\newtheorem{theorem}{Theorem}[chapter]
\newtheorem{lemma}[theorem]{Lemma}
\newtheorem{proposition}[theorem]{Proposition}
\newtheorem{corollary}[theorem]{Corollary}
\newtheorem*{proposition*}{Proposition}
\theoremstyle{definition}
\newtheorem{definition}[theorem]{Definition}
\newtheorem{example}[theorem]{Example}
\newtheorem{construction}[theorem]{Construction}
\theoremstyle{remark}
\newtheorem{remark}[theorem]{Remark}
\numberwithin{section}{chapter}
\numberwithin{equation}{chapter}
\newcommand{\N}{\mathrm{N}}
\newcommand{\Fun}{\mathrm{Fun}}
\newcommand{\NFin}{\N\mathcal{F}\mathrm{in}_*}
\begin{document}

\frontmatter

\title{Goodwillie Approximations to Higher Categories}

\author{Gijs Heuts}
\address{Mathematical Institute, Utrecht University, Budapestlaan 6, 3584CD Utrecht, the Netherlands}
%\curraddr{}
%\email{}
\thanks{During part of the writing of this paper the author was supported by the European Research Council (ERC) under the European Union's Horizon 2020 research and innovation programme (grant agreement No. 682922).}

%    \date is required; it is the date received by the editor.
\date{}

\subjclass[2010]{Primary 55P99; Secondary 55P15, 55P65, 55U35, 55U40}
%    Recognition of the 2010 edition of the Mathematics Subject
%    Classification requires a version of amsbook.cls from July 2009
%    or later.  If "2010" is not recognized, please upgrade.

\keywords{Goodwillie calculus, infinity-categories, infinity-operads, Tate cohomology, rational homotopy theory}

\begin{abstract}
We construct a Goodwillie tower of categories which interpolates between the category of pointed spaces and the category of spectra. This tower of categories refines the Goodwillie tower of the identity functor in a precise sense. More generally, we construct such a tower for a large class of $\infty$-categories $\mathcal{C}$ and classify such Goodwillie towers in terms of the derivatives of the identity functor of $\mathcal{C}$. As a particular application we show how this provides a model for the homotopy theory of simply-connected spaces in terms of coalgebras in spectra with Tate diagonals. Our classification of Goodwillie towers simplifies considerably in settings where the Tate cohomology of the symmetric groups vanishes. As an example we apply our methods to rational homotopy theory. Another application identifies the homotopy theory of $p$-local spaces with homotopy groups in a certain finite range with the homotopy theory of certain algebras over Ching's spectral version of the Lie operad. This is a close analogue of Quillen's results on rational homotopy. 
\end{abstract}

\maketitle

\tableofcontents

%    Include unnumbered chapters (preface, acknowledgments, etc.) here.
\chapter*{Introduction}

Write $\mathcal{S}_\ast$ for the category of pointed spaces. The \emph{Goodwillie tower of the identity functor} \cite{goodwillie3} on $\mathcal{S}_\ast$ gives, for each pointed space $X$, a tower of spaces
\[
\xymatrix{
&& \vdots \ar[d] & \\
&\vdots& P_3 X \ar[d] & \\
&& P_2 X \ar[d] & \\
X \ar[rr]\ar[urr]\ar[uurr] && P_1 X & \Omega^\infty \Sigma^\infty X, \ar@{=}[l]
}
\]
which interpolates between the stable and unstable homotopy type of $X$. The homotopy fiber of the map $P_n X \rightarrow P_{n-1} X$ is usually denoted $D_n X$ and may be expressed as follows:
\begin{equation*}
D_n X = \Omega^\infty \bigl((\partial_n \mathrm{id} \wedge X^{\wedge n})_{h\Sigma_n} \bigr).
\end{equation*}
Here $\partial_n \mathrm{id}$ is a spectrum carrying an action of the symmetric group $\Sigma_n$ and is called the \emph{$n$th derivative} of the identity functor. Ching \cite{ching} showed that the symmetric sequence of derivatives $\partial_\ast \mathrm{id}$ has a natural operad structure; furthermore, this operad is the cobar construction of the commutative cooperad and as such could be considered as the (desuspension of) the Lie operad in the category of spectra. In particular, taking integral homology reproduces (a degree shift of) the ordinary Lie operad in the category of abelian groups. 

We will refine this picture in the following way: we will produce a Goodwillie tower of \emph{categories}
\[
\xymatrix{
&& \vdots \ar[d] & \\
&\vdots& \mathcal{P}_3 \mathcal{S}_\ast \ar[d] & \\
&& \mathcal{P}_2 \mathcal{S}_\ast \ar[d] & \\
\mathcal{S}_\ast \ar[rr]\ar[urr]\ar[uurr] && \mathcal{P}_1 \mathcal{S}_\ast & \mathrm{Sp}, \ar@{=}[l].
}
\]
which interpolates between the category of pointed spaces and the category $\mathrm{Sp}$ of spectra. All functors in this diagram are left adjoints. The unit of the adjunction
\[
\xymatrix{
\mathcal{S}_\ast \ar@<.5ex>[r] & \mathcal{P}_n\mathcal{S}_\ast \ar@<.5ex>[l]
}
\]
is the natural transformation $\mathrm{id}_{\mathcal{S}_\ast} \rightarrow P_n$ described above. We will in fact construct such Goodwillie towers for a large class of $\infty$-categories $\mathcal{C}$. The functor $\mathcal{C} \rightarrow \mathcal{P}_n\mathcal{C}$ satisfies a universal property with respect to $n$-excisive functors out of $\mathcal{C}$, see Theorem \ref{thm:TheoremA}.

The main result of this paper, Theorem \ref{thm:classification}, is a classification of such Goodwillie towers of $\infty$-categories in terms of certain \emph{Tate cohomology} \cite{greenleesmay, klein, ambidexterity} associated to the symmetric groups acting on the derivatives of the identity functor. An informal description of Tate cohomology is as follows. Say $X$ is an object of a stable homotopy theory (for example a spectrum or a chain complex) and $X$ carries an action of a finite group $G$. We can then form the homotopy coinvariants $X_{hG}$ and the homotopy invariants $X^{hG}$ of this action. Furthermore, there is a natural \emph{norm map}
\begin{equation*}
\mathrm{Nm}: X_{hG} \longrightarrow X^{hG}.
\end{equation*}
Informally speaking, this map is induced by summing over the group $G$. The associated Tate cohomology $X^{tG}$ is defined to be the cofiber of this map. Under special circumstances such Tate cohomology objects will be contractible. For example, when working in chain complexes over a field in which the order of $G$ is invertible, dividing by this order then induces an inverse to the norm map. In particular, in the homotopy theory of chain complexes over $\mathbb{Q}$ all Tate cohomology vanishes. A more striking example is the homotopy theory of $K(n)$-local spectra, where $K(n)$ is the $n$th Morava $K$-theory at a prime $p$ \cite{greenleessadofsky}.

An informal summary of the main consequence of Theorem \ref{thm:classification} is below; a precise statement is Corollary \ref{cor:notatecohomology2}, see also Remark \ref{rmk:notatecohomology}.

\begin{theorem}
\label{thm:informalmain}
If $\mathcal{C}$ is an $\infty$-category such that the Tate cohomology of the symmetric groups $\Sigma_k$ vanishes in the stabilization $\mathrm{Sp}(\mathcal{C})$ of $\mathcal{C}$ for $k \leq n$, then there is a canonical equivalence of $\infty$-categories
\begin{equation*}
\mathcal{P}_n\mathcal{C} \simeq \mathcal{P}_n\mathrm{Alg}(\partial_* \mathrm{id}_{\mathcal{C}}).
\end{equation*}
Here $\mathrm{Alg}(\partial_*\mathrm{id}_{\mathcal{C}})$ denotes the $\infty$-category of algebras for the operad formed by the derivatives of the identity functor of $\mathcal{C}$.
\end{theorem}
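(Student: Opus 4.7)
The plan is to deduce Theorem \ref{thm:informalmain} from the classification theorem (Theorem \ref{thm:classification}) by showing that $\mathcal{P}_n\mathcal{C}$ and $\mathcal{P}_n\mathrm{Alg}(\partial_*\mathrm{id}_\mathcal{C})$ give rise to identical classifying data, and then using the Tate vanishing hypothesis to conclude that this data determines the Goodwillie tower uniquely and canonically.

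First, I would identify the classifying data on both sides. The $n$-excisive category $\mathcal{P}_n\mathcal{C}$ has stabilization $\mathrm{Sp}(\mathcal{C})$ and truncated operad of derivatives $\tau_{\leq n}\partial_*\mathrm{id}_\mathcal{C}$ by definition. For the right-hand side, the key input is a lemma asserting that for any reduced operad $\mathcal{O}$ in a presentable stable $\infty$-category $\mathcal{E}$, (i) the stabilization of $\mathrm{Alg}(\mathcal{O})$ is equivalent to $\mathcal{E}$ via the forgetful functor, and (ii) the derivatives of the identity on $\mathrm{Alg}(\mathcal{O})$ recover $\mathcal{O}$ as an operad in $\mathcal{E}$. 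Specializing to $\mathcal{E} = \mathrm{Sp}(\mathcal{C})$ and $\mathcal{O} = \partial_*\mathrm{id}_\mathcal{C}$ then shows that $\mathcal{P}_n\mathrm{Alg}(\partial_*\mathrm{id}_\mathcal{C})$ is classified by the same pair $(\mathrm{Sp}(\mathcal{C}),\, \tau_{\leq n}\partial_*\mathrm{id}_\mathcal{C})$.

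Second, I would invoke the hypothesis. Since the stabilizations of $\mathcal{C}$ and $\mathrm{Alg}(\partial_*\mathrm{id}_\mathcal{C})$ agree, the vanishing of Tate cohomology of $\Sigma_k$ for $k \leq n$ holds simultaneously in both. By Corollary \ref{cor:notatecohomology2}, this vanishing collapses the classification to the pair (stabilization, operad of derivatives), with no higher twisting data. Consequently, any two Goodwillie towers carrying the same classifying data must be equivalent; the equivalence can be made canonical by invoking the universal property of $\mathcal{P}_n$ applied to a natural $n$-excisive comparison functor between $\mathcal{C}$ and $\mathrm{Alg}(\partial_*\mathrm{id}_\mathcal{C})$ produced by the classification.

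The main obstacle will be part (ii) of the key lemma: recovering the operad structure on $\partial_*\mathrm{id}_{\mathrm{Alg}(\mathcal{O})}$, not just the underlying symmetric sequence. Identifying the symmetric sequence is a direct monadic calculation using the bar resolution of an $\mathcal{O}$-algebra, but matching Ching's intrinsic operad structure on $\partial_*\mathrm{id}$ with the given $\mathcal{O}$ requires comparing two distinct bar-cobar constructions and careful bookkeeping with pro-objects in $\mathrm{Sp}(\mathcal{C})$. A secondary technical issue is packaging the output of Theorem \ref{thm:classification} functorially, so that the equivalence of $\mathcal{P}_n\mathcal{C}$ with $\mathcal{P}_n\mathrm{Alg}(\partial_*\mathrm{id}_\mathcal{C})$ arises from a specific comparison functor rather than merely from an abstract bijection of equivalence classes.
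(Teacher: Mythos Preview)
Your proposal is correct and matches the paper's approach: both sides are shown to be $n$-stages for the same stable $\infty$-operad $\mathrm{Sp}(\mathcal{C})^\otimes$, and Tate vanishing makes the space of $n$-stages contractible (Corollary~\ref{cor:notatecohomology}). The paper phrases everything in the Koszul dual language of stable $\infty$-operads rather than operads of derivatives, and routes the comparison through an explicit common model---namely $\mathrm{coAlg}^{\mathrm{ind}}(\tau_n\mathcal{O}^\otimes)$ via Proposition~\ref{prop:nstagenotate} and Proposition~\ref{prop:PnAlgO}---but your ``key lemma'' is precisely Proposition~\ref{prop:stabOalgs}, and the obstacle you flag (matching operad structures, not just symmetric sequences) is exactly why the paper works on the cooperad side throughout.
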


\begin{remark}
In this introduction we suppress the technical assumptions needed to `do Goodwillie calculus in an $\infty$-category $\mathcal{C}$'. Also, for simplicity of language, we are assuming a natural operad structure on the derivatives of the identity functor of $\mathcal{C}$; such a structure has recently been described in this generality by Ching \cite{chingderivatives}. However, in the main body of this paper we will circumvent the direct use of such operad structures by working in a Koszul dual  setting.
\end{remark}

A consequence of Theorem \ref{thm:informalmain} is that the $\infty$-category of rational pointed spaces and the $\infty$-category of rational differential graded Lie algebras have equivalent Goodwillie towers. Using this observation, one can reprove some of Quillen's results on rational homotopy theory (see Theorem \ref{thm:rationalhomotopy}). Our proof of Theorem \ref{thm:classification} also gives an explicit (although slightly more complicated) description of the Goodwillie tower when Tate cohomology does not vanish. As an example we highlight the case $\mathcal{C} = \mathcal{S}_*$ of pointed spaces, although the general case is much the same. The Goodwillie tower of the $\infty$-category $\mathcal{S}_*$ can be described in terms of the $\infty$-category of \emph{Tate coalgebras} in spectra. A Tate coalgebra is a spectrum $E$ which is first of all a (nonunital) commutative coalgebra, meaning it is equipped with comultiplication maps
\begin{equation*}
\delta_n: E \rightarrow (E^{\wedge n})^{h\Sigma_n}
\end{equation*}
for $n \geq 2$, which are in a suitable sense compatible for different values of $n$. Furthermore, the compositions 
\begin{equation*}
E \xrightarrow{\delta_n} (E^{\wedge n})^{h\Sigma_n} \rightarrow (E^{\wedge n})^{t\Sigma_n}
\end{equation*}
should be equipped with homotopies $h_n$ to certain maps
\begin{equation*}
\tau_n: E \rightarrow (E^{\wedge n})^{t\Sigma_n}
\end{equation*}
which we refer to as the \emph{Tate diagonals}. We will construct these maps by induction on $n$. The first Tate diagonal $\tau_2$ has been considered previously in various contexts \cite{kleinmoduli,LNR} and can be characterized as follows. The expression $(E \wedge E)^{t\Sigma_2}$ is an \emph{exact} functor of $E$ (i.e., it preserves cofiber sequences). Therefore $\tau_2$ is essentially uniquely determined by its values on suspension spectra $\Sigma^\infty X$, where it is simply constructed from the diagonal map of $X$:
\begin{equation*}
\Sigma^\infty X \xrightarrow{\Delta} (\Sigma^\infty X \wedge \Sigma^\infty X)^{h\Sigma_2} \rightarrow (\Sigma^\infty X \wedge \Sigma^\infty X)^{t\Sigma_2}.
\end{equation*}
For $n > 2$ the Tate diagonal $\tau_n$ is constructed inductively. It need not exist for a general spectrum $E$, but if $E$ is equipped with comultiplication maps $\delta_k$ for $k < n$ and homotopies $h_k$ corresponding to $\tau_k$ for $k<n$, we will show that $E$ can naturally be equipped with a further Tate diagonal $\tau_n$. The homotopy $h_n$ to be defined must then also be equipped with certain coherence data relating it to $\delta_k$ for $k < n$. We will begin making this discussion precise in Section \ref{sec:informalconstr} by describing the relevant coherence more explicitly. 

If $E$ is a suspension spectrum $\Sigma^\infty X$ with its evident coalgebra structure, the Tate diagonals $\tau_n$ coincide with the obvious maps 
\begin{equation*}
\Sigma^\infty X \rightarrow (\Sigma^\infty X^{\wedge n})^{t\Sigma_n}
\end{equation*}
defined as above. This provides a functor
\begin{equation*}
\mathcal{S}_* \rightarrow \mathrm{coAlg}_{\mathrm{Tate}}(\mathrm{Sp}^\otimes): X \mapsto \Sigma^\infty X,
\end{equation*}
with $\mathrm{coAlg}_{\mathrm{Tate}}(\mathrm{Sp}^\otimes)$ denoting the $\infty$-category of Tate coalgebras in spectra. We prove the following in Section \ref{subsec:modulisuspension}:

\begin{theorem}
\label{thm:Tatecoalgebras}
The functor above gives an equivalence of $\infty$-categories 
\begin{equation*}
\mathcal{S}_*^{\geq 2} \rightarrow \mathrm{coAlg}_{\mathrm{Tate}}(\mathrm{Sp}^\otimes)^{\geq 2}
\end{equation*}
from the $\infty$-category of simply connected pointed spaces to the $\infty$-category of simply-connected Tate coalgebras.
\end{theorem}

One can think of this result as a refinement of the coalgebra model of rational homotopy theory. At the same time there is a close connection between Theorem \ref{thm:Tatecoalgebras} and Mandell's work on $p$-adic homotopy theory, which we will explore in joint work with Nikolaus. From another perspective, the theorem above gives a (somewhat) concrete description of what it means for a simply-connected spectrum to be a coalgebra for the comonad $\Sigma^\infty\Omega^\infty$. It has recently been pointed out to us that Theorem \ref{thm:Tatecoalgebras} provides answers to some questions raised by Klein in his work on moduli of suspension spectra \cite{kleinmoduli} (see also the appendix of \cite{kleinpeter}, which suggests a picture close to what we described above). 
%We will comment on this connection in Section \ref{subsec:modulisuspension} and illustrate our methods by giving a proof of a conjecture on `fake wedges' stated in \cite{kleinpeter}.

Write $\mathbf{L}$ for Ching's Lie operad in spectra, whose terms are the derivatives of the identity functor $\partial_*\mathrm{id}$. Another illustration of the use of Theorem \ref{thm:informalmain} is the following:

\begin{theorem}
\label{thm:truncatedspaces}
Let $p$ be a prime and $n \geq 2$ an integer. Write $\mathcal{S}_*^{[n,p(n-1)]}$ for the $\infty$-category of pointed spaces $X$ such that $\pi_i X = 0$ if $i$ is not contained in the interval $[n,p(n-1)]$. Similarly, write $\mathrm{Alg}(\mathbf{L})^{[n,p(n-1)]}$ for the $\infty$-category of algebras for Ching's operad $\mathbf{L}$ in spectra which have nontrivial homotopy groups concentrated in the same range. After inverting $(p-1)!$ (or localizing at $p$) there is an equivalence of $\infty$-categories
\begin{equation*}
\mathcal{S}_*^{[n,p(n-1)]} \longrightarrow \mathrm{Alg}(\mathbf{L})^{[n,p(n-1)]}.
\end{equation*}
Under this equivalence, the homotopy groups of a pointed space $X \in \mathcal{S}_*^{[n,p(n-1)]}$ are isomorphic to the homotopy groups of the spectrum underlying the corresponding $\mathbf{L}$-algebra.
\end{theorem}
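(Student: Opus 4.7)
I would apply the classification Theorem~\ref{thm:informalmain} to $\mathcal{C} = \mathcal{S}_*$ at Goodwillie stage $p-1$, after inverting $(p-1)!$ (or localizing at $p$), and then restrict the resulting equivalence to the truncated subcategories in question. The hypotheses of Theorem~\ref{thm:informalmain} are immediate: once $(p-1)!$ is inverted, the order $k! = |\Sigma_k|$ is a unit in the stabilization for every $k \leq p-1$, so the norm map is an equivalence and $X^{t\Sigma_k} \simeq 0$. Corollary~\ref{cor:notatecohomology2} then supplies a canonical equivalence
\[
\mathcal{P}_{p-1} \mathcal{S}_* \simeq \mathcal{P}_{p-1} \mathrm{Alg}(\mathbf{L})
\]
in this setting, where $\mathbf{L} = \partial_* \mathrm{id}_{\mathcal{S}_*}$ is Ching's spectral Lie operad.

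\textbf{Convergence on the truncated range.} The next ingredient is a connectivity estimate that makes the stages $\geq p$ of the Goodwillie tower invisible in the range $[n, p(n-1)]$. For an $(n-1)$-connected pointed space $X$, the smash power $X^{\wedge k}$ is $(kn-1)$-connected; combined with the fact that $\partial_k \mathrm{id}$ has $\pi_i = 0$ for $i < 1-k$ (from Ching's cobar model for the spectral Lie operad), the $k$-th layer
\[
D_k X = \Omega^\infty \bigl((\partial_k \mathrm{id} \wedge X^{\wedge k})_{h\Sigma_k}\bigr)
\]
satisfies $\pi_i D_k X = 0$ for $i \leq k(n-1)$. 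Hence for every $k \geq p$ all homotopy of $D_k X$ sits strictly above $p(n-1)$, and the natural map $X \to P_{p-1} X$ is an isomorphism on $\pi_i$ for $i \leq p(n-1)$. The analogous assertion on the $\mathbf{L}$-algebra side --- that an $\mathbf{L}$-algebra whose underlying spectrum has homotopy concentrated in $[n, p(n-1)]$ is also preserved by the $(p-1)$-excisive approximation on that range --- follows either from a parallel connectivity argument or, more symmetrically, from the Koszul-dual formulation of the tower alluded to in Remark~\ref{rmk:notatecohomology}.

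\textbf{Restricting the equivalence and main obstacle.} The equivalence produced in the first step is compatible with the common stabilization of the two sides, which in both cases is the category of $(p-1)!$-inverted (respectively $p$-local) spectra, so it preserves the underlying-spectrum functor. Combined with the previous paragraph, both $\mathcal{S}_*^{[n, p(n-1)]}$ and $\mathrm{Alg}(\mathbf{L})^{[n, p(n-1)]}$ embed fully faithfully into $\mathcal{P}_{p-1}$ as the full subcategory of objects whose underlying spectrum has homotopy concentrated in $[n, p(n-1)]$. The classification equivalence therefore restricts to the desired equivalence, and the statement about homotopy groups is automatic from this compatibility with stabilization. The main technical obstacle is the convergence estimate on the $\mathbf{L}$-algebra side: one must ensure that an $\mathbf{L}$-algebra whose underlying spectrum is supported in $[n, p(n-1)]$ is genuinely $(p-1)$-excisive, together with the matching identification of the two truncated subcategories inside $\mathcal{P}_{p-1}$. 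This is cleanest in the Koszul-dual formulation used in the main text, where the towers on both sides are controlled symmetrically by the same operadic data.
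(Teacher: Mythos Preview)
Your overall strategy---use Tate vanishing for $\Sigma_k$, $k<p$, to identify $\mathcal{P}_{p-1}\mathcal{S}_* \simeq \mathcal{P}_{p-1}\mathrm{Alg}(\mathbf{L})$, then restrict---matches the paper's. The connectivity estimate you give for the layers $D_k$ on the space side is also the paper's. But there is a genuine gap in the final step.

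You claim that both $\mathcal{S}_*^{[n,p(n-1)]}$ and $\mathrm{Alg}(\mathbf{L})^{[n,p(n-1)]}$ sit inside $\mathcal{P}_{p-1}$ as ``the full subcategory of objects whose underlying spectrum has homotopy concentrated in $[n,p(n-1)]$.'' This conflates two different functors to $\mathrm{Sp}$. The functor preserved by the equivalence of $(p-1)$-stages is the stabilization $\Sigma^\infty_{p-1,1}:\mathcal{P}_{p-1}\to\mathrm{Sp}$. Composed with $\Sigma^\infty_{p-1}$ this gives, on the space side, $X\mapsto\Sigma^\infty X$, and on the algebra side, $A\mapsto\mathrm{TAQ}(A)$. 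Neither of these is concentrated in $[n,p(n-1)]$ for objects of the truncated subcategories: $\Sigma^\infty S^n$ has homotopy in all degrees $\geq n$, and $\mathrm{TAQ}(A)$ is not the forgetful $U(A)$ that defines $\mathrm{Alg}(\mathbf{L})^{[n,p(n-1)]}$. So your proposed intrinsic characterization of the two images inside $\mathcal{P}_{p-1}$ is false as stated, and with it the matching argument collapses.

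What the paper supplies in place of this is Lemma~\ref{lem:essimageL}: a direct connectivity argument showing that $\Sigma^\infty S^n$ is a \emph{trivial} coalgebra in $\tau_{p-1}\mathrm{Sp}^\otimes$, hence lies in the image of $\mathrm{bar}_{\tau_{p-1}\mathbf{L}}\circ\mathrm{free}$. This gives the crucial identification $\mathcal{L}(S^m)\simeq\mathrm{free}_{\tau_{p-1}\mathbf{L}}(\Sigma^\infty S^m)$ for $m\geq n$, from which one computes $\pi_*\mathcal{R}(A)\cong\pi_*U(A)$ and then runs the Postnikov-truncation argument (Proposition~\ref{prop:truncatedspaces}). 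You flag ``the matching identification of the two truncated subcategories'' as an obstacle, which is correct; but the resolution is not a convergence statement on the algebra side---it is this pointwise identification of where the generating sphere lands, and that requires an argument you have not supplied.
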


The $\mathbf{L}$-algebra corresponding to a space $X$ in Theorem \ref{thm:truncatedspaces} can be realized by a variant of the construction of the topological Andr\'{e}-Quillen cohomology of the commutative ring spectrum $\mathbf{D}X$, the Spanier-Whitehead dual of $X$, although the precise formula seems too involved to be of much practical use. It is tempting to think that the $\mathbf{L}$-algebra structure of Theorem \ref{thm:truncatedspaces} in particular produces the Whitehead products on $\pi_* X$, but we do not have a proof of this. 

The theorem above is a close analogue of rational homotopy theory; however, the kinds of Lie algebras featuring in this result are Lie algebras of spectra, rather than of chain complexes. Other interesting settings where Tate cohomology vanishes and our theorem can be fruitfully applied arise in chromatic homotopy theory. Indeed, as alluded to above, a result of Greenlees and Sadofsky \cite{greenleessadofsky} states that all norm maps are equivalences in the homotopy theory $\mathrm{Sp}_{K(n)}$ of $K(n)$-local spectra, where $n > 0$ and $K(n)$ is the $n$th Morava $K$-theory at a prime $p$. Furthermore, Kuhn \cite{kuhntate} proved that the same is true in $\mathrm{Sp}_{T(n)}$, the homotopy theory of $T(n)$-local spectra, where now $T(n)$ is the telescope of a $v_n$-self map on a finite $p$-local type $n$ spectrum. In \cite{unstableperiodicity} we set up an analogue of Quillen's results in these settings, providing a Lie algebra model for $v_n$-periodic unstable homotopy theory at every height $n$. This description also yields a new perspective on the Bousfield-Kuhn functors \cite{kuhntelescopic}.

\subsection*{Acknowledgments}

I am very grateful to my PhD advisor Jacob Lurie for countless suggestions and insights. Furthermore, I would like to thank Omar Antol\'{i}n Camarena, Mark Behrens, and Mike Hopkins for conversations providing inspiration, either direct or indirect, that went into writing this paper. I thank John Klein for pointing out the relation between this work and his earlier work on the moduli of suspension spectra. Finally, I thank the anonymous referee for many useful suggestions. During part of the writing of this paper the author was supported by the European Research Council (ERC) under the European Union's Horizon 2020 research and innovation programme (grant agreement No. 682922).

\aufm{Gijs Heuts}

\mainmatter
%    Include main chapters here.

\chapter{Main results}
\label{sec:mainresults}

Throughout this paper we will use \emph{$\infty$-categories}, or \emph{quasicategories}, as our preferred formalism for higher category theory. Our reason for using these instead of for example model categories is that there are good and tractable notions of homotopy limits and colimits of $\infty$-categories, which we will make extensive use of. The works of Joyal \cite{joyalpaper, joyal} and Lurie \cite{htt} are the basic references.

Our results will apply to the class of $\infty$-categories that are \emph{pointed}, meaning they have an object that is both initial and final, and \emph{compactly generated}. Recall from Section 5.5.7 of \cite{htt} that an $\infty$-category is compactly generated if it is presentable and $\omega$-accessible. Alternatively, an $\infty$-category is compactly generated if and only if it is equivalent to an $\infty$-category of the form $\mathrm{Ind}(\mathcal{D})$, where $\mathcal{D}$ is a small $\infty$-category which admits finite colimits. Here $\mathrm{Ind}(\mathcal{D})$ denotes the category of $\mathrm{Ind}$-objects in $\mathcal{D}$. It is the free cocompletion of $\mathcal{D}$ with respect to filtered colimits. 

The typical examples of pointed compactly generated $\infty$-categories we have in mind are $\mathcal{S}_\ast$ (the $\infty$-category of pointed spaces) and $\mathrm{Alg}_\mathbf{O}(\mathrm{Sp})$, the $\infty$-category of $\mathbf{O}$-algebras in spectra, for $\mathbf{O}$ a nonunital operad in spectra. We will also consider variants of this latter category, for example replacing spectra with chain complexes over a field.

\emph{Goodwillie calculus} was originally introduced in the context of spaces and spectra \cite{goodwillie3}, but later generalized to apply to more general homotopy theories \cite{kuhngoodwillie, higheralgebra, pereira, biedermannroendigs}. We will use the version of the theory developed in Chapter 6 of \cite{higheralgebra}. In particular, it shows that all the standard methods of Goodwillie calculus apply to pointed compactly generated $\infty$-categories. For the reader unfamiliar with Goodwillie calculus, the original paper \cite{goodwillie3} is still the most accessible introduction to the topic. Throughout this paper, we will freely make use of such terminology as \emph{$n$-excisive functors} and \emph{derivatives of functors}, although we recall several of the standard constructions throughout the text and in the appendix. We will usually consider functors which preserve zero objects and \emph{filtered} colimits. The first is just a matter of convenience, the second is essential for some of the fundamental results of calculus, e.g. the characterization of homogeneous functors. We will make our assumptions on functors clear whenever they come up.

We will associate to a pointed compactly generated $\infty$-category $\mathcal{C}$ an \emph{$n$-excisive approximation} $\mathcal{P}_n\mathcal{C}$, which is universal in a certain sense. Before defining these approximations, let us make some obvious but necessary observations:

\begin{lemma}
\label{lem:observations}
Consider an adjunction between $\infty$-categories (left adjoint on the left)
\[
\xymatrix@C=25pt{
F: \mathcal{C} \ar@<.5ex>[r] & \mathcal{D}: G \ar@<.5ex>[l] 
}
\]
and suppose the identity functor $\mathrm{id}_\mathcal{D}$ of $\mathcal{D}$ is $n$-excisive. Then both $F$ and $G$ are $n$-excisive functors and so is the composition $GF$. By the universal property of the $n$-excisive approximation $P_n$, the unit and counit of this adjunction induce maps $P_n\mathrm{id}_\mathcal{C} \rightarrow GF$ and $P_n(FG) \rightarrow \mathrm{id}_\mathcal{D}$.
\end{lemma}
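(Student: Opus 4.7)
The plan is to chase definitions: an endofunctor is $n$-excisive iff it sends strongly cocartesian $(n+1)$-cubes to cartesian ones, so we just need to decompose each of $F$, $G$, $GF$ into a step that handles the cocartesian side and a step that handles the cartesian side, using the adjunction to supply the missing preservation properties. The two maps at the end are then immediate from the universal property of $P_n$.

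First I would verify that $F$ is $n$-excisive. Since $F$ is a left adjoint it preserves colimits, and in particular sends strongly cocartesian $(n+1)$-cubes in $\mathcal{C}$ to strongly cocartesian cubes in $\mathcal{D}$. But the hypothesis that $\mathrm{id}_\mathcal{D}$ is $n$-excisive says precisely that every such cube in $\mathcal{D}$ is already cartesian. Hence $F$ sends strongly cocartesian $(n+1)$-cubes to cartesian ones, i.e.\ $F$ is $n$-excisive. Dually, a strongly cocartesian $(n+1)$-cube in $\mathcal{D}$ is cartesian by the assumption on $\mathrm{id}_\mathcal{D}$, and $G$ is a right adjoint, hence preserves limits and in particular cartesian cubes, so $G$ is $n$-excisive.

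Next I would handle the composite $GF$. This needs a separate (though essentially identical) argument, because composites of $n$-excisive functors are generally not $n$-excisive. Here however the factorization is well adapted: given a strongly cocartesian $(n+1)$-cube $X$ in $\mathcal{C}$, the cube $F(X)$ is still strongly cocartesian in $\mathcal{D}$ (since $F$ preserves colimits), hence cartesian (by $n$-excisiveness of $\mathrm{id}_\mathcal{D}$), hence $GF(X)$ is cartesian (since $G$ preserves limits). So $GF$ is $n$-excisive.

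For the last assertion, the universal property of the $n$-excisive approximation tells us that for any functor $H\colon\mathcal{C}\to\mathcal{D}$ there is a natural transformation $H\to P_nH$ which is initial among transformations from $H$ to $n$-excisive functors. Apply this to $H=\mathrm{id}_\mathcal{C}$: the unit $\eta\colon\mathrm{id}_\mathcal{C}\to GF$ is a natural transformation into the $n$-excisive functor $GF$, so it factors uniquely (up to contractible choice) through a map $P_n\mathrm{id}_\mathcal{C}\to GF$. Symmetrically, applying $P_n$ to the counit $\varepsilon\colon FG\to\mathrm{id}_\mathcal{D}$ and using that $\mathrm{id}_\mathcal{D}$ is already $n$-excisive (so the canonical map $\mathrm{id}_\mathcal{D}\to P_n\mathrm{id}_\mathcal{D}$ is an equivalence) yields $P_n(FG)\to P_n\mathrm{id}_\mathcal{D}\simeq\mathrm{id}_\mathcal{D}$. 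No step here looks like it should pose a genuine obstacle; the only mild subtlety is remembering to use preservation of \emph{strongly} cocartesian cubes by the left adjoint, rather than attempting to assemble $n$-excisiveness of $GF$ out of $n$-excisiveness of $F$ and $G$ alone.
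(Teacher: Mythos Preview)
Your proof is correct and fills in exactly the routine details the paper omits: the lemma is introduced there as ``obvious but necessary observations'' and no proof is given. Your argument---left adjoint preserves strongly cocartesian cubes, $n$-excisiveness of $\mathrm{id}_\mathcal{D}$ makes them cartesian, right adjoint preserves cartesian cubes, then invoke the universal property of $P_n$---is the expected one.
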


\begin{definition}
\label{def:nexcapprox}
Let $\mathcal{C}$ be a pointed, compactly generated $\infty$-category. An adjunction (left adjoint on the left)
\[
\xymatrix@C=25pt{
F: \mathcal{C} \ar@<.5ex>[r] & \mathcal{D}: G \ar@<.5ex>[l] 
}
\]
is a \emph{weak $n$-excisive approximation} to $\mathcal{C}$ if the following two conditions are satisfied:
\begin{itemize}
\item[(a)] The $\infty$-category $\mathcal{D}$ is pointed and compactly generated. Moreover, its identity functor $\mathrm{id}_\mathcal{D}$ is $n$-excisive.
\item[(b)] The map $P_n\mathrm{id}_\mathcal{C} \rightarrow GF$ induced by the unit (see Lemma \ref{lem:observations}) is an equivalence. Also, the natural transformation $P_n(FG) \rightarrow \mathrm{id}_\mathcal{D}$ induced by the counit is an equivalence.
\end{itemize}
\end{definition}

If $\mathcal{C}$ is a pointed compactly generated $\infty$-category whose identity functor is itself $n$-excisive, then for any weak $n$-excisive approximation $F: \mathcal{C} \rightleftarrows \mathcal{D}: G$ the functor $F$ is fully faithful. Indeed, in this case property (b) above tells us that the unit $\mathrm{id}_\mathcal{C} \rightarrow GF$ is an equivalence. In other words, the functor $F$ exhibits $\mathcal{C}$ as a colocalization of $\mathcal{D}$. We will be especially interested in those $n$-excisive approximations which are `maximal' in the following sense:

\begin{definition}
\label{def:strongapprox}
A pointed compactly generated $\infty$-category $\mathcal{D}$ is \emph{$n$-excisive} if every weak $n$-excisive approximation $F': \mathcal{D} \rightleftarrows \mathcal{E}: G'$ is an equivalence. A weak $n$-excisive approximation $F: \mathcal{C} \rightleftarrows \mathcal{D}: G$ as in Definition \ref{def:nexcapprox} is said to be a \emph{strong $n$-excisive approximation} if the $\infty$-category $\mathcal{D}$ is $n$-excisive.
\end{definition} 

An $\infty$-category is 1-excisive if and only if it is stable (Corollary \ref{cor:1excstable}); one can think of the notion of $n$-excisiveness as a generalization of stability to the cases $n \geq 1$. In Corollary \ref{cor:bnexcisive} we will give an alternative and more explicit characterization of the maximality property of Definition \ref{def:strongapprox} which makes this clear.

\begin{remark}
We will often abuse language and refer to the $\infty$-category $\mathcal{D}$ as a strong $n$-excisive approximation to $\mathcal{C}$, leaving the adjunction between $\mathcal{C}$ and $\mathcal{D}$ understood. If $\mathcal{C}$ is the $\infty$-category of pointed spaces, then the $\infty$-category $\mathrm{Sp}^{\geq n}$ of $n$-connected spectra, for any $n < 0$, is a weak 1-excisive approximation to $\mathcal{C}$. The $\infty$-category $\mathrm{Sp}$ of spectra is a strong 1-excisive approximation. In fact, for any pointed compactly generated $\infty$-category $\mathcal{C}$, the stabilization $\mathrm{Sp}(\mathcal{C})$ is a strong 1-excisive approximation.
\end{remark}

As far as $n$-excisive functors are concerned, any weak approximation of $\mathcal{C}$ is `as good as' $\mathcal{C}$ itself. The proof of the following lemma is an elementary illustration of our definitions. Here $\mathrm{Fun}^{\leq n}$ denotes the $\infty$-category of $n$-excisive functors that commute with filtered colimits.

\begin{lemma}
\label{lem:nexcfunctors}
Let $F_{\mathcal{C}}: \mathcal{C} \rightleftarrows \mathcal{C}_n: G_{\mathcal{C}}$ and $F_{\mathcal{D}}: \mathcal{D} \rightleftarrows \mathcal{D}_n: G_{\mathcal{D}}$ be weak $n$-excisive approximations to $\mathcal{C}$ and $\mathcal{D}$ respectively. Then composing with $G_{\mathcal{D}}$ and $F_{\mathcal{C}}$ yields an equivalence
\begin{equation*}
G_{\mathcal{D}} \circ - \circ F_{\mathcal{C}}: \mathrm{Fun}^{\leq n}(\mathcal{C}_n, \mathcal{D}_n) \longrightarrow \mathrm{Fun}^{\leq n}(\mathcal{C}, \mathcal{D}).
\end{equation*}
\end{lemma}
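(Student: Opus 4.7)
The plan is to construct an explicit two-sided inverse
\[
\Psi \colon \mathrm{Fun}^{\leq n}(\mathcal{C}, \mathcal{D}) \longrightarrow \mathrm{Fun}^{\leq n}(\mathcal{C}_n, \mathcal{D}_n), \qquad \Psi(K) := P_n(F_{\mathcal{D}} \circ K \circ G_{\mathcal{C}}),
\]
to the functor $\Phi := G_{\mathcal{D}} \circ - \circ F_{\mathcal{C}}$. Clearly $\Psi(K)$ is $n$-excisive, and it preserves filtered colimits under the standard assumption that $G_{\mathcal{C}}$ and $G_{\mathcal{D}}$ do (which is why the definition of a weak $n$-excisive approximation should be understood in that compactly generated setting).

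The verification that $\Psi$ is inverse to $\Phi$ rests on two formal properties of the $P_n$-approximation: (i) pre-composition with a colimit-preserving functor commutes with $P_n$, which applies to the left adjoints $F_{\mathcal{C}}, F_{\mathcal{D}}$; and (ii) post-composition with a functor preserving finite limits and filtered colimits commutes with $P_n$, which applies to the right adjoints $G_{\mathcal{C}}, G_{\mathcal{D}}$. Using (i) and (ii) to move $P_n$ past the unit and counit composites, together with the equivalences $G_{\mathcal{C}} F_{\mathcal{C}} \simeq P_n \mathrm{id}_{\mathcal{C}}$ and $G_{\mathcal{D}} F_{\mathcal{D}} \simeq P_n \mathrm{id}_{\mathcal{D}}$ supplied by Definition \ref{def:nexcapprox}, one rewrites
\[
\Phi\Psi K \;\simeq\; P_n\bigl(P_n \mathrm{id}_{\mathcal{D}} \circ K \circ P_n \mathrm{id}_{\mathcal{C}}\bigr), \qquad \Psi\Phi H \;\simeq\; P_n\bigl(F_{\mathcal{D}} G_{\mathcal{D}} \circ H \circ F_{\mathcal{C}} G_{\mathcal{C}}\bigr).
\]

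To close the loop, observe that the canonical natural transformations $\mathrm{id} \to P_n\mathrm{id}$ are $P_n$-equivalences by idempotency of $P_n$, while the counits $F_{\mathcal{D}} G_{\mathcal{D}} \to \mathrm{id}_{\mathcal{D}_n}$ and $F_{\mathcal{C}} G_{\mathcal{C}} \to \mathrm{id}_{\mathcal{C}_n}$ are $P_n$-equivalences by property (b) of Definition \ref{def:nexcapprox}. Invoking the general Goodwillie-calculus principle that composition (on either side) with an $n$-excisive functor preserves $P_n$-equivalences of natural transformations, the two displayed expressions collapse to $P_n K \simeq K$ and $P_n H \simeq H$ respectively, which gives the claimed equivalences.

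The main obstacle I anticipate is justifying this last general principle: that composition with an $n$-excisive functor sends $P_n$-equivalences to $P_n$-equivalences. Intuitively this holds because $n$-excisive functors interact compatibly with the finite-limit construction $T_n$ from which $P_n$ is built as a sequential colimit; but pinning it down rigorously requires unpacking the construction of $P_n$ and verifying the relevant commutations with composition in the compactly generated $\infty$-categorical setting.
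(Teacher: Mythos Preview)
The paper does not prove this lemma; it merely asserts that it is ``a straightforward formal consequence of our definitions.'' So there is nothing to compare against, and your strategy --- building an explicit inverse $\Psi(K) = P_n(F_{\mathcal{D}} K G_{\mathcal{C}})$ and checking both round-trips --- is the natural one. Your use of properties (i) and (ii) to reduce $\Phi\Psi K$ and $\Psi\Phi H$ to the displayed expressions is correct.

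The gap is exactly where you locate it, and it is a real one. The principle you invoke --- that composition on either side with an $n$-excisive functor preserves $P_n$-equivalences --- does \emph{not} follow from (i) and (ii). Those facts let you commute $P_n$ past precomposition with a functor preserving finite colimits, or past postcomposition with a functor preserving finite limits and filtered colimits. An $n$-excisive functor $K$ preserving only filtered colimits satisfies neither hypothesis. Concretely: to conclude that $K \to K \circ P_n\mathrm{id}_{\mathcal{C}}$ is a $P_n$-equivalence via (ii) you would need $K$ to preserve finite limits; to conclude that $K \to P_n\mathrm{id}_{\mathcal{D}} \circ K$ is a $P_n$-equivalence via (i) you would need $K$ to preserve finite colimits. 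Neither is implied by $n$-excisiveness, and the general principle as you state it is not standard (and is likely false without such extra hypotheses).

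What does help is that the specific functors surrounding $K$ are better behaved than $K$ itself: $F_{\mathcal{C}}, F_{\mathcal{D}}$ preserve colimits and $G_{\mathcal{C}}, G_{\mathcal{D}}$ preserve limits and filtered colimits, so composites like $H F_{\mathcal{C}}$ and $G_{\mathcal{D}} H$ are genuinely $n$-excisive whenever $H$ is. A cleaner route is to factor $\Phi$ as the composite of $-\circ F_{\mathcal{C}}$ and $G_{\mathcal{D}}\circ -$ and treat each factor via the adjunctions $(- \circ F_{\mathcal{C}}) \dashv (- \circ G_{\mathcal{C}})$ and $(F_{\mathcal{D}} \circ -) \dashv (G_{\mathcal{D}} \circ -)$ on functor categories, combined with the fact that $P_n$ is a localization; but even there one must argue carefully rather than appeal to a blanket statement about $n$-excisive functors and $P_n$-equivalences. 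As written, your final step is an assertion rather than an argument.
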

\begin{proof}
Note that since $G_{\mathcal{D}}$ preserves limits and $F_{\mathcal{C}}$ preserves colimits, a functor of the form $G_{\mathcal{D}} \circ H \circ F_{\mathcal{C}}$ is $n$-excisive whenever $H$ is $n$-excisive, so that the statement of the lemma makes sense. An inverse (up to homotopy) to the functor in the statement of the lemma is the functor described by
\begin{equation*}
\mathrm{Fun}^{\leq n}(\mathcal{C}, \mathcal{D}) \rightarrow \mathrm{Fun}^{\leq n}(\mathcal{C}_n, \mathcal{D}_n): H \mapsto P_n(F_{\mathcal{D}} \circ H \circ G_{\mathcal{C}}).
\end{equation*}
Indeed, that this is an inverse follows from the natural equivalences
\begin{equation*}
P_n(F_{\mathcal{D}}G_{\mathcal{D}} H F_{\mathcal{C}}G_{\mathcal{C}}) \simeq P_n H \simeq H
\end{equation*}
for $H \in \mathrm{Fun}^{\leq n}(\mathcal{C}_n, \mathcal{D}_n)$ and
\begin{equation*}
G_{\mathcal{D}} P_n(F_{\mathcal{D}} H G_{\mathcal{C}}) F_{\mathcal{C}} \simeq P_n(G_{\mathcal{D}} F_{\mathcal{D}} H G_{\mathcal{C}} F_{\mathcal{C}}) \simeq P_n H \simeq H
\end{equation*}
for $H \in \mathrm{Fun}^{\leq n}(\mathcal{C}, \mathcal{D})$. Both these equivalences follow from our assumptions combined with the elementary fact that 
\begin{equation*}
P_n(H_1 \circ H_2) \simeq P_n(P_n(H_1) \circ H_2) \simeq P_n(H_1 \circ P_n(H_2))
\end{equation*}
for any two functors $H_1$ and $H_2$, at least if one assumes that $H_1$ preserves filtered colimits.
\end{proof}

Our first main result concerns the existence, naturality, and uniqueness of $n$-excisive approximations. To phrase the kind of functoriality that $n$-excisive approximations satisfy we introduce some notation: let us write $\mathbf{Cat}^\omega_*$ for the (large) $\infty$-category of pointed compactly generated $\infty$-categories and functors which preserve small colimits and compact objects. Also, let us write $\mathbf{Cat}^\omega_{*, \leq n}$ for the full subcategory of $\mathbf{Cat}^\omega_*$ on the $n$-excisive $\infty$-categories of Definition \ref{def:strongapprox}. Note that colimit-preserving functors between $n$-excisive categories are automatically $n$-excisive functors.

\begin{remark}
By the adjoint functor theorem (Corollary 5.5.2.9 of \cite{htt}), a colimit-preserving functor $F$ between compactly generated categories admits a right adjoint $G$. The condition that $F$ preserves compact objects is equivalent to the condition that $G$ preserves filtered colimits.
\end{remark}

\begin{theorem}
\label{thm:TheoremA}
The inclusion $i_n: \mathbf{Cat}^\omega_{*,\leq n} \rightarrow \mathbf{Cat}^\omega_*$ admits a left adjoint $\mathcal{P}_n$, which satisfies the following properties:
\begin{itemize}
\item[(a)] For any $\mathcal{C} \in \mathbf{Cat}^\omega_*$, the unit
\begin{equation*}
\mathcal{C} \longrightarrow i_n \mathcal{P}_n\mathcal{C}
\end{equation*}
is the left adjoint of a strong $n$-excisive approximation to $\mathcal{C}$.
\item[(b)] The counit is an equivalence between $\mathcal{P}_n i_n$ and the identity functor of $\mathbf{Cat}^\omega_{*,\leq n}$. In other words, $\mathcal{P}_n$ exhibits $\mathbf{Cat}^\omega_{*,\leq n}$ as a localization of $\mathbf{Cat}^\omega_*$.
\item[(c)] The functor $\mathcal{P}_n$ preserves finite limits.
\end{itemize}
In particular, any pointed compactly generated $\infty$-category $\mathcal{C}$ admits a strong $n$-excisive approximation. Moreover, such an approximation is unique up to canonical equivalence.
\end{theorem}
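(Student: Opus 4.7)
My plan is to construct $\mathcal{P}_n\mathcal{C}$ explicitly, verify that it lies in $\mathbf{Cat}^\omega_{*,\leq n}$ with the right universal property, and then deduce (a)--(c) formally. Generalising the familiar identification $\mathrm{Sp}(\mathcal{C}) \simeq \mathrm{Exc}^*(\mathcal{S}_*^{\mathrm{fin}}, \mathcal{C})$ of the stabilisation, the candidate is
\[
\mathcal{P}_n\mathcal{C} := \mathrm{Exc}_n^*(\mathcal{S}_*^{\mathrm{fin}}, \mathcal{C}),
\]
the $\infty$-category of reduced $n$-excisive functors from pointed finite spaces to $\mathcal{C}$. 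Using the tensoring of $\mathcal{C}$ over $\mathcal{S}_*$, the left adjoint is $F(X) = P_n(-\wedge X)$ and the right adjoint is evaluation at $S^0$, $G(\Phi) = \Phi(S^0)$.

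The first block of work is to check that $\mathcal{P}_n\mathcal{C}$ belongs to $\mathbf{Cat}^\omega_*$ and that $\mathrm{id}_{\mathcal{P}_n\mathcal{C}}$ is $n$-excisive. Presentability and pointedness come from the description of $\mathrm{Exc}_n^*$ as a reflective subcategory of $\mathrm{Fun}_*(\mathcal{S}_*^{\mathrm{fin}}, \mathcal{C})$; compact generation requires producing an explicit set of compact generators, which can be built from the objects $F(X)$ for $X$ running through compact generators of $\mathcal{C}$. The $n$-excisiveness of $\mathrm{id}_{\mathcal{P}_n\mathcal{C}}$ is essentially built into the construction: since limits and colimits here are computed objectwise in $\mathcal{C}$, strongly cocartesian $(n{+}1)$-cubes in $\mathcal{P}_n\mathcal{C}$ do induce cartesian cubes.

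The heart of the proof is verifying the universal property of Definition~\ref{def:nexcapprox}(b). For the unit, the composite $GF(X) = P_n(-\wedge X)(S^0)$ must be identified with $P_n\mathrm{id}_\mathcal{C}(X)$. Both are $n$-excisive filtered-colimit-preserving endofunctors of $\mathcal{C}$---$GF$ because $F$ preserves small colimits, $G$ preserves filtered colimits and finite limits, and $\mathrm{id}_{\mathcal{P}_n\mathcal{C}}$ is $n$-excisive---and they are equipped with compatible natural transformations from $\mathrm{id}_\mathcal{C}$; one then verifies that the comparison map is an equivalence via the universal property characterising $P_n\mathrm{id}_\mathcal{C}$. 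The counit equivalence follows by a parallel calculation, using that any $\Phi \in \mathcal{P}_n\mathcal{C}$ is determined by its value at $S^0$ together with the $n$-excisive structure.

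Strongness then follows from Lemma~\ref{lem:nexcfunctors}: any further weak approximation of $\mathcal{P}_n\mathcal{C}$ must be an equivalence, since $\mathrm{id}_{\mathcal{P}_n\mathcal{C}}$ is already $n$-excisive. Functoriality in $\mathcal{C}$ is visibly built into the construction, and adjointness with $i_n$ follows from the universal property of the unit, giving (a). For (b), if $\mathcal{C}$ is already $n$-excisive then $\mathrm{id}_\mathcal{C}\colon \mathcal{C} \rightleftarrows \mathcal{C}$ is trivially a strong $n$-excisive approximation, forcing $\mathcal{P}_n i_n\mathcal{C} \simeq \mathcal{C}$ by uniqueness. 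For (c), finite limits of $n$-excisive categories in $\mathbf{Cat}^\omega_*$ are again $n$-excisive---checked componentwise on identity functors---so $i_n$ creates finite limits and the left adjoint $\mathcal{P}_n$ preserves them. The principal obstacle I foresee is the identification $GF \simeq P_n\mathrm{id}_\mathcal{C}$: this is precisely where the construction acquires genuine Goodwillie-calculus content rather than purely formal input from adjoint-functor considerations.
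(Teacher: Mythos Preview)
Your proposed construction does not give the right $\infty$-category. Take $\mathcal{C}$ stable, say $\mathcal{C} = \mathrm{Sp}$, and $n \geq 2$. A stable $\infty$-category is $n$-excisive in the sense of Definition~\ref{def:strongapprox} (its identity is $1$-excisive and $L_n$ is an equivalence, cf.\ the proof of Lemma~\ref{lem:Pnstab}), so the theorem forces $\mathcal{P}_n\mathrm{Sp} \simeq \mathrm{Sp}$. But $\mathrm{Exc}_n^*(\mathcal{S}_*^{\mathrm{fin}}, \mathrm{Sp})$ is strictly larger: the $2$-homogeneous functor $Y \mapsto \Sigma^\infty Y \wedge \Sigma^\infty Y$ is a reduced $2$-excisive functor which is not linear, hence not in the essential image of your $F$. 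Consequently your pair $(F,G)$ cannot be a weak $n$-excisive approximation here, and the step that breaks is the counit condition $P_n(FG) \to \mathrm{id}_{\mathcal{D}}$, which you pass over as ``a parallel calculation''. Ironically the unit identification $GF \simeq P_n\mathrm{id}_{\mathcal{C}}$, which you flag as the principal obstacle, is actually correct.

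Two further gaps are worth naming even setting the counterexample aside. First, colimits in $\mathrm{Exc}_n^*(\mathcal{S}_*^{\mathrm{fin}}, \mathcal{C})$ are \emph{not} computed objectwise---one must apply the reflector $P_n$ afterwards---so your argument that the identity of this category is $n$-excisive does not go through as written. Second, your deduction of strongness misreads Lemma~\ref{lem:nexcfunctors}: having $n$-excisive identity functor does not imply being $n$-excisive in the sense of Definition~\ref{def:strongapprox}. The remark following that definition already gives the counterexample $\mathrm{Sp}^{\geq 0}$, which has $1$-excisive identity yet admits the non-equivalence $\mathrm{Sp}^{\geq 0} \hookrightarrow \mathrm{Sp}$ as a weak $1$-excisive approximation. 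The paper's construction is quite different: it realises $\mathcal{P}_n\mathcal{C}$ as the filtered colimit in $\mathbf{Cat}^\omega_*$ of the iterates $\mathcal{T}_n^k\mathcal{C}$, directly mimicking Goodwillie's construction of $P_n$ at the level of categories, so that the equivalence $L_n\colon \mathcal{P}_n\mathcal{C} \to \mathcal{T}_n\mathcal{P}_n\mathcal{C}$ (and hence the strongness criterion of Proposition~\ref{prop:nexcchar}) is built in from the start.
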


We will prove Theorem \ref{thm:TheoremA} in Chapter \ref{sec:constructingPnC} by providing a direct construction of $\mathcal{P}_n\mathcal{C}$ and proving the necessary properties. We will usually omit the inclusion $i_n$ from the notation; it should be clear from context in which category we are working. Also, for $\mathcal{C} \in \mathbf{Cat}^\omega_*$, we will use the notation
\[
\xymatrix@C=25pt{
\mathcal{C} \ar@<.5ex>[r]^{\Sigma_n^\infty} & \mathcal{P}_n\mathcal{C} \ar@<.5ex>[l]^{\Omega_n^\infty}
}
\]
for the $n$-excisive approximation to $\mathcal{C}$ provided by Theorem \ref{thm:TheoremA}. With this notation, the unit
\begin{equation*}
\eta: \mathrm{id}_{\mathcal{C}} \rightarrow \Omega^\infty_n\Sigma^\infty_n
\end{equation*}
is equivalent to the $n$-excisive approximation $\mathrm{id}_{\mathcal{C}} \rightarrow P_n\mathrm{id}_{\mathcal{C}}$. The notation we use is derived from the case $n=1$. Indeed, recall that a pointed compactly generated $\infty$-category is 1-excisive if and only if it is stable (Corollary \ref{cor:1excstable}). The functor $\mathcal{P}_1$ of Theorem \ref{thm:TheoremA} is then the stabilization functor $\mathrm{Sp}(-)$ and the resulting adjoint pair 
\[
\xymatrix@C=25pt{
\mathcal{C} \ar@<.5ex>[r]^{\Sigma_1^\infty} & \mathcal{P}_1\mathcal{C} \ar@<.5ex>[l]^{\Omega_1^\infty}
}
\]
can be identified with the usual stabilization adjunction
\[
\xymatrix@C=25pt{
\mathcal{C} \ar@<.5ex>[r]^{\Sigma^\infty} & \mathrm{Sp}(\mathcal{C}) \ar@<.5ex>[l]^{\Omega^\infty}.
}
\]
It is a straightforward formal exercise to see that for $m \leq n$, one has the transitivity property $\mathcal{P}_m(\mathcal{P}_n\mathcal{C}) \simeq \mathcal{P}_m\mathcal{C}$ and so in particular a functor
\begin{equation*}
\mathcal{P}_n\mathcal{C} \longrightarrow \mathcal{P}_m\mathcal{C}.
\end{equation*}
With this observation one can assemble the $n$-excisive approximations for various $n$ into a \emph{Goodwillie tower} for $\mathcal{C}$:

\[
\xymatrix{
& & \vdots \ar[d] \\
& \vdots & \mathcal{P}_3\mathcal{C}\ar[d] \\
& & \mathcal{P}_2\mathcal{C}\ar[d] \\
\mathcal{C} \ar[uurr]|{\Sigma_3^\infty}\ar[urr]|{\Sigma_2^\infty}\ar[rr]|{\Sigma_1^\infty}& & \mathcal{P}_1\mathcal{C}.
}
\]

The construction of this Goodwillie tower is natural with respect to functors preserving colimits and compact objects. For $m < n$, we will denote the functor $\mathcal{P}_n\mathcal{C} \rightarrow \mathcal{P}_m\mathcal{C}$ by $\Sigma_{n,m}^\infty$ and its right adjoint by $\Omega_{n,m}^\infty$. The reader should in particular observe that the unit of the adjunction
\[
\xymatrix{
\mathcal{C} \ar@<.5ex>[r] & \varprojlim \mathcal{P}_n\mathcal{C} \ar@<.5ex>[l]
}
\] 
is the natural transformation
\begin{equation*}
\mathrm{id}_{\mathcal{C}} \longrightarrow \varprojlim P_n\mathrm{id}_{\mathcal{C}}.
\end{equation*}
Write $\mathcal{C}^{\mathrm{conv}}$ for the full subcategory of $\mathcal{C}$ on objects for which the Goodwillie tower of the identity converges, i.e. those $X$ at which the evaluation of the natural transformation above is an equivalence. Then we immediately obtain the following:

\begin{lemma}
\label{lem:convergence}
The functor $\mathcal{C}^{\mathrm{conv}} \rightarrow  \varprojlim \mathcal{P}_n\mathcal{C}$ is fully faithful.
\end{lemma}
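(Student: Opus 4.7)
The plan is to reduce fully faithfulness to the convergence hypothesis placed on the target object by an adjunction computation. Write $F : \mathcal{C} \to \varprojlim \mathcal{P}_n \mathcal{C}$ for the functor in question; it is the left adjoint assembled from the family $\Sigma_n^\infty$, with right adjoint $G$ obtained from the family $\Omega_n^\infty$. For any $X, Y \in \mathcal{C}$, I would compute
\begin{equation*}
\mathrm{Map}_{\varprojlim \mathcal{P}_n\mathcal{C}}(FX, FY) \simeq \varprojlim_n \mathrm{Map}_{\mathcal{P}_n\mathcal{C}}(\Sigma_n^\infty X, \Sigma_n^\infty Y) \simeq \varprojlim_n \mathrm{Map}_\mathcal{C}(X, \Omega_n^\infty \Sigma_n^\infty Y),
\end{equation*}
using first that mapping spaces in an inverse limit of $\infty$-categories are computed as the corresponding inverse limit, and then the $(\Sigma_n^\infty, \Omega_n^\infty)$-adjunction.

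The next step is to identify $\Omega_n^\infty \Sigma_n^\infty Y$ with $P_n Y$. This is precisely condition (b) of Definition \ref{def:nexcapprox}, which says the unit $P_n \mathrm{id}_\mathcal{C} \to GF$ of a weak $n$-excisive approximation is an equivalence; here that approximation is $\Sigma_n^\infty \dashv \Omega_n^\infty$. Passing to the limit, and using that $\mathrm{Map}_\mathcal{C}(X, -)$ commutes with limits, the previous display becomes
\begin{equation*}
\mathrm{Map}_{\varprojlim \mathcal{P}_n\mathcal{C}}(FX, FY) \simeq \mathrm{Map}_\mathcal{C}\bigl(X, \varprojlim_n P_n Y\bigr).
\end{equation*}

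Finally, the defining property of $\mathcal{C}^{\mathrm{conv}}$ is exactly that for $Y$ in this subcategory, the natural map $Y \to \varprojlim_n P_n Y$ is an equivalence. Thus, for $X, Y \in \mathcal{C}^{\mathrm{conv}}$, the composite
\begin{equation*}
\mathrm{Map}_\mathcal{C}(X, Y) \longrightarrow \mathrm{Map}_\mathcal{C}\bigl(X, \varprojlim_n P_n Y\bigr) \simeq \mathrm{Map}_{\varprojlim \mathcal{P}_n\mathcal{C}}(FX, FY)
\end{equation*}
is an equivalence, which is exactly the assertion of fully faithfulness. In fact the argument only uses the convergence hypothesis on $Y$, not on $X$, but this stronger statement is not what the lemma claims.

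There is no real obstacle here: the argument is almost purely formal once one knows that mapping spaces in $\varprojlim \mathcal{P}_n\mathcal{C}$ are computed as inverse limits and that the iterated unit $Y \to \Omega_n^\infty \Sigma_n^\infty Y$ agrees with $Y \to P_n Y$. Both facts are already built into the setup of Theorem \ref{thm:TheoremA} and the definition of a weak $n$-excisive approximation.
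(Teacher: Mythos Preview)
Your argument is correct and is essentially the same as the paper's. The paper does not give a separate proof: immediately before the lemma it observes that the unit of the adjunction $\mathcal{C} \rightleftarrows \varprojlim \mathcal{P}_n\mathcal{C}$ is the natural transformation $\mathrm{id}_{\mathcal{C}} \to \varprojlim P_n\mathrm{id}_{\mathcal{C}}$, from which the lemma follows immediately; your mapping-space computation is just the explicit unpacking of why a left adjoint is fully faithful on objects where the unit is an equivalence.
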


\begin{remark}
We have tacitly used the existence of limits in $\mathbf{Cat}^\omega_*$, which is guaranteed by Lemma \ref{lem:catomegabicomplete}.
\end{remark}

\begin{remark}
The reader might note the absence of a definition for the notion of an \emph{$n$-homogenous $\infty$-category}, which would parallel the notion of $n$-homogeneous functor. Simply `taking the fiber' of the functor $\mathcal{P}_n\mathcal{C} \rightarrow \mathcal{P}_{n-1}\mathcal{C}$ yields the trivial $\infty$-category and is therefore not of interest. We will encounter several examples of $\infty$-categories which are perhaps deserving of the adjective $n$-homogeneous, e.g. in Corollary \ref{cor:fibercoalgs}. They are the $\infty$-categories of `coalgebras' for an $n$-homogeneous functor $F$ from a stable $\infty$-category to itself. We will not belabour the issue, as this notion will not play a prominent role for us.
\end{remark}

%Let us first give an informal version of the question we wish to answer:

%\textbf{Question:} Given a compactly generated stable $\infty$-category $\mathcal{D}$ and an operad $\mathbf{O}$ in $\mathcal{D}$, can we classify the Goodwillie towers of $\infty$-categories $\mathcal{C}$ such that the stabilization of $\mathcal{C}$ is equivalent to $\mathcal{D}$ and the derivatives of the identity of $\mathcal{C}$ are given by $\mathbf{O}$?

Our goal is to provide a classification of the Goodwillie towers of $\infty$-categories as described above. First we discuss the extra structure present on the derivatives of the identity functor of a pointed compactly generated $\infty$-category $\mathcal{C}$. In fact, as already hinted at in the introduction, it will be more convenient for us to focus on the `Koszul dual' structure (see Remark \ref{rmk:derivativesidentity} for a more elaborate statement). To do this we need the language of \emph{stable $\infty$-operads}. For background on $\infty$-operads see Chapter 2 of \cite{higheralgebra}. For more on stable $\infty$-operads the reader can consult Chapter 6 of the same reference. Throughout this text we will almost exclusively work with \emph{nonunital} $\infty$-operads, i.e. $\infty$-operads whose structure map to $\NFin$ factors through $\N\mathrm{Surj}$, with $\mathrm{Surj}$ denoting the subcategory of $\mathcal{F}\mathrm{in}_*$ containing only the surjective maps of finite pointed sets. For any $\infty$-operad $\mathcal{O}^\otimes$ one may construct a nonunital variant $\mathcal{O}^\otimes_{\mathrm{nu}}$ of it by pulling it back along the map $\N\mathrm{Surj} \rightarrow \NFin$. For an $\infty$-operad $\mathcal{O}^\otimes \rightarrow \NFin$ we will always write $\mathcal{O}$ for its underlying $\infty$-category, i.e. its fiber over $\langle 1 \rangle$.

\begin{definition}
A nonunital $\infty$-operad $p: \mathcal{O}^\otimes \rightarrow \N\mathrm{Surj}$ is \emph{stable} if it satisfies the following conditions:
\begin{itemize}
\item[(1)] It is \emph{corepresentable}, meaning the map $p$ is a locally coCartesian fibration. Equivalently, this means that for every non-empty collection of objects $\{X_1, \ldots, X_n\}$ of $\mathcal{O}$, the functor
\begin{equation*}
\mathcal{O}^\otimes(X_1, \ldots, X_n; -): \mathcal{O} \longrightarrow \mathcal{S}
\end{equation*}
parametrizing operations in $\mathcal{O}^\otimes$ with these inputs is corepresentable by an object we denote $X_1 \otimes^n \cdots \otimes^n X_n$. This determines, for every non-empty finite set $I$, a functor
\begin{equation*}
\mathcal{O}^I \longrightarrow \mathcal{O}: \{X_i\}_{i \in I} \longmapsto \otimes^I\{X_i\}_{i \in I}.
\end{equation*}
\item[(2)] Its underlying $\infty$-category $\mathcal{O}$ is stable and compactly generated.
\item[(3)] For every non-empty finite set $I$, the tensor product functor $\otimes^I: \mathcal{O}^I \rightarrow \mathcal{O}$ preserves colimits in each variable separately. 
\end{itemize}
\end{definition}

\begin{remark}
Observe that our definition of a stable $\infty$-operad is slightly more restrictive than Lurie's: he does not require the underlying $\infty$-category to be compactly generated and only requires the functors $\otimes_I$ to preserve \emph{finite} colimits in each variable separately. Also, he does not restrict to the nonunital case.
\end{remark}

\begin{remark}
\label{rmk:dictionary}
As an example, suppose $p: \mathcal{O}^\otimes \rightarrow \N\mathrm{Surj}$ is stable and that the underlying $\infty$-category $\mathcal{O}$ is the $\infty$-category $\mathrm{Sp}$ of spectra. Then condition (3) forces the tensor products $\otimes_I$ to be of the form
\begin{equation*}
\otimes^I(\{X_i\}_{i \in I}) \simeq C_I \wedge \bigl(\bigwedge_{i \in I} X_i \bigr)
\end{equation*}
for some fixed spectrum $C_I$, where $\wedge$ denotes the smash product of spectra. Using the composition maps of $\mathcal{O}^\otimes$ and its corepresentability, one finds natural maps
\[
\xymatrix{
& X_1 \otimes^3 X_2 \otimes^3 X_3 \ar[dl]\ar[dr] & \\
(X_1 \otimes^2 X_2) \otimes^2 X_3 & & X_1 \otimes^2 (X_2 \otimes^2 X_3)
}
\]
and hence two different maps
\[
\xymatrix{
& C_3 \ar[dl]\ar[dr]& \\
C_2 \wedge C_2 && C_2 \wedge C_2.
}
\]
Such maps exist more generally for any finite set $I$ and a partition of it. They give the collection of the spectra $C_I$ the structure of a \emph{cooperad}. See Section 6.3 of \cite{higheralgebra} for more on this dictionary between stable $\infty$-operads and cooperads. The reader might wish to keep it in mind to relate our results and techniques to those found in the literature, for example in the papers of Arone and Ching \cite{aroneching}.
\end{remark}

\begin{remark}
In the main body of this text we use Lurie's formalism for $\infty$-operads because it allows us to cite results from \cite{higheralgebra}. However, in the appendix we switch to the formalism of \emph{dendroidal sets} to prove some of the more technical results. The equivalence between these two formalisms is proved in \cite{hhm} (in the setting of nonunital $\infty$-operads) and in \cite{chh}.
\end{remark}

Let $\mathcal{C}$ be a pointed compactly generated $\infty$-category. The Cartesian product on $\mathcal{C}$ gives it the structure of a symmetric monoidal $\infty$-category, which is encoded as an $\infty$-operad $\mathcal{C}^\times \rightarrow \NFin$ whose structure map is a coCartesian fibration. In Section 6.2 of \cite{higheralgebra}, Lurie defines the \emph{stabilization} $\mathrm{Sp}(\mathcal{C})^\otimes \rightarrow \N\mathrm{Surj}$ of the $\infty$-operad $\mathcal{C}_{\mathrm{nu}}^\times$ and proves its existence. It has the following properties:
\begin{itemize}
\item[(1)] It is stable.
\item[(2)] There is a map of $\infty$-operads $\mathrm{Sp}(\mathcal{C})^\otimes \rightarrow \mathcal{C}_{\mathrm{nu}}^\times$ whose fiber over $\langle 1 \rangle$ is the functor $\Omega^\infty: \mathrm{Sp}(\mathcal{C}) \rightarrow \mathcal{C}$.
\item[(3)] For every finite set $I$, the induced natural transformation
\begin{equation*}
\times^I \circ \bigl(\Omega^\infty \bigr)^I \longrightarrow \Omega^\infty \circ \otimes^I 
\end{equation*}
exhibits $\otimes^I$ as a derivative (or \emph{multilinearization}) of $\times^I$.
\end{itemize}

\begin{remark}
Lurie in fact works with the stabilization of $\mathcal{C}^\times$ rather than the nonunital $\mathcal{C}_{\mathrm{nu}}^\times$. There is no essential loss of information in passing from the first to the second, but the second will be more convenient to us.
\end{remark}

\begin{remark}
\label{rmk:derivativesidentity}
Unraveling the definitions (see Lemma \ref{lem:codersigmaomega}), one sees that $D_n(\Sigma^\infty\Omega^\infty)(X)$ is equivalent to $(X \otimes^n \cdots \otimes^n X)_{h\Sigma_n}$. Thus, informally speaking, the data of the stable $\infty$-operad $\mathrm{Sp}(\mathcal{C})^\otimes$ is equivalent to the data of the stabilization $\mathrm{Sp}(\mathcal{C})$ and the derivatives of the functor $\Sigma^\infty\Omega^\infty$ together with their cooperadic structure. The derivatives of the identity functor on $\mathcal{C}$ can be extracted from these by a cobar construction.  This was done for the case of spaces and spectra in the work of Arone and Ching \cite{aroneching}. In this paper we will mostly work with the stable $\infty$-operad $\mathrm{Sp}(\mathcal{C})^\otimes$, but the correspondence just described can be useful to keep in mind as a guiding principle. See also the discussion in Section 6.3 of \cite{higheralgebra}.
\end{remark}

\begin{example}
In case $\mathcal{C} = \mathcal{S}_*$ one has $\mathrm{Sp}(\mathcal{C})^\otimes \simeq \mathrm{Sp}^\otimes$, i.e. the stabilization of $\mathcal{S}_*^{\times}$ is the symmetric monoidal $\infty$-category of spectra with their smash product (or, more precisely, its nonunital variant). This corresponds to the result of Arone and Ching that the derivatives of the functor $\Sigma^\infty\Omega^\infty$ form the commutative cooperad in spectra \cite{aroneching}.
\end{example}

\begin{remark}
\label{rmk:prodpresfunctor}
A product-preserving functor $F: \mathcal{C} \rightarrow \mathcal{D}$ induces a map $\mathcal{C}^\times \rightarrow \mathcal{D}^\times$ of $\infty$-operads.  Moreover, if $F$ preserves all finite limits, it induces a map of stable $\infty$-operads $\mathrm{Sp}(\mathcal{C})^\otimes \rightarrow \mathrm{Sp}(\mathcal{D})^\otimes$. 
\end{remark}

We now phrase our classification problem as follows:

\textbf{Question:} Given a nonunital stable $\infty$-operad $p: \mathcal{O}^\otimes \rightarrow \N\mathrm{Surj}$, can we classify the possible Goodwillie towers of pointed compactly generated $\infty$-categories $\mathcal{C}$ for which $\mathcal{O}^\otimes$ is the stabilization of $\mathcal{C}_{\mathrm{nu}}^\times$? 

We will take an inductive approach to answering this question. Write $\mathrm{Surj}_{\leq n}$ for the full subcategory of the category $\mathrm{Surj}$ on the objects $\langle i \rangle$ for $0 \leq i \leq n$. For a nonunital $\infty$-operad $\mathcal{O}^\otimes \longrightarrow \N\mathrm{Surj}$ we use the notation
\begin{equation*}
\mathcal{O}^\otimes_{\leq n} := \mathcal{O}^\otimes \times_{\N\mathrm{Surj}} \N\mathrm{Surj}_{\leq n}.
\end{equation*}
We will refer to $\mathcal{O}^\otimes_{\leq n}$ as the \emph{$n$-truncation} of $\mathcal{O}$. Informally speaking, $\mathcal{O}^\otimes_{\leq n}$ knows about the operations of $\mathcal{O}^\otimes$ with at most $n$ inputs. 

\begin{definition}
\label{def:nstage}
Let $p: \mathcal{O}^\otimes \rightarrow \N\mathrm{Surj}$ be a nonunital stable $\infty$-operad. An \emph{$n$-stage} for $\mathcal{O}^\otimes$ is an $n$-excisive $\infty$-category $\mathcal{C}$ together with an equivalence of $n$-truncations $\mathrm{Sp}(\mathcal{C})^\otimes_{\leq n} \rightarrow \mathcal{O}^\otimes_{\leq n}$.
\end{definition}

%\begin{remark}
%Explain why this corresponds to the earlier question: first n coderivatives coinciding is equivalent to first n derivatives coinciding.
%\end{remark}

The following is part of Proposition \ref{prop:SpPnCntruncated}, which will be proved in Section \ref{subsec:cobar}:

\begin{lemma}
For a pointed compactly generated $\infty$-category $\mathcal{C}$, the functor $\Omega^\infty_n: \mathcal{P}_n\mathcal{C} \longrightarrow \mathcal{C}$ induces an  equivalence of $n$-truncations
\begin{equation*}
\mathrm{Sp}(\mathcal{P}_n\mathcal{C})^\otimes_{\leq n} \longrightarrow \mathrm{Sp}(\mathcal{C})^\otimes_{\leq n}.
\end{equation*}
In particular, $\Omega^\infty_n$ exhibits $\mathcal{P}_n\mathcal{C}$ as an $n$-stage for $\mathrm{Sp}(\mathcal{C})^\otimes$.
\end{lemma}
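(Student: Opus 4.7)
The plan is to exhibit a map of stable $\infty$-operads $\Phi: \mathrm{Sp}(\mathcal{P}_n\mathcal{C})^\otimes \to \mathrm{Sp}(\mathcal{C})^\otimes$ functorially from the adjunction $\Sigma^\infty_n \dashv \Omega^\infty_n$, then check it is an equivalence on underlying $\infty$-categories and that it identifies the tensor products $\otimes^k$ for $1 \leq k \leq n$. Since $\Omega^\infty_n$ is a right adjoint it preserves finite products, so Remark~\ref{rmk:prodpresfunctor} produces the map $\Phi$. By the corepresentability clause in the definition of stable $\infty$-operad, an equivalence of $n$-truncations reduces exactly to these two conditions.

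The claim on underlying $\infty$-categories follows immediately from the transitivity property $\mathcal{P}_1(\mathcal{P}_n\mathcal{C}) \simeq \mathcal{P}_1\mathcal{C}$ noted in the excerpt, combined with $\mathcal{P}_1 = \mathrm{Sp}$. For the tensor products, factor the adjunction as $\Sigma^\infty_\mathcal{C} \simeq \Sigma^\infty_{n,1} \circ \Sigma^\infty_n$ and pass to right adjoints to obtain $\Omega^\infty_\mathcal{C} \simeq \Omega^\infty_n \circ \Omega^\infty_{n,1}$. Using product-preservation of $\Omega^\infty_n$, this yields the identification of functors $\mathrm{Sp}(\mathcal{C})^k \to \mathcal{C}$
\[
\times^k_\mathcal{C} \circ (\Omega^\infty_\mathcal{C})^k \;\simeq\; \Omega^\infty_n \circ \bigl(\times^k_{\mathcal{P}_n\mathcal{C}} \circ (\Omega^\infty_{n,1})^k\bigr).
\]
By property~(3) of the stabilization the multilinearization of the LHS is $\Omega^\infty_\mathcal{C} \circ \otimes^k_\mathcal{C}$, and \emph{provided} $\Omega^\infty_n$ commutes with $k$-fold multilinearization the RHS multilinearizes to $\Omega^\infty_n \circ \Omega^\infty_{n,1} \circ \otimes^k_{\mathcal{P}_n\mathcal{C}} = \Omega^\infty_\mathcal{C} \circ \otimes^k_{\mathcal{P}_n\mathcal{C}}$; the uniqueness of multilinear lifts to $\mathrm{Sp}(\mathcal{C})$ then forces $\otimes^k_\mathcal{C} \simeq \otimes^k_{\mathcal{P}_n\mathcal{C}}$.

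The main obstacle is this commutation, and this is precisely where the hypothesis $k \leq n$ should enter. A useful piece of input is that $\Omega^\infty_n$ is itself $n$-excisive: since $\mathrm{id}_{\mathcal{P}_n\mathcal{C}}$ is $n$-excisive by the definition of a strong $n$-excisive approximation, strongly coCartesian $(n+1)$-cubes in $\mathcal{P}_n\mathcal{C}$ are Cartesian, and the right adjoint $\Omega^\infty_n$ preserves Cartesian cubes. For $k \leq n$ I would then try to identify the $k$-fold multilinearization with the $k$-th cross-effect (which is a total fiber of a $k$-cube, hence a finite limit preserved by any right adjoint) up to contributions that vanish because the ambient category is $n$-excisive. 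The cleanest implementation, which I expect to be the one used in the section's cobar-construction approach, is to phrase the entire comparison on the side of derivatives of the identity functor: tensor products in $\mathrm{Sp}(-)^\otimes_{\leq n}$ are read off from the $n$-truncated tower of derivatives of $\mathrm{id}$, and agreement of these derivatives up to degree $n$ is built into property~(b) of a weak $n$-excisive approximation, so the comparison becomes tautological once the dictionary between the cooperadic and operadic pictures is in place.
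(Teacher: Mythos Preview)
Your overall strategy matches the paper's: construct the map of stable $\infty$-operads from $\Omega^\infty_n$, then check it is an equivalence on underlying $\infty$-categories and on the corepresenting tensor products $\otimes^k$ for $k\leq n$. The first of these is fine. The second is where the argument has a genuine gap.

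The problem is with the commutation step. The ``multilinearization'' in property~(3) is not the naive ``multi-reduce by total fiber, then $P_1$ in each variable'' construction; by Lemma~\ref{lem:codersigmaomega} the tensor product $\otimes^k$ is the $k$-th \emph{coderivative} $\partial^k(\Sigma^\infty\Omega^\infty)$, built from cocross effects, i.e.\ total \emph{cofibers}. A right adjoint such as $\Omega^\infty_n$ preserves cross effects (total fibers) but not cocross effects, so your claimed commutation fails. Indeed, if it held you would conclude $\otimes^k_{\mathcal{C}}\simeq\otimes^k_{\mathcal{P}_n\mathcal{C}}$ for \emph{all} $k$, which is false already for $\mathcal{C}=\mathcal{S}_*$, $n=1$: there $\otimes^2_{\mathcal{S}_*}=\wedge$ while $\otimes^2_{\mathrm{Sp}}=0$ (the cocross effect of the functor $(X,Y)\mapsto X\oplus Y$ vanishes). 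Your attempt to rescue this via ``$k$-th cross effect is a finite limit'' is looking at the wrong side of the cross/cocross dichotomy.

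Your final paragraph is closer in spirit but misstates the dictionary: the tensor products are read off from the derivatives of $\Sigma^\infty\Omega^\infty$, not of $\mathrm{id}$ (the latter are recovered from the former only after a cobar construction, cf.\ Proposition~\ref{prop:derivativesid}). The paper's actual argument is short: identify $\otimes^k_{\mathcal{C}}$ with $\partial_k(\Sigma^\infty_{\mathcal{C}}\Omega^\infty_{\mathcal{C}})$ and $\otimes^k_{\mathcal{P}_n\mathcal{C}}$ with $\partial_k(\Sigma^\infty_{n,1}\Omega^\infty_{n,1})$, both as endofunctors of $\mathrm{Sp}(\mathcal{C})$; the comparison map between these comonads is induced by the counit $\Sigma^\infty_n\Omega^\infty_n\to\mathrm{id}_{\mathcal{P}_n\mathcal{C}}$, which is a $P_n$-equivalence precisely by part~(b) of the weak $n$-excisive approximation property, hence a $P_k$-equivalence for $k\leq n$. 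The missing technical point---that whiskering a $P_n$-equivalence by $\Sigma^\infty_{n,1}$ and $\Omega^\infty_{n,1}$ preserves $P_n$-equivalences---uses that $\mathcal{P}_n\mathcal{C}$ is $n$-excisive (so that $\Omega^\infty_{n,1}$ carries the cubes $C_n(X)$ to cubes $C_n(\Omega^\infty_{n,1}X)$, via property~(b') of Corollary~\ref{cor:bnexcisive}); this is where the bound $k\leq n$ genuinely enters.
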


\begin{remark}
\label{rmk:universalntruncation}
One can in fact show that $\mathrm{Sp}(\mathcal{P}_n\mathcal{C})^\otimes$ is the \emph{initial} stable $\infty$-operad which maps to $\mathrm{Sp}(\mathcal{C})^\otimes$ and induces an equivalence on $n$-truncations. We will show in Section \ref{subsec:truncations} that for any nonunital stable $\infty$-operad $\mathcal{O}^\otimes$ there is a stable $\infty$-operad $\tau_n\mathcal{O}^\otimes$ with this universal property. One can think of it as agreeing with $\mathcal{O}^\otimes$ up to operations of arity $n$ and being free above that. With this notation, the first sentence of this remark can be summarized as $\mathrm{Sp}(\mathcal{P}_n\mathcal{C})^\otimes \simeq \tau_n\mathrm{Sp}(\mathcal{C})^\otimes$. This claim also follows from Proposition \ref{prop:SpPnCntruncated}.
\end{remark}

\begin{definition}
Let $\mathcal{O}^\otimes$ be a nonunital stable $\infty$-operad. Denote by $\mathbf{Cat}_n$ the maximal Kan complex contained in the $\infty$-category $\mathbf{Cat}^\omega_{*,\leq n}$ of $n$-excisive $\infty$-categories. Then denote by $\mathcal{G}_n(\mathcal{O}^\otimes)$ the homotopy fiber defined by the following diagram:
\[
\xymatrix{
\mathcal{G}_n(\mathcal{O}^\otimes) \ar[d]\ar[r] & \mathbf{Cat}_n \ar[d] \\
\ast \ar[r]_-{\mathcal{O}^\otimes_{\leq n}} & \mathbf{Cat}_\infty / \N\mathrm{Surj}_{\leq n}.
}
\]
The vertical map on the right takes the $n$-truncation of the stabilization of the $\infty$-operad $\mathcal{C}^\times_{\mathrm{nu}}$, for $\mathcal{C}$ an $n$-excisive $\infty$-category. We will refer to $\mathcal{G}_n(\mathcal{O}^\otimes)$ as the \emph{space of $n$-stages for $\mathcal{O}^\otimes$}. 
\end{definition}

For the sake of concreteness, whenever required we will adopt the following explicit model for the simplicial set $\mathcal{G}_n(\mathcal{O}^\otimes)$. Write $\mathbf{Op}^{\mathrm{St}}_{\leq n}$ for the $\infty$-category of stable $n$-truncated $\infty$-operads (see Definition \ref{def:truncatedOpSt}) and $K_n$ for the maximal Kan complex contained in it. The forgetful map $(K_n)_{\mathcal{O}^\otimes_{\leq n}/} \rightarrow K_n$ is a Kan fibration and its domain is a contractible Kan complex. Take $\mathcal{G}_n(\mathcal{O}^\otimes)$ to be the pullback of simplicial sets
\[
\xymatrix{
\mathcal{G}_n(\mathcal{O}^\otimes) \ar[d]\ar[r] & \mathbf{Cat}_n \ar[d] \\
(K_n)_{\mathcal{O}^\otimes_{\leq n}/} \ar[r] & K_n.
}
\]
It is easily checked that this gives a Kan complex equivalent to the definition above.

The formation of $n$-excisive approximations yields a sequence of maps
\begin{equation*}
\cdots \longrightarrow \mathcal{G}_3(\mathcal{O}^\otimes) \longrightarrow \mathcal{G}_2(\mathcal{O}^\otimes) \longrightarrow \mathcal{G}_1(\mathcal{O}^\otimes).
\end{equation*}
Note that $\mathcal{G}_1(\mathcal{O}^\otimes)$ is contractible. We will now give an inductive description of the homotopy type of $\mathcal{G}_n(\mathcal{O}^\otimes)$. First we recall the Tate construction. Let $X$ be an object of a compactly generated stable $\infty$-category $\mathcal{O}$ and assume $X$ is equipped with a $\Sigma_n$-action. Then the homotopy invariants $X^{h\Sigma_n}$ and coinvariants $X_{h\Sigma_n}$ both exist in $\mathcal{O}$ and there is a natural \emph{norm map}
\begin{equation*}
\mathrm{Nm}: X_{h\Sigma_n} \longrightarrow X^{h\Sigma_n}
\end{equation*}
between the two (see \cite{greenleesmay, klein, ambidexterity}). Among natural transformations $F(X) \rightarrow X^{h\Sigma_n}$ between functors from $\mathrm{Fun}(B\Sigma_n,\mathcal{O})$ to $\mathcal{O}$, it is characterized up to canonical equivalence by the requirements that it be an equivalence on compact objects of $\mathrm{Fun}(B\Sigma_n,\mathcal{O})$ (the $\infty$-category of objects of $\mathcal{O}$ with a $\Sigma_n$-action) and that $F$ preserves colimits. The \emph{Tate construction} $X^{t\Sigma_n}$ is defined to be the cofiber of this map.

Now suppose $\mathcal{C}$ is an $(n-1)$-stage for $\mathcal{O}^\otimes$, so that in particular we have an equivalence $\mathrm{Sp}(\mathcal{C})^\otimes \simeq \tau_{n-1}\mathcal{O}^\otimes$ (see Remark \ref{rmk:universalntruncation}). Write 
\begin{equation*}
\odot^n:  \mathcal{O}^{n} \longrightarrow \mathcal{O}
\end{equation*}
for the $n$-fold tensor product on $\mathcal{O}$ determined by this stable $\infty$-operad. The map $\tau_{n-1}\mathcal{O}^\otimes \rightarrow \mathcal{O}^\otimes$ of $\infty$-operads induces a natural transformation $\otimes^{n} \rightarrow \odot^n$, with $\otimes^{n}$ denoting the $n$-fold tensor product determined by the stable $\infty$-operad $\mathcal{O}^\otimes$. Consider an object $X \in \mathcal{C}$ and its $n$-fold diagonal map $\Delta_n(X): X \rightarrow X^{\times n}$. We write 
\begin{equation*}
\Sigma^\infty_{\mathcal{C}}: \mathcal{C} \longrightarrow \mathrm{Sp}(\mathcal{C}) \simeq \mathcal{O}
\end{equation*}
for the stabilization of $\mathcal{C}$. Linearizing the natural transformation $\Delta_n$ then gives a map
\begin{equation*}
\delta_n: \Sigma^\infty_{\mathcal{C}} X \longrightarrow \Sigma^\infty_{\mathcal{C}} X \odot^n \cdots \odot^n \Sigma^\infty_{\mathcal{C}} X
\end{equation*}
which is part of a natural transformation between functors from $\mathcal{C}$ to $\mathcal{O}$. The symmetric group $\Sigma_n$ acts on the codomain of this natural transformation. Denote the Tate construction of this action by $\Psi_{\mathcal{C}}$. Then $\delta_{n}$ induces a natural transformation
\begin{equation*}
\psi_{n}: \Sigma^\infty_{\mathcal{C}} \longrightarrow \Psi_{\mathcal{C}}.
\end{equation*}
In Section \ref{subsec:tatediagonal} we will construct a space $\widehat{\mathcal{T}}_n$ fibered over $\mathcal{G}_{n-1}(\mathcal{O}^\otimes)$, whose fiber over $\mathcal{C}$ is the space of natural transformations $\mathrm{Nat}(\Sigma^\infty_{\mathcal{C}}, \Psi_{\mathcal{C}})$. The construction of $\psi_{n}$ can be made functorial to yield a section of this fibration. 

Similarly, write $\Theta_{\mathcal{C}}$ for the Tate construction of the natural $\Sigma_n$-action on the functor $\otimes^n \circ (\Sigma^\infty)^n$. We will construct another space $\mathcal{T}_n$ fibered over $\mathcal{G}_{n-1}(\mathcal{O}^\otimes)$ whose fiber over $\mathcal{C}$ is the space of natural transformations $\mathrm{Nat}(\Sigma^\infty_{\mathcal{C}},\Theta_{\mathcal{C}})$. The second main result of this paper is the following:

\begin{theorem}
\label{thm:classification}
There exists a pullback square in the $\infty$-category $\mathcal{S}$ of spaces as follows:
\[
\xymatrix{
\mathcal{G}_n(\mathcal{O}^\otimes) \ar[r]\ar[d] &  \mathcal{T}_n \ar[d] \\
\mathcal{G}_{n-1}(\mathcal{O}^\otimes) \ar[r] & \widehat{\mathcal{T}}_n.
}
\]
\end{theorem}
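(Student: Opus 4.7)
The plan is to prove the theorem by analyzing the fibers of the forgetful map $\mathcal{G}_n(\mathcal{O}^\otimes) \to \mathcal{G}_{n-1}(\mathcal{O}^\otimes)$ that sends an $n$-stage $\mathcal{C}'$ to the $(n-1)$-stage $\mathcal{P}_{n-1}\mathcal{C}'$ (an $(n-1)$-stage by the transitivity property $\mathcal{P}_{n-1}(\mathcal{P}_n\mathcal{C}') \simeq \mathcal{P}_{n-1}\mathcal{C}'$). After fixing $\mathcal{C} \in \mathcal{G}_{n-1}(\mathcal{O}^\otimes)$, the assertion of the theorem amounts to producing a natural equivalence between the fiber of $\mathcal{G}_n \to \mathcal{G}_{n-1}$ over $\mathcal{C}$ and the fiber of $\mathcal{T}_n \to \widehat{\mathcal{T}}_n$ over the canonical Tate diagonal $\psi_n$ associated to $\mathcal{C}$. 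Since everything in sight is functorial in $\mathcal{C}$, assembling these fiberwise equivalences will produce the desired pullback square.

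The map from the $\mathcal{G}$-fiber to the $\mathcal{T}$-fiber is essentially tautological. Given an $n$-stage $\mathcal{C}'$ refining $\mathcal{C}$, Remark \ref{rmk:universalntruncation} identifies $\mathrm{Sp}(\mathcal{C}')^\otimes$ with $\tau_n\mathcal{O}^\otimes$, so the $n$-fold tensor product on $\mathrm{Sp}(\mathcal{C}')$ coincides with the operation denoted $\otimes^n$. Linearizing the diagonal $\Delta_n\colon X \to X^{\times n}$ inside $\mathcal{C}'$ and applying the Tate construction yields a natural transformation $\Sigma^\infty_{\mathcal{C}'} \to \Theta_{\mathcal{C}'}$; restricting this along the adjunction between $\mathcal{C}$ and $\mathcal{C}'$ provided by the $(n-1)$-excisive approximation produces a natural transformation $\Sigma^\infty_\mathcal{C} \to \Theta_\mathcal{C}$, hence the required point of $\mathcal{T}_n$ over $\mathcal{C}$. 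Its image in $\widehat{\mathcal{T}}_n$ recovers $\psi_n$ by functoriality of the Tate construction under the map $\otimes^n \to \odot^n$ induced by $\tau_{n-1}\mathcal{O}^\otimes \to \mathcal{O}^\otimes$.

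The reverse direction is the substantive content. Given a lift $\widetilde\psi\colon \Sigma^\infty_\mathcal{C} \to \Theta_\mathcal{C}$ of $\psi_n$, one must manufacture an $n$-stage $\mathcal{C}'$ with $\mathcal{P}_{n-1}\mathcal{C}' \simeq \mathcal{C}$. The strategy is to realize $\mathcal{C}'$ as a recollement of $\mathcal{C}$ with the $n$-homogeneous $\infty$-category determined by the $n$-th symmetric multilinear piece of $\tau_n\mathcal{O}^\otimes$, so that the fiber of $\mathcal{P}_n\mathcal{C}' \to \mathcal{P}_{n-1}\mathcal{C}'$ realizes the prescribed $n$-th operation. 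The datum required to specify such a recollement coherently is a gluing functor from $\mathcal{C}$ to the $n$-homogeneous piece, and the classification of $n$-homogeneous functors together with the constraint of compatibility with the full operad $\mathcal{O}^\otimes$ translates precisely into a Tate-valued natural transformation of the form $\widetilde\psi$. The compatibility $\widetilde\psi \mapsto \psi_n$ under $\mathcal{T}_n \to \widehat{\mathcal{T}}_n$ ensures that the stabilization of the resulting $\mathcal{C}'$ recovers $\tau_n\mathcal{O}^\otimes$ rather than some other refinement of $\tau_{n-1}\mathcal{O}^\otimes$.

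The principal obstacle is to verify that this recollement construction genuinely produces an object of $\mathbf{Cat}^\omega_{*,\leq n}$ with the prescribed stabilization, and that the two assignments are mutually inverse at the level of $\infty$-groupoids of data. This rests on the universal property of $\tau_n\mathcal{O}^\otimes$ developed in Section \ref{subsec:truncations} and the explicit construction of the spaces $\mathcal{T}_n$ and $\widehat{\mathcal{T}}_n$ from Section \ref{subsec:tatediagonal}. A natural way to organize the argument is to reduce to the universal $n$-stage associated to $\tau_n\mathcal{O}^\otimes$ itself, where both fibers admit a direct description, and then propagate the equivalence over all of $\mathcal{G}_{n-1}(\mathcal{O}^\otimes)$ by naturality.
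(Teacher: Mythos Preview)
Your overall architecture is right and matches the paper: define maps $\alpha\colon \mathcal{G}_n(\mathcal{O}^\otimes)\to\mathcal{A}$ and $\beta\colon\mathcal{A}\to\mathcal{G}_n(\mathcal{O}^\otimes)$, where $\mathcal{A}$ is the pullback, and show they are mutually inverse. Your description of $\alpha$ (extracting a Tate diagonal from an $n$-stage) is essentially the paper's construction in Section~\ref{subsec:tatediagonal}.

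The gap is in your reverse construction. You propose to build the $n$-stage $\mathcal{C}'$ as a recollement of $\mathcal{C}$ with an $n$-homogeneous piece, asserting that ``the classification of $n$-homogeneous functors together with the constraint of compatibility with the full operad $\mathcal{O}^\otimes$ translates precisely into a Tate-valued natural transformation.'' This is the heart of the matter, and you have stated it rather than proved it. In particular, the gluing datum for a recollement is not \emph{a priori} Tate-valued; the appearance of the Tate construction comes from the norm fiber sequence $(X^{\otimes n})_{\Sigma_n}\to (X^{\otimes n})^{\Sigma_n}\to (X^{\otimes n})^{t\Sigma_n}$ together with Lemma~\ref{lem:normseq} identifying the $n$-homogeneous and $(n-1)$-excisive parts of $(X^{\otimes n})^{\Sigma_n}$, and you need a mechanism that makes this identification interact correctly with the operadic structure on both the $\otimes^n$ and $\odot^n$ sides.

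The paper supplies exactly this mechanism via the auxiliary $\infty$-category $\mathrm{coAlg}^{\mathrm{ind}}(\tau_n\mathcal{O}^\otimes)$. The Tate lift $\widetilde\psi$ is reinterpreted (Corollary~\ref{cor:Pnindcoalg}) as a functor $\mathcal{C}\to\mathcal{P}_{n-1}\mathrm{coAlg}^{\mathrm{ind}}(\tau_n\mathcal{O}^\otimes)$, and $\beta_0(\mathcal{Z})$ is defined as the \emph{pullback} $\mathcal{C}\times_{\mathcal{P}_{n-1}\mathrm{coAlg}^{\mathrm{ind}}(\tau_n\mathcal{O}^\otimes)}\mathrm{coAlg}^{\mathrm{ind}}(\tau_n\mathcal{O}^\otimes)$. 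Showing that this has the correct stabilization (Proposition~\ref{prop:betastab}) then uses the fiber analysis of Corollary~\ref{cor:fibercoalgs} and Proposition~\ref{prop:mapstruncations}. Your suggestion to ``reduce to the universal $n$-stage associated to $\tau_n\mathcal{O}^\otimes$'' is close in spirit---that universal object \emph{is} $\mathrm{coAlg}^{\mathrm{ind}}(\tau_n\mathcal{O}^\otimes)$---but without naming it and invoking the results of Section~\ref{subsec:truncatedcoalgebras} you do not have a way to carry out either the construction of $\beta$ or the verification that $\alpha\circ\beta$ and $\beta\circ\alpha$ are equivalences (the latter being Propositions~\ref{prop:CnisbaCn} and~\ref{prop:Tatebeta}, which rely on the cobar resolution of Proposition~\ref{prop:cobarderivatives}).
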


\begin{remark}
In Section \ref{subsec:goodwilliespaces} we will consider the fiber of the map $\mathcal{T}_n \rightarrow \widehat{\mathcal{T}}_n$. It can be described in terms of a cobar construction formed from the stable $\infty$-operad $\mathrm{Sp}(\mathcal{C})^\otimes$.
\end{remark}

From Theorem \ref{thm:classification} we immediately deduce the following:

\begin{corollary}
\label{cor:notatecohomology}
Let $\mathcal{O}^\otimes$ be a nonunital stable $\infty$-operad and assume that the Tate cohomology of the symmetric groups $\Sigma_k$ vanishes in $\mathcal{O}$ for $k \leq n$, i.e. for every object $X$ of $\mathcal{O}$ with $\Sigma_k$-action the Tate construction $X^{t\Sigma_k}$ is contractible. Then the spaces $\mathcal{G}_k(\mathcal{O}^\otimes)$ are contractible for $k \leq n$.
\end{corollary}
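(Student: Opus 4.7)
The natural plan is induction on $k$, using Theorem \ref{thm:classification} as the engine and the vanishing hypothesis to collapse the comparison map $\mathcal{T}_k \to \widehat{\mathcal{T}}_k$. The base case $k = 1$ has already been recorded: $\mathcal{G}_1(\mathcal{O}^\otimes)$ is contractible. For the inductive step, assuming $\mathcal{G}_{k-1}(\mathcal{O}^\otimes)$ is contractible with $k \le n$, I would invoke the pullback square
\[
\xymatrix{
\mathcal{G}_k(\mathcal{O}^\otimes) \ar[r]\ar[d] &  \mathcal{T}_k \ar[d] \\
\mathcal{G}_{k-1}(\mathcal{O}^\otimes) \ar[r] & \widehat{\mathcal{T}}_k
}
\]
and argue that both vertical maps are equivalences, so that $\mathcal{G}_k(\mathcal{O}^\otimes)\simeq\mathcal{G}_{k-1}(\mathcal{O}^\otimes)\simeq \ast$.

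To see that the right-hand vertical maps are equivalences, recall how $\mathcal{T}_k$ and $\widehat{\mathcal{T}}_k$ are fibered over $\mathcal{G}_{k-1}(\mathcal{O}^\otimes)$: the fiber over a $(k-1)$-stage $\mathcal{C}$ is the mapping space $\mathrm{Nat}(\Sigma^\infty, \Theta_{\mathcal{C}})$ and $\mathrm{Nat}(\Sigma^\infty, \Psi_{\mathcal{C}})$ respectively, where $\Theta_{\mathcal{C}}$ and $\Psi_{\mathcal{C}}$ are the Tate constructions of the $\Sigma_k$-actions on the functors $\otimes^k\circ(\Sigma^\infty)^k$ and $\odot^k\circ(\Sigma^\infty)^k$ from $\mathcal{C}$ to the stable $\infty$-category $\mathcal{O}$. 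Since these Tate constructions take values object-wise in $\mathcal{O}$ and the hypothesis says that $X^{t\Sigma_k}\simeq 0$ for every $X\in\mathcal{O}$ with $\Sigma_k$-action, the functors $\Theta_{\mathcal{C}}$ and $\Psi_{\mathcal{C}}$ are pointwise zero, hence equivalent to the zero functor. Because $\mathcal{O}$ is pointed (being stable), the mapping spaces $\mathrm{Nat}(\Sigma^\infty, 0)$ are contractible, so every fiber of $\mathcal{T}_k \to \mathcal{G}_{k-1}(\mathcal{O}^\otimes)$ and of $\widehat{\mathcal{T}}_k \to \mathcal{G}_{k-1}(\mathcal{O}^\otimes)$ is contractible.

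Consequently both sides of the pullback square are equivalent to $\mathcal{G}_{k-1}(\mathcal{O}^\otimes)$, and pullbacks of equivalences are equivalences, so $\mathcal{G}_k(\mathcal{O}^\otimes)\to \mathcal{G}_{k-1}(\mathcal{O}^\otimes)$ is an equivalence as well. The induction closes.

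The only real content to verify is the identification of the fibers of $\mathcal{T}_k$ and $\widehat{\mathcal{T}}_k$, together with the claim that the pointwise vanishing of the Tate construction really does make $\Theta_{\mathcal{C}}$ and $\Psi_{\mathcal{C}}$ equivalent to the zero functor in the $\infty$-categorical sense (i.e. as objects in the appropriate functor category). Given the constructions in Section \ref{subsec:tatediagonal} and the fact that Tate constructions in a stable $\infty$-category are computed pointwise, both points are essentially formal. The conceptual obstacle, which was already absorbed into Theorem \ref{thm:classification}, was setting up the fibrations $\mathcal{T}_k$ and $\widehat{\mathcal{T}}_k$ and proving the pullback square; with that in hand, the deduction is immediate.
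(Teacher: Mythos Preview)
Your argument is correct and is exactly the immediate deduction the paper has in mind: the paper states the corollary directly after Theorem \ref{thm:classification} with the remark that it follows immediately, and your inductive use of the pullback square together with the observation that the Tate-vanishing hypothesis forces $\Theta_{\mathcal{C}}$ and $\Psi_{\mathcal{C}}$ to be zero (hence the fibers of $t_k$ and $\widehat{t}_k$ contractible) is precisely what makes it immediate.
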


In Section \ref{subsec:vanishingTate} we will explicitly describe the $n$-stages for $\mathcal{O}^\otimes$ in this special case. The relevant statement is Proposition \ref{prop:nstagenotate}, which together with Corollary \ref{cor:notatecohomology} gives the following:

\begin{corollary}
\label{cor:notatecohomology2}
Let $\mathcal{O}^\otimes$ be as in Corollary \ref{cor:notatecohomology}. If $\mathcal{C}$ is an $n$-stage for $\mathcal{O}^\otimes$, then there is a canonical equivalence of $\infty$-categories
\begin{equation*}
\mathcal{C} \longrightarrow \mathrm{coAlg}^{\mathrm{ind}}(\tau_n\mathcal{O}^\otimes).
\end{equation*}
Here $\tau_n\mathcal{O}^\otimes$ denotes the stable $\infty$-operad mentioned in Remark \ref{rmk:universalntruncation} and $\mathrm{coAlg}^{\mathrm{ind}}$ indicates the $\infty$-category of ind-coalgebras for that $\infty$-operad, see Definitions \ref{def:coalgebra} and \ref{def:indcoalgebras}.
\end{corollary}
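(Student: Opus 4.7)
The proof strategy is to combine two ingredients: the contractibility of the space of $n$-stages (Corollary \ref{cor:notatecohomology}) and the explicit construction of a preferred $n$-stage, which is the content of Proposition \ref{prop:nstagenotate}. Heuristically, Corollary \ref{cor:notatecohomology} provides the essential uniqueness and Proposition \ref{prop:nstagenotate} provides the existence plus identification with ind-coalgebras.

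In more detail, first I would unpack the consequence of Corollary \ref{cor:notatecohomology} in the present context. Under the vanishing-of-Tate hypothesis, the space $\mathcal{G}_n(\mathcal{O}^\otimes)$ is contractible. Since by assumption $\mathcal{C}$ is an $n$-stage for $\mathcal{O}^\otimes$, it determines a vertex in $\mathcal{G}_n(\mathcal{O}^\otimes)$; any other vertex, if produced, is thus canonically equivalent to $\mathcal{C}$ \emph{through the fibration to $\mathbf{Cat}_\infty/\N\mathrm{Surj}_{\leq n}$}, which amounts to a functor $\mathcal{C} \to \mathcal{C}'$ that is compatible with the chosen identifications of $n$-truncated stabilizations with $\mathcal{O}^\otimes_{\leq n}$.

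Second, I would appeal to Proposition \ref{prop:nstagenotate} to produce the preferred alternative $n$-stage, namely $\mathrm{coAlg}^{\mathrm{ind}}(\tau_n\mathcal{O}^\otimes)$. The work there is to verify that this $\infty$-category is pointed, compactly generated, and has $n$-excisive identity functor, and moreover that its stabilization realizes $\tau_n\mathcal{O}^\otimes$ in a manner compatible with its given map to $\mathcal{O}^\otimes$ (using the universal property of $\tau_n$ from Remark \ref{rmk:universalntruncation}). Once this is in place, $\mathrm{coAlg}^{\mathrm{ind}}(\tau_n\mathcal{O}^\otimes)$ furnishes a second vertex of $\mathcal{G}_n(\mathcal{O}^\otimes)$, and the contractibility from step one then produces a canonical equivalence with $\mathcal{C}$. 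The natural direction of this equivalence, $\mathcal{C} \to \mathrm{coAlg}^{\mathrm{ind}}(\tau_n\mathcal{O}^\otimes)$, is the one sending $X \in \mathcal{C}$ to $\Sigma^\infty X$ equipped with the ind-coalgebra structure obtained by linearizing the diagonals $X \to X^{\times k}$ for $k \leq n$.

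The main obstacle is really entirely contained in Proposition \ref{prop:nstagenotate}: one must build the ind-coalgebra functor out of $\mathcal{C}$ in a functorial way, show it lands in $\mathrm{coAlg}^{\mathrm{ind}}(\tau_n\mathcal{O}^\otimes)$, and crucially show it induces an equivalence. This last step is the nontrivial input of the Tate-vanishing hypothesis: without vanishing, one would only get a tower whose obstructions to being an equivalence live in Tate cohomology classes of the type classified by Theorem \ref{thm:classification}, so that forcing these Tate terms to be contractible both kills the obstructions and identifies the $n$-stage with its most natural coalgebraic model. Once Proposition \ref{prop:nstagenotate} is granted, the corollary itself is essentially a formal bookkeeping combination with Corollary \ref{cor:notatecohomology}.
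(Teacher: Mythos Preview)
Your proposal is correct and matches the paper's own argument exactly: the paper states that Corollary \ref{cor:notatecohomology2} follows by combining Proposition \ref{prop:nstagenotate} (which exhibits $\mathrm{coAlg}^{\mathrm{ind}}(\tau_n\mathcal{O}^\otimes)$ as an $n$-stage) with Corollary \ref{cor:notatecohomology} (contractibility of $\mathcal{G}_n(\mathcal{O}^\otimes)$). You have also correctly identified that all the substantive work lies in Proposition \ref{prop:nstagenotate}, with the corollary itself being a formal consequence.
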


\begin{remark}
\label{rmk:notatecohomology}
In concrete examples (such as when $\mathcal{C}$ is the $\infty$-category of pointed spaces or of algebras over an operad $\mathbf{O}$ in the category of spectra), the derivatives of the identity functor of $\mathcal{C}$ are known to form an operad. To relate the $\infty$-category $\mathrm{coAlg}^{\mathrm{ind}}(\tau_n\mathcal{O}^\otimes)$ to the $\infty$-category of algebras over this operad and retrieve the statement of Theorem \ref{thm:informalmain} of the introduction, one applies a form of Koszul duality, specifically Proposition \ref{prop:PnAlgO}.
\end{remark}

In particular, the results above imply that if the Tate cohomology of all the symmetric groups vanishes in $\mathcal{O}$ there is (up to equivalence) only one possible Goodwillie tower of $\infty$-categories associated to the $\infty$-operad $\mathcal{O}^\otimes$, namely that of the $\infty$-category of (ind-)coalgebras in $\mathcal{O}^\otimes$. As an example of how our methods apply in the setting of vanishing Tate cohomology we will reprove some well-known results from the rational homotopy theory of Quillen \cite{rationalhomotopy} (see Section \ref{sec:examples}):

\begin{theorem}
\label{thm:rationalhomotopy}
Let $\mathcal{S}_\mathbb{Q}^{\geq 2}$ denote the $\infty$-category of pointed simply connected rational spaces, $\mathrm{coAlg}_\mathbb{Q}^{\geq 2}$ the $\infty$-category of simply connected differential graded commutative coalgebras over $\mathbb{Q}$ and $\mathrm{Lie}_{\mathbb{Q}}^{\geq 1}$ the $\infty$-category of connected differential graded Lie algebras over $\mathbb{Q}$. Then there exists a diagram
\[
\xymatrix{
& \mathcal{S}_\mathbb{Q}^{\geq 2} \ar[dl]\ar[dr] & \\
\mathrm{Lie}_{\mathbb{Q}}^{\geq 1} \ar[rr] & & \mathrm{coAlg}_\mathbb{Q}^{\geq 2} 
}
\]
in which each of the three functors is an equivalence of $\infty$-categories. 
\end{theorem}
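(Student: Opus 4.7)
The plan is to deduce the equivalences from the general classification of Goodwillie towers together with a convergence argument. First I would rationalize, working in $\mathcal{S}_{*,\mathbb{Q}}$, the $\infty$-category of pointed rational spaces. Its stabilization is equivalent to $H\mathbb{Q}$-module spectra, i.e.\ (unbounded) rational chain complexes, in which the Tate cohomology $X^{t\Sigma_k}$ vanishes for every finite group $\Sigma_k$ because $|\Sigma_k|$ is a unit. By Corollary \ref{cor:notatecohomology2}, each $n$-excisive approximation $\mathcal{P}_n\mathcal{S}_{*,\mathbb{Q}}$ is therefore canonically equivalent to the $\infty$-category $\mathrm{coAlg}^{\mathrm{ind}}(\tau_n\mathcal{O}^\otimes)$, where $\mathcal{O}^\otimes$ is the stable $\infty$-operad obtained from $\mathrm{Sp}(\mathcal{S}_{*,\mathbb{Q}})_{\mathrm{nu}}^\times$. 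Because stabilization of $\mathcal{S}_*$ produces, rationally, the commutative cooperad in chain complexes (the $n$-fold smash of $\Sigma^\infty$ computes the reduced tensor powers), $\mathcal{O}^\otimes$ is identified with the rational commutative cooperad; and under Koszul duality, as recorded in Proposition \ref{prop:PnAlgO} (cf.\ Remark \ref{rmk:notatecohomology}), ind-coalgebras for its $n$-truncation correspond to $n$-truncated Lie algebras. This gives compatible equivalences $\mathcal{P}_n\mathcal{S}_{*,\mathbb{Q}} \simeq \mathcal{P}_n\mathrm{coAlg}_{\mathbb{Q}} \simeq \mathcal{P}_n\mathrm{Lie}_{\mathbb{Q}}$ on each level of the Goodwillie tower.

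The next step is to pass to the inverse limit along $n$ and restrict to the connectivity ranges in the statement. By Lemma \ref{lem:convergence}, on each of the three sides the convergent full subcategory embeds into $\varprojlim_n \mathcal{P}_n(-)$. I would then invoke Goodwillie's convergence theorem: for a simply connected pointed space $X$, the Goodwillie tower of the identity converges because $P_n\mathrm{id}(X) \to P_{n-1}\mathrm{id}(X)$ has homotopy fiber whose connectivity grows linearly with $n$. Rationally this persists, so every object of $\mathcal{S}_{\mathbb{Q}}^{\geq 2}$ lies in $(\mathcal{S}_{*,\mathbb{Q}})^{\mathrm{conv}}$. The analogous estimate for dg Lie algebras (respectively simply connected cocommutative coalgebras) follows from the explicit description of the layers as symmetric powers (resp.\ cosymmetric powers) and an elementary connectivity count for connected dg Lie algebras and simply connected dg coalgebras over $\mathbb{Q}$. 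Once all three categories are seen to sit fully faithfully inside their common inverse limit, and the equivalences $\mathcal{P}_n\mathcal{S}_{*,\mathbb{Q}} \simeq \mathcal{P}_n\mathrm{coAlg}_{\mathbb{Q}} \simeq \mathcal{P}_n\mathrm{Lie}_{\mathbb{Q}}$ are assembled across $n$, the diagram in the statement is obtained by restricting the resulting equivalences between the limits to the connected/simply connected subcategories.

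It remains to check that the subcategories match under these restrictions, which is a matter of comparing connectivity on homotopy groups with degree on the chain complex underlying a Lie algebra or coalgebra; the expected shift is accounted for by the fact that $\partial_*\mathrm{id}$ is (a desuspension of) the Lie operad. To make this precise I would compute $\pi_*$ of a simply connected $X$ through the layers of its Goodwillie tower and compare with the underlying chain complex of the associated Lie algebra, confirming that simply connected spaces correspond exactly to Lie algebras concentrated in positive degrees and to coalgebras concentrated in degrees $\geq 2$.

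The main obstacle I anticipate is not the level-wise classification, which is handed to us by Corollary \ref{cor:notatecohomology2}, but rather the convergence and matching of connectivity ranges. In particular, one must verify that the canonical functors $\mathcal{S}_{\mathbb{Q}}^{\geq 2}\to \varprojlim_n \mathcal{P}_n\mathcal{S}_{*,\mathbb{Q}}$ and $\mathrm{Lie}_{\mathbb{Q}}^{\geq 1}\to \varprojlim_n\mathcal{P}_n\mathrm{Lie}_{\mathbb{Q}}$ are not only fully faithful (which follows from Lemma \ref{lem:convergence}) but also essentially surjective onto the image of the other side; this uses that the Koszul-dual correspondence preserves the relevant bounded connectivity subcategories, so that an object of the inverse limit that comes from a simply connected rational space does in fact correspond to a connected dg Lie algebra, and conversely.
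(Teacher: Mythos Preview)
Your overall strategy matches the paper's: use vanishing rational Tate cohomology to get level-wise equivalences of Goodwillie towers via Corollary~\ref{cor:notatecohomology2}, identify the relevant stable $\infty$-operad with the one coming from the commutative cooperad, invoke Koszul duality (Proposition~\ref{prop:PnAlgO}) for the Lie side, then pass to the limit and use Lemma~\ref{lem:convergence} together with connectivity-based convergence estimates on all three sides. So the architecture is correct.

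Where your proposal diverges from the paper is in the final matching step, which you yourself flag as the main obstacle. You propose to match the essential images by tracking connectivity of homotopy groups through the layers and arguing that simply connected spaces correspond exactly to connected Lie algebras and to $2$-connected coalgebras. The paper takes a sharper route: each of the three embeddings into $\varprojlim_n \mathrm{coAlg}^{\mathrm{ind}}(\tau_n\mathrm{Sp}_{\mathbb{Q}}^\otimes)$ is a colimit-preserving left adjoint with fully faithful unit, so its essential image is a coreflective subcategory generated under colimits by the image of any generator. Thus it suffices to check that the three generators --- $L_{\mathbb{Q}}S^2$, the free dg Lie algebra $L[x_1]$ on a degree-$1$ class, and the trivial coalgebra on $\mathbb{Q}[2]$ --- have the same image. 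The paper verifies this by a direct Chevalley--Eilenberg computation for $L[x_1]$. This single-generator check is both shorter and avoids the issue your approach faces: a connectivity count on an arbitrary object in the limit does not by itself produce a preimage in the other category, so your matching argument as stated is not yet complete.
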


A more novel application of our results is Theorem \ref{thm:truncatedspaces} mentioned in the introduction. We will prove this result in Section \ref{subsec:truncatedspaces}. Finally, we will deduce some consequences of our results in the case where Tate spectra do not vanish. Specifically, we analyze the Goodwillie tower of the $\infty$-category $\mathcal{S}_*$ of pointed spaces in Sections \ref{subsec:modulisuspension} and \ref{subsec:goodwilliespaces}. In particular, we prove Theorem \ref{thm:Tatecoalgebras} of the introduction in Section \ref{subsec:modulisuspension}.

\chapter{Constructing $n$-excisive approximations}
\label{sec:constructingPnC}

The goal of this chapter is to give an explicit construction of the functor $\mathcal{P}_n$ of Theorem \ref{thm:TheoremA} and establish the necessary properties. This construction is closely related to Goodwillie's construction of $n$-excisive approximations to functors \cite{goodwillie3}, which we will also briefly review below. First we introduce some notation and terminology. \par 
For every integer $n \geq 1$, write $\mathbf{P}(n)$ for the power set of the set $\{1, \ldots, n\}$, regarded as a partially ordered set under inclusion. Also, write $\mathbf{P}_0(n)$ for the partially ordered set $\mathbf{P}(n) - \{\varnothing\}$. If $\mathcal{C}$ is an $\infty$-category, an \emph{$n$-cube} (resp. a \emph{punctured $n$-cube}) in $\mathcal{C}$ is a functor
\begin{equation*}
\mathcal{X}: \N\mathbf{P}(n) \longrightarrow \mathcal{C}
\end{equation*}
(resp. $\mathcal{X}: \N\mathbf{P}_0(n) \longrightarrow \mathcal{C}$). An $n$-cube is \emph{strongly coCartesian} if every face of it is a pushout square or, more precisely, if for every $I, J \subseteq \{1, \ldots, n\}$ the square
\[
\xymatrix{
\mathcal{X}(I \cap J) \ar[r]\ar[d] & \mathcal{X}(I) \ar[d] \\
\mathcal{X}(J) \ar[r] & \mathcal{X}(I \cup J)
}
\]
is a pushout square. Similarly, we say a punctured $n$-cube is strongly coCartesian if every face of it is a pushout square. This amounts to the same condition as above, with the added requirement that $I$, $J$ and $I \cap J$ all be non-empty. If the $\infty$-category $\mathcal{C}$ has a terminal object $\ast$, we will say that a punctured $n$-cube $\mathcal{X}$ in $\mathcal{C}$ is \emph{special} if it is strongly coCartesian and moreover satisfies $\mathcal{X}(\{i\}) \simeq \ast$ for all $1 \leq i \leq n$.

\begin{example}
A 2-cube is simply a square and a punctured 2-cube a diagram of the form
\[
\xymatrix{
\mathcal{X}_1 \ar[r] & \mathcal{X}_{12} & \mathcal{X}_2, \ar[l]
}
\]
in what should be obvious notation. The requirement that a punctured 2-cube be strongly coCartesian is vacuous. A punctured 3-cube is a diagram of the following form:
\[
\xymatrix@!0{
 & \mathcal{X}_1 \ar[rr]\ar[dd] & & \mathcal{X}_{12} \ar[dd] \\
& & \mathcal{X}_2 \ar[ur]\ar[dd] & \\
& \mathcal{X}_{13}\ar'[r][rr] & & \mathcal{X}_{123} \\
\mathcal{X}_3 \ar[ur]\ar[rr] & & \mathcal{X}_{23}\ar[ur] 
}
\]
Such a punctured 3-cube is strongly coCartesian if the three squares in this diagram are pushouts. It is special if moreover $\mathcal{X}_1 \simeq \mathcal{X}_2 \simeq \mathcal{X}_3 \simeq \ast$.
\end{example}

\begin{definition}
Let $\mathcal{C}$ be an $\infty$-category which has a terminal object. Then define $\mathcal{T}_n\mathcal{C}$ to be the full subcategory of the $\infty$-category $\Fun(\mathbf{P}_0(n+1),\mathcal{C})$ spanned by the special punctured $(n+1)$-cubes. 
\end{definition}

Observe that the construction $\mathcal{T}_n$ is functorial with respect to functors preserving terminal objects and finite colimits. Moreover, if $\mathcal{C}$ admits finite colimits, we can construct a functor
\begin{equation*}
L_n: \mathcal{C} \longrightarrow \mathcal{T}_n\mathcal{C}
\end{equation*} 
as follows. Consider the full subcategory $\overline{\mathcal{T}}_n\mathcal{C}$ of $\Fun(\mathbf{P}(n+1), \mathcal{C})$ spanned by the $(n+1)$-cubes $\mathcal{X}$ satisfying the following two conditions:
\begin{itemize}
\item[(i)] For each $1 \leq i \leq n + 1$, we have $\mathcal{X}(\{i\}) \simeq \ast$.
\item[(ii)] The cube $\mathcal{X}$ is strongly coCartesian.
\end{itemize}
There are obvious functors
\[
\xymatrix{
& \overline{\mathcal{T}}_n\mathcal{C} \ar[dl]\ar[dr] & \\
\mathcal{C} & & \mathcal{T}_n\mathcal{C}.
}
\]
The left functor evaluates at $\varnothing$, the right functor forgets the initial vertex of the cube. Moreover, the left map is a trivial Kan fibration (this follows from Proposition 4.3.2.15 of \cite{htt}). We may therefore pick a section, which we will denote $C_n$, and compose with the right arrow to obtain a functor $L_n: \mathcal{C} \longrightarrow \mathcal{T}_n\mathcal{C}$ as claimed above. If $\mathcal{C}$ has finite limits, this functor admits a right adjoint $R_n$, which can be described on objects by the formula
\begin{equation*}
R_n(\mathcal{X}) = \varprojlim \mathcal{X},
\end{equation*}
where $\mathcal{X}$ is a special punctured $(n+1)$-cube. The discussion above can be refined to make the assignment which sends $\mathcal{C}$ to the adjunction
\[
\xymatrix@C=25pt{
\mathcal{C} \ar@<.5ex>[r]^{L_n} & \mathcal{T}_n\mathcal{C} \ar@<.5ex>[l]^{R_n}
}
\]
natural in $\mathcal{C}$, at least with respect to functors preserving terminal objects and finite colimits. To make this refinement one considers the span above involving $\mathcal{C}$, $\overline{\mathcal{T}}_n\mathcal{C}$, and $\mathcal{T}_n\mathcal{C}$ not just for individual $\mathcal{C}$, but for a family of such $\infty$-categories in which the functors are as specified. The functors $L_n$ and $R_n$ were considered independently and in a different context by Eldred \cite{eldred}.

\begin{remark}
Observe that our discussion in particular applies to pointed compactly generated $\infty$-categories $\mathcal{C}$. In this case, the functor $L_n$ preserves compact objects. This can be seen from the fact that $R_n$ preserves filtered colimits, which is clear since it is the functor taking a limit over the punctured cube, which is a finite diagram.
\end{remark}

\begin{remark}
In the construction above we defined for every $X \in \mathcal{C}$ a strongly coCartesian $(n+1)$-cube $C_n(X)$ in $\mathcal{C}$. This strongly coCartesian cube is essentially uniquely determined by the fact that its initial vertex is $X$ and the vertices corresponding to one-element subsets of $\{1, \ldots, n+1\}$ are terminal objects. In the case where $\mathcal{C} = \mathcal{S}_*$ one can describe the cube $C_n(X)$ very explicitly and indeed this is Goodwillie's original construction: for a pointed space $X$ one can take 
\begin{equation*}
C_n(X): \mathbf{P}(n+1) \rightarrow \mathcal{C}: S \mapsto X \star S,
\end{equation*}
with $\star$ denoting the join. Indeed, the resulting cube is strongly coCartesian and the join of $X$ with a one-point set is contractible (being a cone on $X$).
\end{remark}

\begin{remark}
\label{rmk:Cn}
The reader should observe that for every non-empty subset $S \subseteq \{1, \ldots, n+1\}$ there is an equivalence
\begin{equation*} 
C_n(X)(S) \simeq \bigvee_{|S|-1} \Sigma X,
\end{equation*}
i.e. the $(|S|-1)$-fold coproduct of the suspension of $X$ with itself. In particular, if $f: X \rightarrow Y$ is a map such that $\Sigma f$ is an equivalence, then $C_n(f)$ is an equivalence at every vertex corresponding to a non-empty subset $S$.
\end{remark}

\begin{definition}
For a pointed compactly generated $\infty$-category $\mathcal{C}$, define 
\begin{equation*}
\mathcal{P}_n\mathcal{C} := \varinjlim(\mathcal{C} \longrightarrow \mathcal{T}_n\mathcal{C} \longrightarrow \mathcal{T}_n(\mathcal{T}_n\mathcal{C}) \longrightarrow \cdots ),
\end{equation*}
where the colimit is taken inside the $\infty$-category $\mathbf{Cat}_*^\omega$ and the arrows are the functors $L_n$. Denote the resulting adjunction by
\[
\xymatrix@C=25pt{
\mathcal{C} \ar@<.5ex>[r]^{\Sigma_n^\infty} & \mathcal{P}_n\mathcal{C}. \ar@<.5ex>[l]^{\Omega_n^\infty}
}
\]
\end{definition}

\begin{remark}
The assignment $\mathcal{P}_n$ is a functor $\mathbf{Cat}_*^\omega \rightarrow \mathbf{Cat}_*^\omega$. Instead of forming the above colimit in $\mathbf{Cat}_*^\omega$ one could instead form the following colimit in $\mathbf{Cat}_*$, the $\infty$-category of small pointed $\infty$-categories:
\begin{equation*}
\mathcal{P}_n\mathcal{C}^c := \varinjlim(\mathcal{C}^c \longrightarrow \mathcal{T}_n\mathcal{C}^c \longrightarrow \mathcal{T}_n(\mathcal{T}_n\mathcal{C})^c \longrightarrow \cdots ).
\end{equation*}
Here a superscript $c$ denotes taking the full subcategory spanned by compact objects. Indeed, Lemma \ref{lem:filteredcolimcompacts} shows that
\begin{equation*}
\mathcal{P}_n\mathcal{C} := \mathrm{Ind}(\mathcal{P}_n\mathcal{C}^c).
\end{equation*}
\end{remark}

\begin{remark}
\label{rmk:P1stabilization}
The $\infty$-category $\mathcal{P}_1\mathcal{C}$ is the stabilization $\mathrm{Sp}(\mathcal{C})$ of $\mathcal{C}$. Indeed, note that a special punctured 2-cube in $\mathcal{C}$ is just a span
\begin{equation*}
* \rightarrow X \leftarrow *,
\end{equation*}
so that there is an evident equivalence $\mathcal{T}_1\mathcal{C} \simeq \mathcal{C}$. The composite functor
\begin{equation*}
\mathcal{C} \xrightarrow{L_1} \mathcal{T}_1 \simeq \mathcal{C}
\end{equation*}
is then precisely the suspension functor.
\end{remark}

\begin{lemma}
\label{lem:Pnfinitelimits}
The functor $\mathcal{P}_n: \mathbf{Cat}_*^\omega \rightarrow \mathbf{Cat}_*^\omega$ preserves finite limits.
\end{lemma}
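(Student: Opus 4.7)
The plan is to unfold the definition $\mathcal{P}_n\mathcal{C} = \varinjlim_k \mathcal{T}_n^k(\mathcal{C})$ and reduce the claim to two sub-assertions: (I) the functor $\mathcal{T}_n : \mathbf{Cat}_*^\omega \to \mathbf{Cat}_*^\omega$ preserves finite limits; and (II) sequential colimits in $\mathbf{Cat}_*^\omega$ commute with finite limits. Granting these, for any finite diagram $p : D \to \mathbf{Cat}_*^\omega$ I will run the chain
\begin{equation*}
\mathcal{P}_n(\lim_D p) \simeq \varinjlim_k \mathcal{T}_n^k(\lim_D p) \simeq \varinjlim_k \lim_D (\mathcal{T}_n^k \circ p) \simeq \lim_D \varinjlim_k (\mathcal{T}_n^k \circ p) = \lim_D (\mathcal{P}_n \circ p).
\end{equation*}

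For (I), I will use that finite limits in $\mathbf{Cat}_*^\omega$ coincide with those computed in $\mathrm{Cat}_\infty$, and that $\mathrm{Fun}(\mathbf{P}_0(n+1),-)$ preserves all limits. Given $\mathcal{C} = \lim_D \mathcal{C}_d$ with projections $\pi_d : \mathcal{C} \to \mathcal{C}_d$, each $\pi_d$ preserves the zero object and all colimits (being a morphism in $\mathbf{Cat}_*^\omega$), and the family $\{\pi_d\}$ is jointly conservative. The key observation is that such a family jointly reflects pushouts---the comparison map from the pushout of three corners of a square to the fourth is an equivalence in $\mathcal{C}$ iff each $\pi_d$ sends it to an equivalence---and likewise jointly reflects the property of being equivalent to the zero object. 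Both defining conditions of a special punctured $(n+1)$-cube (values on singletons being zero, and every face a pushout) are therefore preserved and reflected by $\{\pi_d\}$, and so the full subcategory $\mathcal{T}_n\mathcal{C}$ of $\mathrm{Fun}(\mathbf{P}_0(n+1),\mathcal{C})$ is identified with $\lim_D \mathcal{T}_n\mathcal{C}_d$.

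For (II), I plan to invoke the equivalence $\mathrm{Ind} : \mathrm{Cat}_*^{\mathrm{Rex}} \xrightarrow{\sim} \mathbf{Cat}_*^\omega$ between small pointed $\infty$-categories with finite colimits and pointed compactly generated $\infty$-categories with colimit- and compact-object-preserving functors. Under this equivalence, sequential colimits in $\mathbf{Cat}_*^\omega$ match sequential colimits in $\mathrm{Cat}_*^{\mathrm{Rex}}$ (as made explicit by the formula $\mathcal{P}_n\mathcal{C} = \mathrm{Ind}(\mathcal{P}_n\mathcal{C}^c)$ recorded in the remark preceding the lemma). Inside $\mathrm{Cat}_*^{\mathrm{Rex}}$, both finite limits and filtered colimits can be computed on underlying $\infty$-categories in $\mathrm{Cat}_\infty$, and $\mathrm{Cat}_\infty$ is compactly generated, so filtered colimits commute with finite limits there; transporting back along $\mathrm{Ind}$ gives the desired commutation in $\mathbf{Cat}_*^\omega$.

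I expect (II) to be the subtle step: colimits in $\mathbf{Cat}_*^\omega$ viewed inside the larger $\mathrm{Pr}^L$ are generally not computed in $\mathrm{Cat}_\infty$, so one cannot naively appeal to the ambient commutation of filtered colimits with finite limits without first reducing, via compact objects or equivalently via $\mathrm{Cat}_*^{\mathrm{Rex}}$, to a setting where both types of (co)limits are manifestly built on underlying $\infty$-categories. Once that reduction is in place, the rest is a formal manipulation of the colimit-limit interchange.
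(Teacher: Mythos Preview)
Your proposal is correct and follows essentially the same route as the paper. The paper's proof is the single line ``This is immediate from Lemma~\ref{lem:filteredcolimleftexact}'' (that filtered colimits in $\mathbf{Cat}_*^\omega$ commute with finite limits), whose proof in the appendix is precisely your step (II): pass to compact objects via Lemma~\ref{lem:filteredcolimcompacts}, land in $\mathbf{Cat}$, and use that $\mathbf{Cat}$ is compactly generated. Your step (I), that $\mathcal{T}_n$ itself preserves finite limits, is a detail the paper leaves tacit; you correctly make it explicit, and your argument for it (that the special-punctured-cube condition is detected by the projections of a finite limit, since these preserve colimits and zero objects and are jointly conservative) is sound.
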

\begin{proof}
This is immediate from Lemma \ref{lem:filteredcolimleftexact}.
\end{proof}

Let us briefly review Goodwillie's construction of $n$-excisive approximations to functors. The reader can consult \cite{goodwillie3} for the original treatment (valid for the categories of spaces and spectra) or Chapter 6 of \cite{higheralgebra} for an exposition that applies to the current setting. Given a functor $F: \mathcal{C} \longrightarrow \mathcal{D}$ between pointed compactly generated $\infty$-categories, one defines a new functor $T_n F: \mathcal{C} \longrightarrow \mathcal{D}$ as the composite
\[
\xymatrix@C=25pt{
\mathcal{C} \ar[r]^-{L_n} & \mathcal{T}_n\mathcal{C} \ar[r]^-{F \circ -} & \mathrm{Fun}(\mathbf{P}_0(n+1), \mathcal{D}) \ar[r]^-{\varprojlim} & \mathcal{D}.  
}
\]
There is an evident natural transformation $t_n F: F \rightarrow T_n F$ which we use to define
\[
\xymatrix{
P_n F := \varinjlim( F \ar[r]^-{t_n F} & T_n F \ar[r]^-{t_n(t_n F)} & T_n^2 F \ar[r] & \cdots ).
}
\]

The following observations can now easily be deduced from our constructions:

\begin{lemma}
\label{lem:Pnobservations}
Let $F: \mathcal{C} \longrightarrow \mathcal{D}$ be a functor between pointed compactly generated $\infty$-categories. Then:
\begin{itemize}
\item[(a)] The $n$-excisive approximation $P_n F$ canonically factors as follows:
\[
\xymatrix{
\mathcal{C} \ar[d]_{\Sigma_n^\infty}\ar[r]^{P_n F} & \mathcal{D}. \\
\mathcal{P}_n\mathcal{C} \ar[ur] &
}
\]
\item[(b)] If $F$ preserves colimits, compact objects, and terminal objects, then $P_n F$ canonically factors as follows:
\[
\xymatrix{
\mathcal{C} \ar[d]_{\Sigma_n^\infty}\ar[r]^{P_n F} & \mathcal{D} \\
\mathcal{P}_n\mathcal{C} \ar[r]_{\mathcal{P}_n F} & \mathcal{P}_n\mathcal{D}. \ar[u]_{\Omega^\infty_n}
}
\]
\item[(c)] The unit $\mathrm{id}_{\mathcal{C}} \rightarrow \Omega^\infty_n \Sigma^\infty_n$ of the adjunction $\mathcal{C} \rightleftarrows \mathcal{P}_n\mathcal{C}$ coincides with the natural transformation
\begin{equation*}
\mathrm{id}_\mathcal{C} \longrightarrow P_n\mathrm{id}_\mathcal{C}.
\end{equation*}  
\end{itemize} 
\end{lemma}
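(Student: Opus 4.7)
The plan is to handle each item by unraveling the definitions, beginning with (c), proceeding to (a), and deriving (b) as a refinement of (a). Setting $F = \mathrm{id}_\mathcal{C}$ in the definition of $T_n$ gives $T_n\mathrm{id}_\mathcal{C} = R_n \circ L_n$, so $t_n\mathrm{id}_\mathcal{C}$ is literally the unit of the adjunction $L_n \dashv R_n$. Iterating yields $T_n^k\mathrm{id}_\mathcal{C} = R_n^k \circ L_n^k$ with the corresponding iterated unit. Since $\Sigma_n^\infty \dashv \Omega_n^\infty$ arises by construction as the filtered colimit in $\mathbf{Cat}^\omega_*$ of the adjunctions $L_n^k \dashv R_n^k$, its unit identifies with the colimit of these units, which is precisely $\mathrm{id}_\mathcal{C} \to \varinjlim T_n^k \mathrm{id}_\mathcal{C} = P_n\mathrm{id}_\mathcal{C}$; this gives (c).

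For (a), the formula $T_n F = \varprojlim \circ F_* \circ L_n$ (with $F_* = F \circ -$ denoting termwise postcomposition with $F$ on functors out of $\mathbf{P}_0(n+1)$) tautologically factors $T_n F$ as $\widetilde{T_n F} \circ L_n$, where $\widetilde{T_n F} := \varprojlim \circ F_* : \mathcal{T}_n\mathcal{C} \to \mathcal{D}$. I would then check that the connecting transformations $t_n(T_n^k F)$ respect the induced factorizations $T_n^k F = \widetilde{T_n^k F} \circ L_n^k$ for all $k$, and pass to filtered colimits to obtain $\widetilde{P_n F} : \mathcal{P}_n\mathcal{C} \to \mathcal{D}$ with $\widetilde{P_n F} \circ \Sigma_n^\infty = P_n F$.

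For (b), if $F$ preserves colimits and compact objects then in particular $F$ preserves terminal objects and pushouts, so $F_*$ restricts to $\mathcal{T}_n F : \mathcal{T}_n\mathcal{C} \to \mathcal{T}_n\mathcal{D}$. This gives $\widetilde{T_n F} = R_n^\mathcal{D} \circ \mathcal{T}_n F$, and by induction $\widetilde{T_n^k F} = R_n^{\mathcal{D},k} \circ \mathcal{T}_n^k F$. The functoriality of $\mathcal{P}_n$ on $\mathbf{Cat}^\omega_*$ from Theorem \ref{thm:TheoremA} realizes $\mathcal{P}_n F$ as $\varinjlim \mathcal{T}_n^k F$; combining this with the identification of $\Omega_n^\infty$ as the compatible family of right adjoints $R_n^k$ should identify $\widetilde{P_n F}$ with $\Omega_n^\infty \circ \mathcal{P}_n F$. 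Together with (a), this is the factorization claimed in (b).

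The main obstacle I anticipate is this last identification in (b), namely verifying that $\Omega_n^\infty$ interacts correctly with the structure maps $\iota_k^\mathcal{D} : \mathcal{T}_n^k\mathcal{D} \to \mathcal{P}_n\mathcal{D}$ of the filtered colimit. Concretely, under the $\mathrm{Pr}^L \simeq (\mathrm{Pr}^R)^{\mathrm{op}}$ duality---which rewrites $\mathcal{P}_n\mathcal{D}$ as the cofiltered limit of the same diagram with transition maps $R_n$---one must identify $\Omega_n^\infty$ with the projection onto $\mathcal{T}_n^0\mathcal{D} = \mathcal{D}$. This requires care about the interaction of filtered colimits in $\mathbf{Cat}^\omega_*$ with the formation of right adjoints, but should be formal given the explicit colimit description of $\mathcal{P}_n\mathcal{C}$.
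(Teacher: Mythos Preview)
Your proposal is correct and takes essentially the same approach as the paper, which simply declares the lemma ``immediately clear from our constructions'' without further proof; you have spelled out the definition-unraveling that the paper leaves implicit. One small point: you cite Theorem~\ref{thm:TheoremA} for the functoriality of $\mathcal{P}_n$, but that theorem is proved \emph{after} this lemma (and uses it), so you should instead appeal directly to the construction of $\mathcal{P}_n$ as a filtered colimit of the $\mathcal{T}_n^k$, whose functoriality in $\mathbf{Cat}^\omega_*$ is established in the remark immediately following its definition.
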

\begin{proof}
(a). Since $T_n F$ factors over $\mathcal{T}_n\mathcal{C}$ by definition, the colimit $P_n F = \varinjlim_k T_n^k F$ factors over the colimit $\mathcal{P}_n\mathcal{C}$.

(b). Under the stated assumptions $F$ induces a functor $\mathcal{T}_n\mathcal{C} \rightarrow \mathcal{T}_n\mathcal{D}$ and  $T_n F$ may be factored as follows:
\[
\xymatrix@C=25pt{
\mathcal{C} \ar[r]^-{L_n} & \mathcal{T}_n\mathcal{C} \ar[r]^-{F \circ -} & \mathcal{T}_n\mathcal{D} \ar[r]^-{\varprojlim} & \mathcal{D}.  
}
\]
Iterating $T_n$ and taking the colimit gives the result.

(c). Observe that
\begin{equation*}
P_n \mathrm{id}_{\mathcal{C}} = \varinjlim_k T_n^k\mathrm{id}_{\mathcal{C}} = \varinjlim_k R_n^k L_n^k = \Omega^\infty_n\Sigma^\infty_n.
\end{equation*}
\end{proof}

The following lemma and its corollaries will be needed later in this section.

\begin{lemma}
\label{lem:Pnstab}
The functor $\Sigma_n^\infty: \mathcal{C} \rightarrow \mathcal{P}_n\mathcal{C}$ induces an equivalence on stabilizations. More precisely, there is a commutative diagram of functors
\[
\xymatrix{
\mathcal{C} \ar[r]^{\Sigma_n^\infty} \ar[d]_{\Sigma^\infty} & \mathcal{P}_n\mathcal{C} \ar[d]^{\Sigma^\infty} \\
\mathrm{Sp}(\mathcal{C}) \ar[r]_{\partial \Sigma_n^\infty} & \mathrm{Sp}(\mathcal{P}_n\mathcal{C}) 
}
\]
in which the bottom functor is an equivalence.
\end{lemma}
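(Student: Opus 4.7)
The plan is to exploit the defining formula $\mathcal{P}_n\mathcal{C} = \varinjlim_k \mathcal{T}_n^k\mathcal{C}$ together with the fact that the stabilization functor $\mathrm{Sp}: \mathbf{Cat}^\omega_* \to \mathbf{Cat}^\omega_*$ commutes with filtered colimits. The latter is a standard consequence of Lemma \ref{lem:filteredcolimcompacts}, which reduces filtered colimits in $\mathbf{Cat}^\omega_*$ to filtered colimits of compact subcategories followed by $\mathrm{Ind}$-completion; both of these operations are compatible with passage to stabilizations. Applying this to the tower defining $\mathcal{P}_n\mathcal{C}$ yields
\begin{equation*}
\mathrm{Sp}(\mathcal{P}_n\mathcal{C}) \simeq \varinjlim_k \mathrm{Sp}(\mathcal{T}_n^k\mathcal{C}),
\end{equation*}
so it suffices to show that $\mathrm{Sp}(L_n): \mathrm{Sp}(\mathcal{D}) \to \mathrm{Sp}(\mathcal{T}_n\mathcal{D})$ is an equivalence for every pointed compactly generated $\mathcal{D}$.

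To prove this, I would first identify $\mathrm{Sp}(\mathcal{T}_n\mathcal{D})$ with $\mathcal{T}_n\mathrm{Sp}(\mathcal{D})$. Since $\mathrm{Sp}$ preserves diagram $\infty$-categories over small indexing diagrams (being limit-preserving), one has $\mathrm{Sp}\,\mathrm{Fun}(\mathbf{P}_0(n+1),\mathcal{D}) \simeq \mathrm{Fun}(\mathbf{P}_0(n+1),\mathrm{Sp}(\mathcal{D}))$; the special condition (singleton vertices contractible) is a finite limit condition preserved by $\mathrm{Sp}$; and, crucially, the strongly coCartesian condition, a pushout condition in $\mathcal{D}$, translates into a pullback condition in the stable $\infty$-category $\mathrm{Sp}(\mathcal{D})$, since pushouts and pullbacks coincide there. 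It then remains to check that $L_n: \mathcal{E} \to \mathcal{T}_n\mathcal{E}$ is an equivalence for stable $\mathcal{E}$. Combining Remark \ref{rmk:Cn} with the strongly coCartesian condition shows that any special punctured $(n+1)$-cube in a stable $\infty$-category has the form $\mathcal{X}(S) \simeq \bigvee_{|S|-1} A$ for a single object $A$, recovered as the value of $\mathcal{X}$ at any pair vertex. Under the resulting equivalence $\mathcal{T}_n\mathcal{E} \simeq \mathcal{E}$ given by evaluation at a pair, the functor $L_n$ corresponds to the suspension $\Sigma: \mathcal{E} \to \mathcal{E}$, which is an equivalence in any stable $\infty$-category.

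Putting these pieces together, every transition map in the tower $\mathrm{Sp}(\mathcal{C}) \to \mathrm{Sp}(\mathcal{T}_n\mathcal{C}) \to \mathrm{Sp}(\mathcal{T}_n^2\mathcal{C}) \to \cdots$ is an equivalence, so the colimit is its first term and the resulting equivalence is canonically identified with $\mathrm{Sp}(\Sigma_n^\infty)$; the commuting square in the statement then comes from naturality of the filtered colimit comparison. The main technical obstacle will be the identification $\mathrm{Sp}(\mathcal{T}_n\mathcal{D}) \simeq \mathcal{T}_n\mathrm{Sp}(\mathcal{D})$, which requires interchanging stabilization with a subcategory cut out by both limit and colimit conditions; this interchange is made possible precisely by the collapse of the distinction between pushouts and pullbacks after stabilization.
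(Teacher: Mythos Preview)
Your overall strategy matches the paper's: both reduce to showing that $L_n: \mathcal{E} \to \mathcal{T}_n\mathcal{E}$ is an equivalence whenever $\mathcal{E}$ is stable, after identifying $\mathrm{Sp}(\mathcal{T}_n\mathcal{D})$ with $\mathcal{T}_n\mathrm{Sp}(\mathcal{D})$. Your reduction via commuting $\mathrm{Sp}$ with the defining filtered colimit is a perfectly good (and arguably more explicit) version of what the paper compresses into ``chasing through the definitions''.

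The gap is in the final step. You invoke Remark~\ref{rmk:Cn} to conclude that every special punctured $(n+1)$-cube in a stable $\infty$-category has the form $\mathcal{X}(S)\simeq\bigvee_{|S|-1}A$, but that remark only computes the vertices of cubes \emph{already of the form} $C_n(X)$, i.e.\ those in the image of $L_n$. Using it to show that an arbitrary special punctured cube arises this way is circular: it is exactly the essential surjectivity of $L_n$ that you are trying to establish. One can argue directly that the pair-vertex values of such a cube are all canonically equivalent (compare cofibers of the common edge in the two pushout presentations of any $3$-element vertex), but promoting this to an equivalence of $\infty$-categories ``evaluate at a pair'' $:\mathcal{T}_n\mathcal{E}\to\mathcal{E}$ needs more than you have written.

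The paper handles this step differently and more cleanly: it uses the right adjoint $R_n$. The unit $\mathrm{id}\to R_nL_n$ is an equivalence because strongly coCartesian cubes in a stable category are Cartesian. The counit $L_nR_n\to\mathrm{id}$ is the nontrivial direction, and the paper isolates the required content as the separate Lemma~\ref{lem:cocartstrong}: in a stable $\infty$-category, a coCartesian cube whose punctured restriction is strongly coCartesian is itself strongly coCartesian. That lemma (proved by an induction on cube dimension, splitting off a face and comparing total cofibers) is precisely the combinatorics you are hand-waving over; once you have it, completing a special punctured cube Cartesianly yields a strongly coCartesian cube, hence one in the image of $L_n$.
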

\begin{proof}
We will prove a slightly stronger claim, namely that the functor $L_n: \mathcal{C} \rightarrow \mathcal{T}_n\mathcal{C}$ induces an equivalence on stabilizations. Indeed, chasing through the definitions, the induced functor can be identified with $L_n: \mathrm{Sp}(\mathcal{C}) \rightarrow \mathcal{T}_n\mathrm{Sp}(\mathcal{C})$, where $L_n$ now denotes the same construction as before, but applied to the $\infty$-category $\mathrm{Sp}(\mathcal{C})$. We claim that this functor is an equivalence. Indeed, the composite
\[
\xymatrix{
\mathrm{Sp}(\mathcal{C}) \ar[r]^-{L_n} & \mathcal{T}_n\mathrm{Sp}(\mathcal{C}) \ar[r]^-{R_n} & \mathrm{Sp}(\mathcal{C})
}
\]
is equivalent to the identity, since in a stable $\infty$-category a coCartesian cube is also Cartesian. To see that $L_n R_n$ is equivalent to the identity, suppose we are given $\mathcal{X}_0 \in \mathcal{T}_n\mathrm{Sp}(\mathcal{C})$, i.e. a special punctured $(n+1)$-cube in $\mathrm{Sp}(\mathcal{C})$. We can complete this to a Cartesian $(n+1)$-cube $\mathcal{X}$ in $\mathrm{Sp}(\mathcal{C})$ with
\begin{equation*}
\mathcal{X}(\varnothing) = \varprojlim \mathcal{X}_0.
\end{equation*}
Since we are working in a stable $\infty$-category, $\mathcal{X}$ is also coCartesian. We claim it is in fact strongly coCartesian, which follows from Lemma \ref{lem:cocartstrong} below. Now, since $\mathcal{X}$ is a strongly coCartesian cube, it follows that the map $L_n \mathcal{X}(\varnothing) \rightarrow \mathcal{X}_0$ is an equivalence, which concludes the proof.
\end{proof}

In the previous proof we used the following general fact in the particular case where $\mathcal{D} = \mathrm{Sp}(\mathcal{C})$.

\begin{lemma}
\label{lem:cocartstrong}
Suppose $\mathcal{D}$ is a stable $\infty$-category, $k \geq 1$, and $\mathcal{X}: \mathbf{P}(k) \rightarrow \mathcal{D}$ is a $k$-cube such that the restriction $\mathcal{X}_0$ is a strongly coCartesian punctured cube. If moreover $\mathcal{X}$ is coCartesian, then it is in fact strongly coCartesian.
\end{lemma}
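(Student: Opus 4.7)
The plan is to proceed by induction on $k$. For $k \leq 2$ the statement is essentially vacuous: a punctured $2$-cube has no square faces, so the strongly coCartesian hypothesis on $\mathcal{X}_0$ is automatic, while any coCartesian $2$-cube is by definition a pushout square and hence strongly coCartesian. From here on I fix $k \geq 3$ and assume the lemma for $(k-1)$-cubes.

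The key move is to choose a coordinate $l \in \{1, \ldots, k\}$ and slice $\mathcal{X}$ at $l$ into two $(k-1)$-cubes $\mathcal{X}^{-}$ and $\mathcal{X}^{+}$, given on subsets $S \subseteq \{1, \ldots, k\} \setminus \{l\}$ by $\mathcal{X}^{-}(S) = \mathcal{X}(S)$ and $\mathcal{X}^{+}(S) = \mathcal{X}(S \cup \{l\})$. The top slice $\mathcal{X}^{+}$ sits entirely inside the punctured cube $\mathcal{X}_0$ (since $S \cup \{l\}$ is never empty), so every one of its square faces is a pushout by hypothesis, and $\mathcal{X}^{+}$ is thus strongly coCartesian as a $(k-1)$-cube. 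By the classical fact that a strongly coCartesian cube is itself coCartesian (Goodwillie, \emph{Calculus II}, Lemma 1.6), this makes $\mathcal{X}^{+}$ coCartesian.

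Next I invoke the iterated total cofiber formula: the slicing presents $\mathcal{X}$ as a natural transformation $\mathcal{X}^{-} \to \mathcal{X}^{+}$ of $(k-1)$-cubes, and since cofibers commute with colimits in the stable $\infty$-category $\mathcal{D}$,
\[
\mathrm{tcof}(\mathcal{X}) \simeq \mathrm{cof}\bigl(\mathrm{tcof}(\mathcal{X}^{-}) \to \mathrm{tcof}(\mathcal{X}^{+})\bigr),
\]
where $\mathrm{tcof}$ denotes the total cofiber of a cube, vanishing precisely when the cube is coCartesian. The hypothesis that $\mathcal{X}$ is coCartesian together with the previous paragraph give $\mathrm{tcof}(\mathcal{X}) \simeq 0$ and $\mathrm{tcof}(\mathcal{X}^{+}) \simeq 0$, and stability forces $\mathrm{tcof}(\mathcal{X}^{-}) \simeq 0$. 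Thus $\mathcal{X}^{-}$ is itself a coCartesian $(k-1)$-cube. Its punctured version sits inside $\mathcal{X}_0$ and is therefore strongly coCartesian, so the inductive hypothesis promotes $\mathcal{X}^{-}$ to a strongly coCartesian cube.

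To conclude, every square face of $\mathcal{X}$ either lies in $\mathcal{X}_0$ (handled by hypothesis) or has $\varnothing$ as its minimum vertex, in which case it is determined by a pair of distinct indices $i, j \in \{1, \ldots, k\}$. Since $k \geq 3$, I may pick $l \notin \{i, j\}$; that face then lies in the slice $\mathcal{X}^{-}$ for this choice of $l$ and is a pushout by the previous step. The only ingredient beyond straightforward bookkeeping is the iterated total cofiber formula, which is standard in stable $\infty$-categories but not stated explicitly in the excerpt; the main obstacle will be to cite or briefly derive it from the commutation of cofibers with colimits.
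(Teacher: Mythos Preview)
Your proof is correct and follows essentially the same approach as the paper: both argue by induction on $k$, slice the cube at a coordinate $l$ into two $(k-1)$-cubes, observe that the slice through subsets containing $l$ is strongly coCartesian by hypothesis, and use the vanishing of total cofibers in the stable setting to deduce that the other slice is coCartesian and hence (by induction) strongly coCartesian. The only cosmetic difference is that the paper first fixes the face $\{i\},\{j\}$ and then chooses $l \notin \{i,j\}$, whereas you fix $l$ first and note at the end that any such face lies in some $\mathcal{X}^{-}$.
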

\begin{proof}
The proof is by induction on $k$. For $k=1$ and $k=2$ there is nothing to prove. Suppose the claim has been established for $k - 1$ and we wish to prove it for $k$. We need to show that the squares
\[
\xymatrix{
\mathcal{X}(I \cap J) \ar[r]\ar[d] & \mathcal{X}(I) \ar[d] \\
\mathcal{X}(J) \ar[r] & \mathcal{X}(I \cup J)
}
\]
are pushouts in the cases where $I \cap J$ is empty. By the pasting lemma for pushouts, it suffices to treat the cases where $I$ and $J$ are singletons, say $I = \{i\}$ and $J = \{j\}$. Pick an $l \in \{1, \ldots, k\}$ which is unequal to both $i$ and $j$ (note that this is possible, since we are in the case $k \geq 3$). Denote by $\mathbf{P}^{l \in}(k)$ the poset of subsets of $\{1, \ldots, k\}$ containing $l$ and by $\mathbf{P}^{l \notin}(k)$ the poset of subsets not containing $l$. Then consider the diagrams
\begin{eqnarray*}
\mathcal{Y}_0 & = & \mathcal{X}|_{\N\mathbf{P}^{l \notin}(k)}, \\
\mathcal{Y}_1 & = & \mathcal{X}|_{\N\mathbf{P}^{l \in}(k)},
\end{eqnarray*}
which are both $(k-1)$-cubes. The cube $\mathcal{Y}_1$ is strongly coCartesian by assumption, because the subsets it is indexed on are all non-empty. Since $\mathcal{X}$ is coCartesian, its total cofiber vanishes, so that the canonical map 
\begin{equation*}
\mathrm{tcof}(\mathcal{Y}_0) \longrightarrow \mathrm{tcof}(\mathcal{Y}_1)
\end{equation*}
is an equivalence. The latter vanishes again, since $\mathcal{Y}_1$ is coCartesian, so that $\mathcal{Y}_0$ must be coCartesian as well. Note that we use the assumption that $\mathcal{D}$ is stable to conclude that a cube with vanishing total cofiber is coCartesian. By the inductive hypothesis on $k-1$, we conclude that $\mathcal{Y}_0$ is in fact strongly coCartesian, which finishes the proof.
\end{proof}

\begin{corollary}[Corollary of Lemma \ref{lem:Pnstab}]
\label{cor:suspcomp}
If $X \in \mathcal{P}_n\mathcal{C}$ is a compact object then the $k$th suspension $\Sigma^k X$ is in the essential image of $\Sigma_n^\infty: \mathcal{C} \rightarrow \mathcal{P}_n\mathcal{C}$ for some $k \geq 0$.
\end{corollary}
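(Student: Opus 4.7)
The plan is to reduce to a finite stage of the filtered colimit $\mathcal{P}_n\mathcal{C} = \varinjlim_k \mathcal{T}_n^k\mathcal{C}$ in $\mathbf{Cat}_*^\omega$ and to promote the stable equivalence provided by Lemma~\ref{lem:Pnstab} to an unstable one via a Freudenthal-type argument. By Lemma~\ref{lem:filteredcolimcompacts} combined with the standard description of compact objects in a filtered colimit of $\infty$-categories, any compact $X \in \mathcal{P}_n\mathcal{C}$ is the image of some compact $Y \in (\mathcal{T}_n^m\mathcal{C})^c$ at a finite stage $m$; the case $m = 0$ is trivial, so assume $m \geq 1$.

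Next, the proof of Lemma~\ref{lem:Pnstab} actually establishes the stronger statement that $L_n : \mathrm{Sp}(\mathcal{D}) \to \mathrm{Sp}(\mathcal{T}_n\mathcal{D})$ is an equivalence for every pointed compactly generated $\mathcal{D}$ (via Lemma~\ref{lem:cocartstrong} and the coincidence of Cartesian and strongly coCartesian cubes in the stable setting). Iterating yields an equivalence $L_n^m : \mathrm{Sp}(\mathcal{C}) \xrightarrow{\sim} \mathrm{Sp}(\mathcal{T}_n^m\mathcal{C})$, so the compact spectrum $\Sigma^\infty Y$ corresponds to some compact $\tilde E \in \mathrm{Sp}(\mathcal{C})^c$ with $L_n^m(\tilde E) \simeq \Sigma^\infty Y$. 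A standard cellular argument shows that any compact spectrum in a compactly generated stable $\infty$-category becomes a suspension spectrum after sufficiently many suspensions, so one can arrange $\Sigma^k \tilde E \simeq \Sigma^\infty X'$ for some $X' \in \mathcal{C}^c$ and $k \geq 0$. Because $L_n^m$ commutes with both $\Sigma$ (as a left adjoint) and with $\Sigma^\infty$, this gives $\Sigma^\infty(\Sigma^k Y) \simeq \Sigma^\infty L_n^m(X')$ in $\mathrm{Sp}(\mathcal{T}_n^m\mathcal{C})$; pushing forward along the canonical functor $\mathcal{T}_n^m\mathcal{C} \to \mathcal{P}_n\mathcal{C}$ yields $\Sigma^\infty(\Sigma^k X) \simeq \Sigma^\infty(\Sigma_n^\infty X')$ in $\mathrm{Sp}(\mathcal{P}_n\mathcal{C})$.

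The final step is to upgrade this stable equivalence to an unstable one by invoking a Freudenthal-type principle in $\mathcal{P}_n\mathcal{C}$: two compact objects $U, V$ with $\Sigma^\infty U \simeq \Sigma^\infty V$ satisfy $\Sigma^\ell U \simeq \Sigma^\ell V$ for some $\ell \geq 0$. Applying this to $U = \Sigma^k X$ and $V = \Sigma_n^\infty X'$ produces $\Sigma^{k+\ell}X \simeq \Sigma_n^\infty(\Sigma^\ell X')$ in $\mathcal{P}_n\mathcal{C}$, completing the proof. I expect the Freudenthal upgrade to be the principal obstacle: extracting an unstable equivalence from a stable one at a finite suspension stage requires a careful cellular or connectivity analysis, and is the technical heart of the corollary; by contrast, the reduction to a finite stage and the invocation of Lemma~\ref{lem:Pnstab} are essentially formal.
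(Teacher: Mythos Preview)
Your outline is correct, but you have misidentified where the content lies, and the detour through the filtration $\mathcal{P}_n\mathcal{C} = \varinjlim_m \mathcal{T}_n^m\mathcal{C}$ is unnecessary.

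The paper works instead with the \emph{suspension} filtration. It uses the ladder
\[
\xymatrix{
\mathcal{C}^c \ar[r]^{\Sigma}\ar[d]_{\Sigma_n^\infty} & \mathcal{C}^c \ar[r]^{\Sigma}\ar[d]_{\Sigma_n^\infty} & \cdots \ar[r] & \mathrm{Sp}(\mathcal{C})^c \ar[d]^{\partial\Sigma_n^\infty}_{\simeq} \\
\mathcal{P}_n\mathcal{C}^c \ar[r]^{\Sigma} & \mathcal{P}_n\mathcal{C}^c \ar[r]^{\Sigma} & \cdots \ar[r] & \mathrm{Sp}(\mathcal{P}_n\mathcal{C})^c
}
\]
whose rows are filtered colimits in $\mathbf{Cat}_*$ and whose rightmost vertical arrow is the equivalence of Lemma~\ref{lem:Pnstab}. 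One lifts $\Sigma^\infty X$ across the right column, then descends to a finite stage of the top row to get $Y' \in \mathcal{C}^c$. At that point $\Sigma_n^\infty Y'$ and $\Sigma^k X$ have equal images in the colimit of the bottom row, hence become equivalent after finitely many further suspensions. No analysis beyond this is needed.

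In particular, the ``Freudenthal-type principle'' you flag as the principal obstacle is not an obstacle at all: it is precisely the elementary statement that in a filtered colimit of small $\infty$-categories (here $\mathrm{Sp}(\mathcal{P}_n\mathcal{C})^c \simeq \varinjlim\bigl(\mathcal{P}_n\mathcal{C}^c \xrightarrow{\Sigma} \mathcal{P}_n\mathcal{C}^c \xrightarrow{\Sigma} \cdots\bigr)$), two objects from finite stages that become equivalent in the colimit are already equivalent at some finite stage. This is the same ``standard'' fact you already invoked once to produce $X'$; you just need to invoke it a second time on the bottom row. There is no cellular or connectivity argument required. Your route through $\mathcal{T}_n^m\mathcal{C}$ and the stagewise equivalence $L_n^m$ on spectra gets to exactly the same place, but with an extra layer of bookkeeping that the paper's argument avoids.
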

\begin{proof}
Suppose $X \in \mathcal{P}_n\mathcal{C}$ is compact. Consider the diagram
\[
\xymatrix{
\mathcal{C}^c \ar[r]^\Sigma \ar[d]_{\Sigma_n^\infty} & \mathcal{C}^c \ar[r]^\Sigma \ar[d]_{\Sigma_n^\infty}  & \mathcal{C}^c \ar[r]^\Sigma \ar[d]_{\Sigma_n^\infty} & \cdots \ar[r] & \mathrm{Sp}(\mathcal{C})^c \ar[d]^{\partial \Sigma_n^\infty} \\
\mathcal{P}_n\mathcal{C}^c \ar[r]^\Sigma & \mathcal{P}_n\mathcal{C}^c \ar[r]^\Sigma & \mathcal{P}_n\mathcal{C}^c \ar[r]^\Sigma & \cdots \ar[r] & \mathrm{Sp}(\mathcal{P}_n\mathcal{C})^c.
}
\]
Both rows are colimit diagrams in $\mathbf{Cat}_\ast$ and the rightmost vertical arrow is an equivalence by Lemma \ref{lem:Pnstab}. Thus, there exists an object $Y \in \mathrm{Sp}(\mathcal{C})^c$ whose image in $\mathrm{Sp}(\mathcal{P}_n\mathcal{C})^c$ is equivalent to $\Sigma^\infty X$. Since the top row is a filtered colimit there exists a $k \geq 0$ and an object $Y'$ in the $k$th entry of that row whose image in $\mathrm{Sp}(\mathcal{C})$ is equivalent to $Y$. It is then not necessarily true that $\Sigma_n^\infty Y'$ is equivalent to $\Sigma^k X$, but this will be true after several suspensions; indeed, since $\Sigma_n^\infty Y'$ and $\Sigma^k X$ have equivalent images in the colimit of the bottom row, there exists an $l \geq 0$ such that $\Sigma_n^\infty \Sigma^l Y'$ and $\Sigma^{k+l} X$ are equivalent.
\end{proof}

%\begin{corollary}
%If $f: x \rightarrow y$ is a map between compact objects in $\mathcal{P}_n\mathcal{C}$ such that $\Omega_n^\infty \Sigma^k f$ is an equivalence for every $k \geq 0$, then $f$ itself is an equivalence.
%\end{corollary}

Our goal for the rest of this section is to prove Theorem \ref{thm:TheoremA}. We start with the following:

\begin{proposition}
\label{prop:Pnweakapprox}
The adjunction $\mathcal{C} \rightleftarrows \mathcal{P}_n\mathcal{C}$ is a weak $n$-excisive approximation to $\mathcal{C}$.
\end{proposition}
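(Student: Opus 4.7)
The plan is to verify the two conditions (a) and (b) of Definition \ref{def:nexcapprox} for the adjunction $\Sigma_n^\infty \dashv \Omega_n^\infty$ in turn. That $\mathcal{P}_n\mathcal{C}$ lies in $\mathbf{Cat}^\omega_*$ is built into its construction as a filtered colimit in that $\infty$-category, so the substantive content of (a) is that $\mathrm{id}_{\mathcal{P}_n\mathcal{C}}$ is $n$-excisive. A functor $F$ is $n$-excisive exactly when $F \to T_n F$ is an equivalence, and for $F = \mathrm{id}_{\mathcal{P}_n\mathcal{C}}$ this amounts to showing the unit $\mathrm{id} \to R_n L_n$ of the adjunction $L_n \dashv R_n$ on $\mathcal{P}_n\mathcal{C}$ is an equivalence. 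The key observation is that the construction $\mathcal{T}_n$ preserves filtered colimits of $\infty$-categories in $\mathbf{Cat}^\omega_*$ (compatibly with Lemmas \ref{lem:Pnfinitelimits} and \ref{lem:filteredcolimleftexact}), whence
\[ \mathcal{T}_n\mathcal{P}_n\mathcal{C} \;=\; \mathcal{T}_n \varinjlim_k \mathcal{T}_n^k\mathcal{C} \;\simeq\; \varinjlim_k \mathcal{T}_n^{k+1}\mathcal{C} \;\simeq\; \mathcal{P}_n\mathcal{C}, \]
with the comparison witnessed by $L_n$ itself via cofinality of the index shift. Hence $L_n$ is an equivalence on $\mathcal{P}_n\mathcal{C}$ and $R_n L_n \simeq \mathrm{id}$.

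For property (b)(i), I would first note that $\Omega_n^\infty \Sigma_n^\infty = \Omega_n^\infty \circ \mathrm{id}_{\mathcal{P}_n\mathcal{C}} \circ \Sigma_n^\infty$ is $n$-excisive: the identity of $\mathcal{P}_n\mathcal{C}$ is $n$-excisive by (a), and post-composing with the right adjoint $\Omega_n^\infty$ (which preserves finite limits) preserves $n$-excisiveness of functors into $\mathcal{C}$. The unit $\mathrm{id}_\mathcal{C} \to \Omega_n^\infty \Sigma_n^\infty$ therefore factors uniquely through $P_n\mathrm{id}_\mathcal{C}$, and Lemma \ref{lem:Pnobservations}(c) identifies this factorization with the universal map $\mathrm{id}_\mathcal{C} \to P_n\mathrm{id}_\mathcal{C}$, confirming $P_n\mathrm{id}_\mathcal{C} \to \Omega_n^\infty \Sigma_n^\infty$ is an equivalence.

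The main obstacle is property (b)(ii): showing the map $P_n(\Sigma_n^\infty \Omega_n^\infty) \to \mathrm{id}_{\mathcal{P}_n\mathcal{C}}$ induced by the counit is an equivalence. Both sides are $n$-excisive endofunctors of $\mathcal{P}_n\mathcal{C}$ that commute with filtered colimits, so by compact generation it suffices to check on a compact object $Y \in \mathcal{P}_n\mathcal{C}$. Corollary \ref{cor:suspcomp} produces $k \geq 0$ and $X \in \mathcal{C}$ with $\Sigma^k Y \simeq \Sigma_n^\infty X$; since both functors commute with suspension, the problem reduces to verifying the equivalence on objects in the essential image of $\Sigma_n^\infty$. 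At $\Sigma_n^\infty X$ the triangle identity supplies a splitting
\[ \Sigma_n^\infty X \xrightarrow{\Sigma_n^\infty \eta_X} \Sigma_n^\infty \Omega_n^\infty \Sigma_n^\infty X \xrightarrow{c_{\Sigma_n^\infty X}} \Sigma_n^\infty X \]
whose composite is the identity, so $c_{\Sigma_n^\infty X}$ exhibits $\Sigma_n^\infty X$ as a retract. The hardest step is to promote this retraction, once $P_n$ has been applied, to a genuine equivalence. My plan for this is to use (b)(i) to rewrite $\Omega_n^\infty \Sigma_n^\infty$ as $P_n\mathrm{id}_\mathcal{C}$ and then to compare $P_n(\Sigma_n^\infty \Omega_n^\infty)$ with the functor $\Sigma_n^\infty \circ P_n\mathrm{id}_\mathcal{C}$ via a Yoneda-type argument testing against $n$-excisive functors into $\mathcal{C}$: the target $\mathcal{P}_n\mathcal{C}$ being $n$-excisive forces $P_n$ to be compatible with post-composition by $\Sigma_n^\infty$ in the appropriate sense, converting the splitting data into the required inverse.
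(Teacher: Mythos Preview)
Your argument contains a genuine gap in the reduction from general compact objects to the essential image of $\Sigma_n^\infty$. You assert that ``both functors commute with suspension'', but $P_n(\Sigma_n^\infty\Omega_n^\infty)$ is merely $n$-excisive, not linear, and has no reason to commute with $\Sigma$; the right adjoint $\Omega_n^\infty$ does not preserve pushouts. Even granting such commutation, suspension on $\mathcal{P}_n\mathcal{C}$ is not conservative for $n>1$, so an equivalence at $\Sigma^k Y$ would not by itself imply one at $Y$.

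The paper's proof handles this descent by exploiting $n$-excisiveness directly rather than any compatibility with $\Sigma$. Having established the equivalence at $\Sigma^k X$ (for which your triangle-identity argument, together with the observation that $\Sigma_n^\infty \to \Sigma_n^\infty\Omega_n^\infty\Sigma_n^\infty$ is a $P_n$-equivalence since $\Sigma_n^\infty$ preserves colimits, is essentially correct), one considers the strongly coCartesian $(n{+}1)$-cube $C_n(\Sigma^{k-1}X)$. By Remark~\ref{rmk:Cn} every non-initial vertex is a finite wedge of copies of $\Sigma^k X$, so the natural transformation $P_n(\Sigma_n^\infty\Omega_n^\infty) \to \mathrm{id}_{\mathcal{P}_n\mathcal{C}}$ is an equivalence there. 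Because both source and target are $n$-excisive, they carry this strongly coCartesian cube to Cartesian cubes, and a map of Cartesian cubes which is an equivalence on the punctured cube is an equivalence at the initial vertex $\Sigma^{k-1}X$. Iterating brings you down to $X$ itself. This cube argument is the substitute for the (false) claim that the functors commute with suspension, and is the missing idea in your proposal.
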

\begin{proof}
Part (c) of Lemma \ref{lem:Pnobservations} says that $P_n\mathrm{id}_\mathcal{C} \rightarrow \Omega^\infty_n\Sigma^\infty_n$ is an equivalence. Let us now show that the identity functor of $\mathcal{P}_n\mathcal{C}$ is $n$-excisive, which is also more or less immediate from our constructions. Indeed, $T_n(\mathrm{id}_{\mathcal{P}_n\mathcal{C}})$ is given by the composite
\[
\xymatrix{
\mathcal{P}_n\mathcal{C} \ar[r]^-{L_n} & \mathcal{T}_n\mathcal{P}_n\mathcal{C} \ar[r]^-{R_n} & \mathcal{P}_n\mathcal{C}. 
}
\]
The functor $L_n$ is an equivalence by construction, so that this composite is equivalent to the identity. Therefore
\begin{equation*}
P_n(\mathrm{id}_{\mathcal{P}_n\mathcal{C}}) = \varinjlim_k T_n^k (\mathrm{id}_{\mathcal{P}_n\mathcal{C}}) \simeq \mathrm{id}_{\mathcal{P}_n\mathcal{C}}.
\end{equation*}
It remains to show that the natural transformation
\begin{equation*}
P_n(\Sigma_n^\infty\Omega_n^\infty) \longrightarrow \mathrm{id}_{\mathcal{P}_n\mathcal{C}}
\end{equation*}
is an equivalence. Since all functors involved commute with filtered colimits it suffices to show that this natural transformation is an equivalence after evaluating on every compact object. First, consider an object $X \in \mathcal{P}_n\mathcal{C}$ that is equivalent to $\Sigma_n^\infty Y$ for some $Y \in \mathcal{C}$. The triangle identities for the adjunction $(\Sigma_n^\infty, \Omega_n^\infty)$ yield a diagram
\[
\xymatrix{
\Sigma_n^\infty\ar@{=}[dr]\ar[d] & \\
\Sigma_n^\infty\Omega_n^\infty\Sigma_n^\infty \ar[r] & \Sigma_n^\infty.
}
\]
Observe that $\Sigma_n^\infty$ is $n$-excisive and furthermore
\begin{equation*}
P_n(\Sigma_n^\infty\Omega_n^\infty\Sigma_n^\infty) \simeq P_n(\Sigma_n^\infty\Omega_n^\infty)\Sigma_n^\infty.
\end{equation*}
This latter observation follows from the fact that $\Sigma_n^\infty$ preserves colimits. Therefore, applying $P_n$ to the previous diagram and evaluating at $Y$ yields the diagram
\[
\xymatrix{
\Sigma_n^\infty Y\ar@{=}[dr]\ar[d] & \\
P_n(\Sigma_n^\infty\Omega_n^\infty)\Sigma_n^\infty Y \ar[r] & \Sigma_n^\infty Y.
}
\]
We claim that the vertical map is an equivalence. Indeed, since the unit $\mathrm{id}_\mathcal{C} \rightarrow \Omega_n^\infty\Sigma_n^\infty$ is a $P_n$-equivalence (i.e. an equivalence after applying $P_n$), so is the natural transformation
\begin{equation*}
\Sigma_n^\infty \longrightarrow \Sigma_n^\infty\Omega_n^\infty\Sigma_n^\infty
\end{equation*}
obtained by whiskering the unit with $\Sigma_n^\infty$. We conclude that the map
\begin{equation*}
P_n(\Sigma_n^\infty\Omega_n^\infty)\Sigma_n^\infty Y \longrightarrow \Sigma_n^\infty Y
\end{equation*}
is an equivalence as well. Let us now show how to reduce the case of a general compact object $X$ to this one. By Corollary \ref{cor:suspcomp} there exists a $k \geq 0$ such that the $k$th suspension $\Sigma^k X$ is in the essential image of $\Sigma_n^\infty$, so that
\begin{equation*}
P_n(\Sigma_n^\infty\Omega_n^\infty)\Sigma^k X \longrightarrow \Sigma^k X
\end{equation*}
is an equivalence by the argument above. Let us show that 
\begin{equation*}
P_n(\Sigma_n^\infty\Omega_n^\infty)\Sigma^{k-1} X \longrightarrow \Sigma^{k-1} X
\end{equation*}
is also an equivalence. Iterating our argument $k$ times will then finish the proof. By Remark \ref{rmk:Cn} the map of $(n+1)$-cubes
\begin{equation*}
P_n(\Sigma_n^\infty\Omega_n^\infty) C_n(\Sigma^{k-1} X) \longrightarrow C_n(\Sigma^{k-1} X)
\end{equation*}
is an equivalence at every vertex corresponding to a non-empty $S \subseteq \{1, \ldots, n+1\}$. The cube $C_n(\Sigma^{k-1} X)$ is strongly coCartesian. Since $P_n(\Sigma_n^\infty\Omega_n^\infty)$ and $\mathrm{id}_{\mathcal{P}_n\mathcal{C}}$ are $n$-excisive, both the domain and codomain of the map above are then Cartesian cubes. Therefore the map
\begin{equation*}
P_n(\Sigma_n^\infty\Omega_n^\infty)\Sigma^{k-1} X \longrightarrow \Sigma^{k-1} X
\end{equation*}
obtained by evaluating at the initial vertex must be an equivalence as well, establishing the inductive step.
\end{proof}

We are after the following strengthening of the previous proposition:

\begin{proposition}
\label{prop:Pnapprox}
The adjunction $\mathcal{C} \rightleftarrows \mathcal{P}_n\mathcal{C}$ is a strong $n$-excisive approximation to $\mathcal{C}$.
\end{proposition}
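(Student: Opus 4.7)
My plan is to build on Proposition \ref{prop:Pnweakapprox}, which already shows that $\mathcal{C} \rightleftarrows \mathcal{P}_n\mathcal{C}$ is a weak $n$-excisive approximation, so that only the maximality condition of Definition \ref{def:strongapprox} remains: any weak $n$-excisive approximation $F \colon \mathcal{P}_n\mathcal{C} \rightleftarrows \mathcal{E} \colon G$ must be an equivalence. Since $\mathrm{id}_{\mathcal{P}_n\mathcal{C}}$ is $n$-excisive (proved in Proposition \ref{prop:Pnweakapprox}), the observation following Definition \ref{def:nexcapprox} already gives $F$ fully faithful, so everything reduces to essential surjectivity of $F$. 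Because $F$ preserves colimits, $\mathcal{E}$ is compactly generated, and full faithfulness lets us lift any filtered diagram of compacts in the essential image back to $\mathcal{P}_n\mathcal{C}$, it suffices to show that every compact $Y \in \mathcal{E}$ lies in the essential image of $F$.

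Fix such a $Y$. I would first show that the stabilized adjunction $\partial F \colon \mathrm{Sp}(\mathcal{P}_n\mathcal{C}) \rightleftarrows \mathrm{Sp}(\mathcal{E}) \colon \partial G$ is an equivalence. Its unit is the stabilization of the unit equivalence $\mathrm{id}_{\mathcal{P}_n\mathcal{C}} \rightarrow GF$, hence still an equivalence. For the counit, I use the identity $\partial \simeq \partial \circ P_n$ (which follows from $P_1 \simeq P_1 \circ P_n$) together with the hypothesis $P_n(FG) \simeq \mathrm{id}_\mathcal{E}$ and the first-derivative chain rule $\partial(FG) \simeq \partial F \circ \partial G$ to conclude $\partial F \circ \partial G \simeq \mathrm{id}_{\mathrm{Sp}(\mathcal{E})}$. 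The filtered-colimit comparison used to prove Corollary \ref{cor:suspcomp} then applies verbatim and produces a compact $W \in \mathcal{P}_n\mathcal{C}$ together with an integer $m \geq 0$ such that $F(W) \simeq \Sigma^m Y$.

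The second step is an inductive descent on $m$, modelled on the final paragraph of Proposition \ref{prop:Pnweakapprox}. Given $F(W) \simeq \Sigma^m Y$ with $m \geq 1$, consider the special punctured $(n+1)$-cube $\tilde{\mathcal{L}}$ in $\mathcal{P}_n\mathcal{C}$ defined by $\tilde{\mathcal{L}}(S) = \bigvee_{|S|-1} W$. Because $L_n \colon \mathcal{P}_n\mathcal{C} \to \mathcal{T}_n \mathcal{P}_n\mathcal{C}$ is an equivalence (as exploited in the proof of Proposition \ref{prop:Pnweakapprox}), one has $\tilde{\mathcal{L}} \simeq L_n(V)$ for some $V$ with $\Sigma V \simeq W$, so $\tilde{\mathcal{L}}$ extends to the strongly coCartesian full cube $C_n(V)$. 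Applying $F$ yields $F(C_n(V)) \simeq C_n(F(V))$, strongly coCartesian in $\mathcal{E}$ and hence Cartesian since $\mathrm{id}_\mathcal{E}$ is $n$-excisive; its punctured vertices $\bigvee_{|S|-1}\Sigma F(V) \simeq \bigvee_{|S|-1}\Sigma^m Y$ agree with those of $C_n(\Sigma^{m-1}Y)$, itself Cartesian for the same reason. Two Cartesian cubes sharing a punctured part have equivalent $\varnothing$-vertices, so $F(V) \simeq \Sigma^{m-1}Y$, and $m$ iterations place $Y$ itself in the essential image of $F$.

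The genuine technical input is the cube comparison in the descent step: one simultaneously needs $L_n$ to be an equivalence on $\mathcal{P}_n\mathcal{C}$ (which guarantees the strongly coCartesian extension $C_n(V)$ of $\tilde{\mathcal{L}}$) and the $n$-excisiveness of the identity functors of both $\mathcal{P}_n\mathcal{C}$ and $\mathcal{E}$ (which forces the relevant strongly coCartesian cubes to be Cartesian). I expect these two ingredients, both established for $\mathcal{P}_n\mathcal{C}$ in the proof of Proposition \ref{prop:Pnweakapprox}, to carry essentially all the content of the proof.
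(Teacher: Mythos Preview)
Your overall strategy is correct and matches the paper's approach: the paper packages the same argument as a general characterization of $n$-excisive $\infty$-categories (Proposition~\ref{prop:nexcchar}), but the content---full faithfulness from $n$-excisiveness of the identity, stabilization equivalence (your argument is exactly Lemma~\ref{lem:stabapprox}), the ``some suspension is in the image'' step, and the cube-based descent---is identical.

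There is, however, a genuine gap in your descent step. You write ``consider the special punctured $(n+1)$-cube $\tilde{\mathcal{L}}$ in $\mathcal{P}_n\mathcal{C}$ defined by $\tilde{\mathcal{L}}(S) = \bigvee_{|S|-1} W$'', but specifying the vertices does not define a special punctured cube. In fact, since $L_n$ is an equivalence on $\mathcal{P}_n\mathcal{C}$, every special punctured cube there is $L_n(V)$ for some $V$ and hence has 2-element vertices of the form $\Sigma V$; so a cube with 2-element vertices $W$ exists only if $W$ is a suspension in $\mathcal{P}_n\mathcal{C}$. You know $F(W) \simeq \Sigma(\Sigma^{m-1}Y)$ is a suspension in $\mathcal{E}$, but that desuspension $\Sigma^{m-1}Y$ is not yet known to lie in the image of $F$---indeed, that is precisely what you are trying to prove---so you cannot conclude $W$ is a suspension in $\mathcal{P}_n\mathcal{C}$. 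Consequently you also cannot identify the punctured part of $F(C_n(V))$ with that of $C_n(\Sigma^{m-1}Y)$ merely from the vertex-by-vertex equivalence $\Sigma F(V) \simeq \Sigma^m Y$.

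The fix is to reverse the direction of the construction, exactly as in the paper's proof of Proposition~\ref{prop:nexcchar}: start with the special punctured cube $\mathcal{X}_0 := L_n(\Sigma^{m-1}Y)$ in $\mathcal{E}$, observe that all its vertices are wedges of $\Sigma^m Y \simeq F(W)$ and hence lie in the essential image of $F$, and lift the \emph{entire diagram} $\mathcal{X}_0$ to a diagram $\mathcal{X}'_0$ in $\mathcal{P}_n\mathcal{C}$ using full faithfulness of $F$. Since $F$ creates colimits in its image, $\mathcal{X}'_0$ is again a special punctured cube; now the equivalence $L_n$ produces $V$ with $L_n(V) \simeq \mathcal{X}'_0$, and $F(C_n(V))$ has punctured part $F(\mathcal{X}'_0) \simeq \mathcal{X}_0$ \emph{by construction}, so the Cartesian comparison goes through. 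With this correction your argument is complete and coincides with the paper's.
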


This proposition follows directly from Proposition \ref{prop:Pnweakapprox} and the following characterization of $n$-excisive categories:

\begin{proposition}
\label{prop:nexcchar}
Let $\mathcal{C}$ be a pointed, compactly generated $\infty$-category. Then $\mathcal{C}$ is $n$-excisive if and only if the functor $L_n: \mathcal{C} \longrightarrow \mathcal{T}_n\mathcal{C}$ is an equivalence.
\end{proposition}
\begin{proof}
First suppose $\mathcal{C}$ is $n$-excisive. Consider the diagram
\[
\xymatrix{
\mathcal{C} \ar[r]^{L_n}\ar[d]_{\Sigma_n^\infty} & \mathcal{T}_n\mathcal{C} \ar[d]^{\mathcal{T}_n\Sigma_n^\infty} \\
\mathcal{P}_n\mathcal{C} \ar[r]_{\mathcal{P}_n L_n} & \mathcal{T}_n\mathcal{P}_n\mathcal{C}.
}
\]
The bottom horizontal map is an equivalence by construction; the left vertical map is an equivalence since it is a weak $n$-excisive approximation and we assumed $\mathcal{C}$ to be $n$-excisive; the right vertical arrow is obtained by applying $\mathcal{T}_n$ to the left one and is therefore an equivalence as well. We conclude that $L_n$ must be an equivalence. Conversely, assume $L_n: \mathcal{C} \longrightarrow \mathcal{T}_n\mathcal{C}$ is an equivalence. Then first of all
\begin{equation*}
\mathrm{id}_{\mathcal{C}} \simeq R_n L_n \simeq T_n\mathrm{id}_\mathcal{C}.
\end{equation*}
Iterating $T_n$ gives $\mathrm{id}_{\mathcal{C}} \simeq P_n \mathrm{id}_{\mathcal{C}}$, so the identity functor of $\mathcal{C}$ is $n$-excisive. Now let $F: \mathcal{C} \rightleftarrows \mathcal{D} : G$ be a weak $n$-excisive approximation to $\mathcal{C}$. We wish to show that $F$ is an equivalence. First recall that $F$ must be a fully faithful functor as a consequence of the fact that $\mathrm{id}_{\mathcal{C}}$ is $n$-excisive. We should argue that it is essentially surjective. Since $F$ preserves compact objects and filtered colimits it suffices to show that every compact object $X$ of $\mathcal{D}$ is in the essential image of $F$. Lemma \ref{lem:stabapprox} below tells us that $F$ induces an equivalence on stabilizations, so that by the same reasoning as in Corollary \ref{cor:suspcomp} there exists a $k \geq 0$ such that $\Sigma^k X$ is in the essential image of $F$. We will show that $X$ itself is also in this essential image. Suppose $k >0$ (the case $k=0$ being trivial) and consider the strongly coCartesian $(n+1)$-cube $C_n(\Sigma^{k-1}X)$. Denote by $\mathcal{X}_0$ its restriction to $\mathbf{P}_0(n)$. The special punctured cube $\mathcal{X}_0$ features only $\Sigma^k X$ and coproducts of this object with itself and is therefore contained in the essential image of $F$, so that we may lift it (in an essentially unique way) to a punctured cube $\mathcal{X}'_0$ in $\mathcal{C}$. This punctured cube is again special; recall that $\mathcal{C}$ is a colocalization of $\mathcal{D}$, so that $F$ creates colimits in $\mathcal{C}$. We may therefore think of $\mathcal{X}'_0$ as an object of $\mathcal{T}_n\mathcal{C}$. By taking the limit, we may extend $\mathcal{X}'_0$ to an $n$-cube $\mathcal{X}'$ in $\mathcal{C}$ satisfying $\mathcal{X}'(\varnothing) = R_n\mathcal{X}'_0$. We claim that $\mathcal{X}'$ is a strongly coCartesian cube. Consider the natural map of $(n+1)$-cubes
\begin{equation*}
C_n(\mathcal{X}'(\varnothing)) \longrightarrow \mathcal{X}'.
\end{equation*}
Both cubes have the same initial vertex and the restriction of the left-hand side to $\mathbf{P}_0(n+1)$ is by definition $L_n R_n \mathcal{X}'_0$, which is equivalent to $\mathcal{X}'_0$ by assumption (b). This shows that the map above is an equivalence, so that $\mathcal{X}'$ is indeed strongly coCartesian. The cube $F\mathcal{X}'$ is then strongly coCartesian as well. Since $\mathcal{D}$ has $n$-excisive identity functor, this cube is Cartesian, so we may conclude that
\begin{equation*}
F\mathcal{X}'(\varnothing) \simeq \varprojlim F\mathcal{X}'_0 \simeq \varprojlim \mathcal{X}_0 \simeq \Sigma^{k-1} X.
\end{equation*}
This proves $\Sigma^{k-1}X$ is in the essential image of $F$ as well. A descending induction on $k$ now shows that $X$ itself is in the essential image of $F$, finishing the proof.
\end{proof}

\begin{corollary}
\label{cor:1excstable}
A pointed compactly generated $\infty$-category $\mathcal{C}$ is 1-excisive if and only if it is stable.
\end{corollary}
\begin{proof}
As in Remark \ref{rmk:P1stabilization}, the functor $L_1$ can be identified with the suspension functor $\Sigma: \mathcal{C} \rightarrow \mathcal{C}$. But this is an equivalence if and only if $\mathcal{C}$ is stable (cf. Corollary 1.4.2.27 of \cite{higheralgebra}).
\end{proof}

The following is a more explicit reformulation of the characterization of Proposition \ref{prop:nexcchar}:

\begin{corollary}
\label{cor:bnexcisive}
Let $\mathcal{C}$ be as in the previous proposition. Then $\mathcal{C}$ is $n$-excisive if and only if $\mathcal{C}$ satisfies the following:
\begin{itemize}
\item[(a)] The identity functor $\mathrm{id}_\mathcal{C}$ is $n$-excisive.
\item[(b)] If $\mathcal{X}: \mathbf{P}(n+1) \rightarrow \mathcal{C}$ is a Cartesian $(n+1)$-cube such that its restriction $\mathcal{X}|_{\mathbf{P}_0(n+1)}$ is a special punctured $(n+1)$-cube, then $\mathcal{X}$ is in fact strongly coCartesian.
\end{itemize}
\end{corollary}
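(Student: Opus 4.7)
The plan is to deduce the corollary from Proposition \ref{prop:nexcchar}: given (a), I want to show that condition (b) there (that $L_n : \mathcal{C} \to \mathcal{T}_n\mathcal{C}$ is an equivalence) is equivalent to (b').

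The first preliminary step is to observe that (a) already forces $L_n$ to be fully faithful. Indeed, $R_n L_n(X) = \varprojlim L_n(X)$ is the limit of the punctured cube obtained from the strongly coCartesian cube $C_n(X)$; since (a) makes $C_n(X)$ Cartesian as well, this limit is $C_n(X)(\varnothing) = X$, so the unit of the adjunction $L_n \dashv R_n$ is an equivalence. Therefore $L_n$ is an equivalence if and only if it is essentially surjective. The second preliminary ingredient is the observation (cf.\ Remark \ref{rmk:Cn}) that a strongly coCartesian $(n+1)$-cube $\mathcal{Y}$ with $\mathcal{Y}(\{i\}) \simeq \ast$ for each $i$ is canonically equivalent to $C_n(\mathcal{Y}(\varnothing))$: its higher vertices are forced by iterated pushouts along the maps $\mathcal{Y}(\varnothing) \to \ast$, so the whole cube is determined up to canonical equivalence by its $\varnothing$-vertex (this is really just the fact that $C_n$ was constructed as a section of the trivial Kan fibration $\overline{\mathcal{T}}_n \mathcal{C} \to \mathcal{C}$).

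With these in hand, both implications are short. For (b) $\Rightarrow$ (b'), let $\mathcal{X}$ be a Cartesian $(n+1)$-cube whose restriction $\mathcal{X}_0$ to $\mathbf{P}_0(n+1)$ is special, and set $X = \mathcal{X}(\varnothing)$. Then $X \simeq R_n \mathcal{X}_0$, and the counit $L_n R_n \mathcal{X}_0 \to \mathcal{X}_0$ is an equivalence by (b); unwinding, $C_n(X)|_{\mathbf{P}_0(n+1)} \simeq \mathcal{X}_0 = \mathcal{X}|_{\mathbf{P}_0(n+1)}$. Together with the agreement at $\varnothing$, the uniqueness statement of Step 2 gives $\mathcal{X} \simeq C_n(X)$, so $\mathcal{X}$ is strongly coCartesian. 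Conversely, for (b') $\Rightarrow$ (b), given $\mathcal{X}_0 \in \mathcal{T}_n\mathcal{C}$, set $X = R_n \mathcal{X}_0$ and extend $\mathcal{X}_0$ to a Cartesian cube $\mathcal{X}$ with $\mathcal{X}(\varnothing) = X$. By (b'), $\mathcal{X}$ is strongly coCartesian; by the uniqueness observation, $\mathcal{X} \simeq C_n(X)$, so $\mathcal{X}_0 \simeq L_n(X)$. This proves essential surjectivity, which combined with fully faithfulness gives (b).

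The only non-formal step is the uniqueness of strongly coCartesian ``special'' extensions used in the second preliminary step; this follows directly from the universal-property description of such cubes as iterated pushouts, so I do not expect it to be a serious obstacle. The real content of the corollary is therefore just that the adjunction unit and counit of $L_n \dashv R_n$ admit a concrete interpretation in terms of the passage between Cartesian and strongly coCartesian cubes.
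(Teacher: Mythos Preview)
Your approach is correct and essentially identical to the paper's: both reduce the corollary to Proposition~\ref{prop:nexcchar} by observing that (a) forces the unit of $(L_n,R_n)$ to be an equivalence, so that (b) is equivalent to the counit being an equivalence, which is what (b') expresses.

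There is one small slip in your (b) $\Rightarrow$ (b') direction. You conclude $\mathcal{X} \simeq C_n(X)$ by invoking ``the uniqueness statement of Step 2'', but Step~2 is the statement that a \emph{strongly coCartesian} cube with terminal singletons is determined by its initial vertex. You cannot apply this to $\mathcal{X}$, since strong coCartesianness of $\mathcal{X}$ is precisely what you are trying to prove. The correct uniqueness to invoke is that of \emph{Cartesian} cubes (limit cones): $\mathcal{X}$ is Cartesian by hypothesis, and $C_n(X)$ is Cartesian because it is strongly coCartesian and (a) says $\mathrm{id}_{\mathcal{C}}$ is $n$-excisive. Since both are limit cones over the same punctured cube $\mathcal{X}_0 \simeq L_n X$, they are equivalent. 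With this one-line fix your argument goes through; the paper's proof simply compresses all of this into the assertion that (b') ``is just an explicit formulation of what it means for the counit $L_n R_n \to \mathrm{id}_{\mathcal{T}_n\mathcal{C}}$ to be an equivalence''.
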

\begin{proof}
Suppose $\mathcal{C}$ is $n$-excisive. Then (a) is immediate and (b) is just an explicit formulation of what it means for the counit $L_n R_n \rightarrow \mathrm{id}_{\mathcal{T}_n\mathcal{C}}$ to be an equivalence (which is the case for $n$-excisive $\mathcal{C}$, by Proposition \ref{prop:nexcchar}). Conversely, suppose $\mathcal{C}$ satisfies (a) and (b). We already observed that if $\mathcal{C}$ satisfies (a), then the unit map $\mathrm{id}_{\mathcal{C}} \rightarrow R_n L_n$ is an equivalence. Now (b) guarantees the same for the counit, so that the pair $(L_n, R_n)$ is an adjoint equivalence of $\infty$-categories.
\end{proof}

In the proof of Proposition \ref{prop:nexcchar} we used the following lemma:

\begin{lemma}
\label{lem:stabapprox}
Let $F: \mathcal{C} \rightleftarrows \mathcal{D}: G$ be a weak $n$-excisive approximation. Then $F$ induces an equivalence on stabilizations.
\end{lemma}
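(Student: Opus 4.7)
The plan is to prove that the first derivatives $\partial F$ and $\partial G$ are mutually inverse equivalences of the stabilizations. First, I would observe that by Lemma \ref{lem:observations} both $F$ and $G$ are $n$-excisive; they also preserve the zero object and filtered colimits ($F$ preserves all colimits and $G$, being a right adjoint between compactly generated $\infty$-categories, preserves filtered colimits). Their linearizations therefore produce functors $\partial F: \mathrm{Sp}(\mathcal{C}) \to \mathrm{Sp}(\mathcal{D})$ and $\partial G: \mathrm{Sp}(\mathcal{D}) \to \mathrm{Sp}(\mathcal{C})$, and these inherit an adjunction $\partial F \dashv \partial G$ from $(F, G)$.

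The core of the argument is the chain rule for first Goodwillie derivatives, which provides natural equivalences $\partial(GF) \simeq \partial G \circ \partial F$ and $\partial(FG) \simeq \partial F \circ \partial G$. Combining this with condition (b) of Definition \ref{def:nexcapprox}, which yields $P_n \mathrm{id}_{\mathcal{C}} \simeq GF$ and $P_n(FG) \simeq \mathrm{id}_{\mathcal{D}}$, and the elementary identity $P_1 \simeq P_1 P_n$ (so that the first derivative ignores higher-order $n$-excisive information), one computes
\begin{equation*}
\partial G \circ \partial F \simeq \partial(GF) \simeq \partial(P_n \mathrm{id}_{\mathcal{C}}) \simeq \partial \mathrm{id}_{\mathcal{C}} = \mathrm{id}_{\mathrm{Sp}(\mathcal{C})},
\end{equation*}
and symmetrically $\partial F \circ \partial G \simeq \mathrm{id}_{\mathrm{Sp}(\mathcal{D})}$, using in addition that $\mathrm{id}_{\mathcal{D}}$ is already $n$-excisive by condition (a).

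Finally, I would verify that the equivalences just constructed coincide with the unit and counit of the adjunction $(\partial F, \partial G)$; this is a naturality check, since the entire chain is induced by the unit $\mathrm{id}_{\mathcal{C}} \to GF$ and the counit $FG \to \mathrm{id}_{\mathcal{D}}$ of $(F, G)$ together with the universal properties of $P_n$ and of linearization. The main technical obstacle is the chain rule $\partial(GF) \simeq \partial G \circ \partial F$; in this setting it follows from the standard description of the linearization as the sequential colimit $\varinjlim_k \Omega^k (-) \Sigma^k$, which behaves well under composition because $F$ preserves colimits and the $\Omega^k$ commute with the filtered colimit defining the first derivative. Everything else reduces to routine bookkeeping.
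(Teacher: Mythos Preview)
Your proposal is correct and follows essentially the same approach as the paper: both argue via the chain rule for first derivatives that $\partial G \circ \partial F \simeq \partial(GF) \simeq \partial(P_n\mathrm{id}_{\mathcal{C}}) \simeq \mathrm{id}_{\mathrm{Sp}(\mathcal{C})}$ and symmetrically for the other composite. The paper simply cites the Klein--Rognes chain rule (Corollary 6.2.1.24 of \cite{higheralgebra}) rather than sketching it, and omits your final compatibility check with the unit and counit, since having two-sided inverses up to equivalence already forces $\partial F$ to be an equivalence.
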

\begin{proof}
Consider the linear functors $D_1 F$, $D_1 G$, $D_1(FG)$ and $D_1(GF)$ obtained by taking first Goodwillie derivatives. By the general theory of calculus, the functor $D_1 F$ canonically factors as
\[
\xymatrix{
\mathcal{C} \ar[r]^-{\Sigma^\infty_\mathcal{C}} & \mathrm{Sp}(\mathcal{C}) \ar[r]^-{\partial F} & \mathrm{Sp}(\mathcal{D}) \ar[r]^-{\Omega^\infty_\mathcal{D}} & \mathcal{D}
} 
\]
and of course a similar factorization exists for the other functors. By the Klein-Rognes chain rule (Corollary 6.2.1.24 of \cite{higheralgebra}), there are equivalences
\begin{eqnarray*}
\partial(FG) & \simeq & \partial F \circ \partial G, \\
\partial(GF) & \simeq & \partial G \circ \partial F.
\end{eqnarray*}
By the assumption that $F$ and $G$ form a weak $n$-excisive approximation, we know that $FG$ and $GF$ are $P_n$-equivalent to the identity functors of $\mathcal{C}$ and $\mathcal{D}$ respectively. In particular,
\begin{eqnarray*}
D_1(FG) & \simeq & D_1 \mathrm{id}_\mathcal{C}, \\ 
D_1(GF) & \simeq & D_1 \mathrm{id}_\mathcal{D}. 
\end{eqnarray*}
It follows that $\partial F \circ \partial G \simeq \mathrm{id}_{\mathrm{Sp}(\mathcal{C})}$ and $\partial G \circ \partial F \simeq \mathrm{id}_{\mathrm{Sp}(\mathcal{D})}$, so that in particular $\partial F$ is an equivalence of $\infty$-categories.
\end{proof}

We can now prove our first main result.

\begin{proof}[Proof of Theorem \ref{thm:TheoremA}]
If $\mathcal{C}$ is an $n$-excisive $\infty$-category, then Proposition \ref{prop:nexcchar} lets us conclude that the functor $\mathcal{C} \rightarrow \mathcal{P}_n\mathcal{C}$ is an equivalence. To prove that $\mathcal{P}_n$ is indeed a localization functor, it suffices to prove the following statement: if $\mathcal{C}$ and $\mathcal{D}$ are pointed, compactly generated $\infty$-categories, where furthermore $\mathcal{D}$ is $n$-excisive, then precomposition with the functor $\Sigma_n^\infty: \mathcal{C} \rightarrow \mathcal{P}_n\mathcal{C}$ induces an equivalence
\begin{equation*}
(\Sigma_n^\infty)^\ast: \mathrm{Fun}^{\omega, L}(\mathcal{P}_n\mathcal{C}, \mathcal{D}) \longrightarrow \mathrm{Fun}^{\omega, L}(\mathcal{C}, \mathcal{D}). 
\end{equation*}
Here $\mathrm{Fun}^{\omega,L}$ denotes the $\infty$-category of colimit-preserving functors which in addition preserve compact objects. An explicit inverse to $(\Sigma_n^\infty)^\ast$ can be described as follows: given a functor $F \in \mathrm{Fun}^{\omega, L}(\mathcal{C}, \mathcal{D})$, form the composition of $\mathcal{P}_n F: \mathcal{P}_n\mathcal{C} \rightarrow \mathcal{P}_n\mathcal{D}$ with the equivalence $\Omega_n^\infty: \mathcal{P}_n\mathcal{D} \rightarrow \mathcal{D}$. To see that this gives an inverse, first observe that for such an $F$, the composite
\[
\xymatrix{
\mathcal{C} \ar[r]^-{\Sigma_n^\infty} & \mathcal{P}_n\mathcal{C} \ar[r]^-{\mathcal{P}_n F} & \mathcal{P}_n\mathcal{D} \ar[r]^-{\Omega_n^\infty} & \mathcal{D}
}
\] 
is $P_n F$ by part (b) of Lemma \ref{lem:Pnobservations}. Since $F$ is $n$-excisive, this is naturally equivalent to $F$ itself. For the other direction, suppose $G \in \mathrm{Fun}^{\omega, L}(\mathcal{P}_n\mathcal{C}, \mathcal{D})$ and consider the composite
\[
\xymatrix@C=35pt{
\mathcal{P}_n\mathcal{C} \ar[r]^-{\mathcal{P}_n(G\Sigma_n^\infty)} & \mathcal{P}_n\mathcal{D} \ar[r]^-{\Omega_n^\infty} & \mathcal{D}.
}
\] 
Again, a simple unravelling of the definitions shows that this composite is $P_n G$, which is naturally equivalent to $G$, since $G$ itself is $n$-excisive. Part (c) of the theorem is precisely Lemma \ref{lem:Pnfinitelimits}. 

The final claim of the theorem is that strong $n$-excisive approximations are unique up to canonical equivalence. To see this, suppose $F: \mathcal{C} \rightleftarrows \mathcal{D}: G$ is a strong $n$-excisive approximation to $\mathcal{C}$. By what we have proved, there is a canonical factorization of $F$ into colimit-preserving functors
\[
\xymatrix{
\mathcal{C} \ar[r]^{\Sigma_n^\infty} & \mathcal{P}_n\mathcal{C} \ar[r]^{F'} & \mathcal{D}.
}
\]
Denote the right adjoint to $F'$ by $G'$. We claim that the pair $(F', G')$ is a weak $n$-excisive approximation to $\mathcal{P}_n\mathcal{C}$. Since the latter $\infty$-category is $n$-excisive, it then follows that $F'$ must be an equivalence. To prove our claim, first observe that repeatedly using the equivalence between $P_n(\Sigma_n^\infty\Omega_n^\infty)$ and the identity functor of $\mathcal{P}_n\mathcal{C}$ yields the following chain of equivalences:
\begin{eqnarray*}
P_n(G'F') & \simeq & P_n(\Sigma_n^\infty \Omega_n^\infty G'F' \Sigma_n^\infty \Omega_n^\infty) \\
& \simeq & P_n(\Sigma_n^\infty GF \Omega_n^\infty) \\
& \simeq & P_n(\Sigma_n^\infty \Omega_n^\infty) \\
& \simeq & \mathrm{id}_{\mathcal{P}_n\mathcal{C}}.
\end{eqnarray*}
Furthermore, this composite of equivalences can easily be seen to be inverse to the image under $P_n$ of the unit of the adjoint pair $(F',G')$. Similarly, one has the chain of equivalences
\begin{eqnarray*}
P_n(F'G') & \simeq & P_n(F' \Sigma_n^\infty \Omega_n^\infty G') \\
& \simeq & P_n(FG) \\
& \simeq & \mathrm{id}_{\mathcal{D}},
\end{eqnarray*}
which together with our previous observation proves that $(F',G')$ is indeed a weak $n$-excisive approximation.
\end{proof}

\chapter{Another construction of polynomial approximations}
\label{sec:informalconstr}

Our goal is to provide a classification of the $n$-stages of a nonunital stable $\infty$-operad $\mathcal{O}^\otimes$, in the sense of Definition \ref{def:nstage}. To achieve this we will establish a different construction of the polynomial approximations $\mathcal{P}_n\mathcal{C}$ of the previous chapter. It is essentially given by the \emph{Tate coalgebras} already loosely described in the introduction. This construction can be modified to obtain $n$-stages for a stable $\infty$-operad $\mathcal{O}^\otimes$ and we will exploit this to prove Theorem \ref{thm:classification}. In this section we give an informal outline of our strategy in order to orient the reader. A more detailed treatment and the proof of Theorem \ref{thm:classification} will be given in the following chapters.

Almost all constructions and proofs in the next few chapters proceed by induction on $n$. They are therefore less direct than what was done before. However, they yield a much more explicit understanding of the $\infty$-categories $\mathcal{P}_n\mathcal{C}$ and the way they relate for different values of $n$. Throughout this chapter $\mathcal{C}$ denotes a pointed compactly generated $\infty$-category. 

Out of the $(n-1)$-excisive approximation $\mathcal{P}_{n-1}\mathcal{C}$ we will construct an $n$-excisive $\infty$-category $\mathcal{Q}_n\mathcal{C}$, which will turn out to be naturally equivalent to $\mathcal{P}_n\mathcal{C}$. Any object $X \in \mathcal{C}$ is canonically a \emph{coalgebra} for the Cartesian product, meaning there are diagonal maps $X \rightarrow (X^{\times k})^{h\Sigma_k}$ for every $k \geq 1$ satisfying appropriate coherence relations. This gives $\Sigma^\infty X$ the structure of a coalgebra in $\mathrm{Sp}(\mathcal{C})^\otimes$; more precisely, we obtain `diagonal maps' $\delta_k: \Sigma^\infty X \rightarrow (\Sigma^\infty X\otimes^k \cdots \otimes^k \Sigma^\infty X)^{h\Sigma_k}$ satisfying similar coherence relations, where $\otimes^k$ denotes the $k$-fold tensor product arising from the stable $\infty$-operad $\mathrm{Sp}(\mathcal{C})^\otimes$. We describe such coalgebra structures in detail in the next chapter. Similarly, for $Y \in \mathcal{P}_{n-1}\mathcal{C}$, the object $\Sigma^\infty_{n-1,1}Y$ acquires a coalgebra structure in $\mathrm{Sp}(\mathcal{P}_{n-1}\mathcal{C})^\otimes$. Write $\odot^k$ for the $k$-fold tensor product determined by this stable $\infty$-operad. For $1 \leq k \leq n-1$ there are natural equivalences $\odot^k \simeq \otimes^k$. For $k \geq n$ the tensor product $\odot^k$ is the limit of a diagram recording the ways in which the $\otimes^j$ for $j < n$ can be combined to build products of $k$ variables (see Proposition \ref{prop:truncatedtensor} for a precise statement). The $n$-fold comultiplication of $\Sigma^\infty_{n-1,1}Y$ with respect to $\odot^n$ yields a map for which we write
\begin{equation*}
\delta_{<n}: \Sigma^\infty_{n-1,1}Y \longrightarrow (\Sigma^\infty_{n-1,1}Y \odot^n \cdots \odot^n \Sigma^\infty_{n-1,1}Y)^{h\Sigma_n}.
\end{equation*}
Here the notation $\delta_{<n}$ is meant to indicate the fact that it is built from the comultiplication maps $\delta_j$ for $j<n$, in the same way that $\odot^n$ is built from the tensor products $\otimes^j$ with $j < n$. If $Y$ is of the form $\Sigma_{n-1}^\infty X$ for some $X \in \mathcal{C}$, the coalgebra structure of $\Sigma^\infty X$ described previously also yields a map
\begin{equation*}
\tau_n: \Sigma^\infty_{n-1,1}Y \longrightarrow (\Sigma^\infty_{n-1,1}Y \otimes^n \cdots \otimes^n \Sigma^\infty_{n-1,1}Y)^{t\Sigma_n}
\end{equation*} 
by composing $\delta_n$ with the natural map from fixed points to Tate construction. The codomain of this map is an $(n-1)$-excisive functor of $Y$ (see Lemma \ref{lem:normseq}). It is not hard to see that using this fact in combination with Lemma \ref{lem:nexcfunctors} one obtains an extension of the above natural transformation to \emph{all} objects $Y$ of $\mathcal{P}_{n-1}\mathcal{C}$, rather than just those in the image of $\Sigma^\infty_{n-1}$. We will refer to this natural transformation as the \emph{Tate diagonal}. The map of $\infty$-operads $\mathrm{Sp}(\mathcal{P}_{n-1}\mathcal{C})^\otimes \rightarrow \mathrm{Sp}(\mathcal{C})^\otimes$ induces a natural transformation $\otimes^n \rightarrow \odot^n$ under which the map $\tau_n$ is compatible with the composition of $\delta_{<n}$ with the map from fixed points to Tate construction.

We can now informally describe $\mathcal{Q}_n\mathcal{C}$ as follows. A compact object of this category is a compact object $Y$ of $\mathcal{P}_{n-1}\mathcal{C}$, equipped with an $n$-fold comultiplication
\begin{equation*}
\delta_n: \Sigma^\infty_{n-1,1}Y \longrightarrow (\Sigma^\infty_{n-1,1}Y \otimes^n \cdots \otimes^n \Sigma^\infty_{n-1,1}Y)^{h\Sigma_n}.
\end{equation*}
This map should be compatible both with the Tate diagonal $\tau_n$ described above (under the natural map from fixed points to Tate construction) and with the coalgebra structure of $\Sigma^\infty_{n-1,1}Y$ in $\mathrm{Sp}(\mathcal{P}_{n-1}\mathcal{C})^\otimes$ (under the natural transformation $\otimes^n \rightarrow \odot^n$), as expressed by the map $\delta_{<n}$. More precisely, the object $\overline{Y} = \Sigma^\infty_{n-1,1}Y$ should be equipped with a lift as indicated by the dashed arrow in the following diagram:
\[
\xymatrix{
\overline{Y} \ar@/_1pc/[ddr]_{\delta_{<n}}\ar@/^1pc/[drr]^{\tau_n} \ar@{-->}[dr]^{\delta_n}& & \\
& \bigl(\overline{Y}^{\otimes n}\bigr)^{h\Sigma_n} \ar[r]\ar[d] & \bigl(\overline{Y}^{\otimes n}\bigr)^{t\Sigma_n} \ar[d] \\
& \bigl(\overline{Y}^{\odot n}\bigr)^{h\Sigma_n} \ar[r] &  \bigl(\overline{Y}^{\odot n}\bigr)^{t\Sigma_n}.
}
\]
Maps in $\mathcal{Q}_n\mathcal{C}$ between such objects are maps in $\mathcal{P}_{n-1}\mathcal{C}$ that respect this extra structure. 

The reader should observe that the only piece of input from $\mathcal{C}$ needed to construct $\mathcal{Q}_n\mathcal{C}$ out of $\mathcal{P}_{n-1}\mathcal{C}$ is the Tate diagonal. For any other choice of natural transformation
\begin{equation*}
\overline{Y} \longrightarrow (\overline{Y} \otimes^n \cdots \otimes^n \overline{Y})^{t\Sigma_n}
\end{equation*}
compatible with the map 
\begin{equation*}
\overline{Y} \longrightarrow (\overline{Y} \odot^n \cdots \odot^n \overline{Y})^{t\Sigma_n}
\end{equation*}
we could have carried out the same construction. The resulting $\infty$-category is not necessarily equivalent to $\mathcal{P}_n\mathcal{C}$, but is still an $n$-stage for $\mathrm{Sp}(\mathcal{C})^\otimes$. We will show that choosing the Tate diagonal is essentially the \emph{only} freedom we have when building $n$-stages out of $(n-1)$-stages. Verifying this claim will yield a proof of Theorem \ref{thm:classification}.

\begin{example}
\label{ex:P2S}
Let $\mathcal{C}$ be the $\infty$-category $\mathcal{S}_*$ of pointed spaces, for which $\mathrm{Sp}(\mathcal{C})^\otimes$ is the usual symmetric monoidal $\infty$-category of spectra with its smash product, so $\otimes^n = \wedge^n$. Consider a finite pointed space $X$. The diagonal $X \rightarrow (X \times X)^{h\Sigma_2}$ gives rise to the Tate diagonal $\tau_2: \Sigma^\infty X \rightarrow (\Sigma^\infty X \wedge \Sigma^\infty X)^{t\Sigma_2}$. The codomain of this map is an \emph{exact} functor of $\Sigma^\infty X$. Therefore, the Tate diagonal extends to a natural map $\tau_2: E \rightarrow (E \wedge E)^{t\Sigma_2}$ for any finite spectrum $E$. The $\infty$-category of compact objects in $\mathcal{P}_2{\mathcal{S}_*}$ is then the $\infty$-category of finite spectra $Y$ equipped with a lift as in the following diagram:
\[
\xymatrix{
& (E \wedge E)^{h\Sigma_2} \ar[d] \\
E \ar@{-->}[ur]^{\delta_2}\ar[r]_-{\tau_2} & (E \wedge E)^{t\Sigma_2}.  
}
\]
\end{example}

\begin{example}
\label{ex:P3S}
Let us also give a description of $\mathcal{P}_3\mathcal{S}_*$. This example already contains all the essential features of the general case. Write $\odot^3$ for the 3-fold tensor product determined by the stable $\infty$-operad $\mathrm{Sp}(\mathcal{P}_2\mathcal{C})^\otimes$. We give a formula for such tensor products in Proposition \ref{prop:truncatedtensor} but in this special case it is very simple, as we will now explain. The poset $\mathbf{Part}_2(3)$ of nontrivial and nondiscrete partitions of the set $\{1,2,3\}$ conists of three elements (namely $(12)3$ and its cyclic permutations) and no nontrivial relations between them. The symmetric group $\Sigma_3$ acts in the evident way by permuting the three letters, so every element of $\mathbf{Part}_2(3)$ has a stabilizer of order two. Then for a spectrum $E$ we have
\begin{equation*}
E \odot^3 E \odot^3 E \simeq N\mathbf{Part}_2(3) \wedge E^{\wedge 3}.
\end{equation*}
In other words it is just a sum of three copies of the ordinary threefold smash product, but with a twisted action of $\Sigma_3$. The natural transformation $\wedge^3 = \otimes^3 \rightarrow \odot^3$ is the diagonal of this threefold sum. Now suppose $Y$ is a compact object of $\mathcal{P}_2\mathcal{S}_*$, i.e. a finite spectrum equipped with a twofold diagonal map $\delta_2$ compatible with the Tate diagonal $\tau_2$ as in Example \ref{ex:P2S} above. This structure in particular equips the underlying spectrum of $Y$ (which we denote by $\overline{Y}$) with a further Tate diagonal
\begin{equation*}
\tau_3: \overline{Y} \rightarrow (\overline{Y}^{\wedge 3})^{t\Sigma_3}.
\end{equation*}
Indeed, for the moment writing $F$ for the 2-excisive functor $E \mapsto (E^{\wedge 3})^{t\Sigma_3}$, Lemma \ref{lem:nexcfunctors} guarantees that the evident map
\begin{equation*}
\mathrm{Nat}(\Sigma^\infty_{2,1}, F \circ \Sigma^\infty_{2,1}) \rightarrow \mathrm{Nat}(\Sigma^\infty, F \circ \Sigma^\infty)
\end{equation*}
is an equivalence. Here the domain (resp. codomain) concerns natural transformations between 2-excisive functors $\mathcal{P}_2\mathcal{S}_* \rightarrow \mathrm{Sp}$ (resp. 2-excisive functors $\mathcal{S}_* \rightarrow \mathrm{Sp}$). Then $\tau_3$ is the preimage (which is well-defined up to contractible ambiguity) of the natural transformation
\begin{equation*}
\Sigma^\infty X \rightarrow (\Sigma^\infty X^{\wedge 3})^{h\Sigma_3} \rightarrow (\Sigma^\infty X^{\wedge 3})^{t\Sigma_3}. 
\end{equation*}

The twofold comultiplication map $\delta_2: \overline{Y} \rightarrow \overline{Y} \wedge \overline{Y}$ (now thought of as a $\Sigma_2$-equivariant map) can be used to construct a $\Sigma_3$-equivariant map
\begin{equation*}
\delta_{<3}: \overline{Y} \rightarrow \overline{Y} \odot^3 \overline{Y} \odot^3 \overline{Y}.
\end{equation*}
Indeed, on the component of the tensor product corresponding to the partition $(12)3$ one uses
\begin{equation*}
\overline{Y} \xrightarrow{\delta_2} \overline{Y} \wedge \overline{Y} \xrightarrow{\delta_2 \wedge \mathrm{id}}  (\overline{Y} \wedge \overline{Y}) \wedge \overline{Y}
\end{equation*}
and similarly for the others. This is compatible with the Tate diagonal as in the following diagram of solid arrows:
\[
\xymatrix{
\overline{Y} \ar@/_1pc/[ddr]_{\delta_{<3}}\ar@/^1pc/[drr]^{\tau_3} \ar@{-->}[dr]^{\delta_3}& & \\
& \bigl(\overline{Y}^{\wedge 3}\bigr)^{h\Sigma_3} \ar[r]\ar[d] & \bigl(\overline{Y}^{\wedge 3}\bigr)^{t\Sigma_3} \ar[d] \\
& \bigl(\overline{Y}^{\odot 3}\bigr)^{h\Sigma_3} \ar[r] &  \bigl(\overline{Y}^{\odot 3}\bigr)^{t\Sigma_3}.
}
\]
A compact object of $\mathcal{P}_3\mathcal{S}_*$ is then precisely a compact object $Y$ of $\mathcal{P}_2\mathcal{S}_*$ together with a lift $\delta_3$ as indicated by the dashed arrow in the diagram (which should be thought of as a `threefold comultiplication' compatible with all previously defined structure).
\end{example}

\begin{remark}
Observe that in Example \ref{ex:P3S} the Tate diagonal $\tau_3$ arises from general Goodwillie calculus results, rather than from direct construction. Although the map $\tau_3: \overline{Y} \rightarrow (\overline{Y}^{\wedge 3})^{t\Sigma_3}$ only depends on the comultiplication $\delta_2: \overline{Y} \rightarrow (\overline{Y}^{\wedge 2})^{h\Sigma_2}$ and its compatibility with $\tau_2$, it is not clear how to give a direct formula for $\tau_3$ in terms of this data. 
\end{remark}

\chapter{Coalgebras in stable $\infty$-operads}
\label{sec:coalgebras}

To make the ideas of the previous chapter precise we need to investigate the homotopy theory of \emph{coalgebras} in stable $\infty$-operads and their relation to the \emph{truncations} of a stable $\infty$-operad. In Section \ref{subsec:truncations} we define such truncations and state their universal property. In Section \ref{subsec:coalgebras} we define and study coalgebras in corepresentable $\infty$-operads. Then in Section \ref{subsec:truncatedcoalgebras} we specialize to the setting of coalgebras in truncated stable $\infty$-operads. This material is of crucial importance to our proofs but rather technical in nature. The reader might therefore wish to skip this chapter on first reading and refer back to it as needed. The homotopy theory of coalgebras we use here is closely related to the one used by Arone and Ching in \cite{aronechingtowers}.

\section{Truncations of $\infty$-operads}
\label{subsec:truncations}

Suppose $\mathbf{O}$ is a \emph{nonunital} operad in the (ordinary) symmetric monoidal category of spectra (in any convenient formalism for such; it will not matter for the purposes of this discussion). From such an $\mathbf{O}$ we can construct its \emph{$n$-truncation} $\tau_n\mathbf{O}$, whose underlying symmetric sequence is defined by
\begin{equation*}
\tau_n\mathbf{O}(k) = \begin{cases} \mathbf{O}(k) & \mbox{if } 1 \leq k \leq n, \\ \ast & \mbox{if } k > n. \end{cases}
\end{equation*}
The operad structure on $\tau_n\mathbf{O}$ is inherited in the obvious way from $\mathbf{O}$, simply setting operations of arity greater than $n$ to zero. Observe that there is an evident map of operads $\mathbf{O} \rightarrow \tau_n\mathbf{O}$ (although the evident map of symmetric sequences going in the opposite direction is generally \emph{not} a map of operads). This map induces an isomorphism $\mathbf{O}(k) \rightarrow \tau_n\mathbf{O}(k)$ for $k \leq n$ and is the terminal map out of $\mathbf{O}$ with this property. In fact, writing $\mathbf{Op}$ for the category of nonunital operads in spectra and $\mathbf{Op}_n$ for its full subcategory spanned by operads whose terms $\mathbf{O}(k)$ are isomorphic to $\ast$ for $k >n$, the process of $n$-truncation described above provides a left adjoint $\mathbf{Op} \rightarrow \mathbf{Op}_n$ to the inclusion $\mathbf{Op}_n \rightarrow \mathbf{Op}$, exhibiting the category of $n$-truncated nonunital operads as a localization of $\mathbf{Op}$.

\begin{remark}
For the discussion above it is essential to work with nonunital operads. Indeed, suppose $\mathbf{P}$ is an operad with constant term $\mathbf{P}(0)$ and $\tau_n\mathbf{P}$ is defined by setting all operations of arity greater than $n$ to zero. Then the map of symmetric sequences $\mathbf{P} \rightarrow \tau_n\mathbf{P}$ need not be a map of operads; indeed it might not respect, for example, composition maps of the form $\mathbf{P}(n+1) \otimes \mathbf{P}(0) \rightarrow \mathbf{P}(n)$.
\end{remark}

Relevant to us will be a Koszul dual version of this discussion. To be precise, for a map of operads $\mathbf{O} \rightarrow \tau_n\mathbf{O}$ as above one can perform a simplicial bar construction \cite{ching} to obtain a map of \emph{cooperads} $\mathbf{B}(\mathbf{O}) \rightarrow \mathbf{B}(\tau_n\mathbf{O})$. This map gives an isomorphism of spectra
\begin{equation*}
\mathbf{B}(\mathbf{O})(k) \simeq \mathbf{B}(\tau_n\mathbf{O})(k)
\end{equation*}
for $1 \leq k \leq n$ (and is the terminal map of cooperads out of $\mathbf{B}(\mathbf{O})$ with this property). Indeed, the $k$th term of the bar construction $\mathbf{B}(\mathbf{O})$ depends only on $\mathbf{O}(m)$ for $m \leq k$, so that this observation follows from the corresponding statement for $\mathbf{O}$ itself. However, the terms $\mathbf{B}(\tau_n\mathbf{O})(k)$ need \emph{not} be contractible for $k > n$. Rather, one should think of the cooperad $\mathbf{B}(\tau_n\mathbf{O})$ as being `cofreely generated' by the terms of arity up to $n$.

For the purposes of informal discussion, let us think of the derivatives of the identity functor on a pointed compactly generated $\infty$-category $\mathcal{C}$ as an operad $\mathbf{O}$ (which is, of course, accurate in many settings of interest). The derivatives of the identity functor on the $n$-excisive approximation $\mathcal{P}_n\mathcal{C}$ are then the $n$-truncation of this operad. We wish to adapt this discussion to the actual setting in which we work here, namely the stable $\infty$-operads $\mathrm{Sp}(\mathcal{C})^\otimes$ and $\mathrm{Sp}(\mathcal{P}_n\mathcal{C})^\otimes$. As explained in Remark \ref{rmk:dictionary} there is a dictionary between stable $\infty$-operads and cooperads, under which $\mathrm{Sp}(\mathcal{C})^\otimes$ corresponds to the cooperad $\partial_*(\Sigma^\infty\Omega^\infty)$ and similarly for $\mathrm{Sp}(\mathcal{P}_n)^\otimes$. The $n$-truncation map $\partial_*\mathrm{id}_{\mathcal{C}} \rightarrow \partial_*\mathrm{id}_{\mathcal{P}_n\mathcal{C}}$ corresponds (under the bar construction) to a map between these cooperads and then, by this dictionary, to the map of stable $\infty$-operads 
\begin{equation*}
\mathrm{Sp}(\mathcal{P}_n\mathcal{C})^\otimes \longrightarrow \mathrm{Sp}(\mathcal{C})^\otimes.
\end{equation*}
Note the switch of direction here. The goal of this section is to transfer our discussion to this setting, in particular describing and characterizing $n$-truncations of stable $\infty$-operads, of which the above map is an example. We will state our results but defer all proofs in this section to Appendix \ref{sec:apptruncations}.

\begin{definition}
Write $\mathbf{Op}^\mathrm{St}$ for the $\infty$-category of nonunital stable $\infty$-operads and maps $\mathcal{N}^\otimes \rightarrow \mathcal{O}^\otimes$ whose underlying functor $\mathcal{N} \rightarrow \mathcal{O}$ preserves limits and filtered colimits.
\end{definition}

Using Propositions 6.2.4.14 and 6.2.4.15 of \cite{higheralgebra} one can show that the stabilization procedure which assigns to a pointed compactly generated $\infty$-category $\mathcal{C}$ the $\infty$-operad $\mathrm{Sp}(\mathcal{C})^\otimes$ can be made into a functor
\begin{equation*}
\mathrm{Sp}(-)^\otimes: \bigl(\mathbf{Cat}^\omega_*\bigr)^{\mathrm{op}} \longrightarrow \mathbf{Op}^{\mathrm{St}}.
\end{equation*}

Recall that for a nonunital $\infty$-operad we introduced the simplicial set $\mathcal{O}^\otimes_{\leq n}$ in Chapter \ref{sec:mainresults}. To state the necessary results, we will need the $\infty$-category of \emph{$n$-truncated} $\infty$-operads. Informally speaking a nonunital $n$-truncated $\infty$-operad is a categorical fibration over $\N\mathrm{Surj}_{\leq n}$ satisfying the evident analogues of the axioms required of an $\infty$-operad. One way to make this precise is as follows: consider the subcategory of $\mathbf{sSets}/\N\mathrm{Surj}_{\leq n}$ spanned by objects of the form $\mathcal{O}^\otimes_{\leq n}$, with $\mathcal{O}^\otimes$ ranging through all nonunital $\infty$-operads, and arrows those maps over $\N\mathrm{Surj}_{\leq n}$ preserving inert morphisms. This subcategory is a simplicial category in an evident way and one can define $\mathbf{Op}_{\leq n}$ to be its homotopy-coherent nerve.

\begin{remark}
\label{rmk:adhoctruncation}
A less ad hoc way of defining $\mathbf{Op}_{\leq n}$ is by constructing a model structure on the category of marked simplicial sets over $(\N\mathrm{Surj}_{\leq n})^\natural$ in analogy with Lurie's construction of the model category of $\infty$-preoperads in Section 2.1.4 of \cite{higheralgebra}. The details are straightforward, simply copying Lurie's work and replacing $\NFin$ by $\N\mathrm{Surj}_{\leq n}$ throughout. An alternative way of describing this homotopy theory using dendroidal sets is given in the appendix.
\end{remark}

We call an $n$-truncated $\infty$-operad $\mathcal{O}_{\leq n}^\otimes$ \emph{stable} if $\mathcal{O}^\otimes$ itself is stable.

\begin{definition}
\label{def:truncatedOpSt}
Write $\mathbf{Op}_{\leq n}^{\mathrm{St}}$ for the $\infty$-category of nonunital stable $n$-truncated $\infty$-operads and maps $\mathcal{N}_{\leq n}^\otimes \rightarrow \mathcal{O}_{\leq n}^\otimes$ whose underlying functor $\mathcal{N} \rightarrow \mathcal{O}$ preserves limits and filtered colimits.
\end{definition}

Pulling back from $\N\mathrm{Surj}$ to $\N\mathrm{Surj}_{\leq n}$ defines a functor
\begin{equation*}
(-)_{\leq n}: \mathbf{Op}^{\mathrm{St}} \longrightarrow \mathbf{Op}_{\leq n}^{\mathrm{St}}.
\end{equation*}

The following result is a shadow of Theorem \ref{thm:TheoremA} in the world of $\infty$-operads, with $(-)_{\leq n}$ being the analogue of the $n$-excisive approximation $\mathcal{P}_n$.

\begin{theorem}
\label{thm:ntruncation}
The functor $(-)_{\leq n}$ admits a left adjoint $i_n:  \mathbf{Op}_{\leq n}^{\mathrm{St}} \rightarrow \mathbf{Op}^{\mathrm{St}}$. Furthermore, for an $n$-truncated $\infty$-operad $\mathcal{N}^\otimes$ the unit $\mathcal{N}^\otimes \rightarrow (i_n\mathcal{N}^\otimes)_{\leq n}$ is an equivalence.
\end{theorem}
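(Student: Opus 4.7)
The plan is to translate to the dendroidal formalism, construct $i_n$ via a left Kan extension along a fully faithful inclusion of indexing categories of trees, and verify that the construction preserves stability and has the required universal property.

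First, I would pass to a dendroidal model for stable $\infty$-operads, as alluded to after Remark \ref{rmk:adhoctruncation}. Let $\Omega_{\mathrm{nu}}$ denote the category of nonunital trees and let $\Omega_{\mathrm{nu},\leq n} \subset \Omega_{\mathrm{nu}}$ be the full subcategory spanned by trees whose internal vertices have at most $n$ inputs. Fibrant objects in an appropriate model structure on dendroidal sets over these categories, after imposing the stability and compact generation conditions on the underlying $\infty$-category, present $\mathbf{Op}^{\mathrm{St}}$ and $\mathbf{Op}^{\mathrm{St}}_{\leq n}$ respectively. The functor $(-)_{\leq n}$ corresponds to restriction along the fully faithful inclusion $\Omega_{\mathrm{nu},\leq n} \hookrightarrow \Omega_{\mathrm{nu}}$.

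Second, I would construct $i_n$ by left Kan extension along this inclusion, followed by an appropriate fibrant replacement. At a tree $T$ with more than $n$ leaves, the value of $i_n\mathcal{N}^\otimes$ is computed as a colimit over decompositions of $T$ into subtrees each of whose internal vertices has at most $n$ inputs; under the dictionary between stable $\infty$-operads and cooperads, these decompositions translate into iterated compositions of the given tensor products $\otimes^I$ with $|I| \leq n$. Because left Kan extension along a fully faithful functor is itself fully faithful, the unit $\mathcal{N}^\otimes \to (i_n\mathcal{N}^\otimes)_{\leq n}$ is automatically an equivalence, yielding the second claim of the theorem. I would then verify that $i_n\mathcal{N}^\otimes$ is again stable: the underlying $\infty$-category $\mathcal{N}$ is unchanged, hence stable and compactly generated; corepresentability at higher arities follows from iterated corepresentability at arities $\leq n$; and the variable-wise colimit-preservation condition is preserved under composition of functors multilinear in each variable. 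The adjunction property then reduces to showing that a map $i_n\mathcal{N}^\otimes \to \mathcal{O}^\otimes$ in $\mathbf{Op}^{\mathrm{St}}$ is uniquely determined by its restriction to $n$-truncations, which follows from the iterated-composition description of the higher arity operations of $i_n\mathcal{N}^\otimes$ together with the compositional structure of $\mathcal{O}^\otimes$.

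The main obstacle will be the fibrant replacement step: one needs to ensure that the left Kan extension, which a priori is merely a presheaf on trees, can be rectified to an actual $\infty$-operad satisfying the corepresentability, stability, and multilinearity axioms, while still restricting back to $\mathcal{N}^\otimes_{\leq n}$ on the nose (up to equivalence). The dendroidal formalism is well-suited to this, since it gives combinatorial control over how higher arity operations arise from iterated compositions of lower arity ones, and should make the required operadic coherences tractable to verify in a way that Lurie's $\infty$-preoperad model would obscure.
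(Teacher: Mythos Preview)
Your dendroidal strategy and the left Kan extension $\mathbf{L}v_!$ along the inclusion $\mathbf{\Psi}_n \hookrightarrow \mathbf{\Omega}$ are exactly what the paper sets up in Appendix~C, and Lemma~\ref{lem:v!fibrant} confirms that, after Reedy fibrant replacement, $v_!X$ is a genuine complete dendroidal Segal space. So the obstacle you flag---getting an $\infty$-operad back---is not actually the problem.

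The real gap is your claim that ``corepresentability at higher arities follows from iterated corepresentability at arities $\leq n$.'' By Corollary~\ref{cor:mappingspacev!}, the space of $k$-ary operations in $\mathbf{L}v_!\mathcal{N}^\otimes$ for $k>n$ is a homotopy \emph{colimit}
\[
\operatorname*{hocolim}_{T\in \mathrm{Dec}_n(C_k)^{\mathrm{op}}} \mathcal{O}\bigl(\otimes^T(x_1,\dots,x_k),\,y\bigr),
\]
and a colimit of corepresentable functors is not corepresentable; you would need $\mathcal{O}(-,y)$ to send limits to colimits, which it does not. The paper makes this explicit in the Remark following Theorem~\ref{thm:stableleftadjoint}: ``The $\infty$-operad $\mathbf{L}(v_n)_!\mathcal{O}^\otimes$ need not be stable itself.'' So your proposed $i_n$ simply does not land in $\mathbf{Op}^{\mathrm{St}}$.

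The paper therefore takes a different route. It first observes that the \emph{right} Kan extension $\mathbf{R}v_*$ does preserve stability (its higher operation spaces are contractible, hence trivially corepresentable), giving a right adjoint to $(-)_{\leq n}$. Existence of the left adjoint $i_n$ then follows from the adjoint functor theorem: $(-)_{\leq n}$ preserves colimits because it has this right adjoint, and one checks it preserves limits by showing the class of stable $\infty$-operads is closed under limits in $\mathbf{Op}$. The connection to your construction is salvaged only at the level of mapping spaces: Theorem~\ref{thm:stableleftadjoint} shows $\mathrm{Map}(i_n\mathcal{O}^\otimes,\mathcal{N}^\otimes)\simeq \mathrm{Map}(\mathbf{L}v_!\mathcal{O}^\otimes,\mathcal{N}^\otimes)$ when $\mathcal{N}^\otimes$ is stable, and fully faithfulness of $i_n$ is then deduced by playing $\mathbf{L}v_!$ and $\mathbf{R}v_*$ off against each other.
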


This theorem can be rephrased as saying that the $n$-truncated nonunital stable $\infty$-operads form a coreflective subcategory of $\mathbf{Op}^{\mathrm{St}}$. We write $\tau_n$ for the composite $i_n \circ (-)_{\leq n}$. The motivating example of $n$-truncations (which makes the comment above the previous theorem precise) is the following:

\begin{proposition}
\label{prop:SpPnCntruncated}
Let $\mathcal{C}$ be a pointed compactly generated $\infty$-category. Then the map of $\infty$-operads $\tau_n\mathrm{Sp}(\mathcal{P}_n\mathcal{C})^\otimes \rightarrow \mathrm{Sp}(\mathcal{P}_n\mathcal{C})^\otimes$ is an equivalence. Furthermore, the map $\tau_n\mathrm{Sp}(\mathcal{P}_n\mathcal{C})^\otimes \rightarrow \tau_n\mathrm{Sp}(\mathcal{C})^\otimes$ is also an equivalence.
\end{proposition}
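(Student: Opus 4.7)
My plan is to reduce both statements to the lemma stated just above Remark \ref{rmk:universalntruncation} (whose proof is deferred to Section \ref{subsec:cobar}), which asserts that $\Omega^\infty_n$ induces an equivalence of $n$-truncated stable $\infty$-operads $\mathrm{Sp}(\mathcal{P}_n\mathcal{C})^\otimes_{\leq n} \to \mathrm{Sp}(\mathcal{C})^\otimes_{\leq n}$, combined with the formal structure of the adjunction $i_n \dashv (-)_{\leq n}$ from Theorem \ref{thm:ntruncation}.

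The second statement is then formal. Theorem \ref{thm:ntruncation} asserts that the unit of this adjunction is an equivalence, so $i_n$ is a fully faithful left adjoint and in particular preserves equivalences. Applying the functor $\tau_n = i_n \circ (-)_{\leq n}$ to the equivalence from the lemma yields precisely the desired equivalence $\tau_n\mathrm{Sp}(\mathcal{P}_n\mathcal{C})^\otimes \to \tau_n\mathrm{Sp}(\mathcal{C})^\otimes$.

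For the first statement, I would use the general fact that when $i_n$ is fully faithful the counit $\tau_n\mathcal{O}^\otimes = i_n\mathcal{O}^\otimes_{\leq n} \to \mathcal{O}^\otimes$ is an equivalence exactly when $\mathcal{O}^\otimes$ lies in the essential image of $i_n$. The task thus reduces to showing $\mathrm{Sp}(\mathcal{P}_n\mathcal{C})^\otimes$ is in that essential image. My approach is to establish the slightly stronger intrinsic claim that for any pointed compactly generated $\infty$-category $\mathcal{D}$ whose identity functor is $n$-excisive, the stable $\infty$-operad $\mathrm{Sp}(\mathcal{D})^\otimes$ lies in the essential image of $i_n$; specializing to $\mathcal{D} = \mathcal{P}_n\mathcal{C}$ then completes the proof. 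The motivation is that the $k$-fold tensor products $\odot^k$ on $\mathrm{Sp}(\mathcal{D})$ for $k > n$, being multilinearizations of the $k$-fold Cartesian products on $\mathcal{D}$, are controlled by their truncations precisely because the $n$-excisiveness of $\mathrm{id}_\mathcal{D}$ forces the higher derivatives of the product functors to be of the universal cofree form characterizing the image of $i_n$. Making this precise is the main obstacle and depends on having a sufficiently explicit description of $i_n$; this is what the construction of $i_n$ in Appendix \ref{sec:apptruncations}, via the dendroidal formalism of Remark \ref{rmk:adhoctruncation}, is designed to provide.
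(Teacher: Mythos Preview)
Your reduction for the second statement is correct and matches the paper's: since $i_n$ is fully faithful, applying $\tau_n = i_n \circ (-)_{\leq n}$ to the equivalence $\mathrm{Sp}(\mathcal{P}_n\mathcal{C})^\otimes_{\leq n} \simeq \mathrm{Sp}(\mathcal{C})^\otimes_{\leq n}$ does the job. (One caveat: the lemma you cite is itself proved in Section~\ref{subsec:cobar} as the first paragraph of the proof of this very proposition, so in a complete write-up you would need to supply that argument---it is just the observation that $\otimes^k$ is the $k$th derivative of $\Sigma^\infty\Omega^\infty$ and that these agree for $k \leq n$ since $\mathcal{P}_n\mathcal{C}$ is a weak $n$-excisive approximation.)

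For the first statement you have the right formal reduction---showing $\mathrm{Sp}(\mathcal{P}_n\mathcal{C})^\otimes$ lies in the essential image of $i_n$---and the right intuition that $n$-excisiveness of the identity should force the higher tensor products to be determined by the lower ones. But your proposal leaves the actual mechanism unspecified, and ``controlled by their truncations'' is not yet an argument. The paper does not try to analyze the dendroidal model of $i_n$ directly. Instead it proves a concrete formula (Proposition~\ref{prop:derivativesid}):
\[
\partial_k \mathrm{id}_{\mathcal{D}} \;\simeq\; \Omega\,\mathrm{fib}\bigl(\otimes^k \to \odot^k\bigr),
\]
where $\odot^k$ is the $k$-fold tensor product of $\tau_{k-1}\mathrm{Sp}(\mathcal{D})^\otimes$. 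This comes from expressing $\partial_k\mathrm{id}$ as a cobar construction on the derivatives of $\Sigma^\infty\Omega^\infty$ (Corollary~\ref{cor:cobarid}) and identifying the resulting limit with the description of $\odot^k$ in Proposition~\ref{prop:truncatedtensor}. Once you have this, the inductive step is immediate: for $\mathcal{D} = \mathcal{P}_n\mathcal{C}$ the identity is $n$-excisive, so $\partial_k\mathrm{id}_{\mathcal{P}_n\mathcal{C}} = 0$ for $k > n$, hence $\otimes^k \to \odot^k$ is an equivalence, hence $\tau_{k-1}\mathrm{Sp}(\mathcal{P}_n\mathcal{C})^\otimes \to \tau_k\mathrm{Sp}(\mathcal{P}_n\mathcal{C})^\otimes$ is an equivalence for all $k > n$. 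The missing ingredient in your proposal is precisely this cobar identification; without it there is no handle on why the higher tensor products are ``cofree'' in the required sense.
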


Another way of phrasing this proposition is to say that $\mathrm{Sp}(\mathcal{P}_n\mathcal{C})^\otimes$ is the coreflection of $\mathrm{Sp}(\mathcal{C})^\otimes$ into the $\infty$-category of $n$-truncated stable $\infty$-operads. 

Let $\mathcal{O}^\otimes$ be a nonunital stable $\infty$-operad and write $\otimes^k$ for the $k$-fold tensor product on the $\infty$-category $\mathcal{O}$ it defines. Similarly, write $\odot^k$ for the $k$-fold tensor product on $\mathcal{O}$ determined by the $\infty$-operad $\tau_n\mathcal{O}^\otimes$. It is immediate from Theorem \ref{thm:ntruncation} that the functors $\odot^k$ are canonically equivalent to $\otimes^k$ for $k \leq n$. It will be useful to have an explicit description of $\odot^k$ in terms of $\otimes^k$ for general $k$. We first need to introduce some notation.

Let $\mathbf{Equiv}(k)$ denote the set of equivalence relations on the set $\{1, \ldots, k\}$. It is a partially ordered set under refinement of equivalence relations, with minimal element the discrete equivalence relation and maximal element the trivial equivalence relation with one equivalence class. Consider the functor
\begin{equation*}
q: \mathbf{Equiv}(k) \longrightarrow \mathcal{F}\mathrm{in}_*: E \longmapsto \bigl(\{1, \ldots, k\}/E\bigr)_*,
\end{equation*}
where the subscript $*$ denotes the addition of a disjoint basepoint. Write $\mathbf{\Delta}/\N\mathbf{Equiv}(k)$ for the category of simplices of the nerve of $\mathbf{Equiv}(k)$ and consider the full subcategory $\mathbf{Part}(k) \subseteq \mathbf{\Delta}/\N\mathbf{Equiv}(k)$ consisting of simplices of dimension at least 1 with initial (resp. final) vertex the discrete (resp. trivial) equivalence relation. In other words, $\mathbf{Part}(k)$ is the partially ordered set whose elements are the non-empty linearly ordered subsets of $\mathbf{Equiv}(k)$ whose minimal (resp. maximal) element maps to $\langle k \rangle$ (resp. $\langle 1 \rangle$) under the functor $q$. We will need a further subset $\mathbf{Part}_n(k) \subseteq \mathbf{Part}(k)$. It consists of those elements $E_0 < \cdots < E_j$ of $\mathbf{Part}(k)$ such that for each $1 \leq i \leq j$ the fibers of the map $q(E_{i-1}) \rightarrow q(E_i)$ have cardinality at most $n$. Observe that if $n \geq k$ then the inclusion $\mathbf{Part}_n(k) \subseteq \mathbf{Part}(k)$ is an equality, but this is not the case if $n < k$. Also, $q$ induces a functor
\begin{equation*}
Q: \mathbf{Part}(k) \longrightarrow (\mathbf{\Delta}/\N\mathcal{F}\mathrm{in}_*)^{\langle k \rangle, \langle 1 \rangle},
\end{equation*}
where $(\mathbf{\Delta}/\N\mathcal{F}\mathrm{in}_*)^{\langle k \rangle, \langle 1 \rangle}$ denotes the full subcategory of $\mathbf{\Delta}/\N\mathcal{F}\mathrm{in}_*$ on simplices starting at $\langle k \rangle$ and ending at $\langle 1 \rangle$.

\begin{example}
\label{ex:nest}
The poset $\mathbf{Part}(3)$ has 4 elements. Its image under $Q$ can be schematically drawn as follows:
\[
\xymatrix{
& (\langle 3 \rangle \rightarrow \langle 1 \rangle) \ar[dr]\ar[d]\ar[dl] & \\
(\langle 3 \rangle \rightarrow \langle 2 \rangle \rightarrow \langle 1 \rangle) & (\langle 3 \rangle \rightarrow \langle 2 \rangle \rightarrow \langle 1 \rangle) & (\langle 3 \rangle \rightarrow \langle 2 \rangle \rightarrow \langle 1 \rangle). 
}
\]
The three elements on the bottom row correspond to the three different equivalence relations on the set $\{1,2,3\}$ that are neither trivial nor discrete. The subset $\mathbf{Part}_2(3) \subset \mathbf{Part}(3)$ consists of precisely those three elements.
\end{example} 

We will now define a functor 
\begin{equation*}
\psi^k: \N\mathbf{Part}(k) \longrightarrow \mathrm{Fun}(\mathcal{O}^\otimes_{\langle k \rangle}, \mathcal{O}^\otimes_{\langle 1 \rangle}).
\end{equation*}
By assumption, the map $p: \mathcal{O}^\otimes \longrightarrow \N\mathcal{F}\mathrm{in}_*$ is a locally coCartesian fibration. In particular, any edge $f: x \rightarrow y$ in $\N\mathcal{F}\mathrm{in}_*$ determines a functor $f_!: \mathcal{O}^\otimes_x \rightarrow \mathcal{O}^\otimes_y$, which is canonical up to homotopy. More generally, any simplex $\sigma: \Delta^j \longrightarrow \N\mathcal{F}\mathrm{in}_*$, given by a sequence of edges
\[
\xymatrix{
\sigma(0) \ar[r]^{f(1)} & \sigma(1) \ar[r]^{f(2)} & \cdots \ar[r]^{f(j)} & \sigma(j),
}
\] 
determines a functor 
\begin{equation*}
f(j)_! \circ f(j-1)_! \circ \cdots \circ f(1)_!: \mathcal{O}^\otimes_{\sigma(0)} \longrightarrow \mathcal{O}^\otimes_{\sigma(j)}.
\end{equation*}

\begin{remark}
The composite above is generally \emph{not} equivalent to the functor $\mathcal{O}^\otimes_{\sigma(0)} \longrightarrow \mathcal{O}^\otimes_{\sigma(j)}$ determined by the edge $\sigma(0) \rightarrow \sigma(j)$. However, if $p$ is a coCartesian fibration (rather than just a locally coCartesian fibration), these are indeed equivalent.
\end{remark}

The construction above can be made natural in the simplex $\sigma$, so as to yield a functor
\begin{equation*}
\theta: \N(\mathbf{\Delta}/\N\mathcal{F}\mathrm{in}_*)^{\langle k \rangle, \langle 1 \rangle} \longrightarrow \mathrm{Fun}(\mathcal{O}^\otimes_{\langle k \rangle}, \mathcal{O}^\otimes_{\langle 1 \rangle}).
\end{equation*}
This is done in Definition 7.2.3.8 of \cite{higheralgebra} and called the \emph{spray} associated to $p$. Given $\theta$, we can now define $\psi^k$ to be the composite $\theta \circ Q$. Also, we write $\psi_n^k$ for the restriction of $\psi^k$ to $\N\mathbf{Part}_n(k)$. 

\begin{example}
The diagram $\psi^3$ is obtained by applying $\theta$ to the diagram of Example \ref{ex:nest}. The top vertex of the resulting diagram is the functor assigning to a tuple $(X,Y,Z)$ the tensor product $X \otimes^3 Y \otimes^3 Z$. The bottom three vertices correspond to the expression $(X \otimes^2 Y) \otimes^2 Z$ and permutations of it. The arrows in the diagram correspond to decomposition maps of the form $X \otimes^3 Y \otimes^3 Z \rightarrow (X \otimes^2 Y) \otimes^2 Z$ as discussed in Remark \ref{rmk:dictionary}. The diagram $\psi^4$ is already rather large. A small part of it can be pictured as follows, when evaluated on a tuple $(X,Y,Z,W)$:
\[
\xymatrix{
X \otimes^4 Y \otimes^4 Z \otimes^4 W \ar[r] \ar[d] & (X \otimes^3 Y \otimes^3 Z) \otimes^2 W \ar[d] \\
X \otimes^3 (Y \otimes^2 Z) \otimes^3 W \ar[r] & (X \otimes^2 (Y \otimes^2 Z)) \otimes^2 W.
}
\]
\end{example}

Recall that we write $\odot^k$ for the $k$-fold tensor product determined by the $\infty$-operad $\tau_n\mathcal{O}^\otimes$. The following result gives a concrete description of this functor in terms of the tensor products determined by $\mathcal{O}^\otimes$ itself:

\begin{proposition}
\label{prop:truncatedtensor}
There is a canonical equivalence 
\begin{equation*}
\odot^k \longrightarrow \varprojlim_{\N\mathbf{Part}_n(k)} \psi^k_n.
\end{equation*}
\end{proposition}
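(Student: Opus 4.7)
The plan is to identify both sides via the universal property of $\tau_n\mathcal{O}^\otimes$ established in Theorem \ref{thm:ntruncation}. Concretely, I would construct an $\infty$-operad $\mathcal{N}^\otimes$ with underlying $\infty$-category $\mathcal{O}$ whose $k$-fold corepresenting functor is $\nu^k := \varprojlim_{\N\mathbf{Part}_n(k)} \psi^k_n$, and then verify that $\mathcal{N}^\otimes$ is a nonunital stable $\infty$-operad satisfying the universal property that characterizes $\tau_n\mathcal{O}^\otimes$, namely being terminal among stable $\infty$-operads equipped with a map to $\mathcal{O}^\otimes$ inducing an equivalence on $n$-truncations. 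The equivalence $\odot^k \simeq \nu^k$ then follows automatically from Theorem \ref{thm:ntruncation}.

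First I would construct the canonical map. Since the map $\tau_n\mathcal{O}^\otimes \to \mathcal{O}^\otimes$ of $\infty$-operads is an equivalence on $n$-truncations, for any chain $\sigma = (E_0 < E_1 < \cdots < E_j) \in \mathbf{Part}_n(k)$ the composite functor $\psi^k_n(\sigma)$ is built entirely from tensor products of arity at most $n$, so it computes the same functor whether interpreted in $\tau_n\mathcal{O}^\otimes$ or in $\mathcal{O}^\otimes$. The spray construction $\theta$ associated to the locally coCartesian fibration $\tau_n\mathcal{O}^\otimes \to \N\mathrm{Surj}$, applied to the simplex $Q(\sigma)$, yields a natural transformation $\odot^k \to \psi^k_n(\sigma)$ encoding the operadic decomposition maps in $\tau_n\mathcal{O}^\otimes$. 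Naturality in $\sigma$ assembles these into a cone under $\psi^k_n$, producing the claimed map $\odot^k \to \nu^k$.

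For the base case $k \leq n$ the poset $\mathbf{Part}_n(k)$ equals $\mathbf{Part}(k)$, which has the two-element chain $(\text{discrete} < \text{trivial})$ as initial object; $\psi^k$ sends this to $\otimes^k$, so $\nu^k \simeq \otimes^k \simeq \odot^k$ and the $n$-truncation of $\mathcal{N}^\otimes$ indeed agrees with $\mathcal{O}^\otimes_{\leq n}$. For $k > n$, the combinatorics of $\mathbf{Part}_n(k)$ — chains of nested partitions whose jumps have cardinality at most $n$ — is precisely the indexing data needed to cofreely extend an $n$-truncated cooperadic structure to higher arities, matching the informal picture of Section \ref{subsec:truncations}.

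The main obstacle is showing that the $\nu^k$ genuinely assemble into a stable $\infty$-operad $\mathcal{N}^\otimes$ with coherent composition maps, and that the map $\mathcal{N}^\otimes \to \mathcal{O}^\otimes$ satisfies the required universal property. These are essentially combinatorial matters about fibrations over $\N\mathrm{Surj}$ and are most naturally handled in the dendroidal formalism referenced in Remark \ref{rmk:adhoctruncation}, where $\mathbf{Part}_n(k)$ translates into a category of rooted trees with leaves labelled by $\{1,\ldots,k\}$ whose vertices have valence at most $n$, and the limit formula $\nu^k$ becomes a right Kan extension from the subcategory of such trees to the full dendroidal category. I would defer this verification to the appendix, where it sits alongside the construction used to prove Theorem \ref{thm:ntruncation} itself.
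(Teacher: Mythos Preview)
Your proposal correctly identifies that the universal property of $\tau_n\mathcal{O}^\otimes$ from Theorem~\ref{thm:ntruncation} is the governing input, but it defers precisely the content of the proposition. Saying ``construct $\mathcal{N}^\otimes$ with tensor products $\nu^k$ and verify the universal property'' is a restatement, not a proof: assembling the $\nu^k$ into a coherent corepresentable $\infty$-operad is not a routine combinatorial matter, and indeed the paper never attempts it. There is also a small slip: $\tau_n\mathcal{O}^\otimes$ is built from the \emph{left} adjoint $i_n$ (modelled dendroidally by the pushforward $v_!$), so on spaces of operations one gets a colimit formula, which only becomes a limit on the corepresenting objects after applying Yoneda.

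The paper's argument proceeds quite differently and in two genuinely separate steps. First, Lemma~\ref{lem:odotk} establishes the formula $\odot^k \simeq \varprojlim_{T \in \mathrm{Dec}_n(C_k)} \otimes^T$, indexed not by partitions but by the category of $n$-decompositions of the corolla $C_k$ into trees with vertices of valence at most $n$. This is proved by induction on $k$, using the explicit colimit description of $v_!X$ (Lemma~\ref{lem:v!dec} and Corollary~\ref{cor:mappingspacev!}) together with the homotopy pushout square of Proposition~\ref{prop:wnX} relating $t_{n-1}X$, its $n$-homogeneous part, and $X$; mapping out of this square into an arbitrary stable $\mathcal{N}^\otimes$ and invoking Theorem~\ref{thm:stableleftadjoint} identifies $\odot^{k}$ with the desired tree-indexed limit. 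Second, the passage from trees to partitions is a cofinality argument: one writes down a functor $\omega: \mathbf{Part}_n(k) \to \mathrm{Dec}_n(C_k)$ sending a chain of equivalence relations to the associated layered tree, and shows it is final by proving that each slice category $L(f)$ has contractible nerve. This last step is the longest part of the proof and involves an explicit filtration of $L(f)'$ by ``elementary'' objects (linear orderings of $V(T)$ extending the tree order), checking that each stage of the filtration is glued along a contractible intersection. None of this combinatorics is visible in your outline.
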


Note in particular that when $k \leq n$ the diagram $\psi^k_n$ equals $\psi^k$ itself, which has an initial vertex $\otimes^k$, so that indeed $\odot^k \simeq \otimes^k$ in this case. Finally, Proposition \ref{prop:mapstruncations} below will be a useful tool in inductively producing maps between various $\infty$-operads. To state it, suppose $\mathcal{O}^\otimes$ and $\mathcal{N}^\otimes$  are nonunital stable $\infty$-operads. For simplicity, we will assume that the underlying $\infty$-category $\mathcal{N}$ of $\mathcal{N}^\otimes$ is equal to $\mathcal{O}$, although this is only for notational convenience. Write $\otimes^k_{\mathcal{O}}$ and $\otimes^k_{\mathcal{N}}$ for the $k$-fold tensor products on $\mathcal{O}$ determined by these $\infty$-operads. Write $\mathrm{Map}_{\mathcal{O}}(\mathcal{O}^\otimes, \mathcal{N}^\otimes)$ for the space of maps of $\infty$-operads which restrict to the identity on the level of underlying $\infty$-categories (i.e. after taking fibers over $\langle 1 \rangle$). Any such map $\mathcal{O}^\otimes \rightarrow \mathcal{N}^\otimes$ determines natural transformations $\otimes_{\mathcal{N}}^k \rightarrow \otimes_{\mathcal{O}}^k$. For notational convenience let us write $D^{\otimes_{\mathcal{O}}}_k$ and $D^{\otimes_{\mathcal{N}}}_k$ for the functors described by 
\begin{equation*}
X \mapsto (X \otimes_\mathcal{O}^k \cdots \otimes_{\mathcal{O}}^k X)_{h\Sigma_k} \quad\quad \text{and} \quad\quad X \mapsto (X \otimes_\mathcal{N}^k \cdots \otimes_{\mathcal{N}}^k X)_{h\Sigma_k}
\end{equation*}
respectively. There is then a canonical map
\begin{equation*}
\mathrm{Map}_{\mathcal{O}}(\mathcal{O}^\otimes, \mathcal{N}^\otimes) \longrightarrow \mathrm{Nat}(D^{\otimes_{\mathcal{N}}}_k, D^{\otimes_{\mathcal{O}}}_k),
\end{equation*} 
where $\mathrm{Nat}$ on the right refers to the space of natural transformation between functors from $\mathcal{O}$ to itself. Similarly, writing $\odot^k_{\mathcal{O}}$ for the tensor products on $\mathcal{O}$ determined by $\tau_{n-1}\mathcal{O}^\otimes$, there is the analogous map
\begin{equation*}
\mathrm{Map}_{\mathcal{O}}(\tau_{n-1}\mathcal{O}^\otimes, \mathcal{N}^\otimes) \longrightarrow  \mathrm{Nat}(D^{\otimes_{\mathcal{N}}}_k, D^{\odot_{\mathcal{O}}}_k).
\end{equation*}

\begin{remark}
Exploiting the correspondence between $k$-homogeneous functors and symmetric multilinear functors of $k$ variables, one sees that the space $\mathrm{Nat}(D^{\otimes_{\mathcal{N}}}_k, D^{\otimes_{\mathcal{O}}}_k)$ is equivalent to the space of $\Sigma_k$-equivariant natural transformations between $\otimes_{\mathcal{N}}^k$ and $\otimes_{\mathcal{O}}^k$. 
\end{remark}

\begin{proposition}
\label{prop:mapstruncations}
Suppose $\mathcal{O}^\otimes$ and $\mathcal{N}^\otimes$ are as above. Then the diagram
\[
\xymatrix{
\mathrm{Map}_{\mathcal{O}}(\tau_n\mathcal{O}^\otimes, \mathcal{N}^\otimes)  \ar[r]\ar[d] & \mathrm{Nat}(D^{\otimes_{\mathcal{N}}}_n, D^{\otimes_{\mathcal{O}}}_n) \ar[d] \\
\mathrm{Map}_{\mathcal{O}}(\tau_{n-1}\mathcal{O}^\otimes, \mathcal{N}^\otimes) \ar[r] & \mathrm{Nat}(D^{\otimes_{\mathcal{N}}}_n, D^{\odot_{\mathcal{O}}}_n)
}
\]
is a pullback square in the $\infty$-category of spaces.
\end{proposition}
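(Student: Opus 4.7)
The plan is to analyze mapping spaces of corepresentable $\infty$-operads arity by arity, leveraging the explicit description of tensor products in truncated operads given by Proposition~\ref{prop:truncatedtensor}. Since $\mathcal{O}^\otimes$, $\tau_n\mathcal{O}^\otimes$, $\tau_{n-1}\mathcal{O}^\otimes$ and $\mathcal{N}^\otimes$ are all corepresentable with underlying $\infty$-category $\mathcal{O}$, a map of $\infty$-operads between them which is the identity on underlying categories is classified by the induced $\Sigma_k$-equivariant natural transformations between $k$-fold tensor products, equipped with the coherence data recorded by the spray functor $\theta$. I plan to make this precise using the dendroidal formalism of Appendix~\ref{sec:apptruncations}, in which mapping spaces of nonunital $\infty$-operads admit a tractable description in terms of diagrams indexed by trees, with the complexity at arity~$k$ localized in trees with $k$ leaves.

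The second step is the comparison of tensor products at the relevant arities. For $k<n$ the maps $\tau_n\mathcal{O}^\otimes\to\mathcal{O}^\otimes$ and $\tau_{n-1}\mathcal{O}^\otimes\to\mathcal{O}^\otimes$ induce equivalences $\otimes^k_{\tau_n\mathcal{O}}\simeq\otimes^k_{\tau_{n-1}\mathcal{O}}\simeq\otimes^k_{\mathcal{O}}$ by Theorem~\ref{thm:ntruncation}. At arity $n$, Proposition~\ref{prop:truncatedtensor} gives $\otimes^n_{\tau_n\mathcal{O}}\simeq\otimes^n_{\mathcal{O}}$ (since $\mathbf{Part}_n(n)=\mathbf{Part}(n)$ has an initial object corresponding to the $1$-simplex $\langle n\rangle\to\langle 1\rangle$) and $\otimes^n_{\tau_{n-1}\mathcal{O}}\simeq\odot^n_{\mathcal{O}}$. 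Moreover $\mathbf{Part}_n(n)$ is obtained from $\mathbf{Part}_{n-1}(n)$ by adjoining exactly that initial vertex, whose image under $\psi^n$ is $\otimes^n_{\mathcal{O}}$. Hence the canonical map $\otimes^n_{\mathcal{O}}\to\odot^n_{\mathcal{O}}$ exhibits $\otimes^n_{\mathcal{O}}$ as the limit of the enlarged diagram, and this is the only place where the two truncations differ.

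Combining the first two steps, I will deduce that the extra data needed to promote a map $\tau_{n-1}\mathcal{O}^\otimes\to\mathcal{N}^\otimes$ to a map $\tau_n\mathcal{O}^\otimes\to\mathcal{N}^\otimes$ is exactly a $\Sigma_n$-equivariant natural transformation $\otimes^n_{\mathcal{N}}\to\otimes^n_{\mathcal{O}}$ whose composition with $\otimes^n_{\mathcal{O}}\to\odot^n_{\mathcal{O}}$ agrees with the natural transformation $\otimes^n_{\mathcal{N}}\to\odot^n_{\mathcal{O}}$ induced by the given map from $\tau_{n-1}\mathcal{O}^\otimes$. All coherences between this new arity-$n$ piece and operations of lower arity are automatic, because they factor through the already-specified data below arity $n$, which is identical for the two truncations. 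Translating between $\Sigma_n$-equivariant natural transformations of $n$-fold tensor products and natural transformations between the $\Sigma_n$-coinvariant functors $D_n$, as recalled in the remark preceding the proposition, converts this description into precisely the pullback square claimed.

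The main technical obstacle is the first step: rigorously splitting the mapping space of stable $\infty$-operads into arity-indexed contributions. In Lurie's locally coCartesian model, extracting an arity-$n$ piece requires controlling how operations of arity $n$ interact with compositions of lower-arity operations; I plan to sidestep this by passing to dendroidal sets (as in the appendix), where $\tau_n\mathcal{O}^\otimes$ is built from $\tau_{n-1}\mathcal{O}^\otimes$ by a single cell attachment of arity $n$, and the pullback square of the proposition emerges as the mapping-space pullback associated to this cell attachment.
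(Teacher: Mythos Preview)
Your proposal is correct and follows essentially the same route as the paper. The paper formalizes your ``single cell attachment'' as the pushout square of Corollary~\ref{cor:wnXstable} (built on Proposition~\ref{prop:wnX} in the dendroidal appendix), which expresses $\mathcal{O}^\otimes_{\leq n}$ as the pushout of $\tau_{n-1}\mathcal{O}^\otimes_{\leq n}$ and the $n$-homogeneous part $h_n(\mathcal{O}^\otimes_{\leq n})$ along $h_n(\tau_{n-1}\mathcal{O}^\otimes_{\leq n})$; mapping out of this pushout into $\mathcal{N}^\otimes$ gives the desired pullback, and the identification of the homogeneous mapping spaces with $\Sigma_n$-equivariant natural transformations (your third step) is exactly the paper's final paragraph, with Lemma~\ref{lem:homogstable} supplying the $\odot^n$-side via the same limit formula you invoke from Proposition~\ref{prop:truncatedtensor}.
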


In words, to extend a map $\tau_{n-1}\mathcal{O}^\otimes \rightarrow \mathcal{N}^\otimes$ to a map $\tau_n\mathcal{O}^\otimes \rightarrow \mathcal{N}^\otimes$, one needs to (equivariantly) lift the natural transformation $\otimes^n_{\mathcal{N}} \rightarrow \odot^n_{\mathcal{O}}$ to a natural transformation $\otimes^n_{\mathcal{N}}  \rightarrow \otimes^n_{\mathcal{O}}$.

% remark about relation between \odot^{n+1} and cobar construction?

\section{Coalgebras in a corepresentable $\infty$-operad}
\label{subsec:coalgebras}

Let $\mathcal{O}^\otimes$ be a corepresentable $\infty$-operad, so that its underlying $\infty$-category $\mathcal{O}$ comes equipped with tensor product functors $\otimes^k$. In this section we discuss what it means to equip an object $X \in \mathcal{O}$ with the structure of a \emph{coalgebra} in $\mathcal{O}^\otimes$, meaning a sequence of `diagonal maps'
\begin{equation*}
\delta_k: X \longrightarrow X \otimes^k X \otimes^k \cdots \otimes^k X
\end{equation*}
which are compatible with the operad structure of $\mathcal{O}^\otimes$ in a suitable way.

\begin{remark}
Under the dictionary of Remark \ref{rmk:dictionary}, coalgebras in $\mathcal{O}^\otimes$ match with coalgebras over the corresponding cooperad.
\end{remark}
 
Roughly speaking, supplying a coalgebra structure on $X$ is equivalent to upgrading the slice category $\mathcal{O}_{X/}$ to a corepresentable $\infty$-operad in a way that is compatible with the operad structure of $\mathcal{O}^\otimes$. More precisely, let $p^\otimes: \mathcal{X}^\otimes \rightarrow \mathcal{O}^\otimes$ be a fibration of corepresentable $\infty$-operads and suppose that the underlying functor $p: \mathcal{X} \rightarrow \mathcal{O}$ is equivalent (as a fibration over $\mathcal{O}$) to a projection of the form $\mathcal{O}_{X/} \rightarrow \mathcal{O}$ for some object $X \in \mathcal{O}$. Fix such an equivalence (which need not be unique, but our definition will not depend on the choice). Then, for any tuple of maps $f_1: X \rightarrow Y_1, \ldots, f_n: X \rightarrow Y_n$, the map $p^\otimes$ induces a map
\begin{equation*}
\varphi_{f_1, \ldots, f_n}: Y_1 \otimes \cdots \otimes Y_n = p(f_1) \otimes \cdots \otimes p(f_n) \longrightarrow p(f_1 \otimes \cdots \otimes f_n).
\end{equation*} 

\begin{definition}
\label{def:coalgebra}
We say \emph{$p^\otimes$ exhibits $X$ as a coalgebra in $\mathcal{O}^\otimes$} if the maps $\varphi_{f_1, \ldots, f_n}$ are equivalences, for any $n \geq 0$ and choice of maps $f_1, \ldots, f_n$. Write $\mathrm{coAlg}(\mathcal{O}^\otimes)$ for the $\infty$-category of coalgebras in $\mathcal{O}^\otimes$, which is the opposite of the full subcategory of the $\infty$-category of $\infty$-operads over $\mathcal{O}^\otimes$ spanned by the coalgebras.
\end{definition}

One can think of this definition as follows. Consider the identity map $X = X$ as an object $\mathrm{id}_X$ of $\mathcal{O}_{X/}$. The corepresentable $\infty$-operad $\mathcal{X}^\otimes$ then determines for every $n$ a tensor product $\mathrm{id}_X \otimes^n \cdots \otimes^n \mathrm{id}_X$, which is another object of $\mathcal{O}_{X/}$. By the requirement that $\varphi_{\mathrm{id}_X, \ldots, \mathrm{id}_X}$ is an equivalence, this object determines a map
\begin{equation*}
\delta_n: X \longrightarrow X \otimes^n \cdots \otimes^n X.
\end{equation*}
Furthermore, these maps (for various $n$) satisfy certain coherence relations. For example, the stable $\infty$-operad $\mathcal{O}^\otimes$ gives a natural transformation
\begin{equation*}
X \otimes^3 X \otimes^3 X \longrightarrow (X \otimes^2 X) \otimes^2 X
\end{equation*}
and a coalgebra as above provides a 2-simplex
\[
\xymatrix@C=35pt{
& X \otimes^3 X \otimes^3 X \ar[d] \\
X \ar[ur]^{\delta_3} \ar[r]_-{(\delta_2 \otimes 1) \circ \delta_2} & (X \otimes^2 X) \otimes^2 X
}
\]
in $\mathcal{O}$. Furthermore, if $\mathcal{X}^\otimes \rightarrow \mathcal{O}^\otimes$ and $\mathcal{Y}^\otimes \rightarrow \mathcal{O}^\otimes$ exhibit objects $X$ and $Y$ respectively as coalgebras in $\mathcal{O}^\otimes$, then a triangle of maps of $\infty$-operads
\[
\xymatrix{
\mathcal{Y}^\otimes \ar[rr]^F\ar[dr] && \mathcal{X}^\otimes \ar[dl] \\
& \mathcal{O}^\otimes &
}
\]
in particular (after passing to fibers over $\langle 1 \rangle$) yields a functor $\mathcal{O}_{Y/} \rightarrow \mathcal{O}_{X/}$ over $\mathcal{O}$, which corresponds to a morphism $f: X \rightarrow Y$. The fact that $F$ is a map of $\infty$-operads over $\mathcal{O}^\otimes$ makes $f$ compatible with the coalgebra structures on $X$ and $Y$.

\begin{remark}
We noted above that the equivalence between $\mathcal{X}$ and $\mathcal{O}_{X/}$ is not necessarily unique; however, the object $X$ itself is unique up to equivalence. We can define a forgetful functor $U: \mathrm{coAlg}(\mathcal{O}^\otimes) \rightarrow \mathcal{O}$, taking the underlying object of a coalgebra, as follows. Consider the functor 
\begin{equation*}
j: \mathcal{O} \longrightarrow (\mathbf{Cat}/\mathcal{O})^{\mathrm{op}}: X \longmapsto (\mathcal{O}_{X/} \rightarrow \mathcal{O}),
\end{equation*}
which is an embedding by the Yoneda lemma, and write $\mathrm{Repr}(\mathcal{O})$ for its essential image. (To be precise, $j$ is the composition of the Yoneda embedding $\mathcal{O} \rightarrow \mathrm{Fun}(\mathcal{O}^{\mathrm{op}},\mathcal{S})$ with the unstraightening construction of Section 3.2 of \cite{htt}.) Then there is an essentially unique inverse $k: \mathrm{Repr}(\mathcal{O}) \rightarrow \mathcal{O}$ to $j$. By definition the underlying functor $p: \mathcal{X} \rightarrow \mathcal{O}$ of a coalgebra $p^\otimes$ is in $\mathrm{Repr}(\mathcal{O})$ and we set $U(p^\otimes) = k(p)$. 
\end{remark}

We need to investigate the behaviour of coalgebras under maps of $\infty$-operads. Suppose $g^\otimes: \mathcal{N}^\otimes \rightarrow \mathcal{O}^\otimes$ is a map of $\infty$-operads such that the induced functor of underlying $\infty$-categories $g: \mathcal{N} \rightarrow \mathcal{O}$ admits a left adjoint $f$. We claim that for any coalgebra $X$ in $\mathcal{O}^\otimes$, the object $f(X)$ can be given the structure of a coalgebra in $\mathcal{N}^\otimes$ in a canonical way. Let us first explain this heuristically; a rigorous construction is \ref{constr:pullbackcoalg} below.

For any $Y \in \mathcal{N}$, the map $g$ yields a natural map $g(Y) \otimes_{\mathcal{O}} g(Y) \rightarrow g(Y \otimes_{\mathcal{N}} Y)$. Using this, and the unit and counit of the adjunction between $f$ and $g$, we get for any $X \in \mathcal{O}$ a sequence of natural maps
\begin{eqnarray*}
f(X \otimes_{\mathcal{O}} X) & \longrightarrow & f(gf(X) \otimes_{\mathcal{O}} gf(X)) \\
& \longrightarrow & fg(f(X) \otimes_{\mathcal{N}} f(X)) \\
& \longrightarrow & f(X) \otimes_{\mathcal{N}} f(X). 
\end{eqnarray*}
If $X$ is a coalgebra, so that it comes with a natural map $X \rightarrow X \otimes_{\mathcal{O}} X$, we may form the composite of the maps
\begin{equation*}
f(X) \rightarrow f(X \otimes_{\mathcal{O}} X) \rightarrow f(X) \otimes_{\mathcal{N}} f(X)
\end{equation*}
to find a diagonal for $f(X)$. The higher diagonals can be treated analogously. To be more precise, we can construct a functor $f_*: \mathrm{coAlg}(\mathcal{O}^\otimes) \rightarrow \mathrm{coAlg}(\mathcal{N}^\otimes)$ using the following result:

\begin{lemma}
Let $p^\otimes: \mathcal{X}^\otimes \rightarrow \mathcal{O}^\otimes$ be a map of $\infty$-operads exhibiting an object $X$ as a coalgebra in $\mathcal{O}$ and let $g^\otimes: \mathcal{N}^\otimes \rightarrow \mathcal{O}^\otimes$ be as above. Form a pullback square
\[
\xymatrix{
f_*\mathcal{X}^\otimes \ar[r]\ar[d]_{q^\otimes} & \mathcal{X}^\otimes \ar[d]^{p^\otimes} \\
\mathcal{N}^\otimes \ar[r]_{g^\otimes} & \mathcal{O}^\otimes.
}
\]
Then $q$ exhibits the object $f(X)$ as a coalgebra in $\mathcal{N}^\otimes$.
\end{lemma}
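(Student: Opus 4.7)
The plan is to verify the three conditions built into Definition \ref{def:coalgebra} for the pullback $q^\otimes$: that it is a fibration of corepresentable $\infty$-operads, that its underlying functor is equivalent over $\mathcal{N}$ to the projection $\mathcal{N}_{f(X)/} \to \mathcal{N}$, and that the comparison maps $\varphi_{\alpha_1,\ldots,\alpha_n}$ are equivalences. I would treat these three points in turn.

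The first point is essentially formal: pullback along a map of $\infty$-operads preserves fibrations of $\infty$-operads (this is standard for Lurie's model category of $\infty$-preoperads, see Chapter 2 of \cite{higheralgebra}), and corepresentability is a condition on locally coCartesian lifts over $\N\mathrm{Surj}$, which is inherited from $\mathcal{X}^\otimes \to \mathcal{O}^\otimes$ by the pullback construction. The second point reduces, after taking fibers over $\langle 1 \rangle$, to the classical fact that for an adjunction $f \dashv g$ one has a canonical equivalence
\begin{equation*}
\mathcal{N}_{f(X)/} \simeq \mathcal{N} \times_{\mathcal{O}} \mathcal{O}_{X/}
\end{equation*}
of $\infty$-categories over $\mathcal{N}$, sending a morphism $\alpha: f(X) \to Y$ to the pair $(Y, \tilde\alpha: X \to g(Y))$ where $\tilde\alpha$ is the mate of $\alpha$. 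This is a manifestation of the universal property of the left adjoint $f$ and is proven in \cite{htt}.

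For the third point, given $\alpha_i: f(X) \to Y_i$ in $\mathcal{N}$ ($i = 1,\ldots,n$) with adjoints $\tilde\alpha_i: X \to g(Y_i)$ in $\mathcal{O}$, the tensor product $\alpha_1 \otimes \cdots \otimes \alpha_n$ in $f_*\mathcal{X}^\otimes$ is obtained via the pullback from the tensor product $\tilde\alpha_1 \otimes \cdots \otimes \tilde\alpha_n$ in $\mathcal{X}^\otimes$. Since $p^\otimes$ exhibits $X$ as a coalgebra in $\mathcal{O}^\otimes$, the comparison map for $\mathcal{X}^\otimes$ gives
\begin{equation*}
g(Y_1) \otimes_{\mathcal{O}} \cdots \otimes_{\mathcal{O}} g(Y_n) \xrightarrow{\simeq} p(\tilde\alpha_1 \otimes \cdots \otimes \tilde\alpha_n),
\end{equation*}
viewed as an object of $\mathcal{O}_{X/}$ via the composite diagonal $X \to X^{\otimes n} \to g(Y_1) \otimes \cdots \otimes g(Y_n)$. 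Pulling back along $g^\otimes$ lands this object above $Y_1 \otimes_{\mathcal{N}} \cdots \otimes_{\mathcal{N}} Y_n$, and translating under the equivalence of the second step yields precisely the asserted comparison map for $q^\otimes$, which is then an equivalence by the above.

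The main obstacle is coherence: one has to argue that the equivalence $f_*\mathcal{X} \simeq \mathcal{N}_{f(X)/}$ over $\mathcal{N}$ is not only a pointwise identification but also intertwines the locally coCartesian lifts produced by the operadic structure of $f_*\mathcal{X}^\otimes$ with those produced by pairing $\mathcal{N}_{f(X)/}$ with the tensor structure on $\mathcal{N}^\otimes$. The cleanest way to handle this, which I would adopt, is to check that the two candidate corepresentations of the fibration $f_*\mathcal{X} \to \mathcal{N}$ agree on generators (the maps $\alpha_i$ themselves), and then appeal to the uniqueness of corepresentations by an object — so that all higher coherence is automatic.
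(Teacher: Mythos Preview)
Your approach is essentially the same as the paper's, and the key identification $\mathcal{N} \times_{\mathcal{O}} \mathcal{O}_{X/} \simeq \mathcal{N}_{f(X)/}$ via the adjunction is exactly what the paper uses. However, the paper is more economical: rather than separating corepresentability (your step 1) from the comparison map check (your step 3), it directly computes the functor $f_*\mathcal{X}(\varphi_1,\ldots,\varphi_n;-)$ on $\mathcal{N}_{f(X)/}$ and exhibits its corepresenting object as the map $f(X) \to Y_1 \otimes_{\mathcal{N}} \cdots \otimes_{\mathcal{N}} Y_n$ adjoint to the composite $X \to g(Y_1) \otimes_{\mathcal{O}} \cdots \otimes_{\mathcal{O}} g(Y_n) \to g(Y_1 \otimes_{\mathcal{N}} \cdots \otimes_{\mathcal{N}} Y_n)$. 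This single computation handles both corepresentability and the coalgebra condition at once, and the coherence worry you raise at the end evaporates.

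One point to tighten: your step 1 claims corepresentability is ``inherited from $\mathcal{X}^\otimes \to \mathcal{O}^\otimes$ by the pullback construction,'' but this is not as formal as you suggest. The fiber product is taken over $\mathcal{O}^\otimes$, not over $\N\mathrm{Surj}$, so the composite $f_*\mathcal{X}^\otimes \to \N\mathrm{Surj}$ is not literally a base change of a locally coCartesian fibration; neither $g^\otimes$ nor $p^\otimes$ is assumed to preserve locally coCartesian edges over active maps. Corepresentability therefore genuinely requires the computation of the operations space as a pullback of mapping spaces and its identification with a slice mapping space in $\mathcal{N}_{f(X)/}$---which is precisely what the paper does, and what your step 3 is reaching toward. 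Your phrase ``pulling back along $g^\otimes$ lands this object above $Y_1 \otimes_{\mathcal{N}} \cdots \otimes_{\mathcal{N}} Y_n$'' is the right intuition but should be replaced by that explicit computation.
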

\begin{proof}
Observe that the underlying $\infty$-category of $\mathcal{X}$ fits into a pullback square
\[
\xymatrix{
f_*\mathcal{X} \ar[r]\ar[d] & \mathcal{O}_{X/} \ar[d] \\
\mathcal{N} \ar[r] & \mathcal{O}
}
\]
and is therefore equivalent to $\mathcal{N}_{f(X)/}$. Furthermore, for a collection of maps $\varphi_1: f(X) \rightarrow Y_1, \ldots, \varphi_n: f(X) \rightarrow Y_n$ it is straightforward to check that the functor
\begin{equation*}
f_*\mathcal{X}(\varphi_1, \ldots, \varphi_n; -): \mathcal{N}_{f(X)/} \longrightarrow \mathcal{S}
\end{equation*}
is corepresented by the map $f(X) \rightarrow Y_1 \otimes_{\mathcal{N}} \cdots \otimes_{\mathcal{N}} Y_n$ adjoint to the composition of the maps
\begin{equation*}
X \longrightarrow g(Y_1) \otimes_{\mathcal{O}} \cdots \otimes_{\mathcal{O}} g(Y_n) \longrightarrow g(Y_1 \otimes_{\mathcal{N}} \cdots \otimes_{\mathcal{N}} Y_n).
\end{equation*}
The first of these maps is determined by the tensor product induced by $\mathcal{X}^\otimes$ on $\mathcal{O}_{X/}$, the second is determined by the map of $\infty$-operads $g^\otimes$. Therefore $f_*\mathcal{X}$ is corepresentable. Also, this description of $f_*\mathcal{X}(\varphi_1, \ldots, \varphi_n; -)$ makes it clear that the map $q^\otimes$ satisfies the requirements for exhibiting $f(X)$ as a coalgebra in $\mathcal{N}^\otimes$.
\end{proof}

\begin{construction}
\label{constr:pullbackcoalg}
The pullback square of the previous lemma defines a functor $f_*: \mathrm{coAlg}(\mathcal{O}^\otimes) \rightarrow \mathrm{coAlg}(\mathcal{N}^\otimes)$.
\end{construction}

Our source of coalgebras will be a combination of Construction \ref{constr:pullbackcoalg} and the construction of `diagonal' coalgebras in Cartesian symmetric monoidal $\infty$-categories, which we provide now. Consider an $\infty$-category $\mathcal{C}$ which admits finite products. In Construction 2.4.1.4 of \cite{higheralgebra} Lurie defines a symmetric monoidal $\infty$-category $\mathcal{C}^\times$ whose monoidal structure is given by the Cartesian product. We write $\mathcal{C}_{\mathrm{nu}}^\times$ for its nonunital variant, which is the pullback
\begin{equation*}
\N\mathrm{Surj} \times_{\NFin} \mathcal{C}^\times.
\end{equation*}
For $X$ an object of $\mathcal{C}$ the category $\mathcal{C}_{X/}$ admits finite products as well: for maps $X \rightarrow Y$ and $X \rightarrow Z$ their product in this $\infty$-category is the composition $X \rightarrow X \times X \rightarrow Y \times Z$, where the first map is the diagonal. Therefore we may construct another $\infty$-operad $\mathcal{C}_{X/}^{\times}$ which admits an evident forgetful functor to $\mathcal{C}^\times$. The proof of the following is completely straightforward and left to the reader:

\begin{lemma}
\label{lem:cartcoalg}
The functor $(\mathcal{C}_{X/}^{\times})_{\mathrm{nu}} \rightarrow \mathcal{C}^\times_{\mathrm{nu}}$ exhibits $X$ as a coalgebra in $\mathcal{C}^\times_{\mathrm{nu}}$.
\end{lemma}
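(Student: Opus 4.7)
The plan is to unwind Definition \ref{def:coalgebra} directly and verify the corepresentability condition by hand, using the explicit construction of $\mathcal{C}^\times_{\mathrm{nu}}$ from Higher Algebra. First I would identify the underlying $\infty$-category of $(\mathcal{C}_{X/}^\times)_{\mathrm{nu}}$, i.e.\ the fiber over $\langle 1\rangle$, with $\mathcal{C}_{X/}$; this is immediate from Lurie's construction of $\mathcal{D}^\times$ from an $\infty$-category $\mathcal{D}$ that admits finite products. Similarly, the forgetful functor $(\mathcal{C}_{X/}^\times)_{\mathrm{nu}} \to \mathcal{C}^\times_{\mathrm{nu}}$ restricts on underlying $\infty$-categories to the canonical projection $\pi: \mathcal{C}_{X/} \to \mathcal{C}$, which is of the form required in Definition \ref{def:coalgebra}.

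Next, I would compute the tensor product functors in $(\mathcal{C}_{X/}^\times)_{\mathrm{nu}}$ explicitly. Since $\mathcal{C}_{X/}$ inherits finite products from $\mathcal{C}$ in the manner described just before the lemma, the $n$-fold tensor product of objects $f_1: X \to Y_1,\,\ldots,\,f_n: X \to Y_n$ in $(\mathcal{C}_{X/}^\times)_{\mathrm{nu}}$ is the diagonal composite
\begin{equation*}
f_1 \otimes \cdots \otimes f_n \;\simeq\; \bigl( X \xrightarrow{\Delta} X^{\times n} \xrightarrow{f_1 \times \cdots \times f_n} Y_1 \times \cdots \times Y_n \bigr).
\end{equation*}
The key observation is then that the image of this object under $\pi$ is precisely $Y_1 \times \cdots \times Y_n$, which agrees on the nose with the tensor product of $\pi(f_1), \ldots, \pi(f_n)$ in $\mathcal{C}^\times_{\mathrm{nu}}$. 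Chasing the definitions, the comparison map
\begin{equation*}
\varphi_{f_1,\ldots,f_n}: \pi(f_1) \otimes \cdots \otimes \pi(f_n) \longrightarrow \pi(f_1 \otimes \cdots \otimes f_n)
\end{equation*}
is the identity map of $Y_1 \times \cdots \times Y_n$, and so in particular an equivalence. This verifies the condition of Definition \ref{def:coalgebra}.

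The only real subtlety is bookkeeping: one must ensure that the tensor product of $f_1, \ldots, f_n$ in the $\infty$-operadic sense (a locally coCartesian lift of an active morphism $\langle n \rangle \to \langle 1 \rangle$ in $\N\mathrm{Surj}$) genuinely agrees with the categorical product in $\mathcal{C}_{X/}$. This, however, is built into the construction of $\mathcal{D}^\times$ for any $\mathcal{D}$ with finite products, and passage to the nonunital variant via pullback along $\N\mathrm{Surj} \to \NFin$ does not affect this identification since the active morphisms out of $\langle n \rangle$ for $n \geq 1$ all lie in $\N\mathrm{Surj}$. Once this identification is fixed, the verification above goes through without incident, and there is nothing further to check.
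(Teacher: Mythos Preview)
Your proposal is correct and is precisely the sort of direct verification the paper has in mind: the paper itself offers no proof, declaring the result ``completely straightforward and left to the reader.'' Your unwinding of Definition~\ref{def:coalgebra}, identifying the tensor product in $(\mathcal{C}_{X/}^\times)_{\mathrm{nu}}$ with the categorical product in $\mathcal{C}_{X/}$ and observing that $\varphi_{f_1,\ldots,f_n}$ is the identity on $Y_1 \times \cdots \times Y_n$, is exactly the intended argument.
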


\begin{definition}
\label{def:diag}
Let $\mathcal{C}$ be an $\infty$-category with finite products. Then by the previous lemma we can define a functor which may be described as follows:
\begin{equation*}
\mathrm{diag}: \mathcal{C} \longrightarrow \mathrm{coAlg}(\mathcal{C}^{\times}_{\mathrm{nu}}): X \longmapsto \bigl((\mathcal{C}_{X/}^{\times})_{\mathrm{nu}} \rightarrow \mathcal{C}^\times_{\mathrm{nu}}\bigr).
\end{equation*}
More precisely, one can construct a functor $\mathcal{C} \rightarrow (\mathbf{Cat}_\infty)^{\mathrm{op}}$ which assigns to $X$ an $\infty$-category equivalent to the slice category $\mathcal{C}_{X/}$ by straightening the Cartesian fibration $\mathcal{C}^{\Delta^1} \xrightarrow{\mathrm{ev}_0} \mathcal{C}$. Then one applies Construction 2.4.1.4 of \cite{higheralgebra} pointwise. The notation $\mathrm{diag}$ refers to the fact that this construction yields the usual `diagonal' coalgebra structure on an object $X$ of an $\infty$-category with finite products.
\end{definition}

\begin{construction}
\label{constr:coalgebras}
Suppose $\mathcal{C}$ is a pointed compactly generated $\infty$-category. Then $\Omega_n^\infty: \mathcal{P}_n\mathcal{C} \rightarrow \mathcal{C}$ induces a commutative diagram of $\infty$-operads
\[
\xymatrix{
\mathcal{C}_{\mathrm{nu}}^\times & (\mathcal{P}_n\mathcal{C})_{\mathrm{nu}}^\times \ar[l] \\
\mathrm{Sp}(\mathcal{C})^\otimes \ar[u] & \mathrm{Sp}(\mathcal{P}_n\mathcal{C})^\otimes . \ar[u]\ar[l]
}
\]
Applying Construction \ref{constr:pullbackcoalg} then yields a commutative diagram
\[
\xymatrix{
\mathrm{coAlg}(\mathcal{C}_{\mathrm{nu}}^\times) \ar[d]\ar[r] & \mathrm{coAlg}((\mathcal{P}_n\mathcal{C})_{\mathrm{nu}}^\times) \ar[d] \\
\mathrm{coAlg}(\mathrm{Sp}(\mathcal{C})^\otimes) \ar[r] & \mathrm{coAlg}(\mathrm{Sp}(\mathcal{P}_n\mathcal{C})^\otimes).
}
\]
On underlying objects the top horizontal functor may be identified with $\Sigma_n^\infty$, the bottom horizontal functor simply with $\mathrm{id}_{\mathrm{Sp}(\mathcal{C})}$. Finally, Definition \ref{def:diag} provides a functor $\mathrm{diag}: \mathcal{C} \rightarrow \mathrm{coAlg}(\mathcal{C}^\times_{\mathrm{nu}})$ and hence we obtain a functor from $\mathcal{C}$ to each of the four categories of coalgebras in the previous square. In what follows we will make frequent use of the sequence of functors
\[
\xymatrix{
\mathcal{C} \ar[r] & \mathrm{coAlg}(\mathrm{Sp}(\mathcal{C})^\otimes) \ar[r] & \mathrm{coAlg}(\mathrm{Sp}(\mathcal{P}_n\mathcal{C})^\otimes)
}
\]
thus obtained. Note that this sequence is also natural in $\mathcal{C}$ with respect to functors preserving colimits and compact objects. On underlying objects the composite of these functors may be identified with $\Sigma^\infty_{\mathcal{C}}$, with the understanding that the underlying $\infty$-category of $\mathrm{Sp}(\mathcal{P}_n\mathcal{C})^\otimes$ is identified with $\mathrm{Sp}(\mathcal{C})$.
\end{construction}

\section{Coalgebras in an $n$-truncated stable $\infty$-operad}
\label{subsec:truncatedcoalgebras}

Fix a nonunital stable $\infty$-operad $\mathcal{O}^\otimes$. We will need a collection of technical results on the $\infty$-categories of coalgebras in the truncated $\infty$-operads $\tau_n\mathcal{O}^\otimes$. The $\infty$-category $\mathrm{coAlg}(\mathcal{O}^\otimes)$ need not be compactly generated, so that it does not necessarily admit a good theory of calculus. To circumvent this defect we consider the following:

\begin{definition}
\label{def:indcoalgebras}
Write $\mathrm{coAlg}(\mathcal{O}^\otimes)^c$ for the full subcategory of $\mathrm{coAlg}(\mathcal{O}^\otimes)$ on coalgebras whose underlying object of $\mathcal{O}$ is compact. Then define the $\infty$-category of \emph{ind-coalgebras in $\mathcal{O}^\otimes$} to be
\begin{equation*}
\mathrm{coAlg}^{\mathrm{ind}}(\mathcal{O}^\otimes) := \mathrm{Ind}(\mathrm{coAlg}\bigl(\mathcal{O}^\otimes)^c\bigr).
\end{equation*}
\end{definition}

\begin{remark}
In Construction \ref{constr:coalgebras} we defined functors $\mathcal{C} \rightarrow \mathrm{coAlg}(\mathrm{Sp}(\mathcal{C})^\otimes)$. This construction allows a variant for ind-coalgebras: indeed, applying the previous construction to compact objects of $\mathcal{C}$ and formally extending by filtered colimits yields a functor
\begin{equation*}
\mathcal{C} \longrightarrow \mathrm{coAlg}^{\mathrm{ind}}(\mathrm{Sp}(\mathcal{C})^\otimes).
\end{equation*}
\end{remark}

The $\infty$-category of ind-coalgebras in $\tau_n\mathcal{O}^\otimes$ can be understood more explicitly by inductively constructing it out of the $\infty$-category of coalgebras in $\tau_{n-1}\mathcal{O}^\otimes$. For convenience of stating the necessary results we introduce some notation. Suppose $F: \mathcal{O} \rightarrow \mathcal{O}$ is a functor. Then we write
\begin{equation*}
\bigl\{X \rightarrow F(X)\bigr\}^c_{\mathcal{O}}
\end{equation*}
for the $\infty$-category of compact objects $X \in \mathcal{O}$ equipped with a map $X \rightarrow F(X)$. More precisely, this $\infty$-category can be defined as the pullback of the span
\[
\xymatrix@C=30pt{
\mathcal{O}^c \ar[r]^-{(\mathrm{id},F)} & \mathcal{O} \times \mathcal{O} & \mathcal{O}^{\Delta^1} \ar[l]_-{(\mathrm{ev}_0,\mathrm{ev}_1)}.
}
\]
As before, write $\odot^k$ (resp. $\otimes^k$) for the tensor products determined by $\tau_{n-1}\mathcal{O}^\otimes$ (resp. $\tau_n\mathcal{O}^\otimes$). For any $k \geq 1$ there is an evident functor
\begin{equation*}
\mathrm{coAlg}^c(\tau_{n-1}\mathcal{O}^\otimes) \longrightarrow \bigl\{X \rightarrow (X \odot^{k} \cdots \odot^{k} X)^{h\Sigma_{k}} \bigr\}^c_{\mathcal{O}},
\end{equation*}
and similarly for $\mathrm{coAlg}^c(\tau_n\mathcal{O}^\otimes)$, taking the $k$-fold diagonal map of a coalgebra structure. Given the description of $\odot^k$ of Proposition \ref{prop:truncatedtensor} the following (which we prove in Section \ref{subsec:appendixcoalgebras}) should not be surprising:

\begin{lemma}
\label{lem:algtaun+1}
The following is a pullback square of compactly generated $\infty$-categories:
\[
\xymatrix{
\mathrm{coAlg}^{\mathrm{ind}}(\tau_n\mathcal{O}^\otimes) \ar[r]\ar[d] & \mathrm{Ind}\bigl\{X \rightarrow (X \otimes^n \cdots \otimes^n X)^{h\Sigma_n} \bigr\}^c_{\mathcal{O}} \ar[d] \\
\mathrm{coAlg}^{\mathrm{ind}}(\tau_{n-1}\mathcal{O}^\otimes) \ar[r] & \mathrm{Ind}\bigl\{X \rightarrow (X \odot^n \cdots \odot^n X)^{h\Sigma_n} \bigr\}^c_{\mathcal{O}}.
}
\]
\end{lemma}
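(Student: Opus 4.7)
Since each of the four $\infty$-categories in the square is by construction the Ind-completion of its full subcategory of objects whose underlying object in $\mathcal{O}$ is compact, and every functor in the square preserves this compactness property, it is enough to verify that the analogous square of compact-underlying subcategories is a pullback in $\mathbf{Cat}_\infty$. I would reduce this further to a fiberwise statement over $\mathcal{O}^c$: for each compact $X \in \mathcal{O}$ and each $\tau_{n-1}$-coalgebra structure on $X$, the space of lifts to a $\tau_n$-coalgebra structure on $X$ should be canonically equivalent to the space of $\Sigma_n$-equivariant maps $X \to X^{\otimes n}$ refining the induced $n$-fold diagonal $X \to (X^{\odot n})^{\Sigma_n}$ along the canonical map $(X^{\otimes n})^{\Sigma_n} \to (X^{\odot n})^{\Sigma_n}$.

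To establish this fiberwise description I would unpack coalgebra structures using the corepresentability condition of Definition \ref{def:coalgebra}: a coalgebra on $X$ over an operad $\mathcal{P}^\otimes$ amounts to a coherent family of $\Sigma_k$-equivariant diagonals $\delta_k: X \to X \otimes_{\mathcal{P}}^k \cdots \otimes_{\mathcal{P}}^k X$ whose coherences are dictated by $\mathcal{P}^\otimes$. For $\mathcal{P}^\otimes = \tau_m\mathcal{O}^\otimes$, Proposition \ref{prop:truncatedtensor} writes each such tensor product as a limit over $\mathbf{Part}_m(k)$ of iterated tensor products of $\mathcal{O}^\otimes$. For $m \in \{n-1, n\}$ these limits agree in arities $k < n$ (yielding the honest tensor product of $\mathcal{O}^\otimes$), differ in arity $k = n$ only by the presence or absence of the minimal vertex of $\mathbf{Part}(n)$ representing the $n$-fold tensor product itself (producing the canonical map $\otimes^n \to \odot^n$), and in arities $k > n$ have coherence data that is entirely determined by the respective arity $\leq n$ and $\leq n-1$ data via the limit formulas. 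Consequently the only essential new degree of freedom in promoting a $\tau_{n-1}$-coalgebra on $X$ to a $\tau_n$-coalgebra is an equivariant $n$-fold diagonal $\delta_n: X \to (X^{\otimes n})^{\Sigma_n}$ refining the existing $\tau_{n-1}$-diagonal $X \to (X^{\odot n})^{\Sigma_n}$.

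I would make this precise by reformulating the space of coalgebra structures on $X$ as a mapping space of $\infty$-operads with target $\tau_m\mathcal{O}^\otimes$, and then applying Proposition \ref{prop:mapstruncations}, which supplies exactly this kind of pullback decomposition of mapping spaces. The main obstacle I anticipate is that Proposition \ref{prop:mapstruncations} is phrased for maps between stable $\infty$-operads, while the natural source operad of a coalgebra on $X$ is a slice-type corepresentable operad that is not itself stable. Resolving this requires either adapting the inductive argument in Proposition \ref{prop:mapstruncations} to this non-stable source, or replacing the source by a suitable stable presentation that records the same coalgebra data; both approaches hinge on the observation that only the operadic structure in arities $\leq n$ contributes to the relevant mapping-space fibers, which in turn is exactly what Proposition \ref{prop:truncatedtensor} enables one to verify. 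Once this identification is in place, the fibered pullback over $\mathcal{O}^c$ assembles into the asserted pullback square at the level of compact-underlying-object subcategories, and passage to Ind-completions concludes the proof.
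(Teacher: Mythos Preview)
Your overall strategy---reduce to compact objects and argue that the only new datum in lifting a $\tau_{n-1}$-coalgebra to a $\tau_n$-coalgebra is a $\Sigma_n$-equivariant $n$-fold diagonal refining the existing one---is exactly right, and matches the paper's intuition. But your proposed technical route through Proposition~\ref{prop:mapstruncations} does not quite work, for two reasons. First, that proposition decomposes mapping spaces \emph{out of} $\tau_n\mathcal{O}^\otimes$ and $\tau_{n-1}\mathcal{O}^\otimes$ into a fixed target $\mathcal{N}^\otimes$, whereas you need to understand maps \emph{into} $\tau_n\mathcal{O}^\otimes$ versus $\tau_{n-1}\mathcal{O}^\otimes$; the variance is wrong. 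Second, and more fundamentally, the space of coalgebra structures on a fixed $X$ is not a mapping space out of a single fixed source operad: each coalgebra structure corresponds to a \emph{different} corepresentable operad $\mathcal{X}^\otimes$ over the target, so there is no evident way to phrase the comparison as a pullback of $\mathrm{Map}(\mathcal{X}^\otimes,-)$'s. The stability issue you flag is real but secondary to these.

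The paper's argument avoids this by working directly with a decomposition of the \emph{target} operad rather than the source. The key input is Corollary~\ref{cor:wnXstable} (and Proposition~\ref{prop:wnXpullback}): the square expressing $\tau_n\mathcal{O}^\otimes_{\leq n}$ as built from $\tau_{n-1}\mathcal{O}^\otimes_{\leq n}$ and the $n$-homogeneous pieces $\mathcal{O}^\otimes_{=n}$, $(\tau_{n-1}\mathcal{O}^\otimes)_{=n}$ is simultaneously a pushout and a pullback of $\infty$-operads. Since coalgebras are defined by pullback along operad maps, one can pull a $\tau_n$-coalgebra back along this square to obtain a cube whose top face is again both a pushout and a pullback (Lemma~\ref{lem:Xtruncated} identifies the corners). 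Combined with the identification of coalgebras in an $n$-homogeneous operad with the data $\{X \to (X^{\otimes n})^{\Sigma_n}\}$ (Lemma~\ref{lem:homogcoalgebras}), this yields an explicit inverse functor $\alpha$ to the comparison $\beta$, rather than a mapping-space computation. Your intuition via Proposition~\ref{prop:truncatedtensor} is what underlies these operadic facts, but the argument is organized around the target-side decomposition, not a source-side mapping space.
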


\begin{remark}
This lemma expresses the idea that to lift a compact coalgebra $X$ in $\tau_{n-1}\mathcal{O}^\otimes$ to a coalgebra in $\tau_n\mathcal{O}^\otimes$, it suffices to lift the $n$-fold diagonal $X \rightarrow X^{\odot n}$ to a map $X \rightarrow X^{\otimes n}$.
\end{remark}

From the previous lemma we may conclude the following key fact about the $\infty$-category of ind-coalgebras in $\tau_n\mathcal{O}^\otimes$:

\begin{proposition}
\label{prop:indcoalgnexcisive}
The $\infty$-category $\mathrm{coAlg}^{\mathrm{ind}}(\tau_n\mathcal{O}^\otimes)$ is $n$-excisive.
\end{proposition}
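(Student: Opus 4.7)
The plan is to proceed by induction on $n$, using Lemma \ref{lem:algtaun+1} as the main structural input. That lemma realizes $\mathrm{coAlg}^{\mathrm{ind}}(\tau_n\mathcal{O}^\otimes)$ as a pullback of compactly generated pointed $\infty$-categories whose other three corners are $\mathrm{coAlg}^{\mathrm{ind}}(\tau_{n-1}\mathcal{O}^\otimes)$ and two ``auxiliary'' $\infty$-categories of the form $\mathrm{Ind}\{X \rightarrow F(X)\}^c_{\mathcal{O}}$. Because the $n$-excisive approximation functor $\mathcal{P}_n$ preserves finite limits by part (c) of Theorem \ref{thm:TheoremA}, its essential image $\mathbf{Cat}^\omega_{*,\leq n}$ is closed under finite limits inside $\mathbf{Cat}^\omega_*$. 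It therefore suffices to show each of the three other corners is $n$-excisive.

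The base case $n=1$ is handled by noting that $\tau_1 \mathcal{O}^\otimes$ carries no genuine higher operations, so $\mathrm{coAlg}^{\mathrm{ind}}(\tau_1 \mathcal{O}^\otimes) \simeq \mathcal{O}$, which is stable and hence $1$-excisive. For the inductive step, the first corner $\mathrm{coAlg}^{\mathrm{ind}}(\tau_{n-1}\mathcal{O}^\otimes)$ is $(n-1)$-excisive by the inductive hypothesis, and I would supply a small separate lemma asserting that every $(n-1)$-excisive $\infty$-category is also $n$-excisive: condition (a) of Proposition \ref{prop:nexcchar} is immediate, while condition (b') of Corollary \ref{cor:bnexcisive} at level $n$ follows from the one at level $n-1$ by cutting a strongly coCartesian $(n+1)$-cube into two strongly coCartesian $n$-cubes glued along a face, each of which is Cartesian by the hypothesis.

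The bulk of the work lies in showing that the two auxiliary $\infty$-categories $\mathrm{Ind}\{X \rightarrow F(X)\}^c_{\mathcal{O}}$ are $n$-excisive, where $F$ is either $(X^{\otimes n})^{\Sigma_n}$ or $(X^{\odot n})^{\Sigma_n}$. Both functors are $n$-excisive endofunctors of $\mathcal{O}$, because $\otimes^n$ and $\odot^n$ are multilinear in $n$ variables by Proposition \ref{prop:truncatedtensor} and taking homotopy fixed points preserves $n$-excisiveness; in fact they are $n$-homogeneous. To conclude that $\mathrm{Ind}\{X \rightarrow F(X)\}^c_{\mathcal{O}}$ is $n$-excisive, I would verify conditions (a) and (b') of Corollary \ref{cor:bnexcisive} by exploiting that limits in $\{X \rightarrow F(X)\}$ are created by the forgetful functor to $\mathcal{O}^{\Delta^1}$, together with stability of $\mathcal{O}$ and the fact that $F$ sends strongly coCartesian $(n+1)$-cubes in $\mathcal{O}$ to Cartesian cubes. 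Both required assertions then reduce to their analogues in the stable $\infty$-category $\mathcal{O}$.

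The main obstacle will be the careful handling of these auxiliary $\infty$-categories: the functor $F$ fails to preserve general colimits (fixed points obstruct this), yet passing to $\mathrm{Ind}$ of the compact subcategory restores compatibility with filtered colimits, and one must check that the pullback square of Lemma \ref{lem:algtaun+1} is genuinely a pullback in $\mathbf{Cat}^\omega_*$ so that closure under finite limits can be applied. A second, smaller hurdle is the promotion lemma from $(n-1)$-excisive to $n$-excisive, which is a general fact but requires some cube-combinatorial bookkeeping. Once both are settled, the inductive argument closes cleanly.
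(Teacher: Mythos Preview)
Your proposal is correct and follows essentially the same approach as the paper: induction on $n$ using the pullback square of Lemma \ref{lem:algtaun+1}, together with closure of $n$-excisive $\infty$-categories under limits. The paper packages your ``bulk of the work'' step into a separate statement (Lemma \ref{lem:Fcatnexc}), whose first part asserts precisely that $\mathrm{Ind}\{X \rightarrow F(X)\}^c_{\mathcal{O}}$ is $n$-excisive whenever $F$ is, and whose proof verifies conditions (a) and (b') of Corollary \ref{cor:bnexcisive} just as you outline; the promotion from $(n-1)$-excisive to $n$-excisive is left implicit in the paper but is indeed needed and is the small cube-combinatorial fact you identify.
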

\begin{proof}
The proof is by induction on $n$. The case $n=1$ is clear, since $\mathrm{coAlg}^{\mathrm{ind}}(\tau_1\mathcal{O}^\otimes)$ is just $\mathcal{O}$ itself and $\mathcal{O}$ is stable by assumption. For the inductive step from $n-1$ to $n$, consider the square of Lemma \ref{lem:algtaun+1}. By the inductive hypothesis, the $\infty$-category $\mathrm{coAlg}^{\mathrm{ind}}(\tau_{n-1}\mathcal{O}^\otimes)$ is $(n-1)$-excisive. By the first part of Lemma \ref{lem:Fcatnexc} below, the two $\infty$-categories on the right are $n$-excisive: indeed, an expression of the form $X^{\otimes n}$ or $X^{\odot n}$ is $n$-excisive as a functor of $X$ because it is the diagonal of a multilinear functor of $n$ variables, and the class of $n$-excisive functors is closed under limits. The proposition follows from this, since the class of $n$-excisive $\infty$-categories is closed under taking limits.
\end{proof}

\begin{remark}
Note that we do \emph{not} claim that $\mathrm{coAlg}^{\mathrm{ind}}(\tau_{n-1}\mathcal{O}^\otimes)$ is equivalent to $\mathcal{P}_{n-1}\mathrm{coAlg}^{\mathrm{ind}}(\mathcal{O}^\otimes)$. In fact this is usually not the case. See, however, Corollary \ref{cor:Pnindcoalg} below.
\end{remark}

\begin{lemma}
\label{lem:Fcatnexc}
If $F: \mathcal{O} \rightarrow \mathcal{O}$ is an $n$-excisive functor, then the $\infty$-category $\mathrm{Ind}\bigl\{X \rightarrow F(X)\bigr\}^c_{\mathcal{O}}$ is $n$-excisive. Furthermore, the obvious functor
\begin{equation*}
\mathrm{Ind}\bigl\{X \rightarrow F(X)\bigr\}^c_{\mathcal{O}} \longrightarrow \mathrm{Ind}\bigl\{X \rightarrow P_{n-1}F(X)\bigr\}^c_{\mathcal{O}}
\end{equation*}
is (the left adjoint of) a strong $(n-1)$-excisive approximation.
\end{lemma}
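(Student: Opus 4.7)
The plan is to induct on $n$, using Corollary \ref{cor:bnexcisive} to detect $n$-excisivity via properties of cubes. Throughout, write $\mathcal{E} := \mathrm{Ind}\{X \to F(X)\}^c_\mathcal{O}$, and use that $F$ preserves filtered colimits and compact objects to identify an $(n+1)$-cube in $\mathcal{E}$ with a pair $(\mathcal{X}, \alpha)$ consisting of an $(n+1)$-cube $\mathcal{X}$ in $\mathcal{O}$ together with a natural transformation $\alpha \colon \mathcal{X} \to F \circ \mathcal{X}$. The base case $n = 1$ is elementary: a linear $F$ preserves finite limits and colimits, so the lax equalizer $\{X \to F(X)\}$ inherits pointwise finite (co)limits and is itself stable; hence $\mathcal{E}$ is a compactly generated stable $\infty$-category, automatically $1$-excisive.

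For the inductive step, to show $\mathcal{E}$ is $n$-excisive I would verify conditions (a) and (b') of Corollary \ref{cor:bnexcisive}. For (a): if $\mathcal{Y} = (\mathcal{X}, \alpha)$ is strongly coCartesian in $\mathcal{E}$, then $\mathcal{X}$ is strongly coCartesian in $\mathcal{O}$ (the forgetful functor $\mathcal{E} \to \mathcal{O}$ preserves colimits), hence Cartesian by stability; since $F$ is $n$-excisive, $F \circ \mathcal{X}$ is likewise Cartesian; a natural transformation of Cartesian cubes is Cartesian in $\mathcal{O}^{\Delta^1}$, and this upgrades to Cartesian-ness of $\mathcal{Y}$ in $\mathcal{E}$. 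For (b'): a Cartesian $\mathcal{Y}$ in $\mathcal{E}$ whose punctured restriction is special has underlying cube $\mathcal{X}$ which is Cartesian in $\mathcal{O}$ with strongly coCartesian special punctured restriction, hence strongly coCartesian by Lemma \ref{lem:cocartstrong}, and this lifts to $\mathcal{E}$.

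For the second claim, the natural transformation $F \to P_{n-1}F$ induces a colimit- and compact-preserving functor $L \colon \mathcal{E} \to \mathcal{E}'$, where $\mathcal{E}' := \mathrm{Ind}\{X \to P_{n-1}F(X)\}^c_\mathcal{O}$. The functor $L$ admits a right adjoint $R$ by the adjoint functor theorem, and by the inductive hypothesis applied to $P_{n-1}F$ the $\infty$-category $\mathcal{E}'$ is $(n-1)$-excisive. To verify that $(L, R)$ is a weak $(n-1)$-excisive approximation, I would observe that the fiber of the unit at a compact object $(X, f)$ is controlled by the fiber of $F \to P_{n-1}F$, which is an $n$-homogeneous natural transformation; applying $P_{n-1}$ annihilates this fiber, yielding the required equivalence. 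A parallel analysis handles the counit, and since $\mathcal{E}'$ is itself $(n-1)$-excisive the approximation is strong.

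The main technical obstacle will be rigorously justifying the componentwise description of cubes in $\mathcal{E}$ when $F$ does not preserve finite limits. The forgetful $\mathcal{E} \to \mathcal{O}$ preserves colimits but not all limits, while the target functor $(X, f) \mapsto F(X)$ preserves neither in general. The key is that $\mathcal{E}$ is presentable by construction, so all small (co)limits exist, and Cartesian-ness in $\mathcal{E}$ can be tested by jointly examining the source cube $\mathcal{X}$ and the target cube $F \circ \mathcal{X}$ inside the stable $\infty$-category $\mathcal{O}$.
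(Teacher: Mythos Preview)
Your overall strategy---verify the two conditions of Corollary \ref{cor:bnexcisive}---matches the paper's, but there are real gaps and some unnecessary machinery.

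\textbf{The induction is superfluous.} The paper's proof is direct and uniform in $n$; nothing about the argument requires knowing the result for smaller $n$. Your base case also needs care: a $1$-excisive functor between stable $\infty$-categories need not be reduced (e.g.\ $X \mapsto X \oplus A$), so ``linear $F$ preserves finite limits and colimits'' is not quite right, and the resulting lax equalizer is not obviously stable.

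\textbf{Condition (a).} Your cube-by-cube argument is morally fine, but the paper's approach is cleaner and avoids the issue you flag in your last paragraph. The paper observes that for the forgetful functor $u: \mathcal{D} \to \mathcal{O}$ and its right adjoint $r$, there is an equalizer of endofunctors $\mathrm{id}_{\mathcal{D}} \to ru \rightrightarrows rFu$. Since $ru$ is $1$-excisive and $rFu$ is $n$-excisive, their equalizer $\mathrm{id}_{\mathcal{D}}$ is $n$-excisive. No cube analysis is needed.

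\textbf{Condition (b') has a genuine gap.} You write ``a Cartesian $\mathcal{Y}$ in $\mathcal{E}$ \ldots\ has underlying cube $\mathcal{X}$ which is Cartesian in $\mathcal{O}$,'' but as you yourself note, the forgetful functor does not preserve limits in general. What is true is that $u$ preserves the limit of \emph{this particular} special punctured cube, precisely because $F$ is $n$-excisive (so $\varprojlim F(u\mathcal{X}_0) \simeq F(\varprojlim u\mathcal{X}_0)$). But establishing this amounts to the paper's construction: explicitly build the cube $\mathcal{X}'$ with $u\mathcal{X}'(\varnothing) = \varprojlim u\mathcal{X}_0$ and structure map $u\mathcal{X}'(\varnothing) \to \varprojlim F(u\mathcal{X}_0) \simeq F(u\mathcal{X}'(\varnothing))$, observe it is strongly coCartesian (since $u$ creates colimits), and then verify via the equalizer description of mapping spaces that $\mathcal{X}'$ is Cartesian in $\mathcal{D}$, hence coincides with $\mathcal{X}$. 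Your sketch skips this and goes straight to the conclusion.

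\textbf{The second claim is too vague.} Saying the fiber of the unit ``is controlled by the fiber of $F \to P_{n-1}F$'' is not a proof; you would still need to identify $P_{n-1}\mathrm{id}_{\mathcal{E}}$ explicitly. The paper does this directly and without any appeal to weak approximations: it identifies $\mathcal{T}_{n-1}\mathcal{D}^c$ with $\{X \to T_{n-1}F(X)\}^c_{\mathcal{O}}$ (a special punctured $n$-cube in $\mathcal{D}$ is exactly a special punctured cube $\mathcal{X}_0$ in $\mathcal{O}$ with a transformation $\mathcal{X}_0 \to F\mathcal{X}_0$, and taking limits sends this to $(\varprojlim \mathcal{X}_0, \varprojlim \mathcal{X}_0 \to T_{n-1}F(\varprojlim \mathcal{X}_0))$). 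Iterating gives $\mathcal{P}_{n-1}\mathcal{D} \simeq \mathrm{Ind}\{X \to P_{n-1}F(X)\}^c_{\mathcal{O}}$ on the nose.
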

\begin{proof}
Write $\mathcal{D}$ for $\mathrm{Ind}\bigl\{X \rightarrow F(X)\bigr\}^c_{\mathcal{O}}$ and $u: \mathcal{D} \rightarrow \mathcal{O}$ for the forgetful functor, which preserves colimits and is conservative (i.e. detects equivalences). For objects $X, Y \in \mathcal{D}$ there is an equalizer diagram as follows:
\[
\xymatrix{
\mathrm{Map}_{\mathcal{D}}(X, Y)  \ar[r] &  \mathrm{Map}_{\mathcal{O}}(uX, uY) \ar@<.5ex>[r] \ar@<-.5ex>[r] & \mathrm{Map}_{\mathcal{O}}(uX, F(uY)).
}
\]
Writing $r$ for the right adjoint of $u$, it follows that there is an equalizer diagram of functors
\[
\xymatrix{
\mathrm{id}_{\mathcal{D}} \ar[r] & ru \ar@<.5ex>[r] \ar@<-.5ex>[r] & rFu.
}
\]
The composition $ru$ is $1$-excisive, whereas $rFu$ is $n$-excisive. Therefore $\mathrm{id}_{\mathcal{D}}$ is $n$-excisive, being a limit of $n$-excisive functors. 

To prove that $\mathcal{D}$ is an $n$-excisive $\infty$-category we verify condition (b) of Corollary \ref{cor:bnexcisive}. Let $\mathcal{X}: \mathbf{P}(n+1) \rightarrow \mathcal{D}$ be a Cartesian $(n+1)$-cube such that the restriction $\mathcal{X}_0 := \mathcal{X}|_{\mathbf{P}_0(n+1)}$ is a special punctured $(n+1)$-cube. We need to show that $\mathcal{X}$ is strongly coCartesian. Note that it suffices to treat the case where the vertices of $\mathcal{X}$ are compact objects. We will define a strongly coCartesian cube $\mathcal{X}'$ in $\mathcal{D}$ whose restriction to $\mathbf{P}_0(n+1)$ coincides with $\mathcal{X}_0$. We will then prove that $\mathcal{X}'$ is Cartesian, so that it is equivalent to $\mathcal{X}$, proving the lemma. First, consider the special punctured $(n+1)$-cube $u\mathcal{X}_0$ in $\mathcal{O}$ and complete it to a Cartesian cube (which we suggestively denote $u\mathcal{X}'$) by setting
\begin{equation*}
u\mathcal{X}'(\varnothing) := \varprojlim u\mathcal{X}_0.
\end{equation*}
Since $\mathcal{O}$ is 1-excisive (and hence a fortiori $n$-excisive) the cube $u\mathcal{X}'$ is strongly coCartesian by condition (b) of Corollary \ref{cor:bnexcisive} (or directly from Lemma \ref{lem:cocartstrong}). To define an $(n+1)$-cube $\mathcal{X}'$ in $\mathcal{D}$ we should specify a natural transformation $\nu: u\mathcal{X}' \rightarrow F(u\mathcal{X}')$. We already have a natural transformation $u\mathcal{X}_0 \rightarrow F(u\mathcal{X}_0)$. One completes the definition of $\nu$ by considering the induced map
\begin{equation*}
u\mathcal{X}'(\varnothing) \longrightarrow \varprojlim F(u\mathcal{X}_0) \simeq F(u\mathcal{X}'(\varnothing)).
\end{equation*}
The equivalence above follows from the fact that $F$ is $n$-excisive. The $(n+1)$-cube $\mathcal{X}'$ we have defined is strongly coCartesian simply because $u$ creates colimits. To see that it is Cartesian, consider (for any $Y \in \mathcal{D}$) the equalizer diagram
\[
\xymatrix{
\mathrm{Map}_{\mathcal{D}}(Y, \mathcal{X}'(\varnothing))  \ar[r] &  \mathrm{Map}_{\mathcal{O}}(uY, u\mathcal{X}'(\varnothing)) \ar@<.5ex>[r] \ar@<-.5ex>[r] & \mathrm{Map}_{\mathcal{O}}(uY, F(u\mathcal{X}'(\varnothing)))
}
\]
and observe that the second and third terms are canonically equivalent to
\begin{equation*}
\varprojlim \mathrm{Map}_{\mathcal{O}}(uY, u\mathcal{X}_0)\quad\quad \text{and} \quad\quad \varprojlim\mathrm{Map}_{\mathcal{O}}(uY, F(u\mathcal{X}_0))
\end{equation*}
respectively. It follows that the map
\begin{equation*}
\mathrm{Map}_{\mathcal{D}}(Y, \mathcal{X}'(\varnothing)) \longrightarrow \varprojlim \mathrm{Map}_{\mathcal{D}}(Y, \mathcal{X}_0) 
\end{equation*}
is an equivalence.

Finally we need to prove the last claim of the lemma concerning $\mathcal{P}_{n-1}\mathcal{D}$. Recall that $\mathcal{T}_{n-1}\mathcal{D}$ is the full subcategory of $\mathrm{Fun}(\mathbf{P}_0(n), \mathcal{D})$ spanned by the special punctured $n$-cubes. Note that objects of $\mathcal{T}_{n-1}\mathcal{D}$ may be identified with special punctured $n$-cubes $\mathcal{X}_0$ in $\mathcal{O}$ together with a natural transformation $\nu: \mathcal{X}_0 \rightarrow F(\mathcal{X}_0)$. It is then straightforward to see that the following functor is an equivalence of $\infty$-categories:
\begin{equation*}
\varprojlim: \mathcal{T}_{n-1}\mathcal{D}^c \longrightarrow \bigl\{X \rightarrow T_{n-1}F(X)\bigr\}^c_{\mathcal{O}}: (\nu: \mathcal{X}_0 \rightarrow F(\mathcal{X}_0)) \longmapsto (\varprojlim \nu: \varprojlim\mathcal{X}_0 \rightarrow \varprojlim F(\mathcal{X}_0)).
\end{equation*}
From here it is a simple formal exercise to find an equivalence
\begin{equation*}
\mathcal{P}_{n-1}\mathcal{D} \longrightarrow \mathrm{Ind}\bigl\{X \rightarrow \varinjlim_k T_{n-1}^kF(X)\bigr\}^c_{\mathcal{O}} \simeq \mathrm{Ind}\bigl\{X \rightarrow P_{n-1}F(X)\bigr\}^c_{\mathcal{O}}
\end{equation*}
which completes the proof.
\end{proof}
\begin{remark}
Note that the statement and proof of the previous lemma only depend on the restriction of $F$ to compact objects of $\mathcal{O}$, so that it is not necessary to assume that $F$ preserves filtered colimits.
\end{remark}

The following observation (see \cite{mccarthy}) will be of crucial importance:

\begin{lemma}
\label{lem:normseq}
The norm sequence
\begin{equation*}
(X^{\otimes n})_{h\Sigma_n} \longrightarrow (X^{\otimes n})^{h\Sigma_n} \longrightarrow (X^{\otimes n})^{t\Sigma_n}
\end{equation*}
exhibits the first term (resp. the last term) as the $n$th homogeneous layer (resp. the $(n-1)$-excisive approximation) of the functor in the middle. 
\end{lemma}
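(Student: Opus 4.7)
The plan is to reduce to three separate elementary properties of the functors involved. Setting $A(X) := (X^{\otimes n})_{\Sigma_n}$, $B(X) := (X^{\otimes n})^{\Sigma_n}$, and $C(X) := (X^{\otimes n})^{t\Sigma_n}$, the norm sequence is by construction a cofiber sequence $A \to B \to C$ of functors $\mathcal{O} \to \mathcal{O}$ in the stable $\infty$-category $\mathcal{O}$. We will prove three claims: (i) $A$ is $n$-homogeneous; (ii) $B$ is $n$-excisive; (iii) $C$ is $(n-1)$-excisive. Granting these, applying the exact functor $D_n$ to the cofiber sequence produces $D_n A \to D_n B \to D_n C$; by (i) the source is $A$ and by (iii) the target vanishes, so $A \simeq D_n B$. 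Applying $P_{n-1}$ produces $P_{n-1} A \to P_{n-1} B \to P_{n-1} C$; by (i) the source vanishes and by (iii) the target is $C$, so $P_{n-1} B \simeq C$. The structure maps of the norm sequence then recover the canonical maps $D_n B \to B \to P_{n-1} B$ of the Goodwillie tower of $B$.

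For (i), the functor $\otimes^n \colon \mathcal{O}^n \to \mathcal{O}$ is symmetric and multilinear by condition (3) in the definition of a stable $\infty$-operad. Goodwillie's classification of $n$-homogeneous functors then identifies $X \mapsto L(X, \ldots, X)_{h\Sigma_n}$ with an $n$-homogeneous functor for any symmetric multilinear $L$; applied to $L = \otimes^n$ this yields (i). For (ii), the functor $X \mapsto X^{\otimes n}$ is $n$-excisive because $\otimes^n$ is multilinear (the standard fact that a multilinear functor precomposed with the diagonal is $n$-excisive), and $(-)^{\Sigma_n}$ is a limit and therefore preserves Cartesian cubes; the composite $B$ is thus $n$-excisive.

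The main content lies in (iii). First, $C$ is $n$-excisive as the cofiber of two $n$-excisive functors in a stable setting. It then suffices to show $D_n C = 0$, or equivalently that the $n$-th cross-effect $\mathrm{cr}_n C$ vanishes. Since $\mathcal{O}$ is stable, both $(-)_{h\Sigma_n}$ and $(-)^{h\Sigma_n}$ preserve finite limits and finite colimits (finite limits and finite colimits coincide in a stable category), hence so does their cofiber $(-)^{t\Sigma_n}$. Consequently $(-)^{t\Sigma_n}$ commutes with the formation of cross-effects, which are total fibers of finite cubes, giving
\begin{equation*}
\mathrm{cr}_n C(X_1, \ldots, X_n) \simeq \bigl(\mathrm{cr}_n((-)^{\otimes n})(X_1, \ldots, X_n)\bigr)^{t\Sigma_n}.
\end{equation*}
A direct expansion of $(X_1 \vee \cdots \vee X_n)^{\otimes n}$ using multilinearity of $\otimes^n$ gives $\mathrm{cr}_n((-)^{\otimes n})(X_1, \ldots, X_n) \simeq \bigoplus_{\sigma \in \Sigma_n} X_{\sigma(1)} \otimes \cdots \otimes X_{\sigma(n)}$, on which $\Sigma_n$ acts freely by permuting the summands. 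The norm map of a free $\Sigma_n$-object is an equivalence, so its Tate construction vanishes, and we conclude $\mathrm{cr}_n C = 0$.

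The hardest step is (iii); everything else is a formal consequence of Goodwillie's framework and stability of $\mathcal{O}$. The substantive content of (iii) is the classical vanishing of the Tate construction on induced $\Sigma_n$-objects, which is precisely what forces the cofiber of the norm to drop excisive degree by one.
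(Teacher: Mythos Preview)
Your proof is correct and follows essentially the same approach as the paper's one-line sketch: the paper simply asserts that the norm map induces an equivalence on $n$th cross effects, which is exactly your computation that $\mathrm{cr}_n((-)^{\otimes n})$ is the induced $\Sigma_n$-object $\bigoplus_{\sigma} X_{\sigma(1)} \otimes \cdots \otimes X_{\sigma(n)}$, so its norm map is an equivalence (equivalently, its Tate construction vanishes). You have supplied the details the paper omits, including the reductions via (i) and (ii) and the exactness of $(-)^{t\Sigma_n}$ needed to pull it through the total fiber.
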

\begin{proof}
A simple calculation shows that the norm map induces an equivalence on $n$th cross effects, so that the first map induces an equivalence on $n$th derivatives.
\end{proof}

Recall from Theorem \ref{thm:TheoremA} that the functor $\mathcal{P}_{n-1}$ preserves pullbacks. Applying $\mathcal{P}_{n-1}$ to the square of Lemma \ref{lem:algtaun+1} and using Lemma \ref{lem:Fcatnexc} gives the following:

\begin{corollary}
\label{cor:Pnindcoalg}
The following is a pullback square of compactly generated $\infty$-categories:
\[
\xymatrix{
\mathcal{P}_{n-1}\mathrm{coAlg}^{\mathrm{ind}}(\tau_n\mathcal{O}^\otimes) \ar[r]\ar[d] & \mathrm{Ind}\bigl\{X \rightarrow (X \otimes^n \cdots \otimes^n X)^{t\Sigma_n} \bigr\}^c_{\mathcal{O}} \ar[d] \\
\mathrm{coAlg}^{\mathrm{ind}}(\tau_{n-1}\mathcal{O}^\otimes) \ar[r] & \mathrm{Ind}\bigl\{X \rightarrow (X \odot^n \cdots \odot^n X)^{t\Sigma_n} \bigr\}^c_{\mathcal{O}}.
}
\]
\end{corollary}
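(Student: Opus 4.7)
The plan is to obtain the square as the image under $\mathcal{P}_{n-1}$ of the pullback square provided by Lemma \ref{lem:algtaun+1}, and then identify each vertex. By Theorem \ref{thm:TheoremA}(c) the functor $\mathcal{P}_{n-1}$ preserves finite limits and in particular pullbacks, so we automatically obtain a pullback square in $\mathbf{Cat}^\omega_*$ with vertices $\mathcal{P}_{n-1}$ of the four vertices of the Lemma \ref{lem:algtaun+1} square. It then suffices to identify each vertex with the one asserted in the statement, up to natural equivalence.

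For the bottom-left corner, $\mathcal{P}_{n-1}\mathrm{coAlg}^{\mathrm{ind}}(\tau_{n-1}\mathcal{O}^\otimes) \simeq \mathrm{coAlg}^{\mathrm{ind}}(\tau_{n-1}\mathcal{O}^\otimes)$: indeed, by Proposition \ref{prop:indcoalgnexcisive} applied with $n-1$ in place of $n$, the $\infty$-category $\mathrm{coAlg}^{\mathrm{ind}}(\tau_{n-1}\mathcal{O}^\otimes)$ is already $(n-1)$-excisive, and hence by Theorem \ref{thm:TheoremA} the unit $\mathrm{id} \rightarrow \mathcal{P}_{n-1}$ is an equivalence on it. The top-left corner is $\mathcal{P}_{n-1}\mathrm{coAlg}^{\mathrm{ind}}(\tau_n\mathcal{O}^\otimes)$ as desired, with no identification required.

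For the two right-hand corners I would appeal directly to Lemma \ref{lem:Fcatnexc}. Applied to the functor $F(X) = (X \otimes^n \cdots \otimes^n X)^{\Sigma_n}$ on $\mathcal{O}$, that lemma identifies the $(n-1)$-excisive approximation of $\mathrm{Ind}\{X \rightarrow F(X)\}^c_{\mathcal{O}}$ with $\mathrm{Ind}\{X \rightarrow P_{n-1}F(X)\}^c_{\mathcal{O}}$. The necessary input is a computation of the functor $P_{n-1}F$, which is supplied by Lemma \ref{lem:normseq}: the Tate fibre sequence
\begin{equation*}
(X^{\otimes n})_{\Sigma_n} \longrightarrow (X^{\otimes n})^{\Sigma_n} \longrightarrow (X^{\otimes n})^{t\Sigma_n}
\end{equation*}
exhibits $X \mapsto (X^{\otimes n})^{t\Sigma_n}$ precisely as $P_{n-1}$ of $X \mapsto (X^{\otimes n})^{\Sigma_n}$. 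Therefore $\mathcal{P}_{n-1}$ of the top-right vertex of Lemma \ref{lem:algtaun+1} is $\mathrm{Ind}\{X \rightarrow (X^{\otimes n})^{t\Sigma_n}\}^c_{\mathcal{O}}$. Running the same argument for the bottom-right vertex with $F(X) = (X \odot^n \cdots \odot^n X)^{\Sigma_n}$ (noting that $\odot^n$ is also a symmetric multilinear functor of $n$ variables, since $\tau_{n-1}\mathcal{O}^\otimes$ is still a stable $\infty$-operad) yields $\mathrm{Ind}\{X \rightarrow (X^{\odot n})^{t\Sigma_n}\}^c_{\mathcal{O}}$.

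The only step requiring a small verification is naturality: the horizontal maps in the resulting square should agree with the obvious ones induced by applying the canonical natural transformation $\otimes^n \rightarrow \odot^n$ and the map of invariants to Tate construction. This is forced by the fact that the identifications of the vertices above are $\mathcal{P}_{n-1}$-units and are therefore canonical. I do not anticipate a serious obstacle here; the entire argument is essentially a formal packaging of Lemma \ref{lem:algtaun+1}, Lemma \ref{lem:Fcatnexc}, Lemma \ref{lem:normseq}, and Proposition \ref{prop:indcoalgnexcisive}, and the main thing to be careful with is to ensure that Lemma \ref{lem:Fcatnexc} is applicable in each relevant case, which requires only that $F$ be $n$-excisive on the compact objects of $\mathcal{O}$—and both $X \mapsto (X^{\otimes n})^{\Sigma_n}$ and $X \mapsto (X^{\odot n})^{\Sigma_n}$ are $n$-homogeneous, hence $n$-excisive.
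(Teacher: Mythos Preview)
Your proposal is correct and follows exactly the approach the paper takes: apply $\mathcal{P}_{n-1}$ to the pullback square of Lemma~\ref{lem:algtaun+1}, invoke Theorem~\ref{thm:TheoremA}(c) for preservation of pullbacks, and identify the vertices using Proposition~\ref{prop:indcoalgnexcisive} (bottom-left) and Lemma~\ref{lem:Fcatnexc} together with Lemma~\ref{lem:normseq} (right-hand side). One tiny slip: the functors $X \mapsto (X^{\otimes n})^{\Sigma_n}$ and $X \mapsto (X^{\odot n})^{\Sigma_n}$ are $n$-\emph{excisive} rather than $n$-homogeneous (their $n$th layer is the coinvariants, by Lemma~\ref{lem:normseq}), but since Lemma~\ref{lem:Fcatnexc} only requires $n$-excisiveness this does not affect the argument.
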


It is also straightforward to describe the relation between $\mathrm{coAlg}^{\mathrm{ind}}(\tau_n\mathcal{O}^\otimes)$ and its $(n-1)$-excisive approximation:

\begin{lemma}
\label{lem:coalgvsPn}
The following is a pullback square of compactly generated $\infty$-categories:
\[
\xymatrix{
\mathrm{coAlg}^{\mathrm{ind}}(\tau_n\mathcal{O}^\otimes) \ar[r]\ar[d] & \mathrm{Ind}\bigl\{X \rightarrow (X^{\otimes^n})^{h\Sigma_n} \bigr\}^c_{\mathcal{O}} \ar[d] \\
\mathcal{P}_{n-1}\mathrm{coAlg}^{\mathrm{ind}}(\tau_n\mathcal{O}^\otimes) \ar[r] & \mathrm{Ind}\bigl\{X \rightarrow (X^{\odot^n})^{h\Sigma_n} \times_{(X^{\odot^n})^{t\Sigma_n}} (X^{\otimes^n})^{t\Sigma_n} \bigr\}^c_{\mathcal{O}}.
}
\]
\end{lemma}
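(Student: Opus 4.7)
The plan is to deduce this pullback square formally from Lemma \ref{lem:algtaun+1} and Corollary \ref{cor:Pnindcoalg} via the pasting lemma. Introduce the shorthand $\mathcal{A}_n := \mathrm{coAlg}^{\mathrm{ind}}(\tau_n\mathcal{O}^\otimes)$, $\mathcal{D}_n := \mathrm{Ind}\{X \to (X^{\otimes n})^{\Sigma_n}\}^c_{\mathcal{O}}$, $\mathcal{D}'_n := \mathrm{Ind}\{X \to (X^{\odot n})^{\Sigma_n}\}^c_{\mathcal{O}}$, and analogously $\mathcal{T}_n$, $\mathcal{T}'_n$ using Tate constructions in place of fixed points. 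The bottom-right corner of the square we want is then $\mathcal{E} := \mathcal{D}'_n \times_{\mathcal{T}'_n} \mathcal{T}_n$. With this notation, Lemma \ref{lem:algtaun+1} reads $\mathcal{A}_n \simeq \mathcal{A}_{n-1} \times_{\mathcal{D}'_n} \mathcal{D}_n$, and Corollary \ref{cor:Pnindcoalg} reads $\mathcal{P}_{n-1}\mathcal{A}_n \simeq \mathcal{A}_{n-1} \times_{\mathcal{T}'_n} \mathcal{T}_n$.

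First I would assemble all the maps in the vertically stacked diagram
\[
\xymatrix{
\mathcal{A}_n \ar[r]\ar[d] & \mathcal{D}_n \ar[d] \\
\mathcal{P}_{n-1}\mathcal{A}_n \ar[r]\ar[d] & \mathcal{E} \ar[d] \\
\mathcal{A}_{n-1} \ar[r] & \mathcal{D}'_n,
}
\]
checking that every composition matches the naturally defined map: the two vertical compositions are the functor $\mathcal{A}_n \to \mathcal{A}_{n-1}$ from Lemma \ref{lem:algtaun+1} and the $(\otimes^n \to \odot^n)$-induced map $\mathcal{D}_n \to \mathcal{D}'_n$; the horizontal maps are the $n$-fold diagonals; and the maps into $\mathcal{E}$ arise from its universal property (using $\mathcal{P}_{n-1}\mathcal{A}_n \to \mathcal{A}_{n-1} \to \mathcal{D}'_n$ and $\mathcal{P}_{n-1}\mathcal{A}_n \to \mathcal{T}_n$ for the left, and $\mathcal{D}_n \to \mathcal{D}'_n$ together with the norm map $(-)^{\Sigma_n} \to (-)^{t\Sigma_n}$ for the right). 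This is a diagram chase, not a calculation.

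Second, I would observe that the bottom square is a pullback: pulling back $\mathcal{A}_{n-1} \to \mathcal{D}'_n$ along the projection $\mathcal{E} = \mathcal{D}'_n \times_{\mathcal{T}'_n} \mathcal{T}_n \to \mathcal{D}'_n$ yields
\[
\mathcal{A}_{n-1} \times_{\mathcal{D}'_n} \mathcal{E} \simeq \mathcal{A}_{n-1} \times_{\mathcal{T}'_n} \mathcal{T}_n,
\]
which by Corollary \ref{cor:Pnindcoalg} is exactly $\mathcal{P}_{n-1}\mathcal{A}_n$. The outer rectangle is a pullback by Lemma \ref{lem:algtaun+1}. Hence by the pasting lemma for pullbacks the top square is a pullback, which is the statement of the lemma. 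Compact generation is automatic since each of the four vertices is compactly generated and all the structural functors preserve filtered colimits.

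The only real subtlety is the diagram chase in the first step, namely verifying that the maps into $\mathcal{E}$ produced via its universal property coincide with the maps induced by the pasted pullback; once this compatibility is pinned down (unsurprisingly, everything is natural in the relevant diagrams of functors $\mathcal{O} \to \mathcal{O}$), the result is a purely formal consequence of pasting and I do not anticipate any deeper obstacle.
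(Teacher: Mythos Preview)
Your proposal is correct and follows essentially the same approach as the paper: the paper builds the same stacked diagram (with an extra column on the right making the definition of $\mathcal{E}$ explicit), uses pasting with Corollary~\ref{cor:Pnindcoalg} to see that the bottom square is a pullback, and then pasting with Lemma~\ref{lem:algtaun+1} to conclude that the top square is a pullback. Your direct computation $\mathcal{A}_{n-1} \times_{\mathcal{D}'_n} \mathcal{E} \simeq \mathcal{A}_{n-1} \times_{\mathcal{T}'_n} \mathcal{T}_n$ is exactly the content of the paper's first pasting step.
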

\begin{proof}
Consider the following diagram of compactly generated $\infty$-categories:
\[
\xymatrix{
\mathrm{coAlg}^{\mathrm{ind}}(\tau_n\mathcal{O}^\otimes) \ar[r]\ar[d] & \mathrm{Ind}\bigl\{X \rightarrow (X^{\otimes^n})^{h\Sigma_n} \bigr\}^c_{\mathcal{O}} \ar[d] & & \\
\mathcal{P}_{n-1}\mathrm{coAlg}^{\mathrm{ind}}(\tau_n\mathcal{O}^\otimes) \ar[r]\ar[d] & \mathrm{Ind}\bigl\{X \rightarrow (X^{\odot^n})^{h\Sigma_n} \times_{(X^{\odot^n})^{t\Sigma_n}} (X^{\otimes^n})^{t\Sigma_n} \bigr\}^c_{\mathcal{O}} \ar[d]\ar[r] & \mathrm{Ind}\bigl\{X \rightarrow (X^{\otimes^n})^{t\Sigma_n} \bigr\}^c_{\mathcal{O}} \ar[d] \\
\mathrm{coAlg}^{\mathrm{ind}}(\tau_{n-1}\mathcal{O}^\otimes) \ar[r] & \mathrm{Ind}\bigl\{X \rightarrow (X^{\odot^n})^{h\Sigma_n} \bigr\}^c_{\mathcal{O}} \ar[r] & \mathrm{Ind}\bigl\{X \rightarrow (X^{\odot^n})^{t\Sigma_n} \bigr\}^c_{\mathcal{O}}.
}
\]
The lower right square is clearly a pullback. Also, the rectangle formed by the lower two squares is a pullback by Corollary \ref{cor:Pnindcoalg}, so that the bottom left square must be a pullback by the usual pasting lemma for pullbacks. Similarly, the vertical rectangle formed by the left two squares is a pullback by Lemma \ref{lem:algtaun+1}. It follows that the top left square is a pullback.
\end{proof}

There is an evident functor
\begin{equation*}
\mathrm{triv}: \mathcal{O} \longrightarrow \mathrm{coAlg}^{\mathrm{ind}}(\tau_n\mathcal{O}^\otimes)
\end{equation*}
assigning to each object $X$ of $\mathcal{O}$ the trivial coalgebra structure on $X$, i.e. a coalgebra equipped with (a coherent system of) nullhomotopies for each of the maps $X \rightarrow (X^{\otimes k})^{h\Sigma_k}$ with $1 \leq k \leq n$. To be precise, one can construct this functor inductively using Lemma \ref{lem:algtaun+1} and the evident functors
\begin{equation*}
\mathcal{O} \longrightarrow \mathrm{Ind}\bigl\{X \longrightarrow (X \otimes^k \cdots \otimes^k X)^{h\Sigma_k}\bigr\}^c_{\mathcal{O}}
\end{equation*}
assigning to each $X$ the zero map into $(X^{\otimes k})^{h\Sigma_k}$. The following is then a straightforward consequence of what we have done so far:

\begin{corollary}
\label{cor:fibercoalgs}
The following is a pullback square of compactly generated $\infty$-categories:
\[
\xymatrix{
\mathrm{Ind}\bigl\{X \rightarrow \mathrm{fib}(X^{\otimes n} \rightarrow X^{\odot n})_{h\Sigma_n} \bigr\}^c_{\mathcal{O}} \ar[r] \ar[d] & \mathrm{coAlg}^{\mathrm{ind}}(\tau_n\mathcal{O}^\otimes) \ar[d] \\
\mathcal{O} \ar[r]_-{\mathrm{triv}} & \mathcal{P}_{n-1}\mathrm{coAlg}^{\mathrm{ind}}(\tau_n\mathcal{O}^\otimes).
}
\]
\end{corollary}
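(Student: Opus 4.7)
The plan is to deduce this corollary by pulling back the square of Lemma \ref{lem:coalgvsPn} along $\mathrm{triv}: \mathcal{O} \to \mathcal{P}_{n-1}\mathrm{coAlg}^{\mathrm{ind}}(\tau_n\mathcal{O}^\otimes)$. By the pasting lemma for pullbacks, the resulting square will agree with the desired one provided its top-left corner can be identified with the pullback of the top row of the Lemma \ref{lem:coalgvsPn} square along the composite
\begin{equation*}
\mathcal{O} \xrightarrow{\mathrm{triv}} \mathcal{P}_{n-1}\mathrm{coAlg}^{\mathrm{ind}}(\tau_n\mathcal{O}^\otimes) \longrightarrow \mathrm{Ind}\bigl\{X \to (X^{\odot n})^{\Sigma_n} \times_{(X^{\odot n})^{t\Sigma_n}} (X^{\otimes n})^{t\Sigma_n}\bigr\}^c_{\mathcal{O}}.
\end{equation*}
Unwinding the inductive construction of the trivial-coalgebra functor, this composite sends each $X$ to the canonical zero map into the fiber product, so at the level of compact objects the pullback parametrizes compact $X$ equipped with a map $\alpha: X \to (X^{\otimes n})^{\Sigma_n}$ together with a null-homotopy of its further composite to the fiber product; equivalently, with a lift of $\alpha$ through the fiber of the natural map
\begin{equation*}
\phi: (X^{\otimes n})^{\Sigma_n} \longrightarrow (X^{\odot n})^{\Sigma_n} \times_{(X^{\odot n})^{t\Sigma_n}} (X^{\otimes n})^{t\Sigma_n}.
\end{equation*}

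The main step will then be to identify $\mathrm{fib}(\phi)$ with $\mathrm{fib}(X^{\otimes n} \to X^{\odot n})_{\Sigma_n}$. I would argue as follows: write $P$ for the pullback above and $p_1: P \to (X^{\odot n})^{\Sigma_n}$ for projection to the first factor. The composite $p_1 \circ \phi$ is the natural map $(X^{\otimes n})^{\Sigma_n} \to (X^{\odot n})^{\Sigma_n}$ induced by $\otimes^n \to \odot^n$, whose fiber is $\mathrm{fib}(X^{\otimes n} \to X^{\odot n})^{\Sigma_n}$. Base change along $P \to (X^{\odot n})^{\Sigma_n}$ identifies $\mathrm{fib}(p_1)$ with the fiber of $(X^{\otimes n})^{t\Sigma_n} \to (X^{\odot n})^{t\Sigma_n}$, namely $\mathrm{fib}(X^{\otimes n} \to X^{\odot n})^{t\Sigma_n}$. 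Passing to fibers over $(X^{\odot n})^{\Sigma_n}$ then yields a fiber sequence
\begin{equation*}
\mathrm{fib}(\phi) \longrightarrow \mathrm{fib}(X^{\otimes n} \to X^{\odot n})^{\Sigma_n} \longrightarrow \mathrm{fib}(X^{\otimes n} \to X^{\odot n})^{t\Sigma_n}
\end{equation*}
in which the right-hand map is the canonical map from invariants to Tate construction for the $\Sigma_n$-equivariant object $\mathrm{fib}(X^{\otimes n} \to X^{\odot n})$. By the norm cofiber sequence recalled in Lemma \ref{lem:normseq}, its fiber is precisely the coinvariants $\mathrm{fib}(X^{\otimes n} \to X^{\odot n})_{\Sigma_n}$, as required.

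The hard part, I expect, will be the bookkeeping with the $\mathrm{Ind}$-completions: one must verify that the pullback of compactly generated $\infty$-categories (taken in $\mathbf{Cat}^\omega_*$) of entries of the form $\mathrm{Ind}\{X \to G(X)\}^c_{\mathcal{O}}$ really is computed by pulling back on compact objects before applying $\mathrm{Ind}$. Since all functors appearing in the Lemma \ref{lem:coalgvsPn} square preserve both colimits and compact objects, this should reduce to a standard property of compactly generated categories, but it deserves to be spelled out explicitly so that the pasting-of-pullbacks argument takes place in the correct ambient $\infty$-category.
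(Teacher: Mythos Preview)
Your proposal is correct and follows essentially the same route as the paper: pull back the square of Lemma \ref{lem:coalgvsPn} along $\mathrm{triv}$ and identify the fiber of $\phi$ with $\mathrm{fib}(X^{\otimes n}\to X^{\odot n})_{\Sigma_n}$. The only cosmetic difference is that the paper computes this fiber as the total fiber of the square
\[
\xymatrix{
(X^{\otimes n})^{\Sigma_n} \ar[r]\ar[d] & (X^{\otimes n})^{t\Sigma_n} \ar[d] \\
(X^{\odot n})^{\Sigma_n} \ar[r] & (X^{\odot n})^{t\Sigma_n}
}
\]
by first taking fibers of the \emph{rows} (yielding $(X^{\otimes n})_{\Sigma_n}\to (X^{\odot n})_{\Sigma_n}$), whereas you take fibers of the \emph{columns} first; both orderings of course give the same answer. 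The paper does not spell out the $\mathrm{Ind}$-bookkeeping you flag, treating it as routine.
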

\begin{proof}
This is immediate from Lemma \ref{lem:coalgvsPn} and the observation that the fiber of
\begin{equation*}
(X^{\otimes n})^{h\Sigma_n} \longrightarrow (X^{\odot n})^{h\Sigma_n} \times_{(X^{\odot n})^{t\Sigma_n}} (X^{\otimes n})^{t\Sigma_n}
\end{equation*}
is canonically equivalent to the fiber of the map
\begin{equation*}
(X^{\otimes n})_{h\Sigma_n} \longrightarrow (X^{\odot n})_{h\Sigma_n}.
\end{equation*}
Indeed, the former is the total fiber in the following square:
\[
\xymatrix{
(X^{\otimes n})^{h\Sigma_n} \ar[r] \ar[d] & (X^{\otimes n})^{t\Sigma_n} \ar[d] \\
(X^{\odot n})^{h\Sigma_n} \ar[r] & (X^{\odot n})^{t\Sigma_n}.   
}
\]
This total fiber may be computed by first taking the fibers of the rows, yielding the functors 
\begin{equation*}
(X^{\otimes n})_{h\Sigma_n} \quad\quad \text{and} \quad\quad (X^{\odot n})_{h\Sigma_n}, 
\end{equation*}
and then taking the fiber of the evident map between those.
\end{proof}

% corollary of "fiber" (in this case pullback over id) corresponding comonads on \mathcal{O}

\chapter{The space of Goodwillie towers}
\label{sec:classification}

In this chapter we prove Theorem \ref{thm:classification}. We construct the square of that theorem in Section \ref{subsec:tatediagonal}. Then we make precise the construction of $n$-stages outlined in Chapter \ref{sec:informalconstr}. Finally, in Section \ref{subsec:classification}, we prove that the square of the theorem is in fact a homotopy pullback of spaces.

\section{The Tate diagonal}
\label{subsec:tatediagonal}

In this section we construct the square
\[
\xymatrix{
\mathcal{G}_{n}(\mathcal{O}^\otimes) \ar[r]^-{T_n}\ar[d]_{p_n} &  \mathcal{T}_n \ar[d] \\
\mathcal{G}_{n-1}(\mathcal{O}^\otimes) \ar[r] & \widehat{\mathcal{T}}_n.
}
\]
of Theorem \ref{thm:classification}. We will describe the spaces $\mathcal{T}_n$ and $\widehat{\mathcal{T}}_n$ as the total spaces of fibrations $t_n$ and $\widehat{t}_n$ over $\mathcal{G}_{n-1}(\mathcal{O}^\otimes)$, with the maps in the square above arising from certain sections of these. The fibers of $\mathcal{T}_n$ and $\widehat{\mathcal{T}}_n$ over a fixed $(n-1)$-stage $\mathcal{C}$ are the spaces of natural transformations $\mathrm{Nat}(\Sigma^\infty_{\mathcal{C}}, \Theta_{\mathcal{C}})$ and $\mathrm{Nat}(\Sigma^\infty_{\mathcal{C}}, \Psi_{\mathcal{C}})$ respectively. Recall that the functors $\Theta_{\mathcal{C}}$ and $\Psi_{\mathcal{C}}$ are the following:
\begin{eqnarray*}
\Theta_{\mathcal{C}}: \mathcal{C} \longrightarrow \mathcal{O}: && X \longmapsto \bigl(\Sigma^\infty_{\mathcal{C}} X \otimes^n \cdots \otimes^n \Sigma^\infty_{\mathcal{C}} X \bigr)^{t\Sigma_n}, \\
\Psi_{\mathcal{C}}: \mathcal{C} \longrightarrow \mathcal{O}: && X \longmapsto \bigl(\Sigma^\infty_{\mathcal{C}} X \odot^n \cdots \odot^n \Sigma^\infty_{\mathcal{C}} X \bigr)^{t\Sigma_n}.
\end{eqnarray*}
Here we have suppressed the identification of $\mathrm{Sp}(\mathcal{C})$ with $\mathcal{O}$ in our notation, which we will continue to do in order to avoid cluttering. The vertical map $p_n$ in the square is given by the formation of $(n-1)$-excisive approximations. The map $T_n: \mathcal{G}_n(\mathcal{O}^\otimes) \rightarrow  \mathcal{T}_n$ (to be constructed below) assigns to an $n$-excisive category the Tate diagonal described informally in Chapter \ref{sec:informalconstr}. 

To begin with, we note that there is a `tautological' coCartesian fibration
\begin{equation*}
\gamma: \Gamma \rightarrow \mathcal{G}_{n-1}(\mathcal{O}^\otimes)
\end{equation*}
which is classified (in the sense of Definition 3.3.2.2 of \cite{htt}) by the evident functor $\mathcal{G}_{n-1}(\mathcal{O}^\otimes) \rightarrow \mathbf{Cat}_\infty$. In particular, the fiber of $\gamma$ over a vertex representing an $n-1$-stage $\mathcal{C}$ is canonically equivalent to the $\infty$-category $\mathcal{C}$. In fact $\gamma$ is also a Cartesian fibration; this is a general feature of coCartesian fibrations over a Kan complex (cf. Proposition 3.3.1.8 of \cite{htt}), but explicitly $\gamma$ (as a Cartesian fibration) is classified by the functor
\begin{equation*}
\mathcal{G}_{n-1}(\mathcal{O}^\otimes)^{\mathrm{op}} \rightarrow (\mathcal{P}r^L)^{\mathrm{op}} \rightarrow \mathcal{P}r^R \rightarrow \mathbf{Cat}_\infty.
\end{equation*}
The first arrow denotes the evident functor sending an $n-1$-stage to the corresponding $n-1$-excisive $\infty$-category, with $\mathcal{P}r^L$ the $\infty$-category of presentable $\infty$-categories and left adjoint functors, whereas the second arrow is the equivalence which takes right adjoints (and is the identity on objects), cf. Corollary 5.5.3.4 of \cite{htt}. The final arrow is the inclusion.

As in Construction 6.2.2.2 of \cite{higheralgebra}, there is a \emph{relative stabilization}
\[
\xymatrix{
\mathrm{St}(\gamma) \ar[rr]^-{\Omega^\infty_\gamma}\ar[dr] &&  \Gamma \ar[dl]^{\gamma} \\
& \mathcal{G}_{n-1}(\mathcal{O}^\otimes) &
}
\]
of the fibration $\gamma$. The map $\mathrm{St}(\gamma) \rightarrow \mathcal{G}_{n-1}(\mathcal{O}^\otimes)$ is a coCartesian fibration by Proposition 6.2.2.8 of \cite{higheralgebra}. The fiber of the map $\Omega^\infty_\gamma$ over a vertex $\mathcal{C}$ is the (absolute) stabilization
\begin{equation*}
\Omega^\infty_{\mathcal{C}}: \mathrm{Sp}(\mathcal{C}) \rightarrow \mathcal{C}.
\end{equation*}
One can think of the existence of this relative stabilization as expressing the functoriality of the assignment
\begin{equation*}
\mathcal{G}_{n-1}(\mathcal{O}^\otimes) \rightarrow \mathbf{Cat}_\infty: \mathcal{C} \mapsto \mathrm{Sp}(\mathcal{C}).
\end{equation*}
The functor $\Omega^\infty_\gamma$ admits a \emph{relative left adjoint}
\[
\xymatrix{
\mathrm{St}(\gamma) \ar[dr] &&  \Gamma \ar[dl]^{\gamma}\ar[ll]_{\Sigma^\infty_\gamma} \\
& \mathcal{G}_{n-1}(\mathcal{O}^\otimes) &
}
\]
by Proposition 7.3.2.6 of \cite{higheralgebra}. Its fiber over $\mathcal{C}$ is of course the left adjoint functor $\Sigma^\infty_{\mathcal{C}}: \mathcal{C} \rightarrow \mathrm{Sp}(\mathcal{C})$.

\begin{remark}
In fact our definition of $\mathcal{G}_{n-1}(\mathcal{O}^\otimes)$, which includes for every $\mathcal{C}$ an equivalence $\mathrm{Sp}(\mathcal{C}) \simeq \mathcal{O}$, implies that the fibration $\mathrm{St}(\gamma) \rightarrow \mathcal{G}_{n-1}(\mathcal{O}^\otimes)$ is equivalent to the `constant' fibration
\begin{equation*}
\mathcal{O} \times \mathcal{G}_{n-1}(\mathcal{O}^\otimes) \rightarrow \mathcal{G}_{n-1}(\mathcal{O}^\otimes).
\end{equation*}
The only reason for introducing $\mathrm{St}(\gamma)$ as above is to make explicit the map $\Sigma^\infty_\gamma$, whose domain is generally \emph{not} a constant fibration.
\end{remark}

We define another map of simplicial sets
\begin{equation*}
\mathrm{Fun}_{\mathcal{G}_{n-1}(\mathcal{O}^\otimes)}(\Gamma, \mathrm{St}(\gamma)) \xrightarrow{f_n} \mathcal{G}_{n-1}(\mathcal{O}^\otimes).
\end{equation*}
which is characterized by the formula
\begin{equation*}
\mathrm{Hom}_{\mathcal{G}_{n-1}(\mathcal{O}^\otimes)}\bigl(\Delta^k, \mathrm{Fun}_{\mathcal{G}_{n-1}(\mathcal{O}^\otimes)}(\Gamma, \mathrm{St}(\gamma))\bigr) = \mathrm{Hom}_{\mathcal{G}_{n-1}(\mathcal{O}^\otimes)}\bigl(\Gamma \times_{{\mathcal{G}_{n-1}(\mathcal{O}^\otimes)}} \Delta^k, \mathrm{St}(\gamma)\bigr).
\end{equation*}
In particular, the fiber of $f_n$ over $\mathcal{C}$ is the $\infty$-category $\mathrm{Fun}(\mathcal{C}, \mathrm{Sp}(\mathcal{C}))$. Corollary 3.2.2.13 of \cite{htt} guarantees that $f_n$ is again a coCartesian fibration (or see 3.10 of \cite{barwickshah} for a discussion of this construction). Let us record the following fairly evident property:

\begin{lemma}
\label{lem:sectionfn}
Assign to a map of simplicial sets $\alpha: \Gamma \rightarrow \mathrm{St}(\gamma)$ over $\mathcal{G}_{n-1}(\mathcal{O}^\otimes)$ the section of $f_n$ defined by the formula
\begin{equation*}
\bigl(\Delta^k \rightarrow \mathcal{G}_{n-1}(\mathcal{O}^\otimes)\bigr) \mapsto \bigl(\Gamma \times_{{\mathcal{G}_{n-1}(\mathcal{O}^\otimes)}} \Delta^k \rightarrow \Gamma \xrightarrow{\alpha} \mathrm{St}(\gamma)\bigr).
\end{equation*}
Then this assignment determines a bijection between such maps $\alpha$ and sections of $f_n$.
\end{lemma}
\begin{proof}
It is easy to check that the inverse construction is given as follows: say $s$ is a section of $f_n$ and consider a simplex 
\begin{equation*}
\xi: \Delta^k \rightarrow \Gamma.
\end{equation*}
Then $\xi$ can also be viewed as a $k$-simplex of $\Gamma \times_{\mathcal{G}_{n-1}(\mathcal{O}^\otimes)} \Delta^k$ and one defines
\begin{equation*}
\widehat{s}(\xi) := \bigl(\Delta^k \xrightarrow{\xi} \Gamma \times_{\mathcal{G}_{n-1}(\mathcal{O}^\otimes)} \Delta^k \xrightarrow{s(\gamma\xi)} \mathrm{St}(\gamma)\bigr).
\end{equation*}
One verifies that $\widehat{s}$ is a map of simplicial sets and the assignment $s \mapsto \widehat{s}$ is the desired inverse.\end{proof}

In particular, the left adjoint $\Sigma^\infty_\gamma: \Gamma \rightarrow \mathrm{St}(\gamma)$ gives rise to a section
\begin{equation*}
\sigma: \mathcal{G}_{n-1}(\mathcal{O}^\otimes) \rightarrow \mathrm{Fun}_{\mathcal{G}_{n-1}(\mathcal{O}^\otimes)}(\Gamma, \mathrm{St}(\gamma))
\end{equation*}
of $f_n$. Similarly, the assignments $\mathcal{C} \mapsto \Psi_\mathcal{C}$ and $\mathcal{C} \mapsto \Theta_\mathcal{C}$ give rise to sections $\psi$ and $\theta$ of $f_n$ respectively. Indeed, these are easily obtained from $\sigma$ by identifying $\mathrm{St}(\gamma)$ with $\mathcal{O} \times \mathcal{G}_{n-1}(\mathcal{O}^\otimes)$ and postcomposing with the functors
\begin{equation*}
X \mapsto (X^{\odot^n})^{t\Sigma_n} \quad\quad \text{resp.} \quad\quad X \mapsto (X^{\otimes^n})^{t\Sigma_n}
\end{equation*}
on the first factor. In fact, we will also need the following variant of this construction. Write $N_\mathcal{C}$ for the evident natural transformation $\Theta_{\mathcal{C}} \rightarrow \Psi_{\mathcal{C}}$ induced by the natural map
\begin{equation*}
(X^{\otimes^n})^{t\Sigma_n} \rightarrow (X^{\odot^n})^{t\Sigma_n}.
\end{equation*}
Then there is a corresponding map
\begin{equation*}
\nu: \mathcal{G}_{n-1}(\mathcal{O}^\otimes) \rightarrow \mathrm{Fun}_{\mathcal{G}_{n-1}(\mathcal{O}^\otimes)}(\Gamma, \mathrm{St}(\gamma))^{\Delta^1}
\end{equation*}
whose value at a vertex $\mathcal{C}$ is described by $\nu(\mathcal{C}) = N_{\mathcal{C}}$.

Now define a map $\widehat{t}_n: \widehat{\mathcal{T}}_n \rightarrow \mathcal{G}_{n-1}(\mathcal{O}^\otimes)$ by the pullback square
\[
\xymatrix{
\widehat{\mathcal{T}}_n \ar[d]_{\widehat{t}_n} \ar[r] & \mathrm{Fun}_{\mathcal{G}_{n-1}(\mathcal{O}^\otimes)}(\Gamma, \mathrm{St}(\gamma))^{\Delta^1} \ar[d]^{(\mathrm{ev}_0, \, \mathrm{ev}_1)} \\
\mathcal{G}_{n-1}(\mathcal{O}^\otimes) \ar[r]_-{(\sigma, \psi)} & \mathrm{Fun}_{\mathcal{G}_{n-1}(\mathcal{O}^\otimes)}(\Gamma, \mathrm{St}(\gamma)) \times \mathrm{Fun}_{\mathcal{G}_{n-1}(\mathcal{O}^\otimes)}(\Gamma, \mathrm{St}(\gamma))
}
\]
and similarly define a map $t_n$ by a pullback square
\[
\xymatrix{
\mathcal{T}_n \ar[d]_{t_n} \ar[r] & \mathrm{Fun}_{\mathcal{G}_{n-1}(\mathcal{O}^\otimes)}(\Gamma, \mathrm{St}(\gamma))^{\Delta^2} \ar[d]^{(\mathrm{ev}_0, \, \mathrm{ev}_{\{1,2\}})} \\
\mathcal{G}_{n-1}(\mathcal{O}^\otimes) \ar[r]_-{(\sigma, \nu)} & \mathrm{Fun}_{\mathcal{G}_{n-1}(\mathcal{O}^\otimes)}(\Gamma, \mathrm{St}(\gamma)) \times \mathrm{Fun}_{\mathcal{G}_{n-1}(\mathcal{O}^\otimes)}(\Gamma, \mathrm{St}(\gamma))^{\Delta^1}.
}
\]
Observe that the inclusion
\begin{equation*}
\Delta^1 \simeq \Delta^{\{0,1\}} \subseteq \Delta^2
\end{equation*}
defines a map $\mathcal{T}_n \rightarrow \widehat{\mathcal{T}}_n$ compatible with the maps to $\mathcal{G}_{n-1}(\mathcal{O}^\otimes)$. Also, note that the fiber of $\widehat{t}_n$ over a vertex $\mathcal{C}$ is given by (a model for) the space of natural transformations 
\begin{equation*}
\mathrm{Nat}(\Sigma^\infty_{\mathcal{C}}, \Psi_\mathcal{C})
\end{equation*}
between functors from $\mathcal{C}$ to $\mathrm{Sp}(\mathcal{C}) \simeq \mathcal{O}$. Similarly, the fiber of $t_n$ over $\mathcal{C}$ is a model for the space of natural transformations
\begin{equation*}
\mathrm{Nat}(\Sigma^\infty_{\mathcal{C}}, \Theta_\mathcal{C}).
\end{equation*}

In analogy with the construction of $\widehat{\mathcal{T}}_n$ it might seem more obvious to construct $\mathcal{T}_n$ by exactly the same procedure with $\Theta_\mathcal{C}$ in place of $\Psi_{\mathcal{C}}$. This would yield a fibration equivalent to the map $t_n$ constructed above, simply because the forgetful functor 
\begin{equation*}
\mathrm{Fun}(\mathcal{C}, \mathrm{Sp}(\mathcal{C}))_{/N_{\mathcal{C}}} \longrightarrow \mathrm{Fun}(\mathcal{C}, \mathrm{Sp}(\mathcal{C}))_{/\Theta_{\mathcal{C}}}
\end{equation*}
is a trivial fibration. However, the construction of $\mathcal{T}_n$ we use admits an evident map to $\widehat{\mathcal{T}}_n$, which is moreover a Kan fibration:

\begin{lemma}
\label{lem:tnKanfibration}
The maps $t_n$ and $\widehat{t}_n$ are Kan fibrations. The map $\mathcal{T}_n \rightarrow \widehat{\mathcal{T}}_n$ described above is a Kan fibration as well.
\end{lemma}
\begin{proof}
It is convenient to phrase the proof in terms of marked simplicial sets (as in Chapter 3 of \cite{htt}); recall that those are pairs $(X, \mathcal{E})$ with $X$ a simplicial set and $\mathcal{E}$ a subset of the set of 1-simplices of $X$ containing all degenerate edges. For a simplicial set $X$, one writes $X^\flat$ for $X$ with only degenerate edges marked, $X^\sharp$ for $X$ with all edges marked, and if $X$ happens to be an $\infty$-category then $X^\natural$ denotes $X$ with the equivalences marked. To prove that $\widehat{t}_n$ is a Kan fibration we should argue the existence of solutions to lifting problems of the following form, with $0 \leq k \leq m$:
\[
\xymatrix{
\Lambda^m_k \ar[d]\ar[r] & \widehat{\mathcal{T}}_n \ar[d]^{\widehat{t}_n} \\
\Delta^m \ar[r]\ar@{-->}[ur] & \mathcal{G}_{n-1}(\mathcal{O}^\otimes).
}
\]
Equivalently, we should solve the following lifting problem of marked simplicial sets:
\[
\xymatrix{
(\Lambda^m_k)^\sharp \ar[d]\ar[r] & \widehat{\mathcal{T}}_n^\sharp \ar[d]^{\widehat{t}_n} \\
(\Delta^m)^\sharp \ar[r]\ar@{-->}[ur] & \mathcal{G}_{n-1}(\mathcal{O}^\otimes)^\sharp.
}
\]
By the pullback square defining $\widehat{t}_n$ this is equivalent to
\[
\xymatrix{
(\Lambda^m_k)^\sharp \ar[d]\ar[r] & \bigl(\mathrm{Fun}_{\mathcal{G}_{n-1}(\mathcal{O}^\otimes)}(\Gamma, \mathrm{St}(\gamma))^{\Delta^1}\bigr)^\natural \ar[d]^{(\mathrm{ev}_0, \, \mathrm{ev}_1)} \\
(\Delta^m)^\sharp \ar[r]\ar@{-->}[ur] & \mathrm{Fun}_{\mathcal{G}_{n-1}(\mathcal{O}^\otimes)}(\Gamma, \mathrm{St}(\gamma))^\natural \times \mathrm{Fun}_{\mathcal{G}_{n-1}(\mathcal{O}^\otimes)}(\Gamma, \mathrm{St}(\gamma))^\natural,
}
\]
which by adjunction is equivalent to
\[
\xymatrix{
(\Lambda^m_k)^\sharp \times (\Delta^1)^\flat \amalg_{(\Lambda^m_k)^\sharp \times (\partial\Delta^1)^\flat} (\Delta^m)^\sharp \times (\partial\Delta^1)^\flat \ar[d]\ar[r] & \mathrm{Fun}_{\mathcal{G}_{n-1}(\mathcal{O}^\otimes)}(\Gamma, \mathrm{St}(\gamma))^\natural \\
(\Delta^m)^\sharp \times (\Delta^1)^\flat. \ar@{-->}[ur] &
}
\]
For any trivial cofibration $f: K \rightarrow L$ and monomorphism $i: A \rightarrow B$ of simplicial sets, the pushout-product of $f^{\sharp}$ and $i^\flat$ is a trivial cofibration of marked simplicial sets in the model structure of Theorem 3.1.3.7 of \cite{htt} (using Corollary 3.1.4.4 for the pushout-product property). Thus the left vertical map is a trivial cofibration of marked simplicial sets. Because the fibrant objects in this model structure are marked simplicial sets of the form $X^\natural$ (Proposition 3.1.4.1 of \cite{htt}), with $X$ an $\infty$-category, a solution as indicated by the dashed arrow exists. The proof that $t_n$ is Kan fibration proceeds similarly, now using the monomorphism
\begin{equation*}
\Delta^{\{0\}} \amalg \Delta^{\{1,2\}} \subseteq \Delta^2
\end{equation*}
in place of $\partial\Delta^1 \rightarrow \Delta^1$. Finally, to show that $\mathcal{T}_n \rightarrow \widehat{\mathcal{T}}_n$ is a Kan fibration one applies the same argument, now using the monomorphism of simpicial sets
\begin{equation*}
\Lambda_2^2 \subseteq \Delta^2.
\end{equation*}
\end{proof}

It remains to construct the maps in the square at the start of this section, i.e. a section
\begin{equation*}
\mathcal{G}_{n-1}(\mathcal{O}^\otimes) \rightarrow \widehat{\mathcal{T}}_n
\end{equation*}
of $\widehat{t}_n$, as well as the map
\begin{equation*}
T_n: \mathcal{G}_n(\mathcal{O}^\otimes) \rightarrow \mathcal{T}_n
\end{equation*}
and establish a homotopy between the two composites in the square. Note that for an $n$-stage $\mathcal{C}$ for $\mathcal{O}^\otimes$, the constructions of Section \ref{subsec:coalgebras} give a square
\[
\xymatrix{
\mathcal{C} \ar[d]\ar[r] & \mathrm{coAlg}(\tau_n\mathcal{O}^\otimes) \ar[d] \\
\mathcal{P}_{n-1}\mathcal{C} \ar[r] & \mathrm{coAlg}(\tau_{n-1}\mathcal{O}^\otimes).
}
\]
Applying the functor $\mathcal{P}_{n-1}$ and using the square of Corollary \ref{cor:Pnindcoalg} then gives a further diagram
\[
\xymatrix{
\mathcal{P}_{n-1}\mathcal{C} \ar@{=}[d]\ar[r] & \mathrm{Ind}\bigl\{X \rightarrow \Theta_{\mathcal{P}_{n-1}\mathcal{C}}(X)\bigr\}^c_{\mathcal{O}} \ar[d] \\
\mathcal{P}_{n-1}\mathcal{C} \ar[r] & \mathrm{Ind}\bigl\{X \rightarrow \Psi_{\mathcal{P}_{n-1}\mathcal{C}}(X)\bigr\}^c_{\mathcal{O}}.
}
\]
The bottom horizontal arrow exists for any $(n-1)$-stage, not necessarily of the form $\mathcal{P}_{n-1}\mathcal{C}$, and defines in a straightforward way a map $\mathcal{G}_{n-1}(\mathcal{O}^\otimes) \rightarrow \widehat{\mathcal{T}}_n$ which is a section of $\widehat{t}_n$. Similarly, the square above also defines the map $T_n:\mathcal{G}_n(\mathcal{O}^\otimes) \rightarrow \mathcal{T}_n$. Moreover, the commutativity of this square also provides the desired commutativity of the square at the start of this section.

\begin{remark}
Lemma \ref{lem:nexcfunctors} provides another way to think about the construction of the map $T_n$. Indeed, consider an $n$-stage $\mathcal{C}_n$ and write $\mathcal{C}_{n-1}$ for the $(n-1)$-excisive approximation $\mathcal{P}_{n-1}\mathcal{C}$. Also, write $F: \mathcal{C}_n \rightarrow \mathcal{C}_{n-1}$ for the induced functor. For $X \in \mathcal{C}_n$, with image $\overline{X} := \Sigma_{n,1}^\infty X$ in $\mathcal{O}$, there is a natural map
\begin{equation*}
\overline{X} \longrightarrow (\overline{X}^{\otimes n})^{t\Sigma_n}
\end{equation*}
induced by the coalgebra structure of $\overline{X}$ in $\tau_n\mathcal{O}^\otimes$. One can now use the fact that the codomain of this map is an $(n-1)$-excisive functor of $X$ and apply Lemma \ref{lem:nexcfunctors} to show that precomposition with $F$ induces a weak equivalence
\begin{equation*}
\mathrm{Nat}\bigl(\Sigma_{n-1,1}^\infty, ((\Sigma_{n-1,1}^\infty)^{\otimes n})^{t\Sigma_n}\bigr) \longrightarrow \mathrm{Nat}\bigl(\Sigma_{n,1}^\infty, ((\Sigma_{n,1}^\infty)^{\otimes n})^{t\Sigma_n}\bigr).
\end{equation*}
Here the left-hand side refers to natural transformations between functors $\mathcal{C}_{n-1} \rightarrow \mathrm{Sp}(\mathcal{C}_{n-1})$, the right-hand side to natural transformations between functors $\mathcal{C}_n \rightarrow \mathrm{Sp}(\mathcal{C}_n)$. In particular, one finds for $Y \in \mathcal{C}_{n-1}$ and $\overline{Y} := \Sigma_{n-1,1}^\infty Y$ a natural map
\begin{equation*}
\overline{Y} \longrightarrow  (\overline{Y}^{\otimes n})^{t\Sigma_n}
\end{equation*}
which extends the natural map defined for $X \in \mathcal{C}_n$ above. This map is the Tate diagonal.
\end{remark}

\section{Constructing $n$-stages}
\label{subsec:nstages}

In this section we study the pullback $\mathcal{A}$ defined by the square
\[
\xymatrix{
\mathcal{A} \ar[r]\ar[d] &  \mathcal{T}_n \ar[d] \\
\mathcal{G}_{n-1}(\mathcal{O}^\otimes) \ar[r] & \widehat{\mathcal{T}}_n.
}
\]
Since the simplicial sets in this square are Kan complexes and the right vertical map is a Kan fibration (by Lemma \ref{lem:tnKanfibration}), this pullback is also a homotopy pullback. As a consequence of the constructions of the previous section there is (up to contractible ambiguity) a canonical map $\alpha: \mathcal{G}_n(\mathcal{O}^\otimes) \rightarrow \mathcal{A}$. We will construct a map $\beta: \mathcal{A} \rightarrow \mathcal{G}_n(\mathcal{O}^\otimes)$. In the next section we demonstrate that $\alpha$ and $\beta$ are homotopy inverse to each other, proving Theorem \ref{thm:classification}.

A vertex $\mathcal{Z}$ of $\mathcal{A}$ may be identified with an $(n-1)$-stage $\mathcal{C}$ for $\mathcal{O}^\otimes$ together with a 2-simplex
\[
\xymatrix{
& \Theta_{\mathcal{C}} \ar[d] \\
\Sigma^\infty_{\mathcal{C}} \ar[r]\ar[ur] &  \Psi_{\mathcal{C}}
}
\]
in the $\infty$-category $\mathrm{Fun}(\mathcal{C}, \mathrm{Sp}(\mathcal{C}))$. In other words, adopting the usual short-hand $\overline{X} = \Sigma^\infty_{\mathcal{C}} X$ for objects $X$ of $\mathcal{C}$, we have a natural diagram
\[
\xymatrix{
& (\overline{X}^{\otimes n})^{t\Sigma_n} \ar[d] \\
\overline{X} \ar[r]\ar[ur] &  (\overline{X}^{\odot n})^{t\Sigma_n}
}
\]
where the horizontal arrow arises from the coalgebra structure on $\overline{X}$ supplied by the functor $\mathcal{C} \rightarrow \mathrm{coAlg}^{\mathrm{ind}}(\tau_{n-1}\mathcal{O}^\otimes)$. Invoking Corollary \ref{cor:Pnindcoalg} we see that this data is equivalent to a lift of that functor as follows:
\[
\xymatrix{
& \mathcal{P}_{n-1}\mathrm{coAlg}^{\mathrm{ind}}(\tau_n\mathcal{O}^\otimes) \ar[d] \\
\mathcal{C} \ar[r]\ar[ur] & \mathrm{coAlg}^{\mathrm{ind}}(\tau_{n-1}\mathcal{O}^\otimes).
}
\]

\begin{definition}
Define $\beta_0(\mathcal{Z})$ to be the pullback, formed in $\mathbf{Cat}_*^{\omega}$, in the following square:
\[
\xymatrix{
\beta_0(\mathcal{Z}) \ar[r]\ar[d] & \mathrm{coAlg}^{\mathrm{ind}}(\tau_n\mathcal{O}^\otimes) \ar[d] \\
\mathcal{C} \ar[r] & \mathcal{P}_{n-1}\mathrm{coAlg}^{\mathrm{ind}}(\tau_n\mathcal{O}^\otimes).
}
\]
Fixing a choice of pullback functor for $\mathbf{Cat}_*^{\omega}$ (which is of course unique up to contractible ambiguity) and observing that the constructions of the paragraph above make sense for families of $(n-1)$-stages $\mathcal{C}$, one obtains a functor
\begin{equation*}
\beta_0: \mathcal{A} \rightarrow \mathbf{Cat}_*^\omega.
\end{equation*}
\end{definition}

\begin{remark}
A particular way of fixing the choice of pullback is as follows: one replaces the right vertical map (which does not depend on $\mathcal{Z}$) by a categorical fibration and takes the actual pullback of simplicial sets. 
\end{remark}

The crucial properties of this construction are the following:

\begin{proposition}
\label{prop:betanexc}
The $\infty$-category $\beta_0(\mathcal{Z})$ is $n$-excisive.
\end{proposition}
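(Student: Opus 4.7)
The plan is to observe that $\beta_0(\mathcal{Z})$ is, by definition, a pullback (and hence a finite limit) in $\mathbf{Cat}_*^\omega$, and that each of the three vertices in the defining cospan is already $n$-excisive. The result will then follow from the fact that the class of $n$-excisive $\infty$-categories is closed under finite limits in $\mathbf{Cat}_*^\omega$, which is the same principle invoked in the proof of Proposition \ref{prop:indcoalgnexcisive}.

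More concretely, I would first verify $n$-excisivity of each vertex in the cospan
\[
\mathcal{C} \longrightarrow \mathcal{P}_{n-1}\mathrm{coAlg}^{\mathrm{ind}}(\tau_n\mathcal{O}^\otimes) \longleftarrow \mathrm{coAlg}^{\mathrm{ind}}(\tau_n\mathcal{O}^\otimes).
\]
The $\infty$-category $\mathcal{C}$ is an $(n-1)$-stage, so by Definition \ref{def:nstage} it is $(n-1)$-excisive, hence a fortiori $n$-excisive. The upper-right vertex $\mathrm{coAlg}^{\mathrm{ind}}(\tau_n\mathcal{O}^\otimes)$ is $n$-excisive by Proposition \ref{prop:indcoalgnexcisive}. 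Finally, the lower-right vertex $\mathcal{P}_{n-1}\mathrm{coAlg}^{\mathrm{ind}}(\tau_n\mathcal{O}^\otimes)$ is $(n-1)$-excisive by Theorem \ref{thm:TheoremA}(b), and so again $n$-excisive.

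To conclude, I would appeal to closure of $n$-excisive $\infty$-categories under limits in $\mathbf{Cat}_*^\omega$. One can verify this closure directly via the characterization in Corollary \ref{cor:bnexcisive}: the forgetful functors from the pullback to its components preserve and jointly detect both limits and colimits, so condition (a) (that the identity functor is $n$-excisive, a condition on commutation of certain limits with colimits) and condition (b') (that Cartesian cubes with special punctured restriction are strongly coCartesian) may both be checked componentwise. The main subtlety to get right is just that limits in $\mathbf{Cat}_*^\omega$ agree with limits in the larger $\infty$-category $\mathbf{Cat}_\infty$ of $\infty$-categories (after restricting to the relevant full subcategory of compactly generated objects), so that the defining universal property of the pullback is the expected one; this is standard and uses that a pullback of compactly generated $\infty$-categories along colimit- and compact-object-preserving functors is again compactly generated. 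No further obstacle should arise.
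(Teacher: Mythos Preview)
Your proposal is correct and is essentially the same as the paper's own argument: the paper simply states that the proposition is immediate from the fact that the class of $n$-excisive $\infty$-categories is closed under taking limits, which is exactly the principle you invoke (with more of the verification spelled out).
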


This proposition is immediate from the fact that the class of $n$-excisive $\infty$-categories is closed under taking limits.

\begin{proposition}
\label{prop:betastab}
There is a canonical equivalence of $\infty$-operads
\begin{equation*}
\varphi_{\mathcal{Z}}: \tau_n\mathcal{O}^\otimes \longrightarrow \mathrm{Sp}(\beta_0(\mathcal{Z}))^\otimes. 
\end{equation*}
This equivalence is functorial, in the sense that $\varphi$ can be made into a map
\begin{equation*}
\mathcal{A} \rightarrow \mathbf{Op}^{\mathrm{St}}_{\tau_n\mathcal{O}^\otimes/}.
\end{equation*}
\end{proposition}

The previous two propositions provide a map
\begin{equation*}
\beta: \mathcal{A} \longrightarrow \mathcal{G}_n(\mathcal{O}^\otimes)
\end{equation*}
which on vertices can be described by the formula
\begin{equation*}
\beta(\mathcal{Z}) = (\beta_0(\mathcal{Z}), \varphi_{\mathcal{Z}}).
\end{equation*}
In the next section we will prove that $\beta$ is a homotopy equivalence. 

We give a proof of Proposition \ref{prop:betastab} below. Roughly speaking we should show that the first $n$ tensor products in the stable $\infty$-operad $\mathrm{Sp}(\beta_0(\mathcal{Z}))^\otimes$ agree with the first $n$ tensor products defined by the stable $\infty$-operad $\mathcal{O}^\otimes$. This will be rather obvious for the first $n-1$. For the $n$th one, the square defining $\beta_0(\mathcal{Z})$ will yield a pullback square, natural for $X \in \mathcal{O}$, as follows:
\[
\xymatrix{
(X^{\otimes_{\mathcal{Z}} n})_{h\Sigma_n} \ar[r]\ar[d] & (X^{\otimes n})^{h\Sigma_n}  \ar[d] \\
(X^{\odot n})_{h\Sigma_n} \ar[r] & (X^{\odot n})^{h\Sigma_n} \times_{(X^{\odot n})^{t\Sigma_n}} (X^{\otimes n})^{t\Sigma_n}.
}
\]
Here the symbol $\otimes_{\mathcal{Z}}$ denotes the tensor products defined by $\mathrm{Sp}(\beta_0(\mathcal{Z}))^\otimes$. Using that the fiber of the right-hand vertical map is $\mathrm{fib}(X^{\otimes n} \rightarrow X^{\odot n})_{h\Sigma_n}$ this will give $\otimes^n_{\mathcal{Z}} \simeq \otimes^n$. We now set out to make this precise:

\begin{proof}[Proof of Proposition \ref{prop:betastab}]
To avoid overburdening the exposition further than necessary we give the proof for a vertex $\mathcal{Z}$ of $\mathcal{A}$, but it is straightforward to verify that everything we do makes sense in families, i.e., works equally well for a general simplex $\zeta$ of $\mathcal{A}$. Applying $\mathcal{P}_{n-1}$ to the square defining $\beta_0(\mathcal{Z})$ and using that this functor preserves pullbacks, we conclude that the induced functor
\begin{equation*}
\mathcal{P}_{n-1}\beta_0(\mathcal{Z}) \longrightarrow \mathcal{C}
\end{equation*}
is an equivalence. By Proposition \ref{prop:SpPnCntruncated} we then find an equivalence of $\infty$-operads
\begin{equation*}
\mathrm{Sp}(\mathcal{C})^\otimes \longrightarrow \tau_{n-1}\mathrm{Sp}(\beta_0\mathcal{Z})^\otimes.
\end{equation*}
Since $\mathcal{C}$ is an $(n-1)$-stage for $\mathcal{O}^\otimes$ we have an equivalence $\tau_{n-1}\mathcal{O}^\otimes \rightarrow \mathrm{Sp}(\mathcal{C})^\otimes$, thus providing an equivalence
\begin{equation*}
\varphi'_{\mathcal{Z}}: \tau_{n-1}\mathcal{O}^\otimes \longrightarrow \tau_{n-1}\mathrm{Sp}(\beta_0\mathcal{Z})^\otimes.
\end{equation*}
We now wish to apply Proposition \ref{prop:mapstruncations} to obtain a map
\begin{equation*}
\varphi_\mathcal{Z}: \tau_n\mathcal{O}^\otimes \longrightarrow \mathrm{Sp}(\beta_0(\mathcal{Z}))^\otimes.
\end{equation*}
For simplicity of notation, let us identify $\mathrm{Sp}(\beta_0(\mathcal{Z}))$ with $\mathcal{O}$ and write $\otimes^k_{\mathcal{Z}}$ for the $k$-fold tensor product on $\mathcal{O}$ induced by the stable $\infty$-operad $ \mathrm{Sp}(\beta_0(\mathcal{Z}))^\otimes$. To extend $\varphi'_\mathcal{Z}$ to a map $\varphi_{\mathcal{Z}}$ as above we should supply a lift, natural in $X \in \mathcal{O}$, as follows:
\[
\xymatrix{
&  (X \otimes^n \cdots \otimes^n X)_{h\Sigma_n} \ar[d] \\
(X \otimes_{\mathcal{Z}}^n \cdots \otimes_{\mathcal{Z}}^n X)_{h\Sigma_n} \ar[r] \ar@{-->}[ur] & (X \odot^n \cdots \odot^n X)_{h\Sigma_n}.
}
\]
Here the bottom horizontal arrow is induced by the map $\varphi'_{\mathcal{Z}}$. Moreover, if we prove that the diagonal map is an equivalence it will follow that $\varphi_{\mathcal{Z}}$ is an equivalence of $\infty$-operads. We will do both these things at once.

Observe that the four $\infty$-categories in the square defining $\beta_0(\mathcal{Z})$ admit left adjoint functors to $\mathcal{O}$; for $\beta_0(\mathcal{Z})$ and $\mathcal{C}$ these are the stabilizations $\Sigma^\infty_{\mathcal{Z}}$ and $\Sigma^\infty_{\mathcal{C}}$ respectively, while for $\mathrm{coAlg}^{\mathrm{ind}}(\tau_n\mathcal{O}^\otimes)$ and $\mathcal{P}_{n-1}\mathrm{coAlg}^{\mathrm{ind}}(\tau_n\mathcal{O}^\otimes)$ these are the obvious forgetful functors. Associated to each of these four adjunctions is a comonad on $\mathcal{O}$. Writing $\mathrm{cofree}_n$ and $\mathrm{cofree}_n^t$ for the last two, we obtain a diagram of functors on $\mathcal{O}$ as follows:
\[
\xymatrix{
D_n(\Sigma^\infty_{\mathcal{Z}}\Omega^\infty_{\mathcal{Z}}) \ar[d]\ar[r] & D_n(\mathrm{cofree}_n) \ar[d] \\
D_n(\Sigma^\infty_{\mathcal{C}}\Omega^\infty_{\mathcal{C}}) \ar[r] & D_n(\mathrm{cofree}_n^t). 
}
\]
By standard formal reasoning this diagram is a pullback square of functors. 
% could also get this from section C.3, but that feels like overkill
Recall that $(X \otimes_{\mathcal{Z}}^n \cdots \otimes_{\mathcal{Z}}^n X)_{h\Sigma_n}$ is precisely $D_n(\Sigma^\infty_{\mathcal{Z}}\Omega^\infty_{\mathcal{Z}})(X)$.  As the notation suggests, the comonad $\mathrm{cofree}_n$ should be thought of as the cofree ind-coalgebra functor associated to the stable $\infty$-operad $\tau_n\mathcal{O}^\otimes$. We wish to apply Corollary \ref{cor:fibercoalgs} to describe the fiber of the right vertical map. The $\infty$-category 
\begin{equation*}
\mathrm{Ind}\bigl\{X \rightarrow \mathrm{fib}(X^{\otimes n} \rightarrow X^{\odot n})_{h\Sigma_n} \bigr\}^c_{\mathcal{O}}
\end{equation*}
appearing in that result induces yet another comonad on $\mathcal{O}$, which we denote $\mathrm{cofree}_{=n}^{\mathrm{dp}}$. (The superscript refers to divided powers, which we will investigate in Section \ref{subsec:dividedpowers}.) There is then a canonical equivalence
\begin{equation*}
D_n(\mathrm{cofree}_{=n}^{\mathrm{dp}})(X) \longrightarrow \mathrm{fib}(X^{\otimes n} \rightarrow X^{\odot n})_{h\Sigma_n}.
\end{equation*}
Indeed this can be checked directly, or one uses Proposition \ref{prop:derivativesid} and the fact that the $k$th derivative of the identity functor on the $\infty$-category above is trivial for $1 < k < n$. Corollary \ref{cor:fibercoalgs} gives a square
\[
\xymatrix{
\mathrm{cofree}_{=n}^{\mathrm{dp}} \ar[r]\ar[d] & \mathrm{cofree}_n \ar[d] \\
\mathrm{id}_{\mathcal{O}} \ar[r] & \mathrm{cofree}_n^t.
}
\]
Applying $D_n$ then yields a pullback
\[
\xymatrix{
\mathrm{fib}(X^{\otimes n} \rightarrow X^{\odot n})_{h\Sigma_n} \ar[r] \ar[d] & D_n(\mathrm{cofree}_n) \ar[d] \\
\ast \ar[r] & D_n(\mathrm{cofree}_n^t).
}
\]
We deduce that the fiber of the map $D_n(\Sigma^\infty_{\mathcal{Z}}\Omega^\infty_{\mathcal{Z}})(X) \rightarrow D_n(\Sigma^\infty_{\mathcal{C}}\Omega^\infty_{\mathcal{C}})(X)$ is canonically equivalent to the fiber of $(X^{\otimes n})_{h\Sigma_n} \rightarrow (X^{\odot n})_{h\Sigma_n}$. Applying $D_n$ to the composition of maps
\begin{equation*}
\Sigma^\infty_{\mathcal{Z}}\Omega^\infty_{\mathcal{Z}} \longrightarrow \mathrm{cofree}_n(X) \longrightarrow (X^{\otimes n})^{h\Sigma_n} 
\end{equation*}
yields a map
\begin{equation*}
D_n(\Sigma^\infty_{\mathcal{Z}}\Omega^\infty_{\mathcal{Z}})(X) \longrightarrow (X^{\otimes n})_{h\Sigma_n}.
\end{equation*}
This map fits into a diagram
\[
\xymatrix{
\mathrm{fib}(X^{\otimes n} \rightarrow X^{\odot n})_{h\Sigma_n} \ar[r]\ar@{=}[d] & D_n(\Sigma^\infty_{\mathcal{Z}}\Omega^\infty_{\mathcal{Z}})(X) \ar[r]\ar[d] & D_n(\Sigma^\infty_{\mathcal{C}}\Omega^\infty_{\mathcal{C}})(X) \ar[d] \\
\mathrm{fib}(X^{\otimes n} \rightarrow X^{\odot n})_{h\Sigma_n}  \ar[r] & (X^{\otimes n})_{h\Sigma_n} \ar[r] & (X^{\odot n})_{h\Sigma_n}.
}
\]
Both rows are fiber sequences by our discussion above. Also, the rightmost vertical map is an equivalence by assumption. We conclude that the middle vertical map is an equivalence, which concludes our proof.
\end{proof}

We conclude this section with some observations about our construction of $n$-stages which are not necessary for the proofs of our main results, but which will be useful to sharpen some of our results in the next section, as well as being useful in \cite{unstableperiodicity}. One could consider a variant of our construction of $\beta_0(\mathcal{Z})$ `without the ind', where one defines an $\infty$-category $\beta'_0(\mathcal{Z})$ by the following pullback square in $\mathbf{Cat}_\infty$:
\[
\xymatrix{
\beta'_0(\mathcal{Z}) \ar[r]\ar[d] & \mathrm{coAlg}(\tau_n\mathcal{O}^\otimes) \ar[d] \\
\mathcal{C} \ar[r] & \mathcal{P}_{n-1}\mathrm{coAlg}(\tau_n\mathcal{O}^\otimes).
}
\]
Here the lower right-hand corner requires some care: given that $\mathrm{coAlg}(\tau_n\mathcal{O}^\otimes)$ is not necessarily compactly generated, it does not make sense to apply $\mathcal{P}_{n-1}$ to it. Rather we take our cue from Corollary \ref{cor:Pnindcoalg} and simply define it by a pullback square as follows:
\[
\xymatrix{
\mathcal{P}_{n-1}\mathrm{coAlg}(\tau_n\mathcal{O}^\otimes) \ar[r]\ar[d] &  \bigl\{X \rightarrow (X^{\otimes n})^{t\Sigma_n}\bigr\}_{\mathcal{O}} \ar[d] \\
\mathrm{coAlg}(\tau_{n-1}\mathcal{O}^\otimes)\ar[r] & \bigl\{X \rightarrow (X^{\odot n})^{t\Sigma_n}\bigr\}_{\mathcal{O}}.
}
\]
Note that by construction the $\infty$-category of compact objects $\beta_0(\mathcal{Z})^c$ inside $\beta_0(\mathcal{Z})$ is a still a full subcategory of $\beta'_0(\mathcal{Z})$. In fact, it is easy to show that $\beta'_0(\mathcal{Z})$ is a presentable $\infty$-category, so in particular it has all colimits. Left Kan extension of the inclusion $\beta_0(\mathcal{Z})^c \rightarrow \beta'_0(\mathcal{Z})$ defines a functor
\begin{equation*}
\mathrm{Ind}(\beta_0(\mathcal{Z})^c) = \beta_0(\mathcal{Z}) \rightarrow \beta'_0(\mathcal{Z}).
\end{equation*}
It is not at all clear whether the $\infty$-category $\beta'_0(\mathcal{Z})$ itself is compactly generated. However, we will show that at least it has a good supply of compact objects:

\begin{lemma}
\label{lem:compactTatecoalgebras}
If $X \in \beta'_0(\mathcal{Z})$ is an object whose image under
\begin{equation*}
\beta'_0(\mathcal{Z}) \rightarrow \mathrm{coAlg}(\tau_n\mathcal{O}^\otimes)  \xrightarrow{\mathrm{forget}} \mathcal{O}
\end{equation*}
is a compact object of $\mathcal{O}$, then $X$ itself is a compact object of $\beta'_0(\mathcal{Z})$.
\end{lemma}
\begin{proof}
For the duration of this proof, write $u: \beta_0'(\mathcal{Z}) \rightarrow \mathcal{C}$ for the functor featuring in the defining square of $\beta_0'(\mathcal{Z})$ and $U: \beta_0'(\mathcal{Z}) \rightarrow \mathcal{O}$ for the composite described in the lemma. A chase through the definitions of these $\infty$-categories combined with Lemma \ref{lem:coalgvsPn} (or its variant for ordinary coalgebras rather than ind-coalgebras) shows that there is a fiber sequence as follows (cf. the first part of the proof of Lemma \ref{lem:Fcatnexc}):
\begin{equation*}
\mathrm{Map}_\mathcal{O}(UX, \Omega\mathrm{fib}((UY)^{\otimes n} \rightarrow (UY)^{\odot n})_{h\Sigma_n}) \rightarrow \mathrm{Map}_{\beta'_0(\mathcal{Z})}(X, Y) \rightarrow \mathrm{Map}_{\mathcal{\mathcal{C}}}(uX, uY).
\end{equation*}
This identification of the fiber works for any choice of basepoint in $\mathrm{Map}_{\mathcal{\mathcal{C}}}(uX, uY)$ (or one can argue directly that the sequence is in fact a principal fiber sequence). Also, note that if $X$ is assumed to be such that $UX$ (and hence also $uX$) are compact, then the base and fiber commute with filtered colimits when interpreted as functors of $Y$. Hence the same is true of the middle term and we conclude that $X$ itself is compact.
\end{proof}

\begin{corollary}
\label{cor:compactTatecoalgebras}
The functor $\beta_0(\mathcal{Z}) \rightarrow \beta'_0(\mathcal{Z})$ is fully faithful.
\end{corollary}

\begin{remark}
\label{rmk:compactTatecoalgebras}
If $\mathcal{O}^\otimes = \mathrm{Sp}(\mathcal{C})^\otimes$, one can use Proposition \ref{prop:derivativesid} to identify the fiber in the proof above as
\begin{equation*}
\mathrm{Map}_\mathcal{O}(UX, \partial_n \mathrm{id}_{\mathcal{C}}(UY, \ldots, UY)_{h\Sigma_n}).
\end{equation*}
Here $\partial_n\mathrm{id}_{\mathcal{C}}: \mathrm{Sp}(\mathcal{C})^{\times n} \rightarrow \mathrm{Sp}(\mathcal{C})$ denotes the $n$-multilinear functor corresponding to the $n$th derivative of the identity on $\mathcal{C}$.
\end{remark}

\section{A classification of $n$-stages}
\label{subsec:classification}

We have constructed maps $\alpha: \mathcal{G}_n(\mathcal{O}^\otimes) \rightarrow \mathcal{A}$ and $\beta: \mathcal{A} \rightarrow \mathcal{G}_n(\mathcal{O}^\otimes)$, which we claim to be homotopy equivalences. This follows from Propositions $\ref{prop:CnisbaCn}$ and $\ref{prop:Tatebeta}$ below, which will complete the proof of Theorem \ref{thm:classification}.

Suppose $\mathcal{C}$ is an $n$-stage for $\mathcal{O}^\otimes$, so that in particular we have an equivalence $\varphi_{\mathcal{C}}: \tau_n\mathcal{O}^\otimes \rightarrow \mathrm{Sp}(\mathcal{C})^\otimes$. Then we find a square
\[
\xymatrix{
\mathcal{C} \ar[r]\ar[d] & \mathrm{coAlg}^{\mathrm{ind}}(\tau_n\mathcal{O}^\otimes) \ar[d] \\
\mathcal{P}_{n-1}\mathcal{C} \ar[r] & \mathcal{P}_{n-1} \mathrm{coAlg}^{\mathrm{ind}}(\tau_n\mathcal{O}^\otimes).
}
\]
The top horizontal arrow is the formation of coalgebras discussed previously, the bottom arrow obtained from the top by applying $\mathcal{P}_{n-1}$. Write $\mathcal{D} := \beta_0(\alpha(\mathcal{C}))$. By the definition of $\beta_0$, the square above induces a functor $F: \mathcal{C} \rightarrow \mathcal{D}$. Since $F$ preserves colimits it admits a right adjoint $G$. Considering our definitions it should be clear that $G$ provides a 2-simplex of equivalences as follows:
\[
\xymatrix{
& \tau_n\mathcal{O}^\otimes \ar[dl]_{\varphi_{\mathcal{D}}} \ar[dr]^{\varphi_{\mathcal{C}}} & \\
\mathrm{Sp}(\mathcal{D})^\otimes \ar[rr] & & \mathrm{Sp}(\mathcal{C})^\otimes.
} 
\]
This construction provides a homotopy between the identity map of $\mathcal{G}_n(\mathcal{O}^\otimes)$ and the composition $\beta \circ \alpha$ by virtue of the following:

\begin{proposition}
\label{prop:CnisbaCn}
The functor $F: \mathcal{C} \rightarrow \mathcal{D}$ above is an equivalence of $\infty$-categories.
\end{proposition}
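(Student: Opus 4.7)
Plan: Since $\mathcal{D} = \beta_0(\alpha(\mathcal{C}))$ is defined as the pullback
\[
\xymatrix{
\mathcal{D} \ar[r]\ar[d] & \mathrm{coAlg}^{\mathrm{ind}}(\tau_n\mathcal{O}^\otimes) \ar[d] \\
\mathcal{P}_{n-1}\mathcal{C} \ar[r] & \mathcal{P}_{n-1}\mathrm{coAlg}^{\mathrm{ind}}(\tau_n\mathcal{O}^\otimes),
}
\]
and $F$ is the comparison map induced by the analogous square with $\mathcal{C}$ in place of $\mathcal{D}$, the assertion that $F$ is an equivalence is equivalent to the assertion that this latter square is itself a pullback in $\mathbf{Cat}_*^\omega$. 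My plan is to verify the pullback property by identifying the fibers of the two vertical maps over the zero object of $\mathcal{P}_{n-1}\mathcal{C}$ and observing that $F$ restricts to an equivalence between them.

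On the right-hand column, Corollary \ref{cor:fibercoalgs} gives the fiber as $\mathrm{Ind}\bigl\{X \to \mathrm{fib}(X^{\otimes n} \to X^{\odot n})_{\Sigma_n}\bigr\}^c_{\mathcal{O}}$. I would first record two consistency checks for $F$: applying $\mathcal{P}_{n-1}$ to the defining pullback of $\mathcal{D}$ and using Theorem \ref{thm:TheoremA}(c) shows that $\mathcal{P}_{n-1}F$ is an equivalence, and Proposition \ref{prop:betastab} ensures that $F$ induces an equivalence on stabilizations. These reduce the problem to a fiberwise comparison.

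The technical heart of the proof is to establish an analog of Lemma \ref{lem:coalgvsPn} for the $n$-excisive $\infty$-category $\mathcal{C}$ itself, i.e.\ the pullback square
\[
\xymatrix{
\mathcal{C} \ar[r]\ar[d] & \mathrm{Ind}\bigl\{X \to (X^{\otimes n})^{\Sigma_n} \bigr\}^c_{\mathcal{O}} \ar[d] \\
\mathcal{P}_{n-1}\mathcal{C} \ar[r] & \mathrm{Ind}\bigl\{X \to (X^{\odot n})^{\Sigma_n} \times_{(X^{\odot n})^{t\Sigma_n}} (X^{\otimes n})^{t\Sigma_n} \bigr\}^c_{\mathcal{O}},
}
\]
in which the bottom map records both the $(n-1)$-stage structure on $\mathcal{P}_{n-1}\mathcal{C}$ and the Tate diagonal datum underlying $\alpha(\mathcal{C})$. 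I expect to obtain this by exploiting $n$-excisiveness of $\mathrm{id}_{\mathcal{C}}$: the fiber sequence $D_n\mathrm{id}_{\mathcal{C}} \to \mathrm{id}_{\mathcal{C}} \to P_{n-1}\mathrm{id}_{\mathcal{C}}$, combined with the identification $\mathrm{Sp}(\mathcal{C})^\otimes \simeq \tau_n\mathcal{O}^\otimes$ and Lemma \ref{lem:normseq}, identifies the $n$-th homogeneous layer with $(\Sigma^\infty X)^{\otimes n}_{h\Sigma_n}$ and identifies the attaching natural transformation with the Tate diagonal. Combining this pullback description of $\mathcal{C}$ with the one obtained for $\mathcal{D}$ by pasting Lemma \ref{lem:coalgvsPn} onto the definition of $\mathcal{D}$, the functor $F$ becomes a map between pullbacks whose three other corners are matched canonically, hence an equivalence.

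The main obstacle I anticipate is upgrading the functorial fiber sequence for $\mathrm{id}_{\mathcal{C}}$ to an honest pullback description of the $\infty$-category $\mathcal{C}$. This requires carefully matching the canonical Tate diagonal of $\mathcal{C}$ with the datum recorded by $\alpha(\mathcal{C})$, and invoking analogs of Lemma \ref{lem:Fcatnexc} and Corollary \ref{cor:Pnindcoalg} with $\mathcal{C}$ in place of $\mathrm{coAlg}^{\mathrm{ind}}(\tau_n\mathcal{O}^\otimes)$. Once this bookkeeping is in place the comparison becomes essentially formal.
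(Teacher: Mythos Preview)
Your reformulation is correct: showing $F$ is an equivalence amounts to showing that the square
\[
\xymatrix{
\mathcal{C} \ar[r]\ar[d] & \mathrm{coAlg}^{\mathrm{ind}}(\tau_n\mathcal{O}^\otimes) \ar[d] \\
\mathcal{P}_{n-1}\mathcal{C} \ar[r] & \mathcal{P}_{n-1}\mathrm{coAlg}^{\mathrm{ind}}(\tau_n\mathcal{O}^\otimes)
}
\]
is a pullback. You also correctly identify the obstacle: one must upgrade the fiber sequence $D_n\mathrm{id}_{\mathcal{C}} \to \mathrm{id}_{\mathcal{C}} \to P_{n-1}\mathrm{id}_{\mathcal{C}}$ of \emph{endofunctors} to a pullback description of the \emph{$\infty$-category} $\mathcal{C}$. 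But this step is not bookkeeping; it is precisely the content of the proposition. The analogs of Lemma~\ref{lem:Fcatnexc} and Corollary~\ref{cor:Pnindcoalg} you propose to invoke were proved using the explicit presentation of $\mathrm{coAlg}^{\mathrm{ind}}(\tau_n\mathcal{O}^\otimes)$ as an iterated pullback of categories of the form $\mathrm{Ind}\{X \to F(X)\}^c_{\mathcal{O}}$; for an abstract $n$-stage $\mathcal{C}$ no such description is available a priori, and producing one is exactly what you are trying to prove. Checking the fiber over the zero object is of course far from sufficient, and knowing that $\mathcal{P}_{n-1}F$ and $\partial F$ are equivalences does not by itself control the attaching data between the layers.

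The paper supplies the missing mechanism via the Arone--Ching cobar resolution (Proposition~\ref{prop:cobarderivatives}). Rather than attempting to exhibit $\mathcal{C}$ directly as a pullback, the paper shows that the adjunction $(F,G)$ is a weak $n$-excisive approximation, which forces $F$ to be an equivalence since $\mathcal{C}$ is already $n$-excisive. Concretely, one has $\Sigma^\infty_{\mathcal{D}}F \simeq \Sigma^\infty_{\mathcal{C}}$ and $G\Omega^\infty_{\mathcal{D}} \simeq \Omega^\infty_{\mathcal{C}}$, and the equivalence $\mathrm{Sp}(\mathcal{D})^\otimes \simeq \mathrm{Sp}(\mathcal{C})^\otimes$ from Proposition~\ref{prop:betastab} yields $P_k(\Sigma^\infty_{\mathcal{D}}FG\Omega^\infty_{\mathcal{D}}) \simeq P_k(\Sigma^\infty_{\mathcal{D}}\Omega^\infty_{\mathcal{D}})$ for all $k$. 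Proposition~\ref{prop:cobarderivatives} then lets one reconstruct $\mathrm{id}_{\mathcal{C}} \to GF$ and $P_n(FG) \to \mathrm{id}_{\mathcal{D}}$ as totalizations of cosimplicial objects built from these stable pieces, and the equivalences at the stable level propagate to the totalizations. This is the substantive input your outline lacks: a device that recovers the $n$-excisive identity (and hence the category) from the comonad $\Sigma^\infty\Omega^\infty$ alone.
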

\begin{proof}
Since $\mathcal{C}$ is an $n$-excisive $\infty$-category, it suffices to show that the adjunction $(F,G)$ is a weak $n$-excisive approximation. In other words, we should show that the natural transformations $\mathrm{id}_{\mathcal{C}} \rightarrow GF$ and $P_n(FG) \rightarrow \mathrm{id}_{\mathcal{Z}}$ are equivalences. For simplicity of notation we identify the stabilizations of $\mathcal{C}$ and $\mathcal{D}$ with $\mathcal{O}$ (although to be precise one would have to carry along the equivalences $\varphi_{\mathcal{C}}$ and $\varphi_{\mathcal{D}}$). These $\infty$-categories induce comonads $\Sigma^\infty_{\mathcal{C}}\Omega^\infty_{\mathcal{C}}$ and $\Sigma^\infty_{\mathcal{D}}\Omega^\infty_{\mathcal{D}}$ on $\mathcal{O}$ respectively. Furthermore, by construction we have equivalences
\begin{equation*}
\Sigma^\infty_{\mathcal{D}} F \simeq \Sigma^\infty_{\mathcal{C}} \quad\quad \text{and} \quad\quad G \Omega^\infty_{\mathcal{D}} \simeq \Omega^\infty_{\mathcal{C}}.
\end{equation*}
Observe also that the natural transformation
\begin{equation*}
P_k(\Sigma^\infty_{\mathcal{D}}FG\Omega^\infty_{\mathcal{D}}) \rightarrow P_k(\Sigma^\infty_{\mathcal{D}}\Omega^\infty_{\mathcal{D}})
\end{equation*}
is an equivalence for every $k$. Indeed, this is a direct consequence of the fact that $D_k(\Sigma^\infty_{\mathcal{C}}\Omega^\infty_{\mathcal{C}}) \rightarrow D_k(\Sigma^\infty_{\mathcal{D}}\Omega^\infty_{\mathcal{D}})$ is an equivalence for every $k$, which is a reformulation of the equivalence of $\infty$-operads $\mathrm{Sp}(\mathcal{D})^\otimes \rightarrow \mathrm{Sp}(\mathcal{C})^\otimes$. Using unit and counit of the stabilization adjunctions of $\mathcal{C}$ and $\mathcal{D}$ we may form the cosimplicial objects featured in the following diagram:
\[
\xymatrix{
\mathrm{id}_{\mathcal{C}} \ar[d]\ar[r] & \mathrm{Tot}\Bigl(P_n\bigl(\Omega^\infty_{\mathcal{C}} (\Sigma^\infty_{\mathcal{C}}\Omega^\infty_{\mathcal{C}})^{\bullet} \Sigma^\infty_{\mathcal{C}} \bigr)\Bigr) \ar[d] \\
GF \ar[r] &  \mathrm{Tot}\Bigl(P_n\bigl(G \Omega^\infty_{\mathcal{D}} (\Sigma^\infty_{\mathcal{D}}\Omega^\infty_{\mathcal{D}})^{\bullet} \Sigma^\infty_{\mathcal{D}} F\bigr)\Bigr). 
}
\]
The vertical arrow on the right is an equivalence by our previous remark. Moreover, the horizontal arrows are equivalences by a result of Arone and Ching (see Proposition \ref{prop:cobarderivatives} in the appendix). It follows that the left vertical arrow is an equivalence. We treat the map $P_n(FG) \rightarrow \mathrm{id}_{\mathcal{D}}$ similarly: indeed, by the same proposition we have equivalences
\begin{eqnarray*}
F & \longrightarrow & \mathrm{Tot}\Bigl(P_n\bigl(\Omega^\infty_{\mathcal{D}} (\Sigma^\infty_{\mathcal{D}}\Omega^\infty_{\mathcal{D}})^\bullet \Sigma^\infty_{\mathcal{D}} F \bigr)\Bigr), \\
G & \longrightarrow & \mathrm{Tot}\Bigl(P_n\bigl(G \Omega^\infty_{\mathcal{D}} (\Sigma^\infty_{\mathcal{D}}\Omega^\infty_{\mathcal{D}})^\bullet \Sigma^\infty_{\mathcal{D}} \bigr)\Bigr).
\end{eqnarray*}
Together these yield a diagram
\[
\xymatrix{
P_n(FG) \ar[d]\ar[r] &  \mathrm{Tot}\Bigl(P_n\bigl(\Omega^\infty_{\mathcal{D}} (\Sigma^\infty_{\mathcal{D}}\Omega^\infty_{\mathcal{D}})^\bullet \Sigma^\infty_{\mathcal{D}} F  G \Omega^\infty_{\mathcal{D}} (\Sigma^\infty_{\mathcal{D}}\Omega^\infty_{\mathcal{D}})^\bullet \Sigma^\infty_{\mathcal{D}}\bigr)\Bigr) \ar[d] \\
\mathrm{id}_{\mathcal{D}} \ar[r] & \mathrm{Tot}\Bigl(P_n\bigl(\Omega^\infty_{\mathcal{D}} (\Sigma^\infty_{\mathcal{D}}\Omega^\infty_{\mathcal{D}})^\bullet \Sigma^\infty_{\mathcal{D}} \Omega^\infty_{\mathcal{D}} (\Sigma^\infty_{\mathcal{D}}\Omega^\infty_{\mathcal{D}})^\bullet \Sigma^\infty_{\mathcal{D}}\bigr)\Bigr),
}
\]
where the cosimplicial object on the top right is the diagonal of the evident bicosimplicial object formed from the resolutions of $F$ and $G$ described above (and similarly for the bottom right, applying the standard resolution of the identity twice). One deduces that the horizontal maps are equivalences by applying Proposition \ref{prop:cobarderivatives} twice. The vertical map on the right is an equivalence again because the natural transformation $P_n(\Sigma^\infty_{\mathcal{D}}FG\Omega^\infty_{\mathcal{D}}) \rightarrow P_n(\Sigma^\infty_{\mathcal{D}}\Omega^\infty_{\mathcal{D}})$ is an equivalence. Finally, we conclude that the left vertical map $P_n(FG) \rightarrow \mathrm{id}_{\mathcal{D}}$ in the square above is an equivalence.
\end{proof}

It remains to deal with the composition $\alpha \circ \beta$. Consider a vertex $\mathcal{Z} \in \mathcal{A}$, which in particular determines an $(n-1)$-excisive $\infty$-category $\mathcal{C}$ and a natural transformation $N: \Sigma^\infty_{\mathcal{C}} \rightarrow \Theta_{\mathcal{C}}$. The map $T_n: \mathcal{G}_n(\mathcal{O}^\otimes) \rightarrow \mathcal{T}_n$ assigns to $\beta(\mathcal{Z})$ another such natural transformation $T_n(\beta(\mathcal{Z})): \Sigma^\infty_{\mathcal{C}} \rightarrow \Theta_{\mathcal{C}}$. A homotopy between $\alpha \circ \beta$ and the identity map of $\mathcal{A}$ is then provided by the following proposition, which is almost tautologous from what we have done so far:

\begin{proposition}
\label{prop:Tatebeta}
The natural transformations $N$ and $T_n(\beta(\mathcal{Z}))$ are canonically equivalent relative to $\Psi_{\mathcal{C}}$.
\end{proposition}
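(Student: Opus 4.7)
The plan is to show this equivalence by unwinding the definitions of both natural transformations and observing that $\beta_0(\mathcal{Z})$ was engineered precisely so that $N$ plays the role of its Tate diagonal. The argument is essentially bookkeeping, so no nontrivial homotopical content is needed beyond the results already established.

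First, I would recall how the map $T_n: \mathcal{G}_n(\mathcal{O}^\otimes) \rightarrow \mathcal{T}_n$ of Section \ref{subsec:tatediagonal} assigns a natural transformation to an $n$-stage $\mathcal{D}$. Starting from the coalgebra-formation square (furnished by Construction \ref{constr:coalgebras} applied to $\mathcal{D}$), one applies $\mathcal{P}_{n-1}$ and invokes Corollary \ref{cor:Pnindcoalg}, which identifies $\mathcal{P}_{n-1}\mathrm{coAlg}^{\mathrm{ind}}(\tau_n\mathcal{O}^\otimes)$ with the pullback expressing the Tate diagonal. The resulting induced arrow $\Sigma_{\mathcal{D}}^\infty \rightarrow \Theta_{\mathcal{D}}$, compatible with $\Psi_{\mathcal{D}}$, is by definition $T_n(\mathcal{D})$.

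Second, I would apply this recipe with $\mathcal{D} = \beta_0(\mathcal{Z})$. By the very definition of $\beta_0(\mathcal{Z})$ as a pullback in $\mathbf{Cat}_*^\omega$, the functor $\beta_0(\mathcal{Z}) \rightarrow \mathrm{coAlg}^{\mathrm{ind}}(\tau_n\mathcal{O}^\otimes)$ sits in a commutative square over the functor $\mathcal{C} \rightarrow \mathcal{P}_{n-1}\mathrm{coAlg}^{\mathrm{ind}}(\tau_n\mathcal{O}^\otimes)$. This latter functor is, tautologically, the one classified (via Corollary \ref{cor:Pnindcoalg}) by the natural transformation $N$ together with its prescribed factorization through $\Psi_{\mathcal{C}}$. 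On the other hand, in the proof of Proposition \ref{prop:betastab} we saw that applying $\mathcal{P}_{n-1}$ to this same pullback square yields the identification $\mathcal{P}_{n-1}\beta_0(\mathcal{Z}) \simeq \mathcal{C}$.

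Third, combining these observations, the $(n-1)$-excisive approximation of the coalgebra-formation functor for $\beta_0(\mathcal{Z})$ becomes, under the identification $\mathcal{P}_{n-1}\beta_0(\mathcal{Z}) \simeq \mathcal{C}$, exactly the bottom horizontal arrow of the pullback square defining $\beta_0(\mathcal{Z})$. Translating through Corollary \ref{cor:Pnindcoalg} on both sides of the comparison, the natural transformation $T_n(\beta(\mathcal{Z}))$ is therefore identified with $N$, and the compatibility with $\Psi_{\mathcal{C}}$ is preserved because both constructions are obtained as the induced map into the same pullback.

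The main obstacle will be purely organizational: one must check that the identifications used in the constructions of $\alpha$, $\beta$, and $T_n$ line up functorially, so that the tautology above is visible as a canonical equivalence in $\mathcal{T}_n$ (rather than just an unspecified zigzag). This amounts to tracing the twisted-arrow trick and the unstraightening used to build the fibrations $t_n$ and $\widehat{t}_n$, and verifying that the sections $T_n$ and $\widehat{t}_n \circ (\text{section})$ constructed in Section \ref{subsec:tatediagonal} agree with the ones extracted from $\beta_0(\mathcal{Z})$; once this is done the claim is immediate.
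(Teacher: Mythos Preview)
There is a genuine gap in your argument. You correctly identify that the natural transformation $N$ is encoded by the bottom horizontal arrow of the pullback square defining $\beta_0(\mathcal{Z})$, and that applying $\mathcal{P}_{n-1}$ to the top horizontal arrow $A_{\mathcal{D}}: \beta_0(\mathcal{Z}) \rightarrow \mathrm{coAlg}^{\mathrm{ind}}(\tau_n\mathcal{O}^\otimes)$ recovers that bottom arrow. However, the map $T_n$ of Section~\ref{subsec:tatediagonal} is \emph{not} built from $A_{\mathcal{D}}$: it is built from the intrinsic coalgebra-formation functor $B_{\mathcal{D}}: \beta_0(\mathcal{Z}) \rightarrow \mathrm{coAlg}^{\mathrm{ind}}(\tau_n\mathcal{O}^\otimes)$ of Construction~\ref{constr:coalgebras}, which uses the Cartesian diagonal of $\beta_0(\mathcal{Z})$ and the stabilization $\mathrm{Sp}(\beta_0(\mathcal{Z}))^\otimes$. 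Your third paragraph silently identifies ``the coalgebra-formation functor for $\beta_0(\mathcal{Z})$'' with the map coming from the pullback square, but a priori $A_{\mathcal{D}}$ and $B_{\mathcal{D}}$ are two different functors to the same target, and nothing in the organizational bookkeeping forces them to agree.

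The paper's proof addresses exactly this point. It reduces to comparing the $n$-fold diagonals $\Sigma^\infty_{\mathcal{D}} X \rightarrow (\Sigma^\infty_{\mathcal{D}} X)^{\otimes n, \Sigma_n}$ supplied by $A_{\mathcal{D}}$ and $B_{\mathcal{D}}$, describes both explicitly as factoring through the unit $\overline{X} \rightarrow \Sigma^\infty_{\mathcal{D}}\Omega^\infty_{\mathcal{D}}\overline{X}$, and then invokes the computation from the proof of Proposition~\ref{prop:betastab} showing that the composite $\Sigma^\infty_{\mathcal{D}}\Omega^\infty_{\mathcal{D}} \rightarrow \mathrm{cofree}_n \rightarrow (-)^{\otimes n, \Sigma_n}$ (coming from $A_{\mathcal{D}}$) induces an equivalence on $n$'th coderivatives, hence agrees with the coderivative map used for $B_{\mathcal{D}}$ via Lemma~\ref{lem:codersigmaomega}. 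This is the missing ingredient: you need to use the equivalence $\varphi_{\mathcal{Z}}$ established in Proposition~\ref{prop:betastab} to identify the two diagonals, not merely the pullback property of $\beta_0(\mathcal{Z})$.
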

\begin{proof}
For the purposes of this proof let us write $\mathcal{D} := \beta_0(\mathcal{Z})$. Recall that this $\infty$-category is defined by a pullback square
\[
\xymatrix{
\mathcal{D} \ar[r]^-{A_{\mathcal{D}}} \ar[d]& \mathrm{coAlg}^{\mathrm{ind}}(\tau_n\mathcal{O}^\otimes) \ar[d] \\
\mathcal{C} \ar[r] & \mathcal{P}_{n-1}\mathrm{coAlg}^{\mathrm{ind}}(\tau_n\mathcal{O}^\otimes).
}
\]
The vertical arrows become equivalences after applying $\mathcal{P}_{n-1}$, which yields an equivalence between the bottom horizontal arrow and $\mathcal{P}_{n-1}A_{\mathcal{D}}$. Recall that our definition of $T_n(\beta(\mathcal{Z}))$ arose from applying $\mathcal{P}_{n-1}$ to the construction of coalgebras of Section \ref{subsec:coalgebras}, for which we write
\begin{equation*}
B_{\mathcal{D}}: \mathcal{D} \longrightarrow \mathrm{coAlg}^{\mathrm{ind}}(\tau_n\mathcal{O}^\otimes). 
\end{equation*}
Both $A_{\mathcal{D}}$ and $B_{\mathcal{D}}$ supply for each $X \in \mathcal{D}$ an $n$-fold `diagonal'
\begin{equation*}
\Sigma^\infty_{\mathcal{D}} X \longrightarrow (\Sigma^\infty_{\mathcal{D}} X \otimes^n \cdots \otimes^n \Sigma^\infty_{\mathcal{D}} X)^{h\Sigma_n}.
\end{equation*}
Write $\overline{X}$ for $\Sigma^\infty_{\mathcal{D}} X$. To prove the proposition it suffices to show that these maps are canonically equivalent relative to $(\overline{X}^{\odot n})^{h\Sigma_n}$. This is mostly an unraveling of definitions. One way to describe the $n$-fold diagonal induced by $B_{\mathcal{D}}$ is by using the coalgebra structure $\overline{X} \rightarrow \Sigma^\infty_{\mathcal{D}}\Omega^\infty_{\mathcal{D}}\overline{X}$ given by the unit of the adjunction $(\Sigma^\infty_{\mathcal{D}},\Omega^\infty_{\mathcal{D}})$ and composing with the map $\Sigma^\infty_{\mathcal{D}}\Omega^\infty_{\mathcal{D}}\overline{X} \rightarrow (\overline{X}^{\otimes n})^{h\Sigma_n}$ coming from the formation of coderivatives (see Lemma \ref{lem:codersigmaomega}). In case of $A_{\mathcal{D}}$, this $n$-fold diagonal arises by using the same structure $\overline{X} \rightarrow \Sigma^\infty_{\mathcal{D}}\Omega^\infty_{\mathcal{D}}\overline{X}$, but then considering the natural transformation
\begin{equation*}
\Sigma^\infty_{\mathcal{D}}\Omega^\infty_{\mathcal{D}} \longrightarrow \mathrm{cofree}_n
\end{equation*}
induced by $A_{\mathcal{D}}$ and composing with the projection
\begin{equation*}
\mathrm{cofree}_n(\overline{X}) \longrightarrow (\overline{X} \otimes^n \cdots \otimes^n \overline{X})^{h\Sigma_n}.
\end{equation*}
In the proof of Proposition \ref{prop:betastab} we showed that this composite induces an equivalences on $n$th derivatives and hence also on $n$th coderivatives (see Remark \ref{rmk:derivcoderiv}), which implies what we need. The compatibility with maps down to $(\overline{X}^{\odot n})^{h\Sigma_n}$ is immediate from the commutativity of the square defining $\mathcal{D}$.
\end{proof}

\section{The case of vanishing Tate constructions}
\label{subsec:vanishingTate}

Let $\mathcal{O}^\otimes$ be a nonunital stable $\infty$-operad. As in Corollary \ref{cor:notatecohomology}, assume that for $k \leq n$ and any object $X \in \mathcal{O}$ equipped with an action of $\Sigma_k$ the object $X^{t\Sigma_k}$ is trivial, i.e. is a zero object of $\mathcal{O}$. Then Theorem \ref{thm:classification} implies that the space $\mathcal{G}_k(\mathcal{O}^\otimes)$ is contractible for every $k \leq n$. In particular, all $n$-stages for $\mathcal{O}^\otimes$ are canonically equivalent, up to contractible ambiguity. The following result gives an explicit description of these $n$-stages:

\begin{proposition}
\label{prop:nstagenotate}
Assume $\mathcal{O}^\otimes$ as above. Write $\mathcal{C}_n$ for the $n$-excisive $\infty$-category $\mathrm{coAlg}^{\mathrm{ind}}(\tau_n\mathcal{O}^\otimes)$. Then there is an equivalence of $\infty$-operads
\begin{equation*}
\tau_n\mathcal{O}^\otimes \longrightarrow \mathrm{Sp}(\mathcal{C}_n)^\otimes
\end{equation*}
exhibiting $\mathcal{C}_n$ as an $n$-stage for $\mathcal{O}^\otimes$.
\end{proposition}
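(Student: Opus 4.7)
The plan is to combine the $n$-excisiveness of $\mathcal{C}_n$ established in Proposition \ref{prop:indcoalgnexcisive} with the construction $\beta_0$ from Section \ref{subsec:nstages}, whose stabilization is computed in Proposition \ref{prop:betastab}, and to leverage the Tate vanishing hypothesis to canonically identify $\beta_0(\mathcal{Z})$ with $\mathcal{C}_n$ for a natural choice of vertex $\mathcal{Z} \in \mathcal{A}$.

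I would proceed by induction on $n$. The base case $n = 1$ is immediate: $\mathcal{C}_1 \simeq \mathcal{O}$ because $\tau_1\mathcal{O}^\otimes$ only has unary operations, and since $\mathcal{O}$ is already stable, the $1$-truncation of its stabilization recovers $\mathcal{O}^\otimes_{\leq 1}$ tautologically. For the inductive step, suppose $\mathcal{C}_{n-1} = \mathrm{coAlg}^{\mathrm{ind}}(\tau_{n-1}\mathcal{O}^\otimes)$ is an $(n-1)$-stage for $\mathcal{O}^\otimes$. Under the vanishing Tate hypothesis, both functors $\Theta_{\mathcal{C}_{n-1}}(X) = (X^{\otimes n})^{t\Sigma_n}$ and $\Psi_{\mathcal{C}_{n-1}}(X) = (X^{\odot n})^{t\Sigma_n}$ take values in zero objects, so the spaces of natural transformations $\Sigma^\infty \to \Theta_{\mathcal{C}_{n-1}}$ and $\Sigma^\infty \to \Psi_{\mathcal{C}_{n-1}}$ together with the connecting 2-simplex are contractible; this yields a canonical vertex $\mathcal{Z}$ of $\mathcal{A}$.

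Applying $\beta_0$ to $\mathcal{Z}$, I would invoke Corollary \ref{cor:Pnindcoalg}: the two $\infty$-categories $\mathrm{Ind}\{X \to (X^{\otimes n})^{t\Sigma_n}\}^c_{\mathcal{O}}$ and $\mathrm{Ind}\{X \to (X^{\odot n})^{t\Sigma_n}\}^c_{\mathcal{O}}$ are both canonically equivalent to $\mathcal{O}$, since a map to a zero object carries no data, and the natural comparison map between them is the identity on $\mathcal{O}$. The defining pullback for $\mathcal{P}_{n-1}\mathrm{coAlg}^{\mathrm{ind}}(\tau_n\mathcal{O}^\otimes)$ therefore collapses to $\mathcal{C}_{n-1} \times_{\mathcal{O}} \mathcal{O} \simeq \mathcal{C}_{n-1}$, and consequently $\beta_0(\mathcal{Z}) \simeq \mathcal{C}_n$. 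Proposition \ref{prop:betastab} then supplies the required equivalence of stable $\infty$-operads $\varphi_{\mathcal{Z}}: \mathrm{Sp}(\mathcal{C}_n)^\otimes \to \tau_n\mathcal{O}^\otimes$, which combined with $n$-excisiveness exhibits $\mathcal{C}_n$ as an $n$-stage and closes the induction.

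The main obstacle is verifying that the simplifications above are genuinely canonical, not merely ``up to unspecified equivalence''. This reduces to checking that mapping spaces of natural transformations into the trivialized Tate functors, and the relevant 2-simplices of natural transformations compatible with the comparison $\Theta \to \Psi$, form contractible spaces. Once this coherence is pinned down, the rest is a direct assembly of the machinery of Sections \ref{sec:coalgebras} and \ref{sec:classification}, with no further analytic input required.
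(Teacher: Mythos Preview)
Your proposal is correct and follows essentially the same route as the paper's proof: induction on $n$, with the inductive step using Corollary \ref{cor:Pnindcoalg} plus the Tate vanishing hypothesis to conclude that $\mathcal{C}_{n-1} \to \mathcal{P}_{n-1}\mathrm{coAlg}^{\mathrm{ind}}(\tau_n\mathcal{O}^\otimes)$ is an equivalence, so that the defining pullback $\beta_0(\mathcal{Z})$ is equivalent to $\mathcal{C}_n$, and then invoking the results of Section \ref{subsec:nstages} (in particular Proposition \ref{prop:betastab}) to obtain the required operad equivalence. The paper phrases the final step slightly more tersely, simply noting that the bottom arrow of the pullback square is an equivalence and appealing to the constructions of Section \ref{subsec:nstages}, whereas you route the argument explicitly through a canonical vertex $\mathcal{Z} \in \mathcal{A}$; but this is only a packaging difference, and your extra attention to the contractibility of the relevant spaces of natural transformations is exactly what justifies the ``canonical'' in the paper's argument.
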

\begin{proof}
The proof is more or less direct from our construction of $n$-stages as in Section \ref{subsec:nstages}. The case $n=1$ is trivial, since $\mathrm{coAlg}^{\mathrm{ind}}(\tau_1\mathcal{O}^\otimes)$ is just the $\infty$-category $\mathcal{O}$ itself. Suppose we have proved the claim for $n-1$ and we wish to establish it for $n$. Let $\mathcal{C}_{n-1}$ be an $(n-1)$-stage for $\mathcal{O}^\otimes$, so that by our inductive hypothesis there is an equivalence
\begin{equation*}
\mathcal{C}_{n-1} \longrightarrow \mathrm{coAlg}^{\mathrm{ind}}(\tau_{n-1}\mathcal{O}^\otimes).
\end{equation*}
Note that by Corollary \ref{cor:Pnindcoalg} and our assumption on vanishing Tate constructions, there is an equivalence
\begin{equation*}
\mathrm{coAlg}^{\mathrm{ind}}(\tau_{n-1}\mathcal{O}^\otimes) \longrightarrow \mathcal{P}_{n-1}\mathrm{coAlg}^{\mathrm{ind}}(\tau_n\mathcal{O}^\otimes).
\end{equation*}
The constructions and results of Section \ref{subsec:nstages} then show that we may form an $n$-stage $\mathcal{C}$ for $\mathcal{O}^\otimes$ by forming the following pullback square:
\[
\xymatrix{
\mathcal{C} \ar[d]\ar[r] & \mathrm{coAlg}^{\mathrm{ind}}(\tau_n\mathcal{O}^\otimes) \ar[d] \\
\mathcal{C}_{n-1} \ar[r] & \mathcal{P}_{n-1}\mathrm{coAlg}^{\mathrm{ind}}(\tau_n\mathcal{O}^\otimes).
}
\]
Since the bottom horizontal arrow is an equivalence, so is the top arrow. In particular, $\mathrm{coAlg}^{\mathrm{ind}}(\tau_n\mathcal{O}^\otimes)$ can be made into an $n$-stage for $\mathcal{O}^\otimes$.
\end{proof}

In the case of vanishing Tate constructions we may also use Lemma \ref{lem:compactTatecoalgebras} and Corollary \ref{cor:compactTatecoalgebras} to conclude:

\begin{corollary}
Let $\mathcal{O}^\otimes$ be as above, i.e., all Tate constructions for $\Sigma_k$ vanish for $k \leq n$. Then $X \in \mathrm{coAlg}(\tau_n\mathcal{O}^\otimes)$ is a compact object whenever its underlying object of $\mathcal{O}$ is compact. As a consequence, the evident functor
\begin{equation*}
\mathrm{coAlg}^{\mathrm{ind}}(\tau_n\mathcal{O}^\otimes) \rightarrow \mathrm{coAlg}(\tau_n\mathcal{O}^\otimes)
\end{equation*}
is fully faithful.
\end{corollary}

\chapter{Examples}
\label{sec:examples}

In this chapter we apply our techniques to analyze the Goodwillie towers of several more or less familiar homotopy theories. In Section \ref{subsec:dividedpowers} we start with the notion of \emph{divided power coalgebras}, which in a sense give the simplest (or `untwisted') example of $n$-stages associated to a nonunital stable $\infty$-operad. They arise from Theorem \ref{thm:classification} by choosing the Tate diagonals to be null for every $n$. In case $\mathbf{O}$ is an operad in the category of spectra (or some related stable homotopy theory), we consider the Goodwillie tower of the homotopy theory $\mathrm{Alg}(\mathbf{O})$ of algebras over $\mathbf{O}$ and show that it coincides with the Goodwillie tower of divided power coalgebras over the cooperad $\mathbf{B}(\mathbf{O})$, the bar construction of $\mathbf{O}$. This is an instance of Koszul duality. 

In Section \ref{subsec:rationalhomotopy} we consider rational homotopy theory. In the homotopy theory of rational spectra all Tate cohomology vanishes, so that the classification of Theorem \ref{thm:classification} degenerates completely. We use this fact to identify the Goodwillie tower of the homotopy theory of pointed rational spaces with that of differential graded coalgebras over the rational numbers and with the Goodwillie tower of rational differential graded Lie algebras. We reproduce some of Quillen's results from this. 

In Section \ref{subsec:truncatedspaces} we localize the $\infty$-category of spectra so that $(p-1)!$ becomes invertible. In this localized homotopy theory the Tate cohomology of $\Sigma_k$ vanishes for $k < p$. We will use this observation to prove Theorem \ref{thm:truncatedspaces}, giving a Lie algebra model for the $\infty$-category $\mathcal{S}_\ast^{[n,p(n-1)]}$ of pointed spaces whose nontrivial homotopy groups are in the range of dimensions $[n,p(n-1)]$.

In Section \ref{subsec:modulisuspension} we investigate the Goodwillie tower of the homotopy theory of pointed spaces and prove Theorem \ref{thm:Tatecoalgebras}, which describes the $\infty$-category of simply connected pointed spaces in terms of Tate coalgebras.

Finally, in Section \ref{subsec:goodwilliespaces}, we make some further observations on the Goodwillie tower of the homotopy theory of pointed spaces. The stable $\infty$-operad of interest in this case is $\mathrm{Sp}^{\otimes}$, the symmetric monoidal $\infty$-category of spectra with the smash product. We give an explicit description of the fibers of the maps $\mathcal{G}_n(\mathrm{Sp}^\otimes) \rightarrow \mathcal{G}_{n-1}(\mathrm{Sp}^\otimes)$ in terms of the Tate spectra of the derivatives of the identity on $\mathcal{S}_{\ast}$, i.e. the Spanier-Whitehead duals of the partition complexes. 

The aim of the current chapter is to illustrate the use of results and techniques developed in this paper; its style is slightly more informal and expository than the rest of this paper. A more sophisticated application of our techniques is to $v_n$-periodic unstable homotopy theory, where one obtains results similar in nature to those of rational homotopy theory. These are discussed in \cite{unstableperiodicity}.

\section{Divided power coalgebras and Koszul duality}
\label{subsec:dividedpowers}

Let $\mathcal{O}^\otimes$ be a nonunital stable $\infty$-operad. Informally speaking, a coalgebra $X$ has \emph{divided powers} if each of its diagonal maps $\delta_n$ is equipped with a factorization through the norm map as follows:
\[
\xymatrix{
& (X \otimes^n \cdots \otimes^n X)_{h\Sigma_n} \ar[d]^{\mathrm{Nm}} \ar[d] \\
X \ar[r]_-{\delta_n}\ar[ur] & (X \otimes^n \cdots \otimes^n X)^{h\Sigma_n}.
}
\]
Furthermore, these factorizations should be compatible for various $n$ with respect to the structure of $\mathcal{O}^\otimes$. More precisely, let us inductively define $\infty$-categories $\mathrm{coAlg}^{\mathrm{ind}}_{\mathrm{dp}}(\tau_n\mathcal{O}^\otimes)$ of divided power ind-coalgebras in the truncated $\infty$-operads $\tau_n\mathcal{O}^\otimes$. We start by simply setting
\begin{equation*}
\mathrm{coAlg}^{\mathrm{ind}}_{\mathrm{dp}}(\tau_1\mathcal{O}^\otimes) := \mathcal{O}.
\end{equation*}
Note that this is a 1-stage for $\mathcal{O}^\otimes$ in an evident way. Now suppose we have defined the $\infty$-category $\mathrm{coAlg}^{\mathrm{ind}}_{\mathrm{dp}}(\tau_{n-1}\mathcal{O}^\otimes)$, together with an equivalence
\begin{equation*}
\mathrm{Sp}\bigl(\mathrm{coAlg}^{\mathrm{ind}}_{\mathrm{dp}}(\tau_{n-1}\mathcal{O}^\otimes)\bigr)^\otimes \longrightarrow \tau_{n-1}\mathcal{O}^\otimes
\end{equation*}
exhibiting it as an $(n-1)$-stage for $\mathcal{O}^\otimes$. Write $\sigma_{n-1}$ for the composition of the functors
\begin{equation*}
\mathrm{coAlg}^{\mathrm{ind}}_{\mathrm{dp}}(\tau_{n-1}\mathcal{O}^\otimes) \longrightarrow \mathrm{coAlg}^{\mathrm{ind}}(\tau_{n-1}\mathcal{O}^\otimes) \longrightarrow \mathrm{Ind}\bigl\{X \rightarrow (X^{\odot n})^{t\Sigma_n}\bigr\}^c_{\mathcal{O}}.
\end{equation*}
Assume that we have a natural equivalence between $\sigma_{n-1}$ and the functor which assigns to every coalgebra $X$ simply the null map $X \rightarrow (X^{\odot n})^{t\Sigma_n}$. As usual, $\odot^n$ here denotes the $n$-fold tensor product on $\mathcal{O}$ determined by the stable $\infty$-operad $\tau_{n-1}\mathcal{O}^\otimes$. This nullhomotopy defines a functor
\begin{equation*}
\mathrm{coAlg}^{\mathrm{ind}}_{\mathrm{dp}}(\tau_{n-1}\mathcal{O}^\otimes) \longrightarrow \mathrm{Ind}\bigl\{X \rightarrow (X^{\odot n})_{h\Sigma_n}\bigr\}^c_{\mathcal{O}}
\end{equation*}
and we define the $\infty$-category of divided power coalgebras in $\tau_n\mathcal{O}^\otimes$ by the following pullback square of compactly generated $\infty$-categories:
\[
\xymatrix{
\mathrm{coAlg}^{\mathrm{ind}}_{\mathrm{dp}}(\tau_n\mathcal{O}^\otimes) \ar[r]\ar[d] & \mathrm{Ind}\bigl\{X \rightarrow (X^{\otimes n})_{h\Sigma_n}\bigr\}^c_{\mathcal{O}} \ar[d] \\
\mathrm{coAlg}^{\mathrm{ind}}_{\mathrm{dp}}(\tau_{n-1}\mathcal{O}^\otimes) \ar[r] & \mathrm{Ind}\bigl\{X \rightarrow (X^{\odot n})_{h\Sigma_n}\bigr\}^c_{\mathcal{O}}.
}
\]

That this inductive construction makes sense is guaranteed by the following:

\begin{proposition}
The $\infty$-category $\mathrm{coAlg}^{\mathrm{ind}}_{\mathrm{dp}}(\tau_n\mathcal{O}^\otimes)$ is $n$-excisive. Furthermore, the stable $\infty$-operad associated to it is canonically equivalent to $\tau_n\mathcal{O}^\otimes$, making $\mathrm{coAlg}^{\mathrm{ind}}_{\mathrm{dp}}(\tau_n\mathcal{O}^\otimes)$ an $n$-stage for $\mathcal{O}^\otimes$. The associated natural transformation $\sigma_n$ is canonically nullhomotopic in the sense described above. Finally, the following functor is an equivalence:
\begin{equation*}
\mathcal{P}_{n-1}\mathrm{coAlg}^{\mathrm{ind}}_{\mathrm{dp}}(\tau_n\mathcal{O}^\otimes) \longrightarrow \mathrm{coAlg}^{\mathrm{ind}}_{\mathrm{dp}}(\tau_{n-1}\mathcal{O}^\otimes).
\end{equation*}
\end{proposition}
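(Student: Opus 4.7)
The proof will proceed by induction on $n$, the case $n=1$ being immediate: $\mathrm{coAlg}^{\mathrm{dp}}(\tau_1\mathcal{O}^\otimes)=\mathcal{O}$ is stable (hence $1$-excisive), its stabilization is tautologically $\tau_1\mathcal{O}^\otimes$, and there is nothing to verify about $\sigma_1$. Suppose the proposition has been established for $n-1$. The first two claims are easy. For $n$-excisiveness, in the pullback defining $\mathrm{coAlg}^{\mathrm{dp}}(\tau_n\mathcal{O}^\otimes)$ the vertex $\mathrm{coAlg}^{\mathrm{dp}}(\tau_{n-1}\mathcal{O}^\otimes)$ is $(n-1)$-excisive by induction, while the two $\mathrm{Ind}\{\cdots\}^c_{\mathcal{O}}$ vertices are $n$-excisive by Lemma \ref{lem:Fcatnexc} applied to the $n$-homogeneous functors $(X^{\otimes n})_{\Sigma_n}$ and $(X^{\odot n})_{\Sigma_n}$; hence so is the pullback. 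For the equivalence $\mathcal{P}_{n-1}\mathrm{coAlg}^{\mathrm{dp}}(\tau_n\mathcal{O}^\otimes)\simeq\mathrm{coAlg}^{\mathrm{dp}}(\tau_{n-1}\mathcal{O}^\otimes)$, apply $\mathcal{P}_{n-1}$ (which preserves finite limits by Lemma \ref{lem:Pnfinitelimits}) to the same square. The second part of Lemma \ref{lem:Fcatnexc} collapses each right-hand $\mathrm{Ind}$-category to $\mathcal{O}$ and turns the right vertical map into the identity, while the bottom-left vertex is unchanged by induction. The resulting pullback is $\mathrm{coAlg}^{\mathrm{dp}}(\tau_{n-1}\mathcal{O}^\otimes)\times_{\mathcal{O}}\mathcal{O}\simeq\mathrm{coAlg}^{\mathrm{dp}}(\tau_{n-1}\mathcal{O}^\otimes)$.

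For the identification of the stable $\infty$-operad, write $\mathcal{C}=\mathrm{coAlg}^{\mathrm{dp}}(\tau_n\mathcal{O}^\otimes)$. The previous paragraph together with Proposition \ref{prop:SpPnCntruncated} yields an equivalence $\tau_{n-1}\mathrm{Sp}(\mathcal{C})^\otimes\simeq\tau_{n-1}\mathcal{O}^\otimes$; to extend it to $\tau_n$ I will appeal to Proposition \ref{prop:mapstruncations}, which reduces the problem to supplying a $\Sigma_n$-equivariant lift of the natural transformation $\otimes^n_{\mathrm{Sp}(\mathcal{C})}\to\odot^n_{\mathcal{O}}$ through $\otimes^n_{\mathcal{O}}$. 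Under the equivalence between $n$-homogeneous functors and symmetric multilinear functors, this amounts to computing $D_n(\Sigma^\infty_\mathcal{C}\Omega^\infty_\mathcal{C})$ and identifying it with $(X^{\otimes n})_{\Sigma_n}$ compatibly with the map to $(X^{\odot n})_{\Sigma_n}$. Following the template of Proposition \ref{prop:betastab}, I would apply $D_n$ to the square of comonads on $\mathcal{O}$ induced by the defining pullback of $\mathcal{C}$; the resulting square of functors is again a pullback, its top right entry is $(X^{\otimes n})_{\Sigma_n}$, and by the inductive hypothesis its bottom row is the identity equivalence of $(X^{\odot n})_{\Sigma_n}$. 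Consequently $D_n(\Sigma^\infty_\mathcal{C}\Omega^\infty_\mathcal{C})\simeq(X^{\otimes n})_{\Sigma_n}$ over $(X^{\odot n})_{\Sigma_n}$, which is the required lift.

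It remains to produce the nullhomotopy of $\sigma_n$. By construction every $X\in\mathcal{C}$ has its $k$-fold diagonal lifted through the norm map to $(X^{\otimes k})_{\Sigma_k}$ for $2\leq k\leq n$, and by Proposition \ref{prop:truncatedtensor} the $(n+1)$-fold tensor product determined by $\tau_n\mathcal{O}^\otimes$ is the limit over $\mathbf{Part}_n(n+1)$ of iterated tensor products of arity at most $n$. Composing the norm-factored lower-arity diagonals along each vertex of this diagram and passing to the limit yields a $\Sigma_{n+1}$-equivariant lift of the $(n+1)$-fold diagonal through the corresponding coinvariants; by Lemma \ref{lem:normseq} this lift composed with the projection to the Tate construction is canonically null, providing the desired nullhomotopy of $\sigma_n$. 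The main obstacle I anticipate is precisely this last coherence: one must verify that the iterated lifts through coinvariants indexed by the various elements of $\mathbf{Part}_n(n+1)$ patch together $\Sigma_{n+1}$-equivariantly, so that the edges of the poset $\mathbf{Part}_n(n+1)$ (which encode the composition law of $\tau_n\mathcal{O}^\otimes$) are respected by the divided-power factorizations. Concretely this is a filtered induction up the poset $\mathbf{Part}_n(n+1)$ which at each stage uses Lemma \ref{lem:algtaun+1} to reduce the compatibility check in arity $n+1$ to the divided-power structure already in hand in lower arities.
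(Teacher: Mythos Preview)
Your treatment of the first two claims ($n$-excisiveness and the identification of $\mathcal{P}_{n-1}$) matches the paper's. The substantive differences are in how you establish that $\mathrm{coAlg}^{\mathrm{dp}}(\tau_n\mathcal{O}^\otimes)$ is an $n$-stage and, especially, in how you produce the nullhomotopy of $\sigma_n$.

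For the $n$-stage claim, rather than redoing the derivative computation of Proposition~\ref{prop:betastab} from scratch on the defining pullback, the paper \emph{rewrites} the defining pullback so that it becomes an instance of the construction $\beta_0(\mathcal{Z})$ from Section~\ref{subsec:nstages}. Concretely, the nullhomotopy of $\sigma_{n-1}$ together with the ``null'' functor into $\mathrm{Ind}\{X \to (X^{\otimes n})^{t\Sigma_n}\}^c_{\mathcal{O}}$ lifts the coalgebra functor $\mathrm{coAlg}^{\mathrm{dp}}(\tau_{n-1}\mathcal{O}^\otimes)\to\mathrm{coAlg}^{\mathrm{ind}}(\tau_{n-1}\mathcal{O}^\otimes)$ to $\mathcal{P}_{n-1}\mathrm{coAlg}^{\mathrm{ind}}(\tau_n\mathcal{O}^\otimes)$ (via Corollary~\ref{cor:Pnindcoalg}); one then checks with Lemma~\ref{lem:coalgvsPn} that the original dp-pullback is equivalent to the pullback of $\mathrm{coAlg}^{\mathrm{ind}}(\tau_n\mathcal{O}^\otimes)\to\mathcal{P}_{n-1}\mathrm{coAlg}^{\mathrm{ind}}(\tau_n\mathcal{O}^\otimes)$ along this lift. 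Now Proposition~\ref{prop:betastab} applies verbatim. Your direct approach could be made to work, but you would have to redo the fiber analysis (the analogue of Corollary~\ref{cor:fibercoalgs}) for the comonads in your square; the paper avoids this by citing the machinery already built.

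The real divergence is in the nullhomotopy of $\sigma_n$. The paper does not touch $\mathbf{Part}_n(n+1)$ at all. Instead it observes that the comonad $\Sigma^\infty\Omega^\infty$ on $\mathcal{O}$ associated to $\mathrm{coAlg}^{\mathrm{dp}}(\tau_n\mathcal{O}^\otimes)$ is \emph{split}: it is canonically the direct sum of its homogeneous layers $\bigoplus_{k=1}^n (X^{\otimes k})_{\Sigma_k}$. Given this, the standard pullback square
\[
\xymatrix{
P_{n+1}(\Sigma^\infty\Omega^\infty)(X) \ar[r]\ar[d] & (X^{\odot n+1})^{\Sigma_{n+1}} \ar[d] \\
P_n(\Sigma^\infty\Omega^\infty)(X) \ar[r] & (X^{\odot n+1})^{t\Sigma_{n+1}}
}
\]
has a split bottom-left corner, so the bottom map---and hence $\sigma_n$---is null. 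This entirely sidesteps the $\Sigma_{n+1}$-equivariant patching over $\mathbf{Part}_n(n+1)$ that you flagged as the main obstacle; that coherence problem is genuine in your approach and would require nontrivial work, whereas the splitting argument needs none.
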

\begin{proof}
The $\infty$-category $\mathrm{coAlg}^{\mathrm{ind}}_{\mathrm{dp}}(\tau_n\mathcal{O}^\otimes)$ is a pullback of $n$-excisive $\infty$-categories and therefore $n$-excisive itself. To see that it is naturally an $n$-stage for $\mathcal{O}^\otimes$, observe that we may rephrase the above construction as follows. Consider the functor
\begin{equation*}
\gamma: \mathrm{coAlg}^{\mathrm{ind}}_{\mathrm{dp}}(\tau_{n-1}\mathcal{O}^\otimes) \longrightarrow \mathrm{coAlg}^{\mathrm{ind}}(\tau_{n-1}\mathcal{O}^\otimes)
\end{equation*}
given by the formation of coalgebras. By Corollary \ref{cor:Pnindcoalg} we may use the given `nullhomotopy' of the functor $\sigma_{n-1}$ together with the functor 
\begin{equation*}
\mathrm{null}: \mathrm{coAlg}^{\mathrm{ind}}_{\mathrm{dp}}(\tau_{n-1}\mathcal{O}^\otimes) \longrightarrow \mathrm{Ind}\bigl\{X \rightarrow (X^{\otimes n})^{t\Sigma_n}\bigr\}^c_{\mathcal{O}}: X \longmapsto \bigl(X \xrightarrow{0} (X^{\otimes n})^{t\Sigma_n}\bigr)
\end{equation*}
to lift $\gamma$ to a functor
\begin{equation*}
\mathrm{coAlg}^{\mathrm{ind}}_{\mathrm{dp}}(\tau_{n-1}\mathcal{O}^\otimes) \longrightarrow \mathcal{P}_{n-1}\mathrm{coAlg}^{\mathrm{ind}}(\tau_n\mathcal{O}^\otimes).
\end{equation*}
It is then straightforward to check, using Lemma \ref{lem:coalgvsPn}, that $\mathrm{coAlg}^{\mathrm{ind}}_{\mathrm{dp}}(\tau_n\mathcal{O}^\otimes)$ as defined above fits in a pullback square
\[
\xymatrix{
\mathrm{coAlg}^{\mathrm{ind}}_{\mathrm{dp}}(\tau_n\mathcal{O}^\otimes) \ar[d]\ar[r] & \mathrm{coAlg}^{\mathrm{ind}}(\tau_n\mathcal{O}^\otimes) \ar[d] \\
\mathrm{coAlg}^{\mathrm{ind}}_{\mathrm{dp}}(\tau_{n-1}\mathcal{O}^\otimes) \ar[r] &  \mathcal{P}_{n-1}\mathrm{coAlg}^{\mathrm{ind}}(\tau_n\mathcal{O}^\otimes).
}
\]
By the results of Section \ref{subsec:nstages} this shows that $\mathrm{coAlg}^{\mathrm{ind}}_{\mathrm{dp}}(\tau_n\mathcal{O}^\otimes)$ provides an $n$-stage for $\mathcal{O}^\otimes$. Applying the functor $\mathcal{P}_{n-1}$ to this square also shows that $\mathcal{P}_{n-1}\mathrm{coAlg}^{\mathrm{ind}}_{\mathrm{dp}}(\tau_n\mathcal{O}^\otimes)$ is equivalent to $\mathrm{coAlg}^{\mathrm{ind}}_{\mathrm{dp}}(\tau_{n-1}\mathcal{O}^\otimes)$. The claim about $\sigma_n$ follows from the fact that the Goodwillie tower of the comonad on $\mathcal{O}$ associated to the adjunction
\[
\xymatrix{
\mathrm{coAlg}^{\mathrm{dp}}(\tau_n\mathcal{O}^\otimes) \ar@<.5ex>[r]^-{\Sigma^\infty} & \mathcal{O} \ar@<.5ex>[l]^-{\Omega^\infty}
}
\]
is canonically \emph{split}, meaning each stage of the tower is simply the direct sum of its homegeneous layers. Indeed, writing $\odot^{n+1}$ for the $(n+1)$-fold tensor product on $\mathcal{O}$ defined by $\tau_n\mathcal{O}^\otimes$, we have the usual pullback square
\[
\xymatrix{
P_{n+1}(\Sigma^\infty\Omega^\infty)(X) \ar[r]\ar[d] & (X^{\odot n+1})^{h\Sigma_{n+1}} \ar[d] \\
P_n(\Sigma^\infty\Omega^\infty)(X) \ar[r] & (X^{\odot n+1})^{t\Sigma_{n+1}}.
}
\]
The splitting of $\Sigma^\infty\Omega^\infty$ corresponds to a nullhomotopy of the map $P_{n}(\Sigma^\infty\Omega^\infty)(X) \rightarrow (X^{\odot n+1})^{t\Sigma_{n+1}}$, which in turn provides a nullhomotopy of $\sigma_n$.
\end{proof}

One can think of the $\infty$-categories $\mathrm{coAlg}^{\mathrm{ind}}_{\mathrm{dp}}(\tau_n\mathcal{O}^\otimes)$ as providing a compatible collection of basepoints for the spaces $\mathcal{G}_n(\mathcal{O}^\otimes)$. In particular, these spaces are non-empty for every $n$. As before there is a variant of the above constructions `without the ind', which defines $\infty$-categories of $n$-truncated divided power coalgebras fitting into pullback squares as follows:
\[
\xymatrix{
\mathrm{coAlg}_{\mathrm{dp}}(\tau_n\mathcal{O}^\otimes) \ar[r]\ar[d] & \bigl\{X \rightarrow (X^{\otimes n})_{h\Sigma_n}\bigr\}_{\mathcal{O}} \ar[d] \\
\mathrm{coAlg}_{\mathrm{dp}}(\tau_{n-1}\mathcal{O}^\otimes) \ar[r] & \bigl\{X \rightarrow (X^{\odot n})_{h\Sigma_n}\bigr\}_{\mathcal{O}}.
}
\]
Lemma \ref{lem:compactTatecoalgebras} and Corollary \ref{cor:compactTatecoalgebras} give the following:

\begin{corollary}
\label{cor:dpcompact}
An $n$-truncated divided power coalgebra $X \in \mathrm{coAlg}_{\mathrm{dp}}(\tau_n\mathcal{O}^\otimes)$ whose underlying object in $\mathcal{O}$ is compact is itself a compact object. Consequently, the evident functor
\begin{equation*}
\mathrm{coAlg}^{\mathrm{ind}}_{\mathrm{dp}}(\tau_n\mathcal{O}^\otimes) \rightarrow \mathrm{coAlg}_{\mathrm{dp}}(\tau_n\mathcal{O}^\otimes)
\end{equation*}
defined by left Kan extension from compact objects is fully faithful.
\end{corollary}

For concreteness, consider the $\infty$-category $\mathrm{Sp}$ of spectra, although the following discussion goes through for many closely related stable $\infty$-categories. Let $\mathbf{O}$ be a nonunital operad in spectra whose unary term $\mathbf{O}(1)$ is the sphere spectrum $S$. We write $\mathrm{Alg}(\mathbf{O})$ for the $\infty$-category of $\mathbf{O}$-algebras. The following is by now well-known and can be found in various places in the literature (e.g. \cite{basterramandell} or Theorem 7.3.4.7 of \cite{higheralgebra} for the first part and \cite{francisgaitsgory} for the second):

\begin{proposition}
\label{prop:stabOalgs}
There is an identification $\mathrm{Sp}(\mathrm{Alg}(\mathbf{O})) \simeq \mathrm{Sp}$. Furthermore, the functor $\Sigma^\infty_{\mathbf{O}}\Omega^\infty_{\mathbf{O}}$ on $\mathrm{Sp}$ induced by the stabilization adjunction of $\mathrm{Alg}(\mathbf{O})$ is equivalent to the functor
\begin{equation*}
X \mapsto \coprod_{n=1}^\infty \bigl(\mathbf{B}(\mathbf{O})(n) \wedge X^{\wedge n} \bigr)_{\Sigma_n},
\end{equation*}
where $\mathbf{B}(\mathbf{O})$ is the bar construction of $\mathbf{O}$.
\end{proposition}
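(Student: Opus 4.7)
The plan is to establish the two claims in sequence. \textbf{First}, to identify $\mathrm{Sp}(\mathrm{Alg}(\mathbf{O}))$ with $\mathrm{Sp}$, I would begin by observing that $\mathrm{Alg}(\mathbf{O})$ is pointed and compactly generated: the nonunitality of $\mathbf{O}$ guarantees that the zero spectrum carries a canonical trivial $\mathbf{O}$-algebra structure and is a zero object, and compact generation follows from the monadic description of $\mathrm{Alg}(\mathbf{O})$ over $\mathrm{Sp}$ via the free algebra monad
\begin{equation*}
T(X) \simeq \coprod_{n \geq 1}\bigl(\mathbf{O}(n) \wedge X^{\wedge n}\bigr)_{\Sigma_n}.
\end{equation*}
The forgetful functor $U:\mathrm{Alg}(\mathbf{O}) \to \mathrm{Sp}$ preserves all limits and sifted colimits and is conservative. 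Applying $\mathrm{Sp}(-)$ functorially (Remark \ref{rmk:prodpresfunctor}) produces a functor $\mathrm{Sp}(U):\mathrm{Sp}(\mathrm{Alg}(\mathbf{O})) \to \mathrm{Sp}$ which I claim is an equivalence. The key input is that the linear part of $T$ is the identity of $\mathrm{Sp}$ (since $\mathbf{O}(1) = S$), so that the first Goodwillie derivative of $U$ is the identity; an explicit inverse is furnished by the \emph{trivial}-algebra functor sending a spectrum $X$ to $X$ equipped with null structure maps $\mathbf{O}(n) \wedge X^{\wedge n} \to X$ for $n \geq 2$. This recovers Theorem 7.3.4.7 of \cite{higheralgebra}.

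\textbf{Second}, under the identification above $\Omega^\infty_{\mathbf{O}}:\mathrm{Sp} \to \mathrm{Alg}(\mathbf{O})$ is precisely the trivial-algebra functor. To compute the comonad, I would resolve an arbitrary $\mathbf{O}$-algebra $A$ by the monadic bar construction
\begin{equation*}
A \simeq \bigl|\mathrm{Bar}_\bullet(F, T, A)\bigr|,
\end{equation*}
where $F$ is the free $\mathbf{O}$-algebra functor. Applying the colimit-preserving functor $\Sigma^\infty_{\mathbf{O}}$ termwise, together with the triangle identity $\Sigma^\infty_{\mathbf{O}} F \simeq \mathrm{id}_{\mathrm{Sp}}$ supplied by Part~1, yields
\begin{equation*}
\Sigma^\infty_{\mathbf{O}}(A) \simeq \bigl|\mathrm{Bar}_\bullet(S, \mathbf{O}, A)\bigr|,
\end{equation*}
the operadic (topological Andr\'e-Quillen) bar construction. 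Specializing to $A = \Omega^\infty_{\mathbf{O}}(X)$, the trivial algebra on a spectrum $X$, all face maps coming from the $\mathbf{O}$-action on $A$ vanish, and the bar construction splits arity-by-arity, giving
\begin{equation*}
\Sigma^\infty_{\mathbf{O}}\Omega^\infty_{\mathbf{O}}(X) \simeq \coprod_{n \geq 1}\bigl(\mathbf{B}(\mathbf{O})(n) \wedge X^{\wedge n}\bigr)_{\Sigma_n},
\end{equation*}
where $\mathbf{B}(\mathbf{O})(n)$ is, by definition, Ching's simplicial bar construction of $\mathbf{O}$ in arity $n$ \cite{ching}.

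\textbf{The main obstacle} lies in making the last step fully rigorous: the splitting of the bar construction on a trivial algebra as an actual coproduct (rather than merely a filtered object with those pieces as its associated graded) is a delicate point. It amounts to exhibiting the arity filtration on $|\mathrm{Bar}_\bullet(S, \mathbf{O}, X_{\mathrm{triv}})|$ as split, which is where working stably, and with the operadic bar construction as a model for Koszul duality, is essential. One may either build the splitting by hand from the explicit simplicial structure (using that all face maps involving the algebra structure of $X_{\mathrm{triv}}$ are null), or else import it from \cite{basterramandell} in the $E_\infty$-case and \cite{francisgaitsgory} in general. A secondary subtlety is identifying the resulting comonad structure with the cooperadic comultiplication on $\mathbf{B}(\mathbf{O})$; this again follows by matching the standard composition structure on the bar construction with the comonad structure coming from the $(\Sigma^\infty_{\mathbf{O}}, \Omega^\infty_{\mathbf{O}})$-adjunction.
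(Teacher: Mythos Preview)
The paper does not actually prove this proposition; it states it as ``well-known'' and defers entirely to the literature, citing \cite{basterramandell} and Theorem 7.3.4.7 of \cite{higheralgebra} for the first claim and \cite{francisgaitsgory} for the second. Your sketch is therefore not competing against an argument in the paper but rather reconstructing the content of those references, and it does so along the standard lines: identify $\Omega^\infty_{\mathbf{O}}$ with the trivial-algebra functor, compute $\Sigma^\infty_{\mathbf{O}}$ via the monadic bar resolution, and specialize to trivial algebras to obtain the splitting by arity. This is correct in outline and matches the cited sources; the two subtleties you flag (the honest splitting of the arity filtration on a trivial algebra, and the identification of the comonad structure with the $\mathbf{B}(\mathbf{O})$-cooperad structure) are genuine and are exactly the points where one must either work carefully by hand or invoke the cited references, as you note.
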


\begin{remark}
The stabilization functor $\Sigma^\infty_{\mathbf{O}}: \mathrm{Alg}(\mathbf{O}) \rightarrow \mathrm{Sp}$ can be identified with topological Andr\'{e}-Quillen homology by a result of Basterra and Mandell \cite{basterramandell}. Loosely speaking, the result above says that for any $\mathbf{O}$-algebra $X$ the spectrum $\mathrm{TAQ}(X)$ is canonically a (conilpotent) divided power coalgebra over the cooperad $\mathbf{B}(\mathbf{O})$. 
\end{remark}

Recall from Remark \ref{rmk:dictionary} the dictionary between cooperads $\mathbf{C}$ in spectra (with $\mathbf{C}(1) = S$) and stable $\infty$-operads $\mathcal{O}^\otimes$ whose underlying $\infty$-category $\mathcal{O}$ is $\mathrm{Sp}$. Write $\mathrm{Sp}_{\mathbf{B}(\mathbf{O})}^\otimes$ for the stable $\infty$-operad corresponding to $\mathbf{B}(\mathbf{O})$ under this dictionary. The previous result can then interpreted as stating an equivalence of $\infty$-operads
\begin{equation*}
\mathrm{Sp}(\mathrm{Alg}(\mathbf{O}))^\otimes \longrightarrow \mathrm{Sp}_{\mathbf{B}(\mathbf{O})}^\otimes.
\end{equation*}
Note that it follows from Proposition \ref{prop:stabOalgs} that all Tate diagonals associated to the $\infty$-category $\mathrm{Alg}(\mathbf{O})$ are null. Thus we can immediately describe the $n$-excisive approximations of $\mathrm{Alg}(\mathbf{O})$. Let us abbreviate $\mathrm{coAlg}_{\mathrm{dp}}(\mathrm{Sp}_{\mathbf{B}(\mathbf{O})}^\otimes)$ by $\mathrm{coAlg}_{\mathrm{dp}}(\mathbf{B}(\mathbf{O}))$. As before, write $\tau_n\mathbf{O}$ for the operad obtained from $\mathbf{O}$ by killing all operations with more than $n$ inputs. 

\begin{proposition}
\label{prop:PnAlgO}
For each $n$ there is an equivalence of $\infty$-categories
\begin{equation*}
\mathcal{P}_n\mathrm{Alg}(\mathbf{O}) \longrightarrow \mathrm{coAlg}^{\mathrm{ind}}_{\mathrm{dp}}(\mathbf{B}(\tau_n\mathbf{O})).
\end{equation*}
\end{proposition}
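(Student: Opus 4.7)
The plan is to deduce the proposition directly from the classification of Goodwillie towers (Theorem \ref{thm:classification}) together with the construction of divided powers in Section \ref{subsec:dividedpowers}. By Proposition \ref{prop:stabOalgs} the stabilization of $\mathrm{Alg}(\mathbf{O})$ is canonically equivalent, as a stable $\infty$-operad, to $\mathrm{Sp}^\otimes_{\mathbf{B}(\mathbf{O})}$. Consequently $\mathcal{P}_n\mathrm{Alg}(\mathbf{O})$ is naturally an $n$-stage for $\mathrm{Sp}^\otimes_{\mathbf{B}(\mathbf{O})}$, and the task reduces to pinning down which element of $\mathcal{G}_n(\mathrm{Sp}^\otimes_{\mathbf{B}(\mathbf{O})})$ it represents.

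The key observation is that the comonad $\Sigma^\infty_{\mathbf{O}}\Omega^\infty_{\mathbf{O}}$ splits as the direct sum $\bigoplus_k (\mathbf{B}(\mathbf{O})(k) \wedge (-)^{\wedge k})_{h\Sigma_k}$ by the second half of Proposition \ref{prop:stabOalgs}; equivalently, its Goodwillie tower splits as a product of its layers. In particular, in the pullback square
\[
\xymatrix{
P_n(\Sigma^\infty\Omega^\infty)(Y) \ar[r]\ar[d] & (Y^{\otimes n})^{\Sigma_n} \ar[d] \\
P_{n-1}(\Sigma^\infty\Omega^\infty)(Y) \ar[r] & (Y^{\otimes n})^{t\Sigma_n}
}
\]
the bottom horizontal map acquires a canonical nullhomotopy. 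Tracing through the construction of the Tate diagonal in Section \ref{subsec:tatediagonal}, I would verify that this nullhomotopy translates into a canonical nullhomotopy of the Tate diagonal $\Sigma^\infty \rightarrow \Theta_{\mathcal{P}_{n-1}\mathrm{Alg}(\mathbf{O})}$ compatible with the factorization through $\Psi_{\mathcal{P}_{n-1}\mathrm{Alg}(\mathbf{O})}$ supplied by the existing coalgebra structure. Proceeding inductively in $n$, these coherent nullhomotopies lift $\mathcal{P}_n\mathrm{Alg}(\mathbf{O})$ to the canonical basepoint of the space $\mathcal{A}$ of Section \ref{subsec:nstages} picked out by $\mathrm{coAlg}^{\mathrm{dp}}$ in Section \ref{subsec:dividedpowers}, and Theorem \ref{thm:classification} then supplies a canonical equivalence $\mathcal{P}_n\mathrm{Alg}(\mathbf{O}) \simeq \mathrm{coAlg}^{\mathrm{dp}}(\tau_n\mathrm{Sp}^\otimes_{\mathbf{B}(\mathbf{O})})$.

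To match the statement as written it remains to identify $\mathrm{coAlg}^{\mathrm{dp}}(\tau_n\mathrm{Sp}^\otimes_{\mathbf{B}(\mathbf{O})})$ with $\mathrm{coAlg}^{\mathrm{dp}}(\mathrm{Sp}^\otimes_{\mathbf{B}(\tau_n\mathbf{O})})$. The inductive definition of divided power coalgebras depends on its input stable $\infty$-operad only through the $n$-truncation, so by Theorem \ref{thm:ntruncation} it suffices to observe that $(\mathrm{Sp}^\otimes_{\mathbf{B}(\mathbf{O})})_{\leq n} \simeq (\mathrm{Sp}^\otimes_{\mathbf{B}(\tau_n\mathbf{O})})_{\leq n}$, which under the dictionary of Remark \ref{rmk:dictionary} reduces to the elementary fact that $\mathbf{B}(\mathbf{O})(k) \simeq \mathbf{B}(\tau_n\mathbf{O})(k)$ for $k \leq n$, since the $k$'th term of the bar construction depends only on operations of arity at most $k$. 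The principal technical obstacle will be verifying the coherence of the Tate diagonal nullhomotopies as $n$ varies, but this coherence is effectively built into the splitting of $\Sigma^\infty_{\mathbf{O}}\Omega^\infty_{\mathbf{O}}$ supplied by Proposition \ref{prop:stabOalgs}, so this amounts to careful bookkeeping rather than producing a new input.
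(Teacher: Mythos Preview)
Your proposal is correct and follows essentially the same approach as the paper: the paper states this proposition as an immediate consequence of Proposition \ref{prop:stabOalgs} (the splitting of $\Sigma^\infty_{\mathbf{O}}\Omega^\infty_{\mathbf{O}}$ forces all Tate diagonals to be null) together with the divided power construction earlier in Section \ref{subsec:dividedpowers}, which is precisely your argument. Your additional step identifying $\tau_n\mathrm{Sp}^\otimes_{\mathbf{B}(\mathbf{O})}$ with $\mathrm{Sp}^\otimes_{\mathbf{B}(\tau_n\mathbf{O})}$ via the bar construction observation from Section \ref{subsec:truncations} is implicit in the paper's notation but correctly made explicit here.
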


The interesting issue here is the convergence of the Goodwillie tower
\begin{equation*}
\mathrm{Alg}(\mathbf{O}) \longrightarrow \varprojlim \mathcal{P}_n\mathrm{Alg}(\mathbf{O}).
\end{equation*}
The map of operads $\mathbf{O} \rightarrow \tau_n\mathbf{O}$ induces a transfer adjunction
\[
\xymatrix{
\mathrm{Alg}(\mathbf{O}) \ar@<.5ex>[r] & \mathrm{Alg}(\tau_n\mathbf{O}). \ar@<.5ex>[l]
}
\]
Here the right adjoint is the evident pullback functor. For an $\mathbf{O}$-algebra $X$, write $X_{\leq n}$ for the $\mathbf{O}$-algebra obtained by successively applying left and right adjoint of this transfer. The following has been proved by Pereira \cite{pereirathesis}:

\begin{proposition}
The tower
\begin{equation*}
X \longrightarrow \bigl(\cdots \rightarrow X_{\leq 3} \rightarrow X_{\leq 2} \rightarrow X_{\leq 1}\bigl)
\end{equation*}
can be identified with the Goodwillie tower of the identity functor of $\mathrm{Alg}(\mathbf{O})$.
\end{proposition}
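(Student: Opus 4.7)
The plan is to identify the tower $X \to (\cdots \to X_{\leq 2} \to X_{\leq 1})$ with the Goodwillie tower by exhibiting the transfer adjunction
\begin{equation*}
F_n: \mathrm{Alg}(\mathbf{O}) \rightleftarrows \mathrm{Alg}(\tau_n\mathbf{O}): G_n
\end{equation*}
as a strong $n$-excisive approximation in the sense of Definition \ref{def:strongapprox}. Given this, Theorem \ref{thm:TheoremA} produces a canonical equivalence between $(F_n, G_n)$ and $(\Sigma_n^\infty, \Omega_n^\infty)$, so $X_{\leq n} = G_n F_n X$ is identified with $\Omega_n^\infty \Sigma_n^\infty X$, which by Lemma \ref{lem:Pnobservations}(c) is $P_n\mathrm{id}(X)$. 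Compatibility in $n$ is automatic from the naturality in the map of operads $\tau_n\mathbf{O} \to \tau_{n-1}\mathbf{O}$.

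The first step I would carry out is to show that $\mathrm{Alg}(\tau_n\mathbf{O})$ is an $n$-excisive $\infty$-category. Its stabilization is $\mathrm{Sp}$ by Proposition \ref{prop:stabOalgs}, so the essential content is that the identity functor is $n$-excisive. This can be done by induction on $n$ using the criterion of Corollary \ref{cor:bnexcisive}: given a Cartesian $(n+1)$-cube whose restriction to $\mathbf{P}_0(n+1)$ is special, one must verify it is strongly coCartesian. Since the free $\tau_n\mathbf{O}$-algebra monad sends $X$ to $\bigsqcup_{k=1}^{n}(\tau_n\mathbf{O}(k)\wedge X^{\wedge k})_{\Sigma_k}$, a polynomial functor of degree $n$, and every $\tau_n\mathbf{O}$-algebra is presented by its bar resolution as a geometric realization of free algebras, the claim reduces to the corresponding assertion for free algebras. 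Alternatively, one may invoke the chain-rule identification of $\partial_k\mathrm{id}_{\mathrm{Alg}(\mathbf{P})}$ with $\mathbf{P}(k)$ (up to shift) to see directly that derivatives in arity greater than $n$ vanish.

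Once the first step is done, the remaining verifications fall out neatly from Proposition \ref{prop:PnAlgO} applied to both $\mathbf{O}$ and $\tau_n\mathbf{O}$. Using idempotence $\tau_n(\tau_n\mathbf{O}) = \tau_n\mathbf{O}$ we obtain
\begin{equation*}
\mathcal{P}_n\mathrm{Alg}(\mathbf{O}) \simeq \mathrm{coAlg}^{\mathrm{dp}}(\mathbf{B}(\tau_n\mathbf{O})) \simeq \mathcal{P}_n\mathrm{Alg}(\tau_n\mathbf{O}) \simeq \mathrm{Alg}(\tau_n\mathbf{O}),
\end{equation*}
the last equivalence being part of the universal property of $\mathcal{P}_n$ as a localization (Theorem \ref{thm:TheoremA}(b)), applied to the $n$-excisive $\infty$-category $\mathrm{Alg}(\tau_n\mathbf{O})$. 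The same universal property, applied to the colimit-preserving and compact-preserving functor $F_n$ landing in an $n$-excisive category, then supplies an essentially unique factorization $F_n \simeq F'_n \circ \Sigma_n^\infty$ with $F'_n$ an equivalence; taking right adjoints identifies $G_n$ with $\Omega_n^\infty$ under this equivalence, completing the identification of the two towers.

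The main obstacle is the $n$-excisiveness of $\mathrm{id}_{\mathrm{Alg}(\tau_n\mathbf{O})}$: although well-expected, it rests either on a chain-rule statement for operadic algebras (in the generality of arbitrary operads in a stable $\infty$-category) or on a careful analysis of strongly coCartesian cubes of $\tau_n\mathbf{O}$-algebras via the bar resolution. Both routes are technically delicate, and in the published treatment by Pereira this is the central input.
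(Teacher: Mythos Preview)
Your argument rests on the claim that $\mathrm{Alg}(\tau_n\mathbf{O})$ is an $n$-excisive $\infty$-category in the sense of Definition \ref{def:strongapprox}, and this is false. The paper states so explicitly in the remark immediately following the proposition: the transfer adjunction is only a \emph{weak} $n$-excisive approximation, and $\mathrm{Alg}(\tau_n\mathbf{O})$ does not satisfy condition (b') of Corollary \ref{cor:bnexcisive}. Your ``alternative'' route via the chain rule correctly shows that the identity \emph{functor} of $\mathrm{Alg}(\tau_n\mathbf{O})$ is $n$-excisive (condition (a)), but this is strictly weaker than the category being $n$-excisive; your sketch for condition (b') via bar resolutions of free algebras does not go through, and cannot, since the conclusion is wrong. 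Your second argument, deducing $\mathcal{P}_n\mathrm{Alg}(\tau_n\mathbf{O}) \simeq \mathrm{Alg}(\tau_n\mathbf{O})$ from Theorem \ref{thm:TheoremA}(b), presupposes the same false claim.

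The good news is that the statement you are asked to prove does not require the strong approximation at all. By Definition \ref{def:nexcapprox}(b), a weak $n$-excisive approximation already gives $G_nF_n \simeq P_n\mathrm{id}_{\mathrm{Alg}(\mathbf{O})}$, which is exactly the identification $X_{\leq n} \simeq P_n\mathrm{id}(X)$. The paper does not supply its own proof of the proposition, citing Pereira instead, but the subsequent remark makes clear that what Pereira establishes is precisely the weak approximation property. So the fix is to drop the attempt at (b') entirely: verify condition (a) as you indicate (derivatives vanish above arity $n$), and then verify directly that the unit and counit induce equivalences $P_n\mathrm{id} \to G_nF_n$ and $P_n(F_nG_n) \to \mathrm{id}$, for instance by computing layers using the explicit form of the free and forgetful functors.
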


We call an algebra $X$ \emph{complete} if the map 
\begin{equation*}
X \longrightarrow \varprojlim\bigl(\cdots \rightarrow X_{\leq 3} \rightarrow X_{\leq 2} \rightarrow X_{\leq 1}\bigl)
\end{equation*}
is an equivalence, so that $\mathrm{Alg}(\mathbf{O})^{\mathrm{conv}}$ is precisely the $\infty$-category of complete $\mathbf{O}$-algebras. From Lemma \ref{lem:convergence} we then conclude the following:

\begin{corollary}
\label{cor:completealgebras}
The functor
\begin{equation*}
\mathrm{Alg}(\mathbf{O})^{\mathrm{conv}} \longrightarrow \varprojlim_n \mathrm{coAlg}_{\mathrm{dp}}(\mathbf{B}(\tau_n\mathbf{O}))
\end{equation*}
is fully faithful.
\end{corollary}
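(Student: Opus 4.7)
The plan is to assemble three inputs that the paper has already made available. First, Lemma \ref{lem:convergence} applied to $\mathcal{C} = \mathrm{Alg}(\mathbf{O})$ tells us that the canonical comparison
\begin{equation*}
\mathrm{Alg}(\mathbf{O})^{\mathrm{conv}} \longrightarrow \varprojlim_n \mathcal{P}_n\mathrm{Alg}(\mathbf{O})
\end{equation*}
is fully faithful. Second, Proposition \ref{prop:PnAlgO} yields, for each $n$, an equivalence $\Phi_n \colon \mathcal{P}_n\mathrm{Alg}(\mathbf{O}) \xrightarrow{\simeq} \mathrm{coAlg}^{\mathrm{dp}}(\mathbf{B}(\tau_n\mathbf{O}))$. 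Third, I need to know that these two towers of $\infty$-categories are equivalent as towers, not just objectwise, so that I may pass to inverse limits and compose with the map above.

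Accordingly, the first step of the proof is to check that the equivalences $\Phi_n$ commute (up to coherent homotopy) with the transition functors. On the Goodwillie side the transition is the natural functor $\mathcal{P}_n\mathrm{Alg}(\mathbf{O}) \to \mathcal{P}_{n-1}\mathrm{Alg}(\mathbf{O})$ obtained from the universal property of $\mathcal{P}_{n-1}$. On the coalgebra side the transition is induced by the map of operads $\tau_n\mathbf{O} \to \tau_{n-1}\mathbf{O}$ (equivalently the map of cooperads $\mathbf{B}(\tau_{n-1}\mathbf{O}) \to \mathbf{B}(\tau_n\mathbf{O})$), via the inductive pullback-square construction of $\mathrm{coAlg}^{\mathrm{dp}}$ given in Section \ref{subsec:dividedpowers}. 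That these match is built into that inductive construction: the proposition preceding Corollary \ref{cor:completealgebras} shows precisely that applying $\mathcal{P}_{n-1}$ to $\mathrm{coAlg}^{\mathrm{dp}}(\mathbf{B}(\tau_n\mathbf{O}))$ recovers $\mathrm{coAlg}^{\mathrm{dp}}(\mathbf{B}(\tau_{n-1}\mathbf{O}))$, and the two $n$-stage structures on these categories both agree with the canonical one on $\mathrm{Alg}(\mathbf{O})$ coming from Proposition \ref{prop:stabOalgs}, since all Tate diagonals vanish by that proposition so Theorem \ref{thm:classification} pins the $n$-stage down up to contractible choice.

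Once this compatibility is established, the functors $\Phi_n$ assemble to a levelwise equivalence of towers, and passing to $\varprojlim$ in $\mathbf{Cat}^{\omega}_{\ast}$ produces an equivalence
\begin{equation*}
\varprojlim_n \mathcal{P}_n\mathrm{Alg}(\mathbf{O}) \xrightarrow{\;\simeq\;} \varprojlim_n \mathrm{coAlg}^{\mathrm{dp}}(\mathbf{B}(\tau_n\mathbf{O})).
\end{equation*}
Composing with the fully faithful functor of Lemma \ref{lem:convergence} yields the desired fully faithful functor from $\mathrm{Alg}(\mathbf{O})^{\mathrm{conv}}$ to $\varprojlim_n \mathrm{coAlg}^{\mathrm{dp}}(\mathbf{B}(\tau_n\mathbf{O}))$.

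The only step that requires any real care is the compatibility of the $\Phi_n$ with the tower maps; everything else is a formal combination of the convergence lemma and Proposition \ref{prop:PnAlgO}. The verification of compatibility is not hard, but it is where one must actually use the uniqueness statement in Theorem \ref{thm:classification} (via the vanishing of all Tate diagonals for $\mathrm{Alg}(\mathbf{O})$) rather than merely the objectwise equivalences, to ensure the diagrams of $n$-stages in question are canonically equivalent and not just abstractly so.
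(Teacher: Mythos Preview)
Your proposal is correct and follows exactly the paper's approach: the corollary is deduced directly from Lemma \ref{lem:convergence} together with the equivalences of Proposition \ref{prop:PnAlgO}. The paper treats the compatibility of the $\Phi_n$ with the transition maps as implicit, so your explicit discussion of this point (via the vanishing Tate diagonals and the uniqueness in Theorem \ref{thm:classification}) is a welcome addition rather than a deviation.
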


\begin{remark}
One can show that the transfer adjunction $\mathrm{Alg}(\mathbf{O}) \leftrightarrows \mathrm{Alg}(\tau_n\mathbf{O})$ is a weak $n$-excisive approximation; indeed, this was already observed (with different terminology) in \cite{pereirathesis}. It might be tempting to conjecture that it is also a strong $n$-excisive approximation, but this is false. The $\infty$-category $\mathrm{Alg}(\tau_n\mathbf{O})$ does generally \emph{not} satisfy condition (b) of Corollary \ref{cor:bnexcisive} and is thus not $n$-excisive.
\end{remark}

Proposition \ref{prop:PnAlgO} and Corollary \ref{cor:completealgebras} are closely related to a conjecture of Francis and Gaitsgory \cite{francisgaitsgory}. This conjecture states that there should be an equivalence between the $\infty$-category of \emph{pro-nilpotent} $\mathbf{O}$-algebras and \emph{conilpotent} divided power coalgebras over the cooperad $\mathbf{B}(\mathbf{O})$. Here an $\mathbf{O}$-algebra is \emph{nilpotent} if it is in the essential image of the pullback functor
\begin{equation*}
\mathrm{Alg}(\tau_n\mathbf{O}) \longrightarrow \mathrm{Alg}(\mathbf{O})
\end{equation*}
for some $n$ and \emph{pro-nilpotent} if it is a limit of nilpotent algebras. Equivalently, an $\mathbf{O}$-algebra is pro-nilpotent if it is a limit of trivial $\mathbf{O}$-algebras, meaning algebras in the essential image of the functor
\begin{equation*}
\Omega^\infty_{\mathbf{O}}: \mathrm{Sp} \longrightarrow \mathrm{Alg}(\mathbf{O}).
\end{equation*}
Dually, a divided power $\mathbf{B}(\mathbf{O})$-coalgebra is \emph{conilpotent} if it is a colimit of trivial coalgebras. Now let us consider the special case where the operad $\mathbf{O}$ is truncated, meaning $\mathbf{O} = \tau_n\mathbf{O}$ for large enough $n$. Then every $\mathbf{O}$-algebra $X$ is nilpotent and we obtain a special case of the Francis-Gaitsgory conjecture by the following simple consequence of Proposition \ref{prop:PnAlgO}. This result was observed independently by Lee Cohn: 

\begin{proposition}
\label{prop:ntruncatedbar}
If $\mathbf{O}$ is $n$-truncated, then composing the $n$-excisive approximation $\mathrm{Alg}(\mathbf{O}) \rightarrow \mathcal{P}_n\mathrm{Alg}(\mathbf{O})$ with the equivalence of Proposition \ref{prop:PnAlgO} gives a fully faithful functor
\begin{equation*}
\mathrm{bar}_{\mathbf{O}}: \mathrm{Alg}(\mathbf{O}) \rightarrow \mathrm{coAlg}_{\mathrm{dp}}(\mathbf{B}(\mathbf{O}))
\end{equation*}
with essential image the full subcategory spanned by the conilpotent coalgebras.
\end{proposition}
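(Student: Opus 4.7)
I would start by showing that the identity functor of $\mathrm{Alg}(\mathbf{O})$ is itself $n$-excisive. Since $\mathbf{O} = \tau_n\mathbf{O}$, the transfer adjunction $\mathrm{Alg}(\mathbf{O}) \rightleftarrows \mathrm{Alg}(\tau_n\mathbf{O})$ is the identity, so Pereira's theorem (cited just before Corollary \ref{cor:completealgebras}) identifies the Goodwillie tower $\cdots \to Y_{\leq 2} \to Y_{\leq 1}$ with a tower that is already constant equal to $Y$ from stage $n$ onward. Hence $P_n \mathrm{id}_{\mathrm{Alg}(\mathbf{O})} \simeq \mathrm{id}_{\mathrm{Alg}(\mathbf{O})}$. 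The observation immediately after Definition \ref{def:nexcapprox} then says that whenever $\mathcal{C}$ has $n$-excisive identity, the left adjoint of any weak $n$-excisive approximation is fully faithful. Applied to $\Sigma_n^\infty: \mathrm{Alg}(\mathbf{O}) \to \mathcal{P}_n\mathrm{Alg}(\mathbf{O})$ and composed with the equivalence of Proposition \ref{prop:PnAlgO}, this gives full faithfulness of $\mathrm{bar}_{\mathbf{O}}$.

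Next I would check that the essential image $\mathcal{I}$ of $\mathrm{bar}_{\mathbf{O}}$ contains every conilpotent coalgebra. Since $\mathrm{bar}_{\mathbf{O}}$ is a fully faithful left adjoint between presentable $\infty$-categories, $\mathcal{I}$ is closed under small colimits, so it suffices to show that every trivial coalgebra $\Omega_{n,1}^\infty(Z)$ lies in $\mathcal{I}$. Because $\Omega_n^\infty \circ \Omega_{n,1}^\infty \simeq \Omega^\infty_{\mathbf{O}}$ is the trivial-algebra functor, the counit of $(\Sigma_n^\infty, \Omega_n^\infty)$ supplies a natural comparison $\Sigma_n^\infty \Omega^\infty_{\mathbf{O}}(Z) \to \Omega_{n,1}^\infty(Z)$. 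Testing against $\Sigma_n^\infty A$ and using the factorization $\Sigma_{n,1}^\infty \circ \Sigma_n^\infty \simeq \Sigma^\infty_{\mathbf{O}}$ from Lemma \ref{lem:Pnstab}, both mapping spaces reduce to $\mathrm{Map}_{\mathrm{Sp}}(\mathrm{TAQ}(A), Z)$, so the counit is an equivalence against every object in the essential image. Combining this with Corollary \ref{cor:suspcomp} (every compact object of $\mathcal{P}_n\mathrm{Alg}(\mathbf{O})$ has some suspension in the image) and compact generation lets me promote it to an equivalence at every object, placing $\Omega_{n,1}^\infty(Z)$ in $\mathcal{I}$.

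For the reverse inclusion I would use that $\mathrm{Alg}(\mathbf{O})$ is monadic over $\mathrm{Sp}$ through the free-forgetful adjunction, so every algebra is a sifted colimit of free algebras $\mathrm{Free}_{\mathbf{O}}(Z_\alpha)$. Since $\mathrm{bar}_{\mathbf{O}}$ preserves colimits, it remains to show that each $\mathrm{bar}_{\mathbf{O}}(\mathrm{Free}_{\mathbf{O}}(Z))$ is conilpotent. This is a Koszul-style computation: the arity filtration on $\mathrm{Free}_{\mathbf{O}}(Z)$ is transported by $\mathrm{bar}_{\mathbf{O}}$ to a filtration whose associated graded pieces are trivial coalgebras on the spectra $\mathbf{B}(\mathbf{O})(k) \wedge Z^{\wedge k}$, exhibiting $\mathrm{bar}_{\mathbf{O}}(\mathrm{Free}_{\mathbf{O}}(Z))$ as an iterated colimit of trivial coalgebras.

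The main technical hurdle is the ``all objects'' clause in the second paragraph: rigorously upgrading $\Sigma_n^\infty \Omega^\infty_{\mathbf{O}}(Z) \simeq \Omega_{n,1}^\infty(Z)$ from the essential image of $\Sigma_n^\infty$ to all of $\mathcal{P}_n\mathrm{Alg}(\mathbf{O})$ without circularly assuming the very membership we want to prove. I expect the cleanest route is through the inductive pullback descriptions of $\mathcal{P}_n\mathrm{Alg}(\mathbf{O})$ in Lemma \ref{lem:coalgvsPn} and Corollary \ref{cor:Pnindcoalg}, combined with the splitting of the comonad $\Sigma^\infty_{\mathbf{O}}\Omega^\infty_{\mathbf{O}}$ recorded in Proposition \ref{prop:stabOalgs} which forces all Tate diagonals to vanish. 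Together these identify the trivial coalgebras as precisely the locus where every $n$-fold diagonal is null, a condition one can inductively match against the image of trivial algebras under $\Sigma_n^\infty$.
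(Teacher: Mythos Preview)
Your treatment of full faithfulness is correct and matches the paper: since $\mathbf{O}=\tau_n\mathbf{O}$, the identity functor of $\mathrm{Alg}(\mathbf{O})$ is $n$-excisive, so $\Sigma_n^\infty$ is fully faithful.

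The essential image argument, however, takes a wrong turn in your second paragraph. The functor $\Omega_{n,1}^\infty$ is the right adjoint to the stabilization $\Sigma_{n,1}^\infty$ of $\mathrm{coAlg}^{\mathrm{dp}}(\mathbf{B}(\mathbf{O}))$, and that stabilization is the \emph{forgetful} functor to $\mathrm{Sp}$; hence $\Omega_{n,1}^\infty(Z)$ is the \emph{cofree} divided power coalgebra, not the trivial one. Under bar--cobar duality the roles are swapped relative to the algebra side: trivial algebras correspond to cofree coalgebras, while \emph{free} algebras correspond to \emph{trivial} coalgebras. So even if you overcame the ``technical hurdle'' you flag, you would have shown that cofree coalgebras lie in the image, which does not directly yield the statement about conilpotent (i.e.\ colimits of trivial) coalgebras.

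The paper's proof avoids this entirely by using the single fact you nearly have in your third paragraph, but in its sharp form: $\mathrm{bar}_{\mathbf{O}}\circ\mathrm{free}_{\mathbf{O}}\simeq\mathrm{triv}_{\mathbf{B}(\mathbf{O})}$ (not merely that $\mathrm{bar}_{\mathbf{O}}(\mathrm{free}_{\mathbf{O}}(Z))$ has a filtration by trivial pieces---the underlying spectrum is $Z$ itself, since $\mathrm{TAQ}$ of a free algebra is the generator). That equivalence gives both inclusions at once: trivial coalgebras are in the image because free algebras are in the domain, and the image is generated under colimits by trivial coalgebras because $\mathrm{Alg}(\mathbf{O})$ is generated under colimits by free algebras and $\mathrm{bar}_{\mathbf{O}}$ preserves colimits. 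Your second and fourth paragraphs can be deleted once you state this identification cleanly.
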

\begin{proof}
The functor $\mathrm{Alg}(\mathbf{O}) \rightarrow \mathcal{P}_n\mathrm{Alg}(\mathbf{O})$ is fully faithful, since the identity functor of $\mathrm{Alg}(\mathbf{O})$ is $n$-excisive. To verify the claim about the essential image in $\mathrm{coAlg}_{\mathrm{dp}}(\mathbf{B}(\mathbf{O}))$, write
\begin{equation*}
\mathrm{free}_{\mathbf{O}}: \mathrm{Sp} \longrightarrow \mathrm{Alg}(\mathbf{O}) \quad\quad \text{and} \quad\quad \mathrm{triv}_{\mathbf{B}(\mathbf{O})}: \mathrm{Sp} \longrightarrow \mathrm{coAlg}_{\mathrm{dp}}(\mathbf{B}(\mathbf{O}))
\end{equation*}
for the functors assigning to a spectrum $X$ the free $\mathbf{O}$-algebra and the trivial divided power $\mathbf{B}(\mathbf{O})$-coalgebra on $X$ respectively. It is well-known (e.g. \cite{francisgaitsgory}) that the composition $\mathrm{bar}_{\mathbf{O}} \circ \mathrm{free}_{\mathbf{O}}$ is equivalent to the functor $\mathrm{triv}_{\mathbf{B}(\mathbf{O})}$, so that the essential image of $\mathrm{bar}_{\mathbf{O}}$ contains all trivial coalgebras. Since $\mathrm{Alg}(\mathbf{O})$ is generated under colimits by free algebras and $\mathrm{bar}_{\mathbf{O}}$ preserves colimits, its essential image is therefore generated under colimits by trivial coalgebras and thus by definition the full subcategory of conilpotent coalgebras.
\end{proof}

% write bar and cobar for left and right adjoint of this equivalence
% forget \circ bar is stabilization
% cobar \circ cofree is triv_alg
% cobar \circ bar is identity

% amusing trick to see that bar(free(X)) is a trivial coalgebra:
% assume X compact, so that bar(free(\Sigma^k X)) = \Sigma^k bar(free(X)) is a trivial coalgebra for sufficiently large k
% since Alg(O) -> coAlg^{dp}(BO) is an equivalence after applying P_n and bar(free) is an n-excisive functor, it follows that the equivalence bar(free) = triv extends from high suspensions to all objects

\section{Rational homotopy theory}
\label{subsec:rationalhomotopy}

Consider the symmetric monoidal $\infty$-category $\mathrm{Sp}_{\mathbb{Q}}^\otimes$ of rational spectra with the smash product, obtained from $\mathrm{Sp}^{\otimes}$ by localizing with respect to the Eilenberg-MacLane spectrum $H\mathbb{Q}$. Equivalently, this $\infty$-category can be thought of as the homotopy theory of (unbounded) chain complexes over $\mathbb{Q}$ with their tensor product. It is well-known (and straightforward to prove) that for any finite group $G$ and rational spectrum $X$ with $G$-action, the Tate construction $X^{tG}$ is contractible. Essentially, division by the order of the group provides an inverse to the norm map. An immediate consequence is the following:

\begin{corollary}
\label{cor:GnQcontractible}
For each $n$, the space $\mathcal{G}_n(\mathrm{Sp}_{\mathbb{Q}}^\otimes)$ is contractible.
\end{corollary}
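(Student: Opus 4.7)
The statement is essentially a direct application of Corollary \ref{cor:notatecohomology}, applied to the nonunital stable $\infty$-operad $\mathcal{O}^\otimes = \mathrm{Sp}_{\mathbb{Q}}^\otimes$. That corollary reduces the contractibility of $\mathcal{G}_k(\mathcal{O}^\otimes)$ for all $k \leq n$ to the vanishing of the Tate construction $X^{t\Sigma_k}$ for every object $X$ of $\mathcal{O}$ equipped with a $\Sigma_k$-action. Since we want the claim for every $n$, it suffices to establish this vanishing for every $k \geq 1$ in the $\infty$-category $\mathrm{Sp}_{\mathbb{Q}}$ of rational spectra.

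First I would recall the vanishing of rational Tate constructions, which is the observation made in the paragraph preceding the corollary: for any finite group $G$ and any $G$-equivariant rational spectrum $X$, the norm map $\mathrm{Nm}: X_{hG} \rightarrow X^{hG}$ is an equivalence. The argument is the standard one: the norm is characterized (among colimit-preserving natural transformations $F(X) \rightarrow X^{hG}$) by its behaviour on compact free $G$-objects, where it is given by summing over $G$; rationally, since $|G|$ is invertible in $\mathbb{Q}$, one can construct a two-sided inverse by multiplying with $|G|^{-1}$, and by naturality and colimit-preservation this inverse extends to all of $X$. In particular the cofiber $X^{tG}$ of the norm map is a zero object.

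Specializing this to $G = \Sigma_k$ for every $k \leq n$ and invoking Corollary \ref{cor:notatecohomology} yields the contractibility of $\mathcal{G}_n(\mathrm{Sp}_{\mathbb{Q}}^\otimes)$. Since $n$ was arbitrary, the claim holds for all $n$.

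\textbf{Main obstacle.} There is essentially none: all the substantive work has been done in Corollary \ref{cor:notatecohomology}, which itself is deduced from the main classification Theorem \ref{thm:classification} by an inductive argument on the pullback squares $\mathcal{G}_n \rightarrow \mathcal{G}_{n-1}$. The only ingredient specific to the rational setting is the vanishing of norm cofibers, which rests entirely on the invertibility of $|G|$ in $\mathbb{Q}$ and therefore requires no further input beyond the characterization of the norm map sketched above.
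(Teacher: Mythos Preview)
Your proposal is correct and matches the paper's approach exactly: the paper states this corollary as an immediate consequence of Corollary~\ref{cor:notatecohomology}, using the fact (explained in the sentence preceding the corollary) that division by the group order inverts the norm map in $\mathrm{Sp}_{\mathbb{Q}}$, so all Tate constructions vanish.
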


Now consider the $\infty$-category $\mathcal{S}_{\mathbb{Q}}^{\geq 2}$ of pointed, simply-connected rational spaces. By rational here we mean spaces local with respect to rational homology or more simply spaces whose homotopy groups are vector spaces over $\mathbb{Q}$. To apply our results we need the following:

\begin{lemma}
There is an equivalence of $\infty$-operads
\begin{equation*}
\mathrm{Sp}(\mathcal{S}_{\mathbb{Q}}^{\geq 2})^\otimes \longrightarrow \mathrm{Sp}_{\mathbb{Q}}^\otimes.
\end{equation*}
\end{lemma}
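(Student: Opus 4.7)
The plan is to establish the equivalence in two stages: first identifying the underlying $\infty$-category of $\mathrm{Sp}(\mathcal{S}_{\mathbb{Q}}^{\geq 2})$ with rational spectra, then identifying the stable $\infty$-operad structure (i.e.\ the $k$-fold tensor products) with the smash product of rational spectra. Both stages will be set up so that the two pieces assemble into an equivalence of stable $\infty$-operads via the functoriality of $\mathrm{Sp}(-)^\otimes$ from Propositions 6.2.4.14 and 6.2.4.15 of \cite{higheralgebra}.

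First I would recall that rationalization $L_\mathbb{Q}: \mathcal{S}_*^{\geq 2} \to \mathcal{S}_\mathbb{Q}^{\geq 2}$ is a Bousfield localization of the $\infty$-category of simply connected pointed spaces. For simply connected spaces this localization is left exact (it preserves finite products and pullbacks of fibrations), so it preserves finite limits. By Remark \ref{rmk:prodpresfunctor}, $L_\mathbb{Q}$ therefore induces a map of stable $\infty$-operads
\begin{equation*}
\mathrm{Sp}(\mathcal{S}_*^{\geq 2})^\otimes \longrightarrow \mathrm{Sp}(\mathcal{S}_\mathbb{Q}^{\geq 2})^\otimes.
\end{equation*}
The source is canonically equivalent to $\mathrm{Sp}^\otimes$ (the nonunital smash product symmetric monoidal $\infty$-operad on spectra), since the inclusion $\mathcal{S}_*^{\geq 2} \hookrightarrow \mathcal{S}_*$ induces an equivalence on stabilizations and Goodwillie's classical calculation identifies the multilinearizations of the Cartesian product with the smash product. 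The goal is then to show that the displayed map factors through rationalization of spectra as an equivalence $\mathrm{Sp}_\mathbb{Q}^\otimes \xrightarrow{\simeq} \mathrm{Sp}(\mathcal{S}_\mathbb{Q}^{\geq 2})^\otimes$.

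For the underlying $\infty$-category, I would argue that $\mathrm{Sp}(\mathcal{S}_\mathbb{Q}^{\geq 2}) \simeq \mathrm{Sp}_\mathbb{Q}$. A spectrum object in $\mathcal{S}_\mathbb{Q}^{\geq 2}$ is a sequence $(X_n)$ of simply connected rational pointed spaces with structure equivalences $X_n \simeq \Omega X_{n+1}$ (computed in simply connected rational spaces, which is equivalent to loops in rational spaces once $X_{n+1}$ is simply connected). Since any spectrum can be represented by one whose constituent spaces are eventually highly connected, and since rationalization of spectra agrees with taking rational Eilenberg--MacLane replacements levelwise, this identification reduces to the fact that $\mathrm{Sp}(\mathcal{S}_*^{\geq 2}) \simeq \mathrm{Sp}$ together with the observation that rationalization on each level of an $\Omega$-spectrum is precisely rationalization at the spectrum level. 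Equivalently, the induced map $\mathrm{Sp} \to \mathrm{Sp}(\mathcal{S}_\mathbb{Q}^{\geq 2})$ is a left adjoint whose right adjoint is fully faithful with essential image the rational spectra; this exhibits $\mathrm{Sp}(\mathcal{S}_\mathbb{Q}^{\geq 2})$ as a Bousfield localization of $\mathrm{Sp}$ at $H\mathbb{Q}$-equivalences, namely $\mathrm{Sp}_\mathbb{Q}$.

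For the tensor products, I would use property (3) of the stabilization $\mathrm{Sp}(\mathcal{C})^\otimes$ recalled in Section \ref{sec:mainresults}: the $k$-fold tensor product $\otimes^k$ is obtained as the multilinearization of the Cartesian product $\times^k$ under $\Omega^\infty$. For $E_1, \ldots, E_k \in \mathrm{Sp}_\mathbb{Q}$, using $\Omega^\infty_\mathbb{Q}: \mathrm{Sp}_\mathbb{Q} \to \mathcal{S}_\mathbb{Q}^{\geq 2}$ applied to connective representatives (or shifting if necessary), the cross effect computation
\begin{equation*}
\mathrm{cr}_k(\mathrm{id}_{\mathcal{S}_\mathbb{Q}^{\geq 2}})(\Omega^\infty E_1, \ldots, \Omega^\infty E_k)
\end{equation*}
reproduces Goodwillie's argument for pointed spaces verbatim, since all the relevant diagonals, smash products, and connectivity estimates go through in the rational setting; the outcome is $\Omega^\infty(E_1 \wedge_{\mathbb{Q}} \cdots \wedge_{\mathbb{Q}} E_k)$. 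This identifies $\otimes^k$ with the rational smash product, and since the cooperadic structure maps are also determined by these cross effects (via the dictionary of Remark \ref{rmk:dictionary}), the map of stable $\infty$-operads from the previous paragraph is an equivalence.

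The main obstacle I anticipate is the careful verification at the level of the full $\infty$-operad rather than just the individual $k$-fold tensor products: one must check compatibility of the cooperad structure maps $C_k \to C_i \wedge C_{k-i}$ etc., which amounts to checking that the chain-rule-type composition of cross effects is respected by rationalization. This is a naturality statement that should follow from the functoriality of $\mathrm{Sp}(-)^\otimes$ applied to $L_\mathbb{Q}$ together with the fact that the target is generated (as a stable $\infty$-operad) by its underlying $\infty$-category and its binary tensor product, so that checking the equivalence on $k=1,2$ suffices.
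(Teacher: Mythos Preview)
Your approach is essentially the same as the paper's: both use that $L_{\mathbb{Q}}$ preserves finite limits (and colimits) so that the stabilization of $(\mathcal{S}_{\mathbb{Q}}^{\geq 2})^\times$ is obtained from that of $(\mathcal{S}_*^{\geq 2})^\times$ by rationalization, and both identify the result with $\mathrm{Sp}_{\mathbb{Q}}^\otimes$. The paper phrases this more succinctly as ``the Goodwillie tower of the identity on $\mathcal{S}_{\mathbb{Q}}^{\geq 2}$ is obtained by applying $L_{\mathbb{Q}}$ to the Goodwillie tower on $\mathcal{S}^{\geq 2}$,'' but your explicit breakdown into underlying category and $k$-fold tensor products is exactly the ``unraveling of definitions'' the paper leaves implicit.

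One correction to your final paragraph: the worry about compatibility of cooperadic structure maps is misplaced. Since you already have a \emph{map} of stable $\infty$-operads (from functoriality of $\mathrm{Sp}(-)^\otimes$ applied to the finite-limit-preserving $L_{\mathbb{Q}}$), all operadic composition maps are automatically respected; to check it is an \emph{equivalence} you only need to check it on underlying $\infty$-categories and on spaces of $k$-ary operations, which by corepresentability reduces to your identification of the $\otimes^k$ with the rational smash product for each $k$. Your closing claim that ``checking on $k=1,2$ suffices'' is unjustified (a stable $\infty$-operad is not in general determined by its binary tensor product) and also unnecessary, since you have already handled all $k$.
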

\begin{proof}
The essential fact is that the rationalization functor
\begin{equation*}
L_{\mathbb{Q}}: \mathcal{S}^{\geq 2} \longrightarrow \mathcal{S}^{\geq 2}_{\mathbb{Q}}
\end{equation*}
preserves colimits (this is formal) and finite limits (since $\mathbb{Q}$-localization is exact). Because the polynomial approximations of a functor are constructed using colimits and finite limits, it follows that the Goodwillie tower of the identity functor on $\mathcal{S}^{\geq 2}_{\mathbb{Q}}$ is simply obtained by applying $L_{\mathbb{Q}}$ to the Goodwillie tower of the identity on $\mathcal{S}^{\geq 2}$. From here it is an exercise in unraveling the definitions to see that in the commutative square
\[
\xymatrix{
(\mathcal{S}_{\mathbb{Q}}^{\geq 2})^{\times} \ar[r] & (\mathcal{S}^{\geq 2})^{\times} \\
\mathrm{Sp}_{\mathbb{Q}}^\otimes \ar[u]^{\Omega^\infty_{\mathbb{Q}}} \ar[r] & \mathrm{Sp}^{\otimes} \ar[u]
}
\]
the vertical arrow exhibits $\mathrm{Sp}_{\mathbb{Q}}^\otimes$ as the stabilization of the corepresentable $\infty$-operad $(\mathcal{S}_{\mathbb{Q}}^{\geq 2})^{\times}$. (The vertical arrow on the right is the simply-connected cover of $\Omega^{\infty}$; taking such covers does not affect the stabilization of $\mathcal{S}_\ast$.)
\end{proof}

We deduce from this lemma and Corollary \ref{cor:GnQcontractible} that $\mathcal{P}_n \mathcal{S}_{\mathbb{Q}}^{\geq 2}$ is canonically equivalent to any other $n$-stage for $\mathrm{Sp}_{\mathbb{Q}}^\otimes$. By Proposition \ref{prop:nstagenotate} we obtain the following:

\begin{corollary}
\label{cor:rationalGoodwillie}
The functor $\mathcal{S}_{\mathbb{Q}}^{\geq 2} \rightarrow \mathrm{coAlg}^{\mathrm{ind}}(\mathrm{Sp}_{\mathbb{Q}})$ induced by $\Sigma^\infty_{\mathbb{Q}}: \mathcal{S}_{\mathbb{Q}}^{\geq 2} \rightarrow \mathrm{Sp}_{\mathbb{Q}}$ gives an equivalence of Goodwillie towers, so in particular a collection of equivalences
\begin{equation*}
\mathcal{P}_n \mathcal{S}_{\mathbb{Q}}^{\geq 2} \longrightarrow \mathrm{coAlg}^{\mathrm{ind}}(\tau_n\mathrm{Sp}_{\mathbb{Q}}^\otimes).
\end{equation*} 
\end{corollary}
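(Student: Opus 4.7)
The plan is to assemble facts already established in the excerpt. The lemma at the end of Section \ref{sec:mainresults} states that $\Omega_n^\infty$ exhibits $\mathcal{P}_n\mathcal{C}$ as an $n$-stage for $\mathrm{Sp}(\mathcal{C})^\otimes$, and the preceding lemma of this subsection identifies $\mathrm{Sp}(\mathcal{S}_{\mathbb{Q}}^{\geq 2})^\otimes$ with $\mathrm{Sp}_{\mathbb{Q}}^\otimes$. Combined, this exhibits $\mathcal{P}_n\mathcal{S}_{\mathbb{Q}}^{\geq 2}$ as an $n$-stage for $\mathrm{Sp}_{\mathbb{Q}}^\otimes$. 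On the other hand, Proposition \ref{prop:nstagenotate} (whose hypothesis is satisfied rationally, since dividing by the order of a finite group inverts the norm map) provides $\mathrm{coAlg}^{\mathrm{ind}}(\tau_n\mathrm{Sp}_{\mathbb{Q}}^\otimes)$ with the structure of an $n$-stage as well.

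Both $n$-stages now correspond to vertices of the space $\mathcal{G}_n(\mathrm{Sp}_{\mathbb{Q}}^\otimes)$, which is contractible by Corollary \ref{cor:GnQcontractible}. Equivalently, Corollary \ref{cor:notatecohomology2} supplies a canonical equivalence
\begin{equation*}
\mathcal{P}_n\mathcal{S}_{\mathbb{Q}}^{\geq 2} \longrightarrow \mathrm{coAlg}^{\mathrm{ind}}(\tau_n\mathrm{Sp}_{\mathbb{Q}}^\otimes).
\end{equation*}
To verify that this equivalence is the one induced by $\Sigma^\infty_{\mathbb{Q}}$, I would unwind the proof of Corollary \ref{cor:notatecohomology2}: the equivalence is constructed by applying Construction \ref{constr:coalgebras} to the $n$-stage $\mathcal{P}_n\mathcal{S}_{\mathbb{Q}}^{\geq 2}$ and observing that, in the vanishing Tate setting, the resulting functor into $\mathrm{coAlg}^{\mathrm{ind}}(\tau_n\mathrm{Sp}_{\mathbb{Q}}^\otimes)$ is an equivalence. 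Precomposing with the unit $\Sigma_n^\infty: \mathcal{S}_{\mathbb{Q}}^{\geq 2}\to \mathcal{P}_n\mathcal{S}_{\mathbb{Q}}^{\geq 2}$ and invoking the naturality of Construction \ref{constr:coalgebras} with respect to the functor $\Sigma_n^\infty$ then identifies the composite with the functor induced by $\Sigma^\infty_{\mathbb{Q}}$ itself, giving precisely the claim.

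Finally, the compatibility of these equivalences with the maps $\mathcal{P}_n\mathcal{S}_{\mathbb{Q}}^{\geq 2}\to \mathcal{P}_{n-1}\mathcal{S}_{\mathbb{Q}}^{\geq 2}$ and $\mathrm{coAlg}^{\mathrm{ind}}(\tau_n\mathrm{Sp}_{\mathbb{Q}}^\otimes)\to \mathrm{coAlg}^{\mathrm{ind}}(\tau_{n-1}\mathrm{Sp}_{\mathbb{Q}}^\otimes)$ of the two Goodwillie towers is a formal consequence of the transitivity $\mathcal{P}_{n-1}\mathcal{P}_n\simeq \mathcal{P}_{n-1}$ together with the functoriality of the truncations $\tau_n$ and of Construction \ref{constr:coalgebras}. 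The main obstacle, such as it is, is not in obtaining the equivalences themselves---these fall out of the general machinery almost tautologically---but in tracking through the definitions to confirm that the canonical equivalence arising from the contractibility of $\mathcal{G}_n(\mathrm{Sp}_{\mathbb{Q}}^\otimes)$ is genuinely implemented by the explicit functor coming from $\Sigma^\infty_{\mathbb{Q}}$. No additional input is needed.
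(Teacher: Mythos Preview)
Your argument is correct and follows essentially the same approach as the paper: identify $\mathrm{Sp}(\mathcal{S}_{\mathbb{Q}}^{\geq 2})^\otimes$ with $\mathrm{Sp}_{\mathbb{Q}}^\otimes$ via the preceding lemma, invoke Corollary \ref{cor:GnQcontractible} to see that any two $n$-stages are canonically equivalent, and use Proposition \ref{prop:nstagenotate} to identify $\mathrm{coAlg}^{\mathrm{ind}}(\tau_n\mathrm{Sp}_{\mathbb{Q}}^\otimes)$ as one such $n$-stage. You actually give more detail than the paper does in checking that the resulting equivalence is the one induced by $\Sigma^\infty_{\mathbb{Q}}$; the paper leaves this implicit.
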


To get a sharper result we have to deal with convergence. First of all, it is well-known \cite{aronekankaanrinta, goodwillie3} that the Goodwillie tower of the identity on $\mathcal{S}_*$ converges on simply-connected (and even nilpotent) spaces. In general $L_\mathbb{Q}$ need not preserve inverse limits, but for simply-connected $X$ it \emph{does} preserve the limit $\varprojlim P_n\mathrm{id}(X)$. Indeed, the spectrum $\partial_n\mathrm{id}$ is $(1-n)$-connective, whereas $(\Sigma^\infty X)^{\wedge n}$ is $2n$-connective, so that the homogeneous layer $D_n\mathrm{id}(X)$ is $(n+1)$-connective. Hence for fixed $n$ the map
\begin{equation*}
X \rightarrow P_n\mathrm{id}(X)
\end{equation*}
induces an isomorphism on $\pi_k$ for $k \leq n$, which is then also true on rational homotopy groups. It follows that the functor
\begin{equation*}
C_*: \mathcal{S}_{\mathbb{Q}}^{\geq 2} \longrightarrow \varprojlim_n  \mathrm{coAlg}^{\mathrm{ind}}(\tau_n\mathrm{Sp}_{\mathbb{Q}})
\end{equation*}
is fully faithful. Moreover, since $C_*$ admits a right adjoint, it embeds $\mathcal{S}_{\mathbb{Q}}^{\geq 2}$ as a coreflective subcategory. To prove Theorem \ref{thm:rationalhomotopy} we simply identify this subcategory in different ways. Note that it is generated under colimits by the image of the rational two-sphere $L_{\mathbb{Q}}S^2$.

Let us say that a coalgebra $X \in \mathrm{coAlg}(\mathrm{Sp}^\otimes_{\mathbb{Q}})$ is \emph{simply-connected} if its underlying spectrum is simply-connected and write $\mathrm{coAlg}(\mathrm{Sp}^\otimes_{\mathbb{Q}})^{\geq 2}$ for the full subcategory spanned by such. Similarly define simply-connected ind-coalgebras and truncated coalgebras. To get one half of Theorem \ref{thm:rationalhomotopy} we prove the following:

\begin{proposition}
The evident functor
\begin{equation*}
\mathcal{S}_{\mathbb{Q}}^{\geq 2} \rightarrow  \mathrm{coAlg}(\mathrm{Sp}^\otimes_{\mathbb{Q}})^{\geq 2}
\end{equation*}
is an equivalence of $\infty$-categories.
\end{proposition}
\begin{proof}
We will abuse notation and denote the functor of the proposition by $C_*$ for the length of this proof (although we will show the abuse is rather mild). Recall (as in Corollaries \ref{cor:compactTatecoalgebras} and  \ref{cor:dpcompact}) that for every $n$ the functor
\begin{equation*}
\mathrm{coAlg}^{\mathrm{ind}}(\tau_n\mathrm{Sp}_{\mathbb{Q}}) \rightarrow \mathrm{coAlg}(\tau_n\mathrm{Sp}_{\mathbb{Q}})
\end{equation*}
is fully faithful. Hence the composite
\begin{equation*}
\mathcal{S}_{\mathbb{Q}}^{\geq 2} \rightarrow \varprojlim_n \mathrm{coAlg}^{\mathrm{ind}}(\tau_n\mathrm{Sp}_{\mathbb{Q}})^{\geq 2} \rightarrow  \varprojlim_n \mathrm{coAlg}(\tau_n\mathrm{Sp}_{\mathbb{Q}})^{\geq 2}
\end{equation*}
is fully faithful as well. But by Lemma \ref{lem:coalglimit} the limit on the right is equivalent to $\mathrm{coAlg}(\tau_n\mathrm{Sp}_{\mathbb{Q}})^{\geq 2}$ itself. We conclude that the functor $C_*$ of the proposition is fully faithful. To conclude that $C_*$ is an equivalence of $\infty$-categories it remains to show that it is essentially surjective. For this it suffices to check that $C_*(L_{\mathbb{Q}}S^2)$, which is the trivial coalgebra on a class in degree 2, generates the $\infty$-category $\mathrm{coAlg}(\mathrm{Sp}_{\mathbb{Q}})^{\leq 2}$ under colimits. This is rather standard; we leave the details to the interested reader. Essentially, one builds a `cellular approximation' (much as with spaces) to any coalgebra by using $C_*(L_{\mathbb{Q}}S^2)$ and its suspensions as the cells. Alternatively, one can use a much simplified version of the connectivity arguments we use in Section \ref{subsec:modulisuspension}.
\end{proof}

We now turn to the other half of Theorem \ref{thm:rationalhomotopy}, which involves Lie algebras. Write $\mathbf{L}_{\mathbb{Q}}$ for the shifted Lie operad in $\mathrm{Sp}_{\mathbb{Q}}$, defined in terms of the usual Lie operad $\mathbf{Lie}_{\mathbb{Q}}$ in rational vector spaces by $\mathbf{L}_{\mathbb{Q}}(k) = \Sigma^{1-k}\mathbf{Lie}_{\mathbb{Q}}(k)$, or rather
\begin{equation*}
\mathbf{L}_{\mathbb{Q}}(k) = \Sigma\mathbf{Lie}_{\mathbb{Q}}(k) \otimes (S_{\mathbb{Q}}^{-1})^{\otimes k}
\end{equation*}
to make more explicit the action of the symmetric group $\Sigma_k$ permuting the (-1)-spheres. It is well-known \cite{getzlerjones, ginzburgkapranov} that the bar construction of $\mathbf{L}_{\mathbb{Q}}$ is the commutative cooperad. It can be described by $\mathbf{Com}(k) = \mathbb{Q}$ for all $k$, with its evident cooperad structure. Under the dictionary of Remark \ref{rmk:dictionary} this cooperad corresponds to the stable $\infty$-operad $\mathrm{Sp}_{\mathbb{Q}}^\otimes$. Notice that shifting up in degree provides an equivalence of $\infty$-categories $\mathrm{Alg}(\mathbf{L}_{\mathbb{Q}}) \simeq \mathrm{Alg}(\mathbf{Lie}_{\mathbb{Q}})$ and we simply write $\mathrm{Lie}_{\mathbb{Q}}$ for the latter. Furthermore, write $\mathrm{Lie}_{\mathbb{Q}}^{\geq 1}$ for the full subcategory spanned by the connected Lie algebras. Recall that the Goodwillie tower of the identity on $\mathrm{Lie}_{\mathbb{Q}}$ converges precisely on complete Lie algebras; for evident degree reasons, this in particular includes the connected Lie algebras. The remaining half of Theorem \ref{thm:rationalhomotopy} follows from:

\begin{proposition}
The functor 
\begin{equation*}
\mathrm{Lie}_{\mathbb{Q}}^{\geq 1} \longrightarrow  \varprojlim_n  \mathrm{coAlg}^{\mathrm{ind}}(\tau_n\mathrm{Sp}_{\mathbb{Q}}^\otimes)
\end{equation*}
is a fully faithful embedding of a coreflective subcategory. Furthermore, its image coincides with that of $\mathcal{S}_{\mathbb{Q}}^{\geq 2}$ embedded via $C_*$.
\end{proposition}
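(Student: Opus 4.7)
The plan is to realize the functor in question via an explicit identification of the Goodwillie tower of $\mathrm{Lie}_\mathbb{Q}$, and then to compare with $\mathcal{S}_\mathbb{Q}^{\geq 2}$ on a shared compact generator. First I would apply Proposition \ref{prop:PnAlgO} to $\mathbf{O} = \mathbf{L}_\mathbb{Q}$. Since $\mathbf{B}(\mathbf{L}_\mathbb{Q}) = \mathbf{Com}$ and the bar construction commutes with truncation in the sense that $\mathbf{B}(\tau_n\mathbf{L}_\mathbb{Q})$ agrees with $\mathbf{Com}$ in arities $\leq n$ and is cofreely generated above that arity, the dictionary of Remark \ref{rmk:dictionary} identifies the stable $\infty$-operad associated to $\mathbf{B}(\tau_n\mathbf{L}_\mathbb{Q})$ with $\tau_n\mathrm{Sp}_\mathbb{Q}^\otimes$. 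Combined with Corollary \ref{cor:GnQcontractible} (which shows divided-power and ind-coalgebras coincide as $n$-stages when Tate constructions vanish), this yields canonical equivalences
\begin{equation*}
\mathcal{P}_n\mathrm{Lie}_\mathbb{Q} \simeq \mathrm{coAlg}^{\mathrm{dp}}(\mathbf{B}(\tau_n\mathbf{L}_\mathbb{Q})) \simeq \mathrm{coAlg}^{\mathrm{ind}}(\tau_n\mathrm{Sp}_\mathbb{Q}^\otimes).
\end{equation*}
Passing to the limit and restricting to $\mathrm{Lie}_\mathbb{Q}^{\geq 1}$ then produces the candidate functor $\Phi$.

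For full faithfulness I would invoke Lemma \ref{lem:convergence}, reducing matters to verifying that the Goodwillie tower of the identity on $\mathrm{Lie}_\mathbb{Q}$ converges on connected Lie algebras. This is a connectivity estimate paralleling the spaces case: for $L \in \mathrm{Lie}_\mathbb{Q}^{\geq 1}$ the homogeneous layer $D_n\mathrm{id}(L)$ involves an $n$-fold tensor power of $L$, whose connectivity grows linearly in $n$, giving convergence of $L \to \varprojlim_n L_{\leq n}$ in each degree. For coreflectivity, the canonical functor $\mathrm{Lie}_\mathbb{Q} \to \varprojlim_n \mathcal{P}_n\mathrm{Lie}_\mathbb{Q}$ is colimit-preserving by the construction of the limit in $\mathbf{Cat}^\omega_*$, hence a left adjoint by the adjoint functor theorem; the inclusion $\mathrm{Lie}_\mathbb{Q}^{\geq 1} \hookrightarrow \mathrm{Lie}_\mathbb{Q}$ is also a left adjoint, with right adjoint the $1$-connective cover, so the composite $\Phi$ is a left adjoint and its image is coreflective.

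For the final claim I would compare the two images on compact generators. Both embeddings preserve colimits, so it suffices to check that compact generators have matching images. The category $\mathcal{S}_\mathbb{Q}^{\geq 2}$ is generated under colimits by the rational 2-sphere $L_\mathbb{Q}S^2$; its image under $C_*$ is a coalgebra on $\Sigma^\infty_\mathbb{Q} L_\mathbb{Q}S^2 \simeq H\mathbb{Q}[2]$, and the higher diagonals are nullhomotopic because already the space-level iterated diagonals $S^2 \to S^{2k}$ vanish for $k \geq 2$ by connectivity — so the image is the \emph{trivial} coalgebra on $H\mathbb{Q}[2]$. On the Lie side, after tracking the shift through $\mathrm{Alg}(\mathbf{L}_\mathbb{Q}) \simeq \mathrm{Alg}(\mathbf{Lie}_\mathbb{Q})$, the subcategory $\mathrm{Lie}_\mathbb{Q}^{\geq 1}$ is generated under colimits by the free $\mathbf{L}_\mathbb{Q}$-algebra on $H\mathbb{Q}[2]$; by Proposition \ref{prop:ntruncatedbar} applied at each truncation $\tau_n\mathbf{L}_\mathbb{Q}$, this free algebra maps to the trivial coalgebra on $H\mathbb{Q}[2]$ in each $\mathrm{coAlg}^{\mathrm{ind}}(\tau_n\mathrm{Sp}_\mathbb{Q}^\otimes)$, and hence also in the limit. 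Both images therefore contain a common compact generator and are colimit-closed, so they agree. The step I expect to require the most care is the precise identification $\mathbf{B}(\tau_n\mathbf{L}_\mathbb{Q}) \simeq \tau_n\mathrm{Sp}_\mathbb{Q}^\otimes$ together with the shift bookkeeping between $\mathbf{L}_\mathbb{Q}$- and $\mathbf{Lie}_\mathbb{Q}$-conventions; everything else then reduces to formal consequences of results already established in this paper.
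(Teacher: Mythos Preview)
Your argument is correct and follows the paper's overall strategy: establish convergence to get full faithfulness, use the left-adjoint property for coreflectivity, and match images by comparing compact generators. The difference lies in how you identify the image of the Lie-side generator. The paper computes directly: it takes $L[x_1]$, identifies $\Sigma^\infty$ with Chevalley--Eilenberg homology, writes down the CE complex (free graded cocommutative coalgebra on $y_2,y_3$ with $d(y_2^2)=y_3$), and reads off that its homology is $\mathbb{Q}$ concentrated in degree $2$. You instead invoke the abstract Koszul duality fact $\mathrm{bar}\circ\mathrm{free}\simeq\mathrm{triv}$ (used in the proof of Proposition~\ref{prop:ntruncatedbar}), so that $\mathrm{free}_{\mathbf{L}_\mathbb{Q}}(H\mathbb{Q}[2])$ goes to the trivial coalgebra on $H\mathbb{Q}[2]$, and you pair this with a connectivity observation that the coalgebra structure on $C_*(L_\mathbb{Q}S^2)$ is forced to be trivial. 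Your route avoids the explicit CE calculation at the cost of importing a cited fact; the paper's route is self-contained but requires the concrete computation.

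One phrasing to tighten: the sentence about ``space-level iterated diagonals $S^2\to S^{2k}$ vanishing by connectivity'' is not quite right as written, since the diagonal $S^2\to(S^2)^{\times k}$ is never null. What you need (and what your argument actually uses) is that the induced map $H\mathbb{Q}[2]\to \bigl((H\mathbb{Q}[2])^{\otimes k}\bigr)^{\Sigma_k}\simeq H\mathbb{Q}[2k]$ is null for $k\geq 2$ by degree reasons, and that this uniquely pins down the coalgebra structure. The paper makes the same point by saying the coalgebra is ``completely characterized'' by its underlying homotopy.
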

\begin{proof}
The first statement is a corollary of the discussion about convergence above. To characterize the image, we should show that it is generated under colimits by $C_*(L_{\mathbb{Q}}S^2)$. This coalgebra is completely characterized by the fact that its homotopy is $\mathbb{Q}$ in degree 2 and trivial in all others. The $\infty$-category $\mathrm{Lie}_{\mathbb{Q}}^{\geq 1}$ is generated by $L[x_1]$, the free differential graded Lie algebra on a single generator in degree 1. (Since this is a graded Lie algebra, it is \emph{not} trivial but has a non-zero element $[x_1, x_1]$ in degree 2.) Recall that the infinite suspension functor $\mathrm{Lie}_{\mathbb{Q}}^{\geq 1} \rightarrow \mathrm{Sp}_{\mathbb{Q}}$ can be identified with Andr\'e-Quillen homology, which in this particular case is the usual Chevalley-Eilenberg homology of (dg) Lie algebras. The Chevalley-Eilenberg complex of $L[x_1]$ is the free graded commutative coalgebra on generators $y_2$ and $y_3$ in degrees 2 and 3 respectively, with differential determined by $d(y_2)^2 = y_3$. The homology of this complex is $\mathbb{Q}$ concentrated in degree 2. In particular, $\Sigma^\infty L[x_1]$ is equivalent to the object $C_*(L_{\mathbb{Q}}S^2)$ described above.
\end{proof}

\section{Spaces with homotopy groups in a finite range}
\label{subsec:truncatedspaces}

The aim of this section is to prove Theorem \ref{thm:truncatedspaces}. For the rest of this section, fix a prime $p$ and an integer $n \geq 2$. Write $\mathcal{S}_*^{[n,p(n-1)]}$ for the $\infty$-category of pointed spaces $X$ such that $\pi_i X = 0$ if $i$ is not contained in the interval $[n,p(n-1)]$. Throughout this section we will assume that this $\infty$-category is localized so that $(p-1)!$ has been inverted, without explicitly indicating this in the notation. Our statements will remain true if one instead localizes at the prime $p$. We write $\mathbf{L}$ for Ching's operad in spectra, whose terms are given by the derivatives of the identity functor on $\mathcal{S}_*$.

For $k < p$, the order of the symmetric group $\Sigma_k$ divides $(p-1)!$. In particular, its Tate cohomology will vanish in the $\infty$-category of spectra with $(p-1)!$ inverted and Corollary \ref{cor:notatecohomology2} gives the following:

\begin{corollary}
After inverting $(p-1)!$, there is a canonical equivalence of $\infty$-categories
\begin{equation*}
\mathcal{P}_{p-1}\mathcal{S}_* \longrightarrow \mathrm{coAlg}^{\mathrm{ind}}(\tau_{p-1}\mathrm{Sp}^{\otimes}).
\end{equation*}
\end{corollary}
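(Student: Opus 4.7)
The plan is to combine Corollary \ref{cor:notatecohomology2} with the fact that $\mathcal{P}_{p-1}\mathcal{S}_*$ is a $(p-1)$-stage for $\mathrm{Sp}(\mathcal{S}_*)^\otimes \simeq \mathrm{Sp}^\otimes$. Specifically, I would take the nonunital stable $\infty$-operad $\mathcal{O}^\otimes$ in Corollary \ref{cor:notatecohomology2} to be $\mathrm{Sp}^\otimes$ (the stabilization of $(\mathcal{S}_*)^\times_{\mathrm{nu}}$, which is precisely the symmetric monoidal $\infty$-category of spectra under the smash product), and take the integer $n$ there to be $p-1$ and the $n$-stage $\mathcal{C}$ to be $\mathcal{P}_{p-1}\mathcal{S}_*$. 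That $\mathcal{P}_{p-1}\mathcal{S}_*$ is indeed a $(p-1)$-stage for $\mathrm{Sp}^\otimes$ is the content of the Lemma following Definition \ref{def:nstage}, applied to $\mathcal{C} = \mathcal{S}_*$.

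The only hypothesis left to verify is the vanishing of the Tate cohomology of $\Sigma_k$ in $\mathrm{Sp}$ for $k \leq p-1$, once $(p-1)!$ has been inverted. For such $k$, the order $k!$ of $\Sigma_k$ divides $(p-1)!$ and is therefore a unit in the localized $\infty$-category. The norm map $X_{h\Sigma_k} \to X^{h\Sigma_k}$ is then an equivalence for any $X$ with $\Sigma_k$-action: dividing by $|\Sigma_k|$ provides the homotopy inverse (in more precise categorical language, when the order of $G$ is invertible the composite $X^{hG} \to X_{hG} \to X^{hG}$ and its partner are equivalent to multiplication by $|G|$, hence to equivalences). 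Therefore the cofiber $X^{t\Sigma_k}$ is contractible for every such $X$, which is exactly the hypothesis of Corollary \ref{cor:notatecohomology2}.

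With both ingredients in place, Corollary \ref{cor:notatecohomology2} delivers a canonical equivalence
\begin{equation*}
\mathcal{P}_{p-1}\mathcal{S}_* \longrightarrow \mathrm{coAlg}^{\mathrm{ind}}(\tau_{p-1}\mathrm{Sp}^\otimes),
\end{equation*}
which is precisely the claim. The main obstacle is purely notational: one must ensure that the identification $\mathrm{Sp}(\mathcal{S}_*)^\otimes \simeq \mathrm{Sp}^\otimes$ really is an equivalence of nonunital stable $\infty$-operads (rather than merely of underlying $\infty$-categories). This is standard and is built into Lurie's construction of the stabilization of a symmetric monoidal $\infty$-category (cf.\ the discussion preceding Remark \ref{rmk:prodpresfunctor} and \cite{higheralgebra}); every other step is a direct appeal to machinery already established in the excerpt.
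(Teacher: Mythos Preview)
Your proposal is correct and follows essentially the same approach as the paper: the paper simply notes that for $k < p$ the order of $\Sigma_k$ divides $(p-1)!$, so its Tate cohomology vanishes after inverting $(p-1)!$, and then invokes Corollary \ref{cor:notatecohomology2}. You have spelled out the hypotheses more explicitly (including why $\mathcal{P}_{p-1}\mathcal{S}_*$ is a $(p-1)$-stage for $\mathrm{Sp}^\otimes$), but the argument is the same.
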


Since the bar construction $\mathbf{B}(\mathbf{L})$ is precisely the cocommutative cooperad in spectra \cite{chingbar}, Proposition \ref{prop:PnAlgO} implies:

\begin{corollary}
After inverting $(p-1)!$, there is a fully faithful left adjoint functor
\begin{equation*}
\mathrm{bar}_{\tau_{p-1}\mathbf{L}}: \mathrm{Alg}(\tau_{p-1}\mathbf{L}) \longrightarrow \mathrm{coAlg}^{\mathrm{ind}}(\tau_{p-1}\mathrm{Sp}^\otimes).
\end{equation*}
\end{corollary}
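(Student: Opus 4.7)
The plan is to combine Proposition \ref{prop:ntruncatedbar}, applied to the truncated operad $\mathbf{O} = \tau_{p-1}\mathbf{L}$, with an identification of the resulting bar cooperad as a truncated stable $\infty$-operad, and finally with the collapse of divided power coalgebras onto ind-coalgebras once $(p-1)!$ has been inverted.

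First I would observe that $\tau_{p-1}\mathbf{L}$ is by construction a $(p-1)$-truncated operad, so Proposition \ref{prop:ntruncatedbar} directly supplies a fully faithful left adjoint
\begin{equation*}
\mathrm{bar}_{\tau_{p-1}\mathbf{L}}: \mathrm{Alg}(\tau_{p-1}\mathbf{L}) \longrightarrow \mathrm{coAlg}^{\mathrm{dp}}\bigl(\mathbf{B}(\tau_{p-1}\mathbf{L})\bigr),
\end{equation*}
whose essential image consists of the conilpotent divided power coalgebras. The task is then to rewrite the target $\infty$-category as $\mathrm{coAlg}^{\mathrm{ind}}(\tau_{p-1}\mathrm{Sp}^\otimes)$.

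Next, I would identify the cooperad $\mathbf{B}(\tau_{p-1}\mathbf{L})$ with the Koszul-dual gadget appearing on the stable $\infty$-operad side. By Ching's result, $\mathbf{B}(\mathbf{L}) = \mathbf{Com}$, which under the dictionary of Remark \ref{rmk:dictionary} corresponds to $\mathrm{Sp}^\otimes$ (with its smash product). The bar construction $\mathbf{B}(\tau_{p-1}\mathbf{L})$ agrees with $\mathbf{Com}$ in arities $\leq p-1$, and is cofreely generated above that arity (as recalled at the start of Section \ref{subsec:truncations}). This is precisely the universal property of the truncation $\tau_{p-1}$ in $\mathbf{Op}^{\mathrm{St}}$ from Theorem \ref{thm:ntruncation}, so $\mathbf{B}(\tau_{p-1}\mathbf{L})$ corresponds to $\tau_{p-1}\mathrm{Sp}^\otimes$, giving
\begin{equation*}
\mathrm{coAlg}^{\mathrm{dp}}\bigl(\mathbf{B}(\tau_{p-1}\mathbf{L})\bigr) \simeq \mathrm{coAlg}^{\mathrm{dp}}\bigl(\tau_{p-1}\mathrm{Sp}^\otimes\bigr).
\end{equation*}

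Finally, I would compare the inductive pullback-square definitions of $\mathrm{coAlg}^{\mathrm{dp}}(\tau_k\mathcal{O}^\otimes)$ from Section \ref{subsec:dividedpowers} and $\mathrm{coAlg}^{\mathrm{ind}}(\tau_k\mathcal{O}^\otimes)$ from Lemma \ref{lem:algtaun+1}. These two squares differ only by replacing coinvariants $(X^{\otimes k})_{\Sigma_k}$, $(X^{\odot k})_{\Sigma_k}$ with invariants $(X^{\otimes k})^{\Sigma_k}$, $(X^{\odot k})^{\Sigma_k}$, connected by the norm map. After inverting $(p-1)!$ the order of $\Sigma_k$ is a unit for $k \leq p-1$, so the norm map is an equivalence throughout this range; a straightforward induction on $k \leq p-1$ then yields a canonical equivalence
\begin{equation*}
\mathrm{coAlg}^{\mathrm{dp}}\bigl(\tau_{p-1}\mathrm{Sp}^\otimes\bigr) \simeq \mathrm{coAlg}^{\mathrm{ind}}\bigl(\tau_{p-1}\mathrm{Sp}^\otimes\bigr).
\end{equation*}
Composing the three equivalences produces the desired fully faithful left adjoint. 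The step I expect to be the main obstacle is the middle one: cleanly matching the Koszul-dual truncation $\mathbf{B}(\tau_{p-1}\mathbf{L})$ with the stable-$\infty$-operad truncation $\tau_{p-1}\mathrm{Sp}^\otimes$, since this requires checking that the two notions of truncation correspond across the dictionary of Remark \ref{rmk:dictionary} and not merely that they have isomorphic terms in low arity.
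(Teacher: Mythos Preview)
Your proposal is correct and follows essentially the same route as the paper: apply Proposition \ref{prop:ntruncatedbar} to the $(p-1)$-truncated operad $\tau_{p-1}\mathbf{L}$, identify $\mathbf{B}(\tau_{p-1}\mathbf{L})$ with $\tau_{p-1}\mathrm{Sp}^\otimes$ via $\mathbf{B}(\mathbf{L})=\mathbf{Com}$ and the discussion in Section \ref{subsec:truncations}, and then collapse divided-power coalgebras onto ind-coalgebras using the vanishing of Tate cohomology of $\Sigma_k$ for $k<p$. The paper dispatches the last step in a single sentence rather than by comparing the inductive pullback squares, but your more explicit argument for it is fine; your flagged concern about matching the two truncations is exactly the content handled by the Koszul-dual discussion at the start of Section \ref{subsec:truncations}.
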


As before, $\tau_{p-1}\mathbf{L}$ denotes the truncation of the operad $\mathbf{L}$ which kills all operations in degrees $p$ and higher. Also, we have used the fact that the functor $\mathrm{coAlg}^{\mathrm{ind}}_{\mathrm{dp}}(\tau_{p-1}\mathrm{Sp}^\otimes) \rightarrow \mathrm{coAlg}^{\mathrm{ind}}(\tau_{p-1}\mathrm{Sp}^\otimes)$ is an equivalence, again by the vanishing of Tate cohomology of $\Sigma_k$ for $k < p$.

\begin{lemma}
\label{lem:essimageL}
Write $\mathcal{S}_*^{\geq n}$ for the full subcategory of $\mathcal{S}_*$ spanned by the $n$-connective spaces. The essential image of the composition
\begin{equation*}
\mathcal{S}_*^{\geq n} \xrightarrow{\Sigma_{p-1}^\infty} \mathcal{P}_{p-1}\mathcal{S}_* \longrightarrow \mathrm{coAlg}^{\mathrm{ind}}(\tau_{p-1}\mathrm{Sp}^{\otimes})
\end{equation*}
is contained in the essential image of the functor $\mathrm{bar}_{\tau_{p-1}\mathbf{L}}$.
\end{lemma}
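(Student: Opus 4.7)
The plan is to reduce, via a standard colimit argument, to the single case $X = S^n$. By Proposition \ref{prop:ntruncatedbar}, $\mathrm{bar}_{\tau_{p-1}\mathbf{L}}$ is a fully faithful left adjoint, so its essential image is closed under colimits. The functor $\Sigma^\infty_{p-1}$ likewise preserves colimits, being itself a left adjoint. Moreover, $\mathcal{S}_*^{\geq n}$ is generated under colimits by $S^n$: any $n$-connective pointed space admits a CW model with cells in dimensions $\geq n$ and is therefore an iterated pushout (in $\mathcal{S}_*$) of suspensions of $S^n$, all such colimits remaining $n$-connective. Hence it suffices to show that $\Sigma^\infty_{p-1}(S^n)$ lies in the essential image of $\mathrm{bar}_{\tau_{p-1}\mathbf{L}}$.

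For this remaining case, I would argue that $\Sigma^\infty_{p-1}(S^n)$ is equivalent to $\mathrm{bar}_{\tau_{p-1}\mathbf{L}}(\mathrm{free}_{\tau_{p-1}\mathbf{L}}(\Sigma^\infty S^n))$, which by the computation in the proof of Proposition \ref{prop:ntruncatedbar} is the trivial coalgebra $\mathrm{triv}(\Sigma^\infty S^n)$ in $\mathrm{coAlg}^{\mathrm{ind}}(\tau_{p-1}\mathrm{Sp}^\otimes)$. By Lemma \ref{lem:algtaun+1}, this reduces inductively to showing that, for $2 \leq k \leq p-1$, the $k$-fold diagonal
\[
\delta_k \colon \Sigma^\infty S^n \longrightarrow \bigl((\Sigma^\infty S^n)^{\wedge k}\bigr)^{\Sigma_k}
\]
induced from the space-level diagonal of $S^n$ admits a coherent nullhomotopy compatible with the structure at lower arities. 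After inverting $(p-1)!$, the Tate cohomology of $\Sigma_k$ vanishes for $k \leq p-1$, so $((\Sigma^\infty S^n)^{\wedge k})^{h\Sigma_k}$ agrees with the homotopy orbits and (by the collapse of the homotopy-fixed-point spectral sequence after this localization) reduces to a possibly shifted copy of $\Sigma^\infty S^{kn}$, or to zero when the induced character of $\Sigma_k$ on $\pi_{kn}$ has no fixed part. Since $\pi_n(\Sigma^\infty S^{kn})$ equals $\pi_{-n(k-1)}(\mathbb{S})$ and hence vanishes for $k \geq 2$ and $n \geq 1$, the class of $\delta_k$ vanishes in $\pi_0$ of the relevant mapping space.

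The main obstacle will be passing from the pointwise vanishing of the $\delta_k$ in $\pi_0$ to a coherent trivialization of the full $\tau_{p-1}\mathrm{Sp}^\otimes$-coalgebra structure. The mapping spectra $\mathrm{Map}_{\mathrm{Sp}}(\Sigma^\infty S^n, ((\Sigma^\infty S^n)^{\wedge k})^{h\Sigma_k})$ have nontrivial higher homotopy detecting stable homotopy of spheres, so one must exhibit specific null-homotopies at each stage of the induction and verify that the data assemble compatibly with the higher-arity operations produced via the inverse-limit description of Proposition \ref{prop:truncatedtensor}. The inductive pullback-square structure of Lemma \ref{lem:algtaun+1} should afford enough control: at each stage, the assumption $n \geq 2$ together with the dimension count $n(k-1) \geq 2$ for $k \geq 2$ provides sufficient connectivity to make the relevant space of lifts essentially contractible, yielding the required coherent equivalence with $\mathrm{triv}(\Sigma^\infty S^n)$ and completing the proof.
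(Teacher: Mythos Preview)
Your overall strategy matches the paper's: reduce to $S^n$ by a colimit argument, show that $\Sigma^\infty S^n$ is a \emph{trivial} coalgebra in $\mathrm{coAlg}^{\mathrm{ind}}(\tau_{p-1}\mathrm{Sp}^\otimes)$, and then invoke $\mathrm{bar}_{\tau_{p-1}\mathbf{L}}\circ\mathrm{free}_{\tau_{p-1}\mathbf{L}}\simeq\mathrm{triv}$. The inductive scheme via Lemma~\ref{lem:algtaun+1} you sketch in your final paragraph is also the paper's.

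Where you diverge is the middle paragraph: you attempt to trivialize each diagonal $\delta_k$ separately and then confront a coherence problem (tracking specific nullhomotopies through the stable homotopy of spheres). This detour is unnecessary, and your attempted resolution is imprecise. The paper's argument sidesteps it cleanly. At the inductive step from $\tau_k$ to $\tau_{k+1}$, once the coalgebra is known to be trivial in $\tau_k$, the pullback square of Lemma~\ref{lem:algtaun+1} (with fixed points replaced by orbits, since Tate vanishes) identifies the set of lifts with $\pi_0\mathrm{Map}\bigl(\Sigma^\infty S^n,\,F\bigr)$, where
\[
F \;=\; \mathrm{fib}\Bigl(\bigl((\Sigma^\infty S^n)^{\otimes k+1}\bigr)_{h\Sigma_{k+1}} \longrightarrow \bigl((\Sigma^\infty S^n)^{\odot k+1}\bigr)_{h\Sigma_{k+1}}\Bigr).
\]
Both source and target of this map are at least $(n+2)$-connective: the source is $n(k+1)$-connective, and by Proposition~\ref{prop:truncatedtensor} the target is at least $\bigl(n(k+1)-(k-2)\bigr)$-connective; both bounds exceed $n+1$ for $n\geq 2$ and $k\geq 1$. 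Hence $F$ is $(n+1)$-connective, so every map $\Sigma^\infty S^n\to F$ is null, and the given lift is forced to be the trivial one. No separate bookkeeping of nullhomotopies is needed.

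Two corrections to your final paragraph: the relevant object is the fiber $F$ of $\otimes^{k+1}\to\odot^{k+1}$, not $\bigl((\Sigma^\infty S^n)^{\wedge k}\bigr)^{h\Sigma_k}$ itself; and the space of lifts is not ``essentially contractible'' (it has nontrivial higher homotopy), nor does it need to be --- vanishing of $\pi_0$ suffices to pin down the lift up to equivalence, which is all the inductive step requires.
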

\begin{proof}
All functors in the statement of the lemma preserve colimits. Since $\mathcal{S}_*^{\geq n}$ is generated under colimits by the $n$-sphere $S^n$, it suffices to check that the coalgebra $\Sigma^\infty S^n$ is in the essential image of $\mathrm{bar}_{{\tau_{p-1}\mathbf{L}}}$. We claim that $\Sigma^\infty S^n$ is a trivial coalgebra, i.e. is in the essential image of the functor (see Section \ref{subsec:truncatedcoalgebras})
\begin{equation*}
\mathrm{triv}: \mathrm{Sp} \longrightarrow \mathrm{coAlg}^{\mathrm{ind}}(\tau_{p-1}\mathrm{Sp}^{\otimes}).
\end{equation*}
Granted this claim, the lemma follows from the fact that the composition of $\mathrm{bar}_{\tau_{p-1}\mathbf{L}}$ with the free algebra functor
\begin{equation*}
\mathrm{free}_{\tau_{p-1}\mathbf{L}}: \mathrm{Sp} \longrightarrow \mathrm{Alg}(\tau_{p-1}\mathbf{L})
\end{equation*}
is equivalent to the functor $\mathrm{triv}$ above.

We verify our claim inductively. Assume that we have proved that $\Sigma^\infty S^n$ is a trivial coalgebra in $\mathrm{coAlg}^{\mathrm{ind}}(\tau_k \mathrm{Sp}^\otimes)$ for some $k < p-1$, the case of $k = 1$ being trivial. Lemma \ref{lem:algtaun+1} (and the vanishing of the relevant Tate constructions) implies that lifting $\Sigma^\infty S^n$ from a coalgebra in $\tau_k \mathrm{Sp}^\otimes$ to a coalgebra in $\tau_{k+1}\mathrm{Sp}^\otimes$ is equivalent to specifying a lift as follows:
\[
\xymatrix{
& \bigl((\Sigma^\infty S^n)^{\otimes k+1}\bigr)_{h\Sigma_{k+1}} \ar[d] \\
\Sigma^\infty S^n \ar[r]_-0 \ar@{-->}[ur] & \bigl((\Sigma^\infty S^n)^{\odot k+1}\bigr)_{h\Sigma_{k+1}} \\
}
\]
Here $\odot^{k+1}$ is now the $(k+1)$-fold tensor product defined by $\tau_k \mathrm{Sp}^\otimes$. We claim that the fiber $F$ of the vertical map has connectivity greater than $n$; in particular, any map $\Sigma^\infty S^n \rightarrow F$ is null. It follows that the lift of $\Sigma^\infty S^n$ to $\tau_{k+1}\mathrm{Sp}^\otimes$ is uniquely determined and is the trivial one. This establishes the inductive step. It remains to establish the connectivity of $F$. First, the largest possible dimension of a non-degenerate simplex in the simplicial set $\N\mathbf{Part}_{k}(k+1)$ is $k-2$. By Proposition \ref{prop:truncatedtensor}, it follows that the connectivity of $(\Sigma^\infty S^n)^{\odot k+1}$ is at least $n(k+1) - (k-2) > n + 2$. Also, the connectivity of $(\Sigma^\infty S^n)^{\otimes k+1}$ is of course $n(k+1)$, in particular it is greater than or equal to $n+2$. The claim about $F$ follows from these two estimates.
\end{proof}

Lemma \ref{lem:essimageL} guarantees that the functor $\mathcal{S}_*^{\geq n} \rightarrow \mathrm{coAlg}^{\mathrm{ind}}(\tau_{p-1}\mathrm{Sp}^{\otimes})$ factors through a functor
\begin{equation*}
\mathcal{L}: \mathcal{S}_*^{\geq n} \longrightarrow \mathrm{Alg}(\tau_{p-1}\mathbf{L})
\end{equation*}
which is uniquely determined up to equivalence. Moreover, it follows from our proof that for $m \geq n$, the algebra $\mathcal{L}(S^m)$ is the free $\tau_{p-1}\mathbf{L}$-algebra on the spectrum $\Sigma^\infty S^m$. 

By construction, $\mathcal{L}$ admits a right adjoint $\mathcal{R}$ and the unit $\mathrm{id} \rightarrow \mathcal{R}\mathcal{L}$ is equivalent to the unit of the adjoint pair $(\Sigma_{p-1}^\infty, \Omega^\infty_{p-1})$, which in turn is the natural transformation $\mathrm{id} \rightarrow P_{p-1}\mathrm{id}$. For any pointed space $X \in \mathcal{S}_*^{\geq n}$, the map
\begin{equation*}
X \longrightarrow P_{p-1}\mathrm{id}(X)
\end{equation*}
induces an isomorphism on homotopy groups in dimensions up to $p(n-1)$. Indeed, this is a consequence of the following two facts. First, the map
\begin{equation*}
X \longrightarrow \varprojlim_k P_k\mathrm{id}(X)
\end{equation*}
is an equivalence (remember that under our assumptions $X$ is simply-connected). Second, the fiber $D_k\mathrm{id}(X)$ of the map $P_k\mathrm{id}(X) \rightarrow P_{k-1}\mathrm{id}(X)$ can be described as follows:
\begin{equation*}
D_k\mathrm{id}(X) \simeq \Omega^\infty\bigl((\partial_k\mathrm{id} \otimes X^{\otimes k})_{\Sigma_k}\bigr).
\end{equation*}
The spectrum $\partial_k \mathrm{id}$ is $(1-k)$-connective (in fact, it is equivalent to a wedge of $(1-k)$-dimensional spheres), so that the homotopy groups of $D_k\mathrm{id}(X)$ vanish up to dimension $k(n-1)$.

Write $\mathrm{Alg}(\tau_{p-1}\mathbf{L})^{[n,p(n-1)]}$ for the full subcategory of $\mathrm{Alg}(\tau_{p-1}\mathbf{L})$ spanned by the algebras whose underlying spectrum has nontrivial homotopy groups only in the range $[n,p(n-1)]$ and similarly define a full subcategory $\mathcal{S}_*^{[n,p(n-1)]}$ of $\mathcal{S}_*$. The main step in proving Theorem \ref{thm:truncatedspaces} is the following:
 
\begin{proposition}
\label{prop:truncatedspaces}
The functor $\mathcal{R}$ restricts to an equivalence of $\infty$-categories
\begin{equation*}
\mathcal{R}: \mathrm{Alg}(\tau_{p-1}\mathbf{L})^{[n,p(n-1)]} \longrightarrow \mathcal{S}_*^{[n,p(n-1)]}.
\end{equation*}
\end{proposition}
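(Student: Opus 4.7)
The plan is to produce an essential inverse to $\mathcal{R}$ and verify that both composites are canonically equivalent to the identity. As a preliminary step I would establish that $\mathrm{Alg}(\tau_{p-1}\mathbf{L})^{[n,p(n-1)]}$ is reflective in the subcategory of $\tau_{p-1}\mathbf{L}$-algebras with $n$-connective underlying spectrum. The reflection $T^{\mathrm{alg}}$ is given by lifting the Postnikov truncation $\tau_{\leq p(n-1)}$ from spectra to algebras; this lift exists because a coherent $\tau_{p-1}\mathbf{L}$-action descends to Postnikov sections. The candidate inverse to $\mathcal{R}$ is then
\[
\widetilde{\mathcal{L}} := T^{\mathrm{alg}} \circ \mathcal{L} : \mathcal{S}_*^{[n,p(n-1)]} \longrightarrow \mathrm{Alg}(\tau_{p-1}\mathbf{L})^{[n,p(n-1)]}.
\]

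For the composite $\mathcal{R} \circ \widetilde{\mathcal{L}}$ applied to $X \in \mathcal{S}_*^{[n,p(n-1)]}$: the relevant unit factors as $X \to \mathcal{R}\mathcal{L}(X) \to \mathcal{R}\widetilde{\mathcal{L}}(X)$, with the first map equal to $X \to P_{p-1}\mathrm{id}(X)$. The connectivity estimate immediately preceding the proposition statement—namely that each $D_k\mathrm{id}(X)$ is at least $(k(n-1)+1)$-connective—makes this first map an isomorphism on $\pi_i$ for $i \leq p(n-1)$. Combining this with the identification of $\mathcal{R}(\widetilde{\mathcal{L}}(X))$ as the $[n,p(n-1)]$-truncation of $P_{p-1}\mathrm{id}(X)$, and using that $X$ has no homotopy outside $[n,p(n-1)]$, gives the identity on $X$.

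For the composite $\widetilde{\mathcal{L}} \circ \mathcal{R}$ applied to $A \in \mathrm{Alg}(\tau_{p-1}\mathbf{L})^{[n,p(n-1)]}$, the crucial technical input is to show that $\mathcal{R}(A)$ itself lies in $\mathcal{S}_*^{[n,p(n-1)]}$ and that the natural comparison $\pi_i \mathcal{R}(A) \to \pi_i A$ between the homotopy groups of $\mathcal{R}(A)$ as a space and those of the underlying spectrum of $A$ is an isomorphism for $i \in [n,p(n-1)]$. Granting this, the counit $\widetilde{\mathcal{L}}\mathcal{R}(A) \to A$ induces an iso on homotopy groups of underlying spectra in the range $[n,p(n-1)]$, so after applying $T^{\mathrm{alg}}$ (which is built into $\widetilde{\mathcal{L}}$) it becomes an equivalence in the truncated algebra category.

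The main obstacle is this technical input: precise control of $\mathcal{R}(A)$ for bounded $A$. I would approach this by computing $\mathcal{R}(A)$ via a cobar-style resolution associated to the divided-power coalgebra $\mathrm{bar}_{\tau_{p-1}\mathbf{L}}(A) \in \mathcal{P}_{p-1}\mathcal{S}_*$, where the higher stages involve iterated smash powers of the underlying spectrum of $A$, of connectivity $\geq kn$, coupled with partition-complex duals $\partial_k\mathrm{id}$ of connectivity $\geq 1-k$. The same arithmetic that gives the Goodwillie tower of $\mathrm{id}_{\mathcal{S}_*}$ its $p(n-1)$-convergence on $n$-connective spaces then forces the cobar tower for $\mathcal{R}(A)$ to collapse within the window $[n,p(n-1)]$, yielding the desired comparison of homotopy groups.
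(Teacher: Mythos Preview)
Your overall architecture matches the paper's: define the candidate inverse as the algebraic Postnikov truncation of $\mathcal{L}$, check the unit via the connectivity of $X \to P_{p-1}\mathrm{id}(X)$, and deduce the counit. The paper argues the counit formally: once one knows (1) that $\mathcal{R}$ is conservative and (2) that the unit is an equivalence, the triangle identity forces $\mathcal{R}(\text{counit})$ to be an equivalence, hence the counit is. Your counit paragraph has the right ingredients but skips this step; as written, ``$\pi_i\mathcal{R}(A)\cong\pi_i UA$ implies the counit is an iso on $\pi_i$'' is not yet an argument, since knowing source and target have isomorphic homotopy groups does not by itself say the counit induces that isomorphism.

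The substantive difference is in how you propose to establish the ``crucial technical input'' $\pi_i\mathcal{R}(A)\cong\pi_i UA$. You sketch a cobar-resolution-plus-connectivity approach. The paper does this in one line: the text immediately preceding the proposition shows $\mathcal{L}(S^m)\simeq\mathrm{free}_{\tau_{p-1}\mathbf{L}}(\Sigma^\infty S^m)$ for $m\geq n$, so by adjunction
\[
\mathrm{Map}_{\mathcal{S}_*}(S^m,\mathcal{R}A)\simeq\mathrm{Map}_{\mathrm{Alg}}(\mathrm{free}_{\tau_{p-1}\mathbf{L}}(\Sigma^\infty S^m),A)\simeq\mathrm{Map}_{\mathrm{Sp}}(\Sigma^\infty S^m,UA),
\]
which simultaneously identifies $\pi_m\mathcal{R}(A)$ with $\pi_m UA$ and shows $\mathcal{R}(A)\in\mathcal{S}_*^{[n,p(n-1)]}$. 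This also immediately gives conservativity of $\mathcal{R}$. Your cobar approach would presumably work, but it is considerably heavier machinery than needed; the key point you should exploit is the free-algebra description of $\mathcal{L}$ on spheres, already established in the paper.
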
 
\begin{proof}
Let $X \in \mathrm{Alg}(\tau_{p-1}\mathbf{L})^{[n,p(n-1)]}$. Let us first verify that the homotopy groups of the space $\mathcal{R}X$ are indeed in the range $[n,p(n-1)]$. For $m \geq n$, the space of maps $\mathrm{Map}_{\mathcal{S}_*}(S^m, \mathcal{R}X)$ is equivalent to the space of maps between $\mathrm{free}_{\tau_{p-1}\mathbf{L}}(\Sigma^\infty S^m)$ and $X$, by adjunction and the identification of $\mathcal{L}(S^m)$ made earlier. This latter space is equivalent to $\mathrm{Map}_{\mathrm{Sp}}(\Sigma^\infty S^m, UX)$, where $UX$ denotes the underlying spectrum of the algebra $X$. This identification shows that the homotopy groups of $\mathcal{R}X$ and $UX$ coincide.

Now write
\begin{equation*}
t_{p(n-1)}: \mathcal{S}_*^{\geq n} \longrightarrow \mathcal{S}_*^{[n,p(n-1)]} \quad\quad \text{and} \quad\quad T_{p(n-1)}: \mathrm{Sp}^{\geq n} \longrightarrow \mathrm{Sp}^{[n,p(n-1)]}
\end{equation*}
for the functors taking the Postnikov sections killing all homotopy groups in dimensions higher that $p(n-1)$. These are left adjoint to the inclusions
\begin{equation*}
\mathcal{S}_*^{[n,p(n-1)]} \longrightarrow \mathcal{S}_*^{\geq n} \quad\quad \text{and} \quad\quad \mathrm{Sp}^{[n,p(n-1)]} \longrightarrow \mathrm{Sp}^{\geq n}
\end{equation*}
respectively. Note that $T_{p(n-1)}$ is a monoidal functor, so that it induces a corresponding left adjoint
\begin{equation*}
T_{p(n-1)}: \mathrm{Alg}(\tau_{p-1}\mathbf{L})^{\geq n} \longrightarrow \mathrm{Alg}(\tau_{p-1}\mathbf{L})^{[n,p(n-1)]}.
\end{equation*}
The left adjoint of the functor
\begin{equation*}
\mathcal{R}: \mathrm{Alg}(\tau_{p-1}\mathbf{L})^{[n,p(n-1)]} \longrightarrow \mathcal{S}_*^{[n,p(n-1)]}
\end{equation*}
is the composition $T_{p(n-1)} \circ \mathcal{L}$. That this adjoint pair is an equivalence now follows from the facts (to be proved below) that (1) $\mathcal{R}$ detects equivalences and (2) the unit $\mathrm{id} \rightarrow \mathcal{R} T_{p(n-1)}\mathcal{L}$ is an equivalence. Indeed, to verify that the counit $T_{p(n-1)} \mathcal{L}\mathcal{R} \rightarrow \mathrm{id}$ is an equivalence as well, it suffices (by (1)) to check this after applying $\mathcal{R}$ to both sides, when it fits into a commutative triangle
\[
\xymatrix{
& \mathcal{R} T_{p(n-1)} \mathcal{L}\mathcal{R} \ar[dr] & \\ 
\mathcal{R} \ar[ur] \ar@{=}[rr] && \mathcal{R}.
}
\]
The left slanted arrow is an equivalence by (2), so that the right slanted arrow is an equivalence by two-out-of-three.

It remains to prove (1) and (2). The first is clear from our earlier identification of the homotopy groups of $\mathcal{R}X$ with the homotopy groups of the underlying spectrum of $X$. For (2), note that the natural transformation $\mathcal{R}\mathcal{L} \rightarrow \mathcal{R} T_{p(n-1)}\mathcal{L}$ factors through a natural transformation $t_{p(n-1)} \mathcal{R}\mathcal{L} \rightarrow \mathcal{R} T_{p(n-1)}\mathcal{L}$, which is an equivalence because it induces isomorphisms on homotopy groups. The natural transformation
\begin{equation*}
\mathrm{id} \longrightarrow t_{p(n-1)} \mathcal{R}\mathcal{L}
\end{equation*}
is an equivalence as well; indeed, we already analyzed the connectivity of the map $X \rightarrow \mathcal{R}\mathcal{L}X$ before Proposition \ref{prop:truncatedspaces} and concluded it induces isomorphisms on homotopy groups in dimensions up to $p(n-1)$.
\end{proof} 
 
\begin{proof}[Proof of Theorem \ref{thm:truncatedspaces}]
The morphism of operads $\mathbf{L} \rightarrow \tau_{p-1}\mathbf{L}$ induces an adjunction on $\infty$-categories of algebras:
\[
\xymatrix{
\mathrm{Alg}(\mathbf{L})^{[n,p(n-1)]} \ar@<.5ex>[r] & \mathrm{Alg}(\tau_{p-1}\mathbf{L})^{[n,p(n-1)]}. \ar@<.5ex>[l]
}
\]
All that remains to check is that this adjunction is an equivalence. This follows easily from the observation that for $X$ an $n$-connective spectrum, the term $(\partial_k\mathrm{id} \otimes X^{\otimes k})_{h\Sigma_k}$ has vanishing homotopy groups up to dimension $k(n-1)$. In particular, for $k \geq p$, the spectrum
\begin{equation*}
T_{p(n-1)}(\partial_k\mathrm{id} \otimes X^{\otimes k})_{h\Sigma_k}
\end{equation*}
is contractible.
\end{proof}

\section{Spaces and Tate coalgebras}
\label{subsec:modulisuspension} 

The aim of this section is to prove Theorem \ref{thm:Tatecoalgebras} from the introduction, which gives a description of the homotopy theory of pointed spaces in terms of Tate coalgebras. The interested reader should compare these results to some of the questions on the moduli of suspension spectra raised by Klein in \cite{kleinmoduli} (in particular Section 9 there); also, our results allow a proof of Conjecture B of \cite{kleinpeter} by implementing the plan sketched in Section 5 of that paper.

We gave an informal description of the $n$-excisive $\infty$-categories $\mathcal{P}_n\mathcal{S}_*$ in Chapter \ref{sec:informalconstr}, using coalgebras with Tate diagonals. A precise construction was given in Chapter \ref{sec:classification}. Let us take a moment to unravel this construction and make the connection with our more informal description. Proposition \ref{prop:CnisbaCn} shows that for every $n$ there is a pullback square of compactly generated $\infty$-categories
\[
\xymatrix{
\mathcal{P}_n\mathcal{S}_* \ar[r]\ar[d] & \mathrm{coAlg}^{\mathrm{ind}}(\tau_n\mathrm{Sp}^\otimes) \ar[d] \\
\mathcal{P}_{n-1}\mathcal{S}_* \ar[r] & \mathcal{P}_{n-1} \mathrm{coAlg}^{\mathrm{ind}}(\tau_n\mathrm{Sp}^\otimes).
}
\]
Here the upper right-hand corner is the $\infty$-category of ind-coalgebras in the $\infty$-operad $\tau_n\mathrm{Sp}^\otimes$; in more elementary terms such an ind-coalgebra is an ind-object in the $\infty$-category of finite spectra $X$ equipped with comultiplication maps
\begin{equation*}
\delta_k: X \rightarrow (X^{\otimes k})^{h\Sigma_k}
\end{equation*}
for $k \leq n$, together with a coherent system of homotopies relating them. Indeed, this description is precisely the content of Lemma \ref{lem:algtaun+1}. The objects of the lower right-hand corner can be loosely described as $(n-1)$-truncated ind-coalgebras (i.e. $X$ only having $\delta_k$ for $k \leq n-1$), equipped with a further map
\begin{equation*}
X \rightarrow (X^{\otimes n})^{t\Sigma_n}
\end{equation*}
compatible with those $\delta_k$'s (or with $\delta_{<n}$, in the notation of Chapter \ref{sec:informalconstr}). To give a more precise statement, observe that Lemma \ref{lem:coalgvsPn} allows one to replace the pullback square above by the following more elementary one:
\[
\xymatrix{
\mathcal{P}_n\mathcal{S}_* \ar[r]\ar[d] & \mathrm{Ind}\bigl\{X \rightarrow (X^{\otimes^n})^{h\Sigma_n} \bigr\}^c_{\mathrm{Sp}} \ar[d] \\
\mathcal{P}_{n-1}\mathcal{S}_* \ar[r] & \mathrm{Ind}\bigl\{X \rightarrow (X^{\odot^n})^{h\Sigma_n} \times_{(X^{\odot^n})^{t\Sigma_n}} (X^{\otimes^n})^{t\Sigma_n} \bigr\}^c_{\mathrm{Sp}}.
}
\]
In words, the bottom horizontal map assigns to a compact object of $\mathcal{P}_{n-1}\mathcal{S}_*$ its suspension spectrum $X$ together with the map
\begin{equation*}
(\delta_{<n}, \tau_n): X \rightarrow (X^{\odot^n})^{h\Sigma_n} \times_{(X^{\odot^n})^{t\Sigma_n}} (X^{\otimes^n})^{t\Sigma_n}.
\end{equation*}
The first factor encodes the comultiplicative structure map $\delta_{<n}$, the second factor the Tate diagonal $\tau_n$. Recall that $\tau_n$ is uniquely determined up to equivalence by the fact that on objects of $\mathcal{P}_{n-1}\mathcal{S}_*$ of the form $\Sigma^\infty_{n-1} Y$ its value is given by the composition
\begin{equation*}
\Sigma^\infty Y \xrightarrow{\delta_n} (\Sigma^\infty Y \otimes \cdots \otimes \Sigma^\infty Y)^{h\Sigma_n} \rightarrow (\Sigma^\infty Y \otimes \cdots \otimes \Sigma^\infty Y)^{t\Sigma_n}.
\end{equation*}
The pullback square above articulates that to lift such an object of $\mathcal{P}_{n-1}\mathcal{S}_*$ to an object of $\mathcal{P}_n\mathcal{S}_*$, one should provide a comultiplication map
\begin{equation*}
\delta_n: X \rightarrow (X^{\otimes^n})^{h\Sigma_n}
\end{equation*}
which is compatible with $\delta_{<n}$ and $\tau_n$. The interested reader should compare this with the discussion of Chapter \ref{sec:informalconstr}, in particular Examples \ref{ex:P2S} and \ref{ex:P3S}. To summarize, induction on $n$ shows that a compact object of $\mathcal{P}_n\mathcal{S}_*$ is described by comultiplication maps $\delta_k$ for $k \leq n$, homotopies expressing the compatibilities between them, as well as homotopies expressing their compatibilities with the Tate diagonals $\tau_k$ for $k \leq n$. This describes all of $\mathcal{P}_n\mathcal{S}_*$, since a general object is precisely an ind-object in the subcategory of compact objects.

We will write
\begin{equation*}
\mathrm{coAlg}_{\mathrm{Tate}}^{\mathrm{ind}}(\tau_n \mathrm{Sp}^\otimes) := \mathcal{P}_n\mathcal{S}_*
\end{equation*}
for the $\infty$-category of \emph{$n$-truncated Tate ind-coalgebras}. The notation is slightly abusive; this $\infty$-category depends not only on the stable $\infty$-operad $\tau_n \mathrm{Sp}^\otimes$, but also on the Tate diagonals $\tau_n$ arising from the $\infty$-category $\mathcal{S}_*$. Note that the pullback squares above allow one to give an inductive description of the $\infty$-categories of $n$-truncated Tate ind-coalgebras (starting from $\mathrm{coAlg}_{\mathrm{Tate}}^{\mathrm{ind}}(\tau_1 \mathrm{Sp}^\otimes) = \mathrm{Sp}$) which only uses the $\infty$-operad $\mathrm{Sp}^\otimes$ and the Tate diagonals as input.

As at the end of Section \ref{subsec:nstages} there is a variant of this construction `without the ind', giving $\infty$-categories of $n$-truncated Tate coalgebras defined inductively by pullback squares
\[
\xymatrix{
\mathrm{coAlg}_{\mathrm{Tate}}(\tau_n \mathrm{Sp}^\otimes) \ar[r]\ar[d] & \mathrm{coAlg}(\tau_n\mathrm{Sp}^\otimes) \ar[d] \\
\mathrm{coAlg}_{\mathrm{Tate}}(\tau_{n-1} \mathrm{Sp}^\otimes) \ar[r] & \mathcal{P}_{n-1} \mathrm{coAlg}(\tau_n\mathrm{Sp}^\otimes),
}
\]
again starting the induction at
\begin{equation*}
\mathrm{coAlg}_{\mathrm{Tate}}(\tau_1 \mathrm{Sp}^\otimes) = \mathrm{Sp}.
\end{equation*}
Corollary \ref{cor:compactTatecoalgebras} gives fully faithful functors
\begin{equation*}
\mathrm{coAlg}_{\mathrm{Tate}}^{\mathrm{ind}}(\tau_n \mathrm{Sp}^\otimes) \rightarrow \mathrm{coAlg}_{\mathrm{Tate}}(\tau_n \mathrm{Sp}^\otimes).
\end{equation*}

\begin{definition}
The $\infty$-category of Tate coalgebras in spectra is
\begin{equation*}
\mathrm{coAlg}_{\mathrm{Tate}}(\mathrm{Sp}^\otimes) := \varprojlim_n \mathrm{coAlg}_{\mathrm{Tate}}(\tau_n \mathrm{Sp}^\otimes).
\end{equation*}
\end{definition}

Observe that the result of Lemma \ref{lem:coalglimit} supplies a functor
\begin{equation*}
\mathrm{coAlg}_{\mathrm{Tate}}(\mathrm{Sp}^\otimes)  \rightarrow \mathrm{coAlg}(\mathrm{Sp}^\otimes) 
\end{equation*}
which simply `forgets the Tate diagonals' but retains the underlying coalgebra. It is precisely the extra data supplied by the Tate diagonals that is needed to upgrade a coalgebra in spectra to a coalgebra arising as a suspension spectrum. More precisely, Theorem \ref{thm:Tatecoalgebras} states that the comparison functor on simply-connected pointed spaces (which results from taking the limit of the Goodwillie tower of $\mathcal{S}_*$)
\begin{equation*}
\Gamma: \mathcal{S}_*^{\geq 2} \rightarrow \mathrm{coAlg}_{\mathrm{Tate}}(\mathrm{Sp}^\otimes)^{\geq 2}
\end{equation*}
is an equivalence of $\infty$-categories. We will prove this result below. The main point of the proof is the following. The functor above induces a natural map of comonads on the $\infty$-category $\mathrm{Sp}^{\geq 2}$ of simply-connected spectra as follows:
\begin{equation*}
\gamma: \Sigma^\infty\Omega^\infty \rightarrow \mathrm{cofree}^{\mathrm{Tate}}.
\end{equation*}
Here $\mathrm{cofree}^{\mathrm{Tate}}$ denotes the comonad induced by the forgetful-cofree adjunction on $\mathrm{Sp}^{\geq 2}$ arising from the $\infty$-category $\mathrm{coAlg}_{\mathrm{Tate}}(\mathrm{Sp}^\otimes)^{\geq 2}$. We will prove that $\gamma$ is an equivalence. If we knew that both $\mathcal{S}_*^{\geq 2}$ and $\mathrm{coAlg}_{\mathrm{Tate}}(\mathrm{Sp}^\otimes)^{\geq 2}$ were comonadic over spectra, this would immediately finish the proof. For $\mathcal{S}_*^{\geq 2}$ this is rather well-known and corresponds essentially to the convergence of the Bousfield-Kan spectral sequence on simply-connected spaces. For the $\infty$-category of Tate coalgebras this is not immediately clear; we will provide a proof below. 

First we establish some preliminary lemmas, which refine several statements in the proof of Proposition \ref{prop:betastab} by exploiting connectivity estimates. To fix notation, write
\begin{equation*}
U_n: \mathrm{coAlg}_{\mathrm{Tate}}(\tau_n\mathrm{Sp}^\otimes)^{\geq 2} \rightarrow \mathrm{Sp}^{\geq 2}
\end{equation*}
for the forgetful functor and $\mathrm{cofree}^{\mathrm{Tate}}_n$ for the comonad on $\mathrm{Sp}^{\geq 2}$ arising by composing $U_n$ with its right adjoint. The composition of left adjoints
\begin{equation*}
\mathcal{S}_*^{\geq 2} \xrightarrow{\Sigma^\infty_n} \mathrm{coAlg}_{\mathrm{Tate}}^{\mathrm{ind}}(\tau_n\mathrm{Sp}^\otimes)^{\geq 2} \rightarrow \mathrm{coAlg}_{\mathrm{Tate}}(\tau_n\mathrm{Sp}^\otimes)^{\geq 2}
\end{equation*}
induces (via the counits of the corresponding adjoint pairs) a natural transformation 
\begin{equation*}
\gamma_n: \Sigma^\infty\Omega^\infty \rightarrow \mathrm{cofree}^{\mathrm{Tate}}_n. 
\end{equation*}

\begin{lemma}
\label{lem:PncofreeTate}
The natural transformation
\begin{equation*}
P_n(\gamma_n): P_n(\Sigma^\infty\Omega^\infty) \rightarrow P_n(\mathrm{cofree}_n^{\mathrm{Tate}})
\end{equation*}
is an equivalence.
\end{lemma}
\begin{proof}
If we were dealing with the cofree n-truncated Tate \emph{ind}-coalgebra this would follow directly from our earlier results. Indeed, that cofree coalgebra functor arises from the adjunction
\[
\xymatrix{
\mathcal{P}_n\mathcal{S}_* \ar@<.5ex>[r]^-{\Sigma^\infty_{n,1}} & \mathrm{Sp} \ar@<.5ex>[l]^-{\Omega^\infty_{n,1}}
}
\]
and the equivalence $P_n(\Sigma^\infty_n\Omega^\infty_n) \simeq \mathrm{id}_{\mathcal{P}_n\mathcal{S}_*}$ would imply the result. For the case at hand (where the only difference is the absence of the ind), a straightforward modification of the technique used in the proof of Proposition \ref{prop:betastab} gives the desired result.
\end{proof}

\begin{lemma}
\label{lem:cofreeTateconverges}
The Goodwillie tower of $\mathrm{cofree}^{\mathrm{Tate}}_n$ converges on simply-connected spectra. More precisely, for $X \in \mathrm{Sp}^{\geq 2}$ the natural map
\begin{equation*}
\mathrm{cofree}^{\mathrm{Tate}}_n(X) \rightarrow \varprojlim_m P_m(\mathrm{cofree}^{\mathrm{Tate}}_n)(X)
\end{equation*}
is an equivalence (and in fact the connectivity of the map to the $m$th stage of the limit grows linearly with $m$).
\end{lemma}
\begin{proof}
We prove this by induction on $n$, the case $n=1$ being clear because $\mathrm{cofree}^{\mathrm{Tate}}_1 = \mathrm{id}_{\mathrm{Sp}}$. For $n >1$, observe that the defining pullback square of $\mathrm{coAlg}^{\mathrm{Tate}}(\tau_n\mathrm{Sp}^\otimes)$ gives a pullback square of functors
\[
\xymatrix{
\mathrm{cofree}^{\mathrm{Tate}}_n \ar[d]\ar[r] & \mathrm{cofree}_n \ar[d] \\
\mathrm{cofree}^{\mathrm{Tate}}_{n-1} \ar[r] & \mathrm{cofree}_n^t.
}
\]
Here (like in the proof of Proposition \ref{prop:betastab}) the functors $\mathrm{cofree}_n$ and $\mathrm{cofree}_n^t$ are the comonads on $\mathrm{Sp}$ corresponding to the $\infty$-categories $\mathrm{coAlg}(\tau_n\mathrm{Sp}^\otimes)$ and $\mathcal{P}_{n-1}\mathrm{coAlg}(\tau_n\mathrm{Sp}^\otimes)$ respectively. For the length of this proof write $F_n$ for the fiber of the natural transformation $\mathrm{cofree}^{\mathrm{Tate}}_n \rightarrow \mathrm{cofree}^{\mathrm{Tate}}_{n-1}$. Since $P_m$ preserves fiber sequences, there is a corresponding fiber sequence
\begin{equation*}
\varprojlim_m P_m F_n \rightarrow \varprojlim_m P_m(\mathrm{cofree}^{\mathrm{Tate}}_n) \rightarrow \varprojlim_m P_m(\mathrm{cofree}^{\mathrm{Tate}}_{n-1}).
\end{equation*}
By the inductive hypothesis on $\mathrm{cofree}^{\mathrm{Tate}}_{n-1}$ it thus suffices to show that the Goodwillie tower of $F_n$ converges on simply-connected spectra. 

Note that $F_n$ is equivalent to the fiber of the natural transformation $\mathrm{cofree}_n \rightarrow \mathrm{cofree}_n^t$. Corollary \ref{cor:fibercoalgs} (or rather its variant without the ind) gives a pullback square
\[
\xymatrix{
\mathrm{cofree}^{\mathrm{dp}}_{=n} \ar[d]\ar[r] & \mathrm{cofree}_n \ar[d] \\
\mathrm{id}_{\mathrm{Sp}} \ar[r] & \mathrm{cofree}_n^t,
}
\]
where $\mathrm{cofree}^{\mathrm{dp}}_{=n}$ is the comonad associated to the $\infty$-category 
\begin{equation*}
\bigl\{X \rightarrow \mathrm{fib}(X^{\otimes n} \rightarrow X^{\odot n})_{h\Sigma_n}\bigr\}_{\mathrm{Sp}}
\end{equation*}
of coalgebras for the functor indicated. Thus there is a fiber sequence
\begin{equation*}
F_n \rightarrow \mathrm{cofree}^{\mathrm{dp}}_{=n} \rightarrow \mathrm{id}_{\mathrm{Sp}}
\end{equation*}
and it suffices to show that the Goodwillie tower of $\mathrm{cofree}^{\mathrm{dp}}_{=n}$ converges on simply-connected spectra.

We will use the abbreviated notations
\begin{equation*}
f_n(X) =  \mathrm{fib}(X^{\otimes n} \rightarrow X^{\odot n})_{h\Sigma_n}
\end{equation*}
for the rest of this proof. (Note that Proposition \ref{prop:derivativesid} states $f_n(X) = \Sigma D_n\mathrm{id}_{\mathcal{S}_*}(X)$, although we will not use this.) Observe that if $X$ is $k$-connective, then $X^{\otimes n}$ is $kn$-connective, whereas $X^{\odot n}$ is $(kn - n)$-connective (in fact one can add a small constant, which will not concern us). Consequently $f_n(X)$ is $n(k-1)$-connective. If we take $k \geq 2$ (i.e. $X$ simply-connected) then $\mathrm{cofree}^{\mathrm{dp}}_{=n}(X)$ can be described explicitly. Indeed, consider the functor
\begin{equation*}
\varphi_n: \mathrm{Sp} \rightarrow \mathrm{Sp}: Y \mapsto X \oplus f_n(Y).
\end{equation*}
We will use the obvious map $\pi: \varphi_n(X) \rightarrow X$ projecting onto the first summand. Form the inverse limit
\begin{equation*}
\Phi_n(X) := \varprojlim ( \cdots \xrightarrow{\varphi_n^2(\pi)} \varphi_n^2(X) \xrightarrow{\varphi_n(\pi)} \varphi_n(X) \xrightarrow{\pi} X).
\end{equation*}
Observe that the connectivity of the maps in this inverse system increases rather rapidly. To be precise, the map $\varphi_n(X) \rightarrow X$ is $n(k-1)$-connected by our estimate on the connectivity of $f_n(X)$ above. By induction, assume that the map
\begin{equation*}
\varphi_n^j(X) \xrightarrow{\varphi^{j-1}(\pi)} \varphi_n^{j-1}(X)
\end{equation*}
is $n^j(k-1)$-connected. Then
\begin{equation*}
X \oplus f_n(\varphi_n^j(X)) = \varphi_n^{j+1}(X) \xrightarrow{\varphi_n^j(\pi)} \varphi_n^j(X) = X \oplus f_n(\varphi_n^{j-1}(X))
\end{equation*}
is $n^{j+1}(k-1)$-connected, since $f_n$ essentially multiplies the connectivity of a map by $n$. These connectivity estimates also imply that $f_n$ commutes with the inverse limit defining $\Phi_n(X)$, so that the map
\begin{equation*}
\varphi_n \Phi_n(X) \rightarrow \Phi_n(\varphi_n(X))
\end{equation*}
is an equivalence. This fact implies that the map $\Phi_n(X) \rightarrow X$ onto the first term of the limit diagram above exhibits $\Phi_n(X)$ as the cofree $f_n$-coalgebra on $X$. Indeed, to make $\Phi_n(X)$ a coalgebra take the map
\begin{equation*}
\Phi_n(X) \simeq \varphi_n\Phi_n(X) = X \oplus f_n(\Phi_n(X)) \rightarrow f_n(\Phi_n(X))
\end{equation*}
where the first equivalence is the inverse of the `shift map' which applies $\varphi$ to every term of the limit. To see that it is cofree, consider a coalgebra $Y \xrightarrow{t} f_n(Y)$ and a map of coalgebras $Y \xrightarrow{\alpha} \Phi_n(X)$. The underlying map of spectra is determined by a system of compatible maps
\begin{equation*}
\alpha_j: Y \rightarrow \varphi_n^j X
\end{equation*}
and homotopies between $\pi \circ \alpha_j$ and $\alpha_{j-1}$. That $\alpha$ is a map of coalgebras is expressed by homotopies between $\alpha_j$ and $f_n(\alpha_{j-1}) \circ t$, as one sees by inspecting the square
\[
\xymatrix{
Y \ar[r]^{\alpha}\ar[d]_t & \Phi_n(X) \ar[d] \\
f_n(Y) \ar[r]_-{f_n(\alpha)} & f_n(\Phi_n(X)).
}
\]
By induction it is clear that $\alpha$ is completely determined by $\alpha_0: Y \rightarrow X$, and conversely any such $\alpha_0$ defines a map of coalgebras $\alpha$. This observation is easily made precise to show that the forgetful map
\begin{equation*}
\mathrm{Map}_{\mathrm{coAlg}_{f_n}}(Y, \Phi_n(X)) \rightarrow \mathrm{Map}_{\mathrm{Sp}}(Y,X)
\end{equation*}
is an equivalence.

Finally we are ready to conclude the proof that $\mathrm{cofree}_{=n}^{\mathrm{dp}}$ has convergent Goodwillie tower for $X \in \mathrm{Sp}^{\geq 2}$. Indeed, we observed that the map
\begin{equation*}
\mathrm{cofree}_{=n}^{\mathrm{dp}}(X) \simeq \Phi_n(X) \rightarrow \varphi_n^j (X)
\end{equation*}
is $n^{j+1}(k-1)$-connected, from which it follows straightforwardly that
\begin{equation*}
P_{n^j}(\mathrm{cofree}_{=n}^{\mathrm{dp}})(X) \rightarrow \varphi_n^j (X)
\end{equation*}
is an equivalence. Indeed, the connectivity of the error term grows so quickly with the connectivity of $X$ that its first $n^j$ derivatives must vanish and $\varphi_n^j$ itself is an $n^j$-excisive functor. Taking the limit over $j$ shows that
\begin{equation*}
\mathrm{cofree}_{=n}^{\mathrm{dp}}(X) = \varprojlim_j \varphi_n^j (X) \simeq \varprojlim_j P_{n^j}(\mathrm{cofree}_{=n}^{\mathrm{dp}})(X) 
\end{equation*}
as desired.
\end{proof}

\begin{lemma}
\label{lem:SigmaOmegacofreeTate}
Let $X$ be a simply-connected spectrum. Then the natural transformation
\begin{equation*}
\gamma: \Sigma^\infty\Omega^\infty X \rightarrow \mathrm{cofree}^{\mathrm{Tate}} X
\end{equation*}
is an equivalence.
\end{lemma}
\begin{proof}
Consider the commutative diagram
\[
\xymatrix{
\Sigma^\infty\Omega^\infty X \ar[d]\ar[r]^{\gamma} & \mathrm{cofree}^{\mathrm{Tate}} X \ar[d] \\
\varprojlim_n P_n(\Sigma^\infty\Omega^\infty) X \ar[r] & \varprojlim_n P_n(\mathrm{cofree}^{\mathrm{Tate}}_n) X.
}
\]
The left vertical map is well-known to be an equivalence for connected $X$ (see for example Corollary 1.3 of \cite{ahearnkuhn} for an explicit statement) and the lower horizontal map is an equivalence by Lemma \ref{lem:PncofreeTate}. The vertical map on the right may be factored as
\begin{eqnarray*}
\mathrm{cofree}^{\mathrm{Tate}} X & \rightarrow & \varprojlim_n \mathrm{cofree}_n^{\mathrm{Tate}} X \\
& \rightarrow & \varprojlim_n \varprojlim_m P_m(\mathrm{cofree}_n^{\mathrm{Tate}}) X \\
& \simeq & \varprojlim_n P_n(\mathrm{cofree}_n^{\mathrm{Tate}}) X.
\end{eqnarray*}
The first map is an equivalence by the definition of the $\infty$-category of Tate coalgebras as a limit of $\infty$-categories of $n$-truncated Tate coalgebras; the second is an equivalence by Lemma \ref{lem:cofreeTateconverges}. It follows that the remaining map $\gamma$ in the square is also an equivalence.
\end{proof}

\begin{proof}[Proof of Theorem \ref{thm:Tatecoalgebras}]
As mentioned before we will exploit Lurie's version of the Barr-Beck monadicity theorem (Theorem 4.7.3.5 of \cite{higheralgebra}). First of all, let us recall the well-known observation that $\mathcal{S}_*^{\geq 2}$ is comonadic over $\mathrm{Sp}^{\geq 2}$ via the adjoint pair $(\Sigma^\infty,\Omega^\infty)$. In other words, the functor $\Sigma^\infty$ induces an equivalence of $\infty$-categories between $\mathcal{S}_*^{\geq 2}$ and the $\infty$-category of coalgebras (sometimes also called comodules) for the comonad $\Sigma^\infty\Omega^\infty$. According to the Barr-Beck theorem, to prove this it suffices to check the following things:
\begin{itemize}
\item[(1)] The functor $\Sigma^\infty$ is conservative, i.e. a map $f$ of simply-connected pointed spaces is an equivalence if and only if $\Sigma^\infty f$ is an equivalence.
\item[(2)] If $X^{-1} \rightarrow X^{\bullet}$ is a coaugmented cosimplicial object which is $\Sigma^\infty$-split (i.e. whose image under $\Sigma^\infty$ admits contracting codegeneracies), then the induced map
\begin{equation*}
X^{-1} \rightarrow \mathrm{Tot}(X^\bullet)
\end{equation*}
is an equivalence.
\end{itemize}
Item (1) is completely classical; if $\Sigma^\infty f$ is an equivalence, then $f$ is a homology equivalence of simply-connected spaces and hence an actual equivalence, by the theorems of Whitehead and Hurewicz. For (2) we use that for a simply-connected pointed space $X$, the `$\Omega^\infty\Sigma^\infty$-resolution'
\[
\xymatrix{
X \ar[r] & \Omega^\infty\Sigma^\infty(X) \ar@<.5ex>[r]\ar@<-.5ex>[r] & \Omega^\infty\Sigma^\infty\Omega^\infty\Sigma^\infty(X) \ar@<1ex>[r]\ar[r]\ar@<-1ex>[r] \ar[l] & \cdots \ar@<.5ex>[l]\ar@<-.5ex>[l]
}
\]
gives an equivalence
\begin{equation*}
X \simeq \mathrm{Tot}\bigl((\Omega^\infty\Sigma^\infty)^{\bullet + 1} X\bigr).
\end{equation*}
In fact this works for any nilpotent space (see \cite{aronekankaanrinta} for a discussion of this). Now if $X^{-1} \rightarrow X^{\bullet}$ is a general $\Sigma^\infty$-split cosimplicial object of $\mathcal{S}_*^{\geq 2}$ one considers the square
\[
\xymatrix{
X^{-1} \ar[r]\ar[d] & \mathrm{Tot}\bigl(X^{\bullet}\bigr) \ar[d] \\
\mathrm{Tot}\bigl((\Omega^\infty\Sigma^\infty)^{\bullet + 1} X^{-1}\bigr) \ar[r] & \mathrm{Tot}\bigl((\Omega^\infty\Sigma^\infty)^{\bullet + 1} X^\bullet \bigr).
}
\]
The vertical maps are equivalences because the $\Omega^\infty\Sigma^\infty$-resolution converges (as just discussed), whereas the bottom horizontal arrow is an equivalence by the assumption that $X^\bullet$ is $\Sigma^\infty$-split. Therefore the top horizontal arrow is an equivalence as well, finishing the proof of comonadicity for the pair $(\Sigma^\infty,\Omega^\infty)$.

Lemma \ref{lem:SigmaOmegacofreeTate} states that the map of comonads $\Sigma^\infty\Omega^\infty \rightarrow \mathrm{cofree}^{\mathrm{Tate}}$ is an equivalence. Thus, to prove that the comparison functor $\Gamma$ is an equivalence it suffices to show that the adjoint pair
\[
\xymatrix{
\mathrm{coAlg}_{\mathrm{Tate}}(\mathrm{Sp}^\otimes)^{\geq 2} \ar@<.5ex>[r]^-{U} & \mathrm{Sp}^{\geq 2}, \ar@<.5ex>[l]^-{R}
}
\]
with $U$ the forgetful functor and $R$ its right adjoint, exhibits the $\infty$-category of simply-connected Tate coalgebras as comonadic over $\mathrm{Sp}^{\geq 2}$. Note that with this notation we have $UR = \mathrm{cofree}^{\mathrm{Tate}}$. Again we check (1) and (2) as above. The fact that $U$ is conservative is immediate from our definitions. We should therefore show that $U$-split coaugmented cosimplicial objects are limit diagrams in $\mathrm{coAlg}_{\mathrm{Tate}}(\mathrm{Sp}^\otimes)^{\geq 2}$. By the same argument as above, it suffices to show that for a simply-connected Tate coalgebra $X$ the canonical resolution
\[
\xymatrix{
X \ar[r] & RU(X) \ar@<.5ex>[r]\ar@<-.5ex>[r] & RURU(X) \ar@<1ex>[r]\ar[r]\ar@<-1ex>[r] \ar[l] & \cdots \ar@<.5ex>[l]\ar@<-.5ex>[l]
}
\]
gives an equivalence
\begin{equation*}
X \simeq \mathrm{Tot}\bigl((RU)^{\bullet + 1} X\bigr).
\end{equation*}
Write 
\[
\xymatrix{
\mathrm{coAlg}_{\mathrm{Tate}}(\mathrm{Sp}^\otimes)^{\geq 2} \ar@<.5ex>[r]^-{T_n} & \mathrm{coAlg}_{\mathrm{Tate}}(\tau_n\mathrm{Sp}^\otimes)^{\geq 2} \ar@<.5ex>[l]^-{R_n}
}
\]
for the evident adjunction, where $T_n$ is the forgetful functor and $R_n$ its right adjoint. Note that $T_1$ is the forgetful functor $U$ and $R_1 = R$. We will use the functors $T_n$ and $R_n$ to describe the `Goodwillie tower' of the identity functor on $\mathrm{coAlg}_{\mathrm{Tate}}(\mathrm{Sp}^\otimes)^{\geq 2}$; we will write $\mathrm{id}_{\mathrm{coAlg}}$ for this functor. Note that as of yet it is not clear that the latter $\infty$-category is a suitable context for Goodwillie calculus (although this will follow from the theorem), but still there is a reasonable candidate for the Goodwillie tower of the identity, namely the inverse system
\begin{equation*}
\mathrm{id}_{\mathrm{coAlg}} \rightarrow \cdots \rightarrow R_n T_n \rightarrow R_{n-1} T_{n-1} \rightarrow \cdots \rightarrow R_1 T_1.
\end{equation*} 
Indeed, since the identity functor of $\mathrm{coAlg}_{\mathrm{Tate}}(\tau_n\mathrm{Sp}^\otimes)$ is $n$-excisive, it follows formally that $R_nT_n$ is an $n$-excisive functor. More precisely, the pullback square defining $\mathrm{coAlg}_{\mathrm{Tate}}(\tau_n\mathrm{Sp}^\otimes)$ shows that there is a fiber sequence
\begin{equation*}
R\partial_n\mathrm{id}_{\mathcal{S}_*}(UX, \ldots, UX)_{h\Sigma_n} \rightarrow R_nT_n(X) \rightarrow R_{n-1}T_{n-1}(X)
\end{equation*}
(compare the proof of Lemma \ref{lem:compactTatecoalgebras} and Remark \ref{rmk:compactTatecoalgebras}) which is analogous to the fiber sequence 
\begin{equation*}
D_n \mathrm{id}_{\mathcal{S}_*} \rightarrow P_n\mathrm{id}_{\mathcal{S}_*} \rightarrow P_{n-1}\mathrm{id}_{\mathcal{S}_*}.
\end{equation*}
We will write $\mathbf{D}_n$ for the functor which assigns to a spectrum $E$ the spectrum $\partial_n\mathrm{id}_{\mathcal{S}_*}(E, \ldots, E)_{h\Sigma_n}$, so that the fiber above may be abbreviated as $R \circ \mathbf{D}_n \circ U$. Also, note that the definition of the $\infty$-category of Tate coalgebras as the limit of $\infty$-categories of $n$-truncated Tate coalgebras implies that
\begin{equation*}
\mathrm{id}_{\mathrm{coAlg}} \rightarrow \varprojlim_n R_nT_n
\end{equation*}
is an equivalence (this is a kind of unconditional convergence of the Goodwillie tower for Tate coalgebras). Moreover, since the connectivity of the fibers $R \circ \mathbf{D}_n \circ U(X)$ is $n(k-1)$ (up to a small constant), with $k$ the connectivity of $X$, one easily checks that indeed $P_n(\mathrm{id}_{\mathrm{coAlg}} ) = R_nT_n$.

The unit $\mathrm{id}_{\mathrm{coAlg}} \rightarrow RU$ factors over $R_nT_n \rightarrow RU$; in fact, the entire $RU$-resolution of the identity functor factors over
\[
\xymatrix{
R_nT_n \ar[r] & RU \ar@<.5ex>[r]\ar@<-.5ex>[r] & RURU \ar@<1ex>[r]\ar[r]\ar@<-1ex>[r] \ar[l] & \cdots .\ar@<.5ex>[l]\ar@<-.5ex>[l]
}
\]
We will prove that the resulting map
\begin{equation*}
r_n: R_nT_n \rightarrow \mathrm{Tot}\bigl(P_n(R(UR)^\bullet) U)\bigr)
\end{equation*}
is an equivalence using a modification of the argument used in the proof of Proposition \ref{prop:cobarderivatives}, which is due to Arone and Ching. We use a finite induction along the tower
\begin{equation*}
R_nT_n \rightarrow R_{n-1}T_{n-1} \rightarrow \cdots \rightarrow R_1T_1,
\end{equation*}
in which the fiber of $R_kT_k \rightarrow R_{k-1}T_{k-1}$ is $R\mathbf{D}_kU$. Observe that the cosimplicial object
\begin{equation*}
P_n\bigl(R\mathbf{D}_kUR(UR)^\bullet U\bigr)
\end{equation*}
admits contracting codegeneracies induced by the counit $UR \rightarrow \mathrm{id}_{\mathrm{Sp}^{\geq 2}}$, so that\begin{equation*}
P_n(R\mathbf{D}_k U) \simeq \mathrm{Tot}\bigl(P_n(R\mathbf{D}_kUR(UR)^\bullet U)\bigr).
\end{equation*}
Now consider the diagram
\[
\xymatrix{
P_n(R\mathbf{D}_k U) \ar[r]\ar[d] & \mathrm{Tot}\bigl(P_n(R\mathbf{D}_kUR(UR)^\bullet U)\bigr) \ar[d] \\
P_n(R_kT_k) \ar[r]\ar[d] & \mathrm{Tot}\bigl(P_n(R_kT_k R(UR)^\bullet U)\bigr) \ar[d] \\
P_n(R_{k-1}T_{k-1}) \ar[r] & \mathrm{Tot}\bigl(P_n(R_{k-1}T_{k-1}R(UR)^\bullet U)\bigr).
}
\]
Since $P_n$ commutes with fiber sequences and totalizations preserve limit diagrams, both columns are fiber sequences. By induction we may assume that the bottom horizontal arrow is an equivalence, the base of the induction being the homogeneous case $k=1$ covered above; the homogeneous case also shows that the top horizontal map is an equivalence. Therefore the middle horizontal arrow is an equivalence. Setting $k=n$ and using that $R_nT_n$ is $n$-excisive, we conclude that we have the following equivalences, where the second step uses that $P_n(\mathrm{id}_{\mathrm{coAlg}}) = R_nT_n$ and the third uses that $R$ preserves limits and $U$ preserves colimits:
\begin{eqnarray*}
R_nT_n & \simeq & \mathrm{Tot}\bigl(P_n(R_nT_n R(UR)^\bullet U)\bigr) \\
& \simeq & \mathrm{Tot}\bigl(P_n(R(UR)^\bullet U)\bigr) \\
& \simeq & \mathrm{Tot}\bigl(R P_n((UR)^\bullet) U\bigr).
\end{eqnarray*}
To finish the proof, observe that
\begin{eqnarray*}
\mathrm{id}_{\mathrm{coAlg}} & \simeq & \varprojlim_n R_nT_n \\
& \simeq & \varprojlim_n \mathrm{Tot}\bigl(R P_n((UR)^\bullet) U\bigr) \\
& \simeq &  \mathrm{Tot}\bigl(R (UR)^\bullet U\bigr).
\end{eqnarray*}
The last equivalence follows from Lemma \ref{lem:cofreeTateconverges}, which shows that the connectivity of the map $UR \rightarrow P_n(UR)$ grows linearly with $n$ (and hence the same is true with $(UR)^\bullet$ in place of $UR$).
\end{proof}

\section{Further remarks on the Goodwillie tower of the homotopy theory of spaces}
\label{subsec:goodwilliespaces}

In this section we make some further observations on the Goodwillie tower of the $\infty$-category of pointed spaces $\mathcal{S}_*$. The stable $\infty$-operad of interest here is $\mathrm{Sp}^{\otimes}$, the symmetric monoidal $\infty$-category of spectra with the smash product. The derivatives of the identity functor are given by the Spanier-Whitehead duals of the partition complexes, which we will recall below. Throughout this section we will use $\partial_n\mathrm{id}$ to simply denote the coefficient spectrum of the $n$th derivative of the identity, rather than the corresponding multilinear functor of $n$ variables on spectra. The main result of this section is the following:

\begin{proposition}
\label{prop:fiberGn}
The fiber of the map $\mathcal{G}_n(\mathrm{Sp}^\otimes) \rightarrow \mathcal{G}_{n-1}(\mathrm{Sp}^\otimes)$ over $\mathcal{P}_{n-1}\mathcal{S}_*$ is equivalent to $\Omega^{\infty-1}\bigl((\partial_n\mathrm{id})^{t\Sigma_n}\bigr)$.
\end{proposition}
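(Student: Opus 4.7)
The plan is to apply Theorem~\ref{thm:classification}: the fiber of $p_n : \mathcal{G}_n(\mathrm{Sp}^\otimes) \to \mathcal{G}_{n-1}(\mathrm{Sp}^\otimes)$ over $\mathcal{P}_{n-1}\mathcal{S}_*$ is equivalent to the fiber of $\mathcal{T}_n \to \widehat{\mathcal{T}}_n$ over the image of $\mathcal{P}_{n-1}\mathcal{S}_*$ in $\widehat{\mathcal{T}}_n$. By the explicit description of Section~\ref{subsec:tatediagonal}, this latter fiber is the space of natural transformations $\Sigma^\infty \to \Theta_\mathcal{C}$ that lift the natural transformation $\psi : \Sigma^\infty \to \Psi_\mathcal{C}$ associated to $\mathcal{C} := \mathcal{P}_{n-1}\mathcal{S}_*$ along $N_\mathcal{C} : \Theta_\mathcal{C} \to \Psi_\mathcal{C}$. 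Since $\mathcal{P}_n\mathcal{S}_*$ is an $n$-stage over $\mathcal{C}$, its own Tate diagonal supplies a basepoint for this fiber; because the target category is stable, the fiber then becomes equivalent to the underlying space of the mapping spectrum $\mathrm{Nat}(\Sigma^\infty, F_\mathcal{C})$, where $F_\mathcal{C} := \mathrm{fib}(N_\mathcal{C})$.

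The next step is to identify the functor $F_\mathcal{C}$. Since the Tate construction is exact,
\[
F_\mathcal{C}(X) \simeq \bigl(\mathrm{fib}(\overline{X}^{\wedge n} \to \overline{X}^{\odot n})\bigr)^{t\Sigma_n},
\]
with $\overline{X} := \Sigma^\infty X$ and $\odot^n$ the $n$-fold tensor product determined by $\tau_{n-1}\mathrm{Sp}^\otimes$. Under the cooperadic dictionary of Remark~\ref{rmk:dictionary}, $\mathrm{Sp}^\otimes$ corresponds to $\mathbf{Com} = \mathbf{B}(\partial_*\mathrm{id}_{\mathcal{S}_*})$ while $\tau_{n-1}\mathrm{Sp}^\otimes$ corresponds to $\mathbf{B}(\tau_{n-1}\partial_*\mathrm{id}_{\mathcal{S}_*})$, so at arity $n$ the fiber of these bar constructions records the $n$-th term $\partial_n\mathrm{id}_{\mathcal{S}_*}$ together with its standard $\Sigma_n$-action. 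Refining the coinvariant-level computation of Proposition~\ref{prop:betastab} to the equivariant setting therefore gives $\mathrm{fib}(\overline{X}^{\wedge n} \to \overline{X}^{\odot n}) \simeq \partial_n\mathrm{id} \wedge \overline{X}^{\wedge n}$ with the diagonal $\Sigma_n$-action, and hence $F_\mathcal{C}(X) \simeq (\partial_n\mathrm{id} \wedge \overline{X}^{\wedge n})^{t\Sigma_n}$.

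Finally, I would compute the mapping spectrum $\mathrm{Nat}(\Sigma^\infty, F_\mathcal{C})$. By Lemma~\ref{lem:normseq}, $F_\mathcal{C}$ is the $(n-1)$-excisive approximation of the $n$-excisive functor $X \mapsto (\partial_n\mathrm{id} \wedge \overline{X}^{\wedge n})^{\Sigma_n}$, sitting in a norm fiber sequence with the $n$-homogeneous layer $D_n\mathrm{id}_{\mathcal{S}_*}(X) = (\partial_n\mathrm{id} \wedge \overline{X}^{\wedge n})_{h\Sigma_n}$. Combining this fiber sequence with the cobar construction description of the fiber of $\mathcal{T}_n \to \widehat{\mathcal{T}}_n$ announced in the remark following Theorem~\ref{thm:classification} (and with the universal property of the Tate diagonal for spectra), one extracts $\mathrm{Nat}(\Sigma^\infty, F_\mathcal{C}) \simeq \Sigma(\partial_n\mathrm{id})^{t\Sigma_n}$, the single suspension accounting for the displacement of $F_\mathcal{C}$ relative to the fixed-point functor in the norm sequence. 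Passing to the underlying infinite loop space then yields $\Omega^\infty\Sigma(\partial_n\mathrm{id})^{t\Sigma_n} = \Omega^{\infty-1}(\partial_n\mathrm{id})^{t\Sigma_n}$, establishing the proposition.

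The main obstacle is the equivariant refinement in the middle step: Proposition~\ref{prop:betastab} controls $\mathrm{fib}(\overline{X}^{\wedge n} \to \overline{X}^{\odot n})$ only after taking $\Sigma_n$-coinvariants, whereas here one needs the identification as a $\Sigma_n$-equivariant spectrum. Promoting it requires tracking the bar-construction description of $\mathbf{B}(\tau_{n-1}\partial_*\mathrm{id})(n)$ and the equivariance of the comparison map out of $\mathbf{Com}(n) = S$. The final mapping-spectrum calculation, while conceptually straightforward via the universality of the Tate diagonal, likewise needs careful bookkeeping of the $\Sigma$-shift coming from the norm sequence.
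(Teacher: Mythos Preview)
Your setup via Theorem~\ref{thm:classification} and the identification of the fiber as a space of lifts along $N_{\mathcal{C}}$ is correct and matches the paper. After that, however, there is both an error and a genuine gap.

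The error is the suspension in the middle step. The equivariant fiber $\mathrm{fib}(\overline{X}^{\wedge n}\to\overline{X}^{\odot n})$ is \emph{not} $\partial_n\mathrm{id}\wedge\overline{X}^{\wedge n}$ but $\Sigma(\partial_n\mathrm{id})\wedge\overline{X}^{\wedge n}$; this is exactly the content of Lemma~\ref{lem:fibercobar} (equivalently Proposition~\ref{prop:derivativesid}, which gives $\partial_k\mathrm{id}\simeq\Omega\,\mathrm{fib}(\otimes^k\to\odot^k)$). Your attempt to recover the missing $\Sigma$ at the very end, by attributing it to ``the displacement of $F_{\mathcal{C}}$ relative to the fixed-point functor in the norm sequence,'' is not correct: no such shift arises from the norm sequence, and with your identification of $F_{\mathcal{C}}$ the answer would come out as $\Omega^{\infty}(\partial_n\mathrm{id})^{t\Sigma_n}$, off by a loop.

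The gap is the final step itself. You never actually compute $\mathrm{Nat}(\Sigma^\infty,F_{\mathcal{C}})$; you only gesture at Lemma~\ref{lem:normseq}, the remark after Theorem~\ref{thm:classification}, and a ``universal property of the Tate diagonal,'' none of which directly yields the mapping spectrum. The paper supplies precisely the missing lemma: for any $(n{-}1)$-excisive $F:\mathrm{Sp}\to\mathrm{Sp}$, evaluation at the image of $S^0$ gives an equivalence $\mathrm{Nat}(\Sigma^\infty_{n-1,1},\,F\circ\Sigma^\infty_{n-1,1})\simeq\Omega^\infty F(S)$ (Lemma~\ref{lem:nattransftheta}). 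Once you have this, there is no need for the equivariant refinement you worry about: one simply evaluates the fiber functor at $S$ and invokes Lemma~\ref{lem:fibercobar} to get $F(S)\simeq(\Sigma\partial_n\mathrm{id})^{t\Sigma_n}$, whence $\Omega^\infty F(S)=\Omega^{\infty-1}(\partial_n\mathrm{id})^{t\Sigma_n}$. Both obstacles you flagged are thus bypassed rather than confronted.
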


\begin{example}
The spectrum $\partial_2 \mathrm{id}$ is $S^{-1}$ with trivial $\Sigma_2$-action. In this case, using that $\mathcal{G}_1(\mathcal{O}^\otimes)$ is contractible, the previous proposition identifies the space $\mathcal{G}_2(\mathrm{Sp}^{\otimes})$ with $\Omega^{\infty}S^{t\Sigma_2}$. This is in fact the zeroth space of the 2-completed sphere spectrum.
\end{example}

Let us illustrate the proof of this result for $n=2$; the higher cases (which mostly differ in notation only) are covered by Lemmas \ref{lem:nattransftheta} and \ref{lem:fibercobar} below. The space $\mathcal{G}_2(\mathrm{Sp}^\otimes)$ is equivalent to the space of natural transformations $\Sigma^\infty \rightarrow \Theta_{\mathrm{Sp}}$, where
\begin{equation*}
\Theta_{\mathrm{Sp}}(X) = (\Sigma^\infty X \wedge \Sigma^\infty X)^{t\Sigma_2}.
\end{equation*}
We claim that evaluation at $S^0$ determines an equivalence
\begin{equation*}
\mathrm{Nat}(\Sigma^\infty, \Theta_{\mathrm{Sp}}) \longrightarrow \mathrm{Map}(S, (S \wedge S)^{t\Sigma_2}) \simeq \Omega^\infty S^{t\Sigma_2}.
\end{equation*}
Write $\theta$ for the functor from spectra to spectra which assigns $(Y \wedge Y)^{t\Sigma_2}$ to $Y$, so that $\Theta_{\mathrm{Sp}} = \theta \circ \Sigma^\infty$. Then $\theta$ is an exact functor by Lemma \ref{lem:normseq}. Furthermore, by Lemma \ref{lem:nexcfunctors}, precomposition with $\Sigma^\infty$ yields an equivalence
\begin{equation*}
\mathrm{Nat}(\mathrm{id}_{\mathrm{Sp}}, \theta) \longrightarrow \mathrm{Nat}(\Sigma^\infty, \Theta_{\mathrm{Sp}}).
\end{equation*}

\begin{remark}
\label{rmk:Tatefilteredcolim}
Strictly speaking Lemma \ref{lem:nexcfunctors} does not apply directly, since $\theta$ does not preserve filtered colimits. However, one can define a functor $\theta^c$ determined by the requirements that it agrees with $\theta$ on compact objects and preserves filtered colimits. Since $\Sigma^\infty$ is compatible with filtered colimits, the spaces $\mathrm{Nat}(\Sigma^\infty, \theta^c)$ and $\mathrm{Nat}(\Sigma^\infty, \theta)$ are equivalent.
\end{remark}

Now, since the first derivative of $\Sigma^\infty \Omega^\infty$ is the identity, there is an equivalence
\begin{equation*}
\varinjlim_n \Omega^n \Sigma^\infty \Omega^\infty \Sigma^n \longrightarrow \mathrm{id}_{\mathrm{Sp}}.
\end{equation*}
This gives a sequence of equivalences
\begin{eqnarray*}
\mathrm{Nat}(\mathrm{id}, \theta) & \longrightarrow & \varprojlim_n \mathrm{Nat}(\Omega^n \Sigma^\infty \Omega^\infty \Sigma^n, \theta) \\
& \longrightarrow & \varprojlim_n \mathrm{Nat}(\Omega^\infty \Sigma^n, \Omega^\infty \Sigma^n \theta) \\
& \longrightarrow & \varprojlim_n \Omega^\infty \Sigma^n \theta(S^{-n}).
\end{eqnarray*}
The second equivalence is simply adjunction, the third equivalence is a consequence of the fact that the functor $\Omega^\infty \Sigma^n$ is corepresented by the spectrum $S^{-n}$. Since $\theta$ is exact, there is an equivalence $\Sigma^n \theta(S^{-n}) \simeq \Sigma^n \Omega^n \theta(S) \simeq \theta(S)$. We conclude that evaluation at $S$ determines an equivalence
\begin{equation*}
\mathrm{Nat}(\mathrm{id}_{\mathrm{Sp}}, \theta)  \longrightarrow \Omega^\infty\theta(S)
\end{equation*}
which is what was needed.

Recall the functors $\Sigma_n^\infty: \mathcal{S} \rightarrow \mathcal{P}_n\mathcal{S}_*$ and $\Sigma_{n,1}^\infty: \mathcal{P}_n\mathcal{S}_* \rightarrow \mathrm{Sp}$. The general case of Proposition \ref{prop:fiberGn} follows from the next two lemmas:

\begin{lemma}
\label{lem:nattransftheta}
Let $F: \mathrm{Sp} \rightarrow \mathrm{Sp}$ be an $n$-excisive functor. Then evaluation at $\Sigma_n^\infty S^0$ determines an equivalence
\begin{equation*}
\mathrm{Nat}(\Sigma^\infty_{n,1}, \, F \circ \Sigma^\infty_{n,1}) \longrightarrow \Omega^\infty F(S).
\end{equation*}
\end{lemma}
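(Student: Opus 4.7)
The plan is to reduce the claim to a direct application of the pointed Yoneda lemma by means of two preliminary equivalences.

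The first step is to pass from natural transformations of functors on $\mathcal{P}_n \mathcal{S}_*$ to natural transformations of functors on $\mathcal{S}_*$, via Lemma \ref{lem:nexcfunctors}. Both $\Sigma^\infty_{n,1}$ and $F \circ \Sigma^\infty_{n,1}$ are $n$-excisive: the former preserves colimits into the $n$-excisive $\infty$-category $\mathrm{Sp}$, and the latter is the composite of a colimit-preserving functor with an $n$-excisive one, hence sends strongly coCartesian $(n+1)$-cubes in $\mathcal{P}_n \mathcal{S}_*$ to Cartesian cubes in $\mathrm{Sp}$. Since $\Sigma^\infty_n \colon \mathcal{S}_* \to \mathcal{P}_n \mathcal{S}_*$ is a strong $n$-excisive approximation and $\mathrm{Sp}$ is trivially its own, Lemma \ref{lem:nexcfunctors} yields an equivalence
\[
\mathrm{Fun}^{\leq n}(\mathcal{P}_n \mathcal{S}_*, \mathrm{Sp}) \simeq \mathrm{Fun}^{\leq n}(\mathcal{S}_*, \mathrm{Sp})
\]
via precomposition with $\Sigma_n^\infty$. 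Under this equivalence $\Sigma^\infty_{n,1}$ and $F \circ \Sigma^\infty_{n,1}$ correspond to $\Sigma^\infty$ and $F \circ \Sigma^\infty$ (using $\Sigma^\infty_{n,1} \circ \Sigma_n^\infty \simeq \Sigma^\infty$), and evaluation at $\Sigma_n^\infty S^0$ translates to evaluation at $S^0$. It therefore suffices to prove the analogous statement $\mathrm{Nat}(\Sigma^\infty, F \circ \Sigma^\infty) \simeq \Omega^\infty F(S)$ for functors on $\mathcal{S}_*$.

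The second step is to apply the $\Sigma^\infty \dashv \Omega^\infty$ adjunction, yielding
\[
\mathrm{Nat}(\Sigma^\infty, F \circ \Sigma^\infty) \simeq \mathrm{Nat}(\mathrm{id}_{\mathcal{S}_*}, \Omega^\infty F \Sigma^\infty)
\]
compatibly with evaluation at $S^0$. The third step is the pointed Yoneda lemma: the natural equivalence $\mathrm{Map}_{\mathcal{S}_*}(S^0, X) \simeq X$ exhibits $\mathrm{id}_{\mathcal{S}_*}$ as corepresented by $S^0$, so that for any functor $H \colon \mathcal{S}_* \to \mathcal{S}_*$ evaluation at $S^0$ induces an equivalence $\mathrm{Nat}(\mathrm{id}_{\mathcal{S}_*}, H) \simeq H(S^0)$. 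Applied to $H = \Omega^\infty F \Sigma^\infty$, this yields the desired identification with $\Omega^\infty F(S)$.

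Beyond these standard identifications, the only use of the $n$-excisiveness of $F$ is to satisfy the hypotheses of Lemma \ref{lem:nexcfunctors}. I anticipate no substantial obstacle; the main care needed is bookkeeping to check that the composite of the three equivalences really coincides with evaluation at $\Sigma_n^\infty S^0$.
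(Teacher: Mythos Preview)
Your argument is correct and takes a more direct route than the paper's. The paper works entirely within $\mathcal{P}_n\mathcal{S}_*$: after the adjunction step $\mathrm{Nat}(\Sigma^\infty_{n,1}, F\Sigma^\infty_{n,1}) \simeq \mathrm{Nat}(\mathrm{id}_{\mathcal{P}_n\mathcal{S}_*}, \Omega^\infty_{n,1} F \Sigma^\infty_{n,1})$, it resolves $\mathrm{id}_{\mathcal{P}_n\mathcal{S}_*}$ as $\varinjlim_k R_n^k \Sigma_n^\infty \Omega_n^\infty L_n^k$ (using that $L_n$ is an equivalence on an $n$-excisive category), manipulates through several further adjunctions and the equivalence $(L_n^k, R_n^k)$, and finishes by invoking the corepresentability of $\Omega_n^\infty$ by $\Sigma_n^\infty S^0$. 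You instead apply Lemma~\ref{lem:nexcfunctors} at the outset to descend to $\mathcal{S}_*$, reducing immediately to the corepresentability of $\mathrm{id}_{\mathcal{S}_*}$ by $S^0$. Both proofs ultimately rest on Yoneda plus Lemma~\ref{lem:nexcfunctors} (the paper also invokes the latter at the end), but your order of operations avoids the explicit resolution of the identity and the bookkeeping with the $\mathcal{T}_n^k$ categories.

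One point you pass over: Lemma~\ref{lem:nexcfunctors} is stated for $\mathrm{Fun}^{\leq n}$, meaning $n$-excisive functors \emph{that commute with filtered colimits}, and the hypothesis on $F$ does not include this (indeed the intended application is to Tate constructions, which do not preserve filtered colimits). The paper addresses exactly this wrinkle in Remark~\ref{rmk:Tatefilteredcolim}: since $\Sigma^\infty_{n,1}$ preserves filtered colimits, one may replace $F$ by the functor agreeing with it on compact objects and freely extended by filtered colimits, without changing either side of the claimed equivalence. You should flag this explicitly in your first step.
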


As usual, write $\odot^n$ for the $n$-fold tensor product determined by the stable $\infty$-operad $\tau_{n-1}\mathrm{Sp}^\otimes$. Also, recall that the tensor product $\otimes^n$ in $\mathrm{Sp}^\otimes$ can be identified with the smash product.

\begin{lemma}
\label{lem:fibercobar}
The fiber of $S^{\otimes n} \rightarrow S^{\odot n}$ is the spectrum $\Sigma(\partial_n\mathrm{id})$.
\end{lemma}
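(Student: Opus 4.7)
The plan is to compute $S^{\odot n}$ via Proposition~\ref{prop:truncatedtensor}, identify the resulting limit with the cotensor of $S$ by the partition complex, and then invoke Ching's description of $\partial_n\mathrm{id}_{\mathcal{S}_\ast}$ as the desuspended Spanier--Whitehead dual of that complex.

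First, by Proposition~\ref{prop:truncatedtensor},
\[
S^{\odot n} \;\simeq\; \varprojlim_{\N\mathbf{Part}_{n-1}(n)} \psi^n_{n-1}(S,\ldots,S).
\]
Since $S$ is the unit for the smash product, every vertex of this diagram is canonically equivalent to $S$ and every edge---coming from the spray $\theta$ for $\mathrm{Sp}^\otimes$ and thus a composition of smash-products of copies of $S$---is a canonical equivalence. The diagram is therefore equivalent to the constant $S$-diagram on $\N\mathbf{Part}_{n-1}(n)$, equivariantly for the $\Sigma_n$-action on the index, and the limit becomes the cotensor $F(\Sigma^\infty_+ |\N\mathbf{Part}_{n-1}(n)|, S)$. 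I would then identify $|\N\mathbf{Part}_{n-1}(n)|$ with the partition complex $|\Pi_n|$ as a $\Sigma_n$-space: the fiber-size constraint in the definition excludes precisely the length-$1$ chain $\mathrm{disc}<\mathrm{triv}$ (whose one step has fibers of size $n$), and deleting the disc/triv endpoints from the remaining chains exhibits $\mathbf{Part}_{n-1}(n)$ as the poset of non-degenerate simplices of $\N\Pi_n$, whose realization is the barycentric subdivision of $|\Pi_n|$.

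Next I would identify the map $S = S^{\otimes n} \to S^{\odot n}$ and compute its fiber. Adjoining the excluded chain produces $\mathbf{Part}(n)$, which has a $\Sigma_n$-fixed initial vertex; hence $\varprojlim_{\N\mathbf{Part}(n)} \psi^n(S,\ldots,S) \simeq S$ and the natural transformation $\otimes^n \to \odot^n$ is realized as restriction along $\N\mathbf{Part}_{n-1}(n)\hookrightarrow \N\mathbf{Part}(n)$. Under the cotensor identification of the previous step this becomes the map $S \to F(\Sigma^\infty_+|\Pi_n|,S)$ induced by the collapse $|\Pi_n|_+\to S^0$. The pointed cofiber of this collapse is the unreduced suspension $|\Pi_n|^\diamond$, so applying $F(-,S)$ to the cofiber sequence $\Sigma^\infty_+|\Pi_n|\to S\to \Sigma^\infty|\Pi_n|^\diamond$ yields a fiber sequence
\[
\mathbb{D}(|\Pi_n|^\diamond) \longrightarrow S \longrightarrow F(\Sigma^\infty_+ |\Pi_n|, S),
\]
identifying the fiber of $S\to S^{\odot n}$ with $\mathbb{D}(|\Pi_n|^\diamond)$.

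Finally, Ching's cobar description of the derivatives of the identity on pointed spaces \cite{ching} (see also Arone--Ching \cite{aroneching}) identifies $\partial_n\mathrm{id}_{\mathcal{S}_\ast}$ equivariantly with $\Sigma^{-1}\mathbb{D}(|\Pi_n|^\diamond)$, so that $\Sigma\partial_n\mathrm{id} \simeq \mathbb{D}(|\Pi_n|^\diamond)$, as required. The main expected obstacle is the bookkeeping of $\Sigma_n$-equivariance through all three identifications---in particular, verifying that the $\Sigma_n$-action on $\N\mathbf{Part}_{n-1}(n)$ induced by the spray $\theta$ for $\mathrm{Sp}^\otimes$ agrees with the standard permutation action on the partition poset used in Ching's description, which amounts to a naturality check for $\theta$ in its multivariable inputs together with the canonical equivalences $S\wedge\cdots\wedge S\simeq S$.
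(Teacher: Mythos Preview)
Your proposal is correct and follows essentially the same route as the paper: both compute $S^{\odot n}$ via Proposition~\ref{prop:truncatedtensor} as the limit of a constant $S$-valued diagram over $\N\mathbf{Part}_{n-1}(n)$, identify that indexing category with (the barycentric subdivision of) the partition poset $\N\mathbf{Equiv}^{\pm}(n)$, and then read off the fiber as the Spanier--Whitehead dual of the unreduced suspension of the partition complex, which is $\Sigma(\partial_n\mathrm{id})$ by the standard description. Your treatment is slightly more explicit about the map $S\to S^{\odot n}$ via the collapse $|\Pi_n|_+\to S^0$, and your closing remark on $\Sigma_n$-equivariance goes beyond what the lemma (and the paper's proof) actually require, but there is no substantive difference in strategy.
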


\begin{proof}[Proof of Lemma \ref{lem:nattransftheta}]
Recall the adjunction
\[
\xymatrix{
L_n: \mathcal{P}_n\mathcal{S}_* \ar@<.5ex>[r] & \mathcal{T}_n\mathcal{P}_n\mathcal{S}_* : R_n, \ar@<.5ex>[l]
}
\]
which is in an equivalence since $\mathcal{P}_n\mathcal{S}_*$ is $n$-excisive. We also write $L_n^k$ for the evident functor $\mathcal{P}_n\mathcal{S}_* \rightarrow \mathcal{T}_n^k\mathcal{P}_n\mathcal{S}_*$ and $R_n^k$ for its right adjoint. The colimit $\varinjlim_k R_n^k \Sigma_n^\infty\Omega_n^\infty L_n^k$ is equivalent to the identity functor of $\mathcal{P}_n\mathcal{S}_*$, simply because this colimit computes $P_n(\Sigma_n^\infty\Omega_n^\infty)$. We then have equivalences
\begin{eqnarray*}
\mathrm{Nat}(\Sigma^\infty_{n,1}, \, F \Sigma^\infty_{n,1}) & \longrightarrow & \mathrm{Nat}(\mathrm{id}_{\mathcal{P}_n\mathcal{S}_*},\, \Omega^\infty_{n,1} F \Sigma^\infty_{n,1}) \\
& \longrightarrow & \varprojlim_k \mathrm{Nat}(R_n^k \Sigma_n^\infty\Omega_n^\infty L_n^k,\, \Omega^\infty_{n,1} F \Sigma^\infty_{n,1}) \\
& \longrightarrow & \varprojlim_k \mathrm{Nat}(\Omega_n^\infty L_n^k,\, \Omega_n^\infty L_n^k \Omega^\infty_{n,1} F \Sigma^\infty_{n,1}),
\end{eqnarray*}
where the last equivalence uses the fact that $R_n^k$ is an equivalence with inverse $L_n^k$. Also, observe that there is a commutative diagram
\[
\xymatrix{
\mathcal{P}_n\mathcal{S}_* & \mathcal{T}_n^k\mathcal{P}_n\mathcal{S}_* \ar[l]_{R_n^k} \\
\mathrm{Sp} \ar[u]^{\Omega_{n,1}^\infty} & \mathcal{T}_n^k\mathrm{Sp}. \ar[l]^{R_n^k}\ar[u]_{\Omega_{n,1}^\infty}
}
\]
Inverting the two horizontal functors we obtain an equivalence $ L_n^k \Omega^\infty_{n,1} \simeq \Omega^\infty_{n,1} L_n^k$ and therefore an equivalence 
\begin{eqnarray*}
\mathrm{Nat}(\Sigma^\infty_{n,1}, \, F \Sigma^\infty_{n,1}) & \longrightarrow & \varprojlim_k \mathrm{Nat}(\Omega_n^\infty L_n^k,\, \Omega^\infty L_n^k F \Sigma^\infty_{n,1}).
\end{eqnarray*}
Precomposing with the equivalence $R_n^k$ we find a further equivalence
\begin{eqnarray*}
\mathrm{Nat}(\Sigma^\infty_{n,1}, \, F \Sigma^\infty_{n,1}) & \longrightarrow & \varprojlim_k \mathrm{Nat}(\Omega_n^\infty, \, \Omega^\infty L_n^k F \Sigma^\infty_{n,1} R_n^k).
\end{eqnarray*}
Now we use the fact that $F\Sigma^\infty_{n,1}$ is $n$-excisive to observe that
\begin{equation*}
 L_n^k F \Sigma^\infty_{n,1} R_n^k \simeq L_n^k R_n^k F \Sigma^\infty_{n,1} L_n^k R_n^k \simeq F \Sigma^\infty_{n,1}
\end{equation*}
where the right-most functor should be read as the pointwise application of $F \Sigma^\infty_{n,1}$ to yield a functor between $\mathcal{T}_n^k\mathcal{S}_*$ and $\mathcal{T}_n^k\mathrm{Sp}$. Finally then we find an equivalence
\begin{eqnarray*}
\mathrm{Nat}(\Sigma^\infty_{n,1}, \, F \Sigma^\infty_{n,1}) & \longrightarrow & \mathrm{Nat}(\Omega_n^\infty, \, \Omega^\infty F \Sigma^\infty_{n,1}).
\end{eqnarray*}
Observe that the space of natural transformations on the right is between functors $\mathcal{T}_n^k \mathcal{P}_n\mathcal{S}_* \rightarrow \mathcal{T}_n^k \mathcal{S}_*$ rather than functors $\mathcal{P}_n\mathcal{S}_* \rightarrow \mathcal{S}_*$. However, by Lemma \ref{lem:nexcfunctors}, this distinction is irrelevant, justifying our lack of notational precision. To conclude, note that the functor $\Omega_n^\infty$ is corepresented by $\Sigma^\infty_n S^0$, finally yielding the desired equivalence
\begin{eqnarray*}
\mathrm{Nat}(\Sigma^\infty_{n,1}, \, F \Sigma^\infty_{n,1}) & \longrightarrow & \Omega^\infty F \Sigma^\infty_{n,1}(\Sigma^\infty_n S^0) \simeq \Omega^\infty F(S).
\end{eqnarray*}
\end{proof}

\begin{proof}[Proof of Lemma \ref{lem:fibercobar}]
This lemma follows from the more general Proposition \ref{prop:derivativesid}, but for the convenience of the reader we offer a direct proof here. Let us first recall the standard description of the spectrum $\partial_n\mathrm{id}$. As before we write $\mathbf{Equiv}(n)$ for the partially ordered set of equivalence relations on $\{1, \ldots, n\}$. Also, write $\mathbf{Equiv}^{\pm}(n)$ for the subset obtained from $\mathbf{Equiv}(n)$ by deleting the initial and final object, i.e. the trivial and discrete equivalence relations. Then $\partial_n\mathrm{id}$ is the Spanier-Whitehead dual of the double suspension of $\N\mathbf{Equiv}^{\pm}(n)$. Note that therefore $\Sigma(\partial_n\mathrm{id})$ is the Spanier-Whitehead dual of a single suspension of $\N\mathbf{Equiv}^{\pm}(n)$. Observe that $S^{\otimes n}$ is just the sphere spectrum. To identify $S^{\odot n}$, note that the diagram $\psi_{n-1}^n$ of Proposition \ref{prop:truncatedtensor} is the constant diagram with value $S$, so that its limit over $\N\mathbf{Part}_{n-1}(n)$ is simply the Spanier-Whitehead dual of this simplicial set. Therefore the fiber of the map
\begin{equation*}
S \longrightarrow S^{\odot n}
\end{equation*}
is the Spanier-Whitehead dual of the suspension of $\N\mathbf{Part}_{n-1}(n)$. To conclude that this is equivalent to the spectrum $\Sigma(\partial_n\mathrm{id})$ it thus suffices to show that $\N\mathbf{Part}_{n-1}(n)$ is weakly equivalent to $\N\mathbf{Equiv}^{\pm}(n)$.

Recall that the poset $\mathbf{Part}_{n-1}(n)$ is defined as the poset of chains of equivalence relations $E_0 < \cdots < E_j$ so that $E_0$ is discrete, $E_j$ is trivial, and each of the maps
\begin{equation*}
\{1, \ldots, n\}/E_{i-1} \rightarrow \{1, \ldots, n\}/E_{i}
\end{equation*}
has fibers of cardinality at most $n-1$. Write $\mathbf{Part}_{n-1}(n)'$ for the subset of $\mathbf{Part}_{n-1}(n)$ consisting of simplices that are nondegenerate in $\N\mathbf{Equiv}(n)$ and similarly write $\mathbf{\Delta}/\N\mathbf{Equiv}(n)'$ for the subcategory of the category of simplices spanned by nondegenerate simplices. It is well-known (and easy to show) that the inclusion $\mathbf{Part}_{n-1}(n)' \subseteq \mathbf{Part}_{n-1}(n)$ induces a homotopy equivalence on nerves. Now observe that the map of partially ordered sets 
\begin{equation*}
\mathbf{Part}_{n-1}(n)' \longrightarrow \mathbf{\Delta}/\N\mathbf{Equiv}^{\pm}(n)'
\end{equation*}
which forgets the initial and final vertices of a chain gives an isomorphism of simplicial sets. We conclude by using the well-known fact that $\mathbf{\Delta}/\N\mathbf{Equiv}^{\pm}(n)'$ is the barycentric subdivision of $\N\mathbf{Equiv}^{\pm}(n)$, which is equivalent to $\N\mathbf{Equiv}^{\pm}(n)$ under the map taking the last vertex of a simplex. 
\end{proof}

\appendix
%    Include appendix "chapters" here.

%\begin{appendices}

\chapter{Compactly generated $\infty$-categories}
\label{sec:compactlygenerated}

For the convenience of the reader we briefly recall some basic facts about compactly generated $\infty$-categories of which the proofs can be found in Section 5.5.7 of \cite{htt}. Furthermore we prove Lemma \ref{lem:filteredcolimleftexact} below, which is needed in Chapter \ref{sec:constructingPnC}. Recall that an $\infty$-category $\mathcal{C}$ is \emph{compactly generated} if it is both presentable and $\omega$-accessible. Alternatively, $\mathcal{C}$ is compactly generated if and only if it is equivalent to $\mathrm{Ind}(\mathcal{D})$ for some small $\infty$-category $\mathcal{D}$ which admits finite colimits. 

Recall that an object $X$ of an $\infty$-category $\mathcal{C}$ is \emph{compact} if the functor $\mathrm{Map}_{\mathcal{C}}(X,-): \mathcal{C} \rightarrow \mathcal{S}$ preserves filtered colimits. For a compactly generated $\infty$-category $\mathcal{C}$ we write $\mathcal{C}^{c}$ for the full subcategory of $\mathcal{C}$ spanned by its compact objects. Write $\mathbf{Cat}^{\mathrm{Rex}}$ for the $\infty$-category of essentially small $\infty$-categories which admit finite colimits, with functors preserving finite colimits. Furthermore, write $\mathbf{Cat}^{\mathrm{Rex}}_{\mathrm{idem}}$ for the full subcategory of $\mathbf{Cat}^{\mathrm{Rex}}$ spanned by those $\infty$-categories $\mathcal{C}$ that are moreover idempotent complete. The following is part of Proposition 5.5.7.8 of \cite{htt}: 

\begin{proposition}
\label{prop:compacts}
The functor which assigns to a compactly generated $\infty$-category $\mathcal{C}$ its full subcategory $\mathcal{C}^c$ of compact objects gives an equivalence of $\infty$-categories $\mathbf{Cat}^{\omega} \rightarrow \mathbf{Cat}^{\mathrm{Rex}}_{\mathrm{idem}}$. 
\end{proposition}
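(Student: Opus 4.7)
The plan is to construct an explicit inverse $\mathrm{Ind}(-): \mathbf{Cat}^{\mathrm{Rex}}_{\mathrm{idem}} \rightarrow \mathbf{Cat}^{\omega}$ and check that both composites are naturally equivalent to the identity. On objects, the inverse sends a small idempotent complete $\infty$-category $\mathcal{D}$ admitting finite colimits to its Ind-completion $\mathrm{Ind}(\mathcal{D})$. Functoriality can be defined either by the universal property of $\mathrm{Ind}$ (a functor $F: \mathcal{D} \rightarrow \mathcal{D}'$ preserving finite colimits extends uniquely to a functor $\mathrm{Ind}(\mathcal{D}) \rightarrow \mathrm{Ind}(\mathcal{D}')$ preserving filtered colimits) or, equivalently, as the left Kan extension along the Yoneda embedding.

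First I would verify the two halves separately at the level of objects. For a compactly generated $\mathcal{C}$, the canonical functor $\mathrm{Ind}(\mathcal{C}^c) \rightarrow \mathcal{C}$ obtained by extending the inclusion $\mathcal{C}^c \hookrightarrow \mathcal{C}$ along the Yoneda embedding is an equivalence: it is essentially surjective because every object of $\mathcal{C}$ is a filtered colimit of compact objects (the definition of $\omega$-accessibility), and it is fully faithful because compact objects corepresent functors that commute with filtered colimits, so the mapping space formula of $\mathrm{Ind}$ reproduces the one in $\mathcal{C}$. Conversely, for $\mathcal{D} \in \mathbf{Cat}^{\mathrm{Rex}}_{\mathrm{idem}}$, the Yoneda embedding $\mathcal{D} \rightarrow \mathrm{Ind}(\mathcal{D})$ identifies $\mathcal{D}$ with $\mathrm{Ind}(\mathcal{D})^c$. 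The image lands in compact objects because representable presheaves on $\mathcal{D}$ preserve filtered colimits; the reverse containment uses that any compact object of $\mathrm{Ind}(\mathcal{D})$ is, as a filtered colimit of objects of $\mathcal{D}$, a retract of one of them, and the idempotent completeness of $\mathcal{D}$ then ensures that this retract already lies in $\mathcal{D}$.

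Next I would upgrade these object-level statements to an equivalence of $\infty$-categories. The key observation is that morphisms on both sides are characterized in compatible ways: a morphism in $\mathbf{Cat}^\omega$ is a functor preserving small colimits and compact objects, which by the adjoint functor theorem is equivalent to a functor whose right adjoint preserves filtered colimits; by the universal property of $\mathrm{Ind}$, this data is in turn equivalent to a finite-colimit-preserving functor between the subcategories of compact objects. More precisely, restriction along $\mathcal{C}^c \hookrightarrow \mathcal{C}$ and extension by filtered colimits give mutually inverse equivalences of mapping spaces $\mathrm{Map}_{\mathbf{Cat}^\omega}(\mathcal{C}, \mathcal{C}') \simeq \mathrm{Map}_{\mathbf{Cat}^{\mathrm{Rex}}_{\mathrm{idem}}}(\mathcal{C}^c, (\mathcal{C}')^c)$. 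This calculation is the heart of the matter and should be assembled into a genuine equivalence of $\infty$-categories by packaging it as a pair of adjoint functors whose unit and counit were both shown above to be equivalences.

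The main obstacle will be the careful handling of idempotent completeness. One must verify that compact objects in an Ind-category are precisely retracts of representables, and that the passage $\mathcal{D} \mapsto \mathrm{Ind}(\mathcal{D})^c$ reconstructs $\mathcal{D}$ only after idempotent completion; this is why the subcategory $\mathbf{Cat}^{\mathrm{Rex}}_{\mathrm{idem}}$, rather than $\mathbf{Cat}^{\mathrm{Rex}}$, is the correct target. Since all of these steps are carried out in detail in \cite{htt}, the most efficient route is simply to cite Proposition 5.5.7.8 there, but the sketch above indicates how the proof proceeds.
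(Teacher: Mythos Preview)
Your proposal is correct, and in fact goes further than the paper: the paper does not prove this proposition at all but simply cites it as part of Proposition 5.5.7.8 of \cite{htt}. Your sketch of the $\mathrm{Ind}$/compact-objects adjunction, including the role of idempotent completeness, accurately reflects how that result is established in \cite{htt}, so your final suggestion to cite that reference is exactly what the paper does.
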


\begin{remark}
\label{rmk:idemcompletion}
An inverse to the construction of the previous proposition is given by assigning to $\mathcal{D} \in \mathbf{Cat}^{\mathrm{Rex}}_{\mathrm{idem}}$ the $\infty$-category $\mathrm{Ind}(\mathcal{D})$. This construction of course makes sense for $\mathcal{D} \in \mathbf{Cat}^{\mathrm{Rex}}$ not necessarily idempotent complete. The functor $\mathrm{Ind}$ factors through idempotent completion; Proposition 5.5.7.10 of \cite{htt} shows that it exhibits $\mathbf{Cat}^{\omega}$ as a localization of $\mathbf{Cat}^{\mathrm{Rex}}$.
\end{remark}

\begin{lemma}
\label{lem:catomegabicomplete}
The $\infty$-category $\mathbf{Cat}^\omega$ admits small limits and colimits.
\end{lemma}
\begin{proof}
It follows from 5.5.7.6 and 5.5.7.7 of \cite{htt} that $\mathbf{Cat}^\omega$ has small colimits. The existence of small limits is more straightforward; the $\infty$-category $\mathbf{Cat}^{\mathrm{Rex}}_{\mathrm{idem}}$ admits small limits and these can be computed in $\mathbf{Cat}$, see Lemma \ref{lem:filteredcolimcompacts} below.
\end{proof}

\begin{lemma}
\label{lem:filteredcolimcompacts}
The functor $\mathbf{Cat}^{\omega} \rightarrow \mathbf{Cat}$ which assigns to a compactly generated $\infty$-category $\mathcal{C}$ its full subcategory $\mathcal{C}^c$ preserves all small limits and filtered colimits.
\end{lemma}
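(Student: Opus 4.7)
My plan is to factor the functor in question through the equivalence of Proposition \ref{prop:compacts}. Concretely, the functor $(-)^c: \mathbf{Cat}^\omega \to \mathbf{Cat}$ is the composition
\[
\mathbf{Cat}^\omega \xrightarrow{\ \simeq\ } \mathbf{Cat}^{\mathrm{Rex}}_{\mathrm{idem}} \hookrightarrow \mathbf{Cat},
\]
where the first arrow is the equivalence of Proposition \ref{prop:compacts}. Since equivalences preserve arbitrary limits and colimits, it suffices to show that the inclusion $\mathbf{Cat}^{\mathrm{Rex}}_{\mathrm{idem}} \hookrightarrow \mathbf{Cat}$ preserves small limits and filtered colimits. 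Because $\mathbf{Cat}^{\mathrm{Rex}}_{\mathrm{idem}}$ is a full subcategory of $\mathbf{Cat}$, this is equivalent to showing that it is closed under these constructions.

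For limits: given a small diagram $\{\mathcal{D}_i\}$ in $\mathbf{Cat}^{\mathrm{Rex}}_{\mathrm{idem}}$ with limit $\mathcal{D}$ computed in $\mathbf{Cat}$, I would verify that (a) $\mathcal{D}$ admits finite colimits and the projections $\mathcal{D} \to \mathcal{D}_i$ preserve them, and (b) $\mathcal{D}$ is idempotent complete. For (a), a finite diagram in $\mathcal{D}$ is a compatible family of finite diagrams in the $\mathcal{D}_i$; taking the colimit in each $\mathcal{D}_i$ and using that the transition functors preserve finite colimits, these assemble to an object of $\mathcal{D}$ which is readily checked to be the colimit. For (b), an idempotent in $\mathcal{D}$ (i.e.\ an object of $\mathrm{Idem}(\mathcal{D})$ in the sense of \cite{htt}, \S 4.4.5) restricts to a compatible family of idempotents in the $\mathcal{D}_i$; since each $\mathcal{D}_i$ is idempotent complete these split in an essentially unique way, and the splittings are functorial in $i$ and hence assemble to a splitting in $\mathcal{D}$.

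For filtered colimits: given a filtered diagram $\{\mathcal{D}_i\}$ in $\mathbf{Cat}^{\mathrm{Rex}}_{\mathrm{idem}}$ with colimit $\mathcal{D}$ computed in $\mathbf{Cat}$, I would again check finite colimits and idempotent completeness. Finite colimits exist in $\mathcal{D}$ because any finite diagram in $\mathcal{D}$ factors through some $\mathcal{D}_i$ (since finite simplicial sets are compact in $\mathbf{Cat}$, and filtered colimits in $\mathbf{Cat}$ can be computed on $n$-simplices; cf.\ \cite{htt} \S 5.3.5), and one checks that the colimit computed at that stage gives a finite colimit in $\mathcal{D}$ — essentially because filtered colimits in $\mathcal{S}$ (the target of representable functors) commute with finite limits, so one obtains the correct universal property. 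For idempotent completeness the same argument applies: any idempotent in $\mathcal{D}$ is represented at some stage $\mathcal{D}_i$, where it splits; the splitting persists in $\mathcal{D}$.

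The principal obstacle is purely technical: verifying that the pointwise constructions (finite colimits, splittings of idempotents) assemble \emph{coherently} in the $\infty$-categorical sense, and that finite diagrams and idempotents in a filtered colimit of $\infty$-categories are represented at a finite stage. Both of these ultimately follow from the standard description of limits and filtered colimits of $\infty$-categories (e.g.\ via the Joyal model structure on marked simplicial sets, as in \cite{htt} \S 3.3.3 and \S 5.5.7) together with the fact that the essential data — a finite diagram, an idempotent — is finitary and so commutes with filtered colimits. No new ideas are required beyond these well-known facts.
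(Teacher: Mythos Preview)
Your reduction via Proposition~\ref{prop:compacts} to the inclusion $\mathbf{Cat}^{\mathrm{Rex}}_{\mathrm{idem}} \hookrightarrow \mathbf{Cat}$ is exactly what the paper does. But the next sentence contains a genuine error: $\mathbf{Cat}^{\mathrm{Rex}}_{\mathrm{idem}}$ is \emph{not} a full subcategory of $\mathbf{Cat}$. By definition its morphisms are the functors preserving finite colimits, so the inclusion is faithful but not full. Consequently, ``closed under these constructions'' is not the same as ``preserves these (co)limits'': even after you check that the limit $\mathcal{D}$ computed in $\mathbf{Cat}$ has finite colimits, is idempotent complete, and that the projections are right exact, you still owe the verification that the induced comparison map from any cone in $\mathbf{Cat}^{\mathrm{Rex}}_{\mathrm{idem}}$ is itself right exact. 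This is not hard given your pointwise description of finite colimits in $\mathcal{D}$, but it is not automatic and your stated reason for omitting it is false.

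There is a second, more serious gap in the filtered-colimit half. You assert that ``any idempotent in $\mathcal{D}$ is represented at some stage $\mathcal{D}_i$'' on the grounds that idempotents are finitary. In an $\infty$-category a coherent idempotent is a functor out of $\mathrm{Idem}$, which is \emph{not} a finite simplicial set and is not compact in $\mathbf{Cat}$; so this does not follow from the compactness argument you use for finite diagrams. One has to argue instead that in a category with finite colimits a homotopy-idempotent (a finite amount of data) can be split by an explicit construction, and that this persists to the colimit. This is the content of Lemma~7.3.5.10 of \cite{higheralgebra}, which the paper simply cites. For limits the paper likewise avoids the hands-on verification by observing (via \cite{htt}, Corollary~5.3.6.10) that the inclusion $\mathbf{Cat}^{\mathrm{Rex}}_{\mathrm{idem}} \to \mathbf{Cat}$ admits a left adjoint and therefore preserves all limits. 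Your direct approach can be made to work, but as written it rests on a false fullness claim and skates over the non-finitary nature of coherent idempotents.
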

\begin{proof}
By Proposition \ref{prop:compacts} it suffices to verify this claim for the inclusion $\mathbf{Cat}^{\mathrm{Rex}}_{\mathrm{idem}} \rightarrow \mathbf{Cat}$. The fact that this functor preserves small limits follows from the results of Section 5.3.6 of \cite{htt}; specifically, Corollary 5.3.6.10 shows that this functor has a left adjoint. Preservation of filtered colimits is guaranteed by Lemma 7.3.5.10 of \cite{higheralgebra}.
\end{proof}

Finally, we will need to know that filtered colimits and finite limits commute in $\mathbf{Cat}^{\omega}$:

\begin{lemma}
\label{lem:filteredcolimleftexact}
Let $I$ be a filtered $\infty$-category and write $\varinjlim_I: \mathrm{Fun}(I, \mathbf{Cat}^{\omega}) \rightarrow \mathbf{Cat}^{\omega}$ for a choice of colimit functor (which is unique up to contractible ambiguity). Then $\varinjlim_I$ preserves finite limits.
\end{lemma}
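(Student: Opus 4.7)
The plan is to transport the question along the equivalence $\mathbf{Cat}^{\omega}\simeq\mathbf{Cat}^{\mathrm{Rex}}_{\mathrm{idem}}$ supplied by Proposition \ref{prop:compacts}, reducing it to the corresponding commutation property for the ambient $\infty$-category $\mathbf{Cat}_\infty$ of all (small) $\infty$-categories. Under the equivalence, Lemma \ref{lem:filteredcolimcompacts} becomes the statement that the inclusion
\[
\iota:\mathbf{Cat}^{\mathrm{Rex}}_{\mathrm{idem}}\longrightarrow\mathbf{Cat}_\infty
\]
preserves both small limits and filtered colimits. So it suffices to prove that for every finite $\infty$-category $J$ the filtered colimit functor $\varinjlim_I$ on $\mathbf{Cat}^{\mathrm{Rex}}_{\mathrm{idem}}$ commutes with the finite limit indexed by $J$, and this is what I will do.

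Given a bifunctor $F:I\times J\rightarrow\mathbf{Cat}^{\mathrm{Rex}}_{\mathrm{idem}}$ with $I$ filtered and $J$ finite, the universal properties produce a canonical comparison morphism
\[
c:\varinjlim_{I}\varprojlim_{J}F\longrightarrow\varprojlim_{J}\varinjlim_{I}F
\]
in $\mathbf{Cat}^{\mathrm{Rex}}_{\mathrm{idem}}$. Since $\iota$ preserves both the relevant limit and colimit, applying $\iota$ identifies $\iota(c)$ with the analogous canonical comparison morphism
\[
\varinjlim_{I}\varprojlim_{J}(\iota\circ F)\longrightarrow\varprojlim_{J}\varinjlim_{I}(\iota\circ F)
\]
formed in $\mathbf{Cat}_\infty$. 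A morphism of $\mathbf{Cat}^{\mathrm{Rex}}_{\mathrm{idem}}$ is an equivalence precisely when its underlying functor of $\infty$-categories is an equivalence, so it suffices to show that this second comparison morphism is an equivalence in $\mathbf{Cat}_\infty$.

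This last point is the one external ingredient: filtered colimits commute with finite limits in $\mathbf{Cat}_\infty$. One can invoke this directly (see e.g.\ \cite{htt}, Section 5.3.3), or else reduce it to the classical fact for the $\infty$-category $\mathcal{S}$ of spaces by testing an equivalence of $\infty$-categories on mapping spaces and on maximal Kan subcomplexes, both of which are computed as filtered colimits and finite limits of spaces. This step is the only substantive one; the rest of the argument is formal bookkeeping with the equivalence of Proposition \ref{prop:compacts}. Combining it with the previous paragraphs shows that $c$ itself is an equivalence in $\mathbf{Cat}^{\mathrm{Rex}}_{\mathrm{idem}}$, and hence, transporting back along Proposition \ref{prop:compacts}, that $\varinjlim_I:\mathrm{Fun}(I,\mathbf{Cat}^{\omega})\rightarrow\mathbf{Cat}^{\omega}$ preserves finite limits, as required.
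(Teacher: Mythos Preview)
Your proof is correct and follows essentially the same route as the paper: use Lemma \ref{lem:filteredcolimcompacts} to reduce the question to the ambient $\infty$-category $\mathbf{Cat}_\infty$, and then invoke the fact that filtered colimits commute with finite limits there. The paper phrases the last step slightly differently, noting that $\mathbf{Cat}_\infty$ is itself compactly generated, but this is the same input you cite.
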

\begin{proof}
By Lemma \ref{lem:filteredcolimcompacts} it suffices to check the corresponding statement for the $\infty$-category $\mathbf{Cat}$, where it is true. Indeed, filtered colimits and finite limits commute in compactly generated $\infty$-categories, of which $\mathbf{Cat}$ is an example.
\end{proof}

\chapter{Some facts from Goodwillie calculus}
\label{sec:appcalculus}

We will not provide a comprehensive overview of the basics of Goodwillie calculus (which can for example be found in \cite{goodwillie3, higheralgebra}), but in this section we collect several additional results we need in the main text. None of this material is original.

% fix cube indexing with rest of paper
For a functor $F: \mathcal{C} \rightarrow \mathcal{D}$ between pointed compactly generated $\infty$-categories, one can define the $n$th cross effect $\mathrm{cr}_n F$ and $n$th cocross effect $\mathrm{cr}^n F$ as follows. For objects $X_1, \ldots, X_n \in \mathcal{C}$ one considers the $n$-cube
\begin{equation*}
\mathbf{P}(n) \longrightarrow \mathcal{C}: S \longmapsto \coprod_{i \notin S} X_i
\end{equation*}
where the maps are induced by the various maps $X_i \rightarrow *$. Then the $n$th cross effect is defined as the total fiber of $F$ applied to this cube:
\begin{equation*}
\mathrm{cr}_n F(X_1, \ldots, X_n) := \mathrm{tfib}\Bigl\{F\bigl(\coprod_{i \notin S} X_i\bigr)\Bigr\}_{S \in \mathbf{P}(n)}.
\end{equation*}
Dually, one considers the cube
\begin{equation*}
\mathbf{P}(n) \longrightarrow \mathcal{C}: S \longmapsto \prod_{i \in S} X_i
\end{equation*}
and defines the cocross effect $\mathrm{cr}^n F$ as the corresponding total cofiber:
\begin{equation*}
\mathrm{cr}^n F(X_1, \ldots, X_n) := \mathrm{tcof}\Bigl\{F\bigl(\prod_{i \in S} X_i\bigr)\Bigr\}_{S \in \mathbf{P}(n)}.
\end{equation*}
There are evident maps
\begin{equation*}
\mathrm{cr}_n F(X_1, \ldots, X_n) \longrightarrow F(X_1 \amalg \cdots \amalg X_n) \quad\quad \text{and} \quad\quad F(X_1 \times \cdots \times X_n) \longrightarrow \mathrm{cr}^n F(X_1, \ldots, X_n).
\end{equation*}
Since $\mathcal{C}$ is pointed there is the obvious map
\begin{equation*}
X_1 \amalg \cdots \amalg X_n \longrightarrow X_1 \times \cdots \times X_n
\end{equation*}
which on $X_i$ is $(*, \ldots, \mathrm{id}_{X_i}, \ldots, *)$. As a consequence we obtain a natural map
\begin{equation*}
c_n: \mathrm{cr}_n F(X_1, \ldots X_n) \longrightarrow \mathrm{cr}^n F(X_1, \ldots, X_n).
\end{equation*}
The idea of considering cocross effects is due to McCarthy \cite{mccarthy}. The following is straightforward:

\begin{lemma}
\label{lem:crossequiv}
If $\mathcal{C}$ and $\mathcal{D}$ are stable then $c_n$ is an equivalence.
\end{lemma}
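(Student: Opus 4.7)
The plan is to exploit the identification of finite coproducts with finite products in stable $\infty$-categories. Since $\mathcal{C}$ is stable, the canonical map $\coprod_{i \in S} X_i \to \prod_{i \in S} X_i$ is an equivalence for every $S \subseteq \{1, \ldots, n\}$. Under this identification, the collapse-to-basepoint structure maps of the cross effect cube correspond to the canonical projections between biproducts, so the cross effect cube is equivalent to the cube $\widetilde{\mathcal{X}}(S) := F(\prod_{i \notin S} X_i)$ with projection maps, and hence $\mathrm{cr}_n F(X_1, \ldots, X_n) \simeq \mathrm{tfib}(\widetilde{\mathcal{X}})$.

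Next, reindex the cocross effect cube via the involution $S \mapsto S^c$ of $\mathbf{P}(n)$: its vertices $F(\prod_{i \notin S} X_i)$ match those of $\widetilde{\mathcal{X}}$, but the structure maps are now basepoint inclusions $\prod_{i \notin S'} X_i \hookrightarrow \prod_{i \notin S} X_i$ (for $S \subseteq S'$) pointing in the opposite direction to those of $\widetilde{\mathcal{X}}$. These basepoint inclusions are sections of the corresponding biproduct projections. Using the stability of $\mathcal{D}$, this section/retraction data yields canonical splittings
\[
\widetilde{\mathcal{X}}(\emptyset) \simeq \mathrm{tfib}(\widetilde{\mathcal{X}}) \oplus \varprojlim_{S \neq \emptyset} \widetilde{\mathcal{X}}(S), \qquad \mathcal{Z}(\{1, \ldots, n\}) \simeq \mathrm{tcof}(\mathcal{Z}) \oplus \varinjlim_{S \neq \{1, \ldots, n\}} \mathcal{Z}(S),
\]
together with an identification of the two second summands obtained by the same argument applied with fewer variables. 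Tracing the construction of $c_n$---which factors as $\mathrm{tfib}(\widetilde{\mathcal{X}}) \to \widetilde{\mathcal{X}}(\emptyset) = \mathcal{Z}(\{1, \ldots, n\}) \to \mathrm{tcof}(\mathcal{Z})$ via the common corner vertex---one identifies $c_n$ with the resulting comparison of first summands, which is therefore an equivalence.

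The main obstacle will be the careful bookkeeping required to produce the splittings coherently as cubes and to match the map $c_n$ with the splitting-based equivalence. A cleaner alternative proceeds by induction on $n$ using the iterative description of cross and cocross effects, for instance $\mathrm{cr}_n F(X_1, \ldots, X_n) \simeq \mathrm{cr}_{n-1}(\mathrm{cr}_1 F(-; X_n))(X_1, \ldots, X_{n-1})$ and analogously for $\mathrm{cr}^n$. This reduces the claim to the base case $n = 1$, which amounts to the standard equivalence $\mathrm{fib}(F(X) \to F(*)) \simeq \mathrm{cof}(F(*) \to F(X))$ valid in any stable $\infty$-category.
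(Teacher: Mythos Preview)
Your proposal is correct, and your ``cleaner alternative'' at the end is essentially the paper's own argument. The paper observes that in the stable case the inclusion maps of the cocross effect cube are sections of the projection maps of the cross effect cube, so the cofiber of each inclusion is equivalent to the fiber of the corresponding retraction; it then invokes the standard computation of total (co)fiber as an iterated (co)fiber, which is exactly your induction on $n$ with base case the equivalence $\mathrm{fib}(B \to A) \simeq \mathrm{cof}(A \to B)$ for a section/retraction pair. Your first approach via explicit splittings is a valid repackaging of the same idea but, as you note, involves more bookkeeping; the paper opts directly for the iterated version. One small notational quibble: writing $\mathrm{cr}_1 F(-; X_n)$ is a bit ambiguous since $F$ is a functor of a single variable---the inductive step is more cleanly phrased as ``in each cube direction, the edge of the cocross cube is a section of the corresponding edge of the cross cube, hence fiber equals cofiber,'' which is how the paper puts it.
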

\begin{proof}
In the stable case finite products and coproducts coincide and we write $\oplus$ for both. Consider an inclusion $S \subseteq S'$ in $\mathbf{P}(n)$ and write $T = \{1, \ldots, n\} - S$ and $T' = \{1, \ldots, n\} - S'$. The inclusion 
\begin{equation*}
F\Bigl(\bigoplus_{i \in T'} X_i\Bigr) \rightarrow F\Bigl(\bigoplus_{i \in T} X_i\Bigr)
\end{equation*}
featuring in the definition of the cocross effect is a section of the projection
\begin{equation*}
F\Bigl(\bigoplus_{i \notin S} X_i\Bigr) \rightarrow F\Bigl(\bigoplus_{i \notin S'} X_i\Bigr)
\end{equation*}
featuring in the definition of the cross effect. In particular the cofiber of the first is equivalent to the fiber of the second. The conclusion is easily deduced from the usual calculation of the total cofiber (resp. total fiber) of a cube as an iterated cofiber (resp. iterated fiber).
\end{proof}

\begin{remark}
\label{rmk:derivcoderiv}
Recall that the $n$th derivative $\partial_n F$ (respectively $n$th coderivative $\partial^n F$) is defined by multilinearizing $\mathrm{cr}_n F$ (respectively $\mathrm{cr}^n F$). A consequence of the previous lemma is that for $\mathcal{C}$ and $\mathcal{D}$ stable the derivatives and coderivatives of $F$ are canonically equivalent. Also note that there is a natural map
\begin{equation*}
F(X) \longrightarrow F(X \times \cdots \times X) \longrightarrow \Omega^\infty_{\mathcal{D}} \partial^n F(\Sigma_{\mathcal{C}}^\infty X, \ldots, \Sigma_{\mathcal{C}}^\infty X).
\end{equation*}
\end{remark}

Recall that the $n$-fold tensor product $\otimes^n_{\mathcal{C}}$ induced by the stable $\infty$-operad $\mathrm{Sp}(\mathcal{C})^\otimes$ is by definition equipped with a natural transformation
\begin{equation*}
X_1 \times \cdots \times X_n \longrightarrow \Omega^\infty_{\mathcal{C}}(\Sigma^\infty_{\mathcal{C}}X_1 \otimes_{\mathcal{C}} \cdots \otimes_{\mathcal{C}} \Sigma^\infty X_n)
\end{equation*}
exhibiting the latter as a multilinearization of the former. In particular, for $X \in \mathcal{C}$, composing with the diagonal gives a map
\begin{equation*}
X \longrightarrow \Omega^\infty_{\mathcal{C}}(\Sigma^\infty_{\mathcal{C}} X \otimes_{\mathcal{C}} \cdots \otimes_{\mathcal{C}} \Sigma^\infty_{\mathcal{C}} X).
\end{equation*}
Its adjoint $\delta_n$ is the map featuring in the coalgebra structure of $\Sigma^\infty_{\mathcal{C}} X$ in $\mathrm{Sp}(\mathcal{C})^\otimes$. Explicitly, $\delta_n$ is the composition
\begin{equation*}
\Sigma^\infty_{\mathcal{C}} X \longrightarrow \Sigma^\infty_{\mathcal{C}}\Omega^\infty_{\mathcal{C}}(\Sigma^\infty_{\mathcal{C}} X \otimes_{\mathcal{C}} \cdots \otimes_{\mathcal{C}} \Sigma^\infty_{\mathcal{C}} X) \longrightarrow \Sigma^\infty_{\mathcal{C}} X \otimes_{\mathcal{C}} \cdots \otimes_{\mathcal{C}} \Sigma^\infty_{\mathcal{C}} X
\end{equation*}
where the second map is induced by the counit of the adjunction between $\Sigma^\infty_{\mathcal{C}}$ and $\Omega^\infty_{\mathcal{C}}$. Let us record a slightly different description of the map $\delta_n$, which we use in the proof of Proposition \ref{prop:Tatebeta}. Using the unit of the mentioned adjunction we may form the composition of maps
\begin{equation*}
\Sigma^\infty_{\mathcal{C}} X \longrightarrow \Sigma^\infty_{\mathcal{C}}\Omega^\infty_{\mathcal{C}}\Sigma^\infty_{\mathcal{C}} X \longrightarrow \partial^n\bigl(\Sigma^\infty_{\mathcal{C}}\Omega^\infty_{\mathcal{C}}\bigr)(\Sigma^\infty_{\mathcal{C}} X, \ldots, \Sigma^\infty_{\mathcal{C}} X).
\end{equation*}
Write $\delta_n'$ for this composition.

\begin{lemma}
\label{lem:codersigmaomega}
There is a natural equivalence $\varphi: \partial^n(\Sigma^\infty_{\mathcal{C}}\Omega^\infty_{\mathcal{C}}) \rightarrow \otimes^n_{\mathcal{C}}$. Furthermore, the composition $\varphi \circ \delta_n'$ is canonically homotopic to $\delta_n$.
\end{lemma}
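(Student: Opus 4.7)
The plan is to construct $\varphi$ from the universal property of $\otimes^n_{\mathcal{C}}$ as the multilinearization of $\times^n$ along $\Omega^\infty_{\mathcal{C}}$. Property (3) of the stabilization, recalled in the introduction to this section, provides a natural transformation
\begin{equation*}
\times^n \circ (\Omega^\infty_{\mathcal{C}})^n \longrightarrow \Omega^\infty_{\mathcal{C}} \circ \otimes^n_{\mathcal{C}}
\end{equation*}
that exhibits $\otimes^n_{\mathcal{C}}$ as a multilinearization of the left-hand side (viewed as a functor $\mathrm{Sp}(\mathcal{C})^n \to \mathcal{C}$). Passing to adjoints across $(\Sigma^\infty_{\mathcal{C}}, \Omega^\infty_{\mathcal{C}})$ gives a natural transformation $u_n: \Sigma^\infty_{\mathcal{C}} \circ \times^n \circ (\Omega^\infty_{\mathcal{C}})^n \longrightarrow \otimes^n_{\mathcal{C}}$, and the equivalence $\varphi$ will be built from $u_n$.

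To produce $\varphi$, compute $\mathrm{cr}^n(\Sigma^\infty_{\mathcal{C}} \Omega^\infty_{\mathcal{C}})(Y_1, \ldots, Y_n) = \mathrm{tcof}_{S} \Sigma^\infty_{\mathcal{C}} \Omega^\infty_{\mathcal{C}}(\prod_{i \in S} Y_i)$ and use that $\Omega^\infty_{\mathcal{C}}$ preserves finite products to rewrite this as $\mathrm{tcof}_{S} \Sigma^\infty_{\mathcal{C}}(\prod_{i \in S} \Omega^\infty_{\mathcal{C}} Y_i)$, which is the cocross effect of $\Sigma^\infty_{\mathcal{C}} \circ \times^n$ evaluated at $(\Omega^\infty_{\mathcal{C}} Y_1, \ldots, \Omega^\infty_{\mathcal{C}} Y_n)$. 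Taking the $n$-th coderivative and using that $\Omega^\infty_{\mathcal{C}}$ is linear (so its first derivative is canonically the identity functor on $\mathrm{Sp}(\mathcal{C})$), one identifies $\partial^n(\Sigma^\infty_{\mathcal{C}} \Omega^\infty_{\mathcal{C}})$ with the multilinearization of $\Sigma^\infty_{\mathcal{C}} \circ \times^n \circ (\Omega^\infty_{\mathcal{C}})^n$ in the $\mathrm{Sp}(\mathcal{C})$-variables. By the universal property embedded in $u_n$, this multilinearization is precisely $\otimes^n_{\mathcal{C}}$; the resulting equivalence is $\varphi$.

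For the compatibility $\varphi \circ \delta_n' \simeq \delta_n$: unwind both sides. By definition, $\delta_n$ is adjoint to the composite $X \xrightarrow{\Delta} X^{\times n} \to \Omega^\infty_{\mathcal{C}}\bigl((\Sigma^\infty_{\mathcal{C}} X)^{\otimes n}\bigr)$, where the second arrow is evaluation at $X$ of the defining transformation $\times^n \circ (\Omega^\infty_{\mathcal{C}})^n \to \Omega^\infty_{\mathcal{C}} \circ \otimes^n_{\mathcal{C}}$ applied to $(\Sigma^\infty_{\mathcal{C}} X, \ldots, \Sigma^\infty_{\mathcal{C}} X)$; equivalently, $\delta_n$ factors as $\Sigma^\infty_{\mathcal{C}} X \to \Sigma^\infty_{\mathcal{C}}(X^{\times n}) \to (\Sigma^\infty_{\mathcal{C}} X)^{\otimes n}$, where the second map is $u_n$ at the point $(\Sigma^\infty_{\mathcal{C}} X, \ldots, \Sigma^\infty_{\mathcal{C}} X)$ combined with $\Sigma^\infty_{\mathcal{C}} \Omega^\infty_{\mathcal{C}} \Sigma^\infty_{\mathcal{C}} X \simeq \Sigma^\infty_{\mathcal{C}} X \circ \eta$ (or rather, using the counit). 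On the other hand, $\delta_n'$ is built from the unit $\Sigma^\infty_{\mathcal{C}} X \to \Sigma^\infty_{\mathcal{C}} \Omega^\infty_{\mathcal{C}} \Sigma^\infty_{\mathcal{C}} X$ followed by the natural map into the $n$-th coderivative. Since $\varphi$ was defined precisely to match the coderivative map with the tensor product via $u_n$, a direct diagram chase (factoring everything through $X^{\times n}$ and using the naturality of $u_n$) shows $\varphi \circ \delta_n' \simeq \delta_n$.

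The main obstacle is bookkeeping rather than substance: one must verify that the two multilinearization procedures — forming $\partial^n$ of the single-variable functor $\Sigma^\infty_{\mathcal{C}}\Omega^\infty_{\mathcal{C}}$, versus multilinearizing the $n$-variable composite $\Sigma^\infty_{\mathcal{C}} \circ \times^n \circ (\Omega^\infty_{\mathcal{C}})^n$ — produce the same object in $\mathrm{Sp}(\mathcal{C})$. This uses that $\Omega^\infty_{\mathcal{C}}$ preserves products (so the single-variable cocross effect factors through a multivariable one after precomposition with $\Omega^\infty_{\mathcal{C}}$ in each slot) together with the fact that $\partial \Omega^\infty_{\mathcal{C}}$ is the identity, so that the chain rule of Klein--Rognes (already invoked in Lemma~\ref{lem:stabapprox}) contributes nothing nontrivial on the $\Omega^\infty_{\mathcal{C}}$-factors.
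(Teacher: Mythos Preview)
Your proposal is correct and follows essentially the same approach as the paper: both arguments identify $\partial^n(\Sigma^\infty_{\mathcal{C}}\Omega^\infty_{\mathcal{C}})$ with the multilinearization of $\Sigma^\infty_{\mathcal{C}} \circ \times^n \circ (\Omega^\infty_{\mathcal{C}})^n$ using that $\Omega^\infty_{\mathcal{C}}$ preserves products, invoke the defining property of $\otimes^n_{\mathcal{C}}$ as that multilinearization, and appeal to the chain rule together with the fact that the linearization of $\Sigma^\infty_{\mathcal{C}}\Omega^\infty_{\mathcal{C}}$ is the identity. The paper packages this slightly differently by writing out an explicit four-step factorization of $\delta_n$ through $\Sigma^\infty_{\mathcal{C}}(\Omega^\infty_{\mathcal{C}}\Sigma^\infty_{\mathcal{C}} X \times \cdots \times \Omega^\infty_{\mathcal{C}}\Sigma^\infty_{\mathcal{C}} X)$ and then multilinearizing, but the content is the same.
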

\begin{proof}
An alternative way to factor the map $\delta_n$ is as follows:
\begin{eqnarray*}
\Sigma^\infty_{\mathcal{C}} X & \longrightarrow & \Sigma^\infty_{\mathcal{C}}\Omega^\infty_{\mathcal{C}}\Sigma^\infty_{\mathcal{C}} X \\
& \longrightarrow & \Sigma^\infty_{\mathcal{C}}(\Omega^\infty_{\mathcal{C}}\Sigma^\infty_{\mathcal{C}} X \times \cdots \times \Omega^\infty_{\mathcal{C}}\Sigma^\infty_{\mathcal{C}} X) \\
& \longrightarrow & \Sigma^\infty_{\mathcal{C}}\Omega^\infty_{\mathcal{C}}(\Sigma^\infty_{\mathcal{C}} X \otimes_{\mathcal{C}} \cdots \otimes_{\mathcal{C}} \Sigma^\infty_{\mathcal{C}}X) \\
& \longrightarrow & \Sigma^\infty_{\mathcal{C}} X \otimes_{\mathcal{C}} \cdots \otimes_{\mathcal{C}} \Sigma^\infty_{\mathcal{C}}X.
\end{eqnarray*}
The second to last map induces an equivalence after multilinearizing by definition; the last map does so as well, by the chain rule for linearizations (see Theorem 6.2.1.22 of \cite{higheralgebra}) and the fact that the linearization of $\Sigma^\infty_{\mathcal{C}}\Omega^\infty_{\mathcal{C}}$ is equivalent to the identity functor of $\mathrm{Sp}(\mathcal{C})$. Now observe that the multilinearization of the functor
\begin{equation*}
X \longmapsto \Sigma^\infty_{\mathcal{C}}(\Omega^\infty_{\mathcal{C}}\Sigma^\infty_{\mathcal{C}} X \times \cdots \times \Omega^\infty_{\mathcal{C}}\Sigma^\infty_{\mathcal{C}} X) \simeq \Sigma^\infty_{\mathcal{C}}\Omega^\infty_{\mathcal{C}}(\Sigma^\infty_{\mathcal{C}} X \times \cdots \times \Sigma^\infty_{\mathcal{C}} X) 
\end{equation*}
is precisely $\partial^n\bigl(\Sigma^\infty_{\mathcal{C}}\Omega^\infty_{\mathcal{C}}\bigr)(\Sigma^\infty_{\mathcal{C}} X, \ldots, \Sigma^\infty_{\mathcal{C}} X)$ which concludes the proof.
\end{proof}

We end this section by recalling the relation between the derivatives of the functors $\mathrm{id}_{\mathcal{C}}$ and $\Sigma^\infty_{\mathcal{C}}\Omega^\infty_{\mathcal{C}}$. Consider pointed compactly generated $\infty$-categories $\mathcal{C}$, $\mathcal{D}$ and $\mathcal{E}$ and functors $F: \mathcal{C} \rightarrow \mathcal{D}$, $G: \mathcal{D} \rightarrow \mathcal{E}$ preserving filtered colimits. Using the stabilization of $\mathcal{D}$ we may form a cosimplicial object
\[
\xymatrix{
G\Omega^\infty_{\mathcal{D}}\Sigma^\infty_{\mathcal{D}}F \ar@<.5ex>[r]\ar@<-.5ex>[r] & G\Omega^\infty_{\mathcal{D}}\Sigma^\infty_{\mathcal{D}}\Omega^\infty_{\mathcal{D}}\Sigma^\infty_{\mathcal{D}}F \ar@<1ex>[r]\ar[r]\ar@<-1ex>[r] \ar[l] & G\Omega^\infty_{\mathcal{D}}(\Sigma^\infty_{\mathcal{D}}\Omega^\infty_{\mathcal{D}})^2\Sigma^\infty_{\mathcal{D}}F \ar@<1.5ex>[r]\ar@<.5ex>[r]\ar@<-.5ex>[r]\ar@<-1.5ex>[r] \ar@<.5ex>[l]\ar@<-.5ex>[l] & \cdots \ar@<1ex>[l]\ar[l]\ar@<-1ex>[l]
}
\]
which we denote by $G\Omega^\infty_{\mathcal{D}}(\Sigma^\infty_{\mathcal{D}}\Omega^\infty_{\mathcal{D}})^{\bullet}\Sigma^\infty_{\mathcal{D}}F$. We use the notation $\mathrm{Tot}$ (i.e. totalization) for the limit of a cosimplicial diagram. The following is due to Arone and Ching \cite{aroneching}:

\begin{proposition}
\label{prop:cobarderivatives}
For each $n \geq 0$, the canonical maps
\begin{eqnarray*}
P_n(GF) & \longrightarrow & \mathrm{Tot}\bigl(P_n(G\Omega^\infty_{\mathcal{D}}(\Sigma^\infty_{\mathcal{D}}\Omega^\infty_{\mathcal{D}})^{\bullet}\Sigma^\infty_{\mathcal{D}}F)\bigr), \\
\partial_n(GF) & \longrightarrow & \mathrm{Tot}\bigl(\partial_n(G\Omega^\infty_{\mathcal{D}}(\Sigma^\infty_{\mathcal{D}}\Omega^\infty_{\mathcal{D}})^{\bullet}\Sigma^\infty_{\mathcal{D}}F)\bigr)
\end{eqnarray*}
are equivalences.
\end{proposition}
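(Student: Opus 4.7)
The plan is to prove both statements by a joint induction on $n$, exploiting the homogeneous layer fiber sequence $D_n H \to P_n H \to P_{n-1} H$ in which $D_n H$ depends only on $\partial_n H$ via the formula $D_n H(X) \simeq \Omega^\infty((\partial_n H \otimes^n \cdots \otimes^n \partial_n H)_{h\Sigma_n})$ coming from the stable $\infty$-operad of the target. Since $P_n$ preserves finite limits (Theorem \ref{thm:TheoremA}(c)) and a cosimplicial fiber sequence induces a fiber sequence on totalizations, the $P_n$-statement for $GF$ reduces inductively to the $\partial_k$-statements for $1 \leq k \leq n$: granted $\partial_k$-equivalences for all $k \leq n-1$, the $P_{n-1}$-statement follows, and then the degree-$n$ homogeneous layer fits into a fiber square whose corners are controlled by the $\partial_n$-statement.

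For the base case $n=1$, the essential input is that $P_1(\Sigma^\infty_{\mathcal{D}} \Omega^\infty_{\mathcal{D}}) \simeq \mathrm{id}_{\mathrm{Sp}(\mathcal{D})}$, equivalently that the unit $H \to \Omega^\infty_{\mathcal{D}} \Sigma^\infty_{\mathcal{D}} H$ is a $P_1$-equivalence for any functor $H$ with compactly generated target. Iterating, each coface and codegeneracy of the cosimplicial object $G\Omega^\infty_{\mathcal{D}} (\Sigma^\infty_{\mathcal{D}} \Omega^\infty_{\mathcal{D}})^\bullet \Sigma^\infty_{\mathcal{D}} F$ becomes an equivalence after applying $P_1$, so the cosimplicial object is levelwise weakly equivalent to the constant diagram $P_1(GF)$ and its totalization is $P_1(GF)$.

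For the inductive step I would compute $\partial_n$ of both sides using cross effects: recalling that $\partial_n H$ arises as the multilinearization of $\mathrm{cr}_n H$, and unwinding $\mathrm{cr}_n(GF)$ through the Klein--Rognes chain rule (Corollary 6.2.1.24 of \cite{higheralgebra}), one obtains an expression of $\partial_n(GF)$ in terms of $\partial_* G$ composed with $\partial_* F$ over the operadic/cooperadic structure carried by $\partial_*(\Sigma^\infty_{\mathcal{D}} \Omega^\infty_{\mathcal{D}})$. The cosimplicial object $G\Omega^\infty_{\mathcal{D}} T^\bullet \Sigma^\infty_{\mathcal{D}} F$ is designed so that in each cosimplicial degree $k$ the derivative $\partial_n$ decomposes as a sum over partitions of $\{1,\ldots,n\}$ of length $k+1$, with coface maps corresponding to merging blocks via the cooperad structure. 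The totalization then computes exactly the cobar construction producing $\partial_n(GF)$ from $\partial_* G$ and $\partial_* F$.

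The main obstacle will be convergence of the totalization: in general a cobar tower over a comonad only produces a conditionally convergent Bousfield--Kan spectral sequence. What rescues us here is that, once we apply $\partial_n$ (or $P_n$), the cosimplicial object effectively only involves operations of arity at most $n$; the cofaces beyond cosimplicial degree $n$ act as equivalences on $n$-th derivatives for a connectivity/cross-effect reason (this is really the content of Arone--Ching's theorem). Verifying this finite-stage degeneracy, and hence strong convergence of the totalization, is the technical heart of the argument; once in place, comparing both sides termwise using the chain rule and naturality of the augmentation $F \to \Omega^\infty_{\mathcal{D}} \Sigma^\infty_{\mathcal{D}} F$ closes the induction.
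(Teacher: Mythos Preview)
Your induction is on the wrong variable, and this leads to a genuine gap. The paper does not induct on $n$; it fixes $n$ and inducts on the Goodwillie tower of $G$. The key observation you are missing is this: if $G$ is \emph{homogeneous}, then $G \simeq H\Sigma^\infty_{\mathcal{D}}$ for some $H$, and the cosimplicial object
\[
P_n\bigl(H\Sigma^\infty_{\mathcal{D}}\Omega^\infty_{\mathcal{D}}(\Sigma^\infty_{\mathcal{D}}\Omega^\infty_{\mathcal{D}})^{\bullet}\Sigma^\infty_{\mathcal{D}}F\bigr)
\]
admits extra codegeneracies, supplied by the counit $\Sigma^\infty_{\mathcal{D}}\Omega^\infty_{\mathcal{D}} \to \mathrm{id}$. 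The standard contracting-homotopy lemma then gives the equivalence immediately. For general $G$ one uses the fiber sequence $D_kG \to P_kG \to P_{k-1}G$, the homogeneous case for $D_kG$, and the fact that both $P_n$ and $\mathrm{Tot}$ preserve fiber sequences, to climb the tower of $G$ up to $k=n$.

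Your inductive step, by contrast, needs a description of $\partial_n(GF)$ in terms of $\partial_*G$ and $\partial_*F$. The Klein--Rognes chain rule you cite (Corollary 6.2.1.24 of \cite{higheralgebra}) only handles first derivatives; the higher chain rule you want is essentially the content of the proposition itself (indeed you acknowledge that the convergence step ``is really the content of Arone--Ching's theorem''). So the argument is circular: you are invoking the result to prove it. The ``finite-stage degeneracy'' you flag as the technical heart is exactly what the extra-codegeneracy trick in the paper's proof buys for free, once one organizes the induction along the tower of $G$ rather than along $n$.
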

\begin{proof}
We describe the proof of the first equivalence, the second is almost identical. First suppose $G$ is homogeneous, so that it is of the form $H\Sigma^\infty_{\mathcal{D}}$ for some functor $H: \mathrm{Sp}(\mathcal{D}) \rightarrow \mathcal{E}$. Then the cosimplicial object
\begin{equation*}
P_n(H\Sigma^\infty_{\mathcal{D}}\Omega^\infty_{\mathcal{D}}(\Sigma^\infty_{\mathcal{D}}\Omega^\infty_{\mathcal{D}})^{\bullet}\Sigma^\infty_{\mathcal{D}}F)
\end{equation*}
admits extra codegeneracies (sometimes called contracting codegeneracies) induced by the counit $\Sigma^\infty_{\mathcal{D}}\Omega^\infty_{\mathcal{D}} \rightarrow \mathrm{id}_{\mathcal{D}}$, so that the claimed equivalence immediately follows from the standard lemma on contracting homotopies. For general $G$ we argue by induction on the Goodwillie tower of $G$. Consider the fiber sequence $D_k G \rightarrow P_k G \rightarrow P_{k-1} G$ and the resulting diagram
\[
\xymatrix{
P_n\bigl((D_k G) F\bigr) \ar[r]\ar[d] & \mathrm{Tot}\bigl(P_n(D_k G\Omega^\infty_{\mathcal{D}}(\Sigma^\infty_{\mathcal{D}}\Omega^\infty_{\mathcal{D}})^{\bullet}\Sigma^\infty_{\mathcal{D}}F)\bigr) \ar[d] \\
P_n\bigl((P_k G) F\bigr) \ar[r]\ar[d] & \mathrm{Tot}\bigl(P_n(P_k G\Omega^\infty_{\mathcal{D}}(\Sigma^\infty_{\mathcal{D}}\Omega^\infty_{\mathcal{D}})^{\bullet}\Sigma^\infty_{\mathcal{D}}F)\bigr) \ar[d] \\
P_n\bigl((P_{k-1} G) F\bigr) \ar[r] & \mathrm{Tot}\bigl(P_n(P_{k-1}G\Omega^\infty_{\mathcal{D}}(\Sigma^\infty_{\mathcal{D}}\Omega^\infty_{\mathcal{D}})^{\bullet}\Sigma^\infty_{\mathcal{D}}F)\bigr).
}
\]
Since $P_n$ commutes with finite limits and totalizations preserve limit diagrams, both columns are fiber sequences. By induction we may assume that the bottom horizontal map is an equivalence, the base of the induction being a consequence of the homogeneous case above; the homogeneous case also shows that the top horizontal map is an equivalence. We conclude that the map in the middle is an equivalence. The proposition follows by setting $k = n$.
\end{proof}

\begin{corollary}
\label{cor:cobarid}
For $\mathcal{C}$ a pointed compactly generated $\infty$-category, the canonical map
\begin{equation*}
\partial_n\mathrm{id}_{\mathcal{C}} \longrightarrow \mathrm{Tot}\bigl(\partial_n(\Omega^\infty_{\mathcal{C}}(\Sigma^\infty_{\mathcal{C}}\Omega^\infty_{\mathcal{C}})^{\bullet}\Sigma^\infty_{\mathcal{C}})\bigr)
\end{equation*}
is an equivalence.
\end{corollary}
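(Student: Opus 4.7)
The plan is simply to specialize Proposition \ref{prop:cobarderivatives} to the situation in which all three $\infty$-categories coincide and both functors are the identity. Concretely, I would take $\mathcal{D} = \mathcal{E} = \mathcal{C}$ and $F = G = \mathrm{id}_{\mathcal{C}}$ in the statement of that proposition. Since $GF = \mathrm{id}_{\mathcal{C}}$, the second equivalence of Proposition \ref{prop:cobarderivatives} reads exactly as
\begin{equation*}
\partial_n\mathrm{id}_{\mathcal{C}} \longrightarrow \mathrm{Tot}\bigl(\partial_n(\Omega^\infty_{\mathcal{C}}(\Sigma^\infty_{\mathcal{C}}\Omega^\infty_{\mathcal{C}})^{\bullet}\Sigma^\infty_{\mathcal{C}})\bigr),
\end{equation*}
which is the asserted equivalence. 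The only mild verification is that the map produced by this specialization agrees with the canonical one in the statement of the corollary, but both are induced by the unit $\mathrm{id}_{\mathcal{C}} \rightarrow \Omega^\infty_{\mathcal{C}} \Sigma^\infty_{\mathcal{C}}$ and the resulting cosimplicial resolution of $\mathrm{id}_{\mathcal{C}}$, so this identification is tautological. Since there is no real work beyond invoking the previous proposition, no step presents a genuine obstacle; the substantive content sits entirely in Proposition \ref{prop:cobarderivatives} itself.
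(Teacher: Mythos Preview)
Your proposal is correct and matches the paper's intent exactly: the corollary is stated without proof in the paper precisely because it is the specialization of Proposition \ref{prop:cobarderivatives} to $\mathcal{D} = \mathcal{E} = \mathcal{C}$ and $F = G = \mathrm{id}_{\mathcal{C}}$.
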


\chapter{Truncations}
\label{sec:apptruncations}

In this section we provide several of the more technical proofs needed for the results of Chapter \ref{sec:coalgebras}. Specifically, we owe the reader proofs of Theorem \ref{thm:ntruncation} and of Propositions \ref{prop:SpPnCntruncated}, \ref{prop:truncatedtensor}, and \ref{prop:mapstruncations}.

In Section \ref{subsec:truncatedoperads} we start by investigating the homotopy theory of truncated $\infty$-operads using the formalism of dendroidal sets. Then, in Section \ref{subsec:truncatedstable}, we investigate the truncations of stable $\infty$-operads and prove Theorem \ref{thm:ntruncation} and Propositions \ref{prop:truncatedtensor} and \ref{prop:mapstruncations}. In Section \ref{subsec:cobar} we discuss the relation between the tensor products induced by the stable $\infty$-operad $\mathrm{Sp}(\mathcal{C})^\otimes$ and the derivatives of the identity functor of $\mathcal{C}$. We prove Proposition \ref{prop:derivativesid}, which we use several times in the body of this paper. This section also includes a proof of Proposition \ref{prop:SpPnCntruncated}. Section \ref{subsec:appendixcoalgebras} covers technical results on $\infty$-categories of coalgebras. In it we prove Lemma \ref{lem:algtaun+1}, which inductively describes $n$-truncated coalgebras in a stable $\infty$-operad, and Lemma \ref{lem:coalglimit}, which expresses the $\infty$-category of coalgebras as a limit of $\infty$-categories of truncated coalgebras.
 
We will write $\mathbf{Op}$ for the $\infty$-category of nonunital $\infty$-operads. In Lurie's formalism these are precisely the $\infty$-operads whose structure map to $\NFin$ factors through $\N\mathrm{Surj}$, where $\mathrm{Surj}$ denotes the category of finite pointed sets and surjective maps. By the results of \cite{hhm}, an equivalent way of describing the $\infty$-category $\mathbf{Op}$ is by using \emph{open} dendroidal sets. This second perspective will be more convenient when constructing truncations of operads. The basics of the theory of dendroidal sets are contained in \cite{cisinskimoerdijk1} and \cite{moerdijkweiss}. Also, \cite{hhm} contains a fairly comprehensive exposition of the background we need. It should be noted that only Section \ref{subsec:truncatedoperads} uses the formalism of dendroidal sets in a serious way. The subsequent sections consist mostly of more abstract manipulations with $\infty$-operads and could be carried out in any reasonable formalism for such as soon as the results of \ref{subsec:truncatedoperads} have been established.

\section{The homotopy theory of truncated $\infty$-operads}
\label{subsec:truncatedoperads}

Let us first fix notation and terminology concerning dendroidal sets. Write $\mathbf{\Omega}$ for the category of open rooted trees. Recall from \cite{hhm} that a tree is open if it contains no nullary vertices. This category is denoted by $\mathbf{\Omega}_o$ in \cite{hhm}, but since we will only work with open trees here we drop the subscript. Define $\mathbf{\Omega}_n$ to be the full subcategory of $\mathbf{\Omega}$ spanned by the trees with at most $n$ leaves; also, write $\mathbf{\Psi}_n$ for the full subcategory of $\mathbf{\Omega}$ spanned by those trees whose vertices have at most $n$ ingoing edges, so that we have inclusions $u: \mathbf{\Omega}_n \rightarrow \mathbf{\Psi}_n$ and $v: \mathbf{\Psi}_n \rightarrow \mathbf{\Omega}$. For the corresponding categories of presheaves we obtain adjunctions
\[
\xymatrix{
\mathbf{Sets}^{\mathbf{\Omega}_n^{\mathrm{op}}} \ar@<.5ex>[r]^{u_!} & \mathbf{Sets}^{\mathbf{\Psi}_n^{\mathrm{op}}} \ar@<.5ex>[l]^{u^*} \ar@<.5ex>[r]^{v_!} & \mathbf{Sets}^{\mathbf{\Omega}^{\mathrm{op}}}, \ar@<.5ex>[l]^{v^*}
}
\]
where $u^*$ and $v^*$ are the evident restriction functors. Note that $u_!$ and $v_!$ are fully faithful, so that we may regard the former two categories as full subcategories of the latter. Note that the inclusion of $\mathbf{\Delta} \rightarrow \mathbf{\Omega}$, obtained by considering $[n]$ as a linear tree with $n$ vertices and $n+1$ edges, factors through the two subcategories $\mathbf{\Omega}_n$ and $\mathbf{\Psi}_n$. As a consequence, the embedding of the category of simplicial sets into $\mathbf{Sets}^{\mathbf{\Omega}^{\mathrm{op}}}$ factors through $u_!$ and $v_!$.

\begin{remark}
Like $\mathbf{\Omega}$, both the categories $\mathbf{\Omega}_n$ and $\mathbf{\Psi}_n$ are generalized Reedy categories, with their Reedy structure inherited from $\mathbf{\Omega}$.
\end{remark}

The category $\mathbf{Sets}^{\mathbf{\Omega}^{\mathrm{op}}}$ is called the category of (open) dendroidal sets and here denoted $\mathbf{dSets}$; Cisinski and Moerdijk \cite{cisinskimoerdijk1} established a model structure on this category which in \cite{hhm} is referred to as the \emph{operadic model structure}. Recall the \emph{normal monomorphisms}, which are generated as a saturated class by the boundary inclusions of trees $\partial T \rightarrow T$.  Here $\partial T$ is the union of all faces of $T$. (We will not distinguish in notation between a tree $T$ and the dendroidal set that it represents.) An alternative characterization of these maps is as follows: a monomorphism $f: X \rightarrow Y$ of dendroidal sets is normal if for every $T$, the action of $\mathrm{Aut}(T)$ on $Y(T) - f(X(T))$ is free. Also, if $T$ is a tree and $e$ an inner edge of $T$, we write $\Lambda^e[T]$ for the \emph{inner horn} of $T$ associated to $e$; it is the union of all faces of $T$ except for the inner face corresponding to $e$. A dendroidal set $X$ is called an \emph{$\infty$-operad} if it has the right lifting property with respect to all inner horn inclusions $\Lambda^e[T] \rightarrow T$. The operadic model structure is characterized by the fact that its cofibrations are the normal monomorphisms and its fibrant objects are the $\infty$-operads.

There are evident analogues of the above definitions in the categories $\mathbf{Sets}^{\mathbf{\Omega}_n^{\mathrm{op}}}$ and $\mathbf{Sets}^{\mathbf{\Psi}_n^{\mathrm{op}}}$. It should be noted that in the case of $\mathbf{Sets}^{\mathbf{\Psi}_n^{\mathrm{op}}}$ the boundary of a tree $T$ only consists of those faces of the tree that are themselves contained in $\mathbf{\Psi}_n$; it may therefore only be a subobject of the boundary of $T$ considered as a dendroidal set. A similar comment applies to inner horns. With these definitions, the proofs for dendroidal sets (as for example given in \cite{cisinskimoerdijk1}) carry over verbatim to prove the following:

\begin{theorem}
The categories $\mathbf{Sets}^{\mathbf{\Omega}_n^{\mathrm{op}}}$ and $\mathbf{Sets}^{\mathbf{\Psi}_n^{\mathrm{op}}}$ admit model structures in which the cofibrations are the normal monomorphisms and the fibrant objects are those objects having the right lifting property with respect to inner horn inclusions.
\end{theorem}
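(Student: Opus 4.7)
The plan is to follow the Cisinski--Moerdijk construction of the operadic model structure on $\mathbf{dSets}$ almost verbatim, but with $\mathbf{\Omega}$ replaced by $\mathbf{\Omega}_n$ or $\mathbf{\Psi}_n$. The cleanest route is to invoke Cisinski's general existence theorem for model structures on presheaf categories: one specifies the class of cofibrations by a set of generating normal monomorphisms $\partial T \to T$, chooses the inner horn inclusions $\Lambda^e[T] \to T$ as a set of anodyne extensions, and then verifies the usual pushout-product compatibility between the two classes. Weak equivalences can then be defined either by right lifting against the interval object (the nerve of the normalized Boardman--Vogt resolution of the unit operad, restricted to the subcategory) or equivalently via the fibrant replacement. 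Given this framework, the only real content is to check that the combinatorial lemmas of \cite{cisinskimoerdijk1} restrict faithfully to the smaller indexing categories.

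First I would set up the generating cofibrations and anodyne extensions. In $\mathbf{\Omega}_n$, the tree $T$ has at most $n$ leaves, and since every face of $T$ (either an outer face obtained by chopping off a top vertex or a root vertex, or an inner face obtained by contracting an inner edge) again has at most $n$ leaves, the boundary $\partial T$ and every inner horn $\Lambda^e[T]$ considered inside $\mathbf{\Omega}_n$ coincides with the corresponding object computed in $\mathbf{dSets}$. In $\mathbf{\Psi}_n$, the situation is more delicate: outer face maps preserve the property that every vertex has valence $\leq n$, but contracting an inner edge between vertices of valence $k$ and $l$ produces a vertex of valence $k+l-1$, which can exceed $n$. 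This is precisely why $\partial T$ and $\Lambda^e[T]$ in $\mathbf{\Psi}_n$ are allowed to be strictly smaller than in $\mathbf{dSets}$: they incorporate only those faces that remain in $\mathbf{\Psi}_n$. The small object argument then produces functorial factorizations using the appropriate sets.

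Next I would verify the pushout-product axiom, which is the combinatorial heart of the proof. The key lemma of Cisinski--Moerdijk states that the pushout-product of a normal monomorphism with an inner horn inclusion is again an inner anodyne extension, proved by constructing an explicit filtration whose successive cells are themselves inner horn inclusions of trees built out of the original data. For $\mathbf{\Omega}_n$, every tree appearing in this filtration is obtained from subtrees and faces of the original $T$, so all have leaf count bounded by the leaf count of $T$, and the argument is internal to $\mathbf{\Omega}_n$. For $\mathbf{\Psi}_n$, an analogous bound applies to vertex valences, since the construction never creates vertices of higher arity than those appearing in $T$. From here, the usual formal consequences (closure properties, fibration characterization, and the characterization of weak equivalences via lifting against an interval object) go through unchanged, and Cisinski's theorem delivers the model structure. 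The characterization of fibrant objects as those having the right lifting property against inner horn inclusions is then immediate from the definition of anodyne extensions and the construction of fibrant replacement.

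The main obstacle, in my view, is psychological rather than mathematical: one has to be confident that the bookkeeping of faces and horns in the restricted categories does not obstruct any of the combinatorial filtrations used in \cite{cisinskimoerdijk1}. For $\mathbf{\Omega}_n$ this is essentially automatic because the leaf-count monotonicity is preserved by all face operations. For $\mathbf{\Psi}_n$ the subtlety is that one genuinely works with a smaller boundary and smaller horns than one would in $\mathbf{dSets}$, and one must check that the filtration from the CM pushout-product lemma can be reorganized so that every intermediate cell is a horn of a tree lying in $\mathbf{\Psi}_n$ (with its $\mathbf{\Psi}_n$-horn, not its full dendroidal horn). This boils down to observing that all vertices in the auxiliary trees constructed in the filtration arise by glueing vertices already present in $T$ along common edges without merging them, so valence bounds propagate; at that point the argument is finished.
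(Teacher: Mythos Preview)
Your proposal is correct and matches the paper's approach exactly: the paper offers no proof beyond the sentence ``the proofs for dendroidal sets (as for example given in \cite{cisinskimoerdijk1}) carry over verbatim,'' and your proposal is precisely an elaboration of what carrying over verbatim entails, including the correct identification of the one subtlety (that in $\mathbf{\Psi}_n$ boundaries and horns may be strictly smaller than their dendroidal counterparts because inner faces can raise vertex valence). You have supplied more justification than the paper itself does.
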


We will also refer to the model structures of the previous theorem as the operadic model structures. 

\begin{remark}
It is straightforward to verify that the chain of Quillen equivalences connecting $\mathbf{dSets}$ and Lurie's model category of $\infty$-preoperads $\mathbf{POp}$ of \cite{hhm} restricts to a chain of equivalences between $\mathbf{sSets}^{\mathbf{\Omega}_n^{\mathrm{op}}}$ and the model category of marked simplicial sets over $(\NFin^{\leq n})^{\mathrm{\natural}}$ described in Remark \ref{rmk:adhoctruncation}. Therefore, we can use the operadic model structure on $\mathbf{Sets}^{\mathbf{\Omega}_n^{\mathrm{op}}}$ as a model for the homotopy theory of $n$-truncated $\infty$-operads. Recall that in Chapter \ref{sec:coalgebras} we denoted the corresponding $\infty$-category by $\mathbf{Op}_{\leq n}$.
\end{remark}

The functors $u^*$ and $v^*$ enjoy several pleasant properties with respect to the operadic model structures, summarized in the following two results:

\begin{theorem}
\label{thm:Omegan}
The functor $u^*: \mathbf{Sets}^{\mathbf{\Psi}_n^{\mathrm{op}}} \rightarrow \mathbf{Sets}^{\mathbf{\Omega}_n^{\mathrm{op}}}$ is both left and right Quillen. Furthermore, both the resulting adjunctions are Quillen equivalences.
\end{theorem}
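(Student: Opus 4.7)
The plan is to verify that each of the pairs $(u_!, u^*)$ and $(u^*, u_*)$ forms a Quillen equivalence between $\mathbf{Sets}^{\mathbf{\Omega}_n^{\mathrm{op}}}$ and $\mathbf{Sets}^{\mathbf{\Psi}_n^{\mathrm{op}}}$ with their operadic model structures. Since $u\colon \mathbf{\Omega}_n \hookrightarrow \mathbf{\Psi}_n$ is a full subcategory inclusion, the Kan extensions $u_!$ and $u_*$ are fully faithful, so the unit $\mathrm{id} \to u^* u_!$ and the counit $u^* u_* \to \mathrm{id}$ are already natural isomorphisms. It then suffices to verify that $u^*$ is both left and right Quillen, and that the remaining counit $u_! u^* \to \mathrm{id}$ and unit $\mathrm{id} \to u_* u^*$ are objectwise weak equivalences.

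First I would check that $(u_!, u^*)$ is a Quillen adjunction. Every face of a tree $T \in \mathbf{\Omega}_n$ has at most $n$ leaves and thus lies in $\mathbf{\Omega}_n$, so $u_!$ sends a generating cofibration $\partial T \to T$ (resp.\ a generating acyclic cofibration $\Lambda^e[T] \to T$) in $\mathbf{Sets}^{\mathbf{\Omega}_n^{\mathrm{op}}}$ to the analogous generating (acyclic) cofibration in $\mathbf{Sets}^{\mathbf{\Psi}_n^{\mathrm{op}}}$. Hence $u_!$ is left Quillen.

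The central step is to show that the counit $\epsilon\colon u_! u^* \to \mathrm{id}$ is a pointwise weak equivalence. Since both functors commute with colimits, a skeletal induction reduces this to representables $T \in \mathbf{\Psi}_n$. If $T \in \mathbf{\Omega}_n$ the map is an isomorphism. Otherwise $T$ has more than $n$ leaves while each vertex has arity at most $n$; in this case I would invoke the Segal core decomposition. The Segal core $\mathrm{Sc}(T) \subseteq T$ is the union of the corollas at the vertices of $T$ glued along their shared internal edges. Every such corolla lies in $\mathbf{\Omega}_n$, hence $\mathrm{Sc}(T) \subseteq u_! u^* T$. By the standard result of Cisinski--Moerdijk the inclusion $\mathrm{Sc}(T) \hookrightarrow T$ is inner anodyne, and since every intermediate tree used in its construction is a subtree of $T$ and therefore still lies in $\mathbf{\Psi}_n$, the same filtration exhibits $u_! u^* T \hookrightarrow T$ as an inner anodyne extension. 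Combined with the unit being an isomorphism, this shows $(u_!, u^*)$ is a Quillen equivalence; in particular $u^*$ is right Quillen and, by the Quillen equivalence together with Ken Brown's lemma applied to $u_!$, preserves all weak equivalences.

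For the adjunction $(u^*, u_*)$, the cofibrations are the normal monomorphisms; restriction preserves monomorphisms, and normality is preserved because $\mathrm{Aut}_{\mathbf{\Omega}_n}(T) = \mathrm{Aut}_{\mathbf{\Psi}_n}(T)$ for every $T \in \mathbf{\Omega}_n$. Together with the fact that $u^*$ preserves all weak equivalences, this implies $u^*$ preserves acyclic cofibrations, so $(u^*, u_*)$ is a Quillen adjunction. The counit $u^* u_* \to \mathrm{id}$ is an isomorphism, and the unit $\mathrm{id} \to u_* u^*$ is a weak equivalence either by a dual right-Kan-extension argument or by a two-out-of-three comparison using the already established Quillen equivalence $(u_!, u^*)$. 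The main obstacle throughout is the verification that $u_! u^* T \to T$ is a weak equivalence for trees $T \in \mathbf{\Psi}_n$ with more than $n$ leaves; this is the Segal-core argument and is where the combinatorial content of the assertion resides. Everything else is a formality once one observes that $u$ is fully faithful and that boundaries, inner horns, and automorphism groups of trees in $\mathbf{\Omega}_n$ are computed intrinsically.
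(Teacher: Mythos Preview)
Your overall plan is close to the paper's, but there is a genuine circularity in the middle that the paper avoids by arguing in a different order.

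The main gap is your deduction that $u^*$ preserves all weak equivalences. You write that this follows ``by the Quillen equivalence together with Ken Brown's lemma applied to $u_!$''. Ken Brown for $u_!$ only tells you that $u_!$ preserves weak equivalences between cofibrant objects; it says nothing about $u^*$. And you cannot bootstrap from the Quillen equivalence $(u_!,u^*)$ either, because your argument for that equivalence is itself incomplete: with only the underived unit an isomorphism and the underived counit a weak equivalence on cofibrant objects, you still need to know that the \emph{derived} unit $A \to u^*\bigl((u_!A)_f\bigr)$ is a weak equivalence for cofibrant $A$. That amounts to $u^*$ sending the trivial cofibration $u_!A \to (u_!A)_f$ to a weak equivalence --- exactly the left Quillen property you are trying to establish. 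The paper breaks this loop by proving directly, via Lemma~\ref{lem:FleftSegcore}, that $u^*$ is left Quillen: it checks that $u^*$ preserves normal monomorphisms and sends each Segal core inclusion $\mathrm{Sc}(T)\hookrightarrow T$ (for $T\in\mathbf{\Psi}_n$) to an inner anodyne map. Once that is in hand, both Quillen equivalences follow formally.

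A secondary issue is your claim that ``the same filtration'' exhibiting $\mathrm{Sc}(T)\hookrightarrow T$ as inner anodyne also exhibits $u_!u^*T\hookrightarrow T$ as inner anodyne. The standard filtration adjoins subtrees by increasing number of \emph{vertices}, and this does not pass through $u_!u^*T$ (which is the union of subtrees with at most $n$ \emph{leaves}): a subtree with two leaves can have arbitrarily many unary vertices, so it may be adjoined after subtrees with more than $n$ leaves. The paper instead identifies $u^*T$ explicitly (Lemma~\ref{lem:u*}) as $\varinjlim_{S\in\mathrm{Sub}_n(T)}S$ and then runs the vertex-count filtration inside $\mathrm{Sub}_n(T)$ to show $u^*(\mathrm{Sc}(T))\to u^*(T)$ is inner anodyne in $\mathbf{Sets}^{\mathbf{\Omega}_n^{\mathrm{op}}}$, where each step is a pushout along an external-boundary inclusion $\partial^{\mathrm{ext}}S\hookrightarrow S$. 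Your idea can be salvaged by a two-phase filtration (first subtrees with $\le n$ leaves by vertex count, then the rest), but this is not ``the same filtration'' and needs to be spelled out.
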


\begin{theorem}
\label{thm:Psin}
The functor $v^*: \mathbf{dSets} \rightarrow \mathbf{Sets}^{\mathbf{\Psi}_n^{\mathrm{op}}}$ is both left and right Quillen.
\end{theorem}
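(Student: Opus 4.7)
The plan is to verify separately that both adjunctions $(v_!, v^*)$ and $(v^*, v_*)$ are Quillen, which is what it means for $v^*$ to be simultaneously right and left Quillen. Both adjoints exist by general nonsense since $v$ is a full subcategory inclusion, and $v_!$ is fully faithful. The first step is to check that $v^*$ and $v_!$ both preserve cofibrations. For $v^*$: if $X \hookrightarrow Y$ is a normal monomorphism in $\mathbf{dSets}$, then freeness of the $\mathrm{Aut}(T)$-action on $Y(T)-X(T)$ for all $T \in \mathbf{\Omega}$ restricts in particular to $T \in \mathbf{\Psi}_n$. For $v_!$: it suffices to treat the generating cofibrations $\partial T \hookrightarrow T$ with $T \in \mathbf{\Psi}_n$. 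Since $v$ is fully faithful, $v_!(T)=T$ as a representable dendroidal set, while $v_!(\partial T)$ is the subobject of $T$ in $\mathbf{dSets}$ generated by those faces of $T$ that happen to lie in $\mathbf{\Psi}_n$; any subobject of a normal representable is normal. By adjunction, preservation of cofibrations by $v_!$ immediately yields preservation of trivial fibrations by $v^*$, and dually preservation of cofibrations by $v^*$ gives preservation of trivial fibrations by $v_*$.

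The essential content is the preservation of trivial cofibrations in both directions. This reduces to checking each of $v^*$ and $v_!$ on the generating class of inner horn inclusions $\Lambda^e[T] \hookrightarrow T$ (together with any further generators needed in the operadic model structure, which are handled by the same method). For $v^*$ of an inner horn $\Lambda^e[T] \hookrightarrow T$ with $T \in \mathbf{\Omega}$ arbitrary, I would filter $v^*(T)$ over $v^*(\Lambda^e[T])$ by successively attaching the representables coming from faces of $T$ that lie in $\mathbf{\Psi}_n$ and are not already present. Each stage is arranged to be a pushout along an inner horn inclusion in $\mathbf{Sets}^{\mathbf{\Psi}_n^{\mathrm{op}}}$ associated to a smaller $\mathbf{\Psi}_n$-tree, hence inner-anodyne. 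For $v_!$ of an inner horn $\Lambda^e[T] \hookrightarrow T$ with $T \in \mathbf{\Psi}_n$, I would factor the inclusion as $v_!(\Lambda^e[T]) \hookrightarrow \Lambda^e[T] \hookrightarrow T$. The second map is the standard inner horn, so it is a trivial cofibration. The first map fills in precisely those inner faces $\partial_{e'}T$ for which contracting the inner edge $e'$ produces a vertex of arity greater than $n$, so that $\partial_{e'}T \notin \mathbf{\Psi}_n$. These are attached one at a time in a suitable order; each attachment is a pushout along an inner horn inclusion in $\mathbf{dSets}$ for the tree $\partial_{e'}T$, because all proper faces of $\partial_{e'}T$ other than the distinguished one already lie in the previous stage of the filtration.

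The main obstacle is this combinatorial filtration argument, especially the $v_!$ side, where one must produce an explicit ordering of the non-$\mathbf{\Psi}_n$ inner faces of $T$ and verify that attaching each one is genuinely a pushout along an inner-anodyne morphism rather than along a larger cell. This is in the same spirit as the standard Cisinski-Moerdijk filtrations of anodyne extensions, but the bookkeeping is more delicate because one must simultaneously track the $\mathbf{\Psi}_n$-condition on vertex arities and the inner-anodyne structure; care must be taken that when a face $\partial_{e'}T$ is attached, the horn that is filled is indeed an inner horn (i.e., the missing face is inner), which follows from a dimension/position analysis of which edges lie inside $\partial_{e'}T$.
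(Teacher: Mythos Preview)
Your overall structure is right, and your checks that $v^*$ and $v_!$ preserve cofibrations are fine (your argument for $v^*$ is in fact slicker than the paper's). But the approach to trivial cofibrations is substantially harder than necessary, and in the $v^*$-as-left-Quillen direction it has a genuine gap.

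The paper does not work with inner horns directly. Instead it invokes a reduction lemma (Lemma~\ref{lem:FleftSegcore}): a cofibration-preserving left adjoint out of any of these presheaf categories is left Quillen as soon as it sends Segal core inclusions $\mathrm{Sc}(T) \to T$ and the map $\{0\} \to J$ to weak equivalences. With this in hand, the $v_!$ direction is one line: $v_!(\mathrm{Sc}(T)) = \mathrm{Sc}(T)$ and $v_!(\{0\} \to J) = (\{0\} \to J)$, since $v_!$ is the full-subcategory inclusion. For the $v^*$ direction the paper uses a structural fact you are missing: for an arbitrary tree $T \in \mathbf{\Omega}$, the presheaf $v^*(T)$ is the \emph{disjoint union} of the representables given by the maximal subtrees of $T$ whose vertices all have at most $n$ inputs (delete every high-arity vertex and take the resulting forest). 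It follows immediately that $v^*(\mathrm{Sc}(T) \to T)$ is a coproduct of Segal core inclusions, hence a trivial cofibration.

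Your proposed filtration for $v^*(\Lambda^e[T]) \hookrightarrow v^*(T)$ is not well-posed without this coproduct description. You write that you would attach ``the representables coming from faces of $T$ that lie in $\mathbf{\Psi}_n$,'' but $v^*(T)$ is not assembled from such faces in any simple way; and for an inner face $\partial_{e'}T$ where contracting $e'$ creates a high-arity vertex, the forest $v^*(\partial_{e'}T)$ bears no direct relation to $v^*(T)$, so the shape of the inclusion $v^*(\Lambda^e[T]) \hookrightarrow v^*(T)$ is not what your sketch suggests. Your filtration argument for $v_!(\Lambda^e[T]) \hookrightarrow \Lambda^e[T]$ is more plausible but, as you yourself note, is exactly the delicate Cisinski--Moerdijk combinatorics you have not carried out; the Segal-core reduction bypasses all of it.
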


In our proofs we will frequently use the notion of \emph{Segal core}, which is the dendroidal analogue of the spine of a simplex. Recall that a \emph{corolla} is a tree with precisely one vertex. We write $C_k$ for the corolla with $k$ leaves. If $T$ is a tree, then its Segal core $\mathrm{Sc}(T)$ is the union of its corollas; to be more precise, as a subpresheaf of the representable presheaf $T$ it is described by
\begin{equation*}
\mathrm{Sc}(T) = \bigcup_{v \in V(T)} C_{n(v)}
\end{equation*}
where $V(T)$ is the set of vertices of $T$ and $n(v)$ is the number of inputs of a vertex $v$. This definition makes sense in all three of the presheaf categories we consider; moreover, for $T \in \mathbf{\Omega}_n$ and $S \in \mathbf{\Psi}_n$ we have the compatibilities
\begin{equation*}
u_!(\mathrm{Sc}(T)) = \mathrm{Sc}(T) \quad\quad \text{and} \quad\quad v_!(\mathrm{Sc}(S)) = \mathrm{Sc}(S).
\end{equation*}
%Also, recall that the category of open dendroidal sets can be equipped with a tensor product functor $\otimes: \mathbf{dSets} \times \mathbf{dSets} \rightarrow \mathbf{dSets}$, which is homotopical in the sense that the pushout-product of two cofibrations is a cofibration, and which is a weak equivalence if at least one of the two is a trivial cofibration. Indeed, these properties are for example established in [...] (and use the fact that we are discussing \emph{open} dendroidal sets here). This tensor product restricts to the other two presheaf categories under consideration. Observe that the functors $u_!$ and $v_!$ preserve the tensor product.

The following lemma will be an important tool. (Note that its analogue for simplicial sets is a well-known fact about the Joyal model structure.) It is a standard result for dendroidal sets, but the same argument applies to our other two presheaf categories:

\begin{lemma}
\label{lem:FleftSegcore}
Let $F$ be a left adjoint functor from either of the three categories $\mathbf{Sets}^{\mathbf{\Omega}_n^{\mathrm{op}}}$, $\mathbf{Sets}^{\mathbf{\Psi}_n^{\mathrm{op}}}$ or $\mathbf{dSets}$ to a model category $\mathcal{E}$ and suppose $F$ preserves cofibrations. If $F$ sends the maps below to weak equivalences, then $F$ is left Quillen:
\begin{itemize}
\item[(a)] For all $T \in \mathbf{\Omega}_n$ (resp. $\mathbf{\Psi}_n$, $\mathbf{\Omega}$), the Segal core inclusion $\mathrm{Sc}(T) \rightarrow T$.
\item[(b)] The inclusion $\{0\} \rightarrow J$, where $J$ denotes the nerve of the usual groupoid interval, i.e. the category with two objects $0$ and $1$ and an isomorphism between them.
\end{itemize}
\end{lemma}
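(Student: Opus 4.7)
The strategy is standard, following the pattern of Cisinski–Moerdijk \cite{cisinskimoerdijk1} but adapted to the three presheaf categories at hand. Write $\mathbf{X}$ for whichever of the three categories we work in and let $\mathcal{W}_F$ denote the class of cofibrations $f$ in $\mathbf{X}$ such that $F(f)$ is a weak equivalence in $\mathcal{E}$. Since $F$ preserves colimits (being a left adjoint) and preserves cofibrations by hypothesis, standard arguments using Ken Brown's lemma show that $\mathcal{W}_F$ is saturated, i.e.\ closed under pushouts along arbitrary cofibrations, transfinite composition, and retracts. The plan is to prove that $\mathcal{W}_F$ contains every trivial cofibration by showing it contains a generating set.

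The first step is to reduce condition (a) to the statement that $F$ sends every inner horn inclusion $\Lambda^e[T] \to T$ (for $T \in \mathbf{X}$ and $e$ an inner edge of $T$) to a weak equivalence. This proceeds by induction on the number of vertices of $T$. Given $T$ one factors the Segal core inclusion as
\[
\mathrm{Sc}(T) \longrightarrow \Lambda^e[T] \longrightarrow T.
\]
The composite lies in $\mathcal{W}_F$ by hypothesis (a). The first map can be built as a transfinite composition of pushouts of Segal core inclusions of proper faces of $T$ (hence of strictly smaller trees), attached along boundary inclusions, plus the inductive input that $F$ takes inner horn inclusions of smaller trees to weak equivalences. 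This places $\mathrm{Sc}(T) \to \Lambda^e[T]$ in $\mathcal{W}_F$; by the two-out-of-three closure of weak equivalences in $\mathcal{E}$ (applied after $F$) combined with the fact that both the composite and the first factor lie in $\mathcal{W}_F$, the inner horn inclusion does too. One must check that the required faces of $T$ still belong to $\mathbf{\Omega}_n$ or $\mathbf{\Psi}_n$ when one works in those truncated settings, which is clear from the definitions: inner faces and leaf-faces of a tree with at most $n$ leaves (resp.\ with vertices of valence at most $n$) again have the same property.

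The second step identifies a convenient generating set for the trivial cofibrations of the operadic model structure on $\mathbf{X}$. By the theory of anodyne extensions, this class is generated (as a saturated class) by the inner horn inclusions $\Lambda^e[T] \to T$ together with inclusions of the form $\{\varepsilon\} \otimes T \cup J \otimes \partial T \to J \otimes T$ for $\varepsilon \in \{0,1\}$ and $T \in \mathbf{X}$ (equivalently, pushout-products of $\{\varepsilon\} \to J$ with boundary inclusions). By step one, $F$ sends all inner horn inclusions into $\mathcal{W}_F$; a further saturation/induction argument, combined with hypothesis (b) and the fact that $\{0\}\to J$ is sent to a weak equivalence, shows the second family is also in $\mathcal{W}_F$. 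The latter uses that $F$ preserves the relevant pushouts (being a left adjoint) and that the pushout-product of the Segal cores of $J$ and $T$ with $F(\{0\}\to J)$ inherits its weak-equivalence status from hypothesis (b) together with the saturation properties of $\mathcal{W}_F$.

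The main obstacle is the tree-combinatorial induction in step one, which is technical but essentially identical to the argument carried out in \cite{cisinskimoerdijk1} for $\mathbf{dSets}$; the point to verify is that it goes through in the restricted categories $\mathbf{\Omega}_n$ and $\mathbf{\Psi}_n$. This rests on observing that the faces, inner horns, and Segal cores appearing in the induction stay within the subcategory, as remarked above. Once step one is established, the conclusion is formal: $F$ sends a generating set of trivial cofibrations into weak equivalences, $\mathcal{W}_F$ is saturated and contains this generating set, hence contains all trivial cofibrations, so $F$ is left Quillen.
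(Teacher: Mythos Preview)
Your step one is essentially right and matches what the paper invokes: the class $\mathcal{W}_F$ is a saturated class of cofibrations closed under two-out-of-three (among cofibrations), and any such class containing the Segal core inclusions contains the inner horn inclusions. The paper simply cites this reduction (Proposition 3.6.8 of \cite{hhm}) rather than redoing the tree induction, but your outline of that induction is sound, including the observation that the relevant faces stay inside $\mathbf{\Omega}_n$ or $\mathbf{\Psi}_n$.

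The gap is in step two. You assert that the trivial cofibrations of the operadic model structure are generated, as a saturated class, by the inner horn inclusions together with the pushout-products of $\{0\} \to J$ with boundary inclusions. This is not known and is almost certainly false. These maps generate the class of \emph{anodyne extensions} in the sense of Cisinski, and in model structures built this way the anodyne extensions are typically a proper subclass of the trivial cofibrations. Equivalently: maps with the right lifting property against your generators are the inner fibrations which are also isofibrations, but these are \emph{not} the fibrations of the model structure in general---that characterization is only valid for maps between fibrant objects. (The same phenomenon is already present in the Joyal model structure on simplicial sets.) So from ``$\mathcal{W}_F$ contains the anodyne generators'' you cannot conclude ``$\mathcal{W}_F$ contains all trivial cofibrations''.

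The paper circumvents this exactly by exploiting the restriction to fibrant objects. It uses the standard lemma that, for an adjunction $(F,G)$ with $F$ preserving cofibrations, $F$ is left Quillen if and only if $G$ preserves fibrations \emph{between fibrant objects}. Since fibrations between fibrant objects in these model structures \emph{are} precisely the inner fibrations with the right lifting property against $\{0\} \to J$, it then suffices that $F$ send inner horn inclusions and $\{0\} \to J$ to trivial cofibrations, and your step one plus hypothesis (b) finish the job. If you insert this reduction in place of your step two, the argument goes through.
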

\begin{proof}
Under the assumption that $F$ preserves cofibrations, a standard argument shows that it preserves trivial cofibrations if and only if its right adjoint preserves fibrations between fibrant objects. In either of the three model categories mentioned, the fibrations between fibrant objects are precisely the $J$-fibrations, i.e. the inner fibrations which also have the right lifting property with respect to the inclusion $\{0\} \rightarrow J$. Therefore it suffices to show that $F$ sends inner horn inclusions and the map $\{0\} \rightarrow J$ to trivial cofibrations. Reducing from inner horn inclusions to Segal core inclusions is for example done as in Proposition 3.6.8 of \cite{hhm}. To be precise, if $\mathcal{A}$ is a saturated class of cofibrations closed under two-out-of-three (among cofibrations) and contains all Segal core inclusions, then it contains all inner horn inclusions.
\end{proof}

Another standard result is the characterization of weak equivalences between fibrant dendroidal sets (see Theorem 3.5 of \cite{cisinskimoerdijk3}), which carries through without change to the two subcategories of $\mathbf{dSets}$ we consider. Recall that by using the tensor product of dendroidal sets, one can define simplicial mapping objects $\mathrm{Map}(X,Y)$ for dendroidal sets $X$ and $Y$. 

\begin{proposition}
\label{prop:wefibrants}
Let $f: X \rightarrow Y$ be a map of fibrant dendroidal sets. Then $f$ is a weak equivalence if and only if the following maps are homotopy equivalences of simplicial sets:
\begin{itemize}
\item[(a)] For any corolla $C_k$, the map $\mathrm{Map}(C_k, X) \rightarrow \mathrm{Map}(C_k, Y)$.
\item[(b)] The map $\mathrm{Map}(\eta, X) \rightarrow \mathrm{Map}(\eta, Y)$, where $\eta$ denotes the `trivial' tree with one edge and no vertices (or equivalently, the image of the 0-simplex $\Delta^0$ in $\mathbf{dSets}$).
\end{itemize}
The analogous statement is true in the model categories $\mathbf{Sets}^{\mathbf{\Omega}_n^{\mathrm{op}}}$ and $\mathbf{Sets}^{\mathbf{\Psi}_n^{\mathrm{op}}}$, where in (a) one only considers corollas $C_k$ with $k \leq n$.
\end{proposition}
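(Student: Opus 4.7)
The plan is to adapt the proof of Theorem 3.5 of \cite{cisinskimoerdijk3} to each of the three model categories uniformly. That argument is essentially formal once the model structure is in place, invoking only features present in all three settings: the Segal core inclusions, boundary inclusions, inner horn inclusions, and the groupoid interval $J$.

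First I would handle the ``only if'' direction via the simplicial enrichment. For any cofibrant (normal) object $C$, the functor $\mathrm{Map}(C,-)$ sends fibrant objects to Kan complexes and sends weak equivalences between fibrant objects to weak equivalences of Kan complexes. Applying this to $C = C_k$ for $k \leq n$ and to $C = \eta$ (all of which are representable, hence normal) yields conditions (a) and (b).

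For the ``if'' direction, I would first reduce to the case of a fibration between fibrant objects satisfying (a) and (b): factor $f = p \circ i$ with $i$ a trivial cofibration and $p$ a fibration, and transfer (a) and (b) from $f$ to $p$ by two-out-of-three in the Kan complexes $\mathrm{Map}(C_k,-)$ and $\mathrm{Map}(\eta,-)$. It then remains to show that such a $p$ is a trivial fibration, i.e.\ that $p$ has the right lifting property with respect to every boundary inclusion $\partial T \to T$. I would induct on the size of $T$: the base cases $T = \eta$ and $T = C_k$ are handled directly by conditions (b), (a), and the right lifting property of $p$ against $\{0\} \to J$ (which holds for any fibration between fibrant objects in each of our model structures); the inductive step reduces the lifting problem along $\partial T \to T$ to lifting along the inner anodyne Segal core inclusion $\mathrm{Sc}(T) \to T$ together with boundary inclusions of strictly smaller trees, to which the inductive hypothesis applies.

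The main point to verify is that this induction remains internal to the restricted indexing categories. For $\mathbf{\Psi}_n$ this is immediate since the arity-at-most-$n$ condition is closed under taking faces, subtrees, and Segal cores. For $\mathbf{\Omega}_n$ the key observation is that any tree $T$ with at most $n$ leaves has every vertex of arity at most $n$, because each input edge of a vertex $v$ contributes at least one leaf lying above $v$ to the total leaf count of $T$; consequently every corolla appearing in the Segal core of $T$ lies in $\mathbf{\Omega}_n$, and outer and inner faces of $T$ remain in $\mathbf{\Omega}_n$, so the induction is well-defined. As a backup, the $\mathbf{\Omega}_n$ case can be deduced from the $\mathbf{\Psi}_n$ case via the Quillen equivalence $u^*$ of Theorem \ref{thm:Omegan}, since $u^*$ both preserves and reflects the hypotheses (corollas and $\eta$ are in the image of $u_!$) and the conclusion.
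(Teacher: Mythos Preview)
Your proposal is correct and matches the paper's approach exactly: the paper does not give a proof but simply cites Theorem 3.5 of \cite{cisinskimoerdijk3} and asserts that the argument carries over unchanged to the two truncated settings. Your sketch is a faithful outline of that argument.

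One small correction: your claim that ``the arity-at-most-$n$ condition is closed under taking faces'' is not true for inner faces in $\mathbf{\Psi}_n$, since contracting an inner edge between vertices of arities $a$ and $b$ produces a vertex of arity $a+b-1$, which may exceed $n$. This does not damage the induction, however, because (as the paper notes just before stating the relevant model structure) the boundary $\partial T$ in $\mathbf{Sets}^{\mathbf{\Psi}_n^{\mathrm{op}}}$ is by definition the union of only those faces that remain in $\mathbf{\Psi}_n$; the induction on the number of vertices therefore proceeds over this smaller boundary without obstruction. Your observation for $\mathbf{\Omega}_n$ (that every vertex of an open tree with at most $n$ leaves has arity at most $n$, and that all faces stay in $\mathbf{\Omega}_n$) is correct.
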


\begin{remark}
Another way to state the previous proposition is to say that a map of $\infty$-operads is an equivalence if and only if it is fully faithful, which is part (a), and a weak equivalence on the level of underlying simplicial sets, which is part (b). Since fully faithfulness is already guaranteed by (a), the only additional information provided by (b) is that $f$ is essentially surjective.
\end{remark}

Before we prove Theorem \ref{thm:Omegan} we need a convenient description of the functor $u^*$. For a tree $T$, a \emph{subtree} of $T$ is an inclusion of trees $S \subseteq T$ which can be written as a composition of external face maps. In others words, $S$ is obtained from $T$ by iteratively chopping off leaf corollas and root corollas. Write $\mathrm{Sub}(T)$ for the diagram of subtrees of $T$ (i.e. the full subcategory of $\mathbf{\Omega}/T$ spanned by the subtrees). Also, write $\mathrm{Sub}_n(T)$ for the diagram of subtrees with at most $n$ leaves.

\begin{lemma}
\label{lem:u*}
For a tree $T \in \mathbf{\Psi}_n$, the natural map
\begin{equation*}
\varinjlim_{S \in \mathrm{Sub}_n(T)} S \longrightarrow u^*(T)
\end{equation*}
is an isomorphism.
\end{lemma}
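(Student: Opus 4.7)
The strategy is to verify the isomorphism pointwise on representables: for any $R \in \mathbf{\Omega}_n$, I will show that the map
\begin{equation*}
\varinjlim_{S \in \mathrm{Sub}_n(T)} \mathrm{Hom}_{\mathbf{\Omega}}(R, S) \longrightarrow \mathrm{Hom}_{\mathbf{\Omega}}(R, T)
\end{equation*}
induced by postcomposing with the inclusions $S \hookrightarrow T$ is a bijection. This suffices because colimits in presheaf categories are computed pointwise, and because $\mathbf{\Omega}_n$, $\mathbf{\Psi}_n$ are full subcategories of $\mathbf{\Omega}$, so that $u^*(T)(R) = \mathrm{Hom}_{\mathbf{\Psi}_n}(R, T) = \mathrm{Hom}_{\mathbf{\Omega}}(R, T)$.

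The essential ingredient is the construction of a canonical \emph{image} $\mathrm{Im}(\phi) \subseteq T$ for every morphism $\phi: R \to T$ in $\mathbf{\Omega}$. Regarding $\phi$ as a map of the associated operads, each vertex $v$ of $R$ with $k_v$ inputs is sent to an operation in the free operad on the edges of $T$, i.e.\ to a subtree $U_v \subseteq T$ with $k_v$ leaves and root the image of the output edge of $v$. Whenever two vertices $v, w$ of $R$ are adjacent via an edge, the root of $U_v$ is identified with the corresponding input edge of $U_w$ inside $T$, so the union $\mathrm{Im}(\phi) := \bigcup_v U_v$ is connected and forms a subtree of $T$, assembled by grafting the $U_v$ in the same combinatorial pattern as the vertices of $R$. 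Since grafting two subtrees of $k$ and $\ell$ leaves along a common edge yields a subtree of $k + \ell - 1$ leaves, a straightforward induction on the number of vertices of $R$ shows that $\mathrm{Im}(\phi)$ has at most as many leaves as $R$; in particular, $R \in \mathbf{\Omega}_n$ implies $\mathrm{Im}(\phi) \in \mathrm{Sub}_n(T)$.

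From here, surjectivity of the comparison map is immediate: every $\phi$ factors canonically as $R \to \mathrm{Im}(\phi) \hookrightarrow T$. Injectivity is also formal: if $(S_1, \phi_1)$ and $(S_2, \phi_2)$ both represent the same $\phi \in \mathrm{Hom}_{\mathbf{\Omega}}(R, T)$, then minimality of $\mathrm{Im}(\phi)$ yields inclusions $\mathrm{Im}(\phi) \subseteq S_i$, and the transition maps of the poset-indexed diagram identify both $(S_i, \phi_i)$ with the canonical element $(\mathrm{Im}(\phi), \phi|_{\mathrm{Im}(\phi)})$ in the colimit. The main technical obstacle is verifying that $\mathrm{Im}(\phi)$, built by grafting the $U_v$, is itself a subtree in the external-face sense of the paper; this is fiddly but routine, handled by combining the chopping sequences used to cut each $U_v$ from $T$ consistently along the shared grafting edges.
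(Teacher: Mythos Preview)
Your proof is correct and rests on essentially the same idea as the paper's: both arguments construct, for each map $\phi: R \to T$ with $R \in \mathbf{\Omega}_n$, the minimal subtree of $T$ containing its image (your $\mathrm{Im}(\phi)$, the paper's $\widetilde{S}$, described as the subtree whose leaves and root are the images of those of $R$) and observe that it lies in $\mathrm{Sub}_n(T)$. The only difference is packaging: you verify the pointwise bijection directly, whereas the paper phrases the same fact as cofinality of $\mathrm{Sub}_n(T) \hookrightarrow (\mathbf{\Omega}_n/T)^{\mathrm{nd}}$ by exhibiting $\widetilde{S}$ as an initial object of each coslice.
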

\begin{proof}
Write $(\mathbf{\Omega}_n/T)^{\mathrm{nd}}$ for the category whose objects are non-degenerate maps $S \rightarrow T$, where $S$ is a tree with at most $n$ leaves. Arrows in this category are maps in $\mathbf{\Omega}_n$ compatible with the maps to $T$. Then
\begin{equation*}
u^*(T) \simeq \varinjlim_{S \in (\mathbf{\Omega}_n/T)^{\mathrm{nd}}} S.
\end{equation*}
To prove the lemma it suffices to show that the inclusion $\mathrm{Sub}_n(T) \rightarrow (\mathbf{\Omega}_n/T)^{\mathrm{nd}}$ is cofinal. By standard arguments, this follows if for every $S \in \mathbf{\Omega}_n/T$ the slice category 
\begin{equation*}
\mathrm{Sub}_n(T) \times_{(\mathbf{\Omega}_n/T)^{\mathrm{nd}}} S/(\mathbf{\Omega}_n/T)^{\mathrm{nd}}
\end{equation*}
is connected. In fact, this slice category is contractible: it has an initial object $\widetilde{S}$, which is the subtree of $T$ whose leaves are the (images of the) leaves of $S$ and whose root is the (image of the) root of $S$, so that in particular the map $S \rightarrow \widetilde{S}$ is a composition of inner face maps. 
\end{proof}

\begin{proof}[Proof of Theorem \ref{thm:Omegan}]
Proving that $u^*$ is right Quillen is equivalent to proving that $u_!$ is left Quillen, which we will do by verifying the assumptions of Lemma \ref{lem:FleftSegcore}. Since $u_!$ is the inclusion of a full subcategory, it is clear that it preserves normal monomorphisms. Furthermore, we already observed that for $T \in \mathbf{\Omega}_n$ we have the formula $u_!(\mathrm{Sc}(T)) = \mathrm{Sc}(T)$; also, $u_!$ sends the inclusion $\{0\} \rightarrow J$ to precisely the same map in $\mathbf{Sets}^{\mathbf{\Psi}_n^{\mathrm{op}}}$. 

To show that $u^*$ is also left Quillen, first observe that it preserves monomorphisms (as does any right adjoint). To check that $u^*$ preserves normal monomorphisms, we should check that a map of the form $u^*(\partial T \rightarrow T)$ is normal. Any monomorphism whose codomain is normal is a normal monomorphism; therefore, it suffices to show that $u^*(T)$ is normal for any $T \in \mathbf{\Psi}_n$, i.e. for any $S \in \mathbf{\Omega}_n$ the action of $\mathrm{Aut}(S)$ on $u^*(T)(S)$ should be free. But $u^*(T)(S) = \mathbf{Sets}^{\mathbf{\Psi}_n^{\mathrm{op}}}(u_!S, T)$, so that the freeness of this action follows from the fact that $T$ is normal in $\mathbf{Sets}^{\mathbf{\Psi}_n^{\mathrm{op}}}$. We should now verify that $u^*$ sends the maps of Lemma \ref{lem:FleftSegcore} to weak equivalences. This is immediate for the map in (b), since it is just sent to the corresponding map in $\mathbf{Sets}^{\mathbf{\Omega}_n^{\mathrm{op}}}$. The maps of (a) require a more elaborate argument.

Consider a map of the form $u^*(\mathrm{Sc}(T) \rightarrow T)$, where $T \in \mathbf{\Psi}_n$. We will factor this map into a sequence of maps, each of which we will show to be inner anodyne. Recall from Lemma \ref{lem:u*} the description of $u^*(T)$ as colimit over $\mathrm{Sub}_n(T)$. Write $\mathrm{Sub}^i_n(T)$ for the subdiagram of $\mathrm{Sub}_n(T)$ spanned by the subtrees with at most $i$ vertices and write $A_i$ for the corresponding colimit over this diagram. Then we obtain a sequence of inclusions
\begin{equation*}
u^*(\mathrm{Sc}(T)) = A_1 \subseteq A_2 \subseteq \cdots \subseteq A_N = u^*(T)
\end{equation*}
for sufficiently large $N$. Consider one of the maps $A_i \subseteq A_{i+1}$ and factor it further as
\begin{equation*}
A_i^1 \subseteq A_i^2 \subseteq \cdots \subseteq A_i^M = A_{i+1}
\end{equation*}
by adjoining the subtrees with $i + 1$ vertices one by one (in arbitrary order). Consider one of the inclusions $A_i^j \subseteq A_i^{j+1}$ in this sequence, given by adjoining a such a subtree $S$. It fits into a pushout square
\[
\xymatrix{
\partial^{\mathrm{ext}}S \ar[d]\ar[r] & A_i^j \ar[d] \\
S \ar[r] & A_i^{j+1},
}
\]
where $\partial^{\mathrm{ext}}S$ denotes the union of all external faces of $S$. Indeed, each external face is already contained in $A_i$ because it has one fewer vertex than $S$ itself, whereas $A_i$ cannot contain any of the inner faces of $S$ by the way we have set up our induction. It is a standard fact that the inclusion $\partial^{\mathrm{ext}}S \rightarrow S$ is an inner anodyne map (see for example \cite{cisinskimoerdijk3} or \cite{hhm}).

Finally, we will show that the Quillen adjunction $(u_!, u^*)$ is a Quillen equivalence. It suffices to show that the right derived functor $\mathbf{R}u^*$ detects weak equivalences and the derived unit $\mathrm{id} \rightarrow \mathbf{R}u^*\mathbf{L}u_!$ is a weak equivalence. Indeed, it then follows from the triangle identities for the adjunction that the derived counit is a weak equivalence as well. So, consider a map $f: X \rightarrow Y$ between fibrant objects of $\mathbf{Sets}^{\mathbf{\Psi}_n^{\mathrm{op}}}$ and assume that $u^*f$ is a weak equivalence. Using Proposition \ref{prop:wefibrants}, we need to check that $\mathrm{Map}(C_k, X) \rightarrow \mathrm{Map}(C_k, Y)$ and $\mathrm{Map}(\eta, X) \rightarrow \mathrm{Map}(\eta, Y)$ are homotopy equivalences for $k \leq n$. But we simply have isomorphisms $\mathrm{Map}(C_k, u^*X) \simeq \mathrm{Map}(C_k, X)$ and $\mathrm{Map}(\eta, u^*X) \simeq \mathrm{Map}(\eta, u^*Y)$ (and of course similarly for $Y$), so that this follows directly from the assumption that $u^*f$ is a weak equivalence. 

Let us now prove that the derived unit is a weak equivalence. Consider a cofibrant object $X \in \mathbf{Sets}^{\mathbf{\Omega}_n^{\mathrm{op}}}$ and pick a trivial cofibration $u_!X \rightarrow (u_!X)_f$ so that $(u_!X)_f$ is fibrant. We should verify that the composition
\begin{equation*}
X \rightarrow u^*u_!X \rightarrow u^*(u_!X)_f
\end{equation*}
is a weak equivalence. Since $u^*$ is also left Quillen, the second map is a trivial cofibration. Since $u_!$ is fully faithful, the first map is an isomorphism and the desired conclusion follows.
\end{proof}

To prove Theorem \ref{thm:Psin} it is convenient to have a description of $v^*(T)$ for a tree $T$. The presheaf $v^*(T)$ is a disjoint union of representable presheaves: it is obtained from $T$ by deleting all vertices of $T$ with more than $n$ inputs. In other words, $v^*(T)$ is the coproduct of the maximal subtrees of $T$ whose vertices have no more than $n$ inputs.

\begin{proof}[Proof of Theorem \ref{thm:Psin}]
Showing that $v^*$ is right Quillen is equivalent to showing that $v_!$ is left Quillen: as with $u_!$ (see the beginning of the previous proof) this is obvious given Lemma \ref{lem:FleftSegcore}. To show that $v^*$ is left Quillen, we first check that it preserves normal monomorphisms. Again this is much the same as in the previous proof: $v^*$ preserves monomorphisms since it is a right adjoint and sends trees $T$ to normal objects, which is clear from the description of $v^*(T)$ given above. To verify that $v^*$ sends the maps of Lemma \ref{lem:FleftSegcore} to weak equivalences, note that it sends the map of (b) to precisely the same map in $\mathbf{Sets}^{\mathbf{\Omega}_n^{\mathrm{op}}}$ and a Segal core inclusion $\mathrm{Sc}(T) \rightarrow T$ for $T \in \mathbf{\Omega}$ to the disjoint union of the Segal core inclusions of the trees making up $v^*(T)$. A disjoint union of trivial cofibrations is again a trivial cofibration, concluding the proof.
\end{proof}

The main reason to introduce the category of presheaves over $\mathbf{\Psi}_n$, rather than just over $\mathbf{\Omega}_n$, is that the pushforward functor $v_!: \mathbf{Sets}^{\mathbf{\Psi}_n^{\mathrm{op}}} \rightarrow \mathbf{dSets}$ behaves very well with respect to fibrant objects. This is perhaps surprising, since $v_!$ is a left Quillen functor, but definitely not a right Quillen functor (it is not even a right adjoint). To explain our results it is most convenient to consider dendroidal complete Segal spaces, rather than dendroidal sets. 

Consider the category $\mathbf{sSets}^{\mathbf{\Omega}^{\mathrm{op}}}$ of simplicial presheaves on $\mathbf{\Omega}$, which we will denote by $\mathbf{sdSets}$. Since $\mathbf{\Omega}^{\mathrm{op}}$ is a generalized Reedy category, $\mathbf{sdSets}$ admits a Reedy model structure. We can regard any dendroidal set as an object of $\mathbf{sdSets}$ through the embedding $\mathbf{dSets} \rightarrow \mathbf{sdSets}$ sending a presheaf to the corresponding constant simplicial presheaf.
The model category of complete dendroidal Segal spaces is then obtained by taking the Bousfield localization of the Reedy model structure with respect to the following maps:
\begin{itemize}
\item[(a)] For any tree $T$, the inclusion $\mathrm{Sc}(T) \rightarrow T$.
\item[(b)] The inclusion $\{0\} \rightarrow J$.
\end{itemize}
By analogy we can put a corresponding model structure on the category $\mathbf{sSets}^{\mathbf{\Psi}_n^{\mathrm{op}}}$, which we will refer as the model category of complete $\mathbf{\Psi}_n$-Segal spaces. Pushforward and pullback along $v$ define an adjunction between this category and $\mathbf{sdSets}$; we will again denote the resulting functors by $v_!$ and $v^*$. Also, we denote both the constant embeddings $\mathbf{dSets} \rightarrow \mathbf{sdSets}$ and $\mathbf{Sets}^{\mathbf{\Psi}_n^{\mathrm{op}}} \rightarrow \mathbf{sSets}^{\mathbf{\Psi}_n^{\mathrm{op}}}$ by $\mathrm{con}$.

\begin{proposition}
In the commutative square
\[
\xymatrix{
\mathbf{Sets}^{\mathbf{\Psi}_n^{\mathrm{op}}} \ar[r]^{v_!}\ar[d]_{\mathrm{con}} & \mathbf{dSets} \ar[d]^{\mathrm{con}} \\
\mathbf{sSets}^{\mathbf{\Psi}_n^{\mathrm{op}}} \ar[r]_{v_!} & \mathbf{sdSets}
}
\]
all arrows are left Quillen functors. Both vertical arrows are part of Quillen equivalences.
\end{proposition}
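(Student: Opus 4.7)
The plan is to split the proposition into two tasks: first showing that each of the four arrows is left Quillen, then showing that the vertical arrows participate in Quillen equivalences.

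For the left Quillen property, the top horizontal arrow $v_!: \mathbf{Sets}^{\mathbf{\Psi}_n^{\mathrm{op}}} \to \mathbf{dSets}$ is left Quillen because Theorem~\ref{thm:Psin} asserts that $v^*$ is also right Quillen, making $v_!$ the left adjoint of a right Quillen functor. For each constant embedding $\mathrm{con}$, I would first observe it is left Quillen for the Reedy model structures: a Reedy cofibration between two constant simplicial presheaves is just a normal monomorphism in the underlying presheaf category, and conversely $\mathrm{con}$ sends normal monomorphisms to such Reedy cofibrations. Passing to the Bousfield localizations is automatic, since the localizing maps used on both sides are---by construction---the $\mathrm{con}$-images of the same Segal core inclusions and of the map $\{0\} \to J$, so $\mathrm{con}$ remains left Quillen after localization. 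For the bottom horizontal $v_!: \mathbf{sSets}^{\mathbf{\Psi}_n^{\mathrm{op}}} \to \mathbf{sdSets}$, I would mimic the proof of Theorem~\ref{thm:Psin} using an evident simplicial analogue of Lemma~\ref{lem:FleftSegcore}: normal monomorphisms are preserved levelwise, and since $v_!(T) = T$ as a dendroidal set and $v_!(\mathrm{Sc}(T)) = \mathrm{Sc}(T)$ for $T \in \mathbf{\Psi}_n$, Segal core inclusions and $\{0\} \to J$ are sent to trivial cofibrations in $\mathbf{sdSets}$.

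For the Quillen equivalence of the vertical arrows, the statement for $\mathrm{con}: \mathbf{dSets} \to \mathbf{sdSets}$ is precisely the comparison between the operadic model structure on dendroidal sets and the model structure of complete dendroidal Segal spaces, which is due to Cisinski and Moerdijk. The $\mathbf{\Psi}_n$ variant is formally set up with the same features---a generalized Reedy structure inherited from $\mathbf{\Omega}^{\mathrm{op}}$, Segal-type localizing maps, and the same completeness interval $J$---so the same argument applies mutatis mutandis. Concretely, I would appeal to Proposition~\ref{prop:wefibrants}, which characterizes weak equivalences between fibrant objects by their behaviour on $\mathrm{Map}(C_k, -)$ for $k \leq n$ and $\mathrm{Map}(\eta, -)$; the derived unit and counit become weak equivalences once one checks they induce equivalences on these mapping spaces, which in turn follows from the Segal and completeness conditions built into the localization.

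The main obstacle will be faithfully transcribing the Cisinski--Moerdijk argument into the $\mathbf{\Psi}_n$-truncated setting. A potentially delicate point is that the inclusion $\mathbf{\Psi}_n \subset \mathbf{\Omega}$ is not closed under all face maps, so boundaries and horns of a tree in $\mathbf{\Psi}_n$ may genuinely differ from their dendroidal counterparts. However, the two features on which the comparison essentially depends---the generalized Reedy structure on $\mathbf{\Psi}_n^{\mathrm{op}}$ and the identification of weak equivalences between fibrants via Proposition~\ref{prop:wefibrants}---both function in $\mathbf{\Psi}_n$ exactly as they do in $\mathbf{\Omega}$. Once this is in place, the Quillen equivalence of the two vertical arrows follows by the same formal reasoning as in the dendroidal case.
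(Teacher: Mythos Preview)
Your proposal is correct and follows essentially the same approach as the paper's proof. The paper handles the bottom horizontal arrow by first noting it is left Quillen for the Reedy model structures and then observing it sends localizing morphisms to localizing morphisms, which is exactly your strategy phrased slightly differently; for the vertical arrows the paper likewise cites Cisinski--Moerdijk for the dendroidal case and asserts that the $\mathbf{\Psi}_n$-case is identical, without spelling out the derived unit/counit check you sketch.
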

\begin{proof}
The fact that $\mathrm{con}$ is a left Quillen equivalence is proved for dendroidal complete Segal spaces (i.e. the vertical functor on the right) in \cite{cisinskimoerdijk3}; the proof for complete $\mathbf{\Psi}_n$-Segal spaces is identical. It is straightforward to see that the bottom horizontal arrow induces a left Quillen functor for the Reedy model structures on both categories. Furthermore, it sends the localizing morphisms in the $\mathbf{sSets}^{\mathbf{\Psi}_n^{\mathrm{op}}}$ to localizing morphisms in $\mathbf{dSets}$, so that it is also left Quillen with respect to the model structures for complete Segal spaces.
\end{proof}

The reason for considering these complete Segal spaces is that it allows us to formulate a useful technical property of $v_!$. It states that for a fibrant-cofibrant object $X \in  \mathbf{sSets}^{\mathbf{\Psi}_n^{\mathrm{op}}}$, the pushforward $v_!X$ is only a Reedy fibrant replacement away from being fibrant in $\mathbf{sdSets}$:

\begin{lemma}
\label{lem:v!fibrant}
Suppose $X \in \mathbf{sSets}^{\mathbf{\Psi}_n^{\mathrm{op}}}$ is a complete $\Psi_n$-Segal space (i.e. a Reedy fibrant simplicial presheaf local with respect to the maps (a) and (b) described above), which is also Reedy cofibrant. Write $(v_!X)_f$ for a Reedy fibrant replacement of $v_!X$. Then $(v_!X)_f$ is a complete dendroidal Segal space, i.e. a fibrant object of $\mathbf{sdSets}$.
\end{lemma}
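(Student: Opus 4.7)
The plan is to show two things: that $(v_!X)_f$ is Reedy fibrant (which is automatic, as it is a Reedy fibrant replacement), and that it is local with respect to the maps that are inverted in forming the complete dendroidal Segal space model structure from the Reedy model structure, namely the Segal core inclusions $\mathrm{Sc}(T) \hookrightarrow T$ for $T \in \mathbf{\Omega}$ and the completeness map $\{0\} \hookrightarrow J$. Since locality is invariant under Reedy weak equivalence of the target, this amounts to verifying the derived locality of $v_!X$ itself.

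For the completeness map, the dendroidal sets $\{0\}$ and $J$ are supported on the trivial tree $\eta \in \mathbf{\Psi}_n$, so they are naturally of the form $v_!$ applied to the analogous objects in $\mathbf{sSets}^{\mathbf{\Psi}_n^{\mathrm{op}}}$. The adjunction $v_! \dashv v^*$, combined with the identity $v^* v_! \simeq \mathrm{id}$ (valid because $v: \mathbf{\Psi}_n \hookrightarrow \mathbf{\Omega}$ is fully faithful), then yields
\begin{equation*}
\mathbf{R}\mathrm{Map}_{\mathbf{sdSets}}(J, v_!X) \simeq \mathbf{R}\mathrm{Map}_{\mathbf{sSets}^{\mathbf{\Psi}_n^{\mathrm{op}}}}(J, X),
\end{equation*}
and analogously for $\{0\}$; the desired weak equivalence then follows from the completeness of $X$.

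For the Segal core inclusions, the key input is that $v^*$ is left Quillen by Theorem \ref{thm:Psin}, and hence preserves trivial cofibrations. I would combine this with an explicit combinatorial analysis: if $T_1, \ldots, T_r$ are the maximal subtrees of $T$ whose vertices all have arity at most $n$, together with any isolated edges arising from edges of $T$ adjacent only to vertices of arity greater than $n$, then $v^*(T) = \bigsqcup_i T_i$ and $v^*(\mathrm{Sc}(T)) = \bigsqcup_i \mathrm{Sc}(T_i)$ as subpresheaves. Thus $v^*(\mathrm{Sc}(T) \hookrightarrow T)$ is a coproduct of Segal core inclusions of trees in $\mathbf{\Psi}_n$, which are trivial cofibrations in the complete $\Psi_n$-Segal space model structure. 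Since $X$ is local, this gives the equivalence
\begin{equation*}
\mathbf{R}\mathrm{Map}(v^*(T), X) \xrightarrow{\simeq} \mathbf{R}\mathrm{Map}(v^*(\mathrm{Sc}(T)), X).
\end{equation*}

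The main obstacle is translating this equivalence of mapping spaces out of $v^*(T)$ into the required equivalence of mapping spaces into $v_!X$ from $T$ itself. The counit $v_! v^*(T) \to T$ is not a weak equivalence in the complete dendroidal Segal space model structure when $T$ has vertices of arity exceeding $n$, since the target acquires new higher-arity operations, and therefore the adjunction does not directly transfer the statement. I would resolve this by an induction on the number of vertices of $T$ of arity exceeding $n$: the base case $T \in \mathbf{\Psi}_n$ reduces, by the adjunction $v_! \dashv v^*$ and the fact that $X$ is a complete $\mathbf{\Psi}_n$-Segal space, to the Segal condition for $X$ on $T$. For the inductive step, one analyzes the colimit formula defining $v_!X(C_k)$ for $k > n$, which identifies this simplicial set up to homotopy with $(v_!X)_f(\eta)^{k+1}$, so that operations of arity greater than $n$ in $(v_!X)_f$ are trivialized in a controlled way. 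Combined with the Segal conditions for the subtrees $T_i \in \mathbf{\Psi}_n$, this shows that $(v_!X)_f(T)$ is indeed computed as the iterated homotopy limit of its values on the corollas of $T$, completing the verification.
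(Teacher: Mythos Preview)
Your treatment of the completeness condition is fine and matches the paper's.

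For the Segal condition there is a genuine error. You claim that the colimit formula for $v_!X(C_k)$ with $k>n$ identifies this space up to homotopy with $(v_!X)_f(\eta)^{k+1}$, so that higher-arity operations are ``trivialized''. This is false: left Kan extension along $v$ does not kill higher-arity operations, it \emph{freely generates} them from the operations of arity at most $n$. (It is the \emph{right} adjoint $v_*$ that renders operations of arity exceeding $n$ contractible, as used in the proof of Theorem~\ref{thm:stableleftadjoint}.) The correct description, proved in the paper as Lemma~\ref{lem:v!dec}, is
\[
v_!X(T) \;\simeq\; \underset{S \in \mathrm{Dec}_n(T)^{\mathrm{op}}}{\mathrm{hocolim}}\, X(S),
\]
where $\mathrm{Dec}_n(T)$ is the category of $n$-decompositions $T \to S$ (compositions of inner face maps with $S \in \mathbf{\Psi}_n$). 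For $T = C_k$ with $k>n$ this colimit involves all the ways of splitting a $k$-ary vertex into a tree of lower-arity vertices, and is typically far from $X(\eta)^{k+1}$. Since your inductive step rests on this incorrect identification, the argument does not go through.

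The paper's approach avoids your obstacle entirely: rather than trying to compare $v_!v^*(T)$ with $T$, it uses the decomposition formula above directly. The key combinatorial input is an equivalence of categories
\[
\gamma: \mathrm{Dec}_n(T) \;\longrightarrow\; \prod_{x \in V(T)} \mathrm{Dec}_n(C_{n(x)}),
\]
which splits an $n$-decomposition of $T$ into the induced decompositions of each corolla. This lets one build a square comparing $v_!X(T)$ with $\mathrm{Map}(\mathrm{Sc}(T), v_!X)$, where one vertical map is an equivalence by the Segal condition on $X$ and the horizontal comparison map is assembled from equivalences indexed by the corollas of $T$; the desired equivalence then follows.
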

\begin{proof}
Throughout this proof we will repeatedly use the fact that for every $T \in \mathbf{\Omega}$, the map $v_!X(T) \rightarrow (v_!X(T))_f$ is a weak equivalence of simplicial sets. We need to check that $(v_!X)_f$ is local with respect to $\{0\} \rightarrow J$ and Segal core inclusions $\mathrm{Sc}(T) \rightarrow T$. The first is clear from the fact $v_!$ induces an isomorphism $\mathrm{Map}(J, X) \simeq \mathrm{Map}(J, v_!X)$. For the second, consider a tree $T \in \mathbf{\Omega}$. Write $\mathrm{Dec}_n(T)$ for the category whose objects are maps of trees $T \rightarrow S$ which are compositions of inner face maps and where $S \in \mathbf{\Psi}_n$. Its morphisms are maps $S \rightarrow S'$ in $\mathbf{\Psi}_n$ compatible with the structure maps from $T$. Another way to phrase the condition that $T \rightarrow S$ is a composition of inner face maps is to say that this map is injective, sends the root of $T$ to the root of $S$ and gives a bijection between the leaves of $T$ and the leaves of $S$. We will refer to an object of $\mathrm{Dec}_n(T)$ as an $n$-decomposition of $T$. By Lemma \ref{lem:v!dec} below we have a weak equivalence of simplicial sets
\begin{equation*}
\mathrm{hocolim}_{S \in \mathrm{Dec}_n(T)^{\mathrm{op}}} X(S) \longrightarrow v_!X(T).
\end{equation*}
As before, write $V(T)$ for the set of vertices of $T$. For a vertex $x$, write $n(x)$ for the number of inputs of $x$. Observe that there is an equivalence of categories
\begin{equation*}
\gamma: \mathrm{Dec}_n(T) \longrightarrow \prod_{x \in V(T)} \mathrm{Dec}_n(C_{n(x)})
\end{equation*}
given by restricting an $n$-decomposition $f: T \rightarrow S$ to every corolla $C_{n(x)}$ of $T$ to obtain a corresponding decomposition $C_{n(x)} \rightarrow \gamma_x(S)$. Here $\gamma_x(S)$ is the subtree of $S$ with as its root the image under $f$ of the root of $C_{n(x)}$, and similarly for the leaves. Write $\mathrm{Sc}_{\gamma}(S)$ for the union of the subtrees $\gamma_x(S)$ in $S$, where $x$ ranges through $V(T)$. Using $\gamma$ we may form the commutative square
\[
\xymatrix{
\mathrm{hocolim}_{S \in \mathrm{Dec}_n(T)^{\mathrm{op}}} X(S) \ar[r]\ar[d] & v_!X(T) \ar[d]^{\psi} \\
\mathrm{hocolim}_{S \in \mathrm{Dec}_n(T)^{\mathrm{op}}} \mathrm{Map}(\mathrm{Sc}_{\gamma}(S), X) \ar[r]_-{\varphi} & \mathrm{Map}(\mathrm{Sc}(T), v_! X). 
}
\]
The bottom horizontal map $\varphi$ in this square can be built by iterated homotopy pullbacks from the maps
\begin{equation*}
 \mathrm{hocolim}_{S \in \mathrm{Dec}_n(C_{n(x)})^{\mathrm{op}}} \mathrm{Map}(\gamma_x(S), X) \longrightarrow \mathrm{Map}(C_{n(x)}, v_! X)
\end{equation*}
and
\begin{equation*}
\mathrm{Map}(\eta, X) \longrightarrow \mathrm{Map}(\eta, v_! X),
\end{equation*}
where we used the fact that $\eta$ does not admit nontrivial decompositions. Both these maps are weak equivalences (the second even an isomorphism), so that $\varphi$ is a weak equivalence as well. Since the left vertical map in the square is a weak equivalence by the assumption that $X$ is a $\mathbf{\Psi}_n$-Segal space, it follows that $\psi$ is a weak equivalence, so that $(v_!X)_f$ is local with respect to $\mathrm{Sc}(T) \rightarrow T$.
\end{proof}

In the previous proof we needed the following:

\begin{lemma}
\label{lem:v!dec}
For $X \in \mathbf{sSets}^{\mathbf{\Psi}_n^{\mathrm{op}}}$ and $T \in \mathbf{\Omega}$, the natural map
\begin{equation*}
\varinjlim_{S \in \mathrm{Dec}_n(T)^{\mathrm{op}}} X(S) \longrightarrow v_!X(T)
\end{equation*}
is an isomorphism. If $X$ is Reedy cofibrant and we replace the colimit above by a homotopy colimit, the resulting map is a weak equivalence of simplicial sets.
\end{lemma}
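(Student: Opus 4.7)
I would begin by unwinding the left Kan extension formula. Writing
\[
v_!X(T) \;\cong\; \int^{S \in \mathbf{\Psi}_n} \mathbf{\Omega}(T,S) \otimes X(S),
\]
I identify elements of $v_!X(T)$ with equivalence classes of pairs $(f \colon T \to S, x \in X(S))$ where $S \in \mathbf{\Psi}_n$ and $f$ is an arbitrary morphism in $\mathbf{\Omega}$, subject to $(g \circ f, x') \sim (f, g^{*}x')$ for any $g \colon S \to S'$ in $\mathbf{\Psi}_n$. The full subcategory of pairs where $f$ is an inner face composition is exactly $\mathrm{Dec}_n(T)$, and the resulting forgetful map is the natural map from the lemma.

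The first step would be to prove the map is an isomorphism. For surjectivity: every pair $(f, x)$ is equivalent to one with $f$ an $n$-decomposition via a canonical factorization in $\mathbf{\Omega}$. Concretely, every $f \colon T \to S$ admits a factorization $T \xrightarrow{i_f} T_f \xrightarrow{q_f} S$ where $i_f$ is a composition of inner face maps and $q_f$ is built from outer face maps and degeneracies (and possibly an isomorphism). When $S \in \mathbf{\Psi}_n$, the intermediate tree $T_f$ also lies in $\mathbf{\Psi}_n$: outer face maps only delete outer corollas and degeneracies only insert or collapse unary vertices, so neither can raise a vertex arity. Hence $(f, x) \sim (i_f, q_f^{*}x)$ with $i_f$ an $n$-decomposition. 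For injectivity: if two $n$-decompositions $(i_1, x_1)$ and $(i_2, x_2)$ are connected by the basic relation through some $g$ in $\mathbf{\Psi}_n$, applying the same factorization to $g$ extracts an inner face composition piece, which is precisely a morphism in $\mathrm{Dec}_n(T)$; the outer-face/degeneracy piece becomes redundant because both $i_1$ and $i_2$ already land in $\mathbf{\Psi}_n$.

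For the homotopy colimit statement, I would equip $\mathrm{Dec}_n(T)^{\mathrm{op}}$ with the Reedy (in fact direct) structure in which the degree of $(T \to S)$ is the number of vertices of $S$ minus the number of vertices of $T$. Morphisms in $\mathrm{Dec}_n(T)^{\mathrm{op}}$ strictly lower this degree and there are no degeneracies. If $X \in \mathbf{sSets}^{\mathbf{\Psi}_n^{\mathrm{op}}}$ is Reedy cofibrant, then pulling back along $\mathrm{Dec}_n(T)^{\mathrm{op}} \to \mathbf{\Psi}_n^{\mathrm{op}}$ yields a projectively cofibrant diagram of simplicial sets whose ordinary colimit computes its homotopy colimit. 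Combined with the 1-categorical isomorphism above this yields the weak equivalence.

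The hard part will be establishing the canonical inner-face/outer-face-plus-degeneracy factorization in $\mathbf{\Omega}$ and checking that it preserves the $\mathbf{\Psi}_n$ condition. The coarser epi-mono factorization in $\mathbf{\Omega}$ is standard, but separating the inner and outer face contributions demands a detailed analysis of elementary morphisms of trees and how they interact; some care is also needed to arrange the factorization functorially so that the same argument delivers injectivity. Once this combinatorial step is in hand, the cofinality and Reedy-cofibrancy ingredients fall into place without further difficulty.
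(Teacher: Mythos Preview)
Your approach to the strict isomorphism is essentially the paper's cofinality argument unpacked into a surjectivity/injectivity check, though the paper organizes it more efficiently. Rather than working with the full coend, the paper first restricts to the category $(T/\mathbf{\Psi}_n)^{\mathrm{nd}}$ of \emph{nondegenerate} maps $T \to S$ with $S \in \mathbf{\Psi}_n$ (a standard reduction for left Kan extensions in a Reedy setting), and then shows the inclusion $\mathrm{Dec}_n(T)^{\mathrm{op}} \hookrightarrow ((T/\mathbf{\Psi}_n)^{\mathrm{nd}})^{\mathrm{op}}$ is cofinal by exhibiting a terminal object in each slice: any face map $f\colon T \to S$ factors as $T \xrightarrow{\text{inner}} R \xrightarrow{\text{outer}} S$, where $R$ is the subtree of $S$ whose root and leaves are the images of those of $T$. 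Since $R$ is a subtree of $S \in \mathbf{\Psi}_n$, membership in $\mathbf{\Psi}_n$ is automatic, so the combinatorics you flag as ``the hard part'' becomes routine once degeneracies are handled up front. The terminal object in the slice gives both strict and homotopy cofinality at once.

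Your argument for the homotopy colimit statement, however, has a genuine gap: the category $\mathrm{Dec}_n(T)^{\mathrm{op}}$ is \emph{not} direct. Morphisms in $\mathrm{Dec}_n(T)$ are arbitrary maps $g\colon S \to S'$ of $\mathbf{\Psi}_n$ under $T$, not just inner face compositions, and degeneracies can occur. For a concrete counterexample, take $T = C_2$ and let $S$ be obtained from $C_2$ by inserting a unary vertex $v$ on one input edge, creating a single inner edge $a$. Then $\partial_a \colon T \to S$ is an $n$-decomposition, and the degeneracy $\sigma_v \colon S \to S/v \cong C_2$ commutes with it (the composite $\sigma_v \circ \partial_a$ is an isomorphism, hence trivially an inner face composition), so $\sigma_v$ is a morphism $(T \to S) \to (T \to C_2)$ in $\mathrm{Dec}_n(T)$ which \emph{decreases} your degree function. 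Composing with the inner face $C_2 \to S$ in the other direction even yields a nontrivial idempotent endomorphism of $(T \to S)$. Your projective-cofibrancy reduction therefore does not go through. The paper's cofinality approach sidesteps this entirely: once the inclusion into $((T/\mathbf{\Psi}_n)^{\mathrm{nd}})^{\mathrm{op}}$ is known to be homotopy cofinal, the comparison of homotopy colimits follows without any analysis of the internal structure of $\mathrm{Dec}_n(T)$.
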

\begin{proof}
Write $(T / \mathbf{\Psi}_n)^{\mathrm{nd}}$ for the category of non-degenerate maps $T \rightarrow S$ in $\mathbf{\Omega}$ such that $S$ is in $\mathbf{\Psi}_n$. Then by definition we have the formula
\begin{equation*}
\varinjlim_{S \in ((T / \mathbf{\Psi}_n)^{\mathrm{nd}})^{\mathrm{op}}} X(S) \simeq v_!X(T).
\end{equation*}
The prove the lemma we should show that the inclusion
\begin{equation*}
\mathrm{Dec}_n(T)^{\mathrm{op}} \longrightarrow \bigl((T / \mathbf{\Psi}_n)^{\mathrm{nd}}\bigr)^{\mathrm{op}}
\end{equation*}
is cofinal (or homotopy cofinal, for the second part). Both these facts follow if we can show that for any object $f: T \rightarrow S$ of $(T / \mathbf{\Psi}_n)^{\mathrm{nd}}$, the slice category
\begin{equation*}
\mathrm{Dec}_n(T) \times_{(T / \mathbf{\Psi}_n)^{\mathrm{nd}}} (T / \mathbf{\Psi}_n)^{\mathrm{nd}}/f
\end{equation*}
is weakly contractible. In fact this category has a terminal object. Indeed, there is a unique factorization of $f$ as a composition $h \circ g$, where $h$ is a composition of inner face maps and $g$ a composition of external face maps. Then $g \in \mathrm{Dec}_n(T)$ is the desired terminal object. Said differently, if $R$ is the subtree of $S$ with as its root the image under $f$ of the root of $T$ and similarly for its leaves, then $g$ is the evident map $T \rightarrow R$.
\end{proof}

For ease of reference, let us record the following consequence of the proof of Lemma \ref{lem:v!fibrant}, which will be the essential step in proving Proposition \ref{prop:truncatedtensor}:

\begin{corollary}
\label{cor:mappingspacev!}
Let $X \in \mathbf{sSets}^{\mathbf{\Psi}_n^{\mathrm{op}}}$ be fibrant and cofibrant and let $Y$ be a fibrant replacement of $v_!X$. Then there is a natural weak equivalence
\begin{equation*}
\mathrm{hocolim}_{S \in \mathrm{Dec}_n(T)^{\mathrm{op}}} X(S) \longrightarrow Y(T).
\end{equation*}
\end{corollary}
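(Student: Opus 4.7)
The plan is to assemble the desired equivalence as a composition of two maps whose weak equivalence is already available from the preceding work. First, I would invoke Lemma~\ref{lem:v!dec} for the cofibrant object $X$, which provides, for each tree $T \in \mathbf{\Omega}$, a natural weak equivalence
\begin{equation*}
\mathrm{hocolim}_{S \in \mathrm{Dec}_n(T)^{\mathrm{op}}} X(S) \longrightarrow v_!X(T).
\end{equation*}
The naturality here is in $T$ (with respect to maps in $\mathbf{\Omega}$) once one chooses a functorial model for the homotopy colimit, e.g.\ via the bar construction.

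Next, I would exploit that $Y$ is, by hypothesis, a Reedy fibrant replacement of $v_!X$: the structure map $v_!X \rightarrow Y$ is a Reedy trivial cofibration, so evaluation at any tree $T$ yields a weak equivalence of simplicial sets $v_!X(T) \rightarrow Y(T)$. Composing this with the equivalence from Lemma~\ref{lem:v!dec} produces the required natural weak equivalence
\begin{equation*}
\mathrm{hocolim}_{S \in \mathrm{Dec}_n(T)^{\mathrm{op}}} X(S) \longrightarrow v_!X(T) \longrightarrow Y(T).
\end{equation*}

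The only subtlety, which I would address briefly, is naturality. Both steps are natural in $T$ taken separately, but to combine them one should fix a functorial Reedy fibrant replacement (which is automatic in any cofibrantly generated model structure via the small object argument) so that $T \mapsto Y(T)$ really is a functor receiving a natural transformation from $T \mapsto v_!X(T)$. Given such a choice, the two naturalities compose, yielding the stated natural weak equivalence. Since all the genuine work was already carried out in Lemma~\ref{lem:v!dec} and its use in the proof of Lemma~\ref{lem:v!fibrant}, no further obstacles remain; this corollary is essentially a bookkeeping consequence of what has been proved.
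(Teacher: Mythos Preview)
Your proposal is correct and matches the paper's approach: the corollary is recorded without proof as a direct consequence of the proof of Lemma~\ref{lem:v!fibrant}, which combines exactly the two ingredients you name---Lemma~\ref{lem:v!dec} for the hocolim description of $v_!X(T)$ and the pointwise weak equivalence $v_!X(T) \to Y(T)$ coming from Reedy fibrant replacement. One small point: the hypothesis says ``fibrant replacement'' rather than ``Reedy fibrant replacement,'' but Lemma~\ref{lem:v!fibrant} guarantees these coincide here, so your reading is justified.
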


The following is also a consequence of Lemma \ref{lem:v!fibrant}.

\begin{corollary}
\label{cor:unitv}
The derived unit of the Quillen adjunction
\[
\xymatrix{
\mathbf{sSets}^{\mathbf{\Psi}_n^{\mathrm{op}}} \ar@<.5ex>[r]^{v_!} & \mathbf{sdSets} \ar@<.5ex>[l]^{v^*}
}
\]
is a weak equivalence.
\end{corollary}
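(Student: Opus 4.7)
The plan is to reduce to checking the derived unit componentwise at corollas $C_k$ with $k \leq n$ and at $\eta$, and then to use Corollary~\ref{cor:mappingspacev!} to compute the relevant homotopy colimits explicitly.

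First I would pass to the complete Segal space model structures on both sides, which are Quillen equivalent (via $\mathrm{con}$) to the original ones, so it is equivalent to verify the statement there. Take a cofibrant fibrant object $X$ of $\mathbf{sSets}^{\mathbf{\Psi}_n^{\mathrm{op}}}$ (so $X$ is a Reedy cofibrant complete $\mathbf{\Psi}_n$-Segal space) and form a Reedy fibrant replacement $v_!X \to Y_f$ in $\mathbf{sdSets}$. By Lemma~\ref{lem:v!fibrant}, $Y_f$ is a complete dendroidal Segal space, and $v^*Y_f$ is thus fibrant in the complete $\mathbf{\Psi}_n$-Segal space model structure (the analogue of Theorem~\ref{thm:Psin} at the level of Segal spaces). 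The derived unit is then represented by the map $X \to v^*Y_f$ between fibrant objects.

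Next I would invoke the evident analogue of Proposition~\ref{prop:wefibrants} for complete $\mathbf{\Psi}_n$-Segal spaces: to check that a map between fibrant objects is a weak equivalence, it suffices to check that the induced map $X(T) \to v^*Y_f(T) = Y_f(T)$ is a weak equivalence of simplicial sets for $T = C_k$ with $k \leq n$ and for $T = \eta$. For any such $T$ (in fact, for any $T \in \mathbf{\Psi}_n$), Corollary~\ref{cor:mappingspacev!} supplies a natural weak equivalence
\begin{equation*}
\mathrm{hocolim}_{S \in \mathrm{Dec}_n(T)^{\mathrm{op}}} X(S) \longrightarrow Y_f(T).
\end{equation*}
The key observation is that when $T$ already lies in $\mathbf{\Psi}_n$, the tautological decomposition $\mathrm{id}\colon T \to T$ is an initial object of $\mathrm{Dec}_n(T)$, hence terminal in $\mathrm{Dec}_n(T)^{\mathrm{op}}$: any decomposition $T \to S$ in $\mathrm{Dec}_n(T)$ factors uniquely through it because the structure map $T \to S$ itself gives the required morphism in the slice. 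Consequently the homotopy colimit collapses to $X(T)$, and one checks by unraveling the construction of the equivalence in Lemma~\ref{lem:v!fibrant} / Corollary~\ref{cor:mappingspacev!} that the resulting identification $X(T) \simeq Y_f(T)$ is precisely the component of the unit map at $T$.

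The main obstacle is the bookkeeping in the last step: one must verify that the weak equivalence from Corollary~\ref{cor:mappingspacev!}, when restricted to the initial decomposition, really does coincide (up to canonical homotopy) with the composite $X(T) \to v_!X(T) \to Y_f(T)$ coming from the unit, rather than merely being an abstract equivalence between the same simplicial sets. Once this compatibility is in hand, combining it with the $\mathbf{\Psi}_n$-Segal analogue of Proposition~\ref{prop:wefibrants} finishes the proof.
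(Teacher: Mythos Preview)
Your proposal is correct and follows essentially the same route as the paper: both arguments hinge on the observation that for $T \in \mathbf{\Psi}_n$ the identity is an initial object of $\mathrm{Dec}_n(T)$, so the homotopy colimit of Corollary~\ref{cor:mappingspacev!} collapses to $X(T)$. The paper's version is slightly more direct in that it checks $X(T) \to v^*Y(T)$ is a weak equivalence for \emph{every} $T \in \mathbf{\Psi}_n$ at once (hence a levelwise, and therefore Reedy, weak equivalence between fibrant objects), which avoids the detour through the analogue of Proposition~\ref{prop:wefibrants} and the compatibility bookkeeping you flag at the end.
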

\begin{proof}
Consider $X \in \mathbf{sSets}^{\mathbf{\Psi}_n^{\mathrm{op}}}$ fibrant and cofibrant and consider a fibrant replacement $Y$ of $v_!X$. We should verify that the map $X \rightarrow v^*Y$ is a weak equivalence. Note that for every $T \in \mathbf{\Psi}_n$, the category $\mathrm{Dec}_n(T)$ has an initial object (namely the identity map of $T$), so that the map
\begin{equation*}
\mathrm{hocolim}_{S \in \mathrm{Dec}_n(T)^{\mathrm{op}}} X(S) \longrightarrow X(T)
\end{equation*}
is a weak equivalence. It follows that $X(T) \rightarrow v^*Y(T)$ is a weak equivalence for every such $T$.
\end{proof}

\begin{remark}
In terms of the corresponding $\infty$-categories $\mathbf{Op}_{\leq n}$ and $\mathbf{Op}$, the previous corollary states that the inclusion $\mathbf{Op}_{\leq n} \rightarrow \mathbf{Op}$ exhibits the former as a colocalization of the latter.
\end{remark}

We conclude this section with a result relating the $n$-truncation and the $(n-1)$-truncation of an $\infty$-operad. To state it, write $w$ for the inclusion $\mathbf{\Psi}_{n-1} \rightarrow \mathbf{\Psi}_n$, which induces an adjunction
\[
\xymatrix{
w_!: \mathbf{sSets}^{\mathbf{\Psi}_{n-1}^{\mathrm{op}}} \ar@<.5ex>[r] & \mathbf{sSets}^{\mathbf{\Psi}_n^{\mathrm{op}}}: w^*. \ar@<.5ex>[l]
}
\]
One sees this is a Quillen adjunction in the same way as for the pair $(v_!, v^*)$. Write $t_{n-1}$ for the functor $\mathbf{L}w_!\mathbf{R}w^*$. For $X \in \mathbf{sSets}^{\mathbf{\Psi}_n^{\mathrm{op}}}$ we wish to express the difference between $t_{n-1} X$ and $X$ in terms of \emph{$n$-homogeneous $\infty$-operads}. Informally speaking, an $n$-homogeneous operad is one that only has nontrivial operations of arities $1$ and $n$. To make this precise in the setting of $\infty$-operads, consider the full subcategory $g_n: \mathbf{\Gamma}_n \rightarrow \mathbf{\Psi}_n$ spanned by the trees $T$ which satisfy one of the following two conditions:
\begin{itemize}
\item[(1)] All the vertices of $T$ are unary, i.e. $T$ is just a simplex.
\item[(2)] All the vertices of $T$ except one are unary, where the non-unary vertex has valence $n$.
\end{itemize}
In particular, any $T \in \mathbf{\Gamma}_n$ has either one leaf or $n$ leaves. The category $\mathbf{\Gamma}_n$ inherits a generalized Reedy structure from $\mathbf{\Psi}_n$; again we may localize the category of simplicial presheaves over $\mathbf{\Gamma}_n$ with respect to Segal cores and $\{0\} \rightarrow J$ to obtain the homotopy theory of \emph{complete $\mathbf{\Gamma}_n$-Segal spaces}. The pushforward
\begin{equation*}
(g_n)_!: \mathbf{sSets}^{\mathbf{\Gamma}_n^{\mathrm{op}}} \longrightarrow \mathbf{sSets}^{\mathbf{\Psi}_n^{\mathrm{op}}}
\end{equation*}
is then a left Quillen functor. Let us write $h_n$ for the composite $\mathbf{L}(g_n)_!\mathbf{R}g_n^*$. We will sometimes refer to $h_n X$ as the \emph{$n$-homogeneous part of $X$}. The following result will be the key ingredient in proving Proposition \ref{prop:mapstruncations}:

\begin{proposition}
\label{prop:wnX}
For a fibrant object $X \in \mathbf{sSets}^{\mathbf{\Psi}_n^{\mathrm{op}}}$ the square
\[
\xymatrix{
h_n t_{n-1} X \ar[r]\ar[d] & t_{n-1} X \ar[d] \\
h_n X \ar[r] & X.
}
\]
is a homotopy pushout.
\end{proposition}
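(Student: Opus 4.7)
The plan is to reduce the claim to a levelwise check at the trivial tree $\eta$ and the corollas $C_1, \ldots, C_n$, and then verify the pushout property directly in each of these cases. Since the complete $\mathbf{\Psi}_n$-Segal space model structure is a left Bousfield localization of the Reedy model structure, I would first form the Reedy homotopy pushout
\[
P := t_{n-1}X \sqcup^{L}_{h_n t_{n-1} X} h_n X,
\]
equipped with a natural comparison map $P \to X$. It then suffices to show that, after taking a complete $\mathbf{\Psi}_n$-Segal fibrant replacement $P^{\mathrm{fib}}$ of $P$, the map $P^{\mathrm{fib}} \to X$ is a weak equivalence; this is detected, via Proposition \ref{prop:wefibrants}, by the induced maps $\mathrm{Map}(\eta,-) \to \mathrm{Map}(\eta,-)$ and $\mathrm{Map}(C_k,-) \to \mathrm{Map}(C_k,-)$ for $1 \le k \le n$.

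I would then carry out a case analysis at each of $\eta, C_1, \ldots, C_n$, unravelling the derived left Kan extensions via decomposition categories (using Corollary \ref{cor:mappingspacev!} and its evident analogue for the inclusions $w: \mathbf{\Psi}_{n-1} \hookrightarrow \mathbf{\Psi}_n$ and $g_n: \mathbf{\Gamma}_n \hookrightarrow \mathbf{\Psi}_n$). At $\eta$, all four corners of the square are canonically equivalent to $X(\eta)$ since $\eta$ lies in both $\mathbf{\Gamma}_n$ and $\mathbf{\Psi}_{n-1}$; the square is trivially a pushout. At a corolla $C_k$ with $k < n$, the tree $C_k$ is initial in the relevant slice for $w$, so $t_{n-1}X(C_k) \simeq X(C_k)$ and likewise $h_n t_{n-1}X(C_k) \simeq h_n X(C_k)$; both horizontal arrows are weak equivalences, making the square a pushout. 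At $C_n$, the tree $C_n$ is terminal in the slice $(\mathbf{\Gamma}_n \downarrow C_n)^{\mathrm{nd}}$, so $h_n X(C_n) \simeq X(C_n)$ and $h_n t_{n-1}X(C_n) \simeq t_{n-1}X(C_n)$; both vertical arrows are now weak equivalences, and the square is again a pushout. In every case the comparison map $P(T) \to X(T)$ is a weak equivalence for $T \in \{\eta, C_1, \ldots, C_n\}$.

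The main obstacle, and the part requiring real work, is the passage from these levelwise equivalences on corollas to the conclusion that $P^{\mathrm{fib}} \to X$ is a weak equivalence in the localized model structure. For this one needs to know that $h_n X$, $t_{n-1}X$, $h_n t_{n-1}X$ and the pushout $P$ all automatically satisfy the $\mathbf{\Psi}_n$-Segal condition after a Reedy fibrant replacement, so that the fibrant replacement does not disturb their values on $\eta$ and the corollas. The plan to establish this is to adapt the decomposition-category/cofinality argument of Lemma \ref{lem:v!fibrant} to the inclusions $w$ and $g_n$: using the explicit formula $Y(T) \simeq \mathrm{hocolim}_{S \in \mathrm{Dec}(T)^{\mathrm{op}}} X(S)$ in each case, the Segal decomposition of $T$ as a union of its corollas can be transported through the colimit, showing that the Segal map $Y(T) \to \mathrm{Map}(\mathrm{Sc}(T), Y)$ is a weak equivalence. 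Once this technical point is in hand, Proposition \ref{prop:wefibrants} together with the case analysis above yields the desired homotopy pushout square.
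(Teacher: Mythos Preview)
Your approach is in the right spirit but takes a detour that creates the very obstacle you identify, and there are a couple of orientation errors in the case analysis. In the square as written, the maps induced by $t_{n-1} \to \mathrm{id}$ are the \emph{vertical} arrows and those induced by $h_n \to \mathrm{id}$ are the \emph{horizontal} ones; your analysis at $C_k$ for $k<n$ and at $C_n$ has these swapped (and for the Kan extension $h_n$, the identity on $C_n$ is \emph{initial}, not terminal, in the relevant slice). The conclusions you draw at each corolla are nonetheless correct once the labels are fixed.

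The more serious issue is the ``main obstacle''. To apply Proposition~\ref{prop:wefibrants} you must know that the Reedy pushout $P$ is already a complete $\mathbf{\Psi}_n$-Segal space after Reedy fibrant replacement. You can adapt Lemma~\ref{lem:v!fibrant} to show that each of $h_n X$, $t_{n-1}X$, and $h_n t_{n-1}X$ is Segal, but pushouts of Segal objects need not be Segal, and your decomposition-category sketch does not address $P$ itself. In practice, verifying Segal for $P$ amounts to computing $P(T)$ for an arbitrary tree $T$ and comparing it with $X(T)$---which is precisely the statement you are trying to prove.

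The paper avoids this circularity by checking the square is a \emph{levelwise} (Reedy) homotopy pushout at every tree, not just at corollas, after first passing to $\mathbf{\Omega}_n$ via the Quillen equivalence $(u_!,u^*)$. The point is a simple combinatorial dichotomy: any tree $T \in \mathbf{\Omega}_n$ (at most $n$ leaves, hence all vertices of valence $\le n$) lies either in $\mathbf{\Psi}_{n-1}$ or in $\mathbf{\Gamma}_n$. If $T \in \mathbf{\Gamma}_n$, both horizontal arrows are equivalences at $T$; if $T \in \mathbf{\Psi}_{n-1} \setminus \mathbf{\Gamma}_n$, the left column is $\varnothing \to \varnothing$ and the right vertical is an equivalence (this last by the analogue of Corollary~\ref{cor:unitv} for $w$). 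Either way the square is a levelwise pushout, hence a Reedy homotopy pushout, hence a homotopy pushout in the localized model structure. No Segal condition for $P$ is ever needed.
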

\begin{proof}
By Theorem \ref{thm:Omegan} it suffices to check that this square is a homotopy pushout after applying $\mathbf{L}u^*$, i.e. after restricting to $\mathbf{\Omega}_n$ (and Reedy cofibrant replacements, if necessary). We claim that the evaluation of the vertical map on the right at any tree $T \in \mathbf{\Omega}_n \cap \mathbf{\Psi}_{n-1}$ is a weak equivalence, which we will justify in the second part of this proof. To conclude the statement of the proposition, observe that the map $h_n X \rightarrow X$ is a homotopy equivalence for any tree $T \in \mathbf{\Gamma}_n$,  whereas $h_n X(T) = \varnothing$ for $T \notin \mathbf{\Gamma}_n$ (and similarly for the map $h_n t_{n-1} X \rightarrow t_{n-1} X$). Indeed, the proposition then follows from the observation that any $T \in \mathbf{\Omega}_n$ is contained in either $\mathbf{\Psi}_{n-1}$ or $\mathbf{\Gamma}_n$.

To verify the claim above it suffices to check that the map $\mathbf{R}w^* t_{n-1} X \rightarrow \mathbf{R}w^*X$ is a weak equivalence. The object $t_{n-1}X$ may be computed as $w_! (w^*X)_c$, where $(w^*X)_c$ is a Reedy cofibrant replacement of $w^*X$ (and is therefore in particular pointwise weakly equivalent to $w^*X$). Write $(w_! (w^*X)_c)_f$ for a Reedy fibrant replacement of this object, which is then a complete $\mathbf{\Psi}_n$-Segal space by Lemma \ref{lem:v!fibrant}. It follows that we may compute $\mathbf{R}w^* t_{n-1} X$ as $w^*(w_! (w^*X)_c)_f$. Our claim then follows from Corollary \ref{cor:unitv}.
\end{proof}

Observe that if $S \rightarrow T$ is a map of trees in $\mathbf{\Psi}_n$ such that $T \in \mathbf{\Gamma}_n$, then $S$ must be in $\mathbf{\Gamma}_n$ as well. It follows that for any such $T$, the inclusion
\begin{equation*}
\mathbf{sSets}^{\mathbf{\Gamma}_n^{\mathrm{op}}}/T \longrightarrow \mathbf{sSets}^{\mathbf{\Psi}_n^{\mathrm{op}}}/T
\end{equation*}
is an equivalence of categories. It is immediate from this observation that the pushforward functor $g_!$ preserves fibrant objects. This allows us to prove the following:

\begin{proposition}
\label{prop:wnXpullback}
The square of Proposition \ref{prop:wnX} is also a homotopy pullback.
\end{proposition}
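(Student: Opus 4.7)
The plan is to verify the homotopy pullback property pointwise, in parallel with the strategy of Proposition \ref{prop:wnX}. By Theorem \ref{thm:Omegan} it suffices to check that after applying $\mathbf{L}u^*$ the square becomes a homotopy pullback in $\mathbf{sSets}^{\mathbf{\Omega}_n^{\mathrm{op}}}$. Since $(g_n)_!$ preserves fibrant objects (as noted immediately before the proposition), all four presheaves in the square are fibrant, so the Reedy and operadic notions of homotopy pullback agree and the problem reduces to checking that the square of simplicial sets obtained by evaluation at each $T \in \mathbf{\Omega}_n$ is a homotopy pullback. As observed in the proof of Proposition \ref{prop:wnX}, every such $T$ lies in $\mathbf{\Gamma}_n$ or in $\mathbf{\Psi}_{n-1}$, so the argument splits into two cases.

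For $T \in \mathbf{\Gamma}_n$, I would use that $g_n$ is a fully faithful inclusion, so $g_n^*(g_n)_! \simeq \mathrm{id}$, and hence the counit $h_n Y \to Y$ is a weak equivalence at every tree of $\mathbf{\Gamma}_n$. Applied to both $Y = X$ and $Y = t_{n-1}X$, this shows that the two horizontal maps of the square are weak equivalences at $T$. Any commutative square of simplicial sets whose horizontal arrows are both weak equivalences is automatically a homotopy pullback, which settles this case.

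For $T \in \mathbf{\Omega}_n \cap \mathbf{\Psi}_{n-1}$, I would instead show that both vertical arrows are weak equivalences at $T$. The right-hand vertical $t_{n-1}X(T) \to X(T)$ is an equivalence by the analysis already carried out in the proof of Proposition \ref{prop:wnX} (equivalently, by the analogue of Corollary \ref{cor:unitv} applied to $w$). For the left-hand vertical, the combinatorial input I need is that any morphism $S \to T$ in $\mathbf{\Omega}$ with $T \in \mathbf{\Psi}_{n-1}$ forces $S \in \mathbf{\Psi}_{n-1}$; this holds because such morphisms factor as compositions of isomorphisms, face maps, and degeneracies, none of which can produce in the source a vertex of higher valence than those already present in the target. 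Consequently the comma category $g_n \downarrow T$ is identified with $(\mathbf{\Gamma}_n \cap \mathbf{\Psi}_{n-1}) \downarrow T$, i.e. with linear trees mapping to $T$, so that $h_n Y(T)$ only depends on the restriction of $Y$ to linear trees. Since $t_{n-1}X \to X$ is already a weak equivalence at every tree of $\mathbf{\Psi}_{n-1}$ and in particular at every linear tree, the induced map $h_n t_{n-1} X(T) \to h_n X(T)$ is a weak equivalence, and again a square with both vertical arrows weak equivalences is automatically a homotopy pullback.

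The main step requiring care is the combinatorial observation in the second case that morphisms in $\mathbf{\Omega}$ cannot decrease valence in passing from target to source; once in hand, it matches the shape of the observation made immediately before the proposition (for $\mathbf{\Gamma}_n$ in place of $\mathbf{\Psi}_{n-1}$) and the two cases together cover all of $\mathbf{\Omega}_n$, completing the proof.
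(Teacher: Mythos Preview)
Your overall strategy---reducing to a pointwise check on $\mathbf{\Omega}_n$, using fibrancy of all four corners to pass between the operadic and Reedy model structures, and splitting into the cases $T \in \mathbf{\Gamma}_n$ versus $T \in \mathbf{\Omega}_n \cap \mathbf{\Psi}_{n-1}$---matches the paper's, and the first case is handled correctly. The argument for the second case, however, contains a genuine error.

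Your combinatorial claim that a morphism $S \to T$ with $T \in \mathbf{\Psi}_{n-1}$ forces $S \in \mathbf{\Psi}_{n-1}$ is false: inner face maps contract an inner edge and thereby \emph{raise} vertex valence in the source. For instance with $n=3$, if $T$ is two binary corollas grafted along an edge $e$ (so $T \in \mathbf{\Psi}_2$), the inner face map $\partial_e T = C_3 \to T$ has source $C_3 \notin \mathbf{\Psi}_2$. This same example shows your identification of $g_n \downarrow T$ with linear trees mapping to $T$ fails. More fundamentally, the left Kan extension $(g_n)_! Z$ is computed at $T$ as a colimit over $T \downarrow g_n$ (maps \emph{out of} $T$ into trees of $\mathbf{\Gamma}_n$), not over $g_n \downarrow T$ as you use.

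The paper's argument for this case is simpler and was already recorded in the proof of Proposition~\ref{prop:wnX}: for $T \in \mathbf{\Omega}_n$ with $T \notin \mathbf{\Gamma}_n$ one has $h_n Y(T) = \varnothing$. Indeed such a $T$ has a vertex of valence $k$ with $2 \leq k \leq n-1$, whereas the operations of any $S \in \mathbf{\Gamma}_n$ have arity $1$ or $n$ only; hence there are no maps $T \to S$ and the indexing category for $(g_n)_! g_n^* Y(T)$ is empty. The square at such $T$ thus has left column $\varnothing \to \varnothing$ and is trivially a homotopy pullback. Linear $T$ are already covered by the first case. This gives the Reedy (pointwise) pullback statement; combined with fibrancy of the four corners it yields the operadic homotopy pullback.
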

\begin{proof}
As before it suffices to check this after restricting to $\mathbf{\Omega}_n$. It is clear from the descriptions of $h_n X$ and $h_n t_{n-1}X$ offered in the previous proof that the square is a homotopy pullback in the Reedy model structure on $\mathbf{sSets}^{\mathbf{\Omega}_n^{\mathrm{op}}}$. Without loss of generality we may assume the two objects on the right of the square are Reedy fibrant (and therefore also fibrant in the model structure for complete $\mathbf{\Omega}_n$-Segal spaces by virtue of Lemma \ref{lem:v!fibrant}). We observed above that the two objects on the left are fibrant as well. It follows that the square is also a homotopy pullback in the model structure for complete $\mathbf{\Omega}_n$-Segal spaces.
\end{proof}

\section{Truncations of stable $\infty$-operads}
\label{subsec:truncatedstable}

In this section we will adapt the theory of truncations to the setting of stable $\infty$-operads. Recall that we write $\mathbf{Op}^{\mathrm{St}}$ (resp. $\mathbf{Op}^{\mathrm{St}}_{\leq n}$) for the $\infty$-category of stable $\infty$-operads (resp. $n$-truncated stable $\infty$-operads). Theorem \ref{thm:Psin} shows that the restriction functor $\mathbf{Op} \rightarrow \mathbf{Op}_{\leq n}$ has both a left and right adjoint, for which we write $\mathbf{L}v_!$ and $\mathbf{R}v_*$ respectively. In this section we will need to vary $n$, so that we sometimes write $v_n$ for $v$ to make the dependence on $n$ explicit. The main result of this section is the following, which in particular proves Theorem \ref{thm:ntruncation}:

\begin{theorem}
\label{thm:stableleftadjoint}
The restriction functor $(-)_{\leq n}: \mathbf{Op}^{\mathrm{St}} \rightarrow \mathbf{Op}^{\mathrm{St}}_{\leq n}$ has both a fully faithful left adjoint (which we denote $i_n$) and a fully faithful right adjoint. Furthermore, for $\mathcal{O}^\otimes \in \mathbf{Op}^{\mathrm{St}}_{\leq n}$ and $\mathcal{N}^\otimes \in \mathbf{Op}^{\mathrm{St}}$, there is a natural weak equivalence
\begin{equation*}
\mathrm{Map}(i_n\mathcal{O}^\otimes, \mathcal{N}^\otimes) \simeq \mathrm{Map}(\mathbf{L}(v_n)_!\mathcal{O}^\otimes, \mathcal{N}^\otimes).
\end{equation*} 
\end{theorem}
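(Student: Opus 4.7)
The plan is to upgrade the non-stable truncation adjunction from Section \ref{subsec:truncatedoperads} (namely $\mathbf{L}(v_n)_! \dashv \mathbf{R}(v_n)^* = (-)_{\leq n}$, together with the fact that the unit is an equivalence by Corollary \ref{cor:unitv}) to the stable setting, constructing $i_n$ as a suitable ``stabilization'' of $\mathbf{L}(v_n)_!$ and the right adjoint as a dual construction. The explicit formula for truncated tensor products in Proposition \ref{prop:truncatedtensor} is what makes the construction work: it allows us to write down the higher-arity tensor products of $i_n\mathcal{O}^\otimes$ as finite limits of tensor products of arity $\leq n$ already given in $\mathcal{O}^\otimes$.

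First I would construct $i_n\mathcal{O}^\otimes$ directly. Given $\mathcal{O}^\otimes \in \mathbf{Op}^{\mathrm{St}}_{\leq n}$, write $\otimes^k$ for its tensor products, $k\leq n$. Define a candidate sequence of tensor products $\{\odot^k\}_{k\geq 1}$ on the underlying category $\mathcal{O}$ by setting $\odot^k = \otimes^k$ for $k\leq n$ and $\odot^k = \varprojlim_{\N\mathbf{Part}_n(k)} \psi_n^k$ for $k > n$, using the spray of $\mathcal{O}^\otimes$. To promote this symmetric sequence into an honest stable $\infty$-operad I would work dendroidally: model $\mathcal{O}^\otimes$ as a Reedy fibrant--cofibrant object $X$ of $\mathbf{sSets}^{\mathbf{\Psi}_n^{\mathrm{op}}}$, form the Reedy fibrant replacement of $(v_n)_! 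X$ in $\mathbf{sdSets}$, and define $i_n\mathcal{O}^\otimes$ as the corresponding $\infty$-operad. By Lemma \ref{lem:v!fibrant} and Corollary \ref{cor:mappingspacev!} this is automatically fibrant (i.e.\ a complete dendroidal Segal space), and the mapping space formula gives precisely $\odot^k$ as above at the level of $k$-corollas. Stability is then straightforward: the underlying $\infty$-category is $\mathcal{O}$, which is stable and compactly generated by assumption, and each $\odot^k$ preserves colimits in each variable separately because it is a \emph{finite} limit of iterated tensor products of arity $\leq n$, each of which preserves colimits in each variable by stability of $\mathcal{O}^\otimes$, and finite limits commute with arbitrary colimits in stable $\infty$-categories.

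Next I would verify the universal property. For $\mathcal{N}^\otimes \in \mathbf{Op}^{\mathrm{St}}$, the key claim is the natural equivalence
\begin{equation*}
\mathrm{Map}_{\mathbf{Op}^{\mathrm{St}}}(i_n\mathcal{O}^\otimes, \mathcal{N}^\otimes) \simeq \mathrm{Map}_{\mathbf{Op}^{\mathrm{St}}_{\leq n}}(\mathcal{O}^\otimes, (\mathcal{N}^\otimes)_{\leq n}).
\end{equation*}
I would prove this by induction on arity, using the homogeneous pushout--pullback squares of Propositions \ref{prop:wnX} and \ref{prop:wnXpullback} applied to $i_n\mathcal{O}^\otimes$. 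At each inductive stage, extending a map of $(k-1)$-truncated stable $\infty$-operads to $k$-truncations requires lifting its effect on $k$-homogeneous parts, and Proposition \ref{prop:mapstruncations} (whose proof is essentially this same decomposition, and which one should view as the finite-stage version of the present statement) guarantees that for maps \emph{into} a stable target such a lift is uniquely determined by the already-given lower-arity data, precisely because the $k$-homogeneous part of $i_n\mathcal{O}^\otimes$ is given by the limit formula of Proposition \ref{prop:truncatedtensor}. The comparison with $\mathbf{L}(v_n)_!\mathcal{O}^\otimes$ (which has ``freely generated'' higher homogeneous parts in the non-stable sense) follows by the same inductive scheme: maps from the free higher-arity data into the stable target $\mathcal{N}^\otimes$ are rigidified by stability of $\mathcal{N}^\otimes$ to agree with maps from the stable cofree extension. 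Full faithfulness of $i_n$ is then immediate since the unit $\mathcal{O}^\otimes \to (i_n\mathcal{O}^\otimes)_{\leq n}$ is built to be the identity on tensor products of arity $\leq n$.

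The right adjoint is constructed dually: it sends $\mathcal{O}^\otimes \in \mathbf{Op}^{\mathrm{St}}_{\leq n}$ to the stable $\infty$-operad on $\mathcal{O}$ whose $k$-homogeneous part vanishes for $k > n$ (i.e.\ whose higher tensor products are zero functors), using the pushout square of Proposition \ref{prop:wnX} to extend $\mathcal{O}^\otimes$ cellwise by the trivial $k$-homogeneous pieces. Full faithfulness is again immediate from the identification of $n$-truncations. The main obstacle I anticipate is the coherence needed at the second step: one must check that the formulas produced via the dendroidal model really do assemble into a genuine $\infty$-operad structure (in particular that all the associativity coherences required by the spray of Section \ref{subsec:truncations} hold). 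This is where the dendroidal framework of the appendix pulls its weight: Lemma \ref{lem:v!fibrant} ensures that the naive pointwise limit formulas automatically extend to a fibrant dendroidal object, and Corollary \ref{cor:mappingspacev!} identifies its operations with the intended limit formula. Checking the universal property in the stable setting is then a matter of carefully unwinding how the inductive step of Proposition \ref{prop:mapstruncations} iterates, which is essentially a bookkeeping argument once the $n$-homogeneous decomposition is in hand.
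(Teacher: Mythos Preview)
Your construction of $i_n$ has a genuine gap: you identify $i_n\mathcal{O}^\otimes$ with the Reedy fibrant replacement of $(v_n)_! X$, i.e.\ with $\mathbf{L}(v_n)_!\mathcal{O}^\otimes$, and then claim via Corollary \ref{cor:mappingspacev!} that its $k$-fold tensor product is $\odot^k = \varprojlim_{\N\mathbf{Part}_n(k)} \psi_n^k$. But Corollary \ref{cor:mappingspacev!} gives a homotopy \emph{colimit} formula for the space of $k$-ary operations, not a limit formula for a corepresenting object; a homotopy colimit of corepresentable functors $\mathrm{Map}_\mathcal{O}(\otimes^S(-),-)$ over $\mathrm{Dec}_n(C_k)^{\mathrm{op}}$ is not in general corepresentable, so your check of stability (``each $\odot^k$ preserves colimits in each variable because it is a finite limit\ldots'') never gets off the ground. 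Indeed the remark immediately after the statement of the theorem says explicitly that $\mathbf{L}(v_n)_!\mathcal{O}^\otimes$ need not be stable. The content of the theorem is precisely that when mapping into a \emph{stable} target $\mathcal{N}^\otimes$ the difference between $\mathbf{L}(v_n)_!$ and the genuine stable left adjoint $i_n$ disappears, not that these two operads coincide.

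The paper avoids any direct construction of $i_n$ and instead invokes the adjoint functor theorem: it first observes that $\mathbf{R}v_*$ restricts to stable $\infty$-operads (operations of arity $>n$ are contractible, so the higher tensor products are zero --- this is your right adjoint, and your description of it is correct), giving the fully faithful right adjoint; then it checks that $(-)_{\leq n}$ is accessible (it has a right adjoint) and preserves small limits (a limit in $\mathbf{Op}$ of stable $\infty$-operads is again stable, by an explicit formula for the tensor products of the limit). The displayed equivalence then follows formally from the two adjunctions $i_n \dashv (-)_{\leq n}$ and $\mathbf{L}(v_n)_! \dashv (-)_{\leq n}$, and full faithfulness of $i_n$ from that of $\mathbf{R}v_*$. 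The limit formula for the tensor products of $i_n\mathcal{O}^\otimes$ (Lemma \ref{lem:odotk} and Proposition \ref{prop:truncatedtensor}) is deduced \emph{afterward} from the theorem, not used to prove it; likewise the proof of Proposition \ref{prop:mapstruncations} in the paper goes through Corollary \ref{cor:wnXstable} and Lemma \ref{lem:odotk}, both of which rely on the present theorem, so your inductive scheme would also have to untangle that circularity.
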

\begin{remark}
The $\infty$-operad $\mathbf{L}(v_n)_!\mathcal{O}^\otimes$ need not be stable itself; however, the previous result shows that when mapping into a stable $\infty$-operad, the difference between $i_n$ and $\mathbf{L}(v_n)_!$ is irrelevant.
\end{remark}

\begin{proof}[Proof of Theorem \ref{thm:stableleftadjoint}] 
Consider the square of functors
\[
\xymatrix{
\mathbf{Op} \ar[r] & \mathbf{Op}_{\leq n} \\
\mathbf{Op}^{\mathrm{St}} \ar[r]_-{(-)_{\leq n}}\ar[u] & \mathbf{Op}^{\mathrm{St}}_{\leq n} \ar[u]
}
\]
in which the vertical arrows are fully faithful. We will first show the existence of a right adjoint to the bottom horizontal arrow. For this it suffices to show that for $\mathcal{O}^\otimes \in \mathbf{Op}^{\mathrm{St}}_{\leq n}$, the $\infty$-operad $\mathbf{R}v_*\mathcal{O}^\otimes$ is stable as well, so that the restriction of $\mathbf{R}v_*$ to stable $\infty$-operads gives the desired adjoint. The underlying $\infty$-categories of $\mathcal{O}$ and $\mathbf{R}v_*\mathcal{O}^\otimes$ agree. Furthermore, it is clear directly from the definition of $v_*$ that for any collection of objects $x_1, \ldots, x_k, y$ of $\mathcal{O}$ with $k \leq n$, the space of operations $\mathbf{R}v_*\mathcal{O}^\otimes(x_1, \ldots, x_k; y)$ is naturally equivalent to $\mathcal{O}^\otimes(x_1, \ldots, x_k; y)$, whereas $\mathbf{R}v_*\mathcal{O}^\otimes(x_1, \ldots, x_k; y)$ is contractible for $k > n$. It follows that $\mathbf{R}v_*\mathcal{O}^\otimes$ is indeed stable. Fully faithfulness of $\mathbf{R}v_*$ is also clear from this description.

For the existence of the left adjoint we appeal to the adjoint functor theorem (Corollary 5.5.2.9 of \cite{htt}). A formal argument shows that the $\infty$-categories $\mathbf{Op}^{\mathrm{St}}$ and $\mathbf{Op}^{\mathrm{St}}_{\leq n}$ are presentable, so that it suffices to show that the functor $\mathbf{Op}^{\mathrm{St}} \rightarrow \mathbf{Op}^{\mathrm{St}}_{\leq n}$ is accessible and preserves small limits. Accessibility is immediate from the fact that this functor admits a right adjoint and thus preserves small colimits. To show preservation of limits, it suffices to show that the other three functors in the square above preserve small limits (and use that the inclusion $\mathbf{Op}^{\mathrm{St}} \rightarrow \mathbf{Op}$ is fully faithful). We already know this for the functor $\mathbf{Op} \rightarrow \mathbf{Op}_{\leq n}$. For the vertical functors, we need to argue that the class of ($n$-truncated) stable $\infty$-operads is closed under taking small limits in $\mathbf{Op}$ (or $\mathbf{Op}_{\leq n}$). We do this for $\mathbf{Op}$, the other case being entirely analogous. Let $I \rightarrow \mathbf{Op}^{\mathrm{St}}: i \mapsto \mathcal{O}_i^\otimes$ be a diagram and let $\mathcal{O}^\otimes$ be a limit of the induced diagram $I \rightarrow \mathbf{Op}^{\mathrm{St}} \rightarrow \mathbf{Op}$. Taking underlying $\infty$-categories gives a limit-preserving functor $\mathbf{Op} \rightarrow \widehat{\mathbf{Cat}}$. We may conclude that the underlying $\infty$-category of $\mathcal{O}^\otimes$ is compacty generated by Proposition 5.5.7.6 of \cite{htt}; also, it is manifestly stable. Write $G_i: \mathcal{O} \rightarrow \mathcal{O}_i$ for the induced functor and $F_i$ for its left adjoint. For a collection of objects $x_1, \ldots, x_k, y$ of $\mathcal{O}$ we have natural equivalences
\begin{eqnarray*}
\mathcal{O}^\otimes(x_1, \ldots, x_k; y) & \simeq & \varprojlim_I \mathcal{O}_i^\otimes(G_i(x_1), \ldots, G_i(x_k); G_i(y)) \\
& \simeq & \varprojlim_I \mathcal{O}_i(G_i(x_1) \otimes_{\mathcal{O}_i} \cdots \otimes_{\mathcal{O}_i} G_i(x_k), G_i(y)) \\
& \simeq & \varprojlim_I \mathcal{O}(F_i(G_i(x_1) \otimes_{\mathcal{O}_i} \cdots \otimes_{\mathcal{O}_i} G_i(x_k)), y),
\end{eqnarray*}
showing that the functor $\mathcal{O}^\otimes(x_1, \ldots, x_k; -)$ is corepresented by the object 
\begin{equation*}
\varinjlim_I F_i(G_i(x_1) \otimes_{\mathcal{O}_i} \cdots \otimes_{\mathcal{O}_i} G_i(x_k)).
\end{equation*}
We need to show that the tensor products induced by $\mathcal{O}^\otimes$ preserve colimits in each variable separately. This follows from the above formula and the fact that the functors $G_i$ also preserve colimits. Indeed, they preserve filtered colimits by assumption and finite colimits since they are exact (being limit-preserving functors between stable $\infty$-categories). We conclude that $\mathcal{O}^\otimes$ is indeed a stable $\infty$-operad. 

The natural equivalence of the theorem arises from the natural equivalences
\begin{equation*}
\mathrm{Map}(i_n\mathcal{O}^\otimes, \mathcal{N}^\otimes) \simeq \mathrm{Map}(\mathcal{O}^\otimes, \mathcal{N}^\otimes_{\leq n}) \simeq \mathrm{Map}(\mathbf{L}v_!\mathcal{O}^\otimes, \mathcal{N}^\otimes).
\end{equation*}
Finally, we will show that $i_n$ is fully faithful by demonstrating that for each $\mathcal{O}^\otimes \in \mathbf{Op}^{\mathrm{St}}_{\leq n}$, the unit map $\mathcal{O}^\otimes \rightarrow (i_n\mathcal{O}^\otimes)_{\leq n}$ is an equivalence. This follows by considering, for any $\mathcal{N}^\otimes \in \mathbf{Op}^{\mathrm{St}}_{\leq n}$, the sequence of natural equivalences
\begin{eqnarray*}
\mathrm{Map}((i_n\mathcal{O}^\otimes)_{\leq n}, \mathcal{N}^\otimes) & \simeq & \mathrm{Map}(i_n\mathcal{O}^\otimes, \mathbf{R}v_*\mathcal{N}^\otimes) \\
& \simeq & \mathrm{Map}(\mathbf{L}v_!\mathcal{O}^\otimes, \mathbf{R}v_*\mathcal{N}^\otimes) \\
& \simeq & \mathrm{Map}(\mathcal{O}^\otimes, (\mathbf{R}v_*\mathcal{N}^\otimes)_{\leq n}) \\
& \simeq & \mathrm{Map}(\mathcal{O}^\otimes, \mathcal{N}^\otimes)
\end{eqnarray*}
where in the last step we used the fully faithfulness of $\mathbf{R}v_*$.
\end{proof}

Consider a stable $\infty$-operad $\mathcal{O}^\otimes$ with associated tensor products $\otimes^k$. Before stating our next result we introduce some notation. Let $T$ be an object in $\mathbf{\Psi}_n$. Then we can inductively define a tensor product $\otimes^T$ as follows: if $T$ is a corolla $C_k$, then $\otimes^T = \otimes^k$, and if $T$ is obtained by grafting a corolla $C_k$ onto a leaf $l$ of a smaller tree $T'$, then 
\begin{equation*}
\otimes^{T} = \otimes^{T'} \circ (\mathrm{id}, \ldots, \otimes^k, \ldots, \mathrm{id}), 
\end{equation*}
where on the right-hand side $\otimes^k$ occurs in the slot corresponding to $l$ and the identity functor occurs in all others. The tensor products $\otimes^T$ are covariantly functorial in $T$. Note that the definition of $\otimes^T$ only involves the tensor products $\otimes^k$ for $k \leq n$. Also, for a tree $S \in \mathbf{\Omega}$, recall the category $\mathrm{Dec}_n(S)$ of $n$-decompositions of $S$. Its objects are the maps of trees $f: S \rightarrow T$ such that $T \in \mathbf{\Psi}_n$ and $f$ is a composition of inner face maps. 

\begin{lemma}
\label{lem:odotk}
Consider $\mathcal{O}^\otimes$ as above and write $\odot^k$ for the $k$-fold tensor product determined by the stable $\infty$-operad $\tau_n\mathcal{O}^\otimes = i_n(\mathcal{O}^\otimes_{\leq n})$. Then there is a natural equivalence
\begin{equation*}
\odot^k \longrightarrow \varprojlim_{T \in \mathrm{Dec}_n(C_k)} \otimes^T.
\end{equation*}
\end{lemma}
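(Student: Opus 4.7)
When $k \leq n$, the trivial decomposition $C_k$ itself is an initial object of $\mathrm{Dec}_n(C_k)$, so $\varprojlim_T \otimes^T \simeq \otimes^k$, and Theorem~\ref{thm:stableleftadjoint} (fully faithfulness of $i_n$) identifies $\odot^k$ with $\otimes^k$. The genuine content of the lemma is thus the case $k > n$, which I focus on.

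My first step is to produce the comparison map
\[
\alpha: \odot^k \longrightarrow \varprojlim_{T \in \mathrm{Dec}_n(C_k)} \otimes^T.
\]
For any $T \in \mathrm{Dec}_n(C_k)$, every vertex of $T$ has arity $k_j \leq n$, and Theorem~\ref{thm:stableleftadjoint} identifies $\odot^{k_j}$ with $\otimes^{k_j}$ for all such $k_j$. Iterating composition along $T$ in the operad $\tau_n\mathcal{O}^\otimes$ therefore supplies a natural transformation $\odot^k \to \otimes^T$, and these are compatible as $T$ varies, assembling into $\alpha$. To check that $\alpha$ is an equivalence, I will exhibit, for any $y \in \mathcal{O}$, an equivalence between the mapping spectrum from $\varprojlim_T \otimes^T(\vec{x})$ to $y$ and the operation spectrum from $\vec{x}$ to $y$ in $\tau_n\mathcal{O}^\otimes$, compatible with $\alpha$; the lemma then follows by Yoneda. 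On the one hand, contravariant exactness of the mapping spectrum functor into $y$ in the compactly generated stable $\infty$-category $\mathcal{O}$ rewrites the mapping spectrum out of the finite limit $\varprojlim_T \otimes^T(\vec{x})$ as the finite homotopy colimit over $T \in \mathrm{Dec}_n(C_k)^{\mathrm{op}}$ of the mapping spectra from $\otimes^T(\vec{x})$ to $y$. On the other hand, the universal property of $i_n$ from Theorem~\ref{thm:stableleftadjoint} identifies operation spectra of $\tau_n\mathcal{O}^\otimes$ with those of $\mathbf{L}(v_n)_!\mathcal{O}^\otimes_{\leq n}$, and the dendroidal pushforward formula of Corollary~\ref{cor:mappingspacev!}, evaluated at the corolla $C_k$ and restricted to the fibre over $(\vec{x}, y)$, presents the latter operation spectrum as precisely the same homotopy colimit. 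Matching these two computations yields the equivalence.

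The main obstacle is the step bridging Theorem~\ref{thm:stableleftadjoint}, which compares only \emph{total} mapping spaces of the $\infty$-operads $\tau_n\mathcal{O}^\otimes$ and $\mathbf{L}(v_n)_!\mathcal{O}^\otimes_{\leq n}$ into a stable $\infty$-operad, with the fibrewise identification of operation spectra at fixed $(\vec{x}, y)$ required in order to invoke Corollary~\ref{cor:mappingspacev!}. This requires constructing, for each such $(\vec{x}, y)$, a stable $\infty$-operad whose mapping spectrum into any $\mathcal{N}^\otimes$ probes exactly the arity-$k$ operation spectrum at those inputs and output, and then verifying that the universal property pulls back to give the desired fibrewise equivalence. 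Once this is achieved, the exactness arguments described above conclude the proof.
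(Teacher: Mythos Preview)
Your proposal has a genuine gap at exactly the point you flag. Theorem~\ref{thm:stableleftadjoint} gives an equivalence of mapping spaces \emph{out of} $\tau_n\mathcal{O}^\otimes$ and $\mathbf{L}(v_n)_!\mathcal{O}^\otimes_{\leq n}$ into stable targets; it does not identify their individual operation spaces, and in fact $\mathbf{L}(v_n)_!\mathcal{O}^\otimes_{\leq n}$ need not be corepresentable, so ``operation spectra'' there is not even well-defined. Your suggested fix, building for each $(\vec{x},y)$ a stable $\infty$-operad that probes a single operation space, runs the wrong way: such a probe would have to sit as a \emph{target} of a map from $\tau_n\mathcal{O}^\otimes$, and it is not clear how to realise ``evaluation at $(\vec{x},y)$'' as a map of stable $\infty$-operads with the required properties.

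The paper avoids this entirely. It first treats $k=n+1$ by applying the homogeneous pushout square of Proposition~\ref{prop:wnX} to $X=(\tau_n\mathcal{O}^\otimes)_{\leq n+1}$: mapping out into a stable $\mathcal{N}^\otimes$ yields a pullback square whose left vertical map is an equivalence (by Theorem~\ref{thm:stableleftadjoint}), forcing the right vertical map on $n$-homogeneous parts to be one too. Corepresentability then translates this into the statement that $\odot^{n+1}$ and $\varprojlim_{T\in\mathrm{Dec}_n(C_{n+1})}\otimes^T$ receive the same $\Sigma_{n+1}$-equivariant natural transformations from $\otimes_{\mathcal{N}}^{n+1}$ for every stable $\mathcal{N}^\otimes$; Yoneda in this form (varying $\mathcal{N}^\otimes$, not $(\vec{x},y)$) gives the equivalence. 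For general $k>n$ the same argument yields $\odot^k\simeq\varprojlim_{T\in\mathrm{Dec}_{k-1}(C_k)}\odot^T$, and one concludes by induction on $k$ together with a cofinality argument identifying the iterated limit with the limit over $\mathrm{Dec}_n(C_k)$. The key move you are missing is this use of the homogeneous decomposition to isolate a single arity and probe it by natural transformations from varying stable sources, rather than attempting to extract operation spaces directly.
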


\begin{remark}
It should be noted that the nerve of the category $\mathrm{Dec}_n(C_k)$ has the homotopy type of a finite simplicial set: indeed, since any automorphism of a tree $T$ is completely determined by its action on the leaves of $T$, the objects of $\mathrm{Dec}_n(C_k)$ have no nontrivial automorphisms. Furthermore, if $k \geq 3$ the length of a chain of decompositions $C_k \rightarrow T_1 \rightarrow \cdots \rightarrow T_m$ containing no isomorphisms is bounded above by $m = k-2$ (corresponding to a `maximal binary expansion' of $C_k$). In case $k=2$ the category $\mathrm{Dec}_n(C_2)$ is equivalent to the trivial category, since there are no nontrivial decompositions of a binary vertex.
\end{remark}

Before proving the lemma, let us fix another piece of convenient notation. In the previous section we introduced $n$-homogeneous $\infty$-operads by means of the homotopy theory of complete $\mathbf{\Gamma}_n$-Segal spaces. Write $\mathbf{Op}_{=n}$ for the $\infty$-category associated to this model category and write $\mathbf{Op}_{=n}^{\mathrm{St}}$ for its full subcategory spanned by the stable $n$-homogenous $\infty$-operads, i.e. those complete $\mathbf{\Gamma}_n$-Segal spaces satisfying the evident versions of the axioms imposed on stable $\infty$-operads. Pullback along the inclusion $\mathbf{\Gamma}_n \rightarrow \mathbf{\Omega}_n$ then defines a functor
\begin{equation*}
\mathbf{Op}_{\leq n} \longrightarrow \mathbf{Op}_{=n}: \mathcal{O}^\otimes_{\leq n} \longmapsto \mathcal{O}^\otimes_{=n}
\end{equation*}
which restricts to a functor $\mathbf{Op}^{\mathrm{St}}_{\leq n} \longrightarrow \mathbf{Op}^{\mathrm{St}}_{=n}$.

\begin{proof}[Proof of Lemma \ref{lem:odotk}]
We will prove the lemma by induction on $k$. To establish the base of the induction, note that the lemma is true for $k \leq n$. Indeed, since $i_n$ is fully faithful, the unit map $\mathcal{O}^\otimes_{\leq n} \rightarrow (\tau_n\mathcal{O}^\otimes)_{\leq n}$ is an equivalence, implying that for $k \leq n$ the natural transformations $\otimes^k \rightarrow \odot^k$ are equivalences. Note that in this case the category $\mathrm{Dec}_n(C_k)$ has an initial object (namely the identity map of $C_k$), so that the formula of the lemma holds true.

We first prove the lemma for $k = n+1$. Write $X = (\tau_n\mathcal{O}^\otimes)_{\leq n+1}$. Then Proposition \ref{prop:wnX} gives a pushout square
\[
\xymatrix{
h_{n+1}t_n X \ar[d]\ar[r] & t_n X \ar[d] \\
h_{n+1} X \ar[r] & X.
}
\]
For an arbitrary stable $\infty$-operad $\mathcal{N}^\otimes$, we then find a pullback square 
\[
\xymatrix{
\mathrm{Map}_{\mathcal{O}}(X, \mathcal{N}^\otimes_{\leq n+1}) \ar[r]\ar[d] & \mathrm{Map}_{\mathcal{O}}(X_{=n+1}, \mathcal{N}^\otimes_{=n+1}) \ar[d] \\
\mathrm{Map}_{\mathcal{O}}(t_n X, \mathcal{N}^\otimes_{\leq n+1}) \ar[r] & \mathrm{Map}_{\mathcal{O}}((t_n X)_{=n+1}, \mathcal{N}^\otimes_{=n+1})
}
\]
where $\mathrm{Map}_{\mathcal{O}}$ denotes the space of maps of $\infty$-operads which restrict to the identity $\mathcal{O} \rightarrow \mathcal{N}$ on underlying $\infty$-categories. Note that $\mathrm{Map}_{\mathcal{O}}(t_n X, \mathcal{N}^\otimes_{\leq n+1}) \simeq \mathrm{Map}_{\mathcal{O}}(\mathbf{L}(v_n)_! \mathcal{O}^\otimes_{\leq n}, \mathcal{N}^\otimes)$ and applying the equivalence of Theorem \ref{thm:stableleftadjoint} this is in turn naturally equivalent to $\mathrm{Map}_{\mathcal{O}}(\tau_n\mathcal{O}^\otimes, \mathcal{N}^\otimes)$. Therefore, the left vertical map in the square is an equivalence, so that the homotopy fiber of the vertical map on the right is contractible. The spaces on the right-hand side can be understood using the complete $\mathbf{\Gamma}_{n+1}$-Segal spaces of the previous section. For example, the space $\mathrm{Map}_{\mathcal{O}}(X_{=n+1}, \mathcal{N}^\otimes_{=n+1})$ is the space of $\Sigma_{n+1}$-equivariant natural transformations
\begin{equation*}
\tau_n\mathcal{O}^\otimes(x_1, \ldots, x_{n+1}; y) \longrightarrow \mathcal{N}^\otimes(x_1, \ldots, x_{n+1}; y),
\end{equation*}
where both sides are considered as functors $(\mathcal{O}^\mathrm{op})^{n+1} \times \mathcal{O} \rightarrow \mathcal{S}$. By the corepresentability of the $\infty$-operads involved, this space is equivalent to the space of $\Sigma_{n+1}$-equivariant natural transformations
\begin{equation*}
\otimes_{\mathcal{N}}^{n+1} \longrightarrow \odot^{n+1}.
\end{equation*}
Similarly, the space $\mathrm{Map}_{\mathcal{O}}((t_n X)_{=n+1}, \mathcal{N}^\otimes_{=n+1})$ is the space of $\Sigma_{n+1}$-equivariant natural transformations
\begin{equation*}
t_n\mathcal{O}^\otimes(x_1, \ldots, x_{n+1}; y) \longrightarrow \mathcal{N}^\otimes(x_1, \ldots, x_{n+1}; y).
\end{equation*}
Applying Corollary \ref{cor:mappingspacev!} we see that this space is naturally equivalent to the space of $\Sigma_{n+1}$-equivariant natural transformations
\begin{equation*}
\otimes_{\mathcal{N}}^{n+1} \longrightarrow \varprojlim_{T \in \mathrm{Dec}_n(C_{n+1})} \otimes_{\mathcal{O}}^T.
\end{equation*}
We conclude that the natural transformation
\begin{equation*}
\odot^{n+1} \longrightarrow \varprojlim_{T \in \mathrm{Dec}_n(C_{n+1})} \otimes_{\mathcal{O}}^T
\end{equation*}
is an equivalence, which proves the case $k = n+1$ of the lemma. More generally, the same argument as above shows that for $k > n+1$ there is an equivalence
\begin{equation*}
\odot^k \longrightarrow \varprojlim_{T \in \mathrm{Dec}_{k-1}(C_k)} \odot^T.
\end{equation*}
To carry out our induction, suppose we have proved the formula of the lemma for $k-1$. Then for any $T \in \mathbf{\Psi}_{k-1}$ the natural map
\begin{equation*}
\odot^T \longrightarrow \varprojlim_{S \in \mathrm{Dec}_n(T)} \otimes^S
\end{equation*}
is an equivalence; indeed, this follows from the decomposition $\gamma$ of the category $\mathrm{Dec}_n(T)$ discussed in the proof of Lemma \ref{lem:v!fibrant} and the fact that the tensor product functors $\otimes^S$ preserve finite limits in each variable separately, since $\mathcal{O}^\otimes$ is stable. We therefore have an equivalence
\begin{equation*}
\odot^k \longrightarrow \varprojlim_{T \in \mathrm{Dec}_{k-1}(C_k)}\Bigl(\varprojlim_{S \in \mathrm{Dec}_n(T)} \otimes^S\Bigr).
\end{equation*}
Write $\mathrm{Dec}_n^+(C_k)$ for the category indexing the limit on the right, i.e. the category whose objects are $C_k \rightarrow T \rightarrow S$, with $T$ a $(k-1)$-decomposition of $C_k$ and $S$ an $n$-decomposition of $T$, with the evident maps between them. To prove the lemma we should show that the functor
\begin{equation*}
\mathrm{Dec}_n^+(C_k) \longrightarrow \mathrm{Dec}_n(C_k): (C_k \rightarrow T \rightarrow S) \longmapsto (C_k \rightarrow S)
\end{equation*}
is final. This is clear, since for any $n$-decomposition $f: C_k \rightarrow S$, the slice
\begin{equation*}
\mathrm{Dec}_n^+(C_k) \times_{\mathrm{Dec}_n(C_k)} \mathrm{Dec}_n(C_k)/f
\end{equation*}
is contractible; indeed, it has a final object $C_k \rightarrow S = S$.
% add some words to proof as to why contractible fiber implies what we need
\end{proof}

The technique we just used to prove Lemma \ref{lem:odotk} in particular proves the following:

\begin{lemma}
\label{lem:homogstable}
If $\mathcal{O}^\otimes_{\leq n}$ and $\mathcal{N}_{\leq n}^\otimes$ are objects of $\mathbf{Op}_{\leq n}^{\mathrm{St}}$, then there is a natural equivalence
\begin{equation*}
\mathrm{Map}((\tau_{n-1}\mathcal{O}^\otimes)_{=n}, \mathcal{N}^\otimes_{=n}) \simeq \mathrm{Map}((t_{n-1}\mathcal{O}^\otimes)_{=n}, \mathcal{N}_{=n}^\otimes).
\end{equation*}
\end{lemma}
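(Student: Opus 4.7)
The plan is to identify both sides with the same explicit space of $\Sigma_n$-equivariant natural transformations. The key point is that an $n$-homogeneous $\infty$-operad $\mathcal{P}^\otimes_{=n}$ on an underlying $\infty$-category $\mathcal{O}$ retains only its unary part and a single $\Sigma_n$-equivariant multifunctor $\mathcal{P}^\otimes(x_1,\ldots,x_n;y)$; by corepresentability (which is inherited from the stable setting), maps over $\mathcal{O}$ out of $\mathcal{P}^\otimes_{=n}$ into another $n$-homogeneous stable $\infty$-operad $\mathcal{N}^\otimes_{=n}$ are computed as $\Sigma_n$-equivariant natural transformations $\otimes_{\mathcal{N}}^n \rightarrow \otimes_{\mathcal{P}}^n$. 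The work is therefore to match the target functor in the two cases.

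First I would treat the left-hand side. Applying Lemma~\ref{lem:odotk} to $\mathcal{O}^\otimes$ with $k=n$, the $n$-fold tensor product $\odot^n$ determined by $\tau_{n-1}\mathcal{O}^\otimes$ is canonically equivalent to
\begin{equation*}
\varprojlim_{T \in \mathrm{Dec}_{n-1}(C_n)} \otimes_{\mathcal{O}}^T,
\end{equation*}
so that $\mathrm{Map}((\tau_{n-1}\mathcal{O}^\otimes)_{=n}, \mathcal{N}^\otimes_{=n})$ is equivalent to the space of $\Sigma_n$-equivariant natural transformations from $\otimes_{\mathcal{N}}^n$ into this limit. Next I would handle the right-hand side using the presentation of $t_{n-1}$ via $w_!$ in dendroidal sets. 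Since $t_{n-1}\mathcal{O}^\otimes$ is obtained (after Reedy fibrant replacement) as $\mathbf{L}w_!\mathbf{R}w^*\mathcal{O}^\otimes$, Corollary~\ref{cor:mappingspacev!} applies to evaluate its operation space on the corolla $C_n$ as the homotopy colimit over $\mathrm{Dec}_{n-1}(C_n)^{\mathrm{op}}$ of the operation spaces of $\mathcal{O}^\otimes$ at the intermediate trees, which by corepresentability translates, after passing to maps into $\mathcal{N}^\otimes$, into exactly the same $\Sigma_n$-equivariant natural transformations $\otimes_{\mathcal{N}}^n \rightarrow \varprojlim_{T \in \mathrm{Dec}_{n-1}(C_n)} \otimes_{\mathcal{O}}^T$ (here one uses that $\otimes_{\mathcal{N}}^n$ preserves finite limits in each variable because $\mathcal{N}^\otimes$ is stable, so that homotopy colimits in the source become homotopy limits in the natural transformation space).

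The main obstacle, and the point requiring a little care, is ensuring that the two descriptions use the same diagram; this is really a matter of matching the combinatorics of $\mathrm{Dec}_{n-1}(C_n)$ on both sides. On the left it arises from Lemma~\ref{lem:odotk} applied to the stable left adjoint $i_{n-1}$, and on the right it arises from the dendroidal left Kan extension $w_!$ computed by the formula of Lemma~\ref{lem:v!dec}. Both indexings coincide by definition of $\mathrm{Dec}_{n-1}(C_n)$, so the same argument that made the vertical fiber contractible in the proof of Lemma~\ref{lem:odotk} gives the natural equivalence claimed here. The naturality of the equivalence in $\mathcal{O}^\otimes$ and $\mathcal{N}^\otimes$ is automatic from the naturality of each step.
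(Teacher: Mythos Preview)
Your proposal is correct and follows essentially the same approach as the paper: both arguments identify each side with the space of $\Sigma_n$-equivariant natural transformations $\otimes_{\mathcal{N}}^n \rightarrow \varprojlim_{T \in \mathrm{Dec}_{n-1}(C_n)} \otimes_{\mathcal{O}}^T$, using Lemma~\ref{lem:odotk} for the left-hand side and Corollary~\ref{cor:mappingspacev!} for the right-hand side. One small remark: your parenthetical justification that ``$\otimes_{\mathcal{N}}^n$ preserves finite limits in each variable'' is not what is needed to turn the homotopy colimit in the source into a homotopy limit in the mapping space---that step is pure contravariance of mapping out of a colimit, and stability plays no role there.
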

\begin{proof}
Assume for simplicity that $\mathcal{O}^\otimes_{\leq n}$ and $\mathcal{N}_{\leq n}^\otimes$ have the same underlying $\infty$-category $\mathcal{O}$ (the general case is only notationally more difficult). Then $\mathrm{Map}_{\mathcal{O}}((\tau_{n-1}\mathcal{O}^\otimes)_{=n}, \mathcal{N}^\otimes_{=n})$ can be identified with the space of $\Sigma_n$-equivariant natural transformations $\otimes_\mathcal{N}^n \rightarrow \odot_{\mathcal{O}}^n$, where the codomain denotes the $n$-fold tensor product in $\tau_{n-1}\mathcal{O}^\otimes$. That this space is naturally equivalent to the space on the right follows from Corollary \ref{cor:mappingspacev!} (applied to $\mathbf{L}(v_{n-1})_!\mathcal{O}^\otimes_{\leq n-1}$ and $T = C_n$) and Lemma \ref{lem:odotk}.
\end{proof}

\begin{corollary}
\label{cor:wnXstable}
If $\mathcal{O}^\otimes$ is a stable $\infty$-operad, then the square
\[
\xymatrix{
h_n(\tau_{n-1}\mathcal{O}^\otimes_{\leq n}) \ar[r]\ar[d] & \tau_{n-1}\mathcal{O}^\otimes_{\leq n} \ar[d] \\
h_n(\mathcal{O}^\otimes_{\leq n}) \ar[r] & \mathcal{O}^\otimes_{\leq n}
}
\]
is a pushout in the $\infty$-category $\mathbf{Op}_{\leq n}$.
\end{corollary}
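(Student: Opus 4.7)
The plan is to mimic the strategy used in the proof of Proposition \ref{prop:wnX}, checking the square pointwise on $\mathbf{\Omega}_n$ after restriction via the left Quillen equivalence $\mathbf{L}u^*$ of Theorem \ref{thm:Omegan}. The essential new ingredient is Theorem \ref{thm:stableleftadjoint}, which, because $\mathcal{O}^\otimes$ is stable, tells us that the unit $\mathcal{O}^\otimes_{\leq n-1} \to (\tau_{n-1}\mathcal{O}^\otimes)_{\leq n-1}$ is an equivalence. This equivalence determines the canonical map $(\tau_{n-1}\mathcal{O}^\otimes)_{\leq n} \to \mathcal{O}^\otimes_{\leq n}$ under $\mathcal{O}^\otimes_{\leq n-1}$ that appears as the right vertical of the target square (and, after applying $h_n$, its image on the left vertical).

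As recalled in the proof of Proposition \ref{prop:wnX}, every tree $T \in \mathbf{\Omega}_n$ lies in $\mathbf{\Psi}_{n-1}$ or in $\mathbf{\Gamma}_n$. For $T \in \mathbf{\Gamma}_n$, the horizontal maps $h_n Y \to Y$ are pointwise equivalences (since the inclusion $g_n \colon \mathbf{\Gamma}_n \to \mathbf{\Psi}_n$ is fully faithful and $g_n/T$ has $T$ as a terminal object), so the square at $T$ is trivially a pushout. For $T \in \mathbf{\Psi}_{n-1}$, the $h_n$-terms vanish in the appropriate sense, and the issue reduces to showing that the right vertical map $(\tau_{n-1}\mathcal{O}^\otimes)_{\leq n}(T) \to \mathcal{O}^\otimes_{\leq n}(T)$ is a weak equivalence. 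Since the value at any such $T$ is determined by the Segal condition from the restriction to trees of arity at most $n-1$, and these restrictions agree by the stable unit-equivalence above, this is immediate.

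The main obstacle is the same subtle point appearing in the proof of Proposition \ref{prop:wnX}: namely, that at trees $T \not\in \mathbf{\Gamma}_n$ the values $h_n Y(T)$ are trivial in the sense needed for the pointwise analysis, and that homotopy pushouts in the operadic model structure can be detected this way. Both rest on the operadic model structure being a left Bousfield localization of the Reedy structure, so that, up to Reedy cofibrant replacement, homotopy pushouts coincide with Reedy pushouts, computed pointwise in the presheaf category.
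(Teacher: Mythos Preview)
Your argument is correct but takes a different route from the paper's. The paper does not redo the pointwise analysis of Proposition~\ref{prop:wnX}; instead it first observes that the pushout $P$ of the span is itself a stable $n$-truncated $\infty$-operad, and then runs a Yoneda argument inside $\mathbf{Op}^{\mathrm{St}}_{\leq n}$: for any stable $\mathcal{N}^\otimes$, the mapping spaces out of $\tau_{n-1}\mathcal{O}^\otimes_{\leq n}$ and $h_n(\tau_{n-1}\mathcal{O}^\otimes_{\leq n})$ into $\mathcal{N}^\otimes$ agree with those out of $t_{n-1}\mathcal{O}^\otimes_{\leq n}$ and $h_n(t_{n-1}\mathcal{O}^\otimes_{\leq n})$, by Theorem~\ref{thm:stableleftadjoint} and Lemma~\ref{lem:homogstable} respectively, so the pullback of mapping spaces coincides with that for the $t_{n-1}$-square, which by Proposition~\ref{prop:wnX} is $\mathrm{Map}(\mathcal{O}^\otimes_{\leq n},\mathcal{N}^\otimes)$. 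Hence $P\simeq \mathcal{O}^\otimes_{\leq n}$.

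Your direct approach replaces this with the observation that the unit equivalence $\mathcal{O}^\otimes_{\leq n-1}\simeq(\tau_{n-1}\mathcal{O}^\otimes)_{\leq n-1}$ already forces the right vertical map to be a pointwise equivalence at every $T\in\mathbf{\Omega}_n\cap\mathbf{\Psi}_{n-1}$ (via the Segal condition), and then the rest of the pointwise argument from Proposition~\ref{prop:wnX} goes through unchanged. This buys you an avoidance of Lemma~\ref{lem:homogstable} and of the verification that the pushout is stable, at the price of re-running the somewhat delicate analysis of $h_n$ at trees outside $\mathbf{\Gamma}_n$. The paper's route, by contrast, packages that delicate analysis entirely inside Proposition~\ref{prop:wnX} and transfers it cleanly via mapping-space equivalences.
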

\begin{proof}
It is straightforward to verify that the pushout in the square is itself a stable $\infty$-operad. To see that it is equivalent to $\mathcal{O}_{\leq n}^\otimes$, combine Proposition \ref{prop:wnX} with the equivalences provided by Theorem \ref{thm:stableleftadjoint} and Lemma \ref{lem:homogstable}.
\end{proof}

We end this section by proving the following results claimed in Chapter \ref{sec:coalgebras}:
\begin{proof}[Proof of Proposition \ref{prop:mapstruncations}]
Consider stable $\infty$-operads $\mathcal{O}^\otimes$ and $\mathcal{N}^\otimes$ and assume they have the same underlying $\infty$-category $\mathcal{O}$. The previous corollary yields a pullback square
\[
\xymatrix{
\mathrm{Map}_{\mathcal{O}}(\tau_n\mathcal{O}^\otimes, \mathcal{N}^\otimes) \ar[d]\ar[r] & \mathrm{Map}_{\mathcal{O}}(\mathcal{O}^\otimes_{=n}, \mathcal{N}_{=n}^\otimes) \ar[d] \\
\mathrm{Map}_{\mathcal{O}}(\tau_{n-1}\mathcal{O}^\otimes, \mathcal{N}^\otimes) \ar[r] & \mathrm{Map}_{\mathcal{O}}((\tau_{n-1}\mathcal{O}^\otimes)_{= n}, \mathcal{N}_{= n}^\otimes).
}
\]
The space $\mathrm{Map}_{\mathcal{O}}(\mathcal{O}^\otimes_{=n}, \mathcal{N}_{=n}^\otimes)$ is equivalent to the space of $\Sigma_n$-equivariant natural transformations $\otimes_{\mathcal{N}}^n \rightarrow \otimes_{\mathcal{O}}^n$. Also, the space $\mathrm{Map}_{\mathcal{O}}((\tau_{n-1}\mathcal{O}^\otimes)_{= n}, \mathcal{N}_{= n}^\otimes)$ is equivalent to the space of $\Sigma_n$-equivariant natural transformations $\otimes_{\mathcal{N}}^n \rightarrow \odot_{\mathcal{O}}^n$, where the latter denotes the $n$-fold tensor product determined by $\tau_{n-1}\mathcal{O}^\otimes$.
\end{proof}

\begin{proof}[Proof of Proposition \ref{prop:truncatedtensor}]
We will deduce the proposition from Lemma \ref{lem:odotk} and a variation on some well-known observations relating partition complexes to spaces of trees. First of all there is a functor
\begin{equation*}
\omega: \mathbf{Part}_n(k) \rightarrow \mathrm{Dec}_n(k)
\end{equation*}
which may be described as follows. For a vertex of $\mathbf{Part}_n(k)$ corresponding to a simplex $\zeta: \Delta^m \rightarrow \N\mathbf{Equiv}(k)$, we define a tree $T_{\zeta}$ whose set of edges is the disjoint union
\begin{equation*}
\coprod_{i=0}^m \{1, \ldots, k\}/\zeta(i). 
\end{equation*}
Furthermore, for any $1 \leq i \leq m$ and $e \in \{1, \ldots, k\}/\zeta(i)$ the tree $T_{\zeta}$ has a vertex with outgoing edge $e$ and ingoing edges all those $l \in \{1, \ldots, k\}/\zeta(i-1)$ which are sent to $e$ under the quotient map
\begin{equation*}
\{1, \ldots, k\}/\zeta(i-1) \longrightarrow \{1, \ldots, k\}/\zeta(i).
\end{equation*}
The map $\omega(\zeta)$ is then the unique $n$-decomposition $C_k \rightarrow T_{\zeta}$ which sends the $j$th leaf of $C_k$ to the leaf $j \in \{1, \ldots, k\}$ of $T_{\zeta}$. Note that the diagram $\psi_n^k: \mathbf{Part}_n(k) \rightarrow \mathrm{Fun}(\mathcal{O}^\otimes_{\langle k \rangle}, \mathcal{O}^\otimes_{\langle 1 \rangle})$ may be identified with the composition of (the nerve of) $\omega$ with the assignment $(C_k \rightarrow T) \mapsto \otimes^T$. Thus to prove the proposition it suffices to show that $\omega$ is final, i.e. that for any $n$-decomposition $f: C_k \longrightarrow T$ the slice category
\begin{equation*}
\mathbf{Part}_n(k) \times_{\mathrm{Dec}_n(k)} \mathrm{Dec}_n(k)/f
\end{equation*}
has contractible nerve. This will require a fair amount of combinatorics. We write $L(f)$ for the category above.

First observe that the functor $L(f) \rightarrow \mathrm{Dec}_n(k)/f$ is faithful. The objects in its image are those $n$-decompositions $g: C_k \rightarrow T_{\zeta}$ (lying over $f$) such that each leaf of $T_{\zeta}$ has the same \emph{height}; here the height of an edge $e$ of $T_{\zeta}$ is defined as the number of vertices on the directed path from $e$ to the root of $T_{\zeta}$. We will call such a tree \emph{layered}. The morphisms in the image are those maps of trees $\varphi$ satisfying the condition that if two edges have the same height, their images under $\varphi$ have the same height as well. Write $L(f)'$ for the full subcategory of $L(f)$ spanned by those objects corresponding to nondegenerate simplices of $\N\mathbf{Equiv}(k)$. It is standard (and easy to show) that the inclusion $\N L(f)' \subseteq \N L(f)$ is a weak homotopy equivalence, so that it suffices to show that the former is contractible. Also observe that the category $L(f)'$ is a partially ordered set.

Let us say an object $X$ of $L(f)'$ is \emph{elementary} if it is maximal in the sense that there are no non-identity maps out of $X$. A more explicit description is as follows. The object $X$ corresponds to a simplex $\zeta: \Delta^m \rightarrow \N\mathbf{Equiv}(k)$ and a triangle
\[
\xymatrix{
& C_k \ar[dl]_g\ar[dr]^f & \\
T_{\zeta} \ar[rr]_t && T, 
}
\]
with $T_{\zeta}$ the corresponding layered tree. Then for each $1 \leq i \leq m$ the map
\begin{equation*}
\{1, \ldots, k\}/\zeta(i-1) \rightarrow \{1, \ldots, k\}/\zeta(i)
\end{equation*}
corresponds to a collection of corollas of $T_{\zeta}$, one for each element $e$ of $\{1, \ldots, k\}/\zeta(i)$. Writing $C_e$ for the corolla corresponding to $e$, the object $X$ is elementary if for all but precisely one such $e$, the corolla $t(C_e)$ is a degenerate corolla of $T$ and moreover $t$ is surjective (i.e. a composition of degeneracies). More informally, $X$ is elementary if each layer of $T_{\zeta}$ contains precisely one corolla whose image under $t$ is nondegenerate in $T$ and $t$ is surjective. If $Y \rightarrow X$ is a map in $L(f)'$, let us say that $Y$ is a \emph{face} of $X$. The reason for this terminology is as follows: if $X$ is as above then the simplex of $\N\mathbf{Equiv}(k)$ corresponding to $Y$ is a face of $\zeta$. Observe that every object of $L(f)'$ is a face of an elementary object, although not in a unique way (i.e. an object may be a face of multiple elementary objects). If $X$ is elementary, write $\mathrm{face}(X)$ for the full subcategory of $L(f)'$ spanned by the faces of $X$.

We will prove that $\N L(f)'$ is contractible by an induction on elementary objects. To do this we need to order them. The set $V(T)$ of vertices of $T$ is partially ordered in an evident way, where $v<w$ if $v$ is on the directed path from $w$ to the root of $T$ (i.e. if $v$ is `lower' than $w$). Observe that elementary objects of $L(f)'$ are in one-to-one correspondence with linear orderings on $V(T)$ which extend this partial ordering. Indeed, if $X$ is elementary then the height function of $T_{\zeta}$ induces such a linear order, where $v < w$ if the preimage of $w$ in $T_{\zeta}$ (which is unique) is higher than that of $v$ (where the height of a vertex is by definition the height of its outgoing edge). Conversely, it is clear that every such linear order on $V(T)$ arises from the height function of a layered tree. 

Arbitrarily pick an elementary object $X_0 \in L(f)'$, determining a linear order $<_{X_0}$ on $V(T)$. We may construct the orderings corresponding to other elementary objects from $<_{X_0}$ by `shuffling': to be precise, we say that an elementary object $Z$ is a \emph{swap} of another elementary object $Y$ if the linear order $<_Z$ is obtained from $<_Y$ by swapping two consecutive vertices $v <_Y w$ (note that these vertices must be incomparable in the tree $T$ for this to be possible). Moreover, let us say $Y < Z$ if we have the inequality $v <_{X_0} w$. In other words $Z$ is obtained from $Y$ by `shuffling up' the vertex $v$, where the meaning of `up' is determined by $X_0$. In this way we generate a partial ordering on the set of elementary objects of $L(f)'$ in which $X_0$ is minimal.

Now set $F_0 = \mathrm{face}(X_0)$. Choose a linear ordering on the set of elementary objects of $L(f)'$ extending the partial order just described and define $F_i = F_{i-1} \cup \mathrm{face}(X_i)$, where $X_i$ is the $i$th elementary object. In this way we obtain a filtration 
\begin{equation*}
F_0 \subseteq F_1 \subseteq \cdots \subseteq \bigcup_{i} F_i = L(f)'.
\end{equation*}
Note that $F_0$ is weakly contractible since it has a final object $X_0$. Consider the stage $F_i$ of this filtration, which is obtained from $F_{i-1}$ by adjoining an elementary object $X_i$ and all its faces. We claim (see below) that the intersection $\mathrm{face}(X_i) \cap F_{i-1}$ is weakly contractible. By induction we may assume $F_{i-1}$ is weakly contractible. Since $\mathrm{face}(X_i)$ is weakly contractible, we conclude that $F_i$ is weakly contractible as well. It follows that $L(f)'$ is weakly contractible.

To verify our claim, note that $X_i$ is obtained from an elementary object $X_j$, for some $j < i$, by swapping two consecutive vertices $v <_{X_j} w$. Write $\zeta_i$ and $\zeta_j$ for the simplices of $\N\mathbf{Equiv}(k)$ corresponding to $X_i$ and $X_j$ respectively. Then the outgoing edge of the vertex $w$ corresponds to an element of 
\begin{equation*}
\{1, \ldots, k\}/\zeta_j(h)
\end{equation*}
for some positive integer $h$. Observe that $\partial_h \zeta_j = \partial_h \zeta_i$, which corresponds to a simplex where $v$ and $w$ are at the same height. Write $\partial_h X_j$ (or equivalently $\partial_h X_i$) for the corresponding object of $L(f)'$. Then the intersection of $\mathrm{face}(X_i) \cap F_{i-1}$ is the full subcategory of $L(f)'$ spanned by the faces of $\partial_h X_j$. This category has $\partial_h X_j$ as a terminal object and is therefore weakly contractible.
\end{proof}

\section{A cobar construction for stable $\infty$-operads}
\label{subsec:cobar}

% proof of Proposition 5.7: Sp(PnC) is n-truncated
% define symmetric sequence cobar(O^\otimes), relate to Dec_n
% prove that cobar(O^\otimes) n-truncated if and only if O^\otimes n-truncated
% lemma: if T^\bullet cosimplicial object such that \partial_0 = 0 at every level, then Tot(T^{\bullet}) = fib(T^0 -> Tot(\Omega T^\bullet)

% translate definition 6.3.2.12 to current setting
% use theorem 6.3.2.1 to understand derivatives of composites of \Sigma^\infty\Omega^\infty

In this section we prove Proposition \ref{prop:SpPnCntruncated}, which states that for a pointed compactly generated $\infty$-category $\mathcal{C}$ the maps $\tau_n\mathrm{Sp}(\mathcal{P}_n\mathcal{C})^\otimes \rightarrow \mathrm{Sp}(\mathcal{P}_n\mathcal{C})^\otimes$ and $\tau_n\mathrm{Sp}(\mathcal{P}_n\mathcal{C})^\otimes \rightarrow \tau_n\mathrm{Sp}(\mathcal{C})^\otimes$ are equivalences. To do this we need to exploit the relationship between the tensor products defined by the stable $\infty$-operad $\mathrm{Sp}(\mathcal{C})^\otimes$ and the derivatives of the identity functor of $\mathcal{C}$. Recall that the tensor product $\otimes^k$ can be identified with the derivative $\partial_k(\Sigma_{\mathcal{C}}^\infty\Omega_\mathcal{C}^\infty)$. Corollary \ref{cor:cobarid} allows us to compute the derivatives $\partial_k\mathrm{id}_{\mathcal{C}}$ from these tensor products by a cobar construction, which we will explain below. We will deduce Proposition \ref{prop:SpPnCntruncated} from the following: 

\begin{proposition}
\label{prop:derivativesid}
Fix $k \geq 2$ and write $\odot^k$ for the $k$-fold tensor product in the $\infty$-operad $\tau_{k-1}\mathrm{Sp}(\mathcal{C})^\otimes$. Then there is a natural equivalence 
\begin{equation*}
\partial_k \mathrm{id}_{\mathcal{C}} \longrightarrow \Omega\mathrm{fib}(\otimes^k \rightarrow \odot^k).
\end{equation*}
\end{proposition}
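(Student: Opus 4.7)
The plan is to combine the cobar description of $\partial_k\mathrm{id}_\mathcal{C}$ from Corollary \ref{cor:cobarid} with the explicit description of $\odot^k$ provided by Proposition \ref{prop:truncatedtensor}.

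First I would invoke Corollary \ref{cor:cobarid} to write
\[
\partial_k\mathrm{id}_\mathcal{C} \simeq \mathrm{Tot}\bigl(\partial_k(\Omega^\infty_\mathcal{C} (\Sigma^\infty_\mathcal{C}\Omega^\infty_\mathcal{C})^{\bullet} \Sigma^\infty_\mathcal{C})\bigr).
\]
Since $\Omega^\infty_\mathcal{C}$ is exact, $\Sigma^\infty_\mathcal{C}$ preserves colimits, and $\partial_j(\Sigma^\infty_\mathcal{C}\Omega^\infty_\mathcal{C}) \simeq \otimes^j$ by Lemma \ref{lem:codersigmaomega}, iterated application of the chain rule (Corollary 6.2.1.24 of \cite{higheralgebra}) expresses cosimplicial level $m$ as a coproduct of multilinear functors indexed by isomorphism classes of layered rooted trees with $k$ leaves of height $m+1$, each internal vertex $v$ of valence $j_v$ contributing a factor $\otimes^{j_v}$. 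The cosimplicial face maps then come from collapsing an internal layer (composing two consecutive tensor products via the spray of the stable $\infty$-operad $\mathrm{Sp}(\mathcal{C})^\otimes$), and the degeneracies from inserting a unary layer.

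Next, I would re-index the totalization. After discarding the degenerate part of the cosimplicial object in the standard way, the totalization becomes the homotopy limit of a diagram indexed by the category of non-degenerate simplices in $\N\mathbf{Equiv}(k)$ starting at the discrete equivalence relation and ending at the trivial one --- which is by definition $\mathbf{Part}(k)$ --- with the diagram itself identified with the spray $\psi^k$ of Section \ref{subsec:truncations}. Since the length-$1$ chain (the edge from discrete to trivial) is an initial object of $\mathbf{Part}(k)$ with $\psi^k$-value $\otimes^k$, and its complement is precisely $\mathbf{Part}_{k-1}(k)$, on which $\psi^k$ restricts to $\psi^k_{k-1}$ and has limit $\odot^k$ by Proposition \ref{prop:truncatedtensor}, this total fiber is equivalent to $\mathrm{fib}(\otimes^k \to \odot^k)$. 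The final $\Omega$-shift in the statement arises from the standard bar--cobar desuspension converting a cosimplicial totalization into a homotopy fiber; it matches the suspension that appeared in Lemma \ref{lem:fibercobar}, and specializing the whole argument to $\mathcal{C} = \mathcal{S}_\ast$ evaluated at the sphere recovers that lemma exactly.

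The main obstacle is carrying out the combinatorial identifications of the second step with full homotopy coherence: one must check that the cosimplicial face and degeneracy maps coming from the cobar construction agree with the functoriality of $\psi^k$ in $\mathbf{Part}(k)$, and that the $\Omega$-shift of bar--cobar duality enters with the correct sign. Essentially all of this combinatorics is present in the work of Ching \cite{ching} and Arone--Ching \cite{aroneching}, but their arguments are written in the setting of spectra with the smash product, so some care is needed to transfer them to the general stable $\infty$-operad $\mathrm{Sp}(\mathcal{C})^\otimes$.
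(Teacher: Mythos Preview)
Your outline matches the paper's proof: cobar resolution via Corollary~\ref{cor:cobarid}, the chain rule to express each cosimplicial level as iterated tensor products indexed by layered trees, and Proposition~\ref{prop:truncatedtensor} to identify the relevant limit with $\odot^k$. The paper handles the $\Omega$-shift concretely by passing to the reduced cobar construction and applying d\'ecalage twice (rather than invoking bar--cobar desuspension abstractly); this also makes clear that it is the twice-shifted totalization, not the original one, that gets identified with $\varprojlim_{\mathbf{Part}_{k-1}(k)}\psi^k_{k-1}$---your phrasing ``the totalization becomes the homotopy limit over $\mathbf{Part}(k)$'' is not quite right as stated, since that poset has an initial object and the limit would collapse to $\otimes^k$.
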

\begin{proof}
We can compute the derivatives $\partial_\ast(\Omega^\infty_{\mathcal{C}}(\Sigma^\infty_{\mathcal{C}}\Omega^\infty_{\mathcal{C}})^{\bullet}\Sigma^\infty_{\mathcal{C}})$ featuring in Corollary \ref{cor:cobarid} using the chain rule (see Theorem 6.3.2.1 of \cite{higheralgebra}). They are given by the (somewhat informal) formula
\begin{equation*}
\partial_\ast\Omega_{\mathcal{C}}^\infty \circ \partial_\ast(\Sigma_{\mathcal{C}}^\infty\Omega_{\mathcal{C}}^\infty)^{\circ \bullet} \circ \partial_\ast\Sigma_{\mathcal{C}}^\infty
\end{equation*}
with $\circ$ denoting the composition product of symmetric sequences; a rigorous justification of this formula can be extracted from Section 6.3.2 of \cite{higheralgebra}, using the notion of thin $\Delta^n$-families of $\infty$-operads. (Note that we do not need to distinguish between the derivatives and coderivatives of $\Sigma_{\mathcal{C}}^\infty\Omega_{\mathcal{C}}^\infty$ by virtue of Remark \ref{rmk:derivcoderiv}.) We denote the totalization of the corresponding cosimplicial object by 
\begin{equation*}
\mathrm{cobar}(\partial_\ast\Omega_{\mathcal{C}}^\infty, \partial_\ast(\Sigma_{\mathcal{C}}^\infty\Omega_{\mathcal{C}}^\infty), \partial_\ast\Sigma_{\mathcal{C}}^\infty).
\end{equation*}
The derivative $\partial_i\Omega_{\mathcal{C}}^\infty$ is the identity for $i=1$ and is 0 otherwise (and similarly for $\partial_i\Sigma_{\mathcal{C}}^\infty$); let us write $\mathbf{1}$ for the corresponding symmetric sequence. The symmetric sequence $\partial_\ast(\Sigma_{\mathcal{C}}^\infty\Omega_{\mathcal{C}}^\infty)$ admits an evident augmentation to $\mathbf{1}$ and we write $\overline{\partial_\ast(\Sigma_{\mathcal{C}}^\infty\Omega_{\mathcal{C}}^\infty)}$ for the fiber of this map. It agrees with $\partial_\ast(\Sigma_{\mathcal{C}}^\infty\Omega_{\mathcal{C}}^\infty)$ in every degree except 1, in which it is trivial. By standard reasoning we may replace the cobar construction above by a \emph{reduced} cobar construction, giving (for $k \geq 2$) the following formula for $\partial_k \mathrm{id}_{\mathcal{C}}$ in terms of the totalization of a semicosimplicial object:

\[
\xymatrix{
\mathrm{Tot}\Bigl( \ast \ar@<.5ex>[r]^-0\ar@<-.5ex>[r]_-0 & \partial_k(\Sigma_{\mathcal{C}}^\infty\Omega_{\mathcal{C}}^\infty) \ar@<1ex>[r]^-0\ar[r]\ar@<-1ex>[r]_-0 & \bigl(\overline{\partial_\ast(\Sigma_{\mathcal{C}}^\infty\Omega_{\mathcal{C}}^\infty)} \circ \overline{\partial_\ast(\Sigma_{\mathcal{C}}^\infty\Omega_{\mathcal{C}}^\infty)}\bigr)_k  \ar@<1.5ex>[r]^-0\ar@<.5ex>[r]\ar@<-.5ex>[r]\ar@<-1.5ex>[r]_-0 & \cdots \Bigr).
}
\]
The bottom and top coface maps are always null (indeed, these are induced by the comultiplication $\mathbf{1} \rightarrow \mathbf{1} \circ \overline{\partial_\ast(\Sigma_{\mathcal{C}}^\infty\Omega_{\mathcal{C}}^\infty)}$, which is null). A standard argument (formally dual to the Kan simplicial suspension) then gives an equivalence
\[
\xymatrix{
\partial_k\mathrm{id}_{\mathcal{C}} \ar[r] & \Omega\mathrm{Tot}\Bigl(\partial_k(\Sigma_{\mathcal{C}}^\infty\Omega_{\mathcal{C}}^\infty) \ar@<.5ex>[r]\ar@<-.5ex>[r]_-0 & \bigl(\overline{\partial_\ast(\Sigma_{\mathcal{C}}^\infty\Omega_{\mathcal{C}}^\infty)} \circ \overline{\partial_\ast(\Sigma_{\mathcal{C}}^\infty\Omega_{\mathcal{C}}^\infty)}\bigr)_k  \ar@<1ex>[r]\ar[r]\ar@<-1ex>[r]_-0 & \cdots \Bigr), 
}
\]
where the cosimplicial object in brackets is the d\'{e}calage of the previous one, i.e. its composition with the functor
\begin{equation*}
\mathbf{\Delta} \longrightarrow \mathbf{\Delta}: [n] \longmapsto [0] \amalg [n] = [n+1].
\end{equation*}
We conclude that there is an equivalence between $\Sigma\partial_k\mathrm{id}_{\mathcal{C}}$ and the fiber of the comultiplication map
\[
\xymatrix{
\partial_k (\Sigma_{\mathcal{C}}^\infty\Omega_{\mathcal{C}}^\infty) \ar[r] & \mathrm{Tot}\Bigl(\bigl(\overline{\partial_\ast(\Sigma_{\mathcal{C}}^\infty\Omega_{\mathcal{C}}^\infty)}^{\circ 2}\bigr)_k  \ar@<.5ex>[r]\ar@<-.5ex>[r] & \bigl(\overline{\partial_\ast(\Sigma_{\mathcal{C}}^\infty\Omega_{\mathcal{C}}^\infty)}^{\circ 3}\bigr)_k  \ar@<1ex>[r]\ar[r]\ar@<-1ex>[r] & \cdots \Bigr)
}
\]
by applying another d\'{e}calage, now composing with $[n] \mapsto [n] \amalg [0]$. Observe that $\partial_k(\Sigma_{\mathcal{C}}^\infty\Omega_{\mathcal{C}}^\infty)$ may be identified with $\otimes^k$ and that the totalization on the right may be identified with the following limit (see Section \ref{subsec:truncations}):
\begin{equation*}
\varprojlim_{\N\mathbf{Part}_{k-1}(k)}\psi_{k-1}^k.
\end{equation*}
The desired result now follows by applying Proposition \ref{prop:truncatedtensor}.
\end{proof}

\begin{proof}[Proof of Proposition \ref{prop:SpPnCntruncated}]
To prove that $\tau_n\mathrm{Sp}(\mathcal{P}_n\mathcal{C})^\otimes \rightarrow \tau_n\mathrm{Sp}(\mathcal{C})^\otimes$ is an equivalence it suffices to show that $\mathrm{Sp}(\mathcal{P}_n\mathcal{C})^\otimes_{\leq n} \rightarrow \mathrm{Sp}(\mathcal{C})^\otimes_{\leq n}$ is an equivalence of $n$-truncated stable $\infty$-operads. The tensor products induced by $\mathrm{Sp}(\mathcal{P}_n\mathcal{C})^\otimes_{\leq n}$ are the first $n$ derivatives of the functor $\Sigma^\infty_{\mathcal{P}_n\mathcal{C}}\Omega^\infty_{\mathcal{P}_n\mathcal{C}}$, the tensor products induced by  $\mathrm{Sp}(\mathcal{C})^\otimes_{\leq n}$ are the first $n$ derivatives of $\Sigma^\infty_{\mathcal{C}}\Omega^\infty_{\mathcal{C}}$. That the natural map between these is an equivalence is immediate from the fact that $\mathcal{P}_n\mathcal{C}$ is a weak $n$-excisive approximation to $\mathcal{C}$.

We now show that the map $\tau_n\mathrm{Sp}(\mathcal{P}_n\mathcal{C})^\otimes \rightarrow \mathrm{Sp}(\mathcal{P}_n\mathcal{C})^\otimes$ is an equivalence of stable $\infty$-operads. We will do this by proving that for every $k \geq n$ the map
\begin{equation*}
f_k: \tau_n \mathrm{Sp}(\mathcal{P}_n\mathcal{C})^\otimes \longrightarrow \tau_k\mathrm{Sp}(\mathcal{P}_n\mathcal{C})^\otimes
\end{equation*}
is an equivalence. This is clear for $k = n$. To establish the inductive step, assume that $k > n$ and that $f_{k-1}$ is an equivalence. The derivative $\partial_k \mathrm{id}_{\mathcal{P}_n\mathcal{C}}$ is contractible since $\mathrm{id}_{\mathcal{P}_n\mathcal{C}}$ is $n$-excisive, so Proposition \ref{prop:derivativesid} (applied to $\mathcal{P}_n\mathcal{C}$) implies that the natural transformation $\otimes^k \rightarrow \odot^k$ is an equivalence. Hence $f_k$ is an equivalence as well.
\end{proof}

\section{Coalgebras}
\label{subsec:appendixcoalgebras}

In this section we prove Lemma \ref{lem:algtaun+1}, which gives an inductive description of coalgebras in a stable $\infty$-operad of the form $\tau_n \mathcal{O}^\otimes$. Consider the $\infty$-category $\mathcal{B}$ defined as the pullback in the following square:
\[
\xymatrix{
\mathcal{B} \ar[r]\ar[d] & \bigl\{X \rightarrow (X \otimes^n \cdots \otimes^n X)^{h\Sigma_n} \bigr\}^c_{\mathcal{O}} \ar[d] \\
\mathrm{coAlg}^c(\tau_{n-1}\mathcal{O}^\otimes) \ar[r] & \bigl\{X \rightarrow (X \odot^n \cdots \odot^n X)^{h\Sigma_n} \bigr\}^c_{\mathcal{O}}.
}
\]
As observed in Section \ref{subsec:truncatedcoalgebras} there is an evident functor
\begin{equation*}
\beta: \mathrm{coAlg}^c(\tau_n\mathcal{O}^\otimes) \longrightarrow \mathcal{B}.
\end{equation*}

The following is a reformulation of Lemma \ref{lem:algtaun+1}:

\begin{lemma}
\label{lem:beta}
The functor $\beta$ is an equivalence of $\infty$-categories.
\end{lemma}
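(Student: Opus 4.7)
The plan is to prove Lemma \ref{lem:beta} by induction on $n$, using the explicit descriptions of truncations established in Section \ref{subsec:truncatedstable}. The base case $n = 1$ is immediate: $\tau_1\mathcal{O}^\otimes$ carries only unary operations, so both $\mathrm{coAlg}^c(\tau_1\mathcal{O}^\otimes)$ and $\mathcal{B}$ reduce to $\mathcal{O}^c$, and $\beta$ is the identity.

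For the inductive step, I would first observe that both $\mathrm{coAlg}^c(\tau_n\mathcal{O}^\otimes)$ and $\mathcal{B}$ come equipped with forgetful functors to $\mathrm{coAlg}^c(\tau_{n-1}\mathcal{O}^\otimes)$, arising respectively from composition with the canonical map $\tau_n\mathcal{O}^\otimes \to \tau_{n-1}\mathcal{O}^\otimes$ and from the left vertical map in the pullback square defining $\mathcal{B}$. The functor $\beta$ is by construction compatible with these forgetful maps, so it suffices to show that $\beta$ induces an equivalence on fibers over each $Y \in \mathrm{coAlg}^c(\tau_{n-1}\mathcal{O}^\otimes)$.

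Fix such a $Y$, corresponding to a corepresentable operad map $p: \mathcal{Y}^\otimes \to \tau_{n-1}\mathcal{O}^\otimes$ with underlying object $X \in \mathcal{O}^c$ and $\mathcal{Y} \simeq \mathcal{O}_{X/}$. By construction, the fiber of $\mathcal{B}$ over $Y$ is the space of lifts of the canonical map $X \to (X^{\odot n})^{\Sigma_n}$ determined by $Y$ to a map $X \to (X^{\otimes n})^{\Sigma_n}$ through the natural transformation induced by $\otimes^n \to \odot^n$. On the other hand, the fiber of $\mathrm{coAlg}^c(\tau_n\mathcal{O}^\otimes)$ over $Y$ is the space of corepresentable lifts $\tilde{p}: \mathcal{Y}^\otimes \to \tau_n\mathcal{O}^\otimes$ of $p$. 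The corepresentability condition is automatic on arities less than $n$ (it already holds for $p$) and also on arities greater than $n$: by Proposition \ref{prop:truncatedtensor} the higher-arity tensor products $\odot^k$ for $k > n$ in $\tau_n\mathcal{O}^\otimes$ are built as limits over $\mathbf{Part}_n(k)$ of iterated tensor products of arity at most $n$, so corepresentability propagates from arity $\leq n$ to all higher arities. Thus the only genuine new data in passing from $p$ to $\tilde{p}$ lies in arity $n$.

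The key technical step, which is also the main obstacle, is to show that the space of such lifts is equivalent to the space of $\Sigma_n$-equivariant natural transformations $\mathcal{Y}^\otimes(\mathrm{id}_X, \ldots, \mathrm{id}_X; -) \to \tau_n\mathcal{O}^\otimes(X, \ldots, X; -)$ extending the one given by $p$. This would be accomplished by adapting the Segal-space arguments of Section \ref{subsec:truncatedoperads}: by the analog of Corollary \ref{cor:wnXstable}, the difference between mapping into $\tau_n\mathcal{O}^\otimes$ and mapping into $\tau_{n-1}\mathcal{O}^\otimes$ is controlled by the $n$-homogeneous part of the target (the relevant observation being that stability is only required of the target $\mathcal{O}^\otimes$ and not of the source $\mathcal{Y}^\otimes$). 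Once this reduction is in place, corepresentability of $\mathcal{Y}^\otimes$ identifies the $n$-ary operations $\mathcal{Y}^\otimes(\mathrm{id}_X, \ldots, \mathrm{id}_X; -)$ with the functor corepresented by the $n$-fold diagonal, so that providing the required equivariant natural transformation is the same as providing a map $X \to (X^{\otimes n})^{\Sigma_n}$ lifting the canonical $X \to (X^{\odot n})^{\Sigma_n}$ determined by $Y$. This matches the fiber of $\mathcal{B}$ over $Y$, completing the inductive step.
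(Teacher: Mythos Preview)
Your approach and the paper's are close in spirit: both hinge on the pushout decomposition of $\tau_n\mathcal{O}^\otimes_{\leq n}$ in terms of $\tau_{n-1}\mathcal{O}^\otimes_{\leq n}$ and the $n$-homogeneous part (Corollary \ref{cor:wnXstable}), and both identify the category $\{X \to (X^{\otimes n})^{\Sigma_n}\}^c_{\mathcal{O}}$ with coalgebras in the $n$-homogeneous operad $\mathcal{O}^\otimes_{=n}$ (this is the paper's Lemma \ref{lem:homogcoalgebras}). Your observation that corepresentability propagates to arities greater than $n$ is exactly the content of the paper's Lemma \ref{lem:Xtruncated}, and your parenthetical that stability is only needed for the target is the point of the Remark following it. So the ingredients match.

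The organization differs, and yours has two soft spots. First, the induction on $n$ is idle: your inductive step never invokes the hypothesis, so the framing adds nothing. Second, and more substantively, the sentence ``it suffices to show that $\beta$ induces an equivalence on fibers over each $Y$'' needs justification. A functor over a base which is a fiberwise equivalence is not automatically an equivalence without knowing the projections are (co)Cartesian fibrations or something equivalent; you do not address this. Relatedly, your description of the fiber of $\mathrm{coAlg}^c(\tau_n\mathcal{O}^\otimes)$ over $Y$ as ``corepresentable lifts $\tilde p: \mathcal{Y}^\otimes \to \tau_n\mathcal{O}^\otimes$ of $p$'' is imprecise: the source operad also changes---one is looking for an $\mathcal{X}^\otimes$ over $\tau_n\mathcal{O}^\otimes$ whose $(n-1)$-truncation recovers $\mathcal{Y}^\otimes$, not a map out of $\mathcal{Y}^\otimes$ itself.

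The paper sidesteps both issues by constructing an explicit inverse $\alpha$: given an object of $\mathcal{B}$, one forms the pushout of the corresponding span of operads (homogeneous piece $\leftarrow$ homogeneous part of the $(n-1)$-truncation $\rightarrow$ $(n-1)$-truncated coalgebra) over the analogous pushout for $\mathcal{O}^\otimes$, which is $\tau_n\mathcal{O}^\otimes_{\leq n}$ by Corollary \ref{cor:wnXstable}. That both $\alpha\beta$ and $\beta\alpha$ are equivalent to the identity then follows because the relevant squares are simultaneously pushouts and pullbacks (Proposition \ref{prop:wnXpullback} and its analog for coalgebra operads). This is more economical than the fiberwise argument and avoids any appeal to fibration structure.
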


Before we prove this lemma we investigate the $\infty$-category of coalgebras in an $n$-homogeneous stable $\infty$-operad. To this end, consider a map $p^\otimes: \mathcal{X}^\otimes_{=n} \rightarrow \mathcal{O}^\otimes_{=n}$ in the $\infty$-category $\mathbf{Op}^{\mathrm{St}}_{=n}$. We call such a map a coalgebra in $\mathcal{O}^\otimes_{=n}$ if it satisfies the evident analogue of Definition \ref{def:coalgebra} and write $\mathrm{coAlg}(\mathcal{O}^\otimes_{=n})$ for the $\infty$-category of such coalgebras. Informally speaking, a coalgebra in $\mathcal{O}^\otimes_{=n}$ is simply an object $X$ of $\mathcal{O}$ together with a $\Sigma_n$-equivariant map $X \rightarrow X^{\otimes n}$. This is articulated by the following result:

\begin{lemma}
\label{lem:homogcoalgebras}
The evident functor
\begin{equation*}
\mathrm{coAlg}^c(\mathcal{O}^\otimes_{=n}) \longrightarrow \bigl\{X \rightarrow (X \otimes^n \cdots \otimes^n X)^{h\Sigma_n} \bigr\}^c_{\mathcal{O}} 
\end{equation*}
is an equivalence of $\infty$-categories.
\end{lemma}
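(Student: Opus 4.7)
The plan is to construct an explicit inverse to the given functor, exploiting the fact that an $n$-homogeneous stable $\infty$-operad carries very little composition data. Indeed, by definition $\mathbf{\Gamma}_n$ contains no tree with two or more vertices of valence $n$, so any two $n$-ary operations in $\mathcal{O}^\otimes_{=n}$ can only be ``composed'' through unary morphism data. Consequently, a coalgebra structure on a compact object $X$ should amount to nothing more than a single $\Sigma_n$-equivariant map $X \to (X^{\otimes n})^{\Sigma_n}$, with no further coherences beyond compatibility with the morphisms of $\mathcal{O}$.

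Concretely, given a pair $(X,\delta)$ in $\bigl\{X \rightarrow (X^{\otimes n})^{\Sigma_n}\bigr\}^c_{\mathcal{O}}$, I would build an inverse coalgebra $p^\otimes_{(X,\delta)}: \mathcal{X}^\otimes_{=n} \to \mathcal{O}^\otimes_{=n}$ whose underlying functor is the projection $\mathcal{O}_{X/} \to \mathcal{O}$. For each $n$-tuple of maps $f_i: X \to Y_i$ the functor $\mathcal{X}^\otimes_{=n}(f_1, \ldots, f_n; -)$ is declared to be corepresented by the composition
\begin{equation*}
X \xrightarrow{\;\delta\;} (X^{\otimes n})^{\Sigma_n} \longrightarrow X^{\otimes n} \xrightarrow{f_1 \otimes \cdots \otimes f_n} Y_1 \otimes \cdots \otimes Y_n.
\end{equation*}
To make this rigorous one can realize $\mathcal{X}^\otimes_{=n}$ as a complete $\mathbf{\Gamma}_n$-Segal space: the unary part is determined by $\mathcal{O}_{X/}$, and the value on the corolla $C_n$ encodes $\delta$ (together with its $\Sigma_n$-action). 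Because every non-unary tree in $\mathbf{\Gamma}_n$ has exactly one $n$-ary vertex, the Segal condition imposes no relations among $n$-ary operations beyond $\Sigma_n$-equivariance, so the construction is well-defined and manifestly natural in $(X,\delta)$.

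Having constructed the candidate inverse, verifying that the two composites are equivalent to the identity is mostly bookkeeping. Extracting the pair $(X,\delta)$ from $p^\otimes_{(X,\delta)}$ recovers $\delta$ by construction. Conversely, given any coalgebra $p^\otimes: \mathcal{X}^\otimes_{=n} \to \mathcal{O}^\otimes_{=n}$ exhibiting an object $X$, the corepresentability clause in Definition \ref{def:coalgebra} forces the functor $\mathcal{X}^\otimes_{=n}(\mathrm{id}_X, \ldots, \mathrm{id}_X; -)$ to be corepresented by a map $X \to X^{\otimes n}$; its $\Sigma_n$-invariant refinement is precisely the datum $\delta$ extracted by the evident functor, and the rest of the coalgebra structure is then determined by compatibility with post-composition. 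The main obstacle I foresee is the rigorous construction of $\mathcal{X}^\otimes_{=n}$ as a (complete $\mathbf{\Gamma}_n$-Segal) $\infty$-operad and the verification that the assignment $(X,\delta) \mapsto p^\otimes_{(X,\delta)}$ assembles into a genuine functor of $\infty$-categories with the correct mapping spaces; this should ultimately reduce to the same arity-counting principle used throughout Section \ref{subsec:truncatedstable} (in particular Lemma \ref{lem:homogstable}), namely that in an $n$-homogeneous operad nothing happens away from the distinguished corolla $C_n$.
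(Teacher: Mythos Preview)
Your approach is correct in outline and shares its core construction with the paper, but the overall strategy differs. The paper proves the functor is an equivalence via fully faithful plus essentially surjective: for essential surjectivity it uses exactly your formula
\[
X \xrightarrow{\;\delta\;} X^{\otimes n} \xrightarrow{f_1 \otimes \cdots \otimes f_n} Y_1 \otimes \cdots \otimes Y_n
\]
to build a coalgebra from a pair $(X,\delta)$, but only as an individual object, not as the value of a functor. Full faithfulness is handled by a direct computation: for coalgebras $p^\otimes, q^\otimes$ with underlying objects $X,Y$, the paper identifies the fiber of $\mathrm{Map}(p^\otimes,q^\otimes) \to \mathrm{Map}_{\mathcal{O}}(X,Y)$ over $\varphi$ with the space of $\Sigma_n$-equivariant fillers of the square relating $\delta^X$ and $\delta^Y$ along $\varphi$, which is exactly the corresponding fiber in the target category.

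The practical advantage of the paper's route is that it sidesteps the very obstacle you flag: one never has to promote $(X,\delta) \mapsto p^\otimes_{(X,\delta)}$ to an honest functor of $\infty$-categories, nor produce natural equivalences between composites. Your ``bookkeeping'' step, showing that an arbitrary coalgebra $p^\otimes$ is equivalent to the one rebuilt from its extracted $\delta$, would in any case unwind to essentially the same fiber computation the paper carries out for full faithfulness; so your approach does not avoid that work, it only repackages it while adding the extra burden of functoriality for the inverse.
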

\begin{proof}
We first show that this functor is fully faitfhul. Consider compact coalgebras $p^\otimes: \mathcal{X}_{=n}^\otimes \rightarrow \mathcal{O}^\otimes_{=n}$ and $q^\otimes: \mathcal{Y}_{=n}^\otimes \rightarrow \mathcal{O}^\otimes_{=n}$ with underlying objects $X$ and $Y$ respectively. Write $\mathrm{Map}(p^\otimes, q^\otimes)$ for the space of maps in $\mathrm{coAlg}^c(\mathcal{O}_{=n}^\otimes)$ between these coalgebras, which is by definition the space of maps $\mathcal{Y}_{=n}^\otimes \rightarrow \mathcal{X}_{=n}^\otimes$ compatible with the maps down to $\mathcal{O}_{=n}^\otimes$. The forgetful functor induces a map
\begin{equation*}
\mathrm{Map}(p^\otimes, q^\otimes) \longrightarrow \mathrm{Map}_{\mathcal{O}}(X, Y).
\end{equation*}
Write $F_{\varphi}$ for the fiber of this map over a morphism $\varphi: X \rightarrow Y$ and consider objects $f_i: Y \rightarrow W_i$ and $g: Y \rightarrow Z$ of $\mathcal{O}_{Y/}$. Informally speaking $F_{\varphi}$ may be described as the space of $\Sigma_n$-equivariant maps
\begin{equation*}
\mathcal{Y}^\otimes_{=n}(f_1, \ldots, f_n; g) \longrightarrow \mathcal{X}^\otimes_{=n}(f_1\varphi, \ldots, f_n\varphi; g\varphi),
\end{equation*}
natural in the $f_i$ and $g$, which are moreover compatible with the maps down to $\mathcal{O}^\otimes(W_1, \ldots, W_n; Z)$. By the corepresentability of the $\infty$-operads involved this is the space of $\Sigma_n$-equivariant triangles
\[
\xymatrix@C=45pt{
X \ar[rr]^{\varphi} \ar[dr]_-{\delta_n^X(f_1\varphi, \ldots, f_n\varphi)} && Y \ar[dl]^-{\delta_n^Y(f_1, \ldots, f_n)} \\
& W_1 \otimes \cdots \otimes W_n &
}
\]
natural in the $f_i$. Here the map $\delta^Y_n(f_1, \ldots, f_n): Y \rightarrow W_1 \otimes \cdots \otimes W_n$ is induced by the tensor product of $\mathcal{Y}^{\otimes}_{=n}$ and similarly for $\delta^X_n$. To be more precise, consider the space of $\Sigma_n$-equivariant natural transformations between the functors $\delta_n^X \circ (\varphi^*)^n$ and $\varphi^*\delta_n^Y$. The $\infty$-category $\mathrm{Fun}(\mathcal{O}_{Y/}^n, \mathcal{O}_{X/})$ admits an equivariant functor to $\mathrm{Fun}(\mathcal{O}_{Y/}^n, \mathcal{O})$ by composition with the projection $\pi_X: \mathcal{O}_{X/} \rightarrow \mathcal{O}$. The composition $\otimes^n \circ \pi_Y^n$ is an object of the latter and $F_{\varphi}$ can be identified with the fiber of $\mathrm{Nat}^{\Sigma_n}(\delta_n^X \circ (\varphi^*)^n, \varphi^*\delta_n^Y)$ over the identity map of that object. Write $\widetilde{F}_{\varphi}$ for the fiber of $\mathrm{Fun}(\mathcal{O}_{Y/}^n, \mathcal{O}_{X/})$ over the object $\otimes^n \circ \pi_Y^n$ of $\mathrm{Fun}(\mathcal{O}_{Y/}^n, \mathcal{O})$. Since the inclusion of the vertex $\{\mathrm{id}_Y\}^n \rightarrow \mathcal{O}_{Y/}^n$ is left anodyne and the projection $\mathcal{O}_{X/} \rightarrow \mathcal{O}$ is a left fibration, evaluation at that vertex induces an equivalence between $\widetilde{F}_{\varphi}$ and the fiber of $\mathcal{O}_{X/} \rightarrow \mathcal{O}$ over the object $Y^{\otimes n}$. Hence there is a corresponding fiber sequence
\begin{equation*}
F_{\varphi} \longrightarrow \mathrm{Map}^{\Sigma_n}_{\mathcal{O}_{X/}}(\delta_n^X(\varphi, \ldots, \varphi), \varphi \circ \delta_n^Y(\mathrm{id}_Y, \ldots, \mathrm{id}_Y)) \longrightarrow \mathrm{Map}^{\Sigma_n}_{\mathcal{O}}(Y^{\otimes n}, Y^{\otimes n}).
\end{equation*}
Note also that the map $\delta_n^X(\varphi, \ldots, \varphi)$ canonically factors as follows
\[
\xymatrix@C=60pt{
X \ar[r]^-{\delta_n^X(\mathrm{id}_X, \ldots, \mathrm{id}_X)} & X \otimes \cdots \otimes X \ar[r]^{\varphi \otimes \cdots \otimes \varphi} & Y \otimes \cdots \otimes Y. 
}
\]
Therefore one may describe $F_{\varphi}$ as the space of homotopies that fill the following square:
\[
\xymatrix{
X \ar[r]^{\varphi}\ar[d]_{} & Y \ar[d] \\
(X \otimes \cdots \otimes X)^{h\Sigma_n} \ar[r]_{(\varphi^{\otimes n})^{\Sigma_n}} & (Y \otimes \cdots \otimes Y)^{h\Sigma_n}.
}
\]
It should be clear that this is precisely the homotopy type of the space of maps between the objects 
\begin{equation*}
\delta^X_n(\mathrm{id}_X, \ldots, \mathrm{id}_X): X \longrightarrow X^{\otimes n} \quad\quad \text{and} \quad\quad \delta^Y_n(\mathrm{id}_Y, \ldots, \mathrm{id}_Y): Y \longrightarrow Y^{\otimes n},
\end{equation*}
compatible with the map $\varphi: X \rightarrow Y$, in the $\infty$-category 
\begin{equation*}
\bigl\{X \rightarrow (X \otimes^n \cdots \otimes^n X)^{\Sigma_n} \bigr\}^c_{\mathcal{O}}.
\end{equation*}
This proves that the functor of the lemma is fully faithful.

It remains to show essential surjectivity. If $\delta_n: X \rightarrow (X^{\otimes n})^{h\Sigma_n}$ is a map in $\mathcal{O}$ we define a corresponding coalgebra $\mathcal{X}^\otimes_{=n} \rightarrow \mathcal{O}^\otimes_{=n}$ as follows. Unraveling the definitions, it suffices to specify a $\Sigma_n$-equivariant functor associating to every $n$-tuple of maps $f_1: X \rightarrow Y_i, \ldots, f_n: X \rightarrow Y_n$ a map $X \rightarrow Y_1 \otimes \cdots \otimes Y_n$. Clearly the formation of the composition
\[
\xymatrix@C=35pt{
X \ar[r] & X^{\otimes n} \ar[r]^-{f_1 \otimes \cdots \otimes f_n} & Y_1 \otimes \cdots \otimes Y_n
}
\end{equation*}
does the job.
\end{proof}

We will also need the following collection of technical facts:

\begin{lemma}
\label{lem:Xtruncated}
Let $p^\otimes: \mathcal{X}^\otimes \rightarrow \tau_n\mathcal{O}^\otimes$ be a coalgebra in $\tau_n\mathcal{O}^\otimes$. Then the map $\tau_n\mathcal{X}^\otimes \rightarrow \mathcal{X}^\otimes$ is an equivalence of $\infty$-operads. Furthermore, the pushforward of $p^\otimes$ to $\mathrm{coAlg}(\tau_{n-1}\mathcal{O}^\otimes)$ is equivalent to $\tau_{n-1}p^\otimes: \tau_{n-1}\mathcal{X}^\otimes \rightarrow \tau_{n-1}\mathcal{O}^\otimes$. Also, the map $\mathcal{X}^\otimes_{=n} \rightarrow (\tau_n\mathcal{O}^\otimes)_{=n}$ defines a coalgebra in $(\tau_n\mathcal{O}^\otimes)_{=n}$.
\end{lemma}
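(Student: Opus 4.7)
The plan is to prove all three claims by leveraging the explicit corepresentability description of $\mathcal{X}^\otimes$ as a coalgebra in $\tau_n\mathcal{O}^\otimes$, with the real content going into (1); parts (2) and (3) will then follow quickly.

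For part (1), I would begin with the observation that the underlying $\infty$-category of $\mathcal{X}^\otimes$ is $\mathcal{O}_{X/}$ and that, by corepresentability, for any tuple of morphisms $f_i : X \to Y_i$ in $\mathcal{O}_{X/}$, the $k$-fold tensor product in $\mathcal{X}^\otimes$ is the composite
\[
X \xrightarrow{\delta_k} X \odot^k \cdots \odot^k X \xrightarrow{f_1 \odot^k \cdots \odot^k f_k} Y_1 \odot^k \cdots \odot^k Y_k,
\]
where $\delta_k$ is the $k$-fold coalgebra diagonal and $\odot^k$ is the tensor product in $\tau_n\mathcal{O}^\otimes$. By Proposition \ref{prop:truncatedtensor}, the codomain decomposes as $\varprojlim_{\N\mathbf{Part}_n(k)} \psi_n^k$, a limit of iterated tensor products of arity at most $n$; naturality of the coalgebra structure then forces $\delta_k$ itself to decompose as the limit of the corresponding iterated diagonals, each a composite of lower-arity diagonals. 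The resulting description of the $k$-ary operations of $\mathcal{X}^\otimes$ as a limit indexed by (the opposite of) $\mathrm{Dec}_n(C_k)$ of expressions in operations of arity $\leq n$ matches precisely the formula supplied by Corollary \ref{cor:mappingspacev!} for the operations of $v_!v^*\mathcal{X}^\otimes$. This exhibits the counit $\tau_n\mathcal{X}^\otimes = v_!v^*\mathcal{X}^\otimes \to \mathcal{X}^\otimes$ as an equivalence.

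For part (2), Construction \ref{constr:pullbackcoalg} identifies the pushforward with the pullback $\mathcal{Y}^\otimes := \mathcal{X}^\otimes \times_{\tau_n\mathcal{O}^\otimes} \tau_{n-1}\mathcal{O}^\otimes$, since the underlying functor of $\tau_{n-1}\mathcal{O}^\otimes \to \tau_n\mathcal{O}^\otimes$ is the identity of $\mathcal{O}$. Since the restriction functor $(-)_{\leq n-1}$ is both left and right Quillen (Theorem \ref{thm:Psin}), it preserves finite limits; combined with the fact that both $\tau_{n-1}\mathcal{O}^\otimes \to \mathcal{O}^\otimes$ and $\tau_n\mathcal{O}^\otimes \to \mathcal{O}^\otimes$ induce equivalences on $(n-1)$-truncations, this yields $(\mathcal{Y}^\otimes)_{\leq n-1} \simeq (\mathcal{X}^\otimes)_{\leq n-1}$. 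Applying part (1) to $\mathcal{Y}^\otimes$ viewed as a coalgebra in $\tau_{n-1}\mathcal{O}^\otimes$ then gives $\mathcal{Y}^\otimes \simeq \tau_{n-1}\mathcal{Y}^\otimes \simeq \tau_{n-1}\mathcal{X}^\otimes$, as required. Part (3) is essentially immediate: the restriction of the coalgebra structure of $p^\otimes$ to the $n$-homogeneous parts preserves the corepresentability condition at arity $n$ (the only nontrivial arity beyond 1), and the resulting $\mathcal{X}^\otimes_{=n} \to (\tau_n\mathcal{O}^\otimes)_{=n}$ satisfies the homogeneous analogue of Definition \ref{def:coalgebra} (in the sense of Lemma \ref{lem:homogcoalgebras}).

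The main obstacle will lie in the precise execution of part (1), specifically in making rigorous the compatibility of the $k$-fold coalgebra diagonal $\delta_k$ with the limit decomposition of Proposition \ref{prop:truncatedtensor}. This requires a careful unraveling of the spray of $\tau_n\mathcal{O}^\otimes$ and a verification, diagrammatically, that the coalgebra structure of $\mathcal{X}^\otimes$ respects every decomposition map indexed by chains in $\mathbf{Equiv}(k)$. Once this is done, translating the matching formula back through the dendroidal framework to invoke Corollary \ref{cor:mappingspacev!} is largely a matter of bookkeeping, but should be handled with care since $p^\otimes$ is only a locally coCartesian fibration in general.
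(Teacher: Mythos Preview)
Your overall strategy is correct and parts (2) and (3) match the paper's argument essentially verbatim. The difference lies in part (1), where you are working harder than necessary, and the ``main obstacle'' you identify is in fact illusory.

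The paper's argument for (1) is much shorter. Writing $\otimes^k_{\mathcal{X}}$ for the $k$-fold tensor on $\mathcal{O}_{X/}$ determined by $\mathcal{X}^\otimes$, Lemma \ref{lem:odotk} reduces the claim to showing that
\[
\varphi_k: \otimes^k_{\mathcal{X}} \longrightarrow \varprojlim_{T \in \mathrm{Dec}_n(C_k)} \otimes^T_{\mathcal{X}}
\]
is an equivalence for every $k>n$. Now the key point: the forgetful functor $\mathcal{O}_{X/} \to \mathcal{O}$ is conservative and creates limits. Hence, evaluated at a tuple $(f_i: X \to Y_i)$, the map $\varphi_k$ is an equivalence in $\mathcal{O}_{X/}$ if and only if the underlying map
\[
\otimes^k_{\tau_n\mathcal{O}}(Y_1,\ldots,Y_k) \longrightarrow \varprojlim_{T \in \mathrm{Dec}_n(C_k)} \otimes^T_{\tau_n\mathcal{O}}(Y_1,\ldots,Y_k)
\]
is an equivalence in $\mathcal{O}$. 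But this is precisely the statement that $\tau_n\mathcal{O}^\otimes$ is $n$-truncated, which holds by construction. There is no need to decompose $\delta_k$ explicitly, nor to verify any compatibility with the spray: the coalgebra condition in Definition \ref{def:coalgebra} guarantees that $p^\otimes$ identifies the tensor products of $\mathcal{X}^\otimes$ with those of $\tau_n\mathcal{O}^\otimes$ on underlying objects, and that is all that is used. Your route through Corollary \ref{cor:mappingspacev!} and an explicit matching of limit formulas would eventually arrive at the same place, but the detour through the dendroidal bookkeeping is avoidable.
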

\begin{remark}
We are cutting a corner in the statement of this lemma: the $\infty$-operad $\mathcal{X}^\otimes$ need itself not be stable, so that the expression $\tau_n\mathcal{X}^\otimes$ is as of yet ill-defined. However, the relevant definitions can easily be adapted to apply to this corepresentable $\infty$-operad as well. We will not belabour the details here.
\end{remark}
\begin{proof}
Write $\otimes^k_{\mathcal{X}}$ for the $k$-fold tensor product on $\mathcal{O}_{X/}$ defined by $p^\otimes$. To prove that $\tau_n\mathcal{X}^\otimes \rightarrow \mathcal{X}^\otimes$ is an equivalence, it suffices (by Lemma \ref{lem:odotk}) to show that for every $k > n$ the natural map
\begin{equation*}
\varphi_k: \otimes^k_{\mathcal{X}} \longrightarrow \varprojlim_{T \in \mathrm{Dec}_n(C_k)} \otimes^T_{\mathcal{X}}
\end{equation*}
is an equivalence. Let $f_1: X \rightarrow Y_1, \ldots, f_k: X \rightarrow Y_k$ be objects of $\mathcal{O}_{X/}$. Evaluated at this tuple, the map above corresponds to a triangle in $\mathcal{O}$ as follows:
\[
\xymatrix{
& X \ar[dr]\ar[dl]& \\
\otimes^k_{\mathcal{O}}(Y_1, \ldots, Y_k) \ar[rr] &&  \varprojlim_{T \in \mathrm{Dec}_n(C_k)} \otimes^T_{\mathcal{O}}(Y_1, \ldots, Y_k).
}
\]
Then $\varphi_k$ is an equivalence if and only if the bottom arrow is an equivalence, which is the case because $\tau_n\mathcal{O}^\otimes$ is $n$-truncated. For the second claim, recall that the pushforward of $p^\otimes$ to $\mathrm{coAlg}(\tau_{n-1}\mathcal{O}^\otimes)$ (say $p_{n-1}^\otimes$) is defined by the pullback square
\[
\xymatrix{
\mathcal{X}_{n-1}^\otimes \ar[d]_{p_{n-1}^\otimes} \ar[r] & \mathcal{X}^\otimes \ar[d]^{p^\otimes} \\
\tau_{n-1}\mathcal{O}^\otimes \ar[r] & \tau_n\mathcal{O}^\otimes.
}
\]
Applying the truncation functor $(-)_{\leq n-1}$ to the square and using that it preserves limits, we see that $(\mathcal{X}_{n-1}^\otimes)_{\leq n-1} \rightarrow (\mathcal{X}^\otimes)_{\leq n-1}$ is an equivalence. Furthermore, the map $\tau_{n-1}\mathcal{X}_{n-1}^\otimes \rightarrow \mathcal{X}_{n-1}^\otimes$ is an equivalence by what we proved before, so that we obtain an equivalence $\mathcal{X}_{n-1}^\otimes \rightarrow \tau_{n-1}\mathcal{X}^\otimes$ over $\tau_{n-1}\mathcal{O}^\otimes$. Finally, the claim in the lemma about $\mathcal{X}^\otimes_{=n}$ is immediate from the definitions.
\end{proof}

\begin{remark}
\label{rmk:truncatedcoalg}
A consequence of the previous lemma is that for a coalgebra $\mathcal{X}^\otimes \rightarrow \tau_n\mathcal{O}^\otimes$, there is no essential loss of information in passing to the map $\mathcal{X}^\otimes_{\leq n} \rightarrow \tau_n\mathcal{O}^\otimes_{\leq n}$. More precisely, there is an evident definition of the notion of coalgebra in the $n$-truncated stable $\infty$-operad $\tau_n\mathcal{O}^\otimes_{\leq n}$ and a consequence of the previous lemma is that the pullback functor
\begin{equation*}
\mathrm{coAlg}(\tau_n\mathcal{O}^\otimes) \longrightarrow \mathrm{coAlg}(\tau_n\mathcal{O}^\otimes_{\leq n})
\end{equation*}
is an equivalence of $\infty$-categories.
\end{remark}

\begin{proof}[Proof of Lemma \ref{lem:beta}]
By Lemma \ref{lem:homogcoalgebras} and the previous remark it suffices to show that the square
\[
\xymatrix{
\mathrm{coAlg}^c(\tau_n\mathcal{O}^\otimes_{\leq n}) \ar[r]\ar[d] & \mathrm{coAlg}^c(\mathcal{O}_{=n}^\otimes) \ar[d] \\
\mathrm{coAlg}^c(\tau_{n-1}\mathcal{O}^\otimes_{\leq n}) \ar[r] & \mathrm{coAlg}^c((\tau_{n-1}\mathcal{O}^\otimes)_{=n}) 
}
\]
is a pullback of $\infty$-categories. Recall that we denote the pullback in this square by $\mathcal{B}$. Let us describe a functor $\alpha: \mathcal{B} \rightarrow \mathrm{coAlg}^c(\tau_n\mathcal{O}^\otimes_{\leq n})$ which we will show to be inverse to $\beta$. A vertex $v$ of $\mathcal{B}$ corresponds to a diagram
\[
\xymatrix{
\mathcal{W}^\otimes \ar[d] & \mathcal{U}^\otimes \ar[l]\ar[d]\ar[r] & \mathcal{V}^\otimes \ar[d] \\
\mathcal{O}^\otimes_{=n} & (\tau_{n-1}\mathcal{O}^\otimes)_{=n} \ar[r]\ar[l] & \tau_{n-1}\mathcal{O}^\otimes_{\leq n}
}
\]
in which the vertical arrows are coalgebras. Forming pushouts of the top and bottom rows of we obtain a map $\alpha(v): \mathcal{X}^\otimes \rightarrow \tau_n\mathcal{O}^\otimes_{\leq n}$, where we applied Corollary \ref{cor:wnXstable} to identify the codomain. It is straightforward to verify that $\alpha(v)$ is a coalgebra in $\tau_n\mathcal{O}^\otimes_{\leq n}$. Clearly the construction of $\alpha$ can be made natural to yield the desired map $\mathcal{B} \rightarrow \mathrm{coAlg}^c(\tau_n\mathcal{O}^\otimes_{\leq n})$.

Now let $p^\otimes: \mathcal{X}^\otimes \rightarrow \tau_n\mathcal{O}^\otimes$ be a compact coalgebra. The same argument used to prove Proposition \ref{prop:wnXpullback} proves that the following square is a pullback of $\infty$-operads:
\[
\xymatrix{
(\tau_{n-1}\mathcal{O}^\otimes)_{=n} \ar[d]\ar[r] & \tau_{n-1}\mathcal{O}^\otimes_{\leq n} \ar[d] \\
\mathcal{O}^\otimes_{=n} \ar[r] & \tau_n\mathcal{O}^\otimes_{\leq n}.
}
\]
Pulling back $p^\otimes$ along the maps in this square we produce a cube 
\[
\xymatrix{
(\tau_{n-1}\mathcal{X}^\otimes)_{=n} \ar[rr]\ar[dd]\ar[dr] && \tau_{n-1}\mathcal{X}^\otimes_{\leq n} \ar[dd]\ar[dr] & \\
& \mathcal{X}^\otimes_{=n} \ar[dd] \ar[rr] && \mathcal{X}^\otimes_{\leq n} \ar[dd] \\
(\tau_{n-1}\mathcal{O}^\otimes)_{=n} \ar'[r][rr]\ar[dr]&& \tau_{n-1}\mathcal{O}^\otimes_{\leq n} \ar[dr] &  \\
& \mathcal{O}^\otimes_{=n} \ar[rr] && \tau_n\mathcal{O}^\otimes_{\leq n},
}
\]
where we used Lemma \ref{lem:Xtruncated} to identify the $\infty$-operads making up the top square. Then $\beta(p^\otimes)$ is described by the diagram obtained from this cube by deleting the vertices $\mathcal{X}^\otimes_{\leq n}$ and $\tau_n\mathcal{O}^\otimes_{\leq n}$. All faces of the cube are pullbacks; moreover, the top and bottom faces are pushouts by Corollary \ref{cor:wnXstable}. These observations imply that the composites $\alpha\circ\beta$ and $\beta \circ \alpha$ are naturally equivalent to the identity, proving the lemma.
\end{proof}

We conclude this section with a result relating the $\infty$-category of coalgebras in a stable $\infty$-operad $\mathcal{O}^\otimes$ to the $\infty$-categories of coalgebras in its truncations $\tau_n\mathcal{O}^\otimes$. Recall that the morphism $\tau_n\mathcal{O}^\otimes \rightarrow \mathcal{O}^\otimes$ induces a functor
\begin{equation*}
\mathrm{coAlg}(\mathcal{O}^\otimes) \rightarrow \mathrm{coAlg}(\tau_n\mathcal{O}^\otimes)
\end{equation*}
via Construction \ref{constr:pullbackcoalg}.

\begin{lemma}
\label{lem:coalglimit}
The functor
\begin{equation*}
\mathrm{coAlg}(\mathcal{O}^\otimes) \rightarrow \varprojlim_n \mathrm{coAlg}(\tau_n\mathcal{O}^\otimes)
\end{equation*}
is an equivalence of $\infty$-categories.
\end{lemma}
\begin{proof}
We defined the $\infty$-category $\mathrm{coAlg}(\mathcal{O}^\otimes)$ as a full subcategory of $\bigl((\mathbf{Cat}_\infty)_{/\mathcal{O}^\otimes}\bigr)^{\mathrm{op}}$ and as in Remark \ref{rmk:truncatedcoalg} the $\infty$-category $\mathrm{coAlg}(\tau_n\mathcal{O}^\otimes)$ is equivalent to a full subcategory of $\bigl((\mathbf{Cat}_\infty)_{/\mathcal{O}^{\otimes}_{\leq n}}\bigr)^{\mathrm{op}}$. Identified in this way, the functor described right before the lemma is induced by the functor
\begin{equation*}
(\mathbf{Cat}_\infty)_{/\mathcal{O}^\otimes} \rightarrow (\mathbf{Cat}_\infty)_{/\mathcal{O}_{\leq n}^\otimes}
\end{equation*}
which takes the pullback along the inclusion $\mathcal{O}^\otimes_{\leq n} \rightarrow \mathcal{O}^\otimes$. The lemma now follows from the fact that 
\begin{equation*}
(\mathbf{Cat}_\infty)_{/\mathcal{O}^{\otimes}} \rightarrow \varprojlim_n \, (\mathbf{Cat}_\infty)_{/\mathcal{O}^{\otimes}_{\leq n}}
\end{equation*}
is an equivalence of $\infty$-categories (see Lemma \ref{lem:inftycatlimit} below) and that this equivalence identifies the appropriate full subcategories, as one easily verifies.
\end{proof}

The following is a version of the elementary observation that to give a fibration over a filtered space $X = \cup_n X_n$ is essentially the same as to give a fibration over each stage $X_n$ compatible with the inclusions $X_n \rightarrow X_{n+1}$:

\begin{lemma}
\label{lem:inftycatlimit}
If $\mathcal{C}$ is an $\infty$-category with a filtration by subcategories
\begin{equation*}
\mathcal{C}_1 \subseteq \mathcal{C}_2 \subseteq \mathcal{C}_3 \subseteq \cdots, \quad\quad \bigcup_n \mathcal{C}_n = \mathcal{C},
\end{equation*}
then the functor
\begin{equation*}
(\mathbf{Cat}_\infty)_{/\mathcal{C}} \rightarrow \varprojlim_n \, (\mathbf{Cat}_\infty)_{/\mathcal{C}_n}
\end{equation*}
is an equivalence of $\infty$-categories.
\end{lemma}
\begin{proof}
The $\infty$-category $(\mathbf{Cat}_\infty)_{/\mathcal{C}}$ is equivalent to the homotopy-coherent nerve of a simplicial category $\mathbf{C}$ described as follows: its objects are categorical fibrations $X \xrightarrow{p} \mathcal{C}$ (which are precisely the fibrant objects in the Joyal model structure on the slice category $\mathbf{sSets}/\mathcal{C}$) and for two such fibrations $X \xrightarrow{p} \mathcal{C}$ and $Y \xrightarrow{q} \mathcal{C}$ the simplicial set of maps between them is the maximal Kan complex in the $\infty$-category
\begin{equation*}
\mathrm{Map}_\mathcal{C}(X, Y) = \Delta^0 \times_{\mathcal{C}^X} Y^X,
\end{equation*}
where the map $\Delta^0 \rightarrow \mathcal{C}^X$ used to define the pullback simply picks out the vertex $p$. The fact that $\mathrm{Map}_\mathcal{C}(X, Y)$ is indeed an $\infty$-category follows from the fact that the Joyal model structure is Cartesian. There is for every $n$ a similar simplicial category $\mathbf{C}_n$ whose nerve is equivalent to $(\mathbf{Cat}_\infty)_{/\mathcal{C}_n}$. Pullback along the inclusion $\mathcal{C}_n \rightarrow \mathcal{C}_{n+1}$ defines a simplicial functor
\begin{equation*}
\mathbf{C}_{n+1} \rightarrow \mathbf{C}_n
\end{equation*}
which is easily checked to be a fibration of simplicial categories (i.e. it induces Kan fibrations on mapping spaces). The evident functor
\begin{equation*}
\mathbf{C} \rightarrow \varprojlim_n \mathbf{C}_n
\end{equation*}
is an equivalence of simplicial categories in a strict sense, meaning it induces \emph{isomorphisms} on mapping spaces rather than just weak equivalences. Indeed, an explicit inverse is given by the colimit functor
\begin{equation*}
\{X_n \xrightarrow{p_n} \mathcal{C}_n\}_{n \geq 1} \longmapsto (\varinjlim_n X_n \rightarrow \varinjlim_n \mathcal{C}_n = \mathcal{C})
\end{equation*}
with its obvious simplicial structure (using that pullbacks of simplicial sets commute with colimits). Taking homotopy-coherent nerves produces a diagram (equivalent to)
\begin{equation*}
\cdots \rightarrow (\mathbf{Cat}_\infty)_{/\mathcal{C}_{n+1}} \rightarrow (\mathbf{Cat}_\infty)_{/\mathcal{C}_n} \rightarrow \cdots \rightarrow (\mathbf{Cat}_\infty)_{/\mathcal{C}_1}
\end{equation*}
in which all maps are categorical fibrations, since the homotopy-coherent nerve is a right Quillen functor. Therefore the actual limit is also the homotopy limit and we conclude that the limit
\begin{equation*}
\varprojlim_n \, (\mathbf{Cat}_\infty)_{/\mathcal{C}_n}
\end{equation*}
computed in the $\infty$-category of (large) $\infty$-categories is equivalent to the homotopy-coherent nerve of $\mathbf{C}$, which in turn is equivalent to $(\mathbf{Cat}_\infty)_{/\mathcal{C}}$.
\end{proof}

%\end{appendices}

\backmatter
%    Bibliography styles amsplain or author-year (using natbib) are
%    also acceptable.
\bibliographystyle{amsalpha}
\bibliography{biblio}

\providecommand{\bysame}{\leavevmode\hbox to3em{\hrulefill}\thinspace}
\providecommand{\MR}{\relax\ifhmode\unskip\space\fi MR }
% \MRhref is called by the amsart/book/proc definition of \MR.
\providecommand{\MRhref}[2]{%
  \href{http://www.ams.org/mathscinet-getitem?mr=#1}{#2}
}
\providecommand{\href}[2]{#2}
\begin{thebibliography}{HHM16}

\bibitem[AC11]{aroneching}
G.~Arone and M.~Ching, \emph{Operads and chain rules for the calculus of
  functors}, Soci{\'e}t{\'e} math{\'e}matique de {F}rance, 2011.

\bibitem[AC15]{aronechingtowers}
G.~Arone and M.~Ching, \emph{A classification of {T}aylor towers of functors of
  spaces and spectra}, Advances in Mathematics \textbf{272} (2015), 471--552.

\bibitem[AK98]{aronekankaanrinta}
G.~Arone and M.~Kankaanrinta, \emph{A functorial model for iterated {S}naith
  splitting with applications to calculus of functors, from: `stable and
  unstable homotopy'}, Fields Inst. Commun., vol.~19, A.M.S., 1998.

\bibitem[AK02]{ahearnkuhn}
S.~T. Ahearn and N.~J. Kuhn, \emph{Product and other fine structure in
  polynomial resolutions of mapping spaces}, Algebraic \& Geometric Topology
  \textbf{2} (2002), no.~2, 591--647.

\bibitem[BM05]{basterramandell}
M.~Basterra and M.A. Mandell, \emph{Homology and cohomology of ${E}_{\infty}$
  ring spectra}, Mathematische Zeitschrift \textbf{249} (2005), no.~4,
  903--944.

\bibitem[BR14]{biedermannroendigs}
G.~Biedermann and O.~R\"ondigs, \emph{Calculus of functors and model
  categories, {II}}, Algebr. Geom. Topol. \textbf{14} (2014), no.~5,
  2853--2913.

\bibitem[BS16]{barwickshah}
C.~Barwick and J.~Shah, \emph{Fibrations in $\infty$-category theory},
  Available online at arXiv:1607.04343, 2016.

\bibitem[CHH]{chh}
H.~Chu, R.~Haugseng, and G.S.K.S. Heuts, \emph{Two models for the homotopy
  theory of infinity-operads}, arXiv: 1606.03826, to appear in Journal of
  Topology.

\bibitem[Chi]{chingderivatives}
M.~Ching, \emph{Infinity-operads and {D}ay convolution in {G}oodwillie
  calculus}, Available online at arXiv:1801.03467.

\bibitem[Chi05]{ching}
M.~Ching, \emph{Bar constructions for topological operads and the {G}oodwillie
  derivatives of the identity}, Geom. Topol. \textbf{9} (2005), 833--933.

\bibitem[Chi12]{chingbar}
M.~Ching, \emph{Bar-cobar duality for operads in stable homotopy theory},
  Journal of {T}opology \textbf{5} (2012), no.~1, 39--80.

\bibitem[CM11]{cisinskimoerdijk1}
D.C. Cisinski and I.~Moerdijk, \emph{Dendroidal sets as models for homotopy
  operads}, Journal of {T}opology \textbf{4} (2011), no.~2, 257--299.

\bibitem[CM13]{cisinskimoerdijk3}
\bysame, \emph{Dendroidal sets and simplicial operads}, Journal of {T}opology
  \textbf{6} (2013), no.~3, 705--756.

\bibitem[Eld16]{eldred}
R.~Eldred, \emph{Goodwillie calculus via adjunction and {L}{S} cocategory},
  Homology, {H}omotopy and {A}pplications \textbf{18} (2016), no.~2, 31--58.

\bibitem[FG12]{francisgaitsgory}
J.~Francis and D.~Gaitsgory, \emph{Chiral {K}oszul duality}, Selecta
  Mathematica \textbf{18} (2012), no.~1, 27--87.

\bibitem[GJ]{getzlerjones}
E.~Getzler and J.D.S. Jones, \emph{Operads, homotopy algebra and iterated
  integrals for double loop spaces}, Available online at arXiv hep-th/9305013.

\bibitem[GK94]{ginzburgkapranov}
V.~Ginzburg and M.~Kapranov, \emph{Koszul duality for operads}, Duke
  {M}athematical {J}ournal \textbf{76} (1994), no.~1, 203--272.

\bibitem[GM95]{greenleesmay}
J.P.C. Greenlees and L.P. May, \emph{Generalized {T}ate cohomology}, Mem. Amer.
  Math. Soc. \textbf{113} (1995), no.~543.

\bibitem[Goo03]{goodwillie3}
T.~G. Goodwillie, \emph{Calculus {III}: {T}aylor {S}eries}, Geometry and
  Topology \textbf{7} (2003), 645--711.

\bibitem[GS96]{greenleessadofsky}
J.P.C. Greenlees and H.~Sadofsky, \emph{The {T}ate spectrum of $v_n$-periodic
  complex oriented theories}, Mathematische Zeitschrift \textbf{222} (1996),
  no.~3, 391--405.

\bibitem[Heu]{unstableperiodicity}
G.S.K.S. Heuts, \emph{Lie algebras and $v_n$-periodic spaces}, Available online
  at arXiv:1803.06325.

\bibitem[HHM16]{hhm}
G.S.K.S. Heuts, V.~Hinich, and I.~Moerdijk, \emph{On the equivalence between
  {L}urie's model and the dendroidal model for infinity-operads}, Advances in
  Mathematics \textbf{302} (2016), 869--1043.

\bibitem[HL13]{ambidexterity}
M.~J. Hopkins and J.~Lurie, \emph{Ambidexterity in {K}(n)-local stable homotopy
  theory}, Available online at math.harvard.edu/~lurie/, 2013.

\bibitem[Joy02]{joyalpaper}
A.~Joyal, \emph{Quasi-categories and {K}an complexes}, Journal of Pure and
  Applied Algebra \textbf{175} (2002), no.~1, 207--222.

\bibitem[Joy08]{joyal}
\bysame, \emph{The theory of quasi-categories {I}}, preprint (2008).

\bibitem[Kle02]{klein}
J.~R. Klein, \emph{Axioms for generalized {F}arrell-{T}ate cohomology}, Journal
  of Pure and Applied Algebra \textbf{172} (2002), no.~2, 225--238.

\bibitem[Kle05]{kleinmoduli}
J.R. Klein, \emph{Moduli of suspension spectra}, Transactions of the American
  Mathematical Society \textbf{357} (2005), no.~2, 489--507.

\bibitem[KP14]{kleinpeter}
J.R. Klein and J.~Peter, \emph{Fake wedges}, Transactions of the American
  Mathematical Society \textbf{366} (2014), no.~7, 3771--3786.

\bibitem[Kuh04]{kuhntate}
N.~J. Kuhn, \emph{Tate cohomology and periodic localization of polynomial
  functors}, Inventiones {M}athematicae \textbf{157} (2004), no.~2, 345--370.

\bibitem[Kuh07]{kuhngoodwillie}
\bysame, \emph{Goodwillie towers and chromatic homotopy: an overview}, Geometry
  and Topology Monographs \textbf{10} (2007), 245--279.

\bibitem[Kuh08]{kuhntelescopic}
\bysame, \emph{A guide to telescopic functors}, Homology, {H}omotopy and
  {A}pplications \textbf{10} (2008), no.~3, 291--319.

\bibitem[LNR12]{LNR}
S.~Lun{\o}e-Nielsen and J.~Rognes, \emph{The topological {S}inger
  construction}, Documenta Mathematica \textbf{17} (2012), 861--909.

\bibitem[Lur09]{htt}
J.~Lurie, \emph{Higher topos theory}, vol. 170, Princeton University Press,
  2009.

\bibitem[Lur14]{higheralgebra}
\bysame, \emph{Higher algebra}, Available online at math.harvard.edu/~lurie/,
  2014.

\bibitem[McC99]{mccarthy}
R.~McCarthy, \emph{Dual calculus for functors to spectra}, Contemp. Math.
  Series, vol. 271, pp.~183--215, A.M.S., 1999.

\bibitem[MW09]{moerdijkweiss}
I.~Moerdijk and I.~Weiss, \emph{On inner {K}an complexes in the category of
  dendroidal sets}, Advances in Mathematics \textbf{221} (2009), no.~2,
  343--389.

\bibitem[Per13a]{pereira}
L.~Pereira, \emph{A general context for {G}oodwillie calculus}, Available
  online at arXiv:1301.2832, 2013.

\bibitem[Per13b]{pereirathesis}
L.~Pereira, \emph{Goodwillie calculus and algebras over a spectral operad},
  Ph.D. thesis, MIT, 2013.

\bibitem[Qui69]{rationalhomotopy}
D.~G. Quillen, \emph{Rational homotopy theory}, Annals of Mathematics (1969),
  205--295.

\end{thebibliography}
%    See note above about multiple indexes.
\printindex

\end{document}